\newcommand{\arxiv}[1]{\href{http://arxiv.org/abs/#1}{\tt arXiv:\nolinkurl{#1}}}
\newcommand{\arXiv}[1]{\href{http://arxiv.org/abs/#1}{\tt arXiv:\nolinkurl{#1}}}
\newcommand{\googlebooks}[1]{(preview at \href{http://books.google.com/books?id=#1}{google books})}
\definecolor{medium-blue}{rgb}{0,0,.8}
\definecolor{DarkGreen}{RGB}{0,150,0}
\theoremstyle{plain}
\newtheorem{thm}{Theorem}[section]
\newtheorem*{thm*}{Theorem}
\newtheorem{thmalpha}{Theorem}
\newtheorem{cor}[thm]{Corollary}
\newtheorem{coralpha}[thmalpha]{Corollary}
\newtheorem*{cor*}{Corollary}
\newtheorem*{conj*}{Conjecture}
\newtheorem{hypothesis}[thm]{Hypothesis}
\newtheorem*{hypothesis*}{Hypothesis}
\newtheorem*{philosophy*}{Philosophy}
\newtheorem{lem}[thm]{Lemma}
\newtheorem{prop}[thm]{Proposition}
\newtheorem*{quest*}{Question}
\newtheorem*{claim*}{Claim}
\theoremstyle{definition}
\newtheorem{defn}[thm]{Definition}
\newtheorem{construction}[thm]{Construction}
\newtheorem{nota}[thm]{Notation}
\newtheorem{ex}[thm]{Example}
\newtheorem{sub-ex}[thm]{Sub-Example}
\newtheorem{rem}[thm]{Remark}
\newtheorem*{rem*}{Remark}
\newtheorem{remark}[thm]{Remark}
\newtheorem{warn}[thm]{Warning}
\def\semicolon{;}
\def\applytolist#1{
    \expandafter\def\csname multi#1\endcsname##1{
        \def\multiack{##1}\ifx\multiack\semicolon
            \def\next{\relax}
        \else
            \csname #1\endcsname{##1}
            \def\next{\csname multi#1\endcsname}
        \fi
        \next}
    \csname multi#1\endcsname}
\def\calc#1{\expandafter\def\csname c#1\endcsname{{\mathcal #1}}}
\def\bbc#1{\expandafter\def\csname bb#1\endcsname{{\mathbb #1}}}
\def\bfc#1{\expandafter\def\csname bf#1\endcsname{{\mathbf #1}}}
\def\sfc#1{\expandafter\def\csname s#1\endcsname{{\sf #1}}}
\def\fc#1{\expandafter\def\csname f#1\endcsname{{\mathfrak #1}}}
\def\rm#1{\expandafter\def\csname rm#1\endcsname{{\mathrm #1}}}
\newcommand{\bfa}{\mathbf{a}}
\newcommand{\bfb}{\mathbf{b}}
\newcommand{\MR}[1]{}
\tikzstyle{mid>}=[decoration={markings, mark=at position 0.5 with {\arrow{>}}}, postaction={decorate}]
\tikzstyle{mid<}=[decoration={markings, mark=at position 0.5 with {\arrow{<}}}, postaction={decorate}]
\tikzstyle{upper>}=[decoration={markings, mark=at position 0.8 with {\arrow{>}}}, postaction={decorate}]
\tikzstyle{upper<}=[decoration={markings, mark=at position 0.8 with {\arrow{<}}}, postaction={decorate}]
\tikzstyle{lower>}=[decoration={markings, mark=at position 0.2 with {\arrow{>}}}, postaction={decorate}]
\tikzstyle{lower<}=[decoration={markings, mark=at position 0.2 with {\arrow{<}}}, postaction={decorate}]
\tikzset{
    tripleline/.style args={[#1] in [#2] in [#3]}{
        #1,preaction={preaction={draw,#3},draw,#2}
    }
}
\tikzstyle{triple}=[tripleline={[line width=.15mm,black] in
\tikzset{
    quadrupleline/.style args={[#1] in [#2] in [#3] in [#4]}{
        #1,preaction={preaction={preaction={draw,#4},draw,#3}, draw,#2}
    }
}
\tikzstyle{quadruple}=[quadrupleline={[line width=.3mm,white] in
\DeclareMathOperator{\Inv}{Inv}
\DeclareMathOperator{\Aut}{Aut}
\DeclareMathOperator{\coev}{coev}
\DeclareMathOperator{\ev}{ev}
\DeclareMathOperator{\End}{End}
\DeclareMathOperator{\Hom}{Hom}
\DeclareMathOperator{\id}{id}
\DeclareMathOperator{\op}{op}
\DeclareMathOperator{\Ob}{Ob}
\DeclareMathOperator{\pt}{pt}
\DeclareMathOperator{\st}{st}
\newcommand{\CrsBrd}{\mathsf{CrsBrd}}
\newcommand{\Gray}{\mathsf{Gray}}
\newcommand{\Cat}{\mathsf{Cat}}
\newcommand{\Rep}{\mathsf{Rep}}
\newcommand{\ModTens}{\mathsf{ModTens}}
\newcommand{\BiCat}{\mathsf{2Cat}}
\newcommand{\TriCat}{3\Cat}
\newcommand{\To}{\Rightarrow}
\newcommand{\xz}{\otimes}
\newcommand{\xo}{\circ}
\newcommand{\xt}{*}
\newcommand{\Iz}{1_\cC}
\newcommand{\Io}{\id}
\newcommand{\It}{\id}
\newcommand{\RRightarrow}{\,\,\tikzmath{
\clip (0,-.145) -- (.285,-.145) arc (160:120:.3cm) -- (.4,.15) -- (0,.15);
\clip (0,.145) -- (.285,.145) arc (-160:-120:.3cm) -- (.4,-.15) -- (0,-.15);
\draw[quadrupleline={[line width=.5mm,white] in
      [line width=.8mm,black] in
      [line width=1.8mm,white] in
      [line width=2.1mm,black]}] 
      (0,0) -- (.4,0);
\draw[line width=.3mm] (.285,.15) arc (-160:-120:.3cm);
\draw[line width=.3mm] (.285,-.15) arc (160:120:.3cm);
}\,}
\tikzset{Rightarrow/.style={double equal sign distance,>={Implies},->},
triplecd/.style={-,preaction={draw,Rightarrow}},
quadruplecd/.style={preaction={draw,Rightarrow,
shorten >=0pt
},
shorten >=1pt,
-,double,double
distance=0.2pt}}
\newcommand{\tikzmath}[2][]
{\vcenter{\hbox{\begin{tikzpicture}[#1]#2
                     \end{tikzpicture}}}
}
\newcommand{\roundNbox}[6]{
	\draw[rounded corners=5pt, thick, #1] ($#2+(-#3,-#3)+(-#4,0)$) rectangle ($#2+(#3,#3)+(#5,0)$);
	\coordinate (ZZa) at ($#2+(-#4,0)$);
	\coordinate (ZZb) at ($#2+(#5,0)$);
	\node at ($1/2*(ZZa)+1/2*(ZZb)$) {#6};
}
\newcommand{\AlphaAction}[3]{
\draw[thick, red, mid>] ($ #1 + (#2,0) $)  .. controls ++(90:#2) and ++(90:#2) .. ($ #1 - (#2,0) + (0,#3) $) -- ($ #1 - (#2,0) - (0,#3) $) .. controls ++(270:#2) and ++(270:#2) .. ($ #1 + (#2,0) $);
}
\newcommand{\LongAlphaAction}[4]{
\draw[thick, red, mid>] 
($ #1 + (#2,0) $) 
-- 
($ #1 + (#2,0) + (0,#4) $)  
.. controls ++(90:#2) and ++(90:#2) .. 
($ #1 - (#2,0) + (0,#3) $) 
-- 
($ #1 - (#2,0) - (0,#3) $) 
.. controls ++(270:#2) and ++(270:#2) .. 
($ #1 + (#2,0) - (0,#4) $) 
-- 
($ #1 + (#2,0) $);
}
\newcommand{\DoubleAlphaAction}[4]{
\draw[thick, red, mid>] ($ #1 - .5*(#2,0) + (0,#4) $) .. controls ++(90:{.5*#2}) and ++(270:{#2}) .. ($ #1 + (#2,0) + 2*(0,#4) $) .. controls ++(90:#2) and ++(90:#2) .. ($ #1 - (#2,0) + 2*(0,#4) + (0,#3) $) -- ($ #1 - (#2,0) - (0,#3) $) .. controls ++(270:#2) and ++(270:#2) .. ($ #1 + (#2,0) $) .. controls ++(90:{#2}) and ++(270:{.5*#2}) .. ($ #1 - .5*(#2,0) + (0,#4) $);
}
\newcommand{\TripleAlphaAction}[3]{
\draw[thick, red, mid>] 
($ #1 + (#2,0) $) 
.. controls ++(90:{#2}) and ++(270:{.5*#2}) .. 
($ #1 - .5*(#2,0) + (0,#3) + (0,.2) $) 
.. controls ++(90:{.5*#2}) and ++(270:{#2}) .. 
($ #1 + (#2,.7) $) 
.. controls ++(90:#2) and ++(90:#2) .. 
($ #1 - (#2,0) + (0,#3) + (0,.7) $) --  ($ #1 - (#2,0) - (0,#3) - (0,.7) $)
.. controls ++(270:#2) and ++(270:#2) .. 
($ #1 + (#2,0) - (0,.7) $) 
.. controls ++(90:{#2}) and ++(270:{.5*#2}) .. 
($ #1 - .5*(#2,0) - (0,#3) - (0,.2) $) 
.. controls ++(90:{.5*#2}) and ++(270:{#2}) .. 
($ #1 + (#2,0) $) ;
}
\newcommand{\CircleMorphism}[2]{
\filldraw[fill=#2, thick] #1 circle (.1cm);
}
\newcommand{\RectangleMorphism}[2]{
\filldraw[fill=#2, thick] ($ #1 + (-.1,-.1) $) rectangle ($ #1 + (.1,.1) $);
}
\newcommand{\RedRectangleMorphism}[2]{
\filldraw[fill=#2, thick, draw=red] ($ #1 + (-.1,-.1) $) rectangle ($ #1 + (.1,.1) $);
}
\newcommand{\GreenRectangleMorphism}[2]{
\filldraw[fill=#2, thick, draw=DarkGreen] ($ #1 + (-.1,-.1) $) rectangle ($ #1 + (.1,.1) $);
}
\newcommand{\BlueRectangleMorphism}[2]{
\filldraw[fill=#2, thick, draw=blue] ($ #1 + (-.1,-.1) $) rectangle ($ #1 + (.1,.1) $);
}
\newcommand{\DeferProof}[1]{
\hfill$\tikzmath{
\node[draw,inner sep=1pt, rectangle] (qed) at (0,0) {\normalfont \scriptsize \S\ref{#1}};
}$
}
\newcommand{\gColor}{white}
\newcommand{\hColor}{green!60}
\newcommand{\kColor}{blue!60}
\newcommand{\BlueGray}{blue!30}
\tikzstyle{BackAlphaSheet}=[fill=red!15!white]
\tikzstyle{FrontAlphaSheet}=[fill=red!40!white]
\tikzstyle{MixedAlphaSheet}=[fill=red!40!black!10!white]
\tikzstyle{gSheet}=[fill=black!10!white]
\title{
A 3-categorical perspective on 
\texorpdfstring{$G$}{G}-crossed braided categories}
\date{\today}
\begin{document}
\author{Corey Jones, David Penneys, and David Reutter}
\maketitle
\begin{abstract}
A braided monoidal category may be considered a $3$-category with one object and one $1$-morphism. In this paper, we show that, more generally, $3$-categories with one object and $1$-morphisms given by elements of a group $G$ correspond to $G$-crossed braided categories, certain mathematical structures which have emerged as important invariants of low-dimensional quantum field theories. 
More precisely, we show that the 4-category of $3$-categories $\mathcal{C}$ equipped with a 3-functor $\mathrm{B}G \to \mathcal{C}$ which is essentially surjective on objects and $1$-morphisms is equivalent to the $2$-category of $G$-crossed braided categories. 
This provides a uniform approach to various constructions of $G$-crossed braided categories.

\end{abstract}


\section{Introduction}

$G$-\emph{crossed braided categories}~\cite[\S8.24]{MR3242743} (see also \S\ref{defn:Definitions}) have emerged as important mathematical structures describing symmetry enriched invariants of quantum field theories in low dimensions. 
In particular, $G$-crossed braided categories arise from
global symmetries in (1+1)D chiral conformal field theory (\cite{MR1923177,math/0110221,MR2183964})
and (2+1)D topological phases of matter \cite{1410.4540}, and as invariants of three-dimensional homotopy quantum field theories \cite{MR2674592,1802.08512}. 
They are a central object of study in the theory of $G$-extensions of fusion categories \cite{MR2677836,MR2587410,MR3555361}.
In this article we describe a higher categorical approach to $G$-crossed braided categories, which unifies these perspectives.

When $G$ is trivial, a $G$-crossed braided category is exactly a braided monoidal category.
It is well-known that braided monoidal categories are `the same as' 3-categories\footnote{
In this article, by a \emph{$3$-category} we mean an algebraic tricategory in the sense of~\cite[Def 4.1]{MR3076451}, and by \emph{functor}, \emph{transformation}, \emph{modification}, and \emph{perturbation}, we mean the corresponding notions of trihomomorphism, tritransformation, trimodification, and perturbation of~\cite[Def 4.10, 4.16, 4.18, 4.21]{MR3076451}.
} 
with exactly one object and one 1-morphism
\cite[Table~21]{MR1355899} and \cite{MR2839900}.
This is an instance of the \emph{Delooping Hypothesis} \cite[\S5.6 and Hypothesis 22]{MR2664619}
relating $k$-fold degenerate $(n+k)$-categories with $k$-fold monoidal $n$-categories. 
However, twice degenerate $3$-categories, 3-functors, transformations,  modifications, and perturbations
form a $4$-category, whereas braided monoidal categories, braided monoidal functors, and monoidal natural transformations
only form a $2$-category.
This discrepancy can be resolved by viewing `2-fold degeneracy' as a \emph{structure} on a $3$-category rather than a \emph{property}, namely the structure of a 1-surjective pointing\footnote{\label{footnote:kSurjective}
A functor between $n$-categories $\cG\to \cC$ is $k$-\emph{surjective} if it is
essentially surjective on objects and on $j$-morphisms for all $1\leq j\leq k$. 
A $k$-\emph{surjective pointing} on an $n$-category $\cC$ is a $k$-surjective functor $* \to \cC$.}
~\cite[Sec 5.6]{MR2664619}. 
Explicitly, the Delooping Hypothesis may then be understood as asserting that 
the 4-category of
3-categories equipped with 1-surjective pointings and pointing-preserving higher morphisms between them
is in fact a 2-category (all hom 2-categories between 2-morphisms are contractible) and is equivalent to the 2-category of braided monoidal categories.

Rather than pointing by something contractible (i.e., a point), we can also study `pointings' by other categories. 
In this article, we show that 1-surjective $G$-\emph{pointed} 3-categories, i.e $3$-categories equipped with a 1-surjective $3$-functor from a group $G$ viewed as a 1-category $\rmB G$ with one object, 
are `the same as' $G$-crossed braided categories.

\begin{thmalpha}
\label{thm:Main}
The 4-category\footnote{\label{footnote:4CategoryIntro}
We never actually work with a 4-category, as all our results can be stated and proven at the level of 2-categories.
See Remark \ref{rem:No4Categories} for more details.} 
$\TriCat_G$ of
1-surjective
$G$-pointed $3$-categories 
and pointing-preserving higher morphisms
(see Definition~\ref{defn:TricatG})
is equivalent to the 2-category $G\CrsBrd$ of $G$-crossed braided categories.
In particular, 
every hom $2$-category between parallel $2$-morphisms in $\TriCat_G$ is contractible. 
\end{thmalpha}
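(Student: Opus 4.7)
The strategy is to construct mutually quasi-inverse $2$-functors between $\TriCat_G$ and $G\CrsBrd$, and then separately verify that all hom $2$-categories in $\TriCat_G$ between parallel pointing-preserving $2$-morphisms are contractible. The central construction is a doubly-iterated looping. Given a 1-surjective $G$-pointed 3-category $(\cC, p\colon \rmB G \to \cC)$, the endomorphism 2-category $\Omega\cC := \End_\cC(p(*))$ is naturally a monoidal 2-category (equivalently a 3-category with one object). The pointing selects distinguished objects $p(g) \in \Omega\cC$ for each $g \in G$; essential surjectivity on 1-morphisms ensures these exhaust the equivalence classes of objects of $\Omega\cC$. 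Set $\cA_g := \End_{\Omega\cC}(p(g))$, which is an ordinary 1-category, and form $\cA := \bigoplus_{g \in G} \cA_g$.

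The $G$-crossed braided structure on $\cA$ is then extracted from the tricategorical coherences in $\cC$ together with the 3-functorial coherences of $p$. Horizontal composition in $\Omega\cC$, combined with the coherence isomorphisms $p(g) \circ p(h) \cong p(gh)$ supplied by $p$, yields the graded tensor product $\cA_g \otimes \cA_h \to \cA_{gh}$. The $G$-action $\gamma_g$ by monoidal autoequivalences arises from conjugation by the invertible 1-morphism $p(g)$, with $\gamma_g(\cA_h) = \End(p(g)p(h)p(g^{-1})) \simeq \cA_{ghg^{-1}}$. The $G$-crossed braiding $c_{x,y}\colon x\otimes y \to \gamma_g(y) \otimes x$ (for $x\in\cA_g$, $y\in\cA_h$) is the 3-categorical Eckmann--Hilton braiding coming from the interchange coherence in $\Omega\cC$. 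The pentagon, hexagon, and $G$-crossed axioms then follow by unwinding the tricategorical coherence axioms of $\cC$.

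For the reverse direction, given a $G$-crossed braided category $\cA$, I would build a semi-strict 3-category $\cB\cA$ with one object, 1-morphisms indexed by $G$, endomorphism 2-category at $g$ built from $\cA_g$, vertical composition given by composition in $\cA$, horizontal composition by the tensor product, and interchange coherences supplied by the $G$-crossed braiding. Well-definedness is most cleanly argued by invoking Gurski's strictification theorem (which applies since we take tricategories in the sense of the cited definition) and working in the Gray-category model, where the axioms to verify reduce precisely to those of a $G$-crossed braided category. The pointing $\rmB G \to \cB\cA$ is evident and manifestly 1-surjective. Functoriality of both constructions on 1- and 2-morphisms is routine, and the equivalences $\Omega\cB\cA \simeq \cA$ and $\cB\Omega\cC \simeq \cC$ are established via standard coherence arguments.

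The main obstacle --- and the content that justifies the footnote that one never needs a genuine 4-category --- is the contractibility of all higher hom 2-categories in $\TriCat_G$. Concretely, for parallel pointing-preserving tritransformations $\eta, \eta'\colon F \To F'$, one must show that the 2-category of pointing-preserving trimodifications $\eta \RRightarrow \eta'$ and perturbations between them is contractible. The key observation is that pointing-preservation constrains such higher data on the images $p(g)$, and essential surjectivity on 1-morphisms propagates this constraint to all of $\cC$; iterating, modifications between pointing-preserving tritransformations are essentially uniquely determined by a choice of coherent modification data on the pointing, and perturbations between these are forced to be identities. This rigidity statement is the technical heart of the argument and is where the bulk of the diagrammatic verification will sit; it simultaneously proves the contractibility claim of the theorem and makes the equivalence with $G\CrsBrd$ an equivalence of $2$-categories rather than a genuinely higher categorical statement.
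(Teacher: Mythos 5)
Your overall strategy---loop a $G$-pointed $3$-category down to a $G$-crossed braided category, deloop to go back, and handle truncation separately---has the right shape, and for the delooping direction you correctly anticipate that one should pass to the Gray-category model. However, there is a concrete error at the first step of your looping construction. You set $\cA_g := \End_{\Omega\cC}(p(g))$, the endomorphism category of the object $p(g)$ in the monoidal $2$-category $\Omega\cC=\End_\cC(p(*))$. The correct graded component is $\Hom_{\Omega\cC}(1,p(g))$, i.e.\ the category $\cC(\id_{p(*)}\to p(g))$ of $2$-morphisms from the identity $1$-morphism to $p(g)$. These are genuinely different: since each $p(g)$ is an invertible object of $\Omega\cC$, one has $\End_{\Omega\cC}(p(g))\simeq\End_{\Omega\cC}(1)$ for every $g$, so with your definition every graded piece collapses (as an $\End(1)$-bimodule) to the trivially graded piece. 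This already fails to reproduce nontrivial $G$-crossed extensions---for $\pi\colon G\to\Mod(\cC)$ the $g$-graded piece must be the invertible module category $\Hom(\cC\to\pi(g))\cong\pi(g)$, not $\End(\pi(g))\cong\cC$---and it derails the rest of your construction: the Eckmann--Hilton interchange between $x\in\End(p(g))$ and $y\in\End(p(h))$ stays inside $\End(p(g)\otimes p(h))$ and does not produce the $G$-crossed braiding $x\otimes y\to F_g(y)\otimes x$ relating distinct graded components. With $\cA_g=\Hom(1,p(g))$ the tensor product is whiskering followed by the compositor $p(g)\otimes p(h)\to p(gh)$, and the braiding is the interchanger sliding $x$ past $y$ followed by conjugation by $p(g)$; this is what actually works, and it is what the paper does.

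Separately, your treatment of the contractibility claim is only a heuristic ("pointing-preservation plus $1$-surjectivity propagates rigidity"), and you rightly identify it as the technical heart; but note that the paper obtains it by a different, more economical decomposition. Rather than building quasi-inverse $2$-functors at the weak level and proving a standalone rigidity lemma, the paper strictifies everything in $\TriCat_G$ dimension by dimension: every object is equivalent to a Gray-monoid whose $0$-cells are literally the elements of $G$, every $1$-morphism to one with trivial compositor and unitor data on the pointing, every $2$-morphism to one with identity modification, and every $3$- and $4$-morphism to an identity. This lands in a strict $2$-category $\TriCat_G^{\st}$, which proves the contractibility statement and reduces the theorem to a strict $2$-equivalence $\TriCat_G^{\st}\to G\CrsBrd^{\st}$, with the last step $G\CrsBrd^{\st}\hookrightarrow G\CrsBrd$ an equivalence by the known strictification of $G$-crossed braided categories. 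If you repair the definition of $\cA_g$, I would recommend adopting this strictify-first order of operations rather than attempting the coherence verifications at the weak level.
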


We prove Theorem \ref{thm:Main} as follows.
First, we show in 
Theorem \ref{thm:tricatpt} and 
Corollary \ref{cor:Contract4CategoryTo2Category}
that $\TriCat_G$ is 2-truncated by showing it is equivalent to the \emph{strict} sub-2-category $\TriCat_G^{\st}$ of \emph{strict} $G$-pointed $3$-categories, 
whose objects are $\Gray$-categories with precisely one object, whose sets of $1$-morphisms is exactly $G$, and composition of $1$-morphisms is the group multiplication.
Then in Theorem \ref{thm:ThmA-2}, we construct a strict 2-equivalence between $\TriCat_G^{\st}$ and the strict 2-category $G\CrsBrd^{\st}$ of strict $G$-crossed braided categories.
Finally, by \cite{MR3671186},
every $G$-crossed braided category is equivalent to a strict one (see Definition \ref{defn:StrictGCrossedBraided} for more details), so that $G\CrsBrd$ is equivalent to its full 2-subcategory $G\CrsBrd^{\st}$. 
In summary, we construct the following zig-zag of strict equivalences, where the hooked arrows denote inclusions of full subcategories.
\begin{equation}
\label{eq:ZigZagOfStrictEquivalences}
\begin{tikzcd}[column sep=4em]
\TriCat_G
\arrow[r,hookleftarrow, "\sim","\text{\scriptsize Thm.~\ref{thm:tricatpt}}"']
&
\TriCat_G^{\st}
\arrow[r,rightarrow, "\sim", "\text{\scriptsize Thm.~\ref{thm:ThmA-2}}"']
&
G\CrsBrd^{\st}
\arrow[r,hookrightarrow, "\sim", "\text{\scriptsize \cite{MR3671186}}"']
&
G\CrsBrd
\end{tikzcd}
\end{equation}

For the trivial group $G=\{e\}$, Theorem~\ref{thm:Main} specializes to the Delooping Hypothesis for twice degenerate $3$-categories (also see~\cite{MR2839900}, which uses so called `iconic natural transformations' rather than pointings). 

\begin{coralpha}
The 4-category$^{ \textup{\ref{footnote:4CategoryIntro}}}$ $\TriCat_{\{e\}}$
of 1-surjective pointed 3-categories 
is equivalent to the 2-category of braided monoidal categories.
\end{coralpha}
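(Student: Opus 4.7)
The Corollary is a direct specialization of Theorem~\ref{thm:Main} at the trivial group $G = \{e\}$. The plan is to check that both sides of the equivalence in Theorem~\ref{thm:Main} specialize, essentially by definition, to the two sides named in the Corollary, and then simply invoke Theorem~\ref{thm:Main}.

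For the left-hand side, I would observe that $\rmB\{e\}$ is the terminal $1$-category, with a single object and a single identity $1$-morphism, so it coincides with the point $*$. A $3$-functor $\rmB\{e\} \to \cC$ is therefore the same datum as a pointing $* \to \cC$, and the condition of $1$-surjectivity---essential surjectivity on objects and on $1$-morphisms---is word-for-word the same in both contexts. The pointing-preserving higher morphisms specialize analogously. Hence $\TriCat_{\{e\}}$ is, by the definition in~\ref{defn:TricatG}, the $4$-category of $1$-surjective pointed $3$-categories appearing in the Corollary.

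For the right-hand side, I would unpack the definition of a $G$-crossed braided category from \S\ref{defn:Definitions} with $G = \{e\}$. The $G$-grading becomes trivial, so the category is ungraded; the $G$-action by monoidal autoequivalences becomes trivial; and the $G$-crossed braiding reduces to an ordinary braiding. Likewise, $1$- and $2$-morphisms of $G$-crossed braided categories reduce in this case to braided monoidal functors and monoidal natural transformations. Thus $\{e\}\CrsBrd$ is the $2$-category of braided monoidal categories.

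Combining these two identifications with Theorem~\ref{thm:Main} yields the Corollary. There is no genuine obstacle: all the substantive work has already been carried out in Theorem~\ref{thm:Main}, and the Corollary is just the classical Delooping Hypothesis for twice degenerate $3$-categories recovered as the $G=\{e\}$ case of the main theorem.
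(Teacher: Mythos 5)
Your proposal is correct and matches the paper's treatment: the Corollary is stated as an immediate specialization of Theorem~\ref{thm:Main} at $G=\{e\}$, with no further argument given, and your two identifications ($\rmB\{e\}=*$ so $1$-surjective $\{e\}$-pointings are $1$-surjective pointings, and $\{e\}$-crossed braided categories are braided monoidal categories) are exactly the content being invoked. The only point worth noting is that an $\{e\}$-crossed braided category still formally carries the trivialized action data $(F_e,\mu^{e,e},\iota)$; this is harmless because in the strict model $G\CrsBrd^{\st}$ (to which $G\CrsBrd$ is equivalent by \cite{MR3671186}) that data is literally the identity, so strict $\{e\}$-crossed braided categories are precisely strict braided monoidal categories.
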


Our main theorem was inspired by, and is closely related to, the following two results:
Passing from a $G$-pointed $3$-category to the associated $G$-crossed braided category generalizes a result of~\cite{MR3936135} which constructs $G$-crossed braided categories from group actions on 2-categories; see Example \ref{ex:Galindo} for more details. A version of the construction of a $G$-pointed $3$-category from a $G$-crossed braided category is discussed in~\cite{MR4033513}, and we use this construction in Section \ref{sec:2FunctorEquivalence} to prove essential surjectivity of the $2$-functor $\TriCat_G^{\st}\to G\CrsBrd^{\st}$. 

\subsection{\texorpdfstring{$G$}{G}-crossed braided categories from $G$-pointed $3$-categories}
\label{sec:WeakGCrossedConstruction}
In the proof of Theorem~\ref{thm:Main}, we construct the equivalence $\TriCat_G  \sim G\CrsBrd$ by passing through appropriate strictifications, resulting in the zig-zag~\eqref{eq:ZigZagOfStrictEquivalences} of strict equivalences. 
For the reader's convenience, we now sketch 
a direct construction of a $G$-crossed braided category (defined in \S\ref{sec:GCrossed} below) from a $G$-pointed $3$-category, without passing through strictifications. 

For a group $G$, we denote by $\rmB G$ the \emph{delooping} of $G$, i.e., $G$ considered as a $1$-category with one object.
Let $\cC$ be a $3$-category equipped with a 
$3$-functor\footnote{Since a $k$-category may be viewed as an $n$-category for $n\geq k$ with only identity $r$-morphism for $n\geq r> k$, it  makes sense to talk about an $n$-functor from a $k$-category to an $n$-category.} 
$\pi: \mathrm{B}G \to \cC$. 

To construct the $G$-crossed braided category, we will make use of the graphical calculus of $\Gray$-categories (outlined in Section~\ref{sec:GraphicalCalculus} below) and hence assume that $\cC$ has been strictified to a $\Gray$-category.\footnote{In fact,~\cite{1903.05777} justifies working with this graphical calculus even in the context of weak $3$-categories.
} 
Unpacking the (weak) $3$-functor into the data $(\pi, \mu^\pi, \iota^\pi, \omega^\pi, \lambda^\pi, \rho^\pi)$ as described in Appendix~\ref{sec:Weak3CategoryCoherences}, the $G$-crossed braided category $\fC$ may be constructed as follows.
Strictifying the situation slightly, we may assume that $\cC$ has only one object, i.e. is a $\Gray$-monoid, a monoid object in $\Gray$ viewed as a monoidal 2-category, 
and that the underlying $2$-functor of $\pi$ is strict (the unitor and compositor data $\pi^1, \pi^2$ of $\pi$ is trivial).

We write $g_\cC := \pi(g) \in \cC$, 
and we define $\fC_g := \cC(1_\cC \to g_\cC)$ for each $g\in G$.
We denote the tensorator $\mu^\pi_{g,h}\in \cC(g_\cC \xz h_\cC \to gh_\cC)$ and unitor $\iota^\pi_*\in \cC(1_\cC \to e_\cC)$ of $\pi$ by trivialent and univalent vertices respectively
$$
\mu^\pi_{g,h}
=
\tikzmath{
\draw (0,0) -- (0,.3) node[above] {$\scriptstyle gh_\cC$};
\draw (-.3,-.3) node[below] {$\scriptstyle g_\cC$} arc (180:0:.3cm) node[below] {$\scriptstyle h_\cC$};
\filldraw (0,0) circle (.05cm);
}
\qquad\qquad
\iota^\pi_*
=
\tikzmath{
\draw (0,0) -- (0,.3) node[above] {$\scriptstyle e_\cC$};
\filldraw (0,0) circle (.05cm);
}\,.
$$
We denote 1-morphisms $a_g \in \cC(1_{\cC} \to g_{\cC})$ by shaded disks as follows:
$$
a_g=
\tikzmath{
    \draw (0,0)  -- (0,.3) node [above] {\scriptsize{$g_\cC$}};
    \filldraw[fill=\gColor, thick] (0,0) circle (.1cm);
}
\qquad
\qquad
b_h=
\tikzmath{
    \draw (0,0)  -- (0,.3) node [above] {\scriptsize{$h_\cC$}};
    \filldraw[fill=\hColor, thick] (0,0) circle (.1cm);
}
\qquad
\qquad
c_k=
\tikzmath{
    \draw (0,0)  -- (0,.3) node [above] {\scriptsize{$k_\cC$}};
    \filldraw[fill=\kColor, thick] (0,0) circle (.1cm);
}\,.
$$
For $g,h\in G$, we define a tensor product
$(a_g, b_h)\mapsto a_g\otimes b_h$
by
\begin{equation}
\label{eq:WeakGCrossedTensor}    
\tikzmath{
\draw (-.4,0) -- (-.4,.4) node [above] {\scriptsize{$g_\cC$}};
\draw (.4,0) -- (.4,.4) node [above] {\scriptsize{$h_\cC$}};
\filldraw[fill=\gColor, thick] (-.4,0) circle (.1cm);
\filldraw[fill=\hColor, thick] (.4,0) circle (.1cm);
\node at (0,.2) {$\times$};
}
\longmapsto
\tikzmath{
    \draw (-.3,-.1) -- (-.3,0) arc (180:0:.3cm) -- (.3,-.3);
    \draw (0,.3) -- (0,.6) node [above] {$\scriptstyle gh_\cC$};
    \filldraw (0,.3) circle (.05cm);
    \filldraw[fill=\gColor, thick] (-.3,-.1) circle (.1cm);
    \filldraw[fill=\hColor, thick] (.3,-.3) circle (.1cm);
}\,,
\end{equation}
and we define the associator
$\otimes_{gh,k} \circ (\otimes_{g,h}\times \fC_k) \Rightarrow \otimes_{g,hk} \circ (\fC_g \times \otimes_{h,k})$ 
by
\begin{equation}
\label{eq:WeakGCrossedAssociator}
\tikzmath{
\draw (-.3,-.5) node [left, yshift=.15cm] {\scriptsize{$g_\cC$}} -- (-.3,-.3) arc (180:0:.3cm)  to node [right,yshift=.05cm]  {\scriptsize{$h_\cC$}}(.3,-.7);
\draw (0,0) node [left,xshift=.1cm, yshift=.2cm] {\scriptsize{${gh}_\cC$}} arc (180:0:.45) -- ++(0,-.8) node [right, yshift=.15cm] {\scriptsize{${k}_\cC$}};
\draw (.45,.45) -- (.45,.7) node [above] {\scriptsize{${ghk}_\cC$}};
\filldraw (0,0) circle (.05cm);
\filldraw (.45,.45) circle (.05cm);
\filldraw[fill=\gColor, thick] (-.3,-.5) circle (.1cm);
\filldraw[fill=\hColor, thick] (0.3,-.7) circle (.1cm);
\filldraw[fill=\kColor, thick] (0.9,-.9) circle (.1cm);
\draw[rounded corners= 5pt, dashed, thick] (-.6, -.2) rectangle (1.2, .6);
}
\overset{\omega^\pi_{g,h,k}}{\Longrightarrow}
\tikzmath{
\draw (.3,-.7) -- (.3,-.3) arc (180:0:.3cm)  to (.9,-.9);
\draw (.6,0) arc (0:180:.45) -- ++(0,-.3);
\draw (.15,.45) -- (.15,.7) node [above] {\scriptsize{${ghk}_\cC$}};
\filldraw (.6,0) circle (.05cm);
\filldraw (.15,.45) circle (.05cm);
\filldraw[fill=\gColor, thick] (-.3,-.3) circle (.1cm);
\filldraw[fill=\hColor, thick] (0.3,-.7) circle (.1cm);
\filldraw[fill=\kColor, thick] (0.9,-.9) circle (.1cm);
\draw[rounded corners= 5pt, dashed, thick] (-.7, -.5) rectangle (1.2, .2);
}
\overset{\phi^{-1}}{\Longrightarrow}
\tikzmath{
\draw (.3,-.7) node [left, yshift=.15cm] {\scriptsize{$h_\cC$}} arc (180:0:.3cm)  to node [right,yshift=.05cm]  {\scriptsize{$k_\cC$}}(.9,-.9);
\draw (.6,-.4) -- (.6,0) node [right,xshift=-.1cm, yshift=.2cm] {\scriptsize{${hk}_\cC$}} arc (0:180:.45) -- ++ (0,0) node [left, yshift=.25cm] {\scriptsize{${g}_\cC$}};
\draw (.15,.45) -- (.15,.7) node [above] {\scriptsize{${ghk}_\cC$}};
\filldraw (.6,-.4) circle (.05cm);
\filldraw (.15,.45) circle (.05cm);
\filldraw[fill=\gColor, thick] (-.3,0) circle (.1cm);
\filldraw[fill=\hColor, thick] (0.3,-.7) circle (.1cm);
\filldraw[fill=\kColor, thick] (0.9,-.9) circle (.1cm);
}\,,
\end{equation}
where $\phi$ denotes the interchanger in $\cC$ (see \S\ref{sec:3Categories} below).
We define the unit object $1_\fC := \iota_*^\pi\in \fC_e$. Unitors 
$\otimes_{e,g} \circ (i \times -) \Rightarrow \id_{\fC_g}$
and
$\otimes_{g,e} \circ (- \times i) \Rightarrow \id_{\fC_g}$
are given respectively by
$$
\tikzmath{
    \draw (-.3,-.1) node [left, yshift = .2cm] {\scriptsize{$e_\cC$}} -- (-.3,0) arc (180:0:.3cm) node [right] {\scriptsize{$g_\cC$}}  -- (.3,-.5);
    \draw (0,.3) -- (0,.7) node [above] {\scriptsize{${g}_\cC$}};
    \draw[rounded corners=5pt, thick, dashed] (-.9,-.3) rectangle (.9,.5);
    \filldraw (0,.3) circle (.05cm);
    \filldraw (-.3,-.1) circle (.05cm);
    \filldraw[fill=\gColor, thick] (.3,-.5) circle (.1cm);
}
\,\,\,
\overset{\lambda^\pi_g}{\Longrightarrow}
\tikzmath{
\draw (-.4,0) -- (-.4,1) node [above] {\scriptsize{$g_\cC$}};
\filldraw[fill=\gColor, thick] (-.4,0) circle (.1cm);
}
\qquad\qquad\text{and}\qquad\qquad
\tikzmath[xscale=-1]{
    \draw (-.3,-.5) node [right, yshift = .2cm] {\scriptsize{$e_\cC$}} -- (-.3,0) arc (180:0:.3cm) node [left, yshift = .2cm] {\scriptsize{$g_\cC$}}  -- (.3,-.3);
    \draw (0,.3) -- (0,.7) node [above] {\scriptsize{${g}_\cC$}};
    \draw[rounded corners=5pt, thick, dashed] (-.9,-.8) rectangle (.9,0); 
    \filldraw (0,.3) circle (.05cm);
    \filldraw (-.3,-.5) circle (.05cm);
    \filldraw[fill=\gColor, thick] (.3,-.3) circle (.1cm);
}
\overset{\phi}{\Longrightarrow}\,\,\,
\tikzmath[xscale=-1]{
    \draw (-.3,-.1) node [right, yshift = .2cm] {\scriptsize{$e_\cC$}} -- (-.3,0) arc (180:0:.3cm) node [left] {\scriptsize{$g_\cC$}}  -- (.3,-.5);
    \draw (0,.3) -- (0,.7) node [above] {\scriptsize{${g}_\cC$}};
    \draw[rounded corners=5pt, thick, dashed] (-.9,-.3) rectangle (.9,.5);
    \filldraw (0,.3) circle (.05cm);
    \filldraw (-.3,-.1) circle (.05cm);
    \filldraw[fill=\gColor, thick] (.3,-.5) circle (.1cm);
}
\,\,\,
\overset{\rho^\pi_g}{\Longrightarrow}
\tikzmath{
\draw (0,0) -- (0,1) node [above] {\scriptsize{$g_\cC$}};
\filldraw[fill=\gColor, thick] (0,0) circle (.1cm);
}\,.
$$
We define a $G$-action $F_g:\fC_h \to \fC_{ghg^{-1}}$ by
\begin{equation}
\label{eq:WeakGCrossedAction}    
F_g\left(
\tikzmath{
\draw (0,0) -- (0,.4) node [above] {\scriptsize{$h_\cC$}};
\filldraw[fill=\hColor, thick] (0,0) circle (.1cm);
}
\right)
:=
\tikzmath{
\draw (0,0) -- (0,.3) node [right] {\scriptsize{$h_\cC$}} arc (0:180:.3cm) node [left] {\scriptsize{$g_\cC$}} -- (-.6,.15) arc (-180:0:.6cm) node [right] {\scriptsize{${g^{-1}}_\cC$}} -- (.6,.6) arc (0:180:.45) node [left, yshift = .2cm] {\scriptsize{${gh}_\cC$}};
\draw (.15,1.05) -- (.15,1.4) node [above] {\scriptsize{$ghg^{-1}_\cC$}};
\draw (0,-.75) -- node [right] {\scriptsize{${e}_\cC$}} (0,-.45);
\filldraw[fill=\hColor, thick] (0,0) circle (.1cm);
\filldraw (-.3,.6) circle (.05cm);
\filldraw (0,-.45) circle (.05cm);
\filldraw (0,-.75) circle (.05cm);
\filldraw (.15,1.05) circle (.05cm);
}\,.
\end{equation}
The functors $F_g$ come equipped with natural isomorphisms
$\psi^g: 
\otimes_{ghg^{-1},gkg^{-1}}\circ (F_g \times F_g)
\Rightarrow 
F_g\circ \otimes_{h,k}
$ 
built from the coherence isomorphisms $\omega^\pi, \lambda^\pi, \rho^\pi$ and interchangers between two black nodes and between a black node and a shaded disk.
For example, $\psi^g_{b_h, c_k}$ is given by
\begin{equation}
\label{eq:WeakGCrossedTensorator}
\tikzmath[scale=.5]{
\draw (0,0) -- (0,.3) arc (0:180:.3cm) -- (-.6,.15) arc (-180:0:.6cm) -- (.6,.6) arc (0:180:.45) ;
\draw (0,-1) -- (0,-.45);
\filldraw[fill=\hColor, thick] (0,0) circle (.2cm);
\filldraw (-.3,.6) circle (.1cm);
\filldraw (0,-.45) circle (.1cm);
\filldraw (0,-1) circle (.1cm);
\filldraw (.15,1.05) circle (.1cm);
\begin{scope}[yshift=-2.5cm, xshift=1.5cm]
\draw (0,0) -- (0,.3) arc (0:180:.3cm) -- (-.6,.15) arc (-180:0:.6cm) -- (.6,.6) arc (0:180:.45) ;
\draw (0,-1) -- (0,-.45);
\filldraw[fill=\kColor, thick] (0,0) circle (.2cm);
\filldraw (-.3,.6) circle (.1cm);
\filldraw (0,-.45) circle (.1cm);
\filldraw (0,-1) circle (.1cm);
\filldraw (.15,1.05) circle (.1cm);
\end{scope}
\draw (.15,1.05) arc (180:0:.75cm) -- (1.65,-1.5);
\filldraw (.9,1.8) circle (.1cm);
\draw (.9,1.8) -- (.9,2.2) ;
}
\quad
\overset{\phi}{\Rightarrow}
\quad
\tikzmath[scale=.5]{
\coordinate (a) at (0,0);
\coordinate (b) at (1.8,-.4);
\draw ($ (a) + (0,.2) $) arc (0:180:.3cm) -- (-.6,-1.6);
\draw (-.3,.5) arc (180:0:.45cm) -- (.6,-1.6) arc (0:-180:.6cm);
\draw (b) -- ($ (b)+ (0,1.6) $) arc (0:180:.3cm) -- (1.2,-.6);
\draw ($ (b)+ (-.3,1.9) $) arc (180:0:.45cm) -- (2.4,-.6) arc (0:-180:.6cm);
\draw (0,-2.6) -- (0,-2.2);
\draw (1.8,-1.2) -- (1.8,-1.6);
\draw (.15,.95) -- (.15,1.95) arc (180:0:.9cm);
\draw (1.05, 2.85) -- (1.05, 3.15);
\filldraw (1.05,2.85) circle (.1cm);
\filldraw (-.3,.5) circle (.1cm);
\filldraw (.15,.95) circle (.1cm);
\filldraw (1.5,1.5) circle (.1cm);
\filldraw (1.95,1.95) circle (.1cm);
\filldraw (0,-2.6) circle (.1cm);
\filldraw (0,-2.2) circle (.1cm);
\filldraw (1.8,-1.6) circle (.1cm);
\filldraw (1.8,-1.2) circle (.1cm);
\filldraw[fill=\hColor, thick] (a) circle (.2cm);
\filldraw[fill=\kColor, thick] (b) circle (.2cm);
}
\quad
\overset{\omega,\lambda,\rho}{\Rightarrow}
\quad
\tikzmath[scale=.5]{
\coordinate (a) at (0,0);
\coordinate (b) at (1.8,-.4);
\draw (a) -- ($ (a) + (0,1.2) $) arc (0:180:.3cm) -- (-.6,-1.6) arc (-180:0:.6cm);
\draw (b) -- ($ (b)+ (0,2) $) arc (180:0:.3cm) -- (2.4,-.6) arc (0:-180:.6cm);
\draw (.6,-1.6) -- ($ (a) + (.6,.3) $) arc (180:0:.3cm) -- (1.2,-.6);
\draw (.9,.6) -- (.9,1);
\draw (-.3,1.5) -- (-.3,1.9) arc (180:0:1.2cm);
\draw (0,-2.6) -- (0,-2.2);
\draw (1.8,-1.2) -- (1.8,-1.6);
\draw (.9,3.1) -- (.9, 3.5);
\filldraw (.9,.6) circle (.1cm);
\filldraw (.9,1) circle (.1cm);
\filldraw (-.3,1.5) circle (.1cm);
\filldraw (2.1,1.9) circle (.1cm);
\filldraw (.9,3.1) circle (.1cm);
\filldraw (0,-2.6) circle (.1cm);
\filldraw (0,-2.2) circle (.1cm);
\filldraw (1.8,-1.6) circle (.1cm);
\filldraw (1.8,-1.2) circle (.1cm);
\filldraw[fill=\hColor, thick] (a) circle (.2cm);
\filldraw[fill=\kColor, thick] (b) circle (.2cm);
}
\quad
\overset{\phi}{\Rightarrow}
\quad
\tikzmath[scale=.5]{
\coordinate (a) at (0,1);
\coordinate (b) at (1.8,.6);
\draw (a) -- ($ (a) + (0,.2) $) arc (0:180:.3cm) -- (-.6,-1.6) arc (-180:0:.6cm);
\draw (b) -- ($ (b)+ (0,1) $) arc (180:0:.3cm) -- (2.4,-.6) arc (0:-180:.6cm);
\draw (.6,-1.6) -- (.6,-.6) arc (180:0:.3cm) -- (1.2,-.6);
\draw (.9,-.3) -- (.9,.1);
\draw (-.3,1.5) -- (-.3,1.9) arc (180:0:1.2cm);
\draw (0,-2.6) -- (0,-2.2);
\draw (1.8,-1.2) -- (1.8,-1.6);
\draw (.9,3.1) -- (.9, 3.5);
\filldraw (.9,.1) circle (.1cm);
\filldraw (.9,-.3) circle (.1cm);
\filldraw (-.3,1.5) circle (.1cm);
\filldraw (2.1,1.9) circle (.1cm);
\filldraw (.9,3.1) circle (.1cm);
\filldraw (0,-2.6) circle (.1cm);
\filldraw (0,-2.2) circle (.1cm);
\filldraw (1.8,-1.6) circle (.1cm);
\filldraw (1.8,-1.2) circle (.1cm);
\filldraw[fill=\hColor, thick] (a) circle (.2cm);
\filldraw[fill=\kColor, thick] (b) circle (.2cm);
}
\quad
\overset{\omega,\lambda,\rho}{\Rightarrow}
\quad
\tikzmath[scale=.5]{
\coordinate (a) at (0,0);
\coordinate (b) at (.8,-.4);
\draw (a) -- ($ (a) + (0,.2) $) arc (0:180:.3cm) -- (-.6,-.4) arc (-180:0:1cm);
\draw (b) -- ($ (b)+ (0,1) $) arc (180:0:.3cm) -- (1.4,-.4);
\draw (-.3,.5) -- (-.3,.9) arc (180:0:.7cm);
\draw (.4,-1.4) -- (.4,-1.8);
\draw (.4,1.6) -- (.4,2);
\filldraw (-.3,.5) circle (.1cm);
\filldraw (1.1,.9) circle (.1cm);
\filldraw (.4,1.6) circle (.1cm);
\filldraw (.4,-1.4) circle (.1cm);
\filldraw (.4,-1.8) circle (.1cm);
\filldraw[fill=\hColor, thick] (a) circle (.2cm);
\filldraw[fill=\kColor, thick] (b) circle (.2cm);
}
\quad
\overset{\omega,\lambda,\rho}{\Rightarrow}
\quad
\tikzmath{
\draw (0,.3) arc (0:180:.3cm) -- (-.6,.0) arc (-180:0:.6cm) -- (.6,.6) arc (0:180:.45) ;
\draw (-.2,0) -- (-.2,.1) arc (180:0:.2cm) -- (.2,-.1);
\draw (.15,1.05) -- (.15,1.4) ;
\draw (0,-.9) -- (0,-.6);
\filldraw[fill=\hColor, thick] (-.2,0) circle (.1cm);
\filldraw[fill=\kColor, thick] (.2,-.2) circle (.1cm);
\filldraw (-.3,.6) circle (.05cm);
\filldraw (0,-.6) circle (.05cm);
\filldraw (0,-.9) circle (.05cm);
\filldraw (0,.3) circle (.05cm);
\filldraw (.15,1.05) circle (.05cm);
}\,.
\end{equation}
The tensorator $\mu_{g,h} : F_g \circ F_h \Rightarrow F_{gh}$ 
and the unit map
$\iota_h: \id_{\fC_h}\to F_e|_{\fC_h}$
are defined similarly.
The $G$-crossed braiding natural isomorphisms $\beta^{g,h}:a_g \otimes b_h \to F_g(b_h)\otimes a_g$ are also defined similarly using the interchanger isomorphism $\phi$ of $\cC$:
\begin{equation}
\label{eq:WeakGCrossedBraiding}
\tikzmath{
    \draw (-.3,-.1) node [left, yshift = .2cm] {\scriptsize{$g_\cC$}} -- (-.3,0) arc (180:0:.3cm) node [right] {\scriptsize{$h_\cC$}}  -- (.3,-.3);
    \draw (0,.3) -- (0,.7);
    \filldraw (0,.3) circle (.05cm);
    \filldraw[fill=\gColor, thick] (-.3,-.1) circle (.1cm);
    \filldraw[fill=\hColor, thick] (.3,-.3) circle (.1cm);
}
\overset{\phi}{\Rightarrow}
\tikzmath[xscale=-1]{
    \draw (-.3,-.1) node [right, yshift = .2cm] {\scriptsize{$h_\cC$}} -- (-.3,0) arc (180:0:.3cm) node [left] {\scriptsize{$g_\cC$}}  -- (.3,-.3);
    \draw (0,.3) -- (0,.7) node [above] {\scriptsize{${gh}_\cC$}};
    \filldraw (0,.3) circle (.05cm);
    \filldraw[fill=\hColor, thick] (-.3,-.1) circle (.1cm);
    \filldraw[fill=\gColor, thick] (.3,-.3) circle (.1cm);
}
\overset{\cong}{\Rightarrow}
\tikzmath{
\draw (0,0) -- (0,.3) node [right] {\scriptsize{$h_\cC$}} arc (0:180:.3cm) node [left] {\scriptsize{$g_\cC$}} -- (-.6,.15) arc (-180:0:.6cm) node [right] {\scriptsize{${g^{-1}_\cC}$}} -- (.6,.6) arc (0:180:.45) node [left, yshift = .2cm] {\scriptsize{$gh_\cC$}};
\draw (.15,1.05) node [left, yshift = .5cm] {\scriptsize{$ghg^{-1}_\cC$}} arc (180:0:.75cm) 
node [right] {\scriptsize{${g}_\cC$}}
-- (1.65,-1.1);
\draw (.9,1.8) -- (.9,2) node [above] {\scriptsize{${gh}_\cC$}};
\draw (0,-.75) -- node [right] {\scriptsize{${e}_\cC$}} (0,-.45);
\filldraw[fill=\hColor, thick] (0,0) circle (.1cm);
\filldraw (-.3,.6) circle (.05cm);
\filldraw (0,-.45) circle (.05cm);
\filldraw (0,-.75) circle (.05cm);
\filldraw (.15,1.05) circle (.05cm);
\filldraw (.9,1.8) circle (.05cm);
\filldraw[fill=\gColor, thick] (1.65,-1.1) circle (.1cm);
}\,.
\end{equation}

\subsection{The delooping hypothesis}
\label{sec:DeloopingHypothesis}
Recall $(n+k)$-categories form an $(n+k+1)$-category, whereas $k$-fold monoidal $n$-categories only form an $(n+1)$-category.
Thus one should not think of `$k$-fold degeneracy' as a \emph{property} of an $(n+k)$-category $\cC$ but rather as additional \emph{structure}, namely the structure of a \emph{$(k-1)$-surjective pointing},
and require all morphisms and higher morphisms between these categories to preserve pointings~\cite[Sec 5.6]{MR2664619}. 
Explicitly, the Delooping Hypothesis may then be understood as asserting that $(k-1)$-connected pointed $(n+k)$-categories and pointing-preserving higher morphisms form an $(n+1)$-category which is equivalent to the $(n+1)$-category of $k$-fold monoidal $n$-categories. 
This is an instance of a more general higher categorical principle. 

\begin{defn}
\label{defn:kSurjective}
We call a functor $F:\cC\to\cD$ of $n$-categories  $k$-\emph{surjective}\footnote{This notion of $k$-surjectivity does not coincide with the one used in~\cite{MR2664619}, where a functor is said to be $k$-surjective if it is essentially surjective on $k$-morphisms.} if it is essentially surjective on objects and parallel $r$-morphisms for $r\leq k$. By convention, any functor is $(-1)$-surjective.
\end{defn}

\begin{hypothesis}
\label{conj:OverCatTruncates}
Let $\cG$ be a $n$-category.
The full $(n+1)$-subcategory of the under-$(n+1)$-category $n\Cat_{\cG/}$ on the $k$-surjective functors out of $\cG$ is an $(n-k)$-category, i.e., all hom $(k+1)$-categories between parallel $(n-k)$-morphisms are contractible.
\end{hypothesis}

\begin{rem}
We expect hypothesis~\ref{conj:OverCatTruncates} is a direct consequence of more common assumptions on the $(n+1)$-category of $n$-categories: 
Namely, following~\cite[\S5.5]{MR2664619}, we say that a functor $F:\cC \to \cD$ between $n$-categories is \emph{$j$-monic}\footnote{
Many of the definitions and statements in this remark are extensively developed in the setting of $(\infty,1)$-categories \cite[Sec 5]{MR2522659}, and in particular in the $(n+1,1)$-category of $n$-categories. 
However, we are not able to use these $(\infty,1)$-notions and statements for our purposes, as we are working in 
the $(n+1,n+1)$-category of $n$-categories. 
For example, our $j$-monomorphisms do not coincide with the $(\infty,1)$-categorical $j$-monomorphisms (in this context also known as \emph{$(j-1)$-truncated morphisms}) as the latter only fulfill essential surjectivity conditions with respect to invertible cells.
}
if it is essentially surjective on $k$-morphisms for all $k>j$ (including $k=n+1$, where we interpret surjectivity to mean faithfulness on $n$-morphisms).
By~\cite[Hypothesis 17]{MR2664619}, the (weak) fibers of such a $j$-monic functor are expected to be (possibly poset-enriched) $(j-1)$-categories.\footnote{A functor between $n$-groupoids is $j$-monic if and only if its fibers are $(j-1)$-categories. 
For functors between general $n$-categories, $j$-monomorphisms have truncated fibers but the converse is not necessarily true.} 
Dually, a functor $G:\cC \to \cD$ between $n$-categories is \emph{$j$-epic} if for every $n$-category $\cE$, the pre-composition functor $ n\Cat(p, \cE): n\Cat(\cD\to \cE) \to n\Cat(\cC \to \cE)$ is $(n-1-j)$-monic. 
In particular, any $j$-surjective functor in the sense of Definition~\ref{defn:kSurjective} is $j$-epic.\footnote{\label{footnote:StrongEpi}
More generally, $j$-surjective functors are expected to correspond to `strong $j$-epimorphisms'~\cite[Hypothesis 21]{MR2664619}, that is, functors that have the left lifting property with respect to $j$-monomorphisms. 
Since the $(n+1)$-category of $n$-categories has finite limits, any such `strong $j$-epimorphism' is in particular a $j$-epimorphism; see~\cite[Sec 5.5]{MR2664619}.}
Combining these observations, given a $k$-surjective functor $p:\cG \to \cC$ and an $n$-category $\cE$, the pre-composition functor $n\Cat(p,\cE): n\Cat(\cC \to \cE) \to n\Cat(\cG \to \cE)$ is $(n-1-k)$-monic. 
Hence, its fiber at a $g\in n\Cat(\cG\to \cE)$ is a (possibly poset-enriched) $(n-1-k)$-category.
But the fiber of the pre-composition functor $n\Cat(p, \cE)$ at $g:\cG \to \cE$ is the hom-category $n\Cat_{\cG/}(g,p)$ of the under-category of $n$-categories under $\cG$. 
Therefore, the full subcategory of $n\Cat_{\cG/}$ on the $k$-surjective functors is a (possibly poset-enriched) $(n-k)$-category.
Moreover, essential ($0$-)surjectivity of $p:\cG\to \cC$ (also cf. Footnote~\ref{footnote:StrongEpi}) should imply that the pre-composition functor $n\Cat(p,\cE)
:n\Cat(\cC\to \cE)\to n\Cat(\cG\to \cE)$ is $n$-conservative\footnote{An $n$-functor $F:\cC\to \cD$ is \emph{$n$-conservative} if it reflects $n$-isomorphisms, i.e., for every $n$-morphism $\alpha: f\To g$ in $\cD$ for which $F(\alpha)$ is an isomorphism, it follows that $\alpha$ is an isomorphism.}, and hence that the enriching posets of the (weak) fibers of $n\Cat(p,\cE)$ are honest sets.
\end{rem}

\begin{ex}[$k$-fold monoidal $n$-categories]
In the case where $\cG=*$ is the terminal category, Hypothesis~\ref{conj:OverCatTruncates} asserts that $(k-1)$-surjective ($k$-fold degenerate) pointed $(n+k)$-categories form an $(n+1)$-category. 
The \emph{Delooping Hypothesis} \cite[\S5.6 and Hypothesis 22]{MR2664619} identifies this $(n+1)$-category with the $(n+1)$-category of $k$-fold monoidal $n$-categories.
\end{ex}

An important consequence of Hypothesis~\ref{conj:OverCatTruncates} is that it allows us to study certain higher-categorical objects, namely $k$-surjective functors and their higher transformations, using lower-categorical machinery.
In many instances, there exist concrete descriptions of the resulting low-dimensional categories which have been developed and appear in mathematics and physics independently. 

As a concrete example, 
it is easier to describe and work with
the $1$-category of monoids and monoid homomorphisms 
than its unpointed variant, the $2$-category of categories, functors, and natural transformations.
Similarly, it is easier to describe and work with the 2-category of monoidal categories, monoidal functors, and monoidal natural transformations than its upointed variant, the 3-category of 2-categories, 2-functors, 2-transformations, and 2-modifications. Similar examples are shown in Figure~\ref{fig:PointingTable}.

\begin{figure}[!ht]
\begin{tabular}{|c|c|c|c|c|c|}
\hline
&
$\cG$
&
$n+k=0$
&
$n+k=1$
&
$n+k=2$
&
$n+k=3$
\\\hline
$k=-1$
&
$\emptyset$
&
0-category
&
1-category
&
2-category
&
3-category
\\\hline
$k=0$
&
$*$
&
point
&
monoid
&
monoidal category
&
monoidal 2-category
\\\hline
$k=1$
&
$\rmB G$
&
&
normal subgroup of $G$
&
$G$-crossed monoid
&
$G$-crossed braided category
\\\hline
\end{tabular}

\caption{
\label{fig:PointingTable}
$(n+k)$-categories equipped with $k$-surjective functors from $\cG$ form an $n$-category}
\end{figure}

In this article, we focus on 1-surjective functors from the delooping $\rmB G$ of $G$, i.e., the 1-category with one object and endomorphisms $G$.

\begin{hypothesis}[$G$-crossed delooping] 
\label{hyp:Gcrossed}
For $n\geq -1$,
the $(n+3)$-category 
of 1-surjective functors from $\rmB G$ into $(n+2)$-categories
is equivalent to 
the $(n+1)$-category of $G$-crossed braided $n$-categories.
\end{hypothesis}
While we do not present a general definition of 
$G$-crossed braided $n$-category here,
this hypothesis is a desideratum for any such definition (such as for example via M\"uller and Woike's `little bundles' operad~\cite{1901.04850}). Observe that the $k=1$ version of the delooping hypothesis follows as a consequence for the trivial group $G=\{e\}$. 

In the following, as a warm-up to our main theorem, we discuss the low-dimensional versions ($n=0$ and $n=-1$) of Hypothesis~\ref{hyp:Gcrossed} appearing in the last row of Figure~\ref{fig:PointingTable}.

\begin{ex}[$G$-crossed monoids as $G$-pointed $2$-categories]
\label{ex:GCrossedBraided0Categories}
The 3-category $\BiCat_G$ of $2$-categories $\cC$ equipped with $1$-surjective $2$-functors $\rmB G \to \cC$ is equivalent to the $1$-category of \emph{$G$-crossed monoids}, or `$G$-crossed braided $0$-categories', defined below. Explicitly, the $2$-category $\BiCat_G$ has
\begin{itemize}
    \item 
    objects $(\cC, \pi^\cC)$ where $\cC$ is a 2-category and $\pi^\cC:\rmB G \to \cC$ is a 1-surjective 2-functor,
    \item
    1-morphisms $(A,\alpha): (\cC, \pi^\cC) \to (\cD, \pi^\cD)$ where  
    $A: \cC \to \cD$ is a 2-functor
    and $\alpha : \pi^\cD \Rightarrow A\circ \pi^\cC$ is an invertible 2-transformation,
    \item
    2-morphisms $(\eta,m) : (A, \alpha) \Rightarrow (B, \beta)$ where 
    $\eta: A \Rightarrow B$ is a 2-transformation 
\begin{equation*}
\begin{tikzcd}
\rmB G\arrow[rr, "\pi^\cC"]
\arrow[ddrr, swap, "\pi^\cD"]
&&
\cC\arrow[dd,"B"]
\arrow[dl,Leftarrow,shorten <= 1em, shorten >= 1em, "\beta"]
\\
&\mbox{}&
\\
&&
\cD.
\end{tikzcd}
\qquad
\overset{m}{\Rrightarrow}
\qquad
\begin{tikzcd}
\rmB G\arrow[rr, "\pi^\cC"]
\arrow[ddrr, swap, "\pi^\cD"]
&&
\cC\arrow[dd,swap,"A"]
\arrow[dl,Leftarrow,shorten <= 1em, shorten >= 1em, "\alpha"]
\arrow[dd,bend left = 90, "B"]
\\
&\mbox{}&
\arrow[r,Rightarrow,shorten >= 1em, "\!\!\!\!\eta"]
&\mbox{}
\\
&&
\cD
\end{tikzcd}
\end{equation*}
\item
3-morphisms $p: (\eta,m) \Rrightarrow (\zeta, n)$ where $p: \eta \Rrightarrow \zeta$ is a 2-modification such that
\begin{equation*}
\begin{tikzcd}
\pi^\cD\arrow[rr,Rightarrow, "\alpha"]
\arrow[ddrr, Rightarrow,swap, "\beta"]
&&
A\circ \pi^\cC\arrow[dd,Rightarrow, "\zeta\circ \pi^\cC"]
\\
&
\mbox{}
\arrow[ur,triplecd,shorten <= 1em, shorten >= 1em, "n"]
&
\\
&&
B\circ \pi^\cC.
\end{tikzcd}
\qquad
=
\qquad
\begin{tikzcd}
\pi^\cD\arrow[rr,Rightarrow, "\alpha"]
\arrow[ddrr, Rightarrow,swap, "\beta"]
&&
A\circ \pi^\cC\arrow[dd,swap,Rightarrow, "\eta\circ \pi^\cC"]
\arrow[dd,Rightarrow, bend left = 90, "\zeta \circ \pi^\cC"]
\\
&
\arrow[ur,triplecd,shorten <= 1em, shorten >= 1em, "m"]
\mbox{}
&
\arrow[r,triplecd,shorten >=1em, "\!\!\!\!p \circ \pi^\cC"]
&\mbox{}
\\
&&
B\circ \pi^\cC
\end{tikzcd}
\end{equation*}
\end{itemize}

On the other hand, a natural decategorification of a $G$-crossed braided monoidal category is a $G$-graded monoid
$M=\amalg_{g\in G} M_g$  
together with
a group homomorphism $\pi^M: G \to \Aut(M)$ such that the following axioms are satisfied:
\begin{itemize}
    \item 
    $\pi^M_g(m_h) \in M_{ghg^{-1}}$
    for all $g\in G$ and $m_h\in M_h$, and
    \item
    $m_g \cdot n_h= \pi^M_{g}(n_h) \cdot m_g$
    for all $m\in M_g$ and $n_h \in M_h$.
\end{itemize}
We call such a pair $(M, \pi^M)$ a \emph{$G$-crossed monoid}, or a `$G$-crossed braided $0$-category'.
Morphisms $(M,\pi^M) \to (N, \pi^N)$ are $G$-graded monoid homomorphisms that intertwine the $G$-actions.

To see that $\BiCat_G$ is equivalent to the category of $G$-crossed monoids,
we mirror our proof of Theorem~\ref{thm:Main}.
One first shows that $\BiCat_G$ is
equivalent to the 1-category $\BiCat_G^{\st}$ with
\begin{itemize}
    \item 
    objects strict monoidal categories $\cC$ whose set of objects is $\{g_{\cC}\}_{g\in G}$ with $1_\cC= e_{\cC}$ and tensor product given by the group multiplication, and
    \item
    morphisms $A: \cC \to \cD$ are strict monoidal functors such that $A(g_\cC)=g_\cD$ for all $g\in G$.
\end{itemize}

The equivalence from $\BiCat_G^{\st}$ to $G$-crossed monoids is given by taking hom from $1_\cC$.
We set
$M_g:=\cC(1_\cC \to g_\cC)$,
and the multiplication on $M:= \amalg_{g\in G} M_g$ is $\xz$ in $\cC$.
The $G$-action $\pi^M: G \to \Aut(M)$ is given by conjugation: 
$$
\pi^M_g(m_h) := 
\id_{g_\cC} \xz m_h \xz \id_{g_\cC^{-1}} 
\in
M_{ghg^{-1}}
=
\cC(1_\cC \to ghg^{-1}_\cC).
$$
One then verifies the $G$-crossed braiding axiom by a $G$-graded version of Eckmann-Hilton.
A 1-morphism $A\in \BiCat_G^{\st}( \cC \to \cD)$ yields a $G$-graded monoid homomorphism by restricting to $M_g= \cC(1_\cC \to g_\cC)$.
This monoid homomorphism is compatible with the $G$-actions by strictness of $A$.
Finally, one verifies this construction is an equivalence of categories.
\end{ex}

\begin{ex}[Normal subgroups as $G$-pointed $1$-categories]
\label{ex:GCrossedBraided(-1)Categories}
The 2-category $\Cat_G$ of $1$-categories $\cC$ equipped with $1$-surjective functors $\rmB G \to \cC$ is equivalent to the set of normal subgroups of $G$ (which we may think of as the `$0$-category of $G$-crossed braided $(-1)$-categories', see below). Explicitly, $\Cat_G$ has
\begin{itemize}
    \item 
    objects $(\cC, \pi^\cC)$ where $\cC$ is a category and $\pi^\cC:\rmB G \to \cC$ is a 1-surjective functor,
    \item
    1-morphisms $(A,\alpha): (\cC, \pi^\cC) \to (\cD, \pi^\cD)$ where  
    $A: \cC \to \cD$ is a functor
    and $\alpha : \pi^\cD \Rightarrow A\circ \pi^\cC$ is a natural isomorphism, and
    \item
    2-morphisms $\eta : (A, \alpha) \Rightarrow (B, \beta)$ are natural transformations $\eta: A \Rightarrow B$ such that
\begin{equation*}
\label{eq:2MorphismCriterionForGCrossed(-1)Cat}
\begin{tikzcd}
\rmB G\arrow[rr, "\pi^\cC"]
\arrow[ddrr, swap, "\pi^\cD"]
&&
\cC\arrow[dd,"B"]
\arrow[dl,Leftarrow,shorten <= 1em, shorten >= 1em, "\beta"]
\\
&\mbox{}&
\\
&&
\cD.
\end{tikzcd}
\qquad
=
\qquad
\begin{tikzcd}
\rmB G\arrow[rr, "\pi^\cC"]
\arrow[ddrr, swap, "\pi^\cD"]
&&
\cC\arrow[dd,swap,"A"]
\arrow[dl,Leftarrow,shorten <= 1em, shorten >= 1em, "\alpha"]
\arrow[dd,bend left = 90, "B"]
\\
&\mbox{}&
\arrow[r,Rightarrow,shorten >= 1em, "\!\!\!\!\eta"]
&\mbox{}
\\
&&
\cD
\end{tikzcd}
\end{equation*}
\end{itemize}
It is straightforward to verify that this 2-category is equivalent to a set. Moreover, up to equivalence, the data of a $1$-surjective functor $\pi^\cC: \rmB G \to \cC$ is equivalent to the data of a normal subgroup of $G$, obtained as the kernel of the surjective group homomorphism $G\to \Aut_{\cC}(\pi^\cC(*))$. Hence, the $2$-category $\Cat_G$ is equivalent to the set of normal subgroups of $G$.

Employing `categorical negative thinking' as in \cite[\S2]{MR2664619}, we may in fact think of a normal subgroup of $G$ as a `$G$-crossed braided $(-1)$-category', and hence of the set of normal subgroups as `the $0$-category of $G$-crossed braided $(-1)$-category' as it appears in Hypothesis~\ref{hyp:Gcrossed}:
Since a $(-1)$-category may be thought of as a truth value~\cite[\S2]{MR2664619}, one may define a $G$-graded $(-1)$-category to be a monoid homomorphism $G\to \mathsf{Bool}=(\{T,F\}, \wedge)$, where $\mathsf{Bool}$ denotes the Booleans which one may think of as the commutative monoid (symmetric monoidal 0-category) of $(-1)$-categories. 
Indeed, by taking the kernel, such `$G$-graded $(-1)$-categories' correspond to normal subgroups of $G$. 
This correspondence may be seen a further decategorified analogue of our construction.
Indeed, given $(\cC,\pi^\cC)$, the corresponding monoid homomorphism $G\to \mathsf{Bool}$ is exactly given by $g\mapsto \cC(\id_{\pi^\cC(*)} \to \pi^\cC(g))$,
where the latter is the Boolean which is true if $\id_{\pi^\cC(*)}=\pi(g)$ and false otherwise.
\end{ex}



\begin{ex}[Shaded monoidal algebras]
\label{ex:ShadedMonoidalAlgebras}
In \cite[Defn.~3.18 and ~3.26]{1810.06076}, the authors define the notion of a shaded monoidal algebra, which is an operadic approach to 2-categories with a chosen set of objects and a set of generating 1-morphisms.
The statements of \cite[Thm.~3.21 and Cor.~3.23]{1810.06076} can be understood as examples of 
Hypothesis \ref{conj:OverCatTruncates}.
Indeed, equipping a 2-category with a set of objects and a generating set of 1-morphisms is equivalent to pointing by the free category on a graph $\Gamma$.
Hence the 3-category of 1-surjective $\Gamma$-pointed 2-categories is equivalent to the 1-category of $\Gamma$-shaded monoidal algebras.
\end{ex}

\begin{rem}[Planar algebras]
Expanding on Example~\ref{ex:ShadedMonoidalAlgebras}, Jones' planar algebras \cite{math.QA/9909027} reflect the philosophy of Hypothesis \ref{conj:OverCatTruncates}. 
A 2-shaded planar algebra may be understood as a pivotal 2-category $\cC$ with precisely two objects `unshaded' and `shaded' together with a generating dualizable $1$-morphism between them with loop modulus $\delta$.
This choice of generating $1$-morphism may be understood as equipping $\cC$ with a $1$-surjective pivotal functor $\pi^\cC:\cT\cL\cJ(\delta) \to \cC$,
where $\cT\cL\cJ(\delta)$ is the free spherical 2-category on a dualizble 1-morphism with quantum dimension $\delta$.
By (a pivotal version of) Hypothesis \ref{conj:OverCatTruncates}, such pivotal 2-categories and functors preserving this `TLJ-pointing' actually form a $1$-category, which is equivalent to the 1-category of 2-shaded planar algebras and planar algebra homomorphisms. 

Another instance of this philosophy appears in~\cite{1607.06041} which shows the 2-category $\ModTens_*$ of \emph{pointed module tensor categories} over a braided pivotal category $\cV$ (defined in \cite[\S3.1]{1607.06041}) is $1$-truncated~\cite[Lem.~3.6]{1607.06041}.
By \cite[Thm.~A]{1607.06041}, $\ModTens_*$ is equivalent to the 1-category of anchored planar algebras in $\cV$.
\end{rem}

\subsection{Examples}
Our main theorem asserts an equivalence between $1$-surjective functors $\rmB G \to \cC$ and $G$-crossed braided categories. 
Starting with an arbitrary $3$-functor $\pi:\rmB G\to \cC$ we may factor it through a $1$-surjective functor $\pi':\rmB G\to \cC'$ (where $\cC'$ is the subcategory of $\cC$ with objects and $1$-morphisms in the essential image of $\pi$, and all $2$- and $3$-morphisms between them) and apply our construction from \S\ref{sec:WeakGCrossedConstruction} to obtain a $G$-crossed braided category. Most examples discussed below arise in this way.

\begin{ex}[Delooped braided monoidal categories]
Let $\cB$ be a braided monoidal category, and denote the corresponding 3-category with one object and one $1$-morphism by $\rmB^2\cB$. 
Observe that every weak $3$-functor $\rmB G \to \rmB^2\cB$ is automatically $1$-surjective. 
Such $3$-functors $\rmB G\to \rmB^2 \cB$ factor through 
the maximal sub-3-groupoid $\rmB^2 \cB^\times$ of $\rmB^2 \cB$, delooping the braided monoidal groupoid $\cB^\times$ of invertible objects and morphisms in $\cB$. 
Assuming the homotopy hypothesis for algebraic trigroupoids,\footnote{
The article \cite{MR2842929} constructs a model category structure on the category of $\Gray$-categories and $\Gray$-functors which restrict to a model structure on $\Gray$-groupoids. 
Even though it is shown that the corresponding homotopy category of $\Gray$-groupoids localized at the $\Gray$-equivalences is equivalent to the category of homotopy 3-types and homotopy classes of continuous maps, to the best of our knowledge, it has not yet been shown that this category is also equivalent to the 1-category whose objects are $\Gray$-groupoids (or algebraic trigroupoids) and whose morphisms are natural equivalence classes of weak 3-functors.
}
such functors correspond to homotopy classes of maps from the classifying space $\rmB G$ to the $1$-connected homotopy 3-type $\rmB^2 \cB^\times$.

Such $1$-connected $3$-types are completely determined by the abelian group $ \pi_2(\rmB^2\cB^\times) = \mathrm{Inv}(\cB)$ of isomorphism classes of invertible objects of $\cB$, 
the abelian group $\pi_3(\rmB^2\cB^\times) = \mathrm{Aut}(1_{\cB})$ of automorphisms of the tensor unit $1_{\cB}$ of $\cB$, and the $k$--invariant $q\in H^4(K(\mathrm{Inv}(\cB),2), \Aut(1_\cB))\cong \mathrm{Quad}(\mathrm{Inv}(\cB), \Aut(1_{\cB}))$, the group of quadratic functions on $\mathrm{Inv}(\cB)$ valued in $\Aut(1_\cB)$ \cite{MR65163}, which is explicitly given by the quadratic function $$
q:\mathrm{Inv}(\cB) \to \mathrm{Aut}(1_{\cB})
\qquad
\text{given by}
\qquad
q(b) := \ev_b \circ \beta^\cB_{b,b^{-1}}\circ \coev_b.
$$ 
Here, $\ev_b: b^{-1} \otimes b \to I$ and $\coev_b:I \to   b\otimes b^{-1}$ denote a choice of pairing between $b$ and $b^{-1}$ and $\beta_{b,b^{-1}}: b\otimes b^{-1} \to b^{-1}\otimes b$ denotes the braiding.

By~\cite{MR0045115,MR65163}, 
the group 
$\mathrm{Quad}(\mathrm{Inv}(\cB),\Aut(1_\cB))$ 
is further isomorphic to the group $H^3_{ab}(\mathrm{Inv}(\cB),\Aut(1_\cB))$ of \emph{abelian $3$-cocycles}  $(\alpha, \beta)$, consisting of pairs of a group $3$-cocycle $\alpha: \mathrm{Inv}(\cB)^3\to \mathrm{Aut}(1_{\cB})$ and a certain `$\alpha$-twisted-bilinear' form $\beta: \mathrm{Inv}(\cB)^2 \to \mathrm{Aut}(1_{\cB})$. 
We refer the reader to \cite[(1.2) and \S11]{2005.05243} for more details.

By the obstruction theory for homotopy classes of maps into such Postnikov towers (cf.~\cite[Theorem 1.3]{MR2677836}), it follows that, up to natural isomorphism, $3$-functors $\rmB G \to \rmB^2 \cB$ correspond to the following data: 
\begin{itemize}
\item 
a $2$-cocycle $\mu \in Z^2(G, \Inv(\cB))$, up to coboundary;
\item 
a $3$-cochain $\omega \in C^3(G,\Aut(1_{\cB}))$ such that $d\omega = (\alpha, \beta)_*\mu$, where $(\alpha,\beta)_*\mu \in Z^4(G, \Aut(1_{\cB}))$ is the $4$-cocycle in the image of the Pontryagin-Whitehead morphism\footnote{
Under the isomorphism 
$H^4(K(\mathrm{Inv}(\cB),2), \mathrm{Aut}(1_{\cB}))\cong H^3_{ab}(\mathrm{Inv}(\cB),\Aut(1_\cB))$, 
the abelian $3$-cocycle $(\alpha, \beta)$ corresponds to a map $(\alpha, \beta)_*:K(\mathrm{Inv}(\cB),2) \to K(\Aut(1_{\cB}), 4)$.
From this perspective, the Pontryagin-Whitehead morphism $(\alpha, \beta)_*:H^2(G,\mathrm{Inv}(\cB)) \to H^4(G, \Aut_\cB(1_\cB))$ is simply given by postcomposing a class $\omega: \rmB G \to K(\mathrm{Inv}(\cB),2)$ with $(\alpha, \beta)_*$.
} $(\alpha, \beta)_*:H^2(G,\mathrm{Inv}(\cB)) \to H^4(G, \Aut_\cB(1_\cB)) $ for the $k$-invariant $(\alpha, \beta) \in H^4(K(\mathrm{Inv}(\cB),2), \mathrm{Aut}(1_{\cB}))$.
An explicit expression for the 4-cocycle $(\alpha, \beta)_*\mu\in Z^4(G, \Aut(1_{\cB}))$ is given by
    \begin{equation}
    \label{eq:o_4}
    \begin{split}
    (\alpha,\beta)_*\mu(g,h,k,\ell)
    =
    \beta_{\mu_{k,\ell}, \mu_{g,h}}
    &
    \alpha^{-1}_{\mu_{ghk,\ell}, \mu_{gh,k},\mu_{g,h}}
    \alpha_{\mu_{ghk,\ell}, \mu_{g,hk},\mu_{h,k}}
    \alpha^{-1}_{\mu_{g,hk\ell}, \mu_{hk,\ell},\mu_{h,k}}
    \\&
    \alpha_{\mu_{g,hk\ell}, \mu_{h,k\ell},\mu_{k,\ell}}
    \alpha^{-1}_{\mu_{gh,k\ell}, \mu_{g,h},\mu_{k,\ell}}
    \alpha_{\mu_{gh,k\ell}, \mu_{k,\ell},\mu_{g,h}}
    \end{split}
    \end{equation}
    This explicit expression can also be obtained, up to conventions, by taking the trivial $G$-action in
    \cite[Eq.~(5.6)]{MR3555361}.
\end{itemize}

In fact, after strictifying $\cB$ to a strict braided monoidal category, so that $\rmB^2 \cB$ is a $\Gray$-category, 
this cohomological data may be directly read off from the components of the weak 3-functor $\pi: \rmB G \to \rmB^2 \cB$, using notation from Appendix \ref{sec:Weak3CategoryCoherences}, as follows:
We may assume the underlying 2-functor of $\pi$ is strictly unital, i.e.,
$\pi^1_g = \id_{1_\cB}$ for all $g\in G$.
By \ref{Functor:A.unital}, this implies $\pi^2_{g,h}=\id_{1_\cB}$ for all $g,h\in G$.
We write $\mu_{g,h}:= \mu^\pi_{g,h}\in \operatorname{Inv}(\cB)$.
By \ref{Functor:mu.unital}, $\mu^\pi_{\id_g, \id_h}= \id\in \End(\mu_{g,h})$.
Using the isomorphism
$ \omega^\pi_{g,h,k} :\mu_{gh,k}\xz \mu_{g,h} \to \mu_{g,hk}\xz \mu_{h,k},$ $\mu$ descends to a 2-cocycle in $Z^2(G, \operatorname{Inv}(\cB))$. To translate $\omega^\pi$ into a $3$-cochain in $C^3(G, \Aut_\cB(1_\cB))$, we let $\cC$ be a skeletalization of $\cB^\times$. 
In $\cC$, we may identify all automorphism spaces of $\cC$ with $\Aut(1_\cB)$, and hence recover the associator $\alpha$ in $\cC$ as an element of $Z^3(\Inv(\cC), \Aut(1_\cB))$, and descend the isomorphisms $\omega^\pi_{g,h,k} :\mu_{gh,k}\xz \mu_{g,h} \to \mu_{g,hk}\xz \mu_{h,k}$ to a 3-cochain $\omega$ in $C^3(G, \Aut_\cB(1_\cB))$. 
Unpacking\footnote{Unpacking 
\ref{Functor:PentagonCoherence}
in $\cC$
introduces six additional associator terms, one for every vertex of the hexagon commutative diagram.
As these terms correspond to the two different ways to associate each of the vertex 1-cells in \ref{Functor:PentagonCoherence}, the associators alternate $\alpha$ and $\alpha^{-1}$ around the diagram.
The resulting 12 sided commutative diagram exactly reproduces, up to conventions, a simplification of
\cite[Fig.~1]{MR3555361} where the $G$-action is trivial.
Five of these 12 terms give $d\omega$, while the other 7 terms give \eqref{eq:o_4}.
Since the diagram commutes, we have $d\omega = (\alpha, \beta)_*\mu$ as desired.
}
\ref{Functor:PentagonCoherence} leads to $d\omega = (\alpha, \beta)_*\mu$.

We can now explicitly describe the $G$-crossed braided category resulting from our construction from this cohomological data by interpreting the diagrams 
\eqref{eq:WeakGCrossedTensor}, 
\eqref{eq:WeakGCrossedAssociator}, 
\eqref{eq:WeakGCrossedAction},
\eqref{eq:WeakGCrossedTensorator},
\eqref{eq:WeakGCrossedBraiding}.
\begin{itemize}
\item 
All $g$-graded components are $\cB$,
\item 
the monoidal structure is given by interpreting \eqref{eq:WeakGCrossedTensor}:
$a_g \otimes b_h := \mu_{g,h} \otimes a_g \otimes b_h$,
with associator given by interpreting \eqref{eq:WeakGCrossedAssociator}:
$$
\mu_{gh,k} \otimes \mu_{g,h} \otimes a_g \otimes b_h \otimes c_k
\xrightarrow{\omega_{g,h,k}^\pi\otimes \id}
\mu_{g,hk} \otimes \mu_{h,k} \otimes a_g \otimes b_h \otimes c_k
\xrightarrow{\id\otimes \beta^{-1}_{a_g,\mu_{h,k}}\otimes \id}
\mu_{g,hk} \otimes a_g \otimes \mu_{h,k} \otimes b_h \otimes c_k.
$$
\item
the $G$-action is given by interpreting \eqref{eq:WeakGCrossedAction}:
$F_g(b_h):= \mu_{gh, g^{-1}} \otimes \mu_{g,h} \otimes b_h \otimes \mu_{g,g^{-1}}^{-1}$,
with tensorator $\psi^g$ given by interpreting \eqref{eq:WeakGCrossedTensorator}.
\item
the $G$-crossed braiding is given by interpreting \eqref{eq:WeakGCrossedBraiding}.
\end{itemize}

One can view the resulting $G$-crossed extension as a twisting of the trivial extension by a $2$-cocycle \cite[Pf.~of~Thm.~1.3]{MR2677836}.
When $\cB$ is fusion, this is a $G$-crossed  \emph{zesting} of the trivial $G$-crossed extension $\cB\boxtimes \mathrm{Vec}(G)$ of $\cB$ \cite{2005.05544}.
\end{ex}

\begin{ex}[Generalized relative center construction]
\label{ex:Galindo}
The article~\cite{MR3936135} shows that every (weak) $G$-action on a $2$-category may be strictified to a strict $G$-action on a strict $2$-category, encoded by a group homomorphism $\pi: G \to \Aut^{\st}(\cB)$, where $\Aut^{\st}(\cB)$ is the group of strict 2-equivalences of $\cB$ which admit strict inverses.
From such a strict $G$-action, the authors then construct a $G$-crossed braided monoidal category
$Z_G(\cB)$ whose $g$-graded component is the category of pseudonatural transformations and modifications $\mathsf{PseudoNat}(\id_\cB \Rightarrow \pi(g)).
$ 
Since $\pi(e) = \id_\cB$, the trivial graded component is the Drinfeld center $Z(\cB)$.
This construction generalizes the construction of the relative center $Z_\cC(\cD)$ of a $G$-extension $\cD$ of a fusion category $\cC$;
by \cite{MR2587410}, $Z_\cC(\cD)$ is a $G$-crossed braided fusion category whose trivial graded component is $Z(\cC)$. 

Our construction of a $G$-crossed braided monoidal category from a $G$-pointed $3$-category may be understood as a generalization of~\cite{MR3936135} from $G$-actions on $2$-categories, encoded by $3$-functors $\rmB G\to 2\Cat$ from $\rmB G$ into the $3$-category of $2$-categories, to arbitrary $3$-functors $\rmB G \to \cC$. 
In particular, we show in Section~\ref{sec:Strictifying1Morphisms} that we may strictify a $1$-surjective weak $3$-functor $\rmB G \to \cC$ to a $\Gray$-functor $\rmB G \to \cC'$ into a $\Gray$-category $\cC'$ equivalent to $\cC$, and construct a $G$-crossed braided category from this data.
\end{ex}

\begin{ex}[$G$-crossed extension theory for braided fusion categories]
\label{ex:ENO}
Let $\cC$ be a braided fusion category, and consider the monoidal 2-category $\mathsf{Mod}(\cC)$ of finite semisimple module categories~\cite{MR2678824,1812.11933}. 
Given a monoidal $2$-functor $\pi:G\rightarrow  \mathsf{Mod}(\cC)$, our construction produces the $G$-crossed braided fusion category
$$
\bigoplus_{g\in G}
\Hom(\cC_\cC \to \pi(g)_\cC)
\cong
\bigoplus_{g\in G} \pi(g)
$$
which is a $G$-crossed braided extension of the $e$-graded piece
$\operatorname{End}_{\mathrm{Mod}(\cC)}(\cC_{\cC})\cong\cC$. 
This $G$-crossed braided category is equivalent to the $G$-crossed extension constructed in~\cite{MR2677836} (which moreover gives an alternate proof that faithful $G$-crossed extensions of braided fusion categories are in fact classified by monoidal 2-functors $G \to \mathsf{Mod}(\cC)$).
\end{ex}

\begin{ex}[Permutation crossed extensions]
\label{ex:PermutationCrossedExtension}
Let $\cC$ be a symmetric monoidal 3-category, and let $A$ be an object of $\cC$. 
Then there exists a monoidal $2$-functor $\pi: S_{n}\rightarrow \End(A^{\boxtimes n})$, where $\boxtimes$ denotes the symmetric monoidal product in $\cC$. 
Our construction produces a $S_n$-crossed braided category whose trivially graded piece is 
$\End(\id_{A^{\boxtimes n}})$.
For example, if $\cA$ is an object in the 3-category of fusion categories \cite{1312.7188,MR3650080,MR3590516}, there is an equivalence 
$$
\End(\id_{\cA^{\boxtimes n}}) 
\cong 
Z(\cA^{\boxtimes n}) 
\cong 
Z(\cA)^{\boxtimes n} 
,
$$ 
where $Z(\cA)$ is the Drinfeld center of $\cA$, and the resulting $S_n$-crossed braided category is what is known as a  \emph{permutation crossed extension} of $Z(\cA)^{\boxtimes n}$.
More generally, the article~\cite{MR3959559} shows that such permutation crossed extensions of $\cC^{\boxtimes n}$ exist for any modular tensor category.
\end{ex}

\begin{ex}[Conformal nets]
\label{ex:ConformalNets}
Consider the symmetric monoidal 3-category\footnote{The notion of tricategory used in~\cite{1206.4284,MR3773743}, namely an internal bicategory in $\Cat$, is expected, but not proven to be equivalent to the notion of algebraic tricategory~\cite{MR3076451} used in the present article.
} of coordinate free conformal nets $\mathsf{CN}$ defined in~\cite{1206.4284,MR3439097,MR3656522,MR3927541,MR3773743}.
A $3$-functor $\rmB G \to \mathsf{CN}$ amounts to a conformal net $\cA \in \mathsf{CN}$ together with a generalized action of $G$ on the net $\cA$ by invertible topological defects. 
Applied to such a $3$-functor, our construction produces a $G$-crossed braided category whose trivial graded component is the braided category $\End_{\mathsf{CN}}(1_{\cA}) = \Rep(\cA)$ of (super-selection) sectors~\cite[Sec 1.B]{MR3439097} of $\cA$. 
We expect this generalizes a construction of M\"uger \cite{MR2183964}, which produces a $G$-crossed braided category from the action of global symmetries on a \emph{coordinatized} conformal net.
However, it is difficult to compare these two $G$-crossed braided categories, since it is not obvious how to construct a symmetric monoidal 3-category of coordinatized conformal nets.
\end{ex}

\begin{ex}[Topological phases]
\label{ex:TopologicalPhases}
The collection of (2+1)D gapped topological phases is expected to form a 3-category \cite{MR3978827,1905.09566}. 
Given a global, onsite symmetry, there is an associated $G$-crossed braided category of twist defects \cite{1410.4540}. 
Our construction can be understood as a direct generalization of this heuristic.
Indeed, our pictures and arguments can be viewed as a more mathematically precise version of the arguments and structure given in the physical context (e.g., see \cite[Fig.~7]{1410.4540}).
\end{ex}

\begin{ex}[Homotopy quantum field theory]
\label{ex:HomotopyQFT}
\emph{Homotopy quantum field theories} are topological field theories on bordisms equipped with a map to a fixed target space. 
If this target space is the classifying space $\rmB G$ of a finite group $G$, such field theories are also known as \emph{$G$-equivariant field theories}. 
Following the cobordism hypothesis \cite{MR1355899,MR2555928}, such a fully extended (framed) $3$-dimensional $G$-equivariant topological field theory valued in a fully dualizable symmetric monoidal $3$-category corresponds to a $3$-functor $\rmB G \to \cC$ (i.e. a fully dualizable object $A$ in $\cC$ equipped with an `internal $G$-action', given by a monoidal 2-functor $X:G\to \End_{\cC}(A)$). 
It therefore follows from Theorem~\ref{thm:Main} that to any such field theory, there is an associated $G$-crossed braided category.

In particular, if $\mathsf{Fus}$ is the $3$-category of fusion categories introduced in~\cite{1312.7188}, we expect the $G$-crossed braided category constructed via Theorem~\ref{thm:Main} from a fully extended $G$-equivariant three-dimensional field theory valued in $\mathsf{Fus}$ to coincide with the $G$-crossed braided category constructed in~\cite{1802.08512} by evaluating the field theory on ($G$-structured) circles. 
In particular, if $G$ is trivial, this recovers the construction of the Drinfeld center of a fusion category $\cA$ as ${\mathsf{FusCat}}({}_{\cA}\cA_{\cA} \Rightarrow {}_{\cA} \cA_{\cA})$. 
\end{ex}

\subsection{Outline}
Section~\ref{sec:3Categories} contains basic definitions and a brief introduction to the graphical calculus of $\Gray$-monoids used throughout.

Section \ref{sec:Truncation} proves various strictification results for $1$-surjective pointed $3$-categories (\S\ref{sec:StrictifyingObjects}) and higher morphisms between them (\S\ref{sec:Strictifying1Morphisms}, \S\ref{sec:Strictifying2Morphisms}, \S\ref{sec:Strictifying3Morphisms}, \S\ref{sec:Strictifying4Morphisms}) and shows that $\TriCat_G$ (Definition~\ref{defn:TricatG}) is equivalent to its strict sub-2-category $\TriCat_G^{\st}$ (Corollary~\ref{cor:Contract4CategoryTo2Category}).

Section~\ref{sec:GCrossed} defines the $2$-category $G\CrsBrd$ of $G$-crossed braided categories (\S\ref{defn:Definitions}) and its equivalent full sub-2-category $G\CrsBrd^{\st}$, constructs the strict $2$-functor $\TriCat_G^{\st} \to G\CrsBrd^{\st}$ (\S\ref{sec:FromGBoring3CatsToGCrossedBriadedCats}) and proves that it is an equivalence (\S\ref{sec:2FunctorEquivalence}). 

Section~\ref{sec:inducedprops} discusses how various properties and structures on a $1$-surjective $G$-pointed $3$-category, such as linearity and rigidity, may be translated across the equivalence of Theorem~\ref{thm:Main} to the resulting $G$-crossed braided category.

Appendix~\ref{sec:Weak3CategoryCoherences} unpacks the definitions of (weak) $3$-functors, transformations, modifications and perturbations between $\Gray$-monoids in terms of the graphical calculus. 

Appendices~\ref{sec:CoherenceProofs} and \ref{sec:CoherenceProofsGCrossed} contain most of the coherence proofs from Sections~\ref{sec:Truncation} and~\ref{sec:GCrossed}, respectively.

\paragraph{Acknowledgements}
The authors would like to thank
Shawn Cui,
Nick Gurski,
Niles Johnson,
and
Andr\'e Henriques
for helpful conversations.

This material is based upon work supported by the National Science Foundation under Grant No. DMS-1440140, while the authors DP and DR were in residence at the Mathematical Sciences Research Institute in Berkeley, California, during the Spring 2020 semester.
CJ was supported by NSF DMS grant 1901082.
DP was supported by NSF DMS grant 1654159.
DR is grateful for the financial support and hospitality of the Max Planck Institute for Mathematics where part of this work was carried out.

\section{Background on 3-categories and monoidal 2-categories}
\label{sec:3Categories}

In this article, by a \emph{$3$-category} we mean an algebraic tricategory in the sense of~\cite[Def 4.1]{MR3076451}, and by \emph{functor}, \emph{transformation}, \emph{modification}, and \emph{perturbation}, we mean the corresponding notions of trihomomorphism, tritransformation, trimodification, and perturbation of~\cite[Def 4.10, 4.16, 4.18, 4.21]{MR3076451}.
We include Appendix \ref{sec:Weak3CategoryCoherences} below which unpacks the full definitions of these notions for $\Gray$-monoids using the graphical calculus discussed in \S\ref{sec:GraphicalCalculus} below.
When we consider stricter notions of categories or functors we will always use appropriate adjectives such as `Gray' or `strict'.

\begin{rem}
\label{rem:InvertibleExtendsToAdjointEq}
In this article, we use the term \emph{invertible} as a property, i.e., the existence of a coherent inverse.
Indeed, by \cite{MR2972968}, every invertible 1-morphism (biequivalence) in a 3-category is part of a biadjoint biequivalence, and every invertible 2-morphism is part of an adjoint equivalence. 
Moreover, there is a contractible space of choices for these coherent inverses.
Whenever we need to make such choices, we will refer back to this remark.
\end{rem}

\subsection{\texorpdfstring{$\Gray$}{Gray}-categories and \texorpdfstring{$\Gray$}{Gray}-monoids}
\label{sec:GrayCategories}

In this section, we give a terse definition of $\Gray$-category and $\Gray$-monoid, and a brief discussion on the diagrammatic calculus for $\Gray$-monoids.
We refer the reader to \cite{MR2717302} for a more detailed treatment of $\Gray$-categories and to \cite[\S2.6]{1211.0529} or \cite{1409.2148} for a more detailed treatment of the graphical calculus.

\begin{defn}
\label{defn:GrayCategory}
The symmetric monoidal category $\Gray$ is the 1-category 
of strict 2-categories and strict 2-functors
equipped with the Gray monoidal structure \cite[\S5]{MR2717302}. 
A $\Gray$-category is a category enriched in $\Gray$ in the sense of \cite{MR2177301}.
A $\Gray$-monoid is a monoid object in $\Gray$.
Given a $\Gray$-monoid $\cC$, its \emph{delooping} $\rmB \cC$ is the $\Gray$-category with one object and endomorphisms $\cC$.
\end{defn}

We now unpack the notion of $\Gray$-monoid from
Definition \ref{defn:GrayCategory}.

\begin{nota}
\label{nota:CellsInGrayMonoids}
Given a $\Gray$-monoid $\cC$, we refer to its objects, 1-morphisms, and 2-morphisms as 0-cells, 1-cells, and 2-cells respectively in order to distinguish these basic components of $\cC$ from morphisms in an ambient category in which $\cC$ lives.
\end{nota}

The remarks and warning below are adapted directly from \cite{1812.11933}.

\begin{rem}\label{def:monoidal2cat}
Unpacking Definition \ref{defn:GrayCategory}, a $\Gray$-\emph{monoid} consists of the following data:
\begin{enumerate}[label=(D\arabic*)]
\item 
\label{Gray:2cat}
a strict $2$-category $\cC$, where composition of 1-morphisms is denoted by $\xo$ and composition of 2-morphisms is denoted by $\xt$;
\item 
\label{Gray:Id}
an \emph{identity} 0-cell $\Iz \in \cC$;
\item 
\label{Gray:tensor}
strict left and right \emph{tensor product} $2$-functors $L_a= a\xz -$ and $R_a= -\xz a$ for each object $a\in \cC$:
\begin{align*}
L_a&= a\xz -: \cC \to \cC\\
R_a&= -\xz a: \cC \to \cC, 
\end{align*}
\item
\label{Gray:Interchanger}
an \emph{interchanger} $2$-isomorphism
$\phi_{x,y}$ for each pair of $1$-cells $x:a\to b$ and $g:c\to d$:
\[
\phi_{x,y}: \left( x\xz \Io_{d}\right)\xo \left(\Io_a\xz y\right) \To \left(\Io_b \xz y\right)\xo \left(x \xz \Io_c \right)
\]
\end{enumerate}
subject to the following conditions:
\begin{enumerate}[label=(C\arabic*)]
\item 
left and right tensor product agree: 
for all objects $a,b \in \cC$, 
$L_a b = R_b a = a\xz b$;
\item 
tensor product is strictly unital and associative:
\begin{align*} 
&L_{\Iz} = \mathrm{id}_{\cC} = R_{\Iz} \\
&L_a L_b = L_{a\xz b} \\
&R_bR_a = R_{a\xz b} \\
&L_a R_b = R_b L_a;
\end{align*}
\item the interchanger $\phi$ respects identities, i.e., 
for a 0-cell $A\in \cC$ and a 1-cell $f: C \to D$,
\begin{align*}
\phi_{f, \Io_A} &= \It_{f\xz A} \\
\phi_{\Io_A, f} &= \It_{A\xz f}
\end{align*}
\item
\label{Interchanger:Composition}
the interchanger $\phi$ respects composition, i.e., 
for $x:a\to a'$, $x':a'\to a''$, $y:b\to b'$ and $y':b'\to b''$,
\begin{align*}
\phi_{x'\xo x, y}&= \left(\phi_{x',y} \xo (x\xz \Io_b ) \right)\xt \left( (x' \xz \Io_{b'})\xo \phi_{x,y}\right)\\
\phi_{x,y'\xo y} &= \left((\Io_{a'} \xz y')  \xo \phi_{x,y}\right) \xt \left(\phi_{x,y'}\xo ( \Io_a \xz y )  \right)
\end{align*}
\item 
\label{Interchanger:Natural}
the interchanger $\phi$ is natural, i.e., 
for $1$-cells $x,x':a\to a', y,y':b\to b'$ and $2$-cells $\alpha: x\To x'$, $\beta:y\To y'$,
\begin{align*}
\phi_{x',y} \xt \left((\alpha \xz \Io_{b'}) \xo (\Io_{a}\xz y ) \right) &= \left((\Io_{a'}\xz y) \xo (\alpha \xz \Io_b) \right)\xt \phi_{x,y}\\
\phi_{x,y'}\xt \left( \left(  x\xz \Io_{b'}\right) \xo \left(\Io_a \xz  \beta \right) \right) &=  \left( \left( \Io_{a'} \xz \beta \right) \xo \left( x \xz \Io_b \right) \right)\xt\phi_{x,y}
\end{align*}
\item 
the interchanger $\phi$ respects tensor product, i.e., 
for $x:a \to a'$, $y:b\to b'$ and $z:c\to c'$,
\begin{align*}
\phi_{\Io_a\xz y, z} &= \Io_a \xz \phi_{y,z} \\
\phi_{x\xz \Io_b, z} &= \phi_{x, \Io_b\xz z} \\
\phi_{x,y\xz \Io_c} &= \phi_{x,y} \xz \Io_c
\end{align*}
\end{enumerate}
A $\Gray$-monoid is called \emph{linear} 
if the underlying 2-category is linear 
and 
for all objects $a$ the functors $a \xz -$ and $- \xz a$ are linear.
\end{rem}

\begin{warn}[Horizontal composition of 1-morphisms] \label{notation:nudging}
We warn the reader that
the tensor product in a $\Gray$-monoid does \emph{not} provide a unique definition of the tensor product of two 1-cells.
Given 
$x:a\to b$ and $y:c\to d$,
we define 
\begin{equation}
\label{eq:Nudging}
x\xz y
:=
\left(x\xz \Io_d\right)\xo \left(\Io_a\xz y\right);
\end{equation}
this convention is known as \emph{nudging} \cite[\S4.5]{MR1261589}. 
We use a similar nudging convention for the tensor product of 2-cells. 
With this convention, the data of a $\Gray$-monoid $\cC$ as described in Definition~\ref{def:monoidal2cat} gives rise to an (\emph{opcubical} cf~\cite[\S8]{MR3076451}) algebraic tricategory $\rmB \cC$~\cite[Thm.~8.12]{MR3076451}.
\end{warn}

\begin{rem}[Strictification for monoidal 2-categories]
By the strictification for tricategories from \cite{MR1261589} or \cite[Cor.~9.16]{MR3076451},
every (linear) weakly monoidal weak 2-category admits a monoidal 2-equivalence to a (linear) $\Gray$-monoid of the form in Definition~\ref{def:monoidal2cat}.
\end{rem}

\subsection{Graphical calculus for \texorpdfstring{$\Gray$}{Gray}-monoids}
\label{sec:GraphicalCalculus}

$\Gray$-categories admit a graphical calculus of surfaces, lines, and vertices in three-dimensional space.
We refer the reader to \cite[\S2.6]{1211.0529} for a rigorous discussion. 
Here, we will only ever work in a two-dimensional projection of this graphical calculus for $\Gray$-monoids. 
Our exposition below follows \cite{1409.2148}.

The 0-cells of our strict 2-category $\cC$ \ref{Gray:2cat} are denoted by strands in the plane
$$
\tikzmath{
\draw (0,-.4) -- node[left] {$\scriptstyle{a}$} (0,.4);
}
$$
and the identity 0-cell $1_\cC$ \ref{Gray:Id} is denoted by the empty strand.
The 1-cells are denoted by coupons between labelled strands
$$
x: a\to b
\qquad\qquad
\tikzmath{
\draw (0,-.6) node[right] {$\scriptstyle{a}$} -- (0,.6) node[right] {$\scriptstyle{b}$};
\filldraw[very thick, fill=white] (0,0) node {$\scriptstyle{x}$} circle (.25cm);
}
$$
The composition of 1-cells is denoted by vertical stacking of such diagrams.

The strict tensor product $\xz$ is denoted by horizontal juxtaposition.
For example, the tensor product functors $L_a$ and $R_a$ \ref{Gray:tensor} are denoted by placing a strand labelled by $a$ to the left or right respectively.
$$
L_a(x: b \to c) :=
\id_a \xz x=
\tikzmath{
\draw (0,-.6) node[right] {$\scriptstyle{b}$} -- (0,.6) node[right] {$\scriptstyle{c}$};
\draw (-.4,-.6) -- node[left] {$\scriptstyle{a}$} (-.4,.6);
\filldraw[very thick, fill=white] (0,0) node {$\scriptstyle{x}$} circle (.25cm);
}
\qquad\qquad
R_a(x: b \to c) :=
x\xz \id_a=
\tikzmath{
\draw (0,-.6) node[left] {$\scriptstyle{b}$} -- (0,.6) node[left] {$\scriptstyle{c}$};
\draw (.4,-.6) -- node[right] {$\scriptstyle{a}$} (.4,.6);
\filldraw[very thick, fill=white] (0,0) node {$\scriptstyle{x}$} circle (.25cm);
}
$$
Given $x: a\to b$ and $y: c\to d$, we define their tensor product using the nudging convention from Warning \ref{notation:nudging}.
$$
x\xz y
:=
\left(x\xz \Io_d\right)\xo \left(\Io_a\xz y\right)
=
\tikzmath{
\draw (0,-1) node[left] {$\scriptstyle{a}$} -- (0,1) node[left] {$\scriptstyle{b}$};
\filldraw[very thick, fill=white] (0,.4) node {$\scriptstyle{x}$} circle (.25cm);
\draw (.6,-1) node[right] {$\scriptstyle{c}$} -- (.6,1) node[right] {$\scriptstyle{d}$};
\filldraw[very thick, fill=white] (.6,-.4) node {$\scriptstyle{y}$} circle (.25cm);
}
$$
Observe that no two coupons ever share the same vertical height.

The 2-cells are inherently 3-dimensional, 
and can be thought of as `movies' between our 2-dimensional string diagrams.
Rather than drawing 2-cells, we denote them by arrows $\Rightarrow$ between diagrams corresponding to their source and target 1-cells.
For example, the interchanger 
$
\phi_{x,y}
$
from
\ref{Gray:Interchanger} 
is simply denoted by 
$$
\tikzmath{
\draw (0,-1) node[left] {$\scriptstyle{a}$} -- (0,1) node[left] {$\scriptstyle{b}$};
\filldraw[very thick, fill=white] (0,.4) node {$\scriptstyle{x}$} circle (.25cm);
\draw (.6,-1) node[right] {$\scriptstyle{c}$} -- (.6,1) node[right] {$\scriptstyle{d}$};
\filldraw[very thick, fill=white] (.6,-.4) node {$\scriptstyle{y}$} circle (.25cm);
}
\overset{\phi_{x,y}}{\Longrightarrow}
\tikzmath{
\draw (0,-1) node[left] {$\scriptstyle{a}$} -- (0,1) node[left] {$\scriptstyle{b}$};
\filldraw[very thick, fill=white] (0,-.4) node {$\scriptstyle{x}$} circle (.25cm);
\draw (.6,-1) node[right] {$\scriptstyle{c}$} -- (.6,1) node[right] {$\scriptstyle{d}$};
\filldraw[very thick, fill=white] (.6,.4) node {$\scriptstyle{y}$} circle (.25cm);
}\,.
$$

\begin{nota}
\label{nota:DashedBoxForWhiskering}
When working with $\Gray$-monoids, one often needs to whisker 2-cells between 1-cells, and the notation can quickly become cumbersome.
Instead, we use the convention of a dashed box when we apply a 2-cell locally to a 1-cell, and we simply label the whiskered 2-cell by the name of the locally applied 2-cell.
Later on, we will draw commutative diagrams whose vertices are 1-cells.
When we want to apply two 2-cells locally in different places to the same 1-cell, we will use two dashed boxes with different colors, usually \textcolor{red}{red} and \textcolor{blue}{blue}.
When one of these two 2-cells is applied to the entire diagram, we do not use a dashed box, and we only use one dashed box of another color, usually \textcolor{red}{red}.
As an explicit example, the second equation in \ref{Interchanger:Composition} in string diagrams is given by:
$$
\begin{tikzpicture}[baseline= (a).base]\node[scale=1] (a) at (0,0){\begin{tikzcd}
\tikzmath{
\draw (-.6,-1.5) node[left] {$\scriptstyle{a}$} -- (-.6,1.5) node[left] {$\scriptstyle{b}$};
\filldraw[very thick, fill=white] (-.6,.75) node[yshift=-.05cm] {$\scriptstyle{x}$} circle (.25cm);
\draw (0,-1.5) node[right] {$\scriptstyle{c}$} -- (0,1.5) node[right] {$\scriptstyle{d}$};
\filldraw[very thick, fill=white] (0,0) node[yshift=-.05cm] {$\scriptstyle{y}$} circle (.25cm);
\filldraw[very thick, fill=white] (0,-.75) node[yshift=-.05cm] {$\scriptstyle{z}$} circle (.25cm);
\draw[thick, dashed, red, rounded corners=5pt] (-1,-.4) rectangle (.4,1.2);
}
\arrow[dr, red, Rightarrow, "\phi_{x,y}"]
\arrow[rr, Rightarrow, "\phi_{x,y\xo z}"]
&&
\tikzmath{
\draw (-.6,-1.5) node[left] {$\scriptstyle{a}$} -- (-.6,1.5) node[left] {$\scriptstyle{b}$};
\filldraw[very thick, fill=white] (-.6,-.75) node[yshift=-.05cm] {$\scriptstyle{x}$} circle (.25cm);
\draw (0,-1.5) node[right] {$\scriptstyle{c}$} -- (0,1.5) node[right] {$\scriptstyle{d}$};
\filldraw[very thick, fill=white] (0,.75) node[yshift=-.05cm] {$\scriptstyle{y}$} circle (.25cm);
\filldraw[very thick, fill=white] (0,0) node[yshift=-.05cm] {$\scriptstyle{z}$} circle (.25cm);
}
\\
&
\tikzmath{
\draw (-.6,-1.5) node[left] {$\scriptstyle{a}$} -- (-.6,1.5) node[left] {$\scriptstyle{b}$};
\filldraw[very thick, fill=white] (-.6,0) node[yshift=-.05cm] {$\scriptstyle{x}$} circle (.25cm);
\draw (0,-1.5) node[right] {$\scriptstyle{c}$} -- (0,1.5) node[right] {$\scriptstyle{d}$};
\filldraw[very thick, fill=white] (0,.75) node[yshift=-.05cm] {$\scriptstyle{y}$} circle (.25cm);
\filldraw[very thick, fill=white] (0,-.75) node[yshift=-.05cm] {$\scriptstyle{z}$} circle (.25cm);
\draw[thick, dashed, rounded corners=5pt] (-1,.4) rectangle (.4,-1.2);
}
\arrow[ur, Rightarrow, "\phi_{x,z}"]
\end{tikzcd}};\end{tikzpicture}
$$
\end{nota}

For the convenience of the reader, we have included Appendix \ref{sec:Weak3CategoryCoherences} which unpacks the notions of 3-functor, transformation, modification, and perturbation for $\Gray$-monoids using this graphical calculus.


\section{Strictifying \texorpdfstring{$G$}{G}-pointed 3-categories}
\label{sec:Truncation}

Let $G$ be a group.
We recall from \S\ref{sec:WeakGCrossedConstruction} that $\rmB G$ denoted the \emph{delooping} of $G$, i.e., $G$ considered as a 1-category with one object.
As discussed at the beginning of \S\ref{sec:3Categories}, the terms $n$-\emph{category} and $n$-\emph{functor} for $n\leq 3$ will always mean weak $n$-categories and weak $n$-functors.
Observe that since a $k$-category may be viewed as an $n$-category for $n\geq k$ with only identity higher morphisms, we may talk about an $n$-functor from a $k$-category to an $n$-category.
Recall from Remark \ref{rem:InvertibleExtendsToAdjointEq} that we use the adjective \emph{invertible} for (bi)adjoint (bi)equivalences.

\begin{defn}
A $3$-functor $A: \cC\to \cD$ is 1-\emph{surjective} if it is essentially surjective on objects and if for every pair of objects $c_1,c_2$ of $\cC$, the $2$-functors $A_{c_1,c_2}: \cC(c_1\to c_2) \to \cD(A(c_1)\to A(c_2))$ are essentially surjective on objects.
\end{defn}

\begin{defn}
\label{defn:TricatG}
Let $G$ be a group. 
We define the 4-category\footnote{\label{footnote:4Category}
All results in this section can be stated and proved at the level of various $2$-categories of $(k-1)$-morphisms, $k$-morphisms and equivalence classes of $(k+1)$-morphisms of $\TriCat_G$; 
we therefore will not show that $\TriCat_G$ forms a $4$-category --- and in fact will not even choose any definition of $4$-category.
We only use the conceptual idea of a $4$-category as an underlying organizational principle for our results.}
 $\TriCat_G$ of \emph{$G$-pointed 3-categories} to be the full sub-4-category of the under-category $\TriCat_{\rmB G/}$ on the $1$-surjective $3$-functors $\rmB G \to \cC$. 
Explicitly, this $4$-category can be described as follows:
\begin{itemize}
\item 
objects are 3-categories $\cC$ equipped with a 1-surjective 3-functor $\pi^\cC: \rmB G \to \cC$.
\item
1-morphisms $(A,\alpha):(\cC, \pi^\cC) \to (\cD, \pi^\cD)$ are pairs where $A: \cC \to \cD$ is a 3-functor and $\alpha: \pi^\cD\Rightarrow A \circ \pi^\cC $ is an invertible natural transformation;
    
\item
2-morphisms $(\eta, m): (A,\alpha) \Rightarrow (B,\beta)$ are pairs where $\eta: A \Rightarrow B$ is a natural transformation and $m$ is an invertible modification
\begin{equation}
\label{eq:ModificationConditionFor2Morphisms}
\begin{tikzcd}
\rmB G\arrow[rr, "\pi^\cC"]
\arrow[ddrr, swap, "\pi^\cD"]
&&
\cC\arrow[dd,"B"]
\arrow[dl,Leftarrow,shorten <= 1em, shorten >= 1em, "\beta"]
\\
&\mbox{}&
\\
&&
\cD.
\end{tikzcd}
\qquad
\overset{m}{\Rrightarrow}
\qquad
\begin{tikzcd}
\rmB G\arrow[rr, "\pi^\cC"]
\arrow[ddrr, swap, "\pi^\cD"]
&&
\cC\arrow[dd,swap,"A"]
\arrow[dl,Leftarrow,shorten <= 1em, shorten >= 1em, "\alpha"]
\arrow[dd,bend left = 90, "B"]
\\
&\mbox{}&
\arrow[r,Rightarrow,shorten >= 1em, "\!\!\!\!\eta"]
&\mbox{}
\\
&&
\cD
\end{tikzcd}
\,.
\end{equation}

\item
3-morphisms $(p,\rho): (\eta, m) \Rrightarrow (\zeta, n)$ are modifications $p: \eta \Rrightarrow \zeta$  together with an invertible perturbation $\rho$:
\begin{equation}
\begin{tikzcd}
\pi^\cD\arrow[rr,Rightarrow, "\alpha"]
\arrow[ddrr, Rightarrow,swap, "\beta"]
&&
A\circ \pi^\cC\arrow[dd,Rightarrow, "\zeta\circ \pi^\cC"]
\\
&
\mbox{}
\arrow[ur,triplecd,shorten <= 1em, shorten >= 1em, "n"]
&
\\
&&
B\circ \pi^\cC.
\end{tikzcd}
\qquad
\overset{\rho}{\RRightarrow}
\qquad
\begin{tikzcd}
\pi^\cD\arrow[rr,Rightarrow, "\alpha"]
\arrow[ddrr, Rightarrow,swap, "\beta"]
&&
A\circ \pi^\cC\arrow[dd,swap,Rightarrow, "\eta\circ \pi^\cC"]
\arrow[dd,Rightarrow, bend left = 90, "\zeta \circ \pi^\cC"]
\\
&
\arrow[ur,triplecd,shorten <= 1em, shorten >= 1em, "m"]
\mbox{}
&
\arrow[r,triplecd,shorten >=1em, "\!\!\!\!p \circ \pi^\cC"]
&\mbox{}
\\
&&
B\circ \pi^\cC
\end{tikzcd}
\end{equation}

\item
4-morphisms $\xi:(p,\rho) \RRightarrow (q, \delta)$ are perturbations $\xi: p \RRightarrow q$ satisfying
\begin{equation}
\label{eq:4MorphismCriterion}
\begin{tikzcd}
\beta\arrow[rr,triplecd, "m"]
\arrow[ddrr, triplecd,swap, "n"]
&&
(\eta\circ \pi^\cC)* \alpha\arrow[dd,triplecd,"(q\circ \pi^\cC) *\alpha"]
\\
&\mbox{}
\arrow[ur,quadruplecd,shorten <= 1em, "\delta"]
&
\\
&&
(\zeta\circ \pi^\cC)*\alpha.
\end{tikzcd}
\qquad
=
\qquad
\begin{tikzcd}[column sep=1em]
\beta\arrow[rrrr,triplecd, "m"]
\arrow[ddrrrr, swap,triplecd, "n"]
&&&&
(\eta\circ \pi^\cC)*\alpha\arrow[dd,swap,triplecd,"(p\circ \pi^\cC)*\alpha"]
\\
&&\mbox{}
\arrow[urr,quadruplecd,shorten <= 1em, "\rho"]
&&
\arrow[r,quadruplecd, "\!\!\!\!(\xi\circ \pi^\cC)*\alpha"]
&\mbox{}
\\
&&&&
(\zeta\circ \pi^\cC)*\alpha
\end{tikzcd}
\hspace{-1cm}\raisebox{-.16cm}{
$\tikzmath{
\clip (.125,.125) arc (-20:-60:.25cm) -- (-.1,-.15) -- (2.3,-.15) -- (2.3,2.2) -- (-.15,2.2) -- (.125,.125);
\clip (.125,-.125) arc (20:60:.25cm) -- (-.1,.15) -- (-.1,2.2) -- (2.3,2.2) -- (2.3,-.15) -- (.125,-.125);
\draw[triple] (0,2.15)  .. controls ++(0:1cm) and ++(0:1cm) .. node[right] {$\scriptstyle (q\xo \pi^\cC)\xt \alpha$} (0,0);
\draw[line width=.3mm] (.125,.125) arc (-20:-60:.25cm);
\draw[line width=.3mm] (.125,-.125) arc (20:60:.25cm);
}$
}
\end{equation}

\end{itemize}
\end{defn}
\begin{remark}
\label{rem:No4Categories}
As stated, Definition~\ref{defn:TricatG} and Theorem~\ref{thm:tricatpt} below assume the existence of a (weak) $4$-category $\TriCat$ of algebraic tricategories, trifunctors, tritransformations, modifications, and perturbations which has the appropriate homotopy bicategories between parallel $k$-morphisms. Assuming the existence of such a $4$-category $\TriCat$, we may define $\TriCat_G$ as a certain full sub-4-category of the under-category as in Definition~\ref{defn:TricatG}. 
In Theorem~\ref{thm:tricatpt} we show, working a bicategory at a time, that this 4-category $\TriCat_G$ is equivalent to a sub-$4$-category $\TriCat_G^{\pt}$ with only identity $3$- and $4$-morphisms, and is hence equivalent to a bicategory. 
After having established Theorem~\ref{thm:tricatpt}, we will from then on only work with this bicategory $\TriCat_G^{\pt}$.

Unfortunately, to the best of our knowledge, such a $4$-category $\TriCat$ has not yet been constructed in any of the established models of weak $4$-category.
However, none of the results in this article truly depend on the specifics of $4$-categories, and $4$-categories only appear as a convenient conceptual organizing tool. 

The reader uncomfortable with this sort of model-independent argument may unpack the statement of our main Theorem~\ref{thm:Main} to assert the following:
\begin{enumerate}[label=(\arabic*)]
    \item For a pair of parallel `$1$-morphisms' as in Definition~\ref{defn:TricatG}, the bicategory of $2$-morphisms, $3$-morphisms and $4$-morphisms between them is equivalent to a set.
    \item The bicategory of objects, $1$-morphisms and $2$-morphisms up to invertible $3$-morphisms of Definition~\ref{defn:TricatG} is equivalent to the $2$-category of $G$-crossed braided categories.
\end{enumerate}
\end{remark}

\begin{thm}
\label{thm:tricatpt}
The 4-category $\TriCat_G$
is equivalent to the 4-subcategory $\TriCat_G^{\pt}$ where
\begin{itemize}
\item 
objects $(\rmB \cC, \pi^\cC)$ are those objects of $\TriCat_G$ for which the $3$-category is a $\Gray$-category with one object, and hence given by $\rmB \cC$ for some $\Gray$-monoid $\cC$, and for which $\pi^\cC: \rmB G \to \rmB\cC$ is a strictly $1$-bijective $\Gray$-functor.
Equivalently, an object is a $\Gray$-monoid $\cC$ 
whose set of $0$-cells is $\{g_{\cC}:=\pi^\cC(g)\}_{g\in G}$ 
and composition of 0-cells given by group multiplication.
\item
$1$-morphisms $(A,\alpha): (\rmB\cC,\pi^\cC) \to (\rmB\cD, \pi^\cD)$ 
satisfy:
\begin{itemize}
\item
$A(g_\cC) = g_\cD$ for all $g\in G$, 
\item
the adjoint equivalence
$\mu: \xz_\cD \circ (A \times A) \Rightarrow A \circ \xz_\cC$
satisfies
$\mu_{g_\cC,h_\cC} = \id_{gh_\cD}: g_\cD\xz h_\cD \Rightarrow gh_\cD$, 
\item
the adjoint equivalence $\iota^A=(\iota^A_*, \iota^A_1): I_\cD \Rightarrow A \circ I_\cC$ 
satisfies $\iota^A_* =\id_{e_\cD}$, and $\iota^A_{1}=A^1_e$, 
\item
the associators and unitors $\omega^A, \ell^A,r^A$ are identities,
\item
$\alpha_* = e_{\cD}$ and $\alpha_g = \id_{g_{\cD}}$, and $\alpha_{\id_g} = A^1_g$,
\item
$\alpha^1 = \id_{\id_{e_\cD}}$ and $\alpha^2_{g,h} = \id_{\id_{gh_\cD}}$ for all $g,h\in G$.
\end{itemize}
\item
$2$-morphisms $(\eta,m): (A,\alpha) \To (B,\beta)$ satisfy 
$\eta_{*_{\cC}} = e_\cD$, $\eta_{g_\cC} = \id_{g_{\cD}}$, $\eta^1 = \id_{\id_{e_\cD}}$ and $\eta^2_{g_{\cC},h_{\cC}} = \id_{\id_{gh_\cD}}$
and
$m_* = e_\cD$, $m_g = \id_{\id_{g_\cD}}$.
That is, $m$ is the identity modification.
\item
3-morphisms $(p,\rho): (\eta,m) \Rrightarrow (\zeta,n)$
satisfy
$p_{*_{\cC}}=\id_{e_\cD}$, $p_{g_{\cC}}=\id_{\id_{g_\cD}}$, 
and
$\rho_* = \id_{\id_{e_\cD}}$.
That is, there are only identity 3-morphisms.
\item
4-morphisms $\xi:(p,\rho)\RRightarrow (q,\delta)$
satisfy
$\xi_{*_{\cC}} = \id_{\id_{e_\cD}}$.
That is, the only 4-endomorphism of an identity 3-morphism is the identity.
\end{itemize}
\end{thm}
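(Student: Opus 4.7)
The plan is to establish the equivalence $\TriCat_G^{\pt}\hookrightarrow \TriCat_G$ through successive strictification, proving essential surjectivity of the inclusion on $k$-morphisms for $k=0,1,2,3,4$ in turn. The strategy at each level is uniform: use the invertibility of the $(k-1)$-st comparison datum (e.g.\ $\alpha$, $m$, $\rho$) together with the coherence data of the $k$-morphism itself to transport structure into strict form by conjugation, then verify the resulting coherence axioms and package the transport as an invertible $(k+1)$-morphism in $\TriCat_G$ between the original and the strictified data.

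For objects $(\cC, \pi^\cC)$, first apply the tricategorical strictification theorem \cite[Cor.~9.16]{MR3076451} to replace $\cC$ with an equivalent $\Gray$-category. Using that $\pi^\cC$ is $1$-surjective, select invertible $0$-cells $g_\cC$ representing $\pi^\cC(g)$ up to equivalence and invertible $1$-cells $\mu^{\pi^\cC}_{g,h}\colon g_\cC \otimes h_\cC \to (gh)_\cC$; absorbing these by conjugation along their coherent inverses (Remark~\ref{rem:InvertibleExtendsToAdjointEq}) produces a new $\Gray$-monoid whose set of $0$-cells is exactly $\{g_\cC\}_{g\in G}$, whose composition of $0$-cells is group multiplication, and for which $\pi^\cC$ becomes a strict $\Gray$-functor with trivial coherence data. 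For $1$-morphisms $(A,\alpha)$, set $A'(g_\cC):=g_\cD$ and use the invertible components $\alpha_g\colon g_\cD\to A(g_\cC)$ to transport the values of $A$ on higher cells, absorbing the coherence data $\mu^A, \iota^A, \omega^A, \ell^A, r^A$ and the higher components of $\alpha$ into the redefinition so that all of $\mu^{A'}, \iota^{A'}, \omega^{A'}, \ell^{A'}, r^{A'}$ and $\alpha'$ take the prescribed strict forms. Analogous absorption arguments handle $2$- and $3$-morphisms: the modification components $m_{g_\cC}$ strictify $\eta$ on the distinguished $0$-cells and force $m$ to become the identity modification, while the perturbation $\rho$ then strictifies $p$ and is itself forced to be trivial.

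At the $4$-morphism level essential surjectivity degenerates to a uniqueness statement: given strict data, condition \eqref{eq:4MorphismCriterion} evaluated at the unique $0$-cell of $\rmB G$ pins down $\xi_{*_\cC}$, so any $4$-morphism between strict data is automatically the identity --- which, combined with the analogous collapses at the $3$- and $2$-morphism levels, yields the final three bullets of the theorem. This also matches the philosophy described in Remark~\ref{rem:No4Categories}: after strictification there are no non-identity $3$- or $4$-morphisms, so $\TriCat_G^{\pt}$ is in fact a bicategory.

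The principal difficulty lies in coherence bookkeeping. Each strictification introduces whiskered interchangers and comparison cells that must be checked against the pentagon, triangle, and unit axioms of Appendix~\ref{sec:Weak3CategoryCoherences}, as well as against modification and perturbation coherence. These verifications are straightforward in principle but voluminous, and I would carry them out using the graphical calculus of $\Gray$-monoids from \S\ref{sec:GraphicalCalculus}, deferring the lengthier string-diagrammatic computations to Appendix~\ref{sec:CoherenceProofs}.
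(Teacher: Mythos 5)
Your overall strategy---level-by-level strictification by conjugating along the invertible comparison data $\alpha$, $m$, $\rho$ and packaging the transport as an invertible higher morphism---is exactly the paper's strategy for $1$-, $2$-, $3$- and $4$-morphisms, and your treatment of those levels matches the paper's (modulo the omitted observation, needed at the $3$-morphism level, that any $3$-morphism between $2$-morphisms of $\TriCat_G^{\pt}$ is automatically an \emph{endo}morphism, i.e.\ forces $\eta_x=\zeta_x$ for all $x$; this is a genuine little lemma proved via the perturbation coherence and the interchanger, not a formality).

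The genuine gap is at the object level. Gurski's strictification replaces $\cC$ by a $\Gray$-category, but it does nothing to the functor: $\pi^\cC\colon \rmB G\to \rmB\cC$ remains a weak $3$-functor with nontrivial tensorator $\mu^\pi_{g,h}$ and associator $\omega^\pi_{g,h,k}$. Your proposal to ``absorb'' the $\mu^\pi_{g,h}$ by conjugation does not produce a $\Gray$-monoid: if you set $\cC'(g\to h):=\cC(\pi(g)\to\pi(h))$ and define the tensor product by conjugating with $\mu^\pi$, then the left tensor $2$-functors satisfy $L'_aL'_b = L'_{ab}$ only up to whiskering by $\omega^\pi_{a,b,-}$, so the conjugated structure is merely a weak monoidal $2$-category and the strictness axioms of Remark~\ref{def:monoidal2cat} fail. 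What is actually needed is a strictification theorem for the \emph{functor}: the paper first uses that every $3$-functor out of the $1$-category $\rmB G$ is equivalent to a locally strict one, and then invokes Buhn\'e's theorem (via the $\Gray$-enriched Yoneda embedding into the cocomplete $\Gray$-category $[(\rmB\cC)^{\op},\Gray]$) to replace $\pi$ by a genuine $\Gray$-functor into an equivalent $\Gray$-monoid; only after that can one relabel the $0$-cells by $G$ with composition the group multiplication. This ingredient cannot be reproduced by conjugation alone, and the local strictness hypothesis is not removable in general, so your object-level step as written would fail.
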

\begin{proof}
In 
\S
\ref{sec:StrictifyingObjects}, 
\ref{sec:Strictifying1Morphisms},
\ref{sec:Strictifying2Morphisms},
\ref{sec:Strictifying3Morphisms},
\ref{sec:Strictifying4Morphisms}
below
we show that every object, 1-morphism, 2-morphism, 3-morphism, and 4-morphism respectively in $\TriCat_G$ is equivalent to one of the desired form in $\TriCat_G^{\pt}$.
All proofs in these further subsections amount to checking the appropriate coherences for 3-functors, 3-natural transformations, 3-modifications, and 3-perturbations outlined in Appendix \ref{sec:Weak3CategoryCoherences} and are deferred to Appendix \ref{sec:CoherenceProofs}.
We signify where the reader may find the deferred proof of a statement by including a small box with a link to the appropriate appendix after the statement.
\end{proof}

Since the only $3$- and $4$-morphisms of $\TriCat_G^{\pt}$ are identities, it is evident that $\TriCat_G^{\pt}$ --- and hence by Theorem~\ref{thm:tricatpt} also $\TriCat_G$ --- is $2$-truncated and actually defines a $2$-category. 
In the following corollary, we give a streamlined description of this $2$-category without the redundant data.
\begin{cor}\label{cor:Contract4CategoryTo2Category}
The $4$-category $\TriCat_G^{\pt}$ is isomorphic to the strict $2$-category $\TriCat_G^{\st}$, defined as follows:
\begin{itemize}
\item 
An object is a $\Gray$-monoid $\cC$ 
whose set of $0$-cells is $G$ (below, we will denote the elements of $G$ seen as $0$-cells in $\cC$ by $g_{\cC}$) 
and composition of 0-cells is given by group multiplication.
\item 
A $1$-morphism $A: \cC \to \cD$ is a $3$-functor $A:\rmB\cC \to \rmB\cD$ such that 
\begin{itemize}
\item
$A(g_\cC) = g_\cD$ for all $g\in G$, 
\item
the adjoint equivalence
$\mu: \xz_\cD \circ (A \times A) \Rightarrow A \circ \xz_\cC$
satisfies
$\mu_{g,h} = \id_{gh}: g_\cD\xz h_\cD \Rightarrow gh_\cD$,
\item
the adjoint equivalence $\iota^A=(\iota^A_*, \iota^A_1): I_\cD \Rightarrow A \circ I_\cC$ 
satisfies $\iota^A_* = \id_{e_\cD}$, and $\iota^A_{1}=A^1_e$, 
\item
the associators and unitors $\omega^A, \ell^A,r^A$ are identities.
\end{itemize}    
\item 
A $2$-morphism $\eta: A \To B$ is a natural transformation such that $\eta_* = e_\cD$, $\eta_g = \id_{g_{\cD}}$, $\eta^1 = \id_{\id_{e_\cD}}$ and $\eta^2_{g,h} = \id_{\id_{gh_\cD}}$ for all $g,h\in G$. 
\end{itemize}
Composition of 1- and 2-morphisms is the usual composition of $3$-functors and natural transformations~\cite{MR3076451}.
\end{cor}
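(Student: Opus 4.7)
The plan is to exhibit a strict isomorphism of $2$-categories $\Phi: \TriCat_G^{\st} \to \TriCat_G^{\pt}$ (viewing the latter as a $2$-category via its trivial higher-dimensional structure) by sending an object to itself, a $1$-morphism $A$ to the pair $(A,\alpha)$ with $\alpha$ the transformation whose components are mandated by the definition of $\TriCat_G^{\pt}$ (namely $\alpha_*=e_\cD$, $\alpha_g=\id_{g_\cD}$, $\alpha_{\id_g}=A^1_g$, $\alpha^1=\id$, $\alpha^2_{g,h}=\id$), and a $2$-morphism $\eta$ to $(\eta,\id)$. Since the objects of $\TriCat_G^{\st}$ and $\TriCat_G^{\pt}$ are literally the same, the content of the argument lies in verifying that this assignment is well-defined and bijective on higher cells, and that composition is preserved.

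First I would check well-definedness on $1$-morphisms: given $A:\rmB\cC\to\rmB\cD$ satisfying the strictness conditions listed for objects of $\TriCat_G^{\st}$, one must verify that the prescribed data $\alpha$ truly assembles into an invertible $3$-natural transformation $\pi^\cD\Rightarrow A\circ\pi^\cC$. This is a direct unpacking of the axioms for a tritransformation from Appendix~\ref{sec:Weak3CategoryCoherences}: the naturality of $\alpha_{\id_g}=A^1_g$ in $g$ and the two hexagon/pentagon coherences collapse to the unitality and functoriality axioms \ref{Functor:A.unital} and \ref{Functor:mu.unital} for $A$, which already hold. I would then verify the inverse direction: every $(A,\alpha)$ in $\TriCat_G^{\pt}$ has $\alpha$ uniquely determined by the listed component conditions, so the forgetful map $(A,\alpha)\mapsto A$ is inverse to $\Phi$ on $1$-morphisms. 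The analogous check for $2$-morphisms is even more direct, since $m$ is required to be the identity modification and its source/target are forced to agree by the component conditions on $\eta$.

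Next I would address the collapse of the higher-dimensional structure. By Theorem~\ref{thm:tricatpt}, every $3$-morphism $(p,\rho)$ and every $4$-morphism $\xi$ of $\TriCat_G^{\pt}$ is an identity, so parallel $2$-morphisms are either equal or not related by any non-identity higher cell. This means the hom-bicategory between any two objects of $\TriCat_G^{\pt}$ is equivalent (and indeed isomorphic, given the explicit descriptions) to a $1$-category, and $\TriCat_G^{\pt}$ canonically descends to a strict $2$-category whose $1$- and $2$-morphisms are exactly those of the theorem statement. The bijection $\Phi$ then makes sense as a map of $2$-categories.

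The main remaining obstacle is to check that $\Phi$ preserves compositions of $1$-morphisms and of $2$-morphisms (both horizontally and vertically), and identity $1$- and $2$-morphisms. On the $\TriCat_G^{\st}$ side these are just the usual compositions of $3$-functors and tritransformations described in~\cite{MR3076451}; on the $\TriCat_G^{\pt}$ side composition involves pasting the auxiliary data $\alpha,\beta$ and the modifications $m,n$. The key point is that for composable pairs $(A,\alpha),(B,\beta)$ the component formula for the composite transformation $\alpha_{B,A}$ at a $g\in G$ reduces to $(B\circ A)^1_g=B^1_{A(g_\cC)}\xt B(A^1_g)$ under the strictness conditions, matching the prescribed formula for the pair $(B\circ A,\alpha_{B\circ A})$; similarly the composite modification is forced to be the identity because both endpoints are already identities. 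I would dispatch each of these composition checks by reading off the relevant pasting diagrams, using the conventions of Appendix~\ref{sec:Weak3CategoryCoherences}, and noting that all coherence data collapses to identities under the strictness conditions of $\TriCat_G^{\pt}$.
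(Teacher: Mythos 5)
Your proposal is correct and follows essentially the same route as the paper: the paper likewise observes that the auxiliary data $\alpha$, $m$, $p$, $\rho$, $\xi$ is completely determined by the imposed component conditions and always assembles into genuine transformations, modifications, and perturbations, and then verifies strictness of $\TriCat_G^{\st}$ by writing out the composition formulas for $(A\circ B)^1$, $(A\circ B)^2$, $\mu^{A\circ B}$, $\iota^{A\circ B}$ and noting they are strictly unital and associative. The only quibble is the order of the factors in your composite-unitor formula (the paper's convention gives $B(A^1_g)\xt B^1_{A(g_\cC)}$ for the composite applying $A$ first), which does not affect the argument.
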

\begin{proof}
The natural transformation $\alpha$, the modifications $m$ and $p$ and the perturbations $\rho$ and $\xi$ in the statement of Theorem~\ref{thm:tricatpt} are completely determined by the imposed conditions on their coefficients. 
Moreover, the so defined coefficients always assemble into natural transformations, modifications, and perturbations, respectively, between the respective morphisms described in Corollary~\ref{cor:Contract4CategoryTo2Category}.

We now show that $\TriCat_G^{\st}$ is indeed a strict $2$-category.
Suppose we have two composable 1-morphisms
$(A, A^1, A^2, \mu^A, \iota^A) \in \TriCat_G^{\st}(\cD \to \cE)$
and
$(B, B^1, B^2, \mu^B, \iota^B) \in \TriCat_G^{\st}(\cC \to \cD)$.
Then the formulas for the components for the composite $(A\circ B, (A\circ B)^2, (A\circ B)^2, \mu^{A\circ B}, \iota^{A\circ B})$ are given by 
\begin{align*}
(A \circ B)^1_g
&= 
A(B^1_g) \xt A^1_{g}
&&
\forall\, g\in G
\\
(A \circ B)^2_{x,y} 
&= 
A(B^2_{x,y}) \xt A^2_{B(x), B(y)}
&&
\forall\, x\in \cC(h_\cC \to k_\cC),
\forall\, y \in \cC(g_\cC \to h_\cC)
\\
\mu_{x,y}^{A\circ B}
&=
A(\mu^B_{x,y}) 
\xt 
\mu^A_{B(x),B(y)}  
&&
\forall\, x\in \cC(g_\cC \to k_\cC),
\forall\, y \in \cC(h_\cC \to \ell_\cC)
\\
\iota^{A\circ B}_1 
&=
A(\iota^B_1)\xt \iota^A_1,
\end{align*}
which are easily seen to be strictly associative and strictly unital.
It is also straightforward to see that composition of 2-morphisms is strictly associative and strictly unital as well.
\end{proof}

\subsection{Strictifying objects}
\label{sec:StrictifyingObjects}

In the following section, we prove the `object part' of Theorem~\ref{thm:tricatpt} and show that every object $\pi=\pi^\cC
:\rmB G\to \cC$ of the 4-category $\TriCat_{G}$ is equivalent to a strictly $1$-bijective $\Gray$-functor $\pi':\rmB G\to \rmB\cC'$, where $\cC'$ is a $\Gray$-monoid whose set of 0-cells is $G$ with composition the group multiplication. 
The following lemma is a direct consequence of Gurski's strictification of $3$-categories~\cite[Cor.~9.15]{MR3076451}.

\begin{lem}
Any $1$-surjective $3$-functor $\pi: \rmB G \to \cC$
is equivalent, in $\TriCat_G$, to a $1$-surjective $3$-functor $\pi':\rmB G \to \rmB\cC'$ where $\cC'$ is a $\Gray$-monoid.
\end{lem}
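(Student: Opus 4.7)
The plan is to invoke Gurski's strictification theorem \cite[Cor.~9.15]{MR3076451}, which produces, for any algebraic tricategory $\cC$, a $\Gray$-category $\cC''$ together with a triequivalence $F: \cC \to \cC''$. Composing with $\pi$ yields a $3$-functor $F\circ \pi: \rmB G \to \cC''$ which is still $1$-surjective, since $1$-surjectivity is preserved under composition with essentially surjective functors (and biequivalences are in particular essentially surjective on objects and $1$-morphisms).

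First I would reduce to the case where $\cC''$ has a single object. Since $\rmB G$ has one object and $\pi$ is essentially surjective on objects, the $\Gray$-category $\cC''$ has a single object up to equivalence; after possibly post-composing $F$ with a strict inclusion of the full sub-$\Gray$-category on the object $F(\pi(*))$ (which is equivalent to $\cC''$), we may assume $\cC''$ has exactly one object, and hence is the delooping $\rmB \cC'$ of a $\Gray$-monoid $\cC'$. Then $\pi' := F \circ \pi: \rmB G \to \rmB \cC'$ is the desired $1$-surjective $3$-functor.

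Next I would exhibit $(\cC, \pi)$ and $(\rmB \cC', \pi')$ as equivalent objects in $\TriCat_G$. The $1$-morphism is $(F, \id_{\pi'}): (\cC, \pi) \to (\rmB \cC', \pi')$, where the invertible natural transformation $\alpha: \pi' \Rightarrow F \circ \pi$ is the identity by construction of $\pi'$. Since $F$ is a triequivalence of $3$-categories, Remark~\ref{rem:InvertibleExtendsToAdjointEq} provides a coherent inverse $F^{-1}: \rmB\cC' \to \cC$ together with invertible transformations $F\circ F^{-1} \cong \id$ and $F^{-1}\circ F \cong \id$. Pairing $F^{-1}$ with the induced invertible transformation $\pi \cong F^{-1}\circ \pi'$ produces an inverse $1$-morphism in $\TriCat_G$, and the associated higher morphisms extend to give an equivalence in the $4$-category.

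The main obstacle is purely bookkeeping: verifying that $1$-surjectivity is genuinely preserved under the strictification and under restriction to the one-object subcategory, and that the inverse of $F$ in $\TriCat$ can indeed be upgraded to an inverse $1$-morphism in the $G$-pointed setting (i.e., that the coherence data of Definition~\ref{defn:TricatG} is available). Both reduce to Remark~\ref{rem:InvertibleExtendsToAdjointEq}: the contractible space of coherent inverses in $3\Cat$ lifts canonically to the under-category $\TriCat_{\rmB G/}$, since everything in sight is strictly compatible with the chosen pointing.
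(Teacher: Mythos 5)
Your proof follows the paper's argument essentially verbatim: apply Gurski's strictification \cite[Cor.~9.15]{MR3076451} to get a triequivalence into a $\Gray$-category, then use $1$-surjectivity (essential surjectivity on objects) to cut down to the full endomorphism $\Gray$-monoid on the single object in the image of the composite. The only quibble is the phrase ``post-composing $F$ with a strict inclusion'' --- you mean composing with a quasi-inverse of the inclusion $\rmB\cC'\hookrightarrow\cC''$ (which is an equivalence precisely because $\pi$ is essentially surjective on objects) --- but this does not affect the correctness of the argument.
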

\begin{proof}
By~\cite[Cor.~9.15]{MR3076451}, there is a $\Gray$-category $\cC_0'$ and a $3$-equivalence $\cC \to \cC_0'$. 
By $1$-surjectivity of $\pi$, it follows that the composite $\rmB G \to \cC \to \cC_0'$ factors through the full endomorphism $\Gray$-monoid $\cC'$ of $\cC'_0$ on the single object in the image of the composite, resulting in a $3$-functor $\pi': \rmB G \to \rmB\cC'$ which is equivalent to $\pi: \rmB G \to \cC$ in $\TriCat_G$.
\end{proof}

To further strictify $\pi: \rmB G  \to \rmB\cC$, we use the following direct consequence of a theorem of Buhn\'e~\cite{1408.3481}.
Recall that a $3$-functor $F:\cA\to \cB$ between $\Gray$-categories $\cA$ and $\cB$ is \emph{locally strict} if the $2$-functors $F_{a,b}: \cA(a\to b) \to \cB(F(a)\to F(b))$ are strict.
\begin{prop}
\label{prop:Monoidal2FunctorStrictification}
Given $\Gray$-monoids $\cG,\cC$
and a locally strict 3-functor $\pi: \rmB\cG\to \rmB\cC$,
there exists a $\Gray$-monoid $\cC'$, 
an equivalence $A:\rmB\cC \to \rmB\cC'$, a $\Gray$-functor $\pi':\rmB\cG \to \rmB\cC'$ and a natural isomorphism $\pi' \To A \circ \pi$.
\end{prop}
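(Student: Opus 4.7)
The plan is to invoke Buhn\'e's coherence theorem for $\Gray$-trihomomorphisms~\cite{1408.3481}, which asserts that any locally strict trihomomorphism $F:\cA\to\cB$ between $\Gray$-categories can be strictified to a $\Gray$-functor at the cost of replacing $\cB$ by a $\Gray$-equivalent $\Gray$-category. Applied to our locally strict $3$-functor $\pi:\rmB\cG\to\rmB\cC$, this produces a $\Gray$-category $\widetilde\cC$, a $\Gray$-equivalence $\widetilde A:\rmB\cC\to\widetilde\cC$, a $\Gray$-functor $\widetilde\pi:\rmB\cG\to\widetilde\cC$, and a natural isomorphism $\widetilde\pi\To\widetilde A\circ\pi$. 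The remaining task is to cut the possibly multi-object $\Gray$-category $\widetilde\cC$ down to a single object so as to realise it as the delooping of a $\Gray$-monoid.

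Set $o:=\widetilde\pi(*_{\rmB\cG})\in\widetilde\cC$ and $\cC':=\widetilde\cC(o\to o)$; its delooping $\rmB\cC'$ is precisely the full sub-$\Gray$-category of $\widetilde\cC$ on the single object $o$. Since $\widetilde\pi$ is a $\Gray$-functor whose unique object-component lands at $o$, it factors through the inclusion $\rmB\cC'\hookrightarrow\widetilde\cC$, producing the required $\Gray$-functor $\pi':\rmB\cG\to\rmB\cC'$.

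To produce the equivalence $A:\rmB\cC\to\rmB\cC'$, observe that the component of the natural isomorphism $\widetilde\pi\To\widetilde A\circ\pi$ at $*_{\rmB\cG}$ is an invertible $1$-cell $e:o\to\widetilde A(*_{\rmB\cC})$ in $\widetilde\cC$. Conjugation by $e$ defines a $3$-equivalence $c_e:\widetilde\cC|_{\widetilde A(*)}\to\widetilde\cC|_o=\rmB\cC'$ between the full sub-$\Gray$-categories on the two equivalent objects; this conjugation is a genuine $3$-equivalence, although not itself a strict $\Gray$-functor, which is acceptable since the proposition only requires $A$ to be an equivalence. We then set $A:=c_e\circ\widetilde A$, which is an equivalence as a composite of equivalences; the required natural isomorphism $\pi'\To A\circ\pi$ is assembled from the natural isomorphism $\widetilde\pi\To\widetilde A\circ\pi$ together with the defining data of $c_e$, which exhibits the conjugate of $e$ as the identity at $o$.

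The main technical obstacle is the invocation of Buhn\'e's theorem in exactly the form required, namely producing a $\Gray$-functor $\widetilde\pi$ (strictly enriched over $\Gray$) together with a $\Gray$-equivalence $\widetilde A$ satisfying the appropriate comparison natural isomorphism. The local strictness hypothesis on $\pi$ is the precise input needed to apply Buhn\'e's strictification; the subsequent restriction to the essential image $o$ and the conjugation by $e$ are then formal manipulations of full sub-$\Gray$-categories and equivalences therein.
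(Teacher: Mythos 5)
There is a genuine gap at the very first step. You invoke Buhn\'e's theorem in the form ``any locally strict trihomomorphism $F:\cA\to\cB$ can be strictified to a $\Gray$-functor at the cost of replacing $\cB$ by a $\Gray$-equivalent $\Gray$-category,'' but that is not what \cite[Thm.~8]{1408.3481} says. Buhn\'e's strictification keeps the target fixed and requires it to be \emph{cocomplete} in the $\Gray$-enriched sense; the delooping $\rmB\cC$ of a $\Gray$-monoid is essentially never cocomplete, so the theorem does not apply directly to $\pi:\rmB\cG\to\rmB\cC$, and the form you state is essentially the statement you are trying to prove. The missing idea is to first compose $\pi$ with the $\Gray$-enriched Yoneda embedding $y:\rmB\cC\to[(\rmB\cC)^{\op},\Gray]$: the presheaf $\Gray$-category is cocomplete because $\Gray$ is, the composite $y\circ\pi$ is still locally strict (a locally strict trihomomorphism followed by a $\Gray$-functor), and Buhn\'e's theorem then yields a $\Gray$-functor $\pi'$ equivalent to $y\circ\pi$. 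Note also that after this repair your comparison $\widetilde A$ is only the fully faithful, not essentially surjective, functor $y$ --- not a $\Gray$-equivalence as you assert --- though this does not affect the final conclusion.

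The second half of your argument is essentially the paper's: one restricts to the full sub-$\Gray$-category $\rmB\cC'$ of the presheaf category on the single object $\pi'(*)$, through which $\pi'$ factors, and obtains the equivalence $A:\rmB\cC\to\rmB\cC'$ from the fact that $y$ and the inclusion $\rmB\cC'\hookrightarrow[(\rmB\cC)^{\op},\Gray]$ are both fully faithful and hit equivalent objects, since $\pi'(*)\simeq y(\pi(*))$. Your ``conjugation by $e$'' is a reasonable way to package that last step, but it only goes through once the first step has been repaired as above.
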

\begin{proof}
By~\cite[Thm. 8]{1408.3481}, every locally strict $3$-functor from a (small) $\Gray$-category into a cocomplete $\Gray$-category is equivalent to a $\Gray$-functor. 
Here, \emph{cocomplete} is used in the sense of enriched category theory~\cite[\S3.2]{MR2177301}.

Given two $\Gray$-categories $\cA,\cB$, we denote by $[\cA,\cB]$ the $\Gray$-category of $\Gray$-functors $\cA\to\cB$.
Consider the 
$\Gray$-enriched Yoneda embedding
$y:\rmB \cC \to  [\rmB \cC^{\op}, \Gray]$, where the target is cocomplete as $\Gray$ is cocomplete \cite[\S3.3]{MR2177301}.
The composite 
$$
\rmB \cG \xrightarrow{\pi} \rmB \cC \xrightarrow{y} [(\rmB \cC)^{\op}, \Gray]
$$ 
is a composite of a locally strict 3-functor with a $\Gray$-functor and hence itself locally strict. Therefore, there is a $\Gray$-functor $\pi': \rmB \cG \to [(\rmB \cC)^{\op}, \Gray]$ which is equivalent to the composite. 

Now we define $\rmB\cC'$ to be the full sub-$\Gray$-category of $[(\rmB \cC)^{\op}, \Gray]$ on the object $\pi'(*)$ and define $\pi': \rmB \cG \to \rmB \cC'$ as the codomain-restriction of $\pi'$ to $\rmB \cC'$.
Finally, observe that both the $\Gray$-Yoneda embedding $y: \rmB \cC \to [(\rmB \cC)^{\op},\Gray]$ and the inclusion $\rmB \cC' \to [(\rmB\cC)^{\op}, \Gray]$ are fully faithful\footnote{
Here, by a \emph{fully faithful $\Gray$-functor} we mean a $\Gray$-functor $F:\cA \to \cB$ whose induced 2-functors $F_{a,b}:\cA(a\to b) \to \cB(F(a)\to F(b))$ are isomorphisms in $\Gray$.} 
$\Gray$-functors which map the single objects of $\rmB\cC$ and $\rmB\cC'$ to equivalent objects. Hence, there is an equivalence $A: \rmB\cC \to \rmB\cC'$ and a natural isomorphism $\alpha: \pi' \To A \circ \pi$.
\end{proof}

\begin{rem}
In general, we cannot get rid of the local strictness assumption on $\pi$ by the example given in \cite[Ex 2.2]{BuhneThesis}.
\end{rem}

\begin{thm}[Strictifying objects]
\label{thm:StrictifyingObjects}
Every object $(\cC, \pi)\in \TriCat_G$ is equivalent to an object $(\rmB \cC', \pi')$ of the subcategory $\TriCat_G^{\pt}$ where $\pi': \rmB G \to \rmB\cC'$ is a strictly $1$-bijective $\Gray$-functor into a $\Gray$-monoid $\cC'$ whose set of 0-cells is $G$ with composition the group multiplication. 
\end{thm}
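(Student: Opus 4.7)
The plan is to chain together the two strictification results just established and then perform an explicit "relabelling" of the $0$-cells. First, by the previous lemma I may replace $(\cC, \pi)$ by an equivalent object of $\TriCat_G$ of the form $(\rmB \cC_0, \pi_0)$ where $\cC_0$ is a $\Gray$-monoid and $\pi_0 : \rmB G \to \rmB \cC_0$ is a $1$-surjective $3$-functor. Next, since $\rmB G$ has discrete hom-$2$-categories, the $3$-functor $\pi_0$ is automatically locally strict, so Proposition~\ref{prop:Monoidal2FunctorStrictification} applies and produces a $\Gray$-monoid $\cC_0'$, a $\Gray$-functor $\pi_0' : \rmB G \to \rmB \cC_0'$, and an equivalence in $\TriCat_G$ between $\pi_0$ and $\pi_0'$. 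Essential surjectivity of $\pi_0$ on $1$-cells (i.e., on the objects of $\cC_0$) transfers across this equivalence, so $\pi_0'$ remains $1$-surjective.

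The second step is to replace the $\Gray$-monoid $\cC_0'$ by a $\Gray$-monoid $\cC'$ whose set of $0$-cells is literally $G$. Because $\pi_0'$ is a $\Gray$-functor, the assignment $g \mapsto \pi_0'(g)$ is a (not necessarily injective) monoid homomorphism from $G$ to $\Ob(\cC_0')$. I define $\cC'$ to have $\Ob(\cC') := G$ with $0$-cell composition equal to the group multiplication, hom $2$-categories
\[
\cC'(g \to h) := \cC_0'\bigl(\pi_0'(g) \to \pi_0'(h)\bigr),
\]
and all remaining data (composition of $1$- and $2$-cells, tensor product, interchanger) transported along $g\mapsto \pi_0'(g)$. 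The verification that this transport produces a valid $\Gray$-monoid is routine: every axiom reduces to the corresponding axiom in $\cC_0'$, using that $\pi_0'$ respects composition of $0$-cells strictly. Define $\pi' : \rmB G \to \rmB \cC'$ as the $\Gray$-functor which is the identity on the single object and on $1$-cells (viewing $G$ both as the set of $1$-cells of $\rmB G$ and as $\Ob(\cC')$); by construction this is strictly $1$-bijective, and the underlying hom-$2$-functors at each pair of $1$-cells are just the action of $\pi_0'$ at the corresponding $0$-cells of $\cC_0'$.

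The final step is to exhibit an equivalence in $\TriCat_G$ between $\pi_0'$ and $\pi'$. There is a canonical $\Gray$-functor $\rho : \rmB \cC' \to \rmB \cC_0'$ sending $g \in \Ob(\cC')$ to $\pi_0'(g) \in \Ob(\cC_0')$ and acting as the identity on hom $2$-categories, so that $\rho \circ \pi' = \pi_0'$ on the nose. By construction $\rho$ is fully faithful on hom $2$-categories, and by $1$-surjectivity of $\pi_0'$ it is essentially surjective on $0$-cells, so $\rho$ is a $3$-equivalence; paired with the identity $2$-transformation $\pi_0' \Rightarrow \rho \circ \pi'$, this gives the desired equivalence. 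Composing with the equivalence from the first step yields the equivalence $(\cC,\pi)\simeq(\rmB\cC',\pi')$ in $\TriCat_G$.

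The main obstacle is the bookkeeping in the middle step: one must check that transporting the $\Gray$-monoid structure from $\cC_0'$ to $\cC'$ along the set-level monoid map $G \to \Ob(\cC_0')$ preserves the strict associativity and unitality of the $\Gray$ tensor product and the naturality/coherence of the interchanger. This is not deep but does require carefully unpacking the axioms \ref{Interchanger:Composition}--\ref{Interchanger:Natural} and observing that each is an identity of $2$-cells inside hom $2$-categories that are shared between $\cC'$ and $\cC_0'$.
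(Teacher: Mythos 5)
Your overall route is the same as the paper's: strictify the $3$-category to a $\Gray$-monoid, strictify the pointing to a $\Gray$-functor via Proposition~\ref{prop:Monoidal2FunctorStrictification}, and then relabel the $0$-cells by pulling back along $g\mapsto \pi_0'(g)$. Your relabelling step is a mild streamlining of the paper's argument, which first restricts to the full $\Gray$-submonoid on the image of the pointing and then pulls back along the resulting surjection from $G$ onto its set of $0$-cells; you do both at once and check instead that the comparison functor $\rho$ is fully faithful on hom-$2$-categories and essentially surjective on $0$-cells, hence a triequivalence. That part is fine.

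There is, however, one genuine gap: the claim that ``since $\rmB G$ has discrete hom-$2$-categories, the $3$-functor $\pi_0$ is automatically locally strict.'' This is false. Local strictness of $\pi_0$ means that the hom-$2$-functor $\rmB G(*\to *)=G \to \cC_0$ is a \emph{strict} $2$-functor, and a weak $2$-functor out of a discrete (hom-)$2$-category still carries nontrivial coherence data: the unitors $\id_{\pi_0(g)}\Rightarrow\pi_0(\id_g)$ and the compositors for $\id_g\circ\id_g=\id_g$ are $2$-cells in $\cC_0$ that need not be identities. So the hypothesis of Proposition~\ref{prop:Monoidal2FunctorStrictification} is not automatically satisfied by $\pi_0$. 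What is true --- and what the paper invokes --- is that every $3$-functor out of a $1$-category is \emph{equivalent} to a locally strict one (\cite[Cor 2.6]{BuhneThesis}); one must first replace $\pi_0$ by such an equivalent locally strict functor before applying the proposition. With that replacement (and citation) inserted, the rest of your argument goes through.
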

\begin{proof}
Since $\rmB G$ is a $1$-category, it follows from \cite[Cor 2.6]{BuhneThesis} that every 3-functor $\rmB G \to \cC$ is equivalent to a locally strict 3-functor.
Applying Proposition~\ref{prop:Monoidal2FunctorStrictification}, we obtain a $\Gray$-monoid $\cD$ and a $\Gray$-functor $\pi^\cD: \rmB G \to \rmB\cD$ such that $(\rmB \cD, \pi^{\cD})$ is equivalent to $(\cC,\pi)$ in $\TriCat_G$.

Let $\cD'$ be the full 2-subcategory of $\cD$ whose objects are exactly those in the image of $\pi^\cD$.
Since $\pi^\cD$ is a $\Gray$-functor, $\cD'$ is a $\Gray$-submonoid of $\cD$, which comes equipped with the corestricted $\Gray$-functor $\pi^{\cD'}: \rmB G \to \rmB \cD'$ which is strictly 1-surjective, i.e., onto $\Ob(\cD')$.
Since $\pi^\cD$ is 1-surjective, $(\rmB \cD, \pi^\cD)$ is equivalent in $\TriCat_G$ to $(\rmB \cD', \pi^{\cD'})$.

Since $\pi^{\cD'}: \rmB G \to \rmB \cD'$ is a strictly $1$-surjective $\Gray$-functor, there is in particular a surjective homomorphism $\phi: G \to \Ob (\cD')$.
We define a $\Gray$-monoid $\cC'$ as follows.
The $0$-cells of $\cC'$ are the elements of $G$, and hom categories are given by $\Hom_{\cC'}(g \to h):= \Hom_{\cD'}(\phi(g) \to \phi(h))$.
Since $\phi$ is a homomorphism, $\cC'$ inherits a $\Gray$-monoid structure from $\cD'$ together with an obvious strictly 1-bijective $\Gray$-homomorphism $\pi': \rmB G \to \rmB \cC'$.
Since $\phi$ is surjective, $(\rmB \cC',\pi')$ is equivalent to $(\rmB \cD',\pi^{\cD'})$ in $\TriCat_G$.
\end{proof}

\subsection{Strictifying 1-morphisms}
\label{sec:Strictifying1Morphisms}
Given objects $(\rmB \cC, \pi^\cC)$ and $(\rmB \cD, \pi^{\cD})$ in $\TriCat_G^{\pt}$ comprised of $\Gray$-monoids $\cC$ and $\cD$ and a strictly $1$-bijective $\Gray$-functor $(A,\alpha) \in \TriCat_G\left((\rmB\cC, \pi^\cC) \to (\rmB\cD, \pi^\cD)\right)$, we construct a 1-morphism $(B,\beta)\in \TriCat_G^{\pt}$
and an equivalence $(B,\beta) \Rightarrow (A,\alpha)$.
As $\cC,\cD$ are $\Gray$-monoids, we make heavy use of the graphical calculus discussed in \S\ref{sec:GraphicalCalculus}.

Recall that the 3-functor $A$ consists of the data from Definition \ref{defn:3functor}.
The invertible natural transformation $\alpha:\pi^\cD\Rightarrow A\circ \pi^\cC$ is comprised of 
the data from Definition \ref{defn:Transformation}.
We depict $\alpha_*$ by an oriented \textcolor{red}{red} strand:
$$
\tikzmath{
\draw[thick, red, mid>] (0,-.4) node [below] {$\scriptstyle\alpha_*$} -- (0,.4) ;
}\,.
$$
By the third unitality bullet point in \ref{Transformation:eta_c}, we have that $\alpha_{\id_g}=A^1_g$ since $\pi^\cC$ is strict.
By Remark \ref{rem:InvertibleExtendsToAdjointEq},
there is a contractible choice of ways to extend
the invertible 0-cell $\alpha_*$ to a biadjoint biequivalence \eqref{eq:BiadjointBiequivalence};
we do so arbitrarily.

We now define $B: \rmB\cC \to \rmB\cD$ as follows.
First, 
$B(g_\cC):=g_\cD$ for all $g\in G$.
Given $x \in \cC(g_\cC \to h_\cC)$, we define
$$
B\left(
\tikzmath{
\draw (0,-.4) node[below] {$\scriptstyle g_\cC$} -- (0,.4) node[above] {$\scriptstyle h_\cC$};
\roundNbox{fill=white}{(0,0)}{.2}{0}{0}{$\scriptstyle x$}
}\right)
:=
\tikzmath{
\draw[thick, red, mid>] (.5,0) .. controls ++(90:.5cm) and ++(90:.5cm) .. (-.5,.3) -- (-.5,-.3) .. controls ++(270:.5cm) and ++(270:.5cm) .. (.5,0);
\draw (0,-.7) node[below] {$\scriptstyle g_\cD$} -- (0,.7) node[above] {$\scriptstyle h_\cD$};
\roundNbox{fill=white}{(0,0)}{.25}{.05}{.05}{$\scriptstyle A(x)$}
}
=
\tikzmath{
\draw[thick, red, mid<] (-.1,-.8) arc (0:-180:.2cm)  -- (-.5,0) node [left] {$\scriptstyle \alpha_*^{-1}$} -- (-.5,.8) arc (180:0:.2cm);
\draw[thick, red, mid<] (.1,.6) .. controls ++(-45:.1cm) and ++(90:.4cm) .. (.5,0) node [right] {$\scriptstyle \alpha_*$} .. controls ++(270:.4cm) and ++(45:.1cm) .. (.1,-.6);
\draw (0,-1.2) node[below] {$\scriptstyle g_\cD$} -- (0,1.2) node[above] {$\scriptstyle h_\cD$};
\roundNbox{fill=white}{(0,0)}{.25}{.05}{.05}{$\scriptstyle A(x)$}
\roundNbox{fill=white}{(0,-.7)}{.2}{0}{0}{$\scriptstyle \alpha_g$}
\roundNbox{fill=white}{(0,.7)}{.2}{.1}{.1}{$\scriptstyle \alpha_h^{-1}$}
}\,.
$$
Given $x,y\in \cC(g_\cC \to h_\cC)$ and $f\in \cC(x \Rightarrow y)$, we define $B(f)$ to be the following 2-cell in $\cD$:
$$
\tikzmath{
\draw[thick, red, mid>] (.7,0) .. controls ++(90:.5cm) and ++(90:.5cm) .. (-.7,.5) -- (-.7,-.5) .. controls ++(270:.5cm) and ++(270:.5cm) .. (.7,0);
\draw (0,-.9) node[below] {$\scriptstyle g_\cD$} -- (0,.9) node[above] {$\scriptstyle h_\cD$};
\roundNbox{fill=white}{(0,0)}{.25}{.05}{.05}{$\scriptstyle A(x)$}
\roundNbox{dashed}{(0,0)}{.35}{.05}{.05}{}
}
\overset{A(f)}{\Rightarrow}
\tikzmath{
\draw[thick, red, mid>] (.7,0) .. controls ++(90:.5cm) and ++(90:.5cm) .. (-.7,.5) -- (-.7,-.5) .. controls ++(270:.5cm) and ++(270:.5cm) .. (.7,0);
\draw (0,-.9) node[below] {$\scriptstyle g_\cD$} -- (0,.9) node[above] {$\scriptstyle h_\cD$};
\roundNbox{fill=white}{(0,0)}{.25}{.05}{.05}{$\scriptstyle A(y)$}
}
\,.
$$
For $g\in G$, we define $B^1_g \in \cD(\id_{g_\cD} \Rightarrow B(\id_{g_\cC}))$ to be the composite
\begin{equation}
    \label{eq:B1}
\tikzmath{
\draw (0,-.7) node[below] {$\scriptstyle g_\cD$} -- (0,.7);
}
\Rightarrow
\tikzmath{
\draw (.6,-.7) node[below] {$\scriptstyle g_\cD$} -- (.6,.7);
\draw[thick, red, mid>] (.4,0) .. controls ++(90:.4cm) and ++(90:.4cm) .. (-.4,.2) -- (-.4,-.2) .. controls ++(270:.4cm) and ++(270:.4cm) .. (.4,0);
}
\Rightarrow
\tikzmath{
\draw[thick, red, mid>] (.6,0) .. controls ++(90:.5cm) and ++(90:.5cm) .. (-.6,.3) -- (-.6,-.3) .. controls ++(270:.5cm) and ++(270:.5cm) .. (.6,0);
\draw (0,-.7) node[below] {$\scriptstyle g_\cD$} -- (0,.7) node[above] {$\scriptstyle g_\cD$};
\roundNbox{dashed}{(0,0)}{.25}{.2}{.2}{}
}
\overset{A^1_g}{\Rightarrow}
\tikzmath{
\draw[thick, red, mid>] (.6,0) .. controls ++(90:.5cm) and ++(90:.5cm) .. (-.6,.3) -- (-.6,-.3) .. controls ++(270:.5cm) and ++(270:.5cm) .. (.6,0);
\draw (0,-.7) node[below] {$\scriptstyle g_\cD$} -- (0,.7) node[above] {$\scriptstyle g_\cD$};
\roundNbox{fill=white}{(0,0)}{.25}{.2}{.2}{$\scriptstyle A(\id_g)$}
}
\,.
\end{equation}
For $x\in \cC(g_\cC \to h_\cC)$ and $y\in \cC(h_\cC\to k_\cC)$, we define $B^2_{x,y} \in \cD(B(y)\circ B(x) \Rightarrow B(y\circ x))$ to be the composite
\begin{equation}
    \label{eq:B2}
\tikzmath{
\draw[thick, red, mid>] (.5,.7) .. controls ++(90:.5cm) and ++(90:.5cm) .. (-.5,1) -- (-.5,.4) .. controls ++(270:.5cm) and ++(270:.5cm) .. (.5,.7);
\draw[thick, red, mid>] (.5,-.7) .. controls ++(90:.5cm) and ++(90:.5cm) .. (-.5,-.4) -- (-.5,-1) .. controls ++(270:.5cm) and ++(270:.5cm) .. (.5,-.7);
\draw (0,-1.5) node[below] {$\scriptstyle g_\cD$} -- node[right] {$\scriptstyle h_\cD$} (0,1.5) node[above] {$\scriptstyle k_\cD$};
\roundNbox{fill=white}{(0,.7)}{.25}{.1}{.1}{$\scriptstyle A(y)$}
\roundNbox{fill=white}{(0,-.7)}{.25}{.1}{.1}{$\scriptstyle A(x)$}
}
\Rightarrow
\tikzmath{
\draw[thick, red, mid>] (-.2,0) .. controls ++(90:.3cm) and ++(270:.5cm) .. (.5,.7) .. controls ++(90:.5cm) and ++(90:.5cm) .. (-.5,1) -- (-.5,-1) .. controls ++(270:.5cm) and ++(270:.5cm) .. (.5,-.7) .. controls ++(90:.5cm) and ++(270:.3cm) .. (-.2,0);
\draw (0,-1.5) node[below] {$\scriptstyle g_\cD$} -- node[right] {$\scriptstyle h_\cD$} (0,1.5) node[above] {$\scriptstyle k_\cD$};
\roundNbox{fill=white}{(0,.7)}{.25}{.1}{.1}{$\scriptstyle A(y)$}
\roundNbox{fill=white}{(0,-.7)}{.25}{.1}{.1}{$\scriptstyle A(x)$}
}
\Rightarrow
\tikzmath{
\draw[thick, red, mid>] (.5,0) -- (.5,.7) .. controls ++(90:.5cm) and ++(90:.5cm) .. (-.5,1) -- (-.5,-1) .. controls ++(270:.5cm) and ++(270:.5cm) .. (.5,-.7) -- (.5,0);
\draw (0,-1.5) node[below] {$\scriptstyle g_\cD$} -- (0,1.5) node[above] {$\scriptstyle k_\cD$};
\roundNbox{fill=white}{(0,.6)}{.25}{.1}{.1}{$\scriptstyle A(y)$}
\roundNbox{fill=white}{(0,-.6)}{.25}{.1}{.1}{$\scriptstyle A(x)$}
}
\overset{A^2_{x,y}}{\Rightarrow}
\tikzmath{
\draw[thick, red, mid>] (.7,0) .. controls ++(90:.5cm) and ++(90:.5cm) .. (-.7,.5) -- (-.7,-.5) .. controls ++(270:.5cm) and ++(270:.5cm) .. (.7,0);
\draw (0,-1) node[below] {$\scriptstyle g_\cD$} -- (0,1) node[above] {$\scriptstyle k_\cD$};
\roundNbox{fill=white}{(0,0)}{.25}{.25}{.25}{$\scriptstyle A(y\circ x)$}
}
\,.
\end{equation}

\begin{lem}
\label{lem:Truncation2Functor}
The data $(B, B^1, B^2) : \cC \to \cD$ defines a 2-functor.
\DeferProof{sec:CoherenceProofs1Morphisms}
\end{lem}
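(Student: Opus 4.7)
The plan is to verify the three pieces of a 2-functor structure: (i) that $B$ is strictly functorial on 2-cells, (ii) naturality of $B^2$ in both slots, and (iii) the associativity and unit coherences relating $B^1$ and $B^2$. Throughout, the guiding intuition is that $B$ is just a ``conjugate'' of $A$ by the biadjoint biequivalence $(\alpha_g, \alpha_g^{-1})$: schematically, $B(x) \cong \alpha_h^{-1} \circ A(x) \circ \alpha_g$ (with appropriate nudging), so every 2-functor coherence for $B$ should follow from the corresponding 3-functor coherence for $A$ combined with the zig-zag identities for the chosen biadjoint biequivalence and naturality of the interchanger $\phi$ in $\cD$.

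First I would verify that the 2-cell assignment $f \mapsto B(f)$ is strictly functorial in vertical composition and sends identities to identities. Both facts are immediate from the corresponding properties of $A$ together with (strict) functoriality of vertical composition with the fixed 2-cells obtained from $\alpha_g$ and $\alpha_h^{-1}$; there is nothing to prove beyond unpacking the definition. Next I would show that $B^2_{x,y}$, viewed as a 2-cell $B(y)\circ B(x) \Rightarrow B(y\circ x)$, is natural in $x$ and $y$. This is a diagrammatic computation: the internal uses of $\phi$ in~\eqref{eq:B2} slide past vertical compositions by axiom~\ref{Interchanger:Natural}, and the remaining step involving $A^2_{x,y}$ is handled by the naturality bullet in \ref{Functor:Coherence-mu}. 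Similarly, $B^1_g$ is invertible because $A^1_g$ is (it extends to an adjoint equivalence as part of the 3-functor data for $A$) and the unit/counit of the biadjoint biequivalence $\alpha_g \vdash \alpha_g^{-1}$ are invertible by construction.

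The heart of the argument is then the associativity coherence
\[
B^2_{y\circ x,\, z} \xt (B(z) \xo B^2_{x,y}) \;=\; B^2_{x,\, z\circ y} \xt (B^2_{y,z} \xo B(x))
\]
and the two unit axioms
\[
B^2_{x,\id_h} \xt (B^1_h \xo B(x)) = \id_{B(x)},
\qquad
B^2_{\id_g,\, x} \xt (B(x) \xo B^1_g) = \id_{B(x)}.
\]
For the associativity law, I would expand both sides using~\eqref{eq:B2}, cancel internal $\alpha_h$-$\alpha_h^{-1}$ and $\alpha_k$-$\alpha_k^{-1}$ loops using the zig-zag identities for the biadjoint biequivalences, and reduce the equality to the associativity axiom \ref{Functor:mu.associative} for $A^2$ (together with repeated applications of the interchanger naturality~\ref{Interchanger:Natural} and its compatibility with composition~\ref{Interchanger:Composition} to move 2-cells into the right order). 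The unit axioms reduce analogously to the unitality axioms \ref{Functor:mu.left-unital} and \ref{Functor:mu.right-unital} for $A$, again after using the zig-zag identities to eliminate a cap--cup pair of the $\alpha_g$-biequivalence.

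The main obstacle is purely bookkeeping: the diagrams arising when one expands $B^2$ and $B^1$ in the graphical calculus are large, and one must carefully apply the interchanger axioms~\ref{Interchanger:Natural}--\ref{Interchanger:Composition} to bring the pieces into a form where the 3-functor coherences for $A$ and the zig-zag identities can be invoked. Conceptually nothing delicate happens: the statement expresses the fact that a 3-functor can be rectified along a chosen biadjoint biequivalence on each object, so the pointwise 2-functor structure on $B$ is essentially forced. Accordingly I would defer the detailed diagrammatic verification to Appendix~\ref{sec:CoherenceProofs}, as the author does.
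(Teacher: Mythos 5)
Your proposal is correct and follows essentially the same route as the paper: each coherence for $(B,B^1,B^2)$ is reduced to the corresponding coherence for the underlying $2$-functor of $A$ after manipulating the $\alpha$-loops, with the interchanger axioms and the properties of the chosen biadjoint biequivalence handling the bookkeeping. Two small refinements: in the paper's associativity diagram the only input beyond \ref{Functor:A.associative} for $A$ is that the two loop-cancellations occur in disjoint regions (functoriality of $\xo$) — the coherence data of the biadjoint biequivalence is really needed only for \ref{Functor:A.unital} — and your citations should point to \ref{Functor:A.associative} and \ref{Functor:A.unital} rather than to the $\mu^A$-axioms, since it is the compositor $A^2$ of the underlying $2$-functor, not the tensorator $\mu^A$, that enters here.
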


We now endow $B$ with the structure of a weak 3-functor $\rmB\cC \to \rmB\cD$.
\begin{construction}
\label{const:B3Functor}
We define an adjoint equivalence $\mu^B: \xz_\cD \circ (B \times B) \Rightarrow B \circ \xz_\cC$ as follows.
First we define $\mu^B_{g,h}\in \cD(g_\cD \xz h_\cD \to gh_\cD)$ to be the identity.
Next, for $x \in \cC(g_\cC\to h_\cC)$ and $y\in \cC(k_\cC \to \ell_\cC)$, we define the natural isomorphism
$\mu^B_{x,y} \in \cD(\mu^B_{g,\ell} \xo(B(x) \xz B(y)) \Rightarrow B( x\xz y)\circ \mu^B_{g,k})$ to be the composite
\begin{align*}
\tikzmath{
\draw[thick, red, mid>] (1.5,0) .. controls ++(90:.5cm) and ++(90:.5cm) .. (.5,.3) -- (.5,-.3) .. controls ++(270:.5cm) and ++(270:.5cm) .. (1.5,0);
\draw[thick, red, mid>] (.5,1.3) .. controls ++(90:.5cm) and ++(90:.5cm) .. (-.5,1.6) -- (-.5,1) .. controls ++(270:.5cm) and ++(270:.5cm) .. (.5,1.3);
\draw (0,-.7) node[below] {$\scriptstyle g_\cD$} -- (0,1.3); \draw (1,-.7) node[below] {$\scriptstyle k_\cD$} -- (1,0); 
\draw (0,1.55) .. controls ++(90:.5cm) and ++(270:.5cm) .. (.485,2.6) -- (.485,3);
\draw (1,.25) -- (1,1.55) .. controls ++(90:.5cm) and ++(270:.5cm) .. (.515,2.6) node [right] {$\scriptstyle \mu^B_{h,\ell}$} -- (.515,3) node[above] {$\scriptstyle h\ell_\cD$};
\roundNbox{fill=white}{(1,0)}{.25}{.05}{.05}{$\scriptstyle A(y)$}
\roundNbox{fill=white}{(0,1.3)}{.25}{.05}{.05}{$\scriptstyle A(x)$}
}
&\Rightarrow
\tikzmath{
\draw[thick, red, mid>] (1.5,0) .. controls ++(90:.5cm) and ++(90:.5cm) .. (.5,.7) -- (.5,-.3) .. controls ++(270:.5cm) and ++(270:.5cm) .. (1.5,0);
\draw[thick, red, mid>] (.5,1.3) .. controls ++(90:.5cm) and ++(90:.5cm) .. (-.5,1.6) -- (-.5,.6) .. controls ++(270:.5cm) and ++(270:.5cm) .. (.5,1.3);
\draw (0,-.7) node[below] {$\scriptstyle g_\cD$} -- (0,1.3); 
\draw (1,-.7) node[below] {$\scriptstyle k_\cD$} -- (1,0);
\draw (0,1.55) .. controls ++(90:.5cm) and ++(270:.5cm) .. (.485,2.6) -- (.485,3);
\draw (1,.25) -- (1,1.55) .. controls ++(90:.5cm) and ++(270:.5cm) .. (.515,2.6) node [right] {$\scriptstyle \mu^B_{h,\ell}$} -- (.515,3) node[above] {$\scriptstyle h\ell_\cD$};
\roundNbox{fill=white}{(1,0)}{.25}{.05}{.05}{$\scriptstyle A(y)$}
\roundNbox{fill=white}{(0,1.3)}{.25}{.05}{.05}{$\scriptstyle A(x)$}
}
\Rightarrow
\tikzmath{
\draw[thick, red, mid>] (1.5,0) .. controls ++(90:.3cm) and ++(270:.3cm) .. (.9,.6) -- (.9,.9) arc (0:180:.1cm) arc (0:-180:.1cm) -- (.5,1.3) .. controls ++(90:.5cm) and ++(90:.5cm) .. (-.5,1.6) -- (-.5,.6) .. controls ++(270:.7cm) and ++(270:.3cm) .. (.3,.6) arc (180:0:.1cm) -- (.5,-.3) .. controls ++(270:.5cm) and ++(270:.5cm) .. (1.5,0);
\draw (0,-.7) node[below] {$\scriptstyle g_\cD$} -- (0,1.3); 
\draw (1,-.7) node[below] {$\scriptstyle k_\cD$} -- (1,0);
\draw (0,1.55) .. controls ++(90:.5cm) and ++(270:.5cm) .. (.485,2.6) -- (.485,3);
\draw (1,.25) -- (1,1.55) .. controls ++(90:.5cm) and ++(270:.5cm) .. (.515,2.6) node [right] {$\scriptstyle \mu^B_{h,\ell}$} -- (.515,3) node[above] {$\scriptstyle h\ell_\cD$};
\roundNbox{fill=white}{(1,-.05)}{.25}{.05}{.05}{$\scriptstyle A(y)$}
\roundNbox{fill=white}{(0,1.3)}{.25}{.05}{.05}{$\scriptstyle A(x)$}
}
\Rightarrow
\tikzmath{
\draw[thick, red, mid>] (1.5,0) .. controls ++(90:.3cm) and ++(270:.4cm) .. (.8,.8) .. controls ++(90:.9cm) and ++(90:.5cm) .. (-.5,1.6) -- (-.5,.6) .. controls ++(270:.7cm) and ++(270:.3cm) .. (.3,.7) arc (180:0:.1cm) -- (.5,-.3) .. controls ++(270:.5cm) and ++(270:.5cm) .. (1.5,0);
\draw (0,-.7) node[below] {$\scriptstyle g_\cD$} -- (0,1.3); 
\draw (1,-.7) node[below] {$\scriptstyle k_\cD$} -- (1,0);
\draw (0,1.55) .. controls ++(90:.5cm) and ++(270:.5cm) .. (.485,2.6) -- (.485,3);
\draw (1,.25) -- (1,1.55) .. controls ++(90:.5cm) and ++(270:.5cm) .. (.515,2.6) node [right] {$\scriptstyle \mu^B_{h,\ell}$} -- (.515,3) node[above] {$\scriptstyle h\ell_\cD$};
\roundNbox{fill=white}{(1,-.05)}{.25}{.05}{.05}{$\scriptstyle A(y)$}
\roundNbox{fill=white}{(0,1.3)}{.25}{.05}{.05}{$\scriptstyle A(x)$}
}
\Rightarrow
\tikzmath{
\draw[thick, red, mid>] (1.3,0) .. controls ++(90:.8cm) and ++(270:.5cm) .. (.1,1.5) -- (.1,2) 
arc (0:180:.3cm)
-- (-.5,-.8) .. controls ++(270:.5cm) and ++(270:.3cm) .. (.3,-.8) arc (180:0:.1cm) arc (-180:0:.1cm) -- (.7,-.7)
.. controls ++(90:.3cm) and ++(270:.5cm) .. (1.3,0);
\draw (0,-1.3) node[below] {$\scriptstyle g_\cD$} -- (0,.6); 
\draw (.8,-1.3) node[below] {$\scriptstyle k_\cD$} -- (.8,0);
\draw (0,.85) .. controls ++(90:.5cm) and ++(270:.5cm) .. (.385,1.9) -- (.385,2.5);
\draw (.8,.25) -- (.8,.85) .. controls ++(90:.5cm) and ++(270:.5cm) .. (.415,1.9) node [right] {$\scriptstyle \mu^B_{h,\ell}$} -- (.415,2.5) node[above] {$\scriptstyle h\ell_\cD$};
\roundNbox{fill=white}{(.8,0)}{.25}{.05}{.05}{$\scriptstyle A(y)$}
\roundNbox{fill=white}{(0,.6)}{.25}{.05}{.05}{$\scriptstyle A(x)$}
}
\Rightarrow
\tikzmath{
\draw (0,-1.3) node[below] {$\scriptstyle g_\cD$} -- (0,.4); 
\draw (.8,-1.3) node[below] {$\scriptstyle k_\cD$} -- (.8,-.2);
\draw (0,.65) .. controls ++(90:.5cm) and ++(270:.5cm) .. (.385,1.7) -- (.385,2.5);
\draw (.8,.05) -- (.8,.65) .. controls ++(90:.5cm) and ++(270:.5cm) .. (.415,1.7) node [right] {$\scriptstyle \mu^B_{h,\ell}$} -- (.415,2.5) node[above] {$\scriptstyle h\ell_\cD$};
\roundNbox{fill=white}{(.8,-.2)}{.25}{.05}{.05}{$\scriptstyle A(y)$}
\roundNbox{fill=white}{(0,.4)}{.25}{.05}{.05}{$\scriptstyle A(x)$}
\draw[thick, red, mid>] (1.3,0) .. controls ++(90:.8cm) and ++(270:.5cm) .. (.1,1.5)  -- (.1,2)
arc (0:180:.3cm)
-- (-.5,.8) -- (-.5,-.8) .. controls ++(270:.7cm) and ++(270:.8cm) .. (1.3,0);
}
\Rightarrow
\\&
\Rightarrow
\tikzmath{
\draw (0,-1.3) node[below] {$\scriptstyle g_\cD$} -- (0,.4); 
\draw (.8,-1.3) node[below] {$\scriptstyle k_\cD$} -- (.8,-.2);
\draw (0,.65) .. controls ++(90:.5cm) and ++(270:.5cm) .. (.385,1.7) -- (.385,3);
\draw (.8,.05) -- (.8,.65) .. controls ++(90:.5cm) and ++(270:.5cm) .. (.415,1.7) node [right] {$\scriptstyle \mu^B_{h,\ell}$} -- (.415,3) node[above] {$\scriptstyle h\ell_\cD$};
\roundNbox{fill=white}{(.8,-.2)}{.25}{.05}{.05}{$\scriptstyle A(y)$}
\roundNbox{fill=white}{(0,.4)}{.25}{.05}{.05}{$\scriptstyle A(x)$}
\draw[thick, red, mid>] (1.3,0) .. controls ++(90:.8cm) and ++(270:.5cm) .. (.1,1.5)  -- (.1,1.7)
.. controls ++(90:.5cm) and ++(270:.3cm) ..  (.6,2.5)
.. controls ++(90:.5cm) and ++(90:.5cm) ..  (-.5,2.5)
-- (-.5,.8) -- (-.5,-.8) .. controls ++(270:.7cm) and ++(270:.8cm) .. (1.3,0);
}
\overset{(\alpha^2_{h,\ell})^{-1}}{\Rightarrow}
\tikzmath{
\draw (0,-1.3) node[below] {$\scriptstyle g_\cD$} -- (0,.4); 
\draw (.8,-1.3) node[below] {$\scriptstyle k_\cD$} -- (.8,-.2);
\draw (0,.65) -- (0,1.05) .. controls ++(90:.5cm) and ++(270:.5cm) .. (.3,2.1);
\draw (.8,.05) -- (.8,1.05) .. controls ++(90:.5cm) and ++(270:.5cm) .. (.5,2.1);
\draw[double] (.4,2.35) -- (.4,3) node[above] {$\scriptstyle h\ell_\cD$};
\roundNbox{fill=white}{(.8,-.2)}{.25}{.05}{.05}{$\scriptstyle A(y)$}
\roundNbox{fill=white}{(0,.4)}{.25}{.05}{.05}{$\scriptstyle A(x)$}
\roundNbox{fill=white}{(.4,2.35)}{.25}{.05}{.05}{$\scriptstyle \mu^A_{h,\ell}$}
\draw[thick, red, mid>] (1.3,0) .. controls ++(90:.8cm) and ++(270:.5cm) .. (-.2,1.2) 
.. controls ++(90:.5cm) and ++(270:.5cm) ..  (.9,2.25)
.. controls ++(90:.7cm) and ++(90:.7cm) ..  (-.5,2.5)
-- (-.5,.8) -- (-.5,-.8) .. controls ++(270:.7cm) and ++(270:.8cm) .. (1.3,0);
}
\Rightarrow
\tikzmath{
\draw[thick, red, mid>] (1.3,-.2) -- (1.3,1) .. controls ++(90:.8cm) and ++(90:.7cm) .. (-.5,1.8) -- (-.5,-.8) .. controls ++(270:.7cm) and ++(270:.8cm) .. (1.3,-.2);
\draw (0,-1.3) node[below] {$\scriptstyle g_\cD$} -- (0,.4); 
\draw (.8,-1.3) node[below] {$\scriptstyle k_\cD$} -- (.8,-.2);
\draw (0,.65) .. controls ++(90:.2cm) and ++(270:.2cm) .. (.3,1.15);
\draw (.8,.05) -- (.8,.65) .. controls ++(90:.2cm) and ++(270:.2cm) .. (.5,1.15);
\draw[double] (.4,1.4) -- (.4,2.3) node[above] {$\scriptstyle h\ell_\cD$};
\roundNbox{fill=white}{(.8,-.2)}{.25}{.05}{.05}{$\scriptstyle A(y)$}
\roundNbox{fill=white}{(0,.4)}{.25}{.05}{.05}{$\scriptstyle A(x)$}
\roundNbox{fill=white}{(.4,1.4)}{.25}{.05}{.05}{$\scriptstyle \mu^A_{h,\ell}$}
}
\overset{\mu^A_{x,y}}{\Rightarrow}
\tikzmath{
\draw[thick, red, mid>] (1.3,0) -- (1.3,.8) .. controls ++(90:.8cm) and ++(90:.7cm) .. (-.5,1.4) -- (-.5,-.8) .. controls ++(270:.7cm) and ++(270:.8cm) .. (1.3,0);
\draw (0,-1.3) node[below] {$\scriptstyle g_\cD$} -- (0,-.8) .. controls ++(90:.2cm) and ++(270:.2cm) .. (.3,-.25);
\draw (.8,-1.3) node[below] {$\scriptstyle k_\cD$} -- (.8,-.8) .. controls ++(90:.2cm) and ++(270:.2cm) .. (.5,-.25);
\draw[double] (.4,0) -- (.4,2.3) node[above] {$\scriptstyle h\ell_\cD$};
\roundNbox{fill=white}{(0,.8)}{.25}{.05}{.85}{$\scriptstyle A(x\xz y)$}
\roundNbox{fill=white}{(.4,0)}{.25}{.05}{.05}{$\scriptstyle \mu^A_{g,k}$}
}
\overset{(\alpha^2_{g,k})^{-1}}{\Rightarrow}
\tikzmath{
\draw[thick, red, mid>] (1.3,0) .. controls ++(90:.8cm) and ++(90:.7cm) .. (-.5,.8) -- (-.5,-.8) .. controls ++(270:.7cm) and ++(270:.8cm) .. (1.3,0);
\draw (0,-2.3) node[below] {$\scriptstyle g_\cD$} .. controls ++(90:.5cm) and ++(270:.5cm) .. (.385,-1.25) -- (.385,0);
\draw (.8,-2.3) node[below] {$\scriptstyle k_\cD$} .. controls ++(90:.5cm) and ++(270:.5cm) .. (.415,-1.25) node [right] {$\scriptstyle \mu^B_{g,k}$} -- (.415,0);
\draw[double] (.4,0) -- (.4,1.3) node[above] {$\scriptstyle h\ell_\cD$};
\roundNbox{fill=white}{(.8,0)}{.25}{.85}{.05}{$\scriptstyle A(x\xz y)$}
}
\,.
\end{align*}
We define an adjoint equivalence $\iota^B = (\iota^B_*, \iota^B_{1}): I_\cD \Rightarrow B\circ I_\cC$
by $\iota^B_* = \id_{e_\cD}$, and $\iota^B_{1} := B^1_e\in \cD(\id_{e_\cD}\Rightarrow B(\id_{e_\cC}))$ 
from \eqref{eq:B1}.
Finally, we define the associator $\omega^B$ and unitors $\ell^B, r^B$ to be identities.
\end{construction}

\begin{lem}
\label{lem:Truncation3Functor}
The data $(\mu^B, \iota^B, \omega^B, \ell^B, r^B)$ endows $B: \rmB\cC \to \rmB\cD$ with the structure of a weak 3-functor.

\DeferProof{sec:CoherenceProofs1Morphisms}
\end{lem}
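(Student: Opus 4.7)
The plan is to verify each of the weak $3$-functor axioms for the data $(B, B^1, B^2, \mu^B, \iota^B, \omega^B, \ell^B, r^B)$ enumerated in Appendix~\ref{sec:Weak3CategoryCoherences}. The guiding principle is that $B$ arises by ``conjugating'' every $2$-cell of $A$ by the biadjoint biequivalence $\alpha_*$, while $\mu^B_{x,y}$ is built from $\mu^A_{x,y}$, the transformation data $\alpha^2$ of $\alpha$, and red-strand cups and caps. Consequently, each $B$-axiom should reduce --- after applying snake/zigzag identities for $\alpha_*$, naturality of the interchanger in $\cD$, and the already-verified transformation coherences for $\alpha$ --- to the corresponding axiom for $A$, which holds by hypothesis.

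The verification splits into four independent families of checks. First I would check that $\mu^B_{x,y}$ is natural in $x$ and $y$ and assembles into an adjoint equivalence; naturality is a consequence of naturality of $\mu^A$, of $\alpha^2$, and of the interchanger, while the adjoint equivalence property is automatic because every building block of $\mu^B_{x,y}$ is invertible, so we may extend to a coherent inverse via Remark~\ref{rem:InvertibleExtendsToAdjointEq}. Second, since $\mu^B_{g,h} = \id_{gh_\cD}$, $\iota^B_* = \id_{e_\cD}$, and $\iota^B_1 = B^1_e$ by definition, the unit axioms relating $\mu^B$, $B^1$, $B^2$, and $\iota^B$ unpack directly from the definitions \eqref{eq:B1} and \eqref{eq:B2} together with the corresponding unit axioms for $A$. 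Third I would verify the pentagon axiom for $\mu^B$ with $\omega^B$ chosen to be the identity. Fourth, the left and right unit triangle axioms for $\ell^B, r^B = \id$ are handled analogously, using the unit coherences for $\ell^A, r^A$ and the appropriate component of $\alpha^1$.

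The main obstacle is the pentagon coherence. Once unpacked, both sides are large string diagrams containing two red-strand zigzags per $\mu^B$-factor, four instances of $\alpha^2$, two of $\mu^A$, and a collection of interchangers between red strands and the shaded coupons $A(x), A(y), A(z)$. The strategy is to systematically slide the red strands past each other and past the shaded coupons using naturality of the interchanger (as in Notation~\ref{nota:DashedBoxForWhiskering}), applying zigzag identities to cancel matching cup-cap pairs, until the remaining diagram is recognizable as the $A$-pentagon for $\omega^A$ wrapped by the $2$-cocycle-like coherence axiom relating $\alpha^2$ at $(g,h), (h,k), (g,hk), (gh,k)$. Once those two pieces are isolated, the identity of the two composites follows from the fact that both $A$ is a $3$-functor and $\alpha$ is a transformation.

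All these calculations are routine but notationally heavy, so the detailed graphical manipulations will be deferred to Appendix~\ref{sec:CoherenceProofs}, following the convention indicated after Theorem~\ref{thm:tricatpt}.
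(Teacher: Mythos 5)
Your overall strategy --- reduce each axiom for $B$ to the corresponding axiom for $A$ together with the transformation coherences of $\alpha$ and the zigzag identities for the biadjoint biequivalence $\alpha_*$ --- is exactly the paper's, and your treatment of the unit axioms and of $\omega^B$ (despite the labelling, see below) is on target. However, you have misplaced the difficulty. The pentagon coherence \ref{Functor:PentagonCoherence} involves only the $0$-cell components $\mu^B_{g,h}$ and the associator $\omega^B$, all of which are identities by Construction~\ref{const:B3Functor}; every arrow in that diagram is the identity $2$-cell, so it commutes for free (likewise \ref{Functor:TriangleCoherence}). The diagram you describe in your ``main obstacle'' paragraph --- containing the coupons $A(x),A(y),A(z)$, instances of $\alpha^2$, and $\mu^A$ --- is not the pentagon but the $2$-modification condition for $\omega^B$ inside \ref{Functor:omega}; your proposed reduction of it to \ref{Functor:omega} for $A$ plus the associator coherence \ref{Transformation:AssociatorCoherence} for $\alpha$ (used twice) is correct and matches the paper.

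The genuine gap is that you treat the statement that $\mu^B$ is an adjoint equivalence of $2$-functors as ``automatic because every building block is invertible.'' Invertibility only gives you Remark~\ref{rem:InvertibleExtendsToAdjointEq}; being a $2$-transformation additionally requires the compatibility of $\mu^B_{-,-}$ with vertical composition of $1$-cells, axiom \ref{Functor:mu.monoidal}, and this is the single largest verification in the paper's proof (a full-page diagram in Appendix~\ref{sec:CoherenceProofs1Morphisms}). It does not follow formally from invertibility: one must chase the composite defining $\mu^B_{x_2\xo x_1,\,y_2\xo y_1}$ against $\mu^B_{x_2,y_2}$ and $\mu^B_{x_1,y_1}$, using \ref{Functor:mu.monoidal} for $A$, the interchanger axioms \ref{Interchanger:Composition} and \ref{Interchanger:Natural}, and the cancellation of matching cup--cap pairs on the $\alpha_*$ strands. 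Your plan needs a fifth family of checks devoted to this axiom (and its unital companion \ref{Functor:mu.unital}); as written, the hardest step of the lemma is asserted rather than proved.
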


\begin{lem}
\label{lem:Truncation3FunctorGPointed}
The data
$\beta=(\beta_*:=e_\cD, \beta_g := \id_{g_{\cD}}, \beta_{\id_g}:=B^1_g, \beta^1:=\id_{\id_{e_\cD}}, \beta^2_{g,h}:=\id_{\id_{gh_\cD}},): \pi^\cD \Rightarrow B\circ \pi^\cC$ defines a natural isomorphism. 
\DeferProof{sec:CoherenceProofs1Morphisms}
\end{lem}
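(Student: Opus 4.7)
The plan is to verify that the tuple $\beta = (\beta_*, \beta_g, \beta_{\id_g}, \beta^1, \beta^2)$ satisfies each of the axioms of a weak natural transformation $\pi^\cD \Rightarrow B\circ \pi^\cC$, as unpacked for $\Gray$-monoids in Appendix~\ref{sec:Weak3CategoryCoherences}. The first step is a type-check: since $\pi^\cC$ and $\pi^\cD$ are strict $\Gray$-functors with $\pi^\cC(g) = g_\cC$, $\pi^\cD(g) = g_\cD$, and by construction $B(g_\cC) = g_\cD$ with $\mu^B_{g,h} = \id$ and $\iota^B_* = \id$, each of the proposed components lives in the correct hom-category of $\cD$. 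In particular, $\beta_* = e_\cD$ is a valid $0$-cell, $\beta_g = \id_{g_\cD}$ is a valid $1$-cell $g_\cD \xz e_\cD \to e_\cD \xz g_\cD$, the 2-cell $\beta_{\id_g} := B^1_g$ has the correct source $\id_{g_\cD}$ and target $B(\id_{g_\cC})$, and the identities $\beta^1$, $\beta^2_{g,h}$ are 2-cells of the correct type precisely because $\mu^B_{g,h} = \id$, $\iota^B_* = \id$.

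Next I would address the naturality axiom at the only $2$-morphisms of $\rmB G$, namely the identities $\id_g$. This axiom is a square whose sides involve $\beta_{\id_g} = B^1_g$, the trivial unitor/compositor of the strict $\pi^\cD$, and pieces of the data of $B$; it reduces directly to the unitality and naturality conditions on $B^1$ already established in Lemma~\ref{lem:Truncation2Functor}.

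The remaining coherences --- the unit axiom involving $\iota^B$, the compositor axiom involving $\mu^B$, and the associator/unitor pentagon-type diagrams of a transformation --- all simplify substantially: since $\pi^\cC$ and $\pi^\cD$ are strict, their unit and associator data is trivial; since $\mu^B_{g,h} = \id$, $\iota^B_* = \id$, and $\omega^B, \ell^B, r^B$ are identities, each diagram collapses to an instance of the corresponding axiom for $B$ evaluated at identity $1$-cells of $\rmB\cC$, which is a consequence of $B$ being a weak $3$-functor as verified in Lemma~\ref{lem:Truncation3Functor}. The main obstacle is therefore purely bookkeeping: one must carefully unpack each natural-transformation axiom in the graphical calculus and recognize the collapse onto data already known for $B$. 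For this reason, the full verification is deferred to Appendix~\ref{sec:CoherenceProofs}.
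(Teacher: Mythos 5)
Your proposal is correct and follows essentially the same route as the paper: a type-check followed by an axiom-by-axiom verification in which everything collapses because $\pi^\cC$, $\pi^\cD$ are strict and $\mu^B_{g,h}$, $\iota^B_*$, $\omega^B$, $\ell^B$, $r^B$ are identities, leaving as the only substantive check the compatibility of $\beta_{\id_g}=B^1_g$ with composition (condition \ref{Transformation:eta_c.monoidal}), i.e.\ $B^2_{\id_g,\id_g}\xt(B^1_g\xo B^1_g)=B^1_g$, which is exactly the unitality coherence \ref{Functor:A.unital} for $B$ supplied by Lemma~\ref{lem:Truncation2Functor}. The paper additionally notes that condition \ref{Transformation:eta_c.unital} forces $\beta_{\id_g}=B^1_g\xt((\pi^\cD)^1_g)^{-1}=B^1_g$, consistent with the definition, but this is the same bookkeeping you describe.
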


We now define for $x\in \cC(g_\cC \to h_\cC)$ the 2-cell
$\gamma_x$ given by
\begin{equation}
\label{eq:DefinitionOfGamma_x}
\tikzmath{
\draw[thick, red, mid>] (-.8,-.7) -- (-.8,.3) .. controls ++(90:.5cm) and ++(270:.5cm) .. (.5,1.7);
\draw[thick, red, mid>] (.5,0) .. controls ++(90:.5cm) and ++(90:.5cm) .. (-.5,.3) -- (-.5,-.3) .. controls ++(270:.5cm) and ++(270:.5cm) .. (.5,0);
\draw (0,-.7) node[below] {$\scriptstyle g_\cD$} -- (0,1.7) node[above] {$\scriptstyle A(h_\cC)$};
\roundNbox{fill=white}{(0,0)}{.25}{.05}{.05}{$\scriptstyle A(x)$}
}
\Rightarrow
\tikzmath{
\draw[thick, red, mid>] (-.7,-.7) -- (-.7,.6) arc (180:0:.1cm) -- (-.5,-.3) .. controls ++(270:.5cm) and ++(270:.5cm) .. (.5,0) .. controls ++(90:.5cm) and ++(270:.5cm) .. (-.3,.9) arc (0:180:.1cm) arc (0:-180:.1cm) .. controls ++(90:.5cm) and ++(270:.5cm) .. (.5,1.7);
\draw (0,-.7) node[below] {$\scriptstyle g_\cD$} -- (0,1.7) node[above] {$\scriptstyle A(h_\cC)$};
\roundNbox{fill=white}{(0,0)}{.25}{.05}{.05}{$\scriptstyle A(x)$}
}
\Rightarrow
\tikzmath{
\draw[thick, red, mid>] (-.7,-.7) -- (-.7,.6) arc (180:0:.1cm) -- (-.5,-.3) .. controls ++(270:.5cm) and ++(270:.5cm) .. (.5,0) .. controls ++(90:.5cm) and ++(270:.5cm) .. (-.3,.85) .. controls ++(90:.5cm) and ++(270:.5cm) .. (.5,1.7);
\draw (0,-.7) node[below] {$\scriptstyle g_\cD$} -- (0,1.7) node[above] {$\scriptstyle A(h_\cC)$};
\roundNbox{fill=white}{(0,0)}{.25}{.05}{.05}{$\scriptstyle A(x)$}
}
\Rightarrow
\tikzmath{
\draw[thick, red, mid>] (-.7,-.7) -- (-.7,-.4) arc (180:0:.1cm) -- (-.5,-.4) .. controls ++(270:.5cm) and ++(270:.5cm) .. (.5,0) .. controls ++(90:.5cm) and ++(270:.5cm) .. (-.3,.85) .. controls ++(90:.5cm) and ++(270:.5cm) .. (.5,1.7);
\draw (0,-.7) node[below] {$\scriptstyle g_\cD$} -- (0,1.7) node[above] {$\scriptstyle A(h_\cC)$};
\roundNbox{fill=white}{(0,0)}{.25}{.05}{.05}{$\scriptstyle A(x)$}
}
\Rightarrow
\tikzmath{
\draw[thick, red, mid>] (-.7,-.7) -- (-.7,-.4) arc (180:0:.1cm) -- (-.5,-.4) .. controls ++(270:.5cm) and ++(270:.5cm) .. (.5,0) -- (.5,1.7);
\draw (0,-.7) node[below] {$\scriptstyle g_\cD$} -- (0,1.7) node[above] {$\scriptstyle A(h_\cC)$};
\roundNbox{fill=white}{(0,0)}{.25}{.05}{.05}{$\scriptstyle A(x)$}
}
\Rightarrow  
\tikzmath{
\draw[thick, red, mid>] (-.5,-.7) .. controls ++(90:.5cm) and ++(270:.5cm) .. (.5,.7) -- (.5,1.7);
\draw (0,-.7) node[below] {$\scriptstyle g_\cD$} -- (0,1.7) node[above] {$\scriptstyle A(h_\cC)$};
\roundNbox{fill=white}{(0,1)}{.25}{.05}{.05}{$\scriptstyle A(x)$}
}
\,.
\end{equation}

\begin{thm}
\label{thm:TruncationStrictifying1Morphisms}
The 1-morphisms $(A,\alpha), (B, \beta)
\in \TriCat_G((\rmB\cC, \pi^\cC) \to (\rmB\cD, \pi^\cD))$ are equivalent via the 2-morphism $(\gamma, \id): (B, \beta)\Rightarrow (A, \alpha)$ where $\gamma=(\gamma_*:=\alpha_*, \gamma_g:=\alpha_g, \gamma_x, \gamma^1:=\alpha^1, \gamma^2_{g,h}:=\alpha^2_{g,h}): B \Rightarrow A$
is the natural isomorphism where
$\gamma_x$ is given in \eqref{eq:DefinitionOfGamma_x} above.
\DeferProof{sec:CoherenceProofs1Morphisms}
\end{thm}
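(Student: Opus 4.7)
The plan is to verify three things in turn: (i) $\gamma$ assembles into a natural transformation $B \To A$ of $3$-functors; (ii) the identity provides the modification required to promote $\gamma$ to a $2$-morphism in $\TriCat_G$, i.e., $\alpha = (\gamma \circ \pi^\cC) * \beta$ on the nose; and (iii) $(\gamma, \id)$ is invertible.

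For (i), the $0$-cell and $1$-cell components of $\gamma$ together with its coherence data $\gamma^1, \gamma^2$ are copied verbatim from $\alpha: \pi^\cD \To A \circ \pi^\cC$, so every axiom from Definition~\ref{defn:Transformation} that involves only these components reduces directly to the corresponding axiom for $\alpha$, using the identifications $\pi^\cC(*) = *_\cC$ and $\pi^\cC(g) = g_\cC$. The genuinely new content is the value $\gamma_x$ in \eqref{eq:DefinitionOfGamma_x}, obtained from $\alpha_g \xz B(x)$ by inserting a zigzag for the biadjoint biequivalence extending $\alpha_*$ and then collapsing it. The axioms of $\gamma$ that involve $\gamma_x$ are: naturality in $f: x \To y$; compatibility with $B^1, A^1$ and with $B^2, A^2$; and compatibility with the tensor and unit data $\mu^B, \iota^B$ versus $\mu^A, \iota^A$, mediated by $\gamma^2$ and $\gamma^1$. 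Each reduces to a planar diagrammatic identity in $\cD$ by unfolding the definitions of $B(f), B^1, B^2, \mu^B, \iota^B$ from \S\ref{sec:Strictifying1Morphisms}, performing local isotopies, and then canceling $\alpha_*$-strands via the zigzag identities; what remains is an instance of the corresponding coherence axiom for $\alpha$. The main obstacle will be the tensor-compatibility axiom, since $\mu^B_{x,y}$ in Construction~\ref{const:B3Functor} is itself a lengthy composite of $\alpha_*$-zigzags with $\alpha^2_{g,k}, \alpha^2_{h,\ell}$, and $\mu^A_{x,y}$; combining it with the $\alpha_*$-loops hidden inside $\gamma_x \xz \gamma_y$ produces an intricate diagram whose reduction to the tensor axiom for $\alpha$ requires careful bookkeeping. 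In the spirit of the other coherence statements in \S\ref{sec:Truncation}, this calculation is deferred to Appendix~\ref{sec:CoherenceProofs}.

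For (ii), the equality $\alpha = (\gamma \circ \pi^\cC) * \beta$ is checked componentwise. At the unique $0$-cell of $\rmB G$ the right-hand side is $\gamma_{*_\cC} \xo \beta_* = \alpha_* \xo e_\cD = \alpha_*$. At a $1$-cell $g \in G$, using $\beta_g = \id_{g_\cD}$ and the triviality of $\beta^1, \beta^2$, the horizontal-composition formula collapses to $\gamma_{g_\cC} = \alpha_g$. The compositor and unitor data match because $\gamma^1 = \alpha^1$ and $\gamma^2 = \alpha^2$ by definition. For (iii), invertibility of $\gamma$ follows because its $0$- and $1$-cell components coincide with those of the invertible $\alpha$, while $\gamma_x$ is an explicit composite of invertible $2$-cells; Remark~\ref{rem:InvertibleExtendsToAdjointEq} then supplies a coherent inverse $\gamma^{-1}$, and $(\gamma^{-1}, \id)$ serves as the desired inverse $2$-morphism in $\TriCat_G$.
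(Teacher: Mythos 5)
Your proposal is correct and follows essentially the same route as the paper: the paper's deferred proof likewise verifies that $\gamma$ satisfies the transformation axioms of Definition~\ref{defn:Transformation} by unfolding the definitions of $B(f)$, $B^1$, $B^2$, $\mu^B$, cancelling the $\alpha_*$-zigzags via the biadjoint biequivalence identities and functoriality of $\xo$, and reducing the remaining coherences (notably \ref{Transformation:AssociatorCoherence} and \ref{Transformation:UnitCoherence}) to those of $\alpha$, with the $\eta^2$-compatibility being the one large diagram — exactly the step you flag as the main obstacle. Your explicit componentwise check that the identity modification is admissible and your appeal to Remark~\ref{rem:InvertibleExtendsToAdjointEq} for invertibility are points the paper treats as immediate, so there is no substantive divergence.
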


\begin{rem}
Working a bit harder, we can actually make $(B,\beta)$ \emph{strictly unital}, i.e., 
$B(\id_{g_\cC}) = \id_{g_\cD}$ and $B^1_g = \id_{g_\cD}$ for all $g\in G$.
This has the following advantages:
$\iota$ becomes trivial,
$\mu^B_{\id_{e_\cC}, x} = \id_{B(x)}$ for all $x\in \cC(g_\cC \to h_\cC)$ by \ref{Functor:lr},
$\pi^\cD = B\circ \pi^\cC$ on the nose,
and $\beta : \pi^\cD \Rightarrow B\circ \pi^\cC$ is the identity transformation.
Unfortunately, this would complicate our definition of the coherence data for $B$ considerably, and it would further obfuscate the reasons why certain commuting diagrams commute in the sequel.
Moreover, it has not yet been shown in the literature that every $G$-crossed braided functor is equivalent to a strictly unital one, although this would follow as a corollary of our main theorem.
We are thus content to work with our $(B, \beta)$ with $\beta$ completely determined by $B$.
\end{rem}

\subsection{Strictifying 2-morphisms}
\label{sec:Strictifying2Morphisms}

Suppose $(\rmB\cC,\pi^\cC),(\rmB\cD, \pi^\cD)\in \TriCat_G^{\pt}$ and 
$(A,\alpha), (B,\beta):(\rmB\cC,\pi^\cC)\to (\rmB\cD, \pi^\cD)$ are two 1-morphisms in $\TriCat_G^{\pt}$.
Since $(A,\alpha), (B,\beta)$ are 1-morphisms in $\TriCat_G^{\pt}$, 
$A(g_\cC) = g_\cD = B(g_\cC)$ for all $g\in G$, and
$\alpha_*= e_\cD = \beta_*$ and $\alpha_g = \id_{g_\cD} = \beta_g$.
Suppose $(\eta, m): \in \TriCat_G((A,\alpha)\Rightarrow (B,\beta))$.
We prove that $(\eta, m)$ is equivalent to a 2-morphism $(\zeta,\id) \in \TriCat_G^{\pt}((A,\alpha)\Rightarrow (B,\beta))$.

As in Defintion \ref{defn:Transformation}, we denote the 0-cell $\eta_*$ by an oriented \textcolor{DarkGreen}{green} strand.
The modification $m=(m_*, m_g)$ as in Definition \ref{defn:Modification}
consists of an invertible 1-cell $m_*: \beta_* \Rightarrow \eta_* \xz \alpha_*$
together with coherent invertible 2-cells
\begin{equation}
\label{eq:mgIso}
\tikzmath{
\node[blue] (beta) at (0,0) {$\scriptstyle \beta_*$};
\node (g1) at (1,0) {$\scriptstyle g_\cD$};
\node[DarkGreen] (eta) at (.5,3) {$\scriptstyle \eta_*$};
\node[red] (alpha) at (1.5,3) {$\scriptstyle \alpha_*$};
\coordinate (g2) at (1,1);
\node (g3) at (-.5,3) {$\scriptstyle g_\cD$};
\node[draw,rectangle, thick, rounded corners=5pt] (m) at (0,1) {$\scriptstyle m_*$};
\draw (g1) to[in=-90,out=90] (g2) to[in=-90,out=90] (g3);
\draw[blue,dashed,thick,mid>] (beta) to[in=-90,out=90] (m);
\draw[red, dashed, thick, mid>] (m.60) to[in=-90,out=90] (alpha);
\draw[DarkGreen, thick, mid>] (m.120) to[in=-90,out=90] (eta);
}
\overset{m_g}{\Rightarrow}
\tikzmath{
\node[blue] (beta) at (0,0) {$\scriptstyle \beta_*$};
\node (g1) at (1,0) {$\scriptstyle g_\cD$};
\node[DarkGreen] (eta) at (.5,3) {$\scriptstyle \eta_*$};
\node[red] (alpha) at (1.5,3) {$\scriptstyle \alpha_*$};
\coordinate (g2) at (-.5,2);
\node (g3) at (-.5,3) {$\scriptstyle g_\cD$};
\node[draw,rectangle, thick, rounded corners=5pt] (m) at (1,2) {$\scriptstyle m_*$};
\draw (g1) to[in=-90,out=90] (g2) to[in=-90,out=90] (g3);
\draw[blue,dashed,thick,mid>] (beta) to[in=-90,out=90] (m);
\draw[red, dashed, thick, mid>] (m.60) to[in=-90,out=90] (alpha);
\draw[DarkGreen, thick, mid>] (m.120) to[in=-90,out=90] (eta);
}
\,.
\end{equation}
Observe that since $\beta_*=e_\cD=\alpha_*$ and $\beta_g = \id_{g} = \alpha_g$, we may completely omit the dashed lines in \eqref{eq:mgIso}.
As in Remark \ref{rem:InvertibleExtendsToAdjointEq}, we extend the invertible 1-cell $m_* \in \cD$ to an adjoint equivalence arbitrarily.

For $x\in \cC(g_\cC \to h_\cC)$, we define an invertible 2-cell $\zeta_x$ as the following composite:
\begin{equation}
\label{eq:DefinitionOfZeta_x}
\begin{split}
\tikzmath{
\node (g) at (0,0) {$\scriptstyle g_\cD$};
\node (h) at (0,5) {$\scriptstyle h_\cD$};
\node[draw,rectangle, thick, rounded corners=5pt] (Bx) at (0,1) {$\scriptstyle A(x)$};
\draw (g) to[in=-90,out=90] (Bx);
\draw (Bx) to[in=-90,out=90] (h);
\draw[thick, dashed, rounded corners=5pt] (.5,2.5) rectangle (1.5,4.5);
}
&\overset{}{\Rightarrow}
\tikzmath{
\node (g) at (0,-1) {$\scriptstyle g_\cD$};
\node (h) at (0,4) {$\scriptstyle h_\cD$};
\draw[thick, dashed, rounded corners=5pt] (-.5,.5) rectangle (1.5,2.5);
\node[draw,rectangle, thick, rounded corners=5pt] (Ax) at (0,0) {$\scriptstyle A(x)$};
\node[draw,rectangle, thick, rounded corners=5pt] (m1) at (1,2) {$\scriptstyle m_*$};
\node[draw,rectangle, thick, rounded corners=5pt] (m2) at (1,3) {$\scriptstyle m_*^{-1}$};
\draw (g) to[in=-90,out=90] (Ax);
\draw (Ax) to[in=-90,out=90] (h);
\draw[DarkGreen, thick, mid>] (m1) to[in=-90,out=90] (m2);
}
\overset{m_h^{-1}}{\Rightarrow}
\tikzmath{
\node (g) at (1,-1) {$\scriptstyle g_\cD$};
\coordinate (h1) at (1,1);
\coordinate (h2) at (0,3);
\node (h3) at (0,4) {$\scriptstyle h_\cD$};
\node[draw,rectangle, thick, rounded corners=5pt] (Ax) at (1,0) {$\scriptstyle A(x)$};
\node[draw,rectangle, thick, rounded corners=5pt] (m1) at (0,1) {$\scriptstyle m_*$};
\node[draw,rectangle, thick, rounded corners=5pt] (m2) at (1,3) {$\scriptstyle m_*^{-1}$};
\draw (g) to[in=-90,out=90] (Ax);
\draw (Ax) to[in=-90,out=90] (h1) to[in=-90,out=90] (h2) to[in=-90,out=90] (h3);
\draw[DarkGreen, thick, mid>] (m1) to[in=-90,out=90] (m2);
\draw[thick, dashed, rounded corners=5pt] (-.5,-.5) rectangle (1.5,1.5);
}
\overset{\phi}{\Rightarrow}
\tikzmath{
\node (g) at (1,-1) {$\scriptstyle g_\cD$};
\coordinate (h1) at (0,3);
\node (h2) at (0,4) {$\scriptstyle h_\cD$};
\coordinate (mm) at (0,1);
\draw[thick, dashed, rounded corners=5pt] (-.5,.5) rectangle (1.5,2.5);
\node[draw,rectangle, thick, rounded corners=5pt] (Ax) at (1,1) {$\scriptstyle A(x)$};
\node[draw,rectangle, thick, rounded corners=5pt] (m1) at (0,0) {$\scriptstyle m_*$};
\node[draw,rectangle, thick, rounded corners=5pt] (m2) at (1,3) {$\scriptstyle m_*^{-1}$};
\draw (g) to[in=-90,out=90] (Ax);
\draw (Ax) to[in=-90,out=90] (h1) to[in=-90,out=90] (h2);
\draw[DarkGreen, thick, mid>] (m1) to[in=-90,out=90] (mm) to[in=-90,out=90] (m2);
}
\\&\overset{\eta_x}{\Rightarrow}
\tikzmath{
\node (g1) at (1,0) {$\scriptstyle g_\cD$};
\coordinate (g2) at (1,1);
\node (h) at (0,5) {$\scriptstyle h_\cD$};
\coordinate (mm) at (1,3);
\draw[thick, dashed, rounded corners=5pt] (-.5,.5) rectangle (1.5,2.5);
\node[draw,rectangle, thick, rounded corners=5pt] (Bx) at (0,3) {$\scriptstyle B(x)$};
\node[draw,rectangle, thick, rounded corners=5pt] (m1) at (0,1) {$\scriptstyle m_*$};
\node[draw,rectangle, thick, rounded corners=5pt] (m2) at (1,4) {$\scriptstyle m_*^{-1}$};
\draw (g1) to[in=-90,out=90] (g2) to[in=-90,out=90] (Bx);
\draw (Bx) to[in=-90,out=90] (h);
\draw[DarkGreen, thick, mid>] (m1) to[in=-90,out=90] (mm) to[in=-90,out=90] (m2);
}
\overset{m_g}{\Rightarrow}
\tikzmath{
\node (g) at (0,0) {$\scriptstyle g_\cD$};
\node (h) at (0,5) {$\scriptstyle h_\cD$};
\node[draw,rectangle, thick, rounded corners=5pt] (Bx) at (0,3) {$\scriptstyle B(x)$};
\node[draw,rectangle, thick, rounded corners=5pt] (m1) at (1,2) {$\scriptstyle m_*$};
\node[draw,rectangle, thick, rounded corners=5pt] (m2) at (1,4) {$\scriptstyle m_*^{-1}$};
\draw (g) to[in=-90,out=90] (Bx);
\draw (Bx) to[in=-90,out=90] (h);
\draw[DarkGreen, thick, mid>] (m1) to[in=-90,out=90] (m2);
\draw[thick, dashed, rounded corners=5pt] (-.5,1.5) rectangle (1.5,3.5);
}
\overset{\phi}{\Rightarrow}
\tikzmath{
\node (g) at (0,0) {$\scriptstyle g_\cD$};
\node (h) at (0,5) {$\scriptstyle h_\cD$};
\node[draw,rectangle, thick, rounded corners=5pt] (Bx) at (0,2) {$\scriptstyle B(x)$};
\node[draw,rectangle, thick, rounded corners=5pt] (m1) at (1,3) {$\scriptstyle m_*$};
\node[draw,rectangle, thick, rounded corners=5pt] (m2) at (1,4) {$\scriptstyle m_*^{-1}$};
\draw (g) to[in=-90,out=90] (Bx);
\draw (Bx) to[in=-90,out=90] (h);
\draw[DarkGreen, thick, mid>] (m1) to[in=-90,out=90] (m2);
\draw[thick, dashed, rounded corners=5pt] (.5,2.5) rectangle (1.5,4.5);
}
\overset{}{\Rightarrow}
\tikzmath{
\node (g) at (0,0) {$\scriptstyle g_\cD$};
\node (h) at (0,5) {$\scriptstyle h_\cD$};
\node[draw,rectangle, thick, rounded corners=5pt] (Bx) at (0,2) {$\scriptstyle B(x)$};
\draw (g) to[in=-90,out=90] (Bx);
\draw (Bx) to[in=-90,out=90] (h);
}.
\end{split}
\end{equation}
We define the unit map as in \ref{Transformation:eta^1} by $\zeta^1:=\id_{\id_{e_\cD}}$ and the monoidal map as in \ref{Transformation:eta^2}
 by $\zeta^2_{g,h}:=\id_{\id_{gh_\cD}}$.

\begin{lem}
\label{lem:TruncationDefine2Morphism}
The data $\zeta:=(\zeta_*=e, \zeta_g = \id_g, \zeta_x, \zeta^1:=\id_{\id_{e_\cD}}, \zeta^2_{g,h}:=\id_{\id_{gh_\cD}})$ together with the identity modification defines a 2-morphism $(\zeta, \id) \in \TriCat_G^{\pt} ((A,\alpha) \Rightarrow (B, \beta))$.
\DeferProof{sec:CoherenceProofs2Morphisms}
\end{lem}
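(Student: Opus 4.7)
The plan is to verify that $(\zeta,\id)$ satisfies three groups of conditions: (i) the strictness constraints required of a 2-morphism in $\TriCat_G^{\pt}$, namely $\zeta_* = e_\cD$, $\zeta_g = \id_{g_\cD}$, $\zeta^1 = \id_{\id_{e_\cD}}$, and $\zeta^2_{g,h} = \id_{\id_{gh_\cD}}$; (ii) the axioms for $\zeta$ to be a tritransformation $A \To B$; and (iii) the modification identity \eqref{eq:ModificationConditionFor2Morphisms} with the new modification taken to be the identity. Condition (i) holds by construction, so only (ii) and (iii) require real work.

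For (ii), the key observation is that $\zeta_x$ is obtained by conjugating $\eta_x$ with the adjoint equivalence $m_*$, using the unit/counit of that adjunction to cancel a loop of $m_* \xo m_*^{-1}$, and using $m_g$ and $m_h^{-1}$ from \eqref{eq:mgIso} together with two interchangers $\phi$ to bridge between the $\eta_*$-world (where the $\eta_x$ component lives) and the trivial $\zeta_* = e_\cD$-world. Naturality of $\zeta_x$ in 2-cells $f\colon x\To y$ then follows from naturality of $\eta_x$ in $f$ plus naturality of $\phi$. The composition coherence $\zeta^2_{x,y}$ reduces to the composition coherence for $\eta$ together with a triangle identity cancellation of the middle $m_* \xo m_*^{-1}$ pair; the intermediate instances of $m_g$ that appear in the comparison cancel against their inverses by the modification axioms on $m$. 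The unit coherence $\zeta^1$ reduces analogously, using $\iota^A_* = \iota^B_* = e_\cD$, $\iota^A_1 = A^1_e$, and $\iota^B_1 = B^1_e$. The remaining pentagon/hexagon coherences for $\zeta$ are derived by plugging the defining composite into the coherences and invoking the corresponding pentagon/hexagon coherences for $\eta$ together with the coherence of $m$ on $\alpha^2, \beta^2$ (which are themselves identities in $\TriCat_G^{\pt}$).

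For (iii), the condition reads $\beta = (\zeta \circ \pi^\cC) \xt \alpha$ on the nose. On the $*$- and $g$-components this is immediate from $\beta_* = e_\cD = \zeta_* \xz \alpha_*$ and $\beta_g = \id_{g_\cD} = \zeta_g \xo \alpha_g$. The nontrivial check is the equation $\beta_{\id_g} = \zeta_{\id_{g_\cC}} \xt A^1_g$, i.e., $B^1_g = \zeta_{\id_{g_\cC}} \xt A^1_g$: one unfolds $\zeta_{\id_{g_\cC}}$ using \eqref{eq:DefinitionOfZeta_x} with $x = \id_{g_\cC}$, applies the unit axiom for $\eta$ (relating $\eta_{\id_g}$ to $\eta^1$ and hence to $A^1_g$ and $B^1_g$), and finishes via the modification-compatibility axiom for $m_g$ applied to identity $1$-cells; after the triangle-identity cancellations in the $m_*$-loop, what remains matches $B^1_g$ on the nose.

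The step I expect to require the most care is the verification of the composition coherence $\zeta^2_{x,y}$ in (ii): one must carefully manage the interchangers between the $m_*$- and $m_*^{-1}$-strands and the $A(x)$-, $A(y)$-coupons, and show that when two composable $\zeta_x, \zeta_y$ are stacked the intermediate $m_* \xo m_*^{-1}$ pair cancels cleanly so that the result differs from $\zeta_{y \xo x}$ only by the strict coherences $B^2_{x,y}$ and $(A^2_{x,y})^{-1}$ already built into the definition. This is a purely diagrammatic, if lengthy, chase in the graphical calculus of \S\ref{sec:GraphicalCalculus}, and I would defer the detailed diagram chase to the appropriate appendix, following the convention used for the preceding lemmas in \S\ref{sec:Strictifying1Morphisms}.
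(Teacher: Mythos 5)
Your proposal is correct and matches the paper's proof in Appendix~\ref{sec:CoherenceProofs2Morphisms}: one checks the transformation axioms for $\zeta$ one at a time, reducing each to the corresponding axiom for $\eta$ by cancelling the $m_*$-conjugation (via the adjoint equivalence and interchanger identities) and invoking the modification axioms for $m$, while the identity-modification condition is automatic once \ref{Transformation:eta_c.unital} gives $\zeta_{\id_{g_\cC}} = B^1_g \xt (A^1_g)^{-1}$. The only (cosmetic) discrepancy is that the lengthiest diagram in the paper is the $\xz$-monoidality coherence \ref{Transformation:eta^2}, proved using \ref{Modification:MonoidalCoherence} for $m$ and \ref{Transformation:eta^2} for $\eta$, rather than the $\xo$-composition coherence you flag as the hardest step; both are handled by the same kind of chase you describe.
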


Observe now that by the strictness properties of $\alpha$ and $\beta$,
$m_* : e_\cD \Rightarrow \eta_*$.
Erasing the dotted lines from \eqref{eq:mgIso} for $m_g$, we see that
the same data as $m=(m_*, m_g)$ actually defines an invertible modification $\zeta\Rrightarrow \eta$! 

\begin{thm}
\label{thm:TruncationStrictify2Morphisms}
The 2-morphisms
$(\eta, m),(\zeta, \id) \in\TriCat_G( (A,\alpha) \Rightarrow (B, \beta))$
are equivalent via the 3-morphism
$(m,\id)\in \TriCat_G((\zeta,\id)\Rrightarrow (\eta,\id))$.
\DeferProof{sec:CoherenceProofs2Morphisms}
\end{thm}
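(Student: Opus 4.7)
The plan is to verify that the pair $(m, \id)$ is a valid invertible 3-morphism in $\TriCat_G$ from $(\zeta, \id)$ to $(\eta, m)$, following Definition~\ref{defn:TricatG}. This requires two pieces of data: a modification $m: \zeta \Rrightarrow \eta$ and an invertible perturbation $\id$ satisfying the perturbation diagram sandwiched between the modification data of $(\zeta, \id)$ and $(\eta, m)$.

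First I would check that the components $(m_*, m_g)$ --- which originally presented $m$ as a modification $\beta \Rrightarrow \eta \circ \pi^\cC$ --- also assemble into a modification $m: \zeta \Rrightarrow \eta$. Here the key point is that $\zeta_x$ was defined in \eqref{eq:DefinitionOfZeta_x} \emph{precisely} by inserting an $m_* \xo m_*^{-1}$ loop and conjugating $\eta_x$ by $m_g$, $m_h^{-1}$, and the interchanger $\phi$. Therefore the modification axiom relating $\zeta_x$ and $\eta_x$ via $m_g$ and $m_h$ reduces, after cancelling the loop using the snake equations for the adjoint equivalence extending $m_*$, to a tautology. The unit and monoidal component axioms of the modification are trivial since $\zeta^1, \zeta^2$ are identities.

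Second I would verify the perturbation compatibility diagram preceding \eqref{eq:4MorphismCriterion} for the triple $((\zeta, \id), (\eta, m), (m, \id))$. Since the modification part of $(\zeta, \id)$ is the identity and the perturbation is $\id$, the diagram collapses to the assertion that $m$ (viewed as a modification $\beta \Rrightarrow \eta \circ \pi^\cC$) is literally equal to the whiskering $(m \circ \pi^\cC) \ast \id_\alpha$. Under the strictness conditions $\alpha_* = \beta_* = e_\cD$ and $\alpha_g = \beta_g = \id_{g_\cD}$ from $\TriCat_G^{\pt}$, both sides carry the same component data $(m_*, m_g)$, and the identity holds on the nose.

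Finally, invertibility of $(m, \id)$ is immediate: $m_*$ was extended to an adjoint equivalence via Remark~\ref{rem:InvertibleExtendsToAdjointEq}, each $m_g$ is invertible by assumption, and the identity perturbation is trivially invertible. The main obstacle will be the graphical verification of the modification axiom for $m: \zeta \Rrightarrow \eta$: because the definition of $\zeta_x$ involves several interchangers, unit maps, and the snake equations for the extension of $m_*$, the verification amounts to a substantial diagram chase in the graphical calculus of $\Gray$-monoids, entirely analogous in style to the verifications in \S\ref{sec:Strictifying1Morphisms}. I would therefore defer the detailed cell-by-cell check to Appendix~\ref{sec:CoherenceProofs}.
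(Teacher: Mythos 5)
Your overall strategy coincides with the paper's: the components $(m_*,m_g)$ of the modification in \eqref{eq:ModificationConditionFor2Morphisms} reassemble, thanks to the strictness $\alpha_*=\beta_*=e_\cD$ and $\alpha_g=\beta_g=\id_{g_\cD}$, into a modification $m:\zeta\Rrightarrow\eta$, and the identity perturbation then satisfies its compatibility triangle on the nose. Your treatment of \ref{Modification:m_c} and of the perturbation condition matches the paper's, up to one imprecision: no snake/zigzag identities are needed. The loop $m_*^{-1}\xo m_*$ in \eqref{eq:DefinitionOfZeta_x} is inserted and removed by the unit $2$-cell of the chosen adjoint equivalence and its inverse, and the whole check is a diagram whose inner faces commute by functoriality of $\xo$, i.e.\ by commuting $2$-cells applied in disjoint regions.

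The genuine gap is your claim that the monoidal and unit axioms of the modification are ``trivial since $\zeta^1,\zeta^2$ are identities.'' They are not: \ref{Modification:MonoidalCoherence} for $m:\zeta\Rrightarrow\eta$ involves the \emph{target} transformation's datum $\eta^2_{g,h}$, which is not an identity in general. After using $\mu^A_{g,h}=\mu^B_{g,h}=\id_{gh_\cD}$ and $\zeta^2_{g,h}=\id$, the hexagon reduces to a genuine identity relating $\eta^2_{g,h}$, $m_{gh}$, $m_g$, $m_h$ and interchangers; this does not follow from strictness alone. It holds because it is precisely the coherence \ref{Modification:MonoidalCoherence} already satisfied by $m$ viewed as the modification $\beta\Rrightarrow(\eta\circ\pi^\cC)*\alpha$ of \eqref{eq:ModificationConditionFor2Morphisms}. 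Likewise \ref{Modification:UnitCoherence} amounts to $m_{1_\cC}=\eta^1$, which is exactly what the unit coherence of the original $m$ yields under the strictness of $(A,\alpha)$ and $(B,\beta)$. These two axioms must therefore be imported from the hypotheses on $(\eta,m)$ rather than dismissed as vacuous; with that substitution your argument agrees with the paper's.
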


\subsection{Strictifying 3-morphisms}
\label{sec:Strictifying3Morphisms}

Suppose now that $(\eta,m=\id),(\zeta, n=\id): (A,\alpha) \Rightarrow (B,\beta)$ are two 2-morphisms in $\TriCat_G^{\pt}$ and $(p,\rho):(\eta, \id) \Rrightarrow (\zeta, \id)$ is a 3-morphism in $\TriCat_G$.

First, since $(\eta,\id),(\zeta,\id)$ are 2-morphisms in $\TriCat_G^{\pt}$, we have that
$\eta_*=e_\cD = \zeta_*$ and $\eta_g = \id_{g_\cD} = \zeta_g$ for all $g\in G$, and the modifications are identities.
This means the perturbation $\rho$ is a 2-cell
$$
\tikzmath{
\draw[dashed, thick, rounded corners=5pt] (-.2,.3) rectangle (-.8,.7);
}
\quad
=
\tikzmath{
\node[blue] (beta) at (0,0) {$\scriptstyle \beta_*=e_\cD$};
\node[red] (alpha) at (.5,2) {$\scriptstyle \alpha_*=e_\cD$};
\node[orange] (zeta) at (-.5,2) {$\scriptstyle \zeta_*=e_\cD$};
\node[draw,rectangle, thick, rounded corners=5pt] (m) at (0,1) {$\scriptstyle m_*=\id_{e_\cD}$};
\draw[blue, dashed, thick, mid>] (beta) to[in=-90,out=90] (m);
\draw[red, dashed, thick, mid>] (m.60) to[in=-90,out=90] (alpha);
\draw[orange, dashed, thick, mid>] (m.120) to[in=-90,out=90] (zeta);
}
\overset{\rho}{\Rightarrow}
\tikzmath{
\node[blue] (beta) at (0,0) {$\scriptstyle \beta_*=e_\cD$};
\coordinate (alpha1) at (.5,2);
\node[red] (alpha2) at (.5,3) {$\scriptstyle \alpha_*=e_\cD$};
\node[orange] (zeta) at (-.5,3) {$\scriptstyle \zeta_*=e_\cD$};
\node[draw,rectangle, thick, rounded corners=5pt] (m) at (0,1) {$\scriptstyle n_*=\id_{e_\cD}$};
\node[draw,rectangle, thick, rounded corners=5pt] (p) at (-.5,2) {$\scriptstyle p_*$};
\draw[blue, dashed, thick, mid>] (beta) to[in=-90,out=90] (m);
\draw[red, dashed, thick, mid>] (m.60) to[in=-90,out=90] (alpha1) to[in=-90,out=90] (alpha2);
\draw[DarkGreen, dashed, thick, mid>] (m.120) to[in=-90,out=90] node[left] {$\scriptstyle \eta_*=e_\cD$} (p);
\draw[orange, dashed, thick, mid>] (p) to[in=-90,out=90] (zeta);
}
=
\quad
\tikzmath{
\node[draw,rectangle, thick, rounded corners=5pt] (p) at (0,0) {$\scriptstyle p_*$};
}
$$
satisfying \ref{Petrubtation:Condition} in Definition \ref{defn:Perturbation}.
We may thus view $\rho$ as an invertible 2-cell $\id_{\id_{e_\cD}} \Rightarrow p_*$, under which \ref{Petrubtation:Condition} becomes
\begin{equation}
\label{eq:PerturbationEquationForRho}
\left(
\tikzmath{
\node (g1) at (0,0) {$\scriptstyle g_\cD$};
\node (g2) at (0,1) {};
\draw[dashed, thick, rounded corners=5pt] (-.2,.3) rectangle (-.8,.7);
\draw (g1) to[in=-90,out=90] (g2);
}
\overset{\rho_*}{\Rightarrow}
\tikzmath{
\node (g1) at (0,0) {$\scriptstyle g_\cD$};
\node (g2) at (0,1) {};
\node[draw,rectangle, thick, rounded corners=5pt] (p) at (-.5,.5) {$\scriptstyle p_*$};
\draw (g1) to[in=-90,out=90] (g2);
}
\overset{p_g}{\Rightarrow}
\tikzmath{
\node (g1) at (0,0) {$\scriptstyle g_\cD$};
\node (g2) at (0,1) {};
\node[draw,rectangle, thick, rounded corners=5pt] (p) at (.5,.5) {$\scriptstyle p_*$};
\draw (g1) to[in=-90,out=90] (g2);
}
\right)
=
\left(
\tikzmath{
\node (g1) at (0,0) {$\scriptstyle g_\cD$};
\node (g2) at (0,1) {};
\draw[dashed, thick, rounded corners=5pt] (.2,.3) rectangle (.8,.7);
\draw (g1) to[in=-90,out=90] (g2);
}
\overset{\rho_*}{\Rightarrow}
\tikzmath{
\node (g1) at (0,0) {$\scriptstyle g_\cD$};
\node (g2) at (0,1) {};
\node[draw,rectangle, thick, rounded corners=5pt] (p) at (.5,.5) {$\scriptstyle p_*$};
\draw (g1) to[in=-90,out=90] (g2);
}
\right)
\qquad\qquad \forall\,g\in G.
\end{equation}

\begin{lem}
\label{lem:Only3Endos}
Any $3$-morphism in $\TriCat_G$ between $2$-morphisms in the subcategory $\TriCat_G^{\pt}$ is an endomorphism.
\DeferProof{sec:CoherenceProofs3Morphisms}
\end{lem}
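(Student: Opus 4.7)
The plan is to exploit the highly strict structure of $2$-morphisms in $\TriCat_G^{\pt}$. Since both $(\eta, \id)$ and $(\zeta, \id)$ lie in $\TriCat_G^{\pt}$, they already agree on almost all their data: $\eta_* = \zeta_* = e_\cD$, $\eta_{g_\cC} = \zeta_{g_\cC} = \id_{g_\cD}$ for every $g \in G$, and the unitality and monoidality coherences $\eta^1, \eta^2, \zeta^1, \zeta^2$ are all identities. The only data on which $\eta$ and $\zeta$ can possibly differ are the $2$-cell components $\eta_x, \zeta_x$ attached to the $1$-cells $x$ of $\cC$ (equivalently, the $2$-cells of $\rmB\cC$). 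To show that the $3$-morphism $(p,\rho)$ is an endomorphism, it therefore suffices to prove $\eta_x = \zeta_x$ for every such $x$.

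The first step is to analyze the perturbation $\rho$. As in the discussion preceding \eqref{eq:PerturbationEquationForRho}, $\rho$ may be viewed as an invertible $2$-cell $\id_{e_\cD} \Rightarrow p_*$ in $\cD$, so that $p_*$ is canonically $2$-isomorphic to $\id_{e_\cD}$. Moreover, applying the perturbation axiom \eqref{eq:PerturbationEquationForRho} for each $g \in G$ forces each modification component $p_{g_\cC}$ to be the nudging of $\rho$ along the strand labelled $g_\cD$, realized via the interchanger $\phi$ of $\cD$.

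The second step is to apply the modification axiom for $p : \eta \Rrightarrow \zeta$ to each $1$-cell $x \in \cC(g_\cC \to h_\cC)$. The axiom yields a relation between $\eta_x$ and $\zeta_x$ mediated by $p_*$, $p_{g_\cC}$, and $p_{h_\cC}$. After substituting the formulas from the first step, the $\rho$-contributions appear symmetrically on both sides and cancel thanks to the invertibility of $\rho$, while the interchangers line up because $\eta_{g_\cC} = \zeta_{g_\cC} = \id_{g_\cD}$. What remains is precisely the equation $\eta_x = \zeta_x$.

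The main obstacle is carrying out this second step cleanly in the graphical calculus of $\Gray$-monoids from \S\ref{sec:GraphicalCalculus}. Although the cancellation of $\rho$-contributions is conceptually transparent, it requires careful bookkeeping of interchangers and of the unit data $A^1, B^1$ hidden inside $\eta_x, \zeta_x$ under our conventions. This is a routine but intricate diagram chase, which fits naturally alongside the other coherence verifications of this section in Appendix \ref{sec:CoherenceProofs3Morphisms}.
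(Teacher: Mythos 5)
Your proposal is correct and takes essentially the same route as the paper: the paper's Appendix proof reduces to showing $\eta_x=\zeta_x$, uses the perturbation axiom \eqref{eq:PerturbationEquationForRho} to express the components $p_{g_\cC}$ in terms of $\rho_*$, and then closes a bigon around the modification coherence \ref{Modification:m_c} for $p$, cancelling the $\rho_*$-insertions via naturality of the interchanger \ref{Interchanger:Natural} and functoriality of $\xo$ — exactly the two steps you describe.
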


\begin{thm}
Any 3-morphism in $\TriCat_G$ between $2$-morphisms in the subcategory $\TriCat_G^{\pt}$ is isomorphic to the identity $3$-morphism.
\end{thm}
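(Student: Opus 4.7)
By Lemma~\ref{lem:Only3Endos}, any $3$-morphism $(p,\rho)$ in $\TriCat_G$ between $2$-morphisms in $\TriCat_G^{\pt}$ is an endomorphism, so source and target agree, both equal to some $(\eta,\id)$. The plan is to exhibit an invertible $4$-morphism $\xi: (p,\rho) \RRightarrow (\id_\eta, \id_{\id_\eta})$ in $\TriCat_G$, extracted directly from the 2-cell $\rho$ itself.

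As already observed in the paragraph preceding the statement, the strictness conditions defining $\TriCat_G^{\pt}$ force all components of $\eta$ (namely $\eta_*$, $\eta_g$, $\eta^1$, $\eta^2$) and all components of the identity modification $\id_\eta$ to be trivial. Under these identifications, $\rho$ reduces to a single invertible $2$-cell $\rho : \id_{\id_{e_\cD}} \Rightarrow p_*$ in $\cD$, subject to equation~\eqref{eq:PerturbationEquationForRho}. Accordingly, I will set $\xi_* := \rho^{-1}: p_* \Rightarrow \id_{\id_{e_\cD}}$.

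The first step is to verify that $\xi_*$ extends to a perturbation $\xi: p \RRightarrow \id_\eta$, that is, that $\xi_*$ satisfies condition~\ref{Petrubtation:Condition} of Definition~\ref{defn:Perturbation} relating the components $p_g$ and $(\id_\eta)_g = \id_{\id_{g_\cD}}$. Since the target-side coefficients vanish, this axiom collapses to exactly the inverse of equation~\eqref{eq:PerturbationEquationForRho}, which holds by hypothesis. The second step is to verify the $4$-morphism criterion~\eqref{eq:4MorphismCriterion} with $(q,\delta) = (\id_\eta, \id)$; with $\xi_* = \rho^{-1}$, this reduces to the tautology $\rho^{-1} \xt \rho = \id_{\id_{\id_{e_\cD}}}$. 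Invertibility of $\xi$ is then automatic, with inverse $\rho$ itself.

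The single delicate point is the first step, where the $G$-indexed perturbation axiom must be carefully translated into~\eqref{eq:PerturbationEquationForRho} through the strictness identifications for $\eta$ and $\id_\eta$. Once this translation is carried out, the remainder of the argument is purely formal bookkeeping.
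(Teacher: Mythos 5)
Your proposal is correct and takes essentially the same approach as the paper: after invoking Lemma~\ref{lem:Only3Endos}, the paper observes that the invertible $2$-cell $\rho_*$ satisfying \eqref{eq:PerturbationEquationForRho} is precisely the data of a perturbation $\id_{(\eta,\id)} \RRightarrow (p,\rho)$, which is your $4$-isomorphism run in the opposite direction (using $\rho$ itself rather than $\rho^{-1}$).
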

\begin{proof}
First, by Lemma \ref{lem:Only3Endos}, every 3-morphism is a 3-endomorphism.
Suppose $(\eta, \id)$ is a 2-morphism in $\TriCat_G^{\pt}$
and $(p,\rho)$ is a 3-endomorphism of $(\eta, \id)$.
As above, we may view $\rho_*$ as an invertible 2-morphism $\id_{e_\cD} \Rightarrow p_*$ that satisfies \eqref{eq:PerturbationEquationForRho}.
This is exactly saying that $\rho_*$ is a perturbation $\id_{(\eta, \id)} \RRightarrow (p,\rho)$.
\end{proof}

\subsection{Strictifying 4-morphisms}
\label{sec:Strictifying4Morphisms}

\begin{thm}
The only 4-endomorphism in $\TriCat_G$ of an identity 3-morphism in the subcategory $\TriCat_G^{\pt}$ is the identity.
\end{thm}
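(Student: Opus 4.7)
The plan is to unpack a 4-endomorphism $\xi$ of the identity 3-morphism $(\id_\eta, \id)$ and show that the coherence condition \eqref{eq:4MorphismCriterion} collapses to $\xi = \id$. Since $(\eta, \id)\in \TriCat_G^{\pt}$ has $\eta_* = e_\cD$ and $\eta_g = \id_{g_\cD}$, the identity modification $\id_\eta$ has components $(\id_\eta)_* = \id_{e_\cD}$ and $(\id_\eta)_g = \id_{\id_{g_\cD}}$. A 4-endomorphism $\xi:(\id_\eta, \id) \RRightarrow (\id_\eta, \id)$ is by definition a perturbation $\xi:\id_\eta \RRightarrow \id_\eta$ satisfying \eqref{eq:4MorphismCriterion}; its only piece of data is a single 2-cell $\xi_* : \id_{e_\cD} \Rightarrow \id_{e_\cD}$ in $\cD$, and the $g$-indexed perturbation coherence is automatic because all modification components in sight are identities.

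Next I would substitute $p = q = \id_\eta$, $m = n = \id$, and $\rho = \delta = \id$ into \eqref{eq:4MorphismCriterion}. On the LHS only the identity $\delta$ contributes; on the RHS the identity $\rho$ composes trivially, leaving the whiskered perturbation $(\xi \circ \pi^\cC) * \alpha$. Hence the coherence reduces to the single equation $(\xi \circ \pi^\cC) * \alpha = \id$, living in the endomorphism monoid $\cD(\id_{e_\cD} \Rightarrow \id_{e_\cD})$. To finish, I would compute this sole component explicitly: pre-composition with the strict $\Gray$-functor $\pi^\cC$ is evaluation at the unique object of $\rmB G$ and produces $\xi_*$, while the subsequent whiskering by $\alpha$ uses the cells $\alpha_* = e_\cD$, $\alpha_g = \id_{g_\cD}$, $\alpha_{\id_g} = A^1_g$, $\alpha^1 = \id$, and $\alpha^2_{g,h} = \id$, all of which are identities or identity-like by the very definition of $\TriCat_G^{\pt}$. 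The whiskering therefore acts trivially, the component equals $\xi_*$, and the condition forces $\xi_* = \id_{\id_{e_\cD}}$, so $\xi$ is the identity 4-morphism.

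The main obstacle is verifying that whiskering the perturbation $\xi \circ \pi^\cC$ by the natural transformation $\alpha$ introduces no nontrivial coherence cells; this amounts to chasing the components of horizontal composition of perturbations through the graphical calculus of $\Gray$-monoids (\S\ref{sec:GraphicalCalculus}). However, every potential source of non-triviality — the associators and unitors of $\pi^\cC$, the components and tensorators of $\alpha$ — is explicitly required to be trivial in $\TriCat_G^{\pt}$, so the simplification is a mechanical application of the strictness conditions rather than a genuine coherence argument, paralleling the earlier subsections on strictifying 2- and 3-morphisms.
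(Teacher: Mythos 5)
Your proposal is correct and follows essentially the same route as the paper: both plug the identity data $p=q=\id$, $\rho=\delta=\id$ into \eqref{eq:4MorphismCriterion}, observe that all whiskering by $\pi^\cC$ and $\alpha$ is trivial because of the strictness conditions defining $\TriCat_G^{\pt}$, and conclude $\xi_*=\id$. The paper simply records this as a single diagrammatic equation; your prose unpacking adds nothing different in substance.
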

\begin{proof}
Suppose $\xi$ is a 4-endomorphism of an identity 3-morphism $(p=\id,\rho=\id)$ in $\TriCat_G^{\pt}$.
Then $\xi$ satisfies the criterion \eqref{eq:4MorphismCriterion}, which in diagrams is
$$
\left(
\tikzmath{
\node[blue] (beta) at (0,0) {$\scriptstyle \beta_*=e_\cD$};
\node[red] (alpha) at (.5,2) {$\scriptstyle \alpha_*=e_\cD$};
\node[DarkGreen] (zeta) at (-.5,2) {$\scriptstyle \eta_*=e_\cD$};
\node[draw,rectangle, thick, rounded corners=5pt] (m) at (0,1) {$\scriptstyle m_*=\id_{e_\cD}$};
\draw[blue, dashed, thick, mid>] (beta) to[in=-90,out=90] (m);
\draw[red, dashed, thick, mid>] (m.60) to[in=-90,out=90] (alpha);
\draw[DarkGreen, dashed, thick, mid>] (m.120) to[in=-90,out=90] (zeta);
}
\overset{\rho=\id}{\Rightarrow}
\tikzmath{
\coordinate (beta) at (0,0);
\coordinate (alpha1) at (.5,2);
\coordinate (alpha2) at (.5,3);
\coordinate (zeta) at (-.5,3);
\node[draw,rectangle, thick, rounded corners=5pt] (m) at (0,1) {$\scriptstyle n_*=\id_{e_\cD}$};
\node[draw,rectangle, thick, rounded corners=5pt] (p) at (-.5,2) {$\scriptstyle p_*=\id_{e_\cD}$};
\draw[dashed, thick, rounded corners=5pt] (-1.3,1.6) rectangle (.3,2.4);
\draw[blue, dashed, thick, mid>] (beta) to[in=-90,out=90] (m);
\draw[red, dashed, thick, mid>] (m.60) to[in=-90,out=90] (alpha1) to[in=-90,out=90] (alpha2);
\draw[DarkGreen, dashed, thick, mid>] (m.120) to[in=-90,out=90] (p);
\draw[DarkGreen, dashed, thick, mid>] (p) to[in=-90,out=90] (zeta);
}
\overset{\xi}{\Rightarrow}
\tikzmath{
\coordinate (beta) at (0,0);
\coordinate (alpha1) at (.5,2);
\coordinate (alpha2) at (.5,3);
\coordinate (zeta) at (-.5,3);
\node[draw,rectangle, thick, rounded corners=5pt] (m) at (0,1) {$\scriptstyle n_*=\id_{e_\cD}$};
\node[draw,rectangle, thick, rounded corners=5pt] (p) at (-.5,2) {$\scriptstyle p_*=\id_{e_\cD}$};
\draw[blue, dashed, thick, mid>] (beta) to[in=-90,out=90] (m);
\draw[red, dashed, thick, mid>] (m.60) to[in=-90,out=90] (alpha1) to[in=-90,out=90] (alpha2);
\draw[DarkGreen, dashed, thick, mid>] (m.120) to[in=-90,out=90] (p);
\draw[DarkGreen, dashed, thick, mid>] (p) to[in=-90,out=90] (zeta);
}
\right)
=
\left(
\tikzmath{
\coordinate (beta) at (0,0);
\coordinate (alpha) at (.5,2);
\coordinate (zeta) at (-.5,2);
\node[draw,rectangle, thick, rounded corners=5pt] (m) at (0,1) {$\scriptstyle m_*=\id_{e_\cD}$};
\draw[blue, dashed, thick, mid>] (beta) to[in=-90,out=90] (m);
\draw[red, dashed, thick, mid>] (m.60) to[in=-90,out=90] (alpha);
\draw[DarkGreen, dashed, thick, mid>] (m.120) to[in=-90,out=90] (zeta);
}
\overset{\rho=\id}{\Rightarrow}
\tikzmath{
\coordinate (beta) at (0,0);
\coordinate (alpha1) at (.5,2);
\coordinate (alpha2) at (.5,3);
\coordinate (zeta) at (-.5,3);
\node[draw,rectangle, thick, rounded corners=5pt] (m) at (0,1) {$\scriptstyle n_*=\id_{e_\cD}$};
\node[draw,rectangle, thick, rounded corners=5pt] (p) at (-.5,2) {$\scriptstyle p_*=\id_{e_\cD}$};
\draw[blue, dashed, thick, mid>] (beta) to[in=-90,out=90] (m);
\draw[red, dashed, thick, mid>] (m.60) to[in=-90,out=90] (alpha1) to[in=-90,out=90] (alpha2);
\draw[DarkGreen, dashed, thick, mid>] (m.120) to[in=-90,out=90] (p);
\draw[DarkGreen, dashed, thick, mid>] (p) to[in=-90,out=90] (zeta);
}
\right).
$$
We conclude that $\xi=\id$.
\end{proof}


\section{\texorpdfstring{$G$}{G}-crossed braided categories}
\label{sec:GCrossed}

In \S\ref{defn:Definitions} below, we define the strict 2-category $G\CrsBrd$ of $G$-crossed braided categories.
By \cite{MR3671186}, 
$G\CrsBrd$ is equivalent to the full 2-subcategory $G\CrsBrd^{\st}$ of strict $G$-crossed braided categories.
In this section, we prove our second main theorem.

\begin{thm}
\label{thm:ThmA-2}
The 2-category $\TriCat_G^{\st}$ is equivalent to $G\CrsBrd^{\st}$.
\end{thm}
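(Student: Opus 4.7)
The plan is to construct an explicit strict 2-functor $\Psi: \TriCat_G^{\st} \to G\CrsBrd^{\st}$ and verify that it is both essentially surjective on objects and induces an equivalence on every hom-category. On an object $\cC \in \TriCat_G^{\st}$, I set $\Psi(\cC)=\fC$ with $g$-graded component $\fC_g := \cC(e_\cC \to g_\cC)$. The sketch in \S\ref{sec:WeakGCrossedConstruction} provides all the required structure, simplified by strictness: since the unitor and tensorator data of $\pi: \rmB G \to \rmB\cC$ are trivial, the $\Gray$-monoid tensor product $\xz$ of $\cC$ restricts to a strict $G$-graded tensor $a_g \otimes b_h := a_g \xz b_h \in \fC_{gh}$; the $G$-action is strict conjugation $F_g(b_h):=\id_{g_\cC}\xz b_h \xz \id_{g^{-1}_\cC}\in \fC_{ghg^{-1}}$; and the $G$-crossed braiding $\beta^{g,h}$ comes directly from the interchanger $\phi$ of $\cC$, as in~\eqref{eq:WeakGCrossedBraiding} collapsed to its strict counterpart. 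Verifying the axioms of a strict $G$-crossed braided category reduces, one by one, to the $\Gray$-monoid axioms \ref{Gray:2cat}--\ref{Interchanger:Natural}.

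On 1-morphisms, a morphism $A: \cC \to \cD$ in $\TriCat_G^{\st}$ restricts to 2-functors $A_{e_\cC, g_\cC}: \cC(e_\cC \to g_\cC)\to \cD(e_\cD \to g_\cD)$, which assemble into a functor $\Psi(A):\fC\to \fD$. The conditions $\mu^A_{g,h}=\id$, $\iota^A_*=\id$, and the $\Gray$-functoriality of $A$ force $\Psi(A)$ to be strict monoidal, $G$-graded, strictly $G$-equivariant, and compatible with the $G$-crossed braiding. Similarly, a 2-morphism $\eta$ in $\TriCat_G^{\st}$ restricts to a monoidal natural transformation of $G$-graded functors under the constraints $\eta_g = \id$, $\eta^1 = \id$, $\eta^2 = \id$. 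Strictness of composition in $\TriCat_G^{\st}$, established in Corollary~\ref{cor:Contract4CategoryTo2Category}, then yields that $\Psi$ is a strict 2-functor.

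To establish essential surjectivity on objects, I would construct an inverse on the object level: given a strict $\fC \in G\CrsBrd^{\st}$, define a $\Gray$-monoid $\cC_\fC$ with 0-cells $G$, hom-categories $\cC_\fC(g_\cC\to h_\cC):=\fC_{hg^{-1}}$, tensor product of 0-cells given by the group multiplication, composition of 1-cells induced by $\otimes$ and the $G$-action of $\fC$, and interchanger $\phi$ built from the $G$-crossed braiding. This is essentially the construction of~\cite{MR4033513} referenced at the end of \S\ref{sec:WeakGCrossedConstruction}. The $\Gray$-monoid axioms translate line-by-line into the strict $G$-crossed braided axioms, and $\Psi(\cC_\fC)\cong \fC$ follows by inspection of the hom-categories out of $e_\cC$.

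The main technical obstacle is establishing that the restricted 2-functors $\TriCat_G^{\st}(\cC,\cD)\to G\CrsBrd^{\st}(\Psi(\cC),\Psi(\cD))$ are equivalences of 1-categories. For essential surjectivity on 1-morphisms, one must reassemble a strict $G$-crossed braided functor $\fA: \Psi(\cC)\to \Psi(\cD)$ into a $\Gray$-functor $A:\rmB\cC\to \rmB\cD$ by setting $A(x):=\fA(x)$ on 1-cells in $\cC(e_\cC \to g_\cC)$ and extending $G$-equivariantly to general hom-categories; the strictness properties of Corollary~\ref{cor:Contract4CategoryTo2Category} then pin down all higher data. The verification that this $A$ is a well-defined $\Gray$-functor amounts to checking that the $G$-equivariance, monoidality and braiding-compatibility of $\fA$ correspond precisely to the $\Gray$-structure axioms on $\cC$ and $\cD$, essentially reversing the translation carried out in the first paragraph. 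Full faithfulness on 2-morphisms follows from the same dictionary applied to natural transformations, and naturality of the reconstruction in both arguments completes the argument.
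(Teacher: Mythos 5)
Your overall strategy coincides with the paper's: the same strict 2-functor on objects (Constructions \ref{const:UnderlyingCategories}--\ref{const:GCrossedBraiding}), the same inverse construction of a $\Gray$-monoid from a strict $G$-crossed braided category following \cite{MR4033513}, and the same three-part verification (surjectivity on objects, essential surjectivity on 1-morphisms, bijectivity on 2-morphisms). However, there is a genuine error in how you describe the 1-morphisms on both sides, and it would derail the argument if carried out as written.

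A 1-morphism $A$ of $\TriCat_G^{\st}$ is \emph{not} a $\Gray$-functor: by Corollary \ref{cor:Contract4CategoryTo2Category} only the 0-cell-level data are required to be trivial ($A(g_\cC)=g_\cD$, $\mu^A_{g,h}=\id_{gh_\cD}$, $\iota^A_*=\id_{e_\cD}$, trivial $\omega^A,\ell^A,r^A$), while the unitors $A^1$, the compositors $A^2$, and the 1-cell components $\mu^A_{x,y}$ of the tensorator remain genuinely nontrivial. Consequently $\Psi(A)$ is neither strict monoidal nor strictly $G$-equivariant: its tensorator is $\bfA^2_{a,b}=\mu^A_{a,b}$ and its action-compatibility $\bfa$ is the nontrivial composite \eqref{eq:ActionatorForGCrossedFunctor} built from $A^1$ and $\mu^A$. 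Dually, the 1-morphisms of $G\CrsBrd^{\st}$ are arbitrary $G$-crossed braided functors, since $G\CrsBrd^{\st}$ is a \emph{full} sub-2-category of $G\CrsBrd$ (only the objects are strict). So your essential-surjectivity argument, which reconstructs a $\Gray$-functor from a \emph{strict} $G$-crossed functor, addresses only a small part of the hom-category; and you cannot repair this by strictifying functors, since (as the paper remarks) strictification of $G$-crossed braided functors is not available in the literature and would itself be a corollary of the theorem. The fix is the paper's: set $A(x):=\bfA(x\xz g_\cC^{-1})\xz g_\cD$ and assemble the weak data $A^1$, $A^2$, $\mu^A$ from $\bfA^1$, $\bfA^2$, $\bfa$ as in Constructions \ref{defn:PreimageUnderlying2Functor} and \ref{defn:PreimageMonoidal2Functor}, then verify the coherences of Definition \ref{defn:3functor}; this is where the real work of Theorem \ref{thm:Preimage3Functor} lies, and your proposal does not engage with it.
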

\begin{proof}
In \S\ref{sec:FromGBoring3CatsToGCrossedBriadedCats} below, we construct a strict 2-functor $\TriCat_G^{\st} \to G\CrsBrd^{\st}$.
In \S\ref{sec:2FunctorEquivalence} below, we show this 2-functor is an equivalence.
Indeed, 
the 2-functor is essentially surjective on objects by \cite{MR4033513} explained at the beginning of \S\ref{sec:2FunctorEquivalence}, essentially surjective on 1-morphisms by Theorem \ref{thm:Preimage3Functor}, and fully faithful on 2-morphisms by Theorem \ref{thm:FullyFaithfulOn2Morphisms}.
We defer all further proofs in this section to Appendix \ref{sec:CoherenceProofsGCrossed}.
\end{proof}

We thus have the following zig-zag of strict equivalences denoted $\sim$ and an isomorphism $\cong$, where the hooked arrows denote inclusions of full subcategories.
$$
\begin{tikzcd}[column sep=4em]
\TriCat_G
\arrow[r,hookleftarrow, "\sim","\text{\scriptsize Thm.~\ref{thm:tricatpt}}"']
&
\TriCat_G^{\pt}
\arrow[r,leftrightarrow, "\cong","\text{\scriptsize Cor.~\ref{cor:Contract4CategoryTo2Category}}"']
&
\TriCat_G^{\st}
\arrow[r,rightarrow, "\sim", "\text{\scriptsize Thm.~\ref{thm:ThmA-2}}"']
&
G\CrsBrd^{\st}
\arrow[r,hookrightarrow, "\sim", "\text{\scriptsize \cite{MR3671186}}"']
&
G\CrsBrd
\end{tikzcd}
$$


\subsection{Definitions}
\label{defn:Definitions}

Let $G$ be a group.
We now give a definition of a (possibly non-additive) $G$-crossed braided category.
Below, we give a definition in terms of the component categories $\fC_g$.
When each component $\fC_g$ is linear and the tensor product functors and $G$-action functors are linear, $\fC:=\bigoplus_{g\in G} \fC_g$ is an ordinary $G$-crossed braided monoidal category in the sense of \cite[\S8.24]{MR3242743} (except possibly not rigid nor fusion).

\begin{defn}
A $G$-\emph{crossed braided category} $\fC$ consists of the following data:
\begin{itemize}
    \item 
    a collection of categories $(\fC_g)_{g\in G}$;
    \item
    a family of bifunctors $\xz_{g,h}: \fC_{g}\times \fC_{h} \to \fC_{gh}$;
    \item
    an associator natural isomorphism
    $\alpha_{g,h,k}: \xz_{gh,k}\circ (\xz_{g,h}\times \id_{\fC_k})\Rightarrow \xz_{g,hk}\circ (\id_{\fC_g} \times \xz_{h,k})$;
    \item
    a unit object $1_\fC \in \fC_e$;
    \item
    unitor natural isomorphisms
    $\lambda: \xz_{e,g}\circ (1_\fC \times -)\Rightarrow \id_{\fC_g}$
    and
    $\rho:\xz_{g,e}\circ (-\times 1_\fC) \Rightarrow \id_{\fC_g}$.
    \end{itemize}
Using the convention
$$
a_g\xz b_h 
:= 
 \xz_{g,h}(a_g \times b_h)
\qquad\qquad
\forall\, a_g \in \fC_g
\text{ and }
b_h \in \fC_h,
$$
this data should satisfy the obvious pentagon and triangle axioms of a monoidal category.

Moreover, $\fC$ is equipped with a $G$-action $F_g : \fC_h \to \fC_{ghg^{-1}}$ together with
an isomorphism
$i_g:1_\fC \to F_g(1_\fC)$
and
natural isomorphisms $\psi^g$, $\mu^{g,h}$, and $\iota_g$
$$
\begin{tikzcd}
\fC_h \times \fC_k 
\arrow[bend left=50, rr, "F_g\circ \xz_{h,k}"]
\arrow[bend right=50, swap, rr, "\xz_{ghg^{-1}, gkg^{-1}}\circ (F_g \times F_g)"]
&
\arrow[phantom, "\Uparrow \psi^g_{h,k}"]
& 
\fC_{ghkg^{-1}}
\end{tikzcd}
\qquad\qquad
\begin{tikzcd}
\fC_k 
\arrow[bend left=50, rr, "F_{gh}"]
\arrow[bend right=50, swap, rr, "F_g \circ F_h"]
&
\arrow[phantom, "\Uparrow \mu^{g,h}_k"]
& 
\fC_{ghkh^{-1}g^{-1}}
\end{tikzcd}
\qquad\qquad
\begin{tikzcd}
\fC_g
\arrow[bend left=50, rr, "F_e"]
\arrow[bend right=50, swap, rr, "\id_{\fC_g}"]
&
\arrow[phantom, "\Uparrow \iota_g"]
& 
\fC_{g}
\end{tikzcd}
$$
which satisfy the following associativity and unitality conditions where we suppress whiskering:
\begin{enumerate}[label=($\psi$\arabic*)]
\item\label{rel:psi1} (associativity)
The following diagram commutes:
$$
\begin{tikzpicture}[baseline= (a).base]
\node[scale=.75] (a) at (0,0){
\begin{tikzcd}
\xz_{ghkg^{-1},g\ell g^{-1}}\circ (\xz_{ghg^{-1}, gkg^{-1}}\circ (F_g \times F_g) \times F_g)
\arrow[Rightarrow, rr, "\alpha"]
\arrow[Rightarrow, d, "\xz_{ghkg^{-1}}\circ (\psi^g_{h,k}\times \id_{\fC_{g\ell g^{-1}}})"]
&&
\xz_{ghg^{-1},gk\ell g^{-1}}\circ (F_g \times (\xz_{gkg^{-1}, g\ell g^{-1}}\circ (F_g \times F_g)))
\arrow[Rightarrow, swap, d, "\xz_{ghg^{-1}, gk\ell g^{-1}} \circ (\id_{\fC_{ghg^{-1}}}\times \psi^g_{k,\ell})"]
\\
\xz_{ghkg^{-1}, g\ell g^{-1}}\circ ((F_g \circ \xz_{h,k})\times F_g)
\arrow[Rightarrow, swap, dr, "\psi^g_{hk,\ell}"]
&&
\xz_{ghg^{-1},gk\ell g^{-1}}\circ (F_g \times (F_g \circ \xz_{k,\ell}))
\arrow[Rightarrow, dl, "\psi^g_{h,k\ell}"]
\\
&
F_g\circ \xz_{h,k\ell}\circ (\id_{\fC_h}\times \xz_{k,\ell})
\end{tikzcd}};
\end{tikzpicture}
$$
\item\label{rel:psi2} (unitality)
For every $a_h\in \fC_h$, the following diagram commutes:
$$
\begin{tikzcd}
1_{\fC}\xz F_g(a_h)
\arrow[rrrr, "\xz_{e,ghg^{-1}}(i_g \times \id_{\fC_{ghg^{-1}}})"]
\arrow[d, "\lambda_{F_g(a_h)}"]
&&&&
F_g(1_\fC) \xz F_g(a_h)
\arrow[d, "\psi^g_{e,h}"]
\\
F_g(a_h)
&&&&
F_g(1_\fC \xz a_h)
\arrow[llll, "F_g(\lambda_{a_h})"]
\end{tikzcd}
$$
as does a similar diagram where $1_\fC$ appears on the right with $\rho$.
\end{enumerate}
\begin{enumerate}[label=($\mu$\arabic*)]
\item\label{rel:mu1} (monoidality)
The following diagram commutes:
$$
\begin{tikzcd}
\xz_{ghkg^{-1}h^{-1}, gh\ell g^{-1}h^{-1}}\circ ((F_g\circ F_h)\times (F_g\circ F_h))
\arrow[Rightarrow, rr, "\psi^g_{hkh^{-1},h\ell h^{-1}}"]
\arrow[Rightarrow, d,"\xz_{ghkg^{-1}h^{-1}, gh\ell g^{-1}h^{-1}}(\mu^{g,h}_{k}\times \mu^{g,h}_{\ell})"]
&&
F_g\circ \xz_{hkh^{-1}, h\ell h^{-1}}\circ (F_h \times F_h)
\arrow[Rightarrow, d,swap,  "F_g(\psi^h_{k,\ell})"]
\\
\xz_{ghkg^{-1}h^{-1}, gh\ell g^{-1}h^{-1}} \circ(F_{gh}\times F_{gh})
\arrow[Rightarrow, dr,swap, "\psi^{gh}_{k,\ell}"]
&&
F_g\circ F_h \circ \xz_{k,\ell}
\arrow[Rightarrow, dl,"\mu^{g,h}_{k\ell}"]
\\
&
F_{gh}\circ \xz_{k,\ell}
\end{tikzcd}
$$
\item\label{rel:mu2} (associativity)
The following diagram commutes:
$$
\begin{tikzcd}
F_g \circ F_h \circ F_k
\arrow[Rightarrow, d,"\mu^{g,h}_{k\ell k^{-1}}"]
\arrow[Rightarrow, r, "F_g(\mu^{h,k}_\ell)"]
&
F_g\circ F_{hk}
\arrow[Rightarrow, d,"\mu^{g,hk}_\ell"]
\\
F_{gh}\circ F_k
\arrow[Rightarrow, r,"\mu^{gh,k}_{\ell}"]
&
F_{ghk}
\end{tikzcd}
$$
\end{enumerate}
\begin{enumerate}[label=($\iota$\arabic*)]
\item\label{rel:iota1} (monoidality)
The following diagram commutes:
$$
\begin{tikzcd}
\id_{\fC_{hk}}\circ \xz_{h,k}
\arrow[equal ,r,""]
\arrow[Rightarrow, d, "\iota_{hk}"]
&
\xz_{h,k}\circ (\id_{\fC_h}\times \id_{\fC_k})
\arrow[Rightarrow, d, "\xz_{h,k}(\iota_h \times \iota_k)"]
\\
F_e\circ \xz_{h,k}
\arrow[Rightarrow, r, "\psi^e_{h,k}"]
&
\xz_{h,k} \circ (F_e \times F_e)
\end{tikzcd}
$$
\item\label{rel:iota2} (unitality)
The following diagrams commute:
$$
\begin{tikzcd}
\id_{\fC_{ghg^{-1}}}\circ F_g
\arrow[equal ,rr,""]
\arrow[Rightarrow, swap, dr, "\iota_{ghg^{-1}}"]
&&
F_g
\\
&
F_e\circ F_g
\arrow[Rightarrow, swap, ur, "\mu^{e,g}_h"]
\end{tikzcd}
\qquad\text{and}
\qquad
\begin{tikzcd}
F_g
\arrow[equal ,rr,""]
&&
F_g\circ \id_{\fC_h}
\arrow[Rightarrow, dl, "F_g(\iota_h)"]
\\
&
F_g\circ F_e
\arrow[Rightarrow, ul, "\mu^{g,e}_h"]
\end{tikzcd}.
$$
\end{enumerate}
Finally, we have the $G$-crossed braiding natural isomorphism
$$
\begin{tikzcd}
\fC_h \times \fC_g
\arrow[bend left=30, drr, "\xz_{ghg^{-1},g}\circ (F_g\times \id_{\fC_g})"]
\\
&
\arrow[phantom, "\Uparrow \beta^{g,h}"]
& 
\fC_{gh}
\\
\fC_g \times \fC_h 
\arrow[uu, "\text{\scriptsize swap}"]
\arrow[bend right=30, swap, urr, " \xz_{g,h}"]
\end{tikzcd}
\qquad\qquad
a_g \otimes b_h \xrightarrow{\beta^{g,h}_{a_g, b_h}} F_g(b_h) \otimes a_g
\qquad
\forall a_g \in \fC_G, \, b_h \in \fC_h.
$$
The $G$-action and $G$-crossed braiding are subject to the following coherence axioms taken from \cite{MR3242743}.
For all $a_g \in \fC_g$, $b_h \in \fC_h$, and $c_k \in \fC_k$, the following diagrams commute, where suppress all labels.
\begin{equation}
\label{eq:Hexagon}
\begin{tikzcd}
F_g(b_h) \xz F_g(c_k)
\arrow[rr]
&& 
F_{ghg^{-1}}F_g(c_k) \xz F_g(b_h)
\arrow[d]
\\
F_{g}(b_h\xz c_k)
\arrow[d]
\arrow[u]
&&
F_{gh}(c_k)\xz F_g(b_h)
\\
F_{g}(F_h(c_k)\xz b_h)
\arrow[rr]
&&
F_gF_h(c_k) \xz F_g(b_h)
\arrow[u]
\end{tikzcd}
\tag{$\beta$1}
\end{equation}

\begin{equation}
\label{eq:Heptagon1}
\begin{tikzcd}
&
(a_g \xz b_h)\xz c_k
\arrow[dr]
\arrow[dl]
\\
a_g\xz (b_h \xz c_k)
\arrow[d]
&& 
(F_g(b_h)\xz a_g) \xz c_k
\arrow[d]
\\
F_{g}(b_h\xz c_k)\xz a_g
\arrow[d]
&&
F_g(b_h)\xz (a_g \xz c_k)
\arrow[d]
\\
(F_g(b_h) \xz F_g(c_k)) \xz a_g
\arrow[rr]
&&
F_g(b_h) \xz (F_g(c_k) \xz a_g)
\end{tikzcd}
\tag{$\beta$2}
\end{equation}

\begin{equation}
\label{eq:Heptagon2}
\begin{tikzcd}
&
a_g \xz (b_h\xz c_k)
\arrow[dr]
\arrow[dl]
\\
(a_g\xz b_h) \xz c_k 
\arrow[d]
&& 
a_g\xz (F_h(c_k) \xz b_h)
\arrow[d]
\\
F_{gh}(c_k)\xz (a_g \xz b_h)
\arrow[d]
&&
(a_g\xz F_h(c_k)) \xz b_h
\arrow[d]
\\
F_gF_h(c_k) \xz (a_g \xz b_h)
\arrow[rr]
&&
(F_gF_h(c_k)\xz a_g) \xz b_h
\end{tikzcd}
\tag{$\beta$3}
\end{equation}
\end{defn}

\begin{defn}
Given two $G$-crossed braided categories $\fC$ and $\fD$, a $G$-\emph{crossed braided functor} $(\bfA,\bfa): \fC \to \fD$ consists of 
a family of functors $(\bfA_g : \fC_g \to \fD_g)_{g\in G}$
together with a unitor isomorphism $\bfA^1: 1_\fD \to \bfA(1_\fC)$
and
a tensorator natural isomorphism $\bfA^2_{a_g, b_h} : \bfA(a_g)\xz \bfA(b_h) \to \bfA(a_g \xz b_h)$ for all $a_g \in \fC_g$ and $b_h \in \fC_h$
satisfying the obvious coherences.
The monoidal functor $\bfA=(\bfA_g, \bfA^1, \bfA^2)$ comes equipped with a family $\bfa=\{\bfa_g : F^\fD_g \circ \bfA \Rightarrow \bfA \circ F^\fC_g\}_{g\in G}$
of monoidal natural isomorphisms such that
for all $g,h\in G$, the following diagrams commute, where we suppress whiskering from the notation.
\begin{equation}
\label{eq:GCrossedFunctor-Gamma1}
\begin{tikzcd}
F^\fD_g \circ F^\fD_h \circ \bfA
\arrow[rr,"\mu^\fD_{g,h}"]
\arrow[d,"\bfa_h"]
&&
F^\fD_{gh} \circ \bfA
\arrow[d,"\bfa_{gh}"]
\\
F^\fD_g \circ \bfA \circ F^\fC_h
\arrow[dr,"\bfa_g"]
&&
\bfA\circ F^\fC_{gh}
\\
&
\bfA \circ F^\fD_g \circ F^\fC_h
\arrow[ur,"\mu^\fC_{g,h}"]
\end{tikzcd}
\tag{$\gamma1$}
\end{equation}
\begin{equation}
\label{eq:GCrossedFunctor-Gamma2}
\begin{tikzcd}
\bfA(a)\xz \bfA(b)
\arrow[rr,"\bfA^2_{a,b}"]
\arrow[d,"\beta^\fD"]
&&
\bfA(a\xz b)
\arrow[d,"\bfA(\beta^\fC)"]
\\
F^\fD_h(\bfA(b)) \xz \bfA(a)
\arrow[dr,"\bfa_h \xz \id"]
&&
\bfA(F^\fC(b)\xz a)
\\
&\bfA(F^\fC(b)) \xz \bfA(a)
\arrow[ur,"\bfA^2_{F^\fC(b), a}"]
\end{tikzcd}
\tag{$\gamma2$}
\end{equation}
\end{defn}

\begin{defn}
If $(\bfA, \bfa), (\bfB,\bfb): \fC \to \fD$ are $G$-crossed braided functors,
a $G$-crossed braided natural transformation
$h: (\bfA, \bfa) \Rightarrow (\bfB, \bfb)$
is a monoidal natural transformation $h: \bfA \Rightarrow \bfB$
such that
for all $g\in G$, the following diagram commutes.
\begin{equation}
\label{eq:ActionCohrenceForTransformation}
\begin{tikzcd}[column sep=4em]
    F^\fD_g \circ \bfA
    \arrow[r,"F_g^\fD(h_{(\cdot)})"]
    \arrow[d, "\bfa_g"]
    &
    F_g^\fD \circ \bfB
    \arrow[d, "\bfb_g"]
    \\
    \bfA\circ F_g^\fC
    \arrow[r,"h_{F^\fC_g(\cdot)}"]
    &
    \bfB\circ F_g^\fC
\end{tikzcd}
\end{equation}
\end{defn} 

It is straightforward to verify that $G$-crossed braided categories, functors, and natural transformations assemble into a strict 2-category called $G\CrsBrd$ with familiar composition formulas similar to those from the strict 2-category of monoidal categories.
(See the proof of Proposition \ref{prop:2FunctorStrictlyUnitalAndAssociative} in Appendix \ref{sec:CoherenceProofsGCrossed} for full details.)

\begin{defn}[{Adapted from \cite[p.6]{MR3671186}}]
\label{defn:StrictGCrossedBraided}
A $G$-crossed braided category is called \emph{strict} if $\alpha, \lambda, \rho$ are all identities, and all $i_g$, $\psi^g$, $\mu^{g,h}$, and $\iota^g$ are identities.
Observe this implies that $F_e$ is the identity as well.
\end{defn}

By the main theorem of \cite{MR3671186}, every $G$-crossed braided category is equivalent (via a $G$-crossed braided functor which is an equivalence of categories) to a strict $G$-crossed braided category. In particular, the $2$-category $G\CrsBrd$ is equivalent to the full subcategory $G\CrsBrd^{\st}$ of strict $G$-crossed braided categories.

\subsection{A strict 2-functor 
\texorpdfstring{$\TriCat_G^{\st}$}{TriCatGst}
to
\texorpdfstring{$G\CrsBrd^{\st}$}{GCrsBrdst}}
\label{sec:FromGBoring3CatsToGCrossedBriadedCats}

In this section, we construct a strict 2-functor $\TriCat_G^{\st} \to G\CrsBrd^{\st}$.
We begin by explaining how to obtain a strict $G$-crossed braided category $\fC$ from an object $\cC\in \TriCat_G^{\st}$, i.e., $\cC$ is a $\Gray$-monoid 
with 0-cells $\{g_\cC\}_{g\in G}$ with 
0-composition the group multiplication.

\begin{construction}
\label{const:UnderlyingCategories}
For each $g\in \cG$, we define the category
$\fC_g := {\cC}(1_\cC \to g_\cC)$.
We denote $1$-cells in $\fC_g$ by small disks. 
For better readability, we distinguish different 1-cells in a given diagram by different shadings of the corresponding disks.
We will use the shorthand notation that white, green, and blue shaded disks correspond to 1-cells into $g_\cC, h_\cC,$ and $k_\cC$, respectively:
$$
\tikzmath{
    \draw (0,0)  -- (0,.3) node [above] {\scriptsize{$g_\cC$}};
    \filldraw[fill=\gColor, thick] (0,0) circle (.1cm);
}
\qquad
\tikzmath{
    \draw (0,0)  -- (0,.3) node [above] {\scriptsize{$h_\cC$}};
    \filldraw[fill=\hColor, thick] (0,0) circle (.1cm);
}
\qquad
\tikzmath{
    \draw (0,0)  -- (0,.3) node [above] {\scriptsize{$k_\cC$}};
    \filldraw[fill=\kColor, thick] (0,0) circle (.1cm);
}\,.
$$
We define the bifunctor
$\xz_{g,h}: \fC_g \times \fC_h \to \fC_{gh}$
by $-\xz-$:
$$
\tikzmath{
	\draw (-.4,0) -- (-.4,.4) node [above] {\scriptsize{$g_\cC$}};
	\draw (.4,0) -- (.4,.4) node [above] {\scriptsize{$h_\cC$}};
	\filldraw[fill=\gColor, thick] (-.4,0) circle (.1cm);
	\filldraw[fill=\hColor, thick] (.4,0) circle (.1cm);
	\node at (0,.2) {$\times$};
}
\longmapsto
\tikzmath{
	\coordinate (a) at (-.2,0);
	\coordinate (b) at (.2,-.2);
	\draw (a) -- ($ (a) + (0,.4) $) node [above] {$\scriptstyle g_\cC$};
	\draw (b) -- ($ (b) + (0,.6) $) node [above] {$\scriptstyle h_\cC$};
	\filldraw[fill=\gColor, thick] (a) circle (.1cm);
	\filldraw[fill=\hColor, thick] (b) circle (.1cm);
}\,.
$$
The associator $\xz_{gh,k} \circ (\xz_{g,h}\times \fC_k) \Rightarrow \xz_{g,hk} \circ (\fC_g \times \xz_{h,k})$ is the identity.
The unit object $1_\fC := \id_e \in \fC_e$, which we denote by a univalent vertex attached to a dashed string. 
The unitors 
$\xz_{e,g} \circ (i \times -) \Rightarrow \id_{\fC_g}$
and
$\xz_{g,e} \circ (- \times i) \Rightarrow \id_{\fC_g}$
are also identities
$$
\tikzmath{
	\coordinate (a) at (-.2,0);
	\coordinate (b) at (.2,-.2);
	\draw[dashed] (a) -- ($ (a) + (0,.4) $) node [above] {$\scriptstyle e_\cC$};
	\draw (b) -- ($ (b) + (0,.6) $) node [above] {$\scriptstyle g_\cC$};
	\filldraw[fill=\gColor, thick] (b) circle (.1cm);
	\filldraw (a) circle (.05cm);
}
=
\tikzmath{
	\coordinate (a) at (-.2,-.2);
	\draw (a) -- ($ (a) + (0,.6) $) node [above] {$\scriptstyle g_\cC$};
	\filldraw[fill=\gColor, thick] (a) circle (.1cm);
}
=
\tikzmath{
	\coordinate (a) at (-.2,-.2);
	\coordinate (b) at (.2,0);
	\draw[dashed] (b) -- ($ (b) + (0,.4) $) node [above] {$\scriptstyle e_\cC$};
	\draw (a) -- ($ (a) + (0,.6) $) node [above] {$\scriptstyle g_\cC$};
	\filldraw[fill=\gColor, thick] (a) circle (.1cm);
	\filldraw (b) circle (.05cm);
}\,.
$$
Clearly the associators and unitors satisfy the obvious pentagon and triangle axioms of a $G$-crossed braided category.
\end{construction}

\begin{construction}[$G$-action]
\label{const:GAction}
We define a $G$-action $F_g:\fC_h \to \fC_{ghg^{-1}}$ by
$$
F_g\left(
\tikzmath{
\draw (0,0) -- (0,.4) node [above] {\scriptsize{$h_\cC$}};
\filldraw[fill=\hColor, thick] (0,0) circle (.1cm);
}
\right)
:=
\tikzmath{
\draw (-.4,.4) node [above] {\scriptsize{$g_\cC$}} -- (-.4,0) arc (-180:0:.4cm) -- (.4,.4) node [above, xshift=.1cm] {\scriptsize{$g^{-1}_\cC$}};
\draw (0,0) -- (0,.4) node [above] {\scriptsize{$h_\cC$}};
\filldraw[fill=\hColor, thick] (0,0) circle (.1cm);
}
=:
\tikzmath{
    \coordinate (a) at (0,0);
    \draw (a) -- ($ (a) + (0,.4) $) node [above,xshift=.15cm] {$\scriptstyle ghg^{-1}_\cC$};
    \draw[red] ($ (a) + (-.2,.4) $) -- ($ (a) + (-.2,0) $) arc (-180:0:.2cm) -- ($ (a) + (.2,.4) $); 
    \filldraw[fill=\hColor, thick] (a) circle (.1cm);
}\,.
$$
On the right hand side, we abbreviate this `cup' action by a single $g$-labelled red cup drawn under the respective node.
The functors $F_g$ are strict tensor functors, i.e., the tensorators $\psi^g: 
\xz_{ghg^{-1},gkg^{-1}}\circ (F_g \times F_g)
\Rightarrow 
F_g\circ \xz_{h,k}
$
are identity natural isomorphisms.
The tensorator $\mu_{g,h} : F_g \circ F_h \Rightarrow F_{gh}$ 
and the unit map
$\iota_h: \id_{\fC_h}\to F_e$
are also both identities.
It is straightforward to see that these identity natural isomorphisms $\psi^g$, $\mu^{g,h}$, and $\iota_h$ satisfy \ref{rel:psi1}, \ref{rel:psi2}, \ref{rel:mu1}, \ref{rel:mu2}, \ref{rel:iota1}, \ref{rel:iota2}.
\end{construction}

\begin{construction}[$G$-crossed braiding]
\label{const:GCrossedBraiding}
The $G$-crossed braiding natural isomorphisms $\beta^{g,h}$ are given by interchangers in $\cC$:
$$
\tikzmath{
	\coordinate (a) at (-.2,0);
	\coordinate (b) at (.2,-.2);
	\draw (a) -- ($ (a) + (0,.4) $) node [above] {$\scriptstyle g_\cC$};
	\draw (b) -- ($ (b) + (0,.6) $) node [above] {$\scriptstyle h_\cC$};
	\filldraw[fill=\gColor, thick] (a) circle (.1cm);
	\filldraw[fill=\hColor, thick] (b) circle (.1cm);
}
\overset{\phi}{\Rightarrow}
\tikzmath{
	\coordinate (a) at (-.2,-.2);
	\coordinate (b) at (.2,0);
	\draw (a) -- ($ (a) + (0,.6) $) node [above] {$\scriptstyle g_\cC$};
	\draw (b) -- ($ (b) + (0,.4) $) node [above] {$\scriptstyle h_\cC$};
	\filldraw[fill=\gColor, thick] (a) circle (.1cm);
	\filldraw[fill=\hColor, thick] (b) circle (.1cm);
}
=
\tikzmath{
\draw (-.4,.4) node [above] {\scriptsize{$g_\cC$}} -- (-.4,0) arc (-180:0:.4cm) -- (.4,.4) node [above, xshift=.1cm] {\scriptsize{$g^{-1}_\cC$}};
\draw (0,0) -- (0,.4) node [above] {\scriptsize{$h_\cC$}};
\filldraw[fill=\hColor, thick] (0,0) circle (.1cm);
\draw (.8,-.6) -- (.8,.4) node [above] {\scriptsize{$g_\cC$}};
\filldraw[fill=\gColor, thick] (.8,-.6) circle (.1cm);
}
=
\tikzmath{
	\coordinate (a) at (-.35,0);
	\coordinate (b) at (.35,-.2);
	\draw (a) -- ($ (a) + (0,.4) $) node [above,xshift=.15cm] {$\scriptstyle ghg^{-1}_\cC$};
	\draw[red] ($ (a) + (-.2,.4) $) -- ($ (a) + (-.2,0) $) arc (-180:0:.2cm) -- ($ (a) + (.2,.4) $); 
	\draw (b) -- ($ (b) + (0,.6) $) node [above] {$\scriptstyle h_\cC$};
	\filldraw[fill=\hColor, thick] (a) circle (.1cm);
	\filldraw[fill=\gColor, thick] (b) circle (.1cm);
}\,.
$$
\end{construction}

\begin{thm}
\label{thm:ConstructionOfGCrossedBraidedCategory}
The data 
$(\fC, \xz_{g,h}, F_g, \beta^{g,h})$
from Constructions \ref{const:UnderlyingCategories}, \ref{const:GAction}, and \ref{const:GCrossedBraiding}
forms a strict $G$-crossed braided category.
\DeferProof{sec:CoherenceProofsForGCrossedObjects}
\end{thm}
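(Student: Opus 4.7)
The plan is to verify the axioms of a strict $G$-crossed braided category by checking each one in turn, noting that most of the coherences are trivial by construction and that the nontrivial ones reduce to $\Gray$-monoid axioms for the interchanger $\phi$. I will defer the heavier diagrammatic calculations to Appendix \ref{sec:CoherenceProofsForGCrossedObjects}, as the statement indicates.

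First I would dispose of the strict monoidal structure. Since the associators and unitors in Construction \ref{const:UnderlyingCategories} are all identities, the pentagon and triangle axioms hold trivially. The strict monoidality and unit compatibility of the components follow immediately because $\xz_{g,h}$ is implemented by the tensor product of $\cC$, which is strictly associative and unital on $0$-cells (since $G$ is the set of $0$-cells with group multiplication as $\xz$). Likewise, since Construction \ref{const:GAction} takes the tensorators $\psi^{g}$, the monoidators $\mu^{g,h}$, and the unit maps $\iota_{h}$ all to be identities, axioms \ref{rel:psi1}, \ref{rel:psi2}, \ref{rel:mu1}, \ref{rel:mu2}, \ref{rel:iota1}, and \ref{rel:iota2} reduce to tautologies, and to check that $F_g$ is genuinely a functor one only needs that $g_\cC \xz (-) \xz g^{-1}_\cC$ is strictly functorial, which follows from $\cC$ being a $\Gray$-monoid and from $gg^{-1} = e$.

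Next I would show that $\beta^{g,h}$ is a natural isomorphism with the claimed source and target. Invertibility and naturality in $a_g$ and $b_h$ are precisely the conditions that $\phi$ in $\cC$ is invertible and natural in both slots, which is \ref{Interchanger:Natural} in Remark \ref{def:monoidal2cat}. That the target is indeed $F_g(b_h) \xz a_g$ is read off from Construction \ref{const:GCrossedBraiding}, using only that the group multiplication is associative.

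The main work is verifying the hexagon \eqref{eq:Hexagon} and the two heptagons \eqref{eq:Heptagon1}, \eqref{eq:Heptagon2}. With associators, unitors, tensorators $\psi^g$, and monoidators $\mu^{g,h}$ all identities, \eqref{eq:Heptagon1} collapses to the assertion that $\beta^{g,hk}_{a_g, b_h \xz c_k}$ factors as the composite of $\beta^{g,h}_{a_g, b_h} \xz \id_{c_k}$ and $\id_{F_g(b_h)} \xz \beta^{g,k}_{a_g, c_k}$; likewise \eqref{eq:Heptagon2} collapses to the factorization of $\beta^{gh,k}_{a_g \xz b_h, c_k}$. These are, respectively, the two equations in \ref{Interchanger:Composition} governing how $\phi$ behaves under composition of $1$-cells in either slot. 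For the hexagon \eqref{eq:Hexagon}, again all coherence isomorphisms are identities, so it reduces to an identity between nested interchangers; this in turn follows from the tensor compatibility of the interchanger (the three displayed equations involving $\phi_{\Io_a \xz y, z}$, $\phi_{x \xz \Io_b, z}$, and $\phi_{x, y \xz \Io_c}$), applied to the cup-and-cap diagrams encoding the $F_g$-action on the left-hand side.

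The main obstacle I anticipate is purely notational: translating the $G$-crossed axioms, which are stated algebraically in terms of natural isomorphisms between long composites, into the nested graphical form of Constructions \ref{const:GAction} and \ref{const:GCrossedBraiding} requires keeping track of the positions of the cups implementing the conjugation action, and matching them against the different instances of \ref{Interchanger:Composition} and the tensor-compatibility rules. Once the bookkeeping is fixed, each axiom is a one-line consequence of an identity in the $\Gray$-monoid $\cC$, and the detailed diagram chases belong in Appendix \ref{sec:CoherenceProofsForGCrossedObjects}.
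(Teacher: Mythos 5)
Your proposal is correct and follows essentially the same route as the paper: the monoidal and $G$-action coherences are tautologies since all structure isomorphisms are identities, the two heptagons \eqref{eq:Heptagon1} and \eqref{eq:Heptagon2} reduce to the two equations of \ref{Interchanger:Composition}, and the hexagon \eqref{eq:Hexagon} reduces to the observation that the two candidate $2$-cells are the same interchanger, justified by the tensor-compatibility axioms for $\phi$. The paper's appendix carries out exactly this bookkeeping via three commutative diagrams, each with a single non-trivial face.
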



Now suppose that $\cC, \cD\in \TriCat_G^{\st}$
and $A\in \TriCat_G^{\st}(\cC \to \cD)$ 
This means 
$A(g_\cC) = g_\cD$ on the nose for all $g\in G$, 
the adjoint equivalence
$\mu^A: \xz_\cD \circ (A \times A) \Rightarrow A \circ \xz_\cC$
satisfies
$\mu^A_{g,h} = \id_{gh}\in \cD(g_\cD\xz h_\cD \to gh_\cD)$,
the adjoint equivalence
$\iota^A: (\iota^A_*,\iota^A_1): I_\cD \Rightarrow A\circ I_\cC$
satisfies $\iota^A_* = \id_{e_\cD}$, and $\iota^A_1 := A^1_e\in \cD(\id_{e_\cD}\Rightarrow B(\id_{e_\cC}))$,
and the associators and unitors $\omega, \ell,r$ are identities.

Let $\fC$ and $\fD$ be the strict $G$-crossed braided categories obtained from $\cC$ and $\cD$
respectively from Theorem \ref{thm:ConstructionOfGCrossedBraidedCategory}.
We now define a $G$-crossed braided functor $(\bfA,\bfa):\fC \to \fD$.

\begin{construction}
\label{const:From3FunctorToGCrossedBraidedFunctor}
First, for $a \in \fC_{g}:=\cC(e_\cC \to g_\cC)$, we define $\bfA(a) := A(a) \in \cD(e_\cD \to g_\cD)=\fD_g$.
For $x \in \fC_g(a \to b)$, we define $\bfA(x) := A(x) \in \fD_g(\bfA(a) \to \bfA(b))$.
It is straightforward to verify $\bfA$ is a functor.
We now endow $\bfA$ with a tensorator.
For $a\in \fC_g$ and $b\in \fC_h$, we define $\bfA^2_{a,b} \in \fD(\bfA(a) \xz \bfA(b) \to \bfA(a\xz b))$ to be 
$$
\mu_{a,b}^A:
(A(a)\xz \id_h)\circ (\id_e \xz A(b))
\Rightarrow
A((a\xz \id_h) \circ (\id_e \xz b)).
$$
We define the unitor by $\bfA^1 := A^1_e \in \fD(1_\fD \to \bfA(1_\fC)) = \cD(\id_{e_\cD} \to A(\id_{e_\cC}))$.
\end{construction}

\begin{lem}
\label{lem:FromUnderlying2FunctorToGMonoidalFunctor}
The data $(\bfA, \bfA^1, \bfA^2): \fC \to \fD$ is a $G$-graded monoidal functor.
\DeferProof{sec:CoherenceProofsForGCrossedObjects}
\end{lem}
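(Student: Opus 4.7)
The goal is to verify the defining axioms of a (graded) monoidal functor for the triple $(\bfA,\bfA^1,\bfA^2)$. Concretely, I need to check: (i) each $\bfA_g = \bfA|_{\fC_g}$ is a functor into $\fD_g$; (ii) $\bfA^2_{a,b}$ is natural in both arguments; (iii) the pentagon axiom for $\bfA^2$ with the identity associators of $\fC$ and $\fD$; (iv) the left and right triangle axioms involving $\bfA^1$ and the identity unitors. Since $\fC$ and $\fD$ are strict $G$-crossed braided categories (all associators and unitors are identities), axioms (iii) and (iv) reduce to a handful of commuting squares built from the coherence data of $A$.

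The plan is to read off each of these from the components of the 3-functor $A$ catalogued in Appendix \ref{sec:Weak3CategoryCoherences}, exploiting the strictness of $A$ as a 1-morphism in $\TriCat_G^{\st}$. For (i): $A$ restricts to a 2-functor on each hom 2-category $\cC(e_\cC\to g_\cC)$, so $\bfA_g$ is automatically a functor. For (ii): naturality of $\bfA^2_{a,b}$ in each variable is exactly the naturality of $\mu^A$, viewed now as a natural transformation between 1-cell composites $(a\xz \id_h)\circ (\id_e \xz b)$ rather than between generic horizontal composites; the normalization $\mu^A_{g_\cC,h_\cC}=\id_{gh_\cD}$ ensures that the source and target 1-cells in $\cD$ literally equal $\bfA(a)\xz \bfA(b)$ and $\bfA(a\xz b)$, without having to pre/post-compose with nontrivial components of $\mu^A$ at 0-cells.

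For (iii), the associativity axiom for $\bfA^2$ is a hexagon in $\fD_{ghk}$ for $a\in\fC_g$, $b\in\fC_h$, $c\in\fC_k$. This hexagon is exactly the $\omega^A$ pentagon coherence \ref{Functor:PentagonCoherence} for $A$ applied to the composable triple $(a,b,c)$: the four outer edges are the four instances of $\bfA^2$ obtained by tensoring first on one side then the other, and the two associator edges in $\fC$ and $\fD$ are identities since $\cC,\cD$ are $\Gray$-monoids. Because $\omega^A$ is the identity modification in $\TriCat_G^{\st}$, this pentagon collapses to a genuine commuting diagram of 2-cells in $\cD$. For (iv), the unit triangles become the unitor coherences $\ell^A$, $r^A$ of the 3-functor, which are likewise identities; the compatibility of $\bfA^1=A^1_e$ with $\bfA^2$ follows from \ref{Functor:lr} together with the strictness $\iota^A_*=\id_{e_\cD}$, $\iota^A_1=A^1_e$.

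The main obstacle will be purely bookkeeping: each axiom is a commuting diagram of 2-cells in the $\Gray$-monoid $\cD$, and one must pass between the diagrammatic form of the 3-functor coherences (which are drawn for arbitrary composable 1-cells with a single monoidal product) and the \emph{nudged} tensor product \eqref{eq:Nudging} used to define $\bfA^2$. Once the translation convention is fixed, all required diagrams reduce, via the strictness bullets of Corollary \ref{cor:Contract4CategoryTo2Category}, to the coherence axioms of Definition \ref{defn:3functor} with their associator, unitor, and 0-cell tensorator components set equal to identities. As in the other results of this section, the detailed diagrammatic verification will be relegated to Appendix \ref{sec:CoherenceProofsForGCrossedObjects}.
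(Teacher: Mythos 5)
Your overall strategy matches the paper's: each axiom of a monoidal functor is read off from the coherence data of the $3$-functor $A$ in Definition~\ref{defn:3functor}, using the strictness of $A$ as a $1$-morphism of $\TriCat_G^{\st}$ to trivialize the associators, unitors, and $0$-cell tensorator components. Your points (i), (ii), and (iv) are correct and are exactly what the paper does: functoriality of each $\bfA_g$ from functoriality of $A$, naturality of $\bfA^2$ from \ref{Functor:mu.natural}, and the unit triangles from the $2$-modification property in \ref{Functor:lr} together with $\iota^A_1=A^1_e$ and $\ell^A,r^A,\phi$ being identities.

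Step (iii) as written, however, would fail. You attribute the associativity of $\bfA^2$ to the pentagon coherence \ref{Functor:PentagonCoherence} ``applied to the composable triple $(a,b,c)$''. That coherence is a condition on the components $\omega^A_{a,b,c}$ indexed by \emph{$0$-cells} of $\cC$, whereas the objects $a\in\fC_g$, $b\in\fC_h$, $c\in\fC_k$ are \emph{$1$-cells} of $\cC$: \ref{Functor:PentagonCoherence} cannot be instantiated at them, and for a $1$-morphism of $\TriCat_G^{\st}$ every map in that diagram is already the identity, so it carries no information about the $2$-cells $\bfA^2_{a,b}=\mu^A_{a,b}$. The coherence that actually yields associativity is the $2$-modification axiom for $\omega^A$ stated inside item \ref{Functor:omega}, a diagram over composable $1$-cells $x,y,z$ relating $\mu^A_{x,y}$, $\mu^A_{x\xz y,z}$, $\mu^A_{y,z}$, $\mu^A_{x,y\xz z}$ and the $\omega^A$'s; with $\omega^A=\id$ and the interchangers trivial (because the $0$-cell components $\mu^A_{g,h}$ are identities), it reduces precisely to $\mu^A_{x\xz y,z}\xt(\mu^A_{x,y}\xz\id)=\mu^A_{x,y\xz z}\xt(\id\xz\mu^A_{y,z})$. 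This is the citation the paper uses; once it is substituted for \ref{Functor:PentagonCoherence}, the rest of your argument goes through.
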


We now construct the compatibility $\bfa$ between the $G$-actions on $\fC$ and $\fD$. 
For $a\in \fC_h=\cC(1_\cC \to h_\fC)$, we define
$\bfa^a_g : F^\fD_g (\bfA(a)) \Rightarrow \bfA (F^\fC_g(a))$ using the tensorator $\mu^A$:
\begin{equation}
\label{eq:ActionatorForGCrossedFunctor}
\begin{split}
F^\fD_g(\bfA(a))
=
\tikzmath{
	\draw (0,0) -- (0,.5) node [above] {$\scriptstyle h_\cD$};
	\draw[red] (-.5,.5) -- (-.5,0) node [left,xshift=.1cm] {$\scriptstyle g$} arc (-180:0:.5cm) -- (.5,.5); 
	\roundNbox{fill=white}{(0,0)}{.25}{.1}{.1}{$\scriptstyle \bfA(a)$}
}
&\overset{A^1_{g}}{\Rightarrow}
\tikzmath{
	\draw (0,-.5) -- (0,.5) node [above] {$\scriptstyle h_\cD$};
	\draw[red] (-1,.5) node [above] {$\scriptstyle g_\cD$} -- (-1,0) arc (-180:0:1cm) -- (1,.5) node [above] {$\scriptstyle g_\cD^{-1}$} ;
	\roundNbox{fill=white}{(0,-.5)}{.25}{.1}{.1}{$\scriptstyle A(a)$}
	\roundNbox{fill=white}{(-1,0)}{.25}{.3}{.3}{$\scriptstyle A(\id_{g_\cC})$}
}
\overset{\mu^{A}_{\id_{g_\cC}, a}}{\Rightarrow}
\tikzmath{
	\draw (0,0) -- (0,.5) node [above] {$\scriptstyle h_\cD$};
	\draw[red] (-1,.5) node [above] {$\scriptstyle g_\cD$} -- (-1,0) arc (-180:0:1cm) -- (1,.5) node [above] {$\scriptstyle g_\cD^{-1}$} ;
	\roundNbox{fill=white}{(-1,0)}{.25}{0}{1}{$\scriptstyle A(\id_{g_\cC}\otimes a)$}
}
\\&\overset{A^1_{g^{-1}_\cC}}{\Rightarrow}
\tikzmath{
	\draw (0,0) -- (0,.5) node [above] {$\scriptstyle h_\cD$};
	\draw[red] (-1,.5) node [above] {$\scriptstyle g_\cD$} -- (-1,0) -- (-1,-.5) arc (-180:0:1cm) -- (1,.5) node [above] {$\scriptstyle g_\cD^{-1}$} ;
	\roundNbox{fill=white}{(-1,0)}{.25}{0}{1}{$\scriptstyle A(\id_{g_\cC}\otimes a)$}
	\roundNbox{fill=white}{(1,-.5)}{.25}{.4}{.4}{$\scriptstyle A(\id_{g^{-1}_\cC})$}
}
\overset{\mu^{A}_{\id_{g_\cC}\xz a, \id_{g^{-1}_\cC}}}{\Rightarrow}
\tikzmath{
	\draw (0,0) -- (0,.5) node [above] {$\scriptstyle h_\cD$};
	\draw[red] (-.5,.5) -- (-.5,0)  node [below, yshift=-.25cm] {$\scriptstyle g$} arc (-180:0:.5cm) -- (.5,.5); 
	\roundNbox{fill=white}{(0,0)}{.25}{1}{1}{$\scriptstyle A(\id_{g_\cC} \xz a \xz \id_{g_\cC^{-1}})$}
}
\\&=
\bfA\left(
\tikzmath{
	\draw (0,0) -- (0,.5) node [above] {$\scriptstyle h_\cC$};
	\draw[red] (-.5,.5) -- (-.5,0)  node [left, xshift=.1cm] {$\scriptstyle g$} arc (-180:0:.5cm) -- (.5,.5); 
	\roundNbox{fill=white}{(0,0)}{.25}{0}{0}{$\scriptstyle a$}
}\right)
=
\bfA(F^\fC_g(a)).
\end{split}
\end{equation}

\begin{thm}
\label{thm:From3FunctorToGCrossedBraidedFunctor}
The data $(\bfA, \bfA^1,\bfA^2, \bfa)$ is a $G$-crossed braided monoidal functor.
\DeferProof{sec:CoherenceProofsForGCrossedObjects}
\end{thm}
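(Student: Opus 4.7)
The plan is to verify the remaining pieces of structure for $(\bfA, \bfA^1, \bfA^2, \bfa)$ to be a $G$-crossed braided functor, given that Lemma~\ref{lem:FromUnderlying2FunctorToGMonoidalFunctor} already provides the underlying $G$-graded monoidal functor $(\bfA, \bfA^1, \bfA^2)$. What remains is: (a) each $\bfa_g$ is a natural transformation of the underlying functors $F^\fD_g\circ \bfA \Rightarrow \bfA\circ F^\fC_g$; (b) each $\bfa_g$ is monoidal, and together the $\bfa_g$ satisfy the $G$-action coherence~\eqref{eq:GCrossedFunctor-Gamma1}; (c) the $G$-crossed braiding coherence~\eqref{eq:GCrossedFunctor-Gamma2} holds.

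For (a), I would read $\bfa_g$ off of~\eqref{eq:ActionatorForGCrossedFunctor} as a vertical composite of whiskered instances of $A^1_g$, $A^1_{g^{-1}}$, and two tensorators $\mu^A$; naturality in the input $a\in \fC_h$ is then a short chase using naturality of $\mu^A$ (\ref{Functor:mu.natural}) and of the interchanger (\ref{Interchanger:Natural}). For (b), I would exploit the strictness of the constructed $G$-crossed braided categories $\fC$ and $\fD$ in the sense of Definition~\ref{defn:StrictGCrossedBraided}: all $\psi$, $\mu^{g,h}$, $\iota$ are identities, so \eqref{eq:GCrossedFunctor-Gamma1} collapses to the assertion that composing the action cup for $h$ followed by the action cup for $g$ equals the action cup for $gh$, after pulling $\bfA$ through. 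Unwinding, this diagram commutes by the pentagon-coherence~\ref{Functor:PentagonCoherence}, the unit coherences~\ref{Functor:lr}, and the strictness of $A$ on the $1$-cells $g_\cC$. Monoidality of each individual $\bfa_g$ (compatibility with $\bfA^2$ and $\bfA^1$) is an analogous, shorter computation using the same coherences.

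The main obstacle is (c), the $G$-crossed braiding coherence. Both sides of~\eqref{eq:GCrossedFunctor-Gamma2} unpack, via Constructions~\ref{const:GAction} and~\ref{const:GCrossedBraiding}, into string-diagram composites in $\cD$ that mix $\mu^A$, $A^1$, and several instances of the interchanger $\phi$ (which supplies the braiding on both sides). I would carry out the comparison in the graphical calculus of Section~\ref{sec:GraphicalCalculus}: sliding the action cup past the tensorator on the right-hand side using the coherence axiom of a weak 3-functor relating $\mu^A$ to the interchanger, then collapsing the resulting diagram using \ref{Functor:PentagonCoherence} together with the naturality axioms~\ref{Functor:mu.natural} and~\ref{Interchanger:Natural}. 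Because the bookkeeping of whiskerings and nudges (Warning~\ref{notation:nudging}) is substantial but routine, and in keeping with the convention already established in this section, I would defer the explicit coherence diagrams to Appendix~\ref{sec:CoherenceProofsGCrossed}.
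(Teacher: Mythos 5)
Your decomposition — naturality of $\bfa_g$, the $G$-action coherence \eqref{eq:GCrossedFunctor-Gamma1} (which collapses to a triangle by strictness of $\fC$ and $\fD$), and the braiding coherence \eqref{eq:GCrossedFunctor-Gamma2} — is exactly the paper's, and the overall strategy of a graphical-calculus diagram chase against the weak-3-functor axioms is the right one. There is, however, one concrete misidentification that would stall the chase if you tried to execute it: you repeatedly invoke \ref{Functor:PentagonCoherence}, but for a $1$-morphism of $\TriCat_G^{\st}$ the associator $\omega^A$ is the identity, so every arrow in \ref{Functor:PentagonCoherence} is an identity and that axiom carries no information here. The nontrivial content you need is instead the $2$-modification square \ref{Functor:omega} for the (trivial) associator — which, unpacked, is precisely the associativity of $\mu^A_{x,y}$ against the interchanger — together with \ref{Functor:mu.unital} and \ref{Functor:A.unital} for the unit bookkeeping and \ref{Functor:lr} for the unitors. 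For \eqref{eq:GCrossedFunctor-Gamma2} the axiom you describe verbally as ``relating $\mu^A$ to the interchanger'' is \ref{Functor:mu.monoidal}, and it is indeed the crux of that diagram; naming it would make the argument checkable. With those citations corrected, your proof matches the paper's.
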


\begin{prop}
\label{prop:2FunctorStrictlyUnitalAndAssociative}
The map $(A,\mu^A, \iota^A)\mapsto (\bfA, \bfa)$ strictly preserves identity 1-morphisms and composition of 1-morphisms.
\DeferProof{sec:CoherenceProofsForGCrossedObjects}
\end{prop}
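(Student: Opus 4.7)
The plan is to prove preservation of identities and preservation of composition separately, in each case by unpacking the formulas of Construction~\ref{const:From3FunctorToGCrossedBraidedFunctor} and comparing with the composition data of Corollary~\ref{cor:Contract4CategoryTo2Category}. Both checks amount to routine but careful bookkeeping; the bulk of the work lies in verifying that the compatibility $\bfa$ with the $G$-action composes correctly.

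For preservation of identities, I would start from $\id_\cC\in \TriCat_G^{\st}(\cC \to \cC)$, whose coherence data $(\id^1_g, \id^2_{x,y}, \mu^{\id}_{x,y}, \iota^{\id}_1)$ are all identities. Inserting these into Construction~\ref{const:From3FunctorToGCrossedBraidedFunctor}, the underlying functor of the image $\bfA$ is the identity on each $\fC_g$, the unitor $\bfA^1 = \id^1_e = \id$, and the tensorator $\bfA^2_{a,b} = \mu^{\id}_{a,b} = \id$. The actionator $\bfa_g^a$ from~\eqref{eq:ActionatorForGCrossedFunctor} is a pasting of four pieces ($\id^1_g$, $\mu^\id_{\id_{g_\cC},a}$, $\id^1_{g^{-1}_\cC}$, and $\mu^\id_{\id_{g_\cC}\xz a, \id_{g^{-1}_\cC}}$), each of which is an identity 2-cell, so the composite collapses to the identity by strictness of $\cC$ as a $\Gray$-monoid.

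For preservation of composition, given composable $B:\cC \to \cD$ and $A:\cD \to \cE$ in $\TriCat_G^{\st}$, I would first compare the underlying functors, unitors, and tensorators. Agreement on objects and 1-cells of each $\fC_g$ is immediate from $(A\circ B)(a) = A(B(a))$. The unitors agree via
\[
\bfA(\bfB^1)\xt \bfA^1 \;=\; A(B^1_e)\xt A^1_e \;=\; (A\circ B)^1_e,
\]
using the formula for $(A\circ B)^1$ from Corollary~\ref{cor:Contract4CategoryTo2Category}. The tensorators agree via
\[
\bfA(\bfB^2_{a,b})\xt \bfA^2_{\bfB(a),\bfB(b)} \;=\; A(\mu^B_{a,b})\xt \mu^A_{B(a),B(b)} \;=\; \mu^{A\circ B}_{a,b},
\]
again by direct comparison. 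These calculations are immediate.

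The main obstacle, as expected, is matching the actionator $\bfa^{A\circ B}$ of the composite with the composite actionator $\bfA(\bfb_g)\xt \bfa_g$. I would expand $\bfa^{A\circ B}_g$ from~\eqref{eq:ActionatorForGCrossedFunctor} using the formulas $(A\circ B)^1_g = A(B^1_g)\xt A^1_g$ and $\mu^{A\circ B}_{x,y} = A(\mu^B_{x,y})\xt \mu^A_{B(x),B(y)}$, and then argue that after a suitable reshuffling the resulting pasting factors into the piece built purely from the coherence data of $B$ (which equals $\bfb_g^a$, with an outer $A$ applied) followed by the piece built purely from the coherence data of $A$ (evaluated at $B(a)$, which is exactly $\bfa_g^{\bfB(a)}$). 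The reshuffling uses only that $A$ is a $\Gray$-functor on its underlying 2-category, together with the strictness properties $A(g_\cC\xz h_\cC)=g_\cE\xz h_\cE$, so that interchangers and whiskerings inside the red-cup diagrams can be pushed through $A$ freely. Once this factorization is established, both the identity and composition laws hold on the nose, giving the desired strict 2-functoriality.
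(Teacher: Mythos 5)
Your proposal follows essentially the same route as the paper: identities are immediate from the triviality of the coherence data, the unitor and tensorator of the composite are compared by the same one-line formulas, and the real work is the factorization of the composite actionator $\bfa^{A\circ B}_g$ into $\bfA(\bfb_g)$ followed by $\bfa_g$ evaluated at $\bfB(a)$, which the paper carries out via a large commutative diagram. One correction to your justification of the reshuffling: a $1$-morphism of $\TriCat_G^{\st}$ is \emph{not} a strict $\Gray$-functor on its underlying $2$-category ($A^1$, $A^2$, and the components $\mu^A_{x,y}$ on $1$-cells are genuinely nontrivial; only $\mu^A_{g,h}$ on $0$-cells, $\iota^A_*$, and the associators and unitors are identities), so you cannot push whiskerings "through $A$ freely" --- the faces of the diagram that do the reshuffling commute instead by naturality of the tensorator $\mu^A$ (property \ref{Functor:mu.natural}) together with functoriality of $1$-cell composition $\xo$, which is exactly how the paper labels them.
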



Suppose $\cC, \cD \in \TriCat_G^{\st}$, $A, B\in \TriCat_G^{\st}(\cC \to \cD)$, and $\eta\in \TriCat_G^{\st}(A \Rightarrow B)$.
This means that $\eta_*=e_\cD$ and $\eta_g = \id_{g_\cD}$ for all $g\in G$.
Let $\fC, \fD$ be the $G$-crossed braided categories obtained from $\cC,\cD$ respectively
from Theorem \ref{thm:ConstructionOfGCrossedBraidedCategory}.
Let $(\bfA,\bfa),(\bfB,\bfb):\fC \to \fD$ be the $G$-crossed braided functors obtained from $A,B$ respectively from Theorem \ref{thm:From3FunctorToGCrossedBraidedFunctor}, 

\begin{construction}
\label{const:GMonoidalTransformationFrom3Transformation}
We define $h:(\bfA,\bfa) \Rightarrow (\bfB,\bfb)$
by $h_a:=\eta_a \in \cD(\bfA(a) \Rightarrow \bfB(a))$ for $a\in \fC_g = \cC(1_\cC \to g_\cC)$.
\end{construction}

\begin{thm}
\label{thm:GMonoidalTransformation}
The data $h$ defines a $G$-crossed braided natural transformation $(\bfA,\bfa) \Rightarrow (\bfB, \bfb)$.
\DeferProof{sec:CoherenceProofsForGCrossedObjects}
\end{thm}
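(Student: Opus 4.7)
The plan is to verify the three defining properties of a $G$-crossed braided natural transformation for $h$: naturality of the underlying assignment $a \mapsto h_a := \eta_a$, monoidality of $h$ with respect to the tensorators $\bfA^2, \bfB^2$ and unitors $\bfA^1, \bfB^1$, and the $G$-action coherence diagram~\eqref{eq:ActionCohrenceForTransformation}. Each of these should follow by unpacking the corresponding coherence axiom satisfied by $\eta$ as a transformation of $3$-functors (cf.\ Appendix~\ref{sec:Weak3CategoryCoherences}) together with the strictness properties $\eta_* = e_\cD$, $\eta_g = \id_{g_\cD}$, $\eta^1 = \id$, $\eta^2_{g,h} = \id$ that hold because $\eta$ lives in $\TriCat_G^{\st}$.

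For naturality, given a morphism $x: a \to b$ in $\fC_g = \cC(1_\cC \to g_\cC)$, i.e., a $2$-cell in $\cC$, the required square is exactly the $2$-cell naturality condition satisfied by $\eta$ at $x$, read with $\eta_* = e_\cD$ and $\eta_g = \id_{g_\cD}$ erased. For monoidality, the unit axiom reduces to the statement $h_{1_\fC} \circ \bfA^1 = \bfB^1$, which is exactly the component of $\eta^1$ once we recall that $\bfA^1 = A^1_e$ and $\bfB^1 = B^1_e$; since $\eta^1 = \id$, this becomes a tautology after unpacking. Similarly, the tensor coherence $h_{a \xz b} \circ \bfA^2_{a,b} = \bfB^2_{a,b} \circ (h_a \xz h_b)$ is the coherence axiom relating $\eta$ with $\mu^A$ and $\mu^B$, applied at $1$-cells $a \in \cC(1_\cC \to g_\cC)$ and $b \in \cC(1_\cC \to h_\cC)$, and simplified using $\eta^2_{g_\cC, h_\cC} = \id$.

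The main step, and the expected main obstacle, is the $G$-action coherence~\eqref{eq:ActionCohrenceForTransformation}. The point is that $\bfa^a_g$ and $\bfb^a_g$ are not identities but five-step composites built from $A^1_g$, $A^1_{g^{-1}}$, $\mu^A_{\id_{g_\cC}, a}$, $\mu^A_{\id_{g_\cC} \xz a, \id_{g^{-1}_\cC}}$ (and the analogues for $B$), as spelled out in~\eqref{eq:ActionatorForGCrossedFunctor}. To show that $h_{F^\fC_g(a)} \circ \bfa^a_g = \bfb^a_g \circ F^\fD_g(h_a)$, the plan is to compare the two composites stage by stage: at each occurrence of $A^1_g$ versus $B^1_g$, we apply the unit coherence for $\eta$ (pairing $A^1_g$, $B^1_g$ with $\eta_{\id_{g_\cC}}$ and $\eta^1$), and at each occurrence of $\mu^A$ versus $\mu^B$, we apply the tensor coherence for $\eta$ (pairing $\mu^A, \mu^B$ with $\eta^2$). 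Combined with naturality of $\eta$ at the $1$-cell $\id_{g_\cC} \xz a$ and at $\id_{g_\cC} \xz a \xz \id_{g^{-1}_\cC}$ and standard interchanger manipulations, this produces the desired equality.

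Modulo the bookkeeping in this diagram chase, the result is a routine consequence of the transformation coherences and the strictness of $\eta$; the full verification is deferred to Appendix~\ref{sec:CoherenceProofsForGCrossedObjects}.
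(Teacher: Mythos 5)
Your proposal is correct and follows essentially the same route as the paper: naturality from the $2$-cell naturality of $\eta$, monoidality and unitality from \ref{Transformation:eta^2} and \ref{Transformation:eta_c.unital}, and the $G$-action coherence~\eqref{eq:ActionCohrenceForTransformation} by a stage-by-stage comparison of the five-step composites defining $\bfa^a_g$ and $\bfb^a_g$, pairing each $A^1$/$B^1$ step with the unit coherence and each $\mu^A$/$\mu^B$ step with the tensor coherence, exactly as in the paper's grid diagram in Appendix~\ref{sec:CoherenceProofsForGCrossedObjects}. The only small imprecision is attributing unitality to ``the component of $\eta^1$'' rather than directly to \ref{Transformation:eta_c.unital} (which gives $h_{1_\fC}=\eta_{\id_{e_\cC}}=B^1_e\xt(A^1_e)^{-1}$), but these are equivalent given that $\eta^1$ is determined by that condition.
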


\begin{thm}
\label{thm:2Functor}
The map $\cC \mapsto (\fC, \otimes_{g,h}, F_g, \beta^{g,h})$, $(A,\mu^A, \iota^A) \mapsto (\bfA, \bfa)$, $\eta\mapsto h$ is a strict 2-functor $\TriCat_G^{\st} \to G\CrsBrd^{\st}$.
\end{thm}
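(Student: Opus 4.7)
The plan is to observe that the bulk of the work for Theorem \ref{thm:2Functor} has already been done. By Theorems \ref{thm:ConstructionOfGCrossedBraidedCategory}, \ref{thm:From3FunctorToGCrossedBraidedFunctor}, and \ref{thm:GMonoidalTransformation}, the assignments on objects, 1-morphisms, and 2-morphisms are each well-defined, and by Proposition \ref{prop:2FunctorStrictlyUnitalAndAssociative}, strict preservation of identities and composition of 1-morphisms has been established. Thus what remains is to verify that the assignment $\eta \mapsto h$ respects identity 2-morphisms and the two compositions of 2-morphisms that are relevant for a strict 2-category, namely vertical composition and whiskering by 1-morphisms.

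First I would handle identity 2-morphisms: for $\eta = \id_A$ in $\TriCat_G^{\st}$, the naturator $(\id_A)_a$ is the identity 2-cell at every 1-cell $a$ of $\cC$ (directly from the definition of the identity transformation recalled in Appendix \ref{sec:Weak3CategoryCoherences}), so $h_a = \id_{\bfA(a)}$, and hence $h = \id_{\bfA}$. Next I would verify vertical composition: for composable $\eta: A \Rightarrow B$ and $\zeta: B \Rightarrow C$ in $\TriCat_G^{\st}$, the general formula for the naturator of the composite $\zeta * \eta$ at a 1-cell $a$ involves pre- and post-whiskering the respective naturators by $\eta_*$ and $\zeta_*$; but since these both equal the identity $0$-cell $e_\cD$ in $\TriCat_G^{\st}$, the formula collapses to $(\zeta * \eta)_a = \zeta_a \xt \eta_a$, which matches vertical composition in $G\CrsBrd^{\st}$. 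Then I would address whiskering: the left whiskering of $\eta: B \Rightarrow C$ by a 1-morphism $A$ has naturator $(A * \eta)_a = \eta_{A(a)}$, which under our assignment becomes $h_{\bfA(a)}$, matching left whiskering by $\bfA$ in $G\CrsBrd^{\st}$; the case of right whiskering involves the monoidator of the whiskering functor but reduces similarly to the expected formula because all such coherence data are identities in $\TriCat_G^{\st}$.

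The main obstacle is not conceptual difficulty but bookkeeping. Extracting the precise formulas for composition and whiskering of tritransformations from Appendix \ref{sec:Weak3CategoryCoherences} is somewhat involved in the general setting, and the verifications amount to checking that each auxiliary $2$-cell (unitor, associator, interchanger) trivializes under the strictness constraints defining $\TriCat_G^{\st}$. In line with the convention already established in this section, I would defer these routine calculations to Appendix \ref{sec:CoherenceProofsGCrossed}.
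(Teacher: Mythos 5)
Your proposal is correct and follows essentially the same route as the paper: the paper's proof likewise reduces everything to Proposition~\ref{prop:2FunctorStrictlyUnitalAndAssociative} for the 1-morphism level and then disposes of the 2-morphism level in one line, observing that $(\eta\xt\zeta)_a = \eta_a\xt\zeta_a$ holds on the nose because $\TriCat_G^{\st}$ is strict (so the whiskering/interchanger data you propose to check all trivializes, exactly as you anticipate). The only difference is one of economy: the paper treats these verifications as immediate from Construction~\ref{const:GMonoidalTransformationFrom3Transformation} rather than deferring them to an appendix.
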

\begin{proof}
By Proposition \ref{prop:2FunctorStrictlyUnitalAndAssociative}, we saw that this candidate 2-functor strictly preserves identity 1-morphisms and composition of 1-morphisms.
It remains to prove that the map
$\eta \mapsto h$ 
preserves identities and 2-composition.
This is immediate from Construction \ref{const:GMonoidalTransformationFrom3Transformation} as $(\eta\xt \zeta)_a = \eta_a \xt \zeta_a$ as $\TriCat_G^{\st}$ is strict.
\end{proof}


\subsection{The 2-functor is an equivalence}
\label{sec:2FunctorEquivalence}

We now show our 2-functor $\TriCat_G^{\st} \to G\CrsBrd^{\st}$ constructed in \S\ref{sec:FromGBoring3CatsToGCrossedBriadedCats} is an equivalence.

\paragraph{Essential surjectivity on objects.}
We begin by showing essential surjective, applying the techniques from \cite{MR4033513}.
Suppose $\fC$ is a strict $G$-crossed braided category.
We define a $\Gray$-monoid $\cC \in \TriCat_G^{\st}$
as follows.
The 0-cells of $\cC$ are simply the elements of $G$, and 0-composition $\xz$ is the group multiplication.
For $g,h\in G$, we define the hom category $\cC(g\to h):= \fC_{hg^{-1}}$.
These hom categories comprise a strict 2-category by defining vertical composition by the tensor product in $\fC$, i.e., if $a\in \cC(g\to h)=\fC_{hg^{-1}}$ and $b \in \cC(h\to k) = \fC_{kh^{-1}}$, we define
$$
b \xo_\cC a :=  b\otimes_\fC a = \otimes_{kh^{-1}, hg^{-1}}(b \times a).
$$
It is straightforward to verify that $\cC$ is a strict 2-category by strictness of the associator and unitor of $\fC$.

We now endow $\cC$ with a monoidal product and interchanger.
We define the monoidal product with identity 1-morphisms as follows.
Given $a\in \cC(g\to h)=\fC_{hg^{-1}}$ and $k\in G$, we set
$$
a\xz \id_k := a \in \fC_{hg^{-1}}=\fC_{hkk^{-1}g^{-1}} = \cC(gk \to hk),
$$
i.e., tensoring on the right with $\id_k$ does nothing.
Tensoring on the left, however, implements the $G$-action:
$$
\id_k \xz a := F_k(a) \in \fC(kgh^{-1}k^{-1}) = \cC(hg \to kh).
$$
The interchanger $\phi$ is given by the $G$-crossed braiding.
In more detail, given $a\in \cC(g\to h)=\fC_{hg^{-1}}$ and $b\in \cC(k\to \ell)_{\ell k^{-1}}$, we define
$$
\phi_{a,b}
:=
\beta^{hg^{-1},g\ell k^{-1}\ell^{-1}}_{a,b}
\in
\cC((a\xz \id_\ell)\xo(\id_g \xz b)
\Rightarrow
(\id_h\xz b)\xo(a \xz \id_k))
$$
Indeed, since $\fC$ is strict, $F_{hg^{-1}}=F_hF_{g^{-1}}$ on the nose, and
$\beta^{hg^{-1},g\ell k^{-1}g^{-1}}$ is a natural isomorphism
$$
\begin{tikzcd}
    \fC_{g\ell k^{-1}g^{-1}} \times \fC_{hg^{-1}}
    \arrow[bend left=30, drrr, "\xz_{hg^{-1}g\ell k^{-1}g^{-1}gh^{-1},hg^{-1}}\circ (F_{hg^{-1}}\times \id_{\fC_{hg^{-1}}})"]
    \\
    &
    \arrow[phantom, "\Uparrow \beta^{hg^{-1},g\ell k^{-1}g^{-1}}"]
    & &
    \fC_{h\ell k^{-1}g^{-1}}
    \\
    \fC_{hg^{-1}} \times \fC_{g\ell k^{-1}g^{-1}} 
    \arrow[uu, "\sigma"]
    \arrow[bend right=30, swap, urrr, " \xz_{hg^{-1},g\ell k^{-1}g^{-1}}"]
\end{tikzcd}.
$$

\begin{nota}
In the graphical calculus, one can think of a 1-cell in $\cC(g \to h)$ as a 1-cell in $\cC(1_\cC \to hg^{-1})$ with a $g$-strand on the right hand side, which does nothing.
$$
\tikzmath{
\draw (0,-.4) node[below] {$\scriptstyle g$} -- (0,.4) node[above] {$\scriptstyle h$};
    \filldraw[fill=\gColor, thick] (0,0) circle (.1cm);
}
:=
\tikzmath{
\draw[double] (0,0) -- (0,.4) node[above, xshift=.15cm] {$\scriptstyle hg^{-1}$};
\draw (.3,-.4) node[below] {$\scriptstyle g$} -- (.3,.4);
    \filldraw[fill=\gColor, thick] (0,0) circle (.1cm);
}
$$
Vertical composition is then given by
$$
\tikzmath{
\draw (0,-.7) node[below] {$\scriptstyle g$} -- node[right] {$\scriptstyle h$} (0,.7) node[above] {$\scriptstyle k$};
    \filldraw[fill=\gColor, thick] (0,-.3) circle (.1cm);
    \filldraw[fill=\hColor, thick] (0,.3) circle (.1cm);
}
:=
\tikzmath{
\draw[double] (0,.3) -- (0,.7) node[above, xshift=.15cm] {$\scriptstyle kh^{-1}$};
\draw[double] (.6,-.3) -- (.6,.7) node[above, xshift=.15cm] {$\scriptstyle hg^{-1}$};
\draw (1,.7) -- (1,-.7) node[below] {$\scriptstyle g$};
\filldraw[fill=\hColor, thick] (0,.3) circle (.1cm);
\filldraw[fill=\gColor, thick] (.6,-.3) circle (.1cm);
}
\in
\xz_{kh^{-1},hg^{-1}}(\fC_{kh^{-1}} \times \fC_{hg^{-1}}) = \fC_{kg^{-1}}.
$$
Tensoring by an identity strand on the right adds a strand on the right which does nothing, whereas tensoring by an identity on the left implements the $G$-action.\footnote{
This graphical calculus is analogous to 
diagrams for endomorphisms of a von Neumann algebra or a Cuntz $\rmC^*$-algebra \cite[\S2]{MR3635673} 
where
adding a strand labelled by an endomorphisms of a von Neumann algebra on the right does nothing, and adding a strand on the left implements the action of that endomorphism.
}
$$
\tikzmath{
\draw (0,-.4) node[below] {$\scriptstyle g$} -- (0,.4) node[above] {$\scriptstyle h$};
    \filldraw[fill=\gColor, thick] (0,0) circle (.1cm);
}
\xz
\tikzmath{
\draw (0,-.4) node[below] {$\scriptstyle k$} -- (0,.4) node[above] {$\scriptstyle k$};
}
:=
\tikzmath{
\draw (0,-.4) node[below] {$\scriptstyle g$} -- (0,.4) node[above] {$\scriptstyle h$};
    \filldraw[fill=\gColor, thick] (0,0) circle (.1cm);
\draw (.3,-.4) node[below] {$\scriptstyle k$} -- (.3,.4) node[above] {$\scriptstyle k$};
}
:=
\tikzmath{
\draw[double] (0,0) -- (0,.4) node[above, xshift=.15cm] {$\scriptstyle hg^{-1}$};
\draw (.3,-.4) node[below] {$\scriptstyle gk$} -- (.3,.4);
    \filldraw[fill=\gColor, thick] (0,0) circle (.1cm);
}
\in \fC_{hg^{-1}}
\qquad\qquad
\tikzmath{
\draw (0,-.4) node[below] {$\scriptstyle g$} -- (0,.4) node[above] {$\scriptstyle g$};
}
\xz
\tikzmath{
\draw (0,-.4) node[below] {$\scriptstyle h$} -- (0,.4) node[above] {$\scriptstyle k$};
    \filldraw[fill=\gColor, thick] (0,0) circle (.1cm);
}
:=
\tikzmath{
\draw[double] (0,0) -- (0,.4) node[above, xshift=.15cm] {$\scriptstyle kh^{-1}$};
\draw (-.3,-.4) node[below] {$\scriptstyle g$} -- (-.3,.4);
    \filldraw[fill=\gColor, thick] (0,0) circle (.1cm);
}
:=
\tikzmath{
\draw[double] (0,0) -- (0,.4) node[above, xshift=.15cm] {$\scriptstyle kh^{-1}$};
\draw (.4,-.4) node[below] {$\scriptstyle gh$} -- (.4,.4);
    \filldraw[fill=\gColor, thick] (0,0) circle (.1cm);
    \draw[red] (-.2,.4) -- (-.2,0) node [left] {\scriptsize{$g$}} arc (-180:0:.2cm) -- (.2,.4); 
}
\in \fC_{gkh^{-1}g^{-1}}.
$$ 
That the interchanger is given by the $G$-crossed braiding can now be represented graphically by
\begin{align*}
\tikzmath{
\draw (.5,-.5) node[below] {$\scriptstyle k$} -- (.5,.5) node[above] {$\scriptstyle \ell$};
\draw (0,-.5) node[below] {$\scriptstyle g$} -- (0,.5) node[above] {$\scriptstyle h$};
    \filldraw[fill=\gColor, thick] (0,.15) circle (.1cm);
    \filldraw[fill=\hColor, thick] (.5,-.15) circle (.1cm);
}
&=
\tikzmath{
\draw[double] (0,.15) -- (0,.5) node[above, xshift=.15cm] {$\scriptstyle hg^{-1}$};
\draw[double] (.8,-.15) -- (.8,.5) node[above, xshift=.15cm] {$\scriptstyle \ell k^{-1}$};
\draw (1.2,-.5) node[below] {$\scriptstyle k$} -- (1.2,.5);
\draw (.4,-.5) node[below] {$\scriptstyle g$} -- (.4,.5);
    \filldraw[fill=\gColor, thick] (0,.15) circle (.1cm);
    \filldraw[fill=\hColor, thick] (.8,-.15) circle (.1cm);
}
=
\tikzmath{
\draw[double] (0,.15) -- (0,.5) node[above,xshift=.15cm] {$\scriptstyle hg^{-1}$};
\draw[double] (.6,-.15) -- (.6,.5) node[above, xshift=.15cm] {$\scriptstyle \ell k^{-1}$};
\draw (1,-.5) node[below] {$\scriptstyle gk$} -- (1,.5);
    \filldraw[fill=\gColor, thick] (0,.15) circle (.1cm);
    \filldraw[fill=\hColor, thick] (.6,-.15) circle (.1cm);
    \draw[red] (.4,.5) -- (.4,-.15) node [left] {\scriptsize{$g$}} arc (-180:0:.2cm) -- (.8,.5); 
}
\xrightarrow{\beta^{hg^{-1},g\ell k^{-1}g^{-1}}}
\tikzmath{
\draw[double] (0,.15) -- (0,.5) node[above, xshift=.15cm] {$\scriptstyle \ell k^{-1}$};
\draw[double] (.6,-.15) -- (.6,.5) node[above, xshift=.15cm] {$\scriptstyle hg^{-1}$};
\draw (1,-.5) node[below] {$\scriptstyle gk$} -- (1,.5);
    \filldraw[fill=\hColor, thick] (0,.15) circle (.1cm);
    \filldraw[fill=\gColor, thick] (.6,-.15) circle (.1cm);
    \draw[red] (-.2,.5) -- (-.2,.15) node [left] {\scriptsize{$h$}} arc (-180:0:.2cm) -- (.2,.5); 
}
=
\tikzmath{
\draw (.5,-.5) node[below] {$\scriptstyle k$} -- (.5,.5) node[above] {$\scriptstyle \ell$};
\draw (0,-.5) node[below] {$\scriptstyle g$} -- (0,.5) node[above] {$\scriptstyle h$};
    \filldraw[fill=\gColor, thick] (0,-.15) circle (.1cm);
    \filldraw[fill=\hColor, thick] (.5,.15) circle (.1cm);
}\,.
\end{align*}
\end{nota}

One then checks that $\cC$ defined as above is a $\Gray$-monoid and thus defines an object of $\TriCat_G^{\st}$.
Indeed, the verification is entirely similar to \cite{MR4033513} (see also \cite[Construction~2.1.23]{1812.11933}).
Moreover, applying our construction from Theorem~\ref{thm:ConstructionOfGCrossedBraidedCategory} to the so defined $\cC$, recovers the $G$-crossed braided category $\fC$ on the nose. Hence, the strict $2$-functor $\TriCat_G^{\st} \to G\CrsBrd^{\st}$ is in fact strictly surjective on objects. 

\paragraph{Essential surjectivity on $1$-morphisms.}
Let $\fC, \fD$ be the $G$-crossed braided categories obtained from $\cC,\cD\in \TriCat_G^{\st}$ respectively from Theorem \ref{thm:ConstructionOfGCrossedBraidedCategory}, and suppose $(\bfA,\bfa): \fC \to \fD$ is a $G$-crossed braided functor.
We now construct an $A\in \TriCat_G^{\st}(\cC \to \cD)$
which maps to $(\bfA,\bfa)$ under Construction \ref{const:From3FunctorToGCrossedBraidedFunctor} and \eqref{eq:ActionatorForGCrossedFunctor}.

\begin{construction}
\label{defn:PreimageUnderlying2Functor}
First, we must have $A(g_\cC) = g_\cD$ for all $g\in G$.
Recall that we have an isomorphism of categories $\cC(g_\cC \to h_\cC) \cong \cC(1_\cC \to hg_\cC^{-1})$ given by the strict 2-functor $R_{g^{-1}_\cC}=-\xz g^{-1}_\cC$.
For a 1-cell $x\in \cC(g_\cC \to h_\cC)$, we define
$A(x):=\bfA(x \xz g^{-1}_\cC)\xz g_\cD$,
and similarly for 2-cells $f\in \cC(x \Rightarrow y)$.
We define the unitor 
$$
A^1_g := \bfA^1_e \xz g_\cD\in \cD(\id_{g_\cD} \Rightarrow A(\id_{g_\cC}) = \bfA(\id_{e_\cC})\xz g_\cD),
$$
and 
for $x\in \cC(g_\cC \to h_\cC)$ and $y\in \cC(h_\cC \to k_\cC)$, the compositor $A^2_{y,x}$
as the composite
\begin{align*}
A(y)\xo A(x) 
&=
(\bfA(y \xz h^{-1}_\cC)\xz h_\cD)
\xo
(\bfA(x \xz g^{-1}_\cC)\xz g_\cD)
\\&=
(\bfA(y \xz h^{-1}_\cC)\xz hg^{-1}_\cD \xz g_\cD)
\xo
(\bfA(x \xz g^{-1}_\cC)\xz g_\cD)
\\&=
((\bfA(y \xz h^{-1}_\cC)\xz hg^{-1}_\cD)
\xo
(\bfA(x \xz g^{-1}_\cC))\xz g_\cD
&&
\text{$-\otimes g_\cD$ strict}
\\&=
((\bfA(y \xz h^{-1}_\cC)
\xz 
\bfA(x \xz g^{-1}_\cC))\xz g_\cD
&&
\text{Nudging \eqref{eq:Nudging}}
\\&\xrightarrow{\bfA^2}
(\bfA((y \xz h^{-1}_\cC) \xz (x \xz g^{-1}_\cC))\xz g_\cD
\\&=
(\bfA((y \xz h^{-1}_\cC \xz hg^{-1}_\cC) \xo (x \xz g^{-1}_\cC))\xz g_\cD
&&
\text{Nudging \eqref{eq:Nudging}}
\\&=
(\bfA((y \xz g^{-1}_\cC) \xo (x \xz g^{-1}_\cC))\xz g_\cD
\\&=
(\bfA((y \xo x) \xz g^{-1}_\cC)\xz g_\cD
&&
\text{$-\otimes g^{-1}_\cC$ strict}
\\&=
A(y\circ x).
\end{align*}
\end{construction}

\begin{lem}
\label{lem:PreimageUnderlying2Functor}
The data $(A,A^1,A^2)$ defines a 2-functor $\cC\to \cD$ such that 
$A(g_\cC) = g_\cD$ for all $g\in G$.
\DeferProof{sec:CoherenceProofsForEquivalence}
\end{lem}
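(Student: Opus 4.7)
My plan is to exploit a factorization of $A$ on each hom-category through $\bfA$, together with the strictness of right tensor multiplication in the ambient $\Gray$-monoids. First, observe that $R_{g^{-1}_\cC} = -\xz g^{-1}_\cC$ is a strict $2$-functor (Remark~\ref{def:monoidal2cat}, item~\ref{Gray:tensor}) which, by the axioms of a $\Gray$-monoid, restricts to an isomorphism of categories
\[
\cC(g_\cC \to h_\cC) \xrightarrow{\sim} \cC(1_\cC \to hg^{-1}_\cC) = \fC_{hg^{-1}},
\]
with strict inverse $R_{g_\cC}$. Under this identification, Construction~\ref{defn:PreimageUnderlying2Functor} defines $A$ on hom-categories as the composite
\[
\cC(g_\cC\to h_\cC)\xrightarrow{R_{g^{-1}_\cC}}\fC_{hg^{-1}}\xrightarrow{\bfA}\fD_{hg^{-1}}\xrightarrow{R_{g_\cD}}\cD(g_\cD\to h_\cD).
\]
Since each factor is a functor (with $R_{g^{-1}_\cC}$ and $R_{g_\cD}$ even strict $2$-functors), preservation of identity $2$-cells and vertical composition is automatic, so $A$ is a functor on each hom-category with $A(g_\cC)=g_\cD$ by construction.

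Next, I would verify the naturality and $2$-functor coherence of the structure $2$-cells. Naturality of $A^2_{y,x}$ in $y$ and $x$ follows from naturality of the tensorator $\bfA^2$, bifunctoriality of $\xz$ in $\cD$, and the strictness of $-\xz g_\cD$; naturality of $A^1_g$ in $g$ is immediate since $A^1_g = \bfA^1\xz g_\cD$ is a whiskered fixed $2$-cell. The substantive content is the associativity coherence
\[
A^2_{z,y\xo x}\xt\bigl(\Io_{A(z)}\xo A^2_{y,x}\bigr)=A^2_{z\xo y,x}\xt\bigl(A^2_{z,y}\xo\Io_{A(x)}\bigr)
\]
together with the left and right unit coherences relating $A^1$ and $A^2$. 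My plan is to unpack both sides using the definition of $A^2$ from Construction~\ref{defn:PreimageUnderlying2Functor}, then repeatedly apply: (a) strictness of $-\xz g_\cD$ to pull the rightmost $g_\cD$-strand out of all horizontal compositions, and (b) the nudging convention \eqref{eq:Nudging} to rewrite horizontal composites of nudged $1$-cells as tensor products. After these manipulations, both sides of the pentagon and both unit triangles reduce to whiskerings of the pentagon and unit axioms for $\bfA$ viewed as a monoidal functor, tensored on the right with $\Io_{g_\cD}$, which hold by hypothesis.

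The main obstacle will be bookkeeping: when passing between the form $\cD(g_\cD\to k_\cD)$ and the form $\fD_{kg^{-1}}\xz g_\cD$, several interchanger $2$-cells $\phi$ appear, and one must confirm that they agree on both sides of the coherence equation. This is ultimately routine because the interchanger axioms of a $\Gray$-monoid (Remark~\ref{def:monoidal2cat}) force $\phi$ to be an identity whenever one of its arguments is an identity $1$-cell, and in the present setting the $g_\cD$-strand to be factored out is always an identity. The resulting calculation is tedious but mechanical, which is why the detailed verification is deferred to Appendix~\ref{sec:CoherenceProofsForEquivalence}.
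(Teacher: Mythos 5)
Your proposal is correct and follows essentially the same route as the paper: you define $A$ hom-wise as $R_{g_\cD}\circ\bfA\circ R_{g^{-1}_\cC}$ (so functoriality on $2$-cells is automatic), and you reduce the coherences \ref{Functor:A.associative} and \ref{Functor:A.unital} via the nudging convention \eqref{eq:Nudging} and strictness of $-\xz g_\cD$ to the associativity of $\bfA^2$ and the unitality of $\bfA^1,\bfA^2$, exactly as in the deferred proof in Appendix~\ref{sec:CoherenceProofsForEquivalence}. Your observation that all interchangers involved are identities (since one leg is always $\Io_{g_\cD}$) is the same fact the paper uses implicitly when it writes those steps as equalities.
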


\begin{construction}
\label{defn:PreimageMonoidal2Functor}
The adjoint equivalence $\mu^A: \xz_\cD \circ (A\times A) \Rightarrow A \circ \xz_\cC$ is defined as follows.
First, $\mu^A_{g,h}:= \id_{gh_\cD}\in \cD(g_\cD \xz h_\cD \Rightarrow gh_\cD)$.
For $x\in \cC(g_\cC \to h_\cC)$ and $y\in \cC(k_\cC \to \ell_\cC)$, we define the natural isomorphism $\mu^A_{x,y}\in \cD(A(x)\xz A(y) \Rightarrow A(x \xz y))$ by the composite
\begin{align*}
A(x)\xz A(y)
&=
\bfA(x \xz g^{-1}_\cC))\xz g_\cD \xz \bfA(y \xz k^{-1}_\cC))\xz k_\cD
\\&=
\bfA(x \xz g^{-1}_\cC))\xz F^\fD_g(\bfA(y \xz k^{-1}_\cC)))\xz g_\cD \xz k_\cD
\\&=
\bfA(x \xz g^{-1}_\cC))\xz F^\fD_g(\bfA(y \xz k^{-1}_\cC)))\xz gk_\cD
\\&\xrightarrow{\bfa}
\bfA(x \xz g^{-1}_\cC))\xz \bfA(F^\fC_g(y \xz k^{-1}_\cC)))\xz gk_\cD
\\&\xrightarrow{\bfA^2}
\bfA(x \xz g^{-1}_\cC\xz F^\fC_g(y \xz k^{-1}_\cC))\xz gk_\cD
\\&=
\bfA(x \xz F^\fC_{g^{-1}}F^\fC_g(y \xz k^{-1}_\cC)\xz g^{-1}_\cC)\xz gk_\cD
\\&=
\bfA(x \xz F^\fC_e(y \xz k^{-1}_\cC)\xz g^{-1}_\cC)\xz gk_\cD
\\&=
\bfA(x \xz y \xz k^{-1}_\cC\xz g^{-1}_\cC)\xz gk_\cD
\\&=
\bfA(x \xz y \xz (gk)^{-1}_\cC)\xz gk_\cD
\\&=
A(x\xz y).
\end{align*}
The adjoint equivalence $\iota^A = (\iota^!_*, \iota^A_1): I_\cD \Rightarrow A\circ I_\cC$ is defined by $\iota^A_* := \id_{e_\cD}$, $\iota^A_1 := A^1_e$.
The associator $\omega^A$ and the unitors $\ell^A, r^A$ are all defined to be identities.
\end{construction}

\begin{thm}
\label{thm:Preimage3Functor}
The data $(A,\mu^A, \iota^A)$ defines a 1-morphism in $\TriCat_G^{\st}( \cC \to \cD)$.
\DeferProof{sec:CoherenceProofsForEquivalence}
\end{thm}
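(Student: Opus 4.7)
The plan is to verify that $(A, A^1, A^2, \mu^A, \iota^A)$ together with trivial $\omega^A, \ell^A, r^A$ satisfies the axioms of a weak $3$-functor between $\Gray$-monoids as unpacked in Appendix~\ref{sec:Weak3CategoryCoherences}. The strictness conditions defining membership in $\TriCat_G^{\st}$ (namely $A(g_\cC) = g_\cD$, $\mu^A_{g,h} = \id_{gh_\cD}$, $\iota^A_* = \id_{e_\cD}$, and $\iota^A_1 = A^1_e$) are built into Construction~\ref{defn:PreimageMonoidal2Functor} and hold by fiat. Since Lemma~\ref{lem:PreimageUnderlying2Functor} already provides the underlying $2$-functor, what remains is the $3$-functorial coherence for the adjoint equivalences $\mu^A$ and $\iota^A$, namely naturality in $2$-cells, unitality, the $\mu/\iota$-triangle, and the pentagon.

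My strategy is to reduce every $3$-functor coherence for $A$ to a coherence already established for the $G$-crossed braided functor $(\bfA, \bfa)$. The definition of $\mu^A_{x,y}$ factors as a composite of four blocks: (a) a repackaging via the strict right-tensoring $-\xz g_\cD$ identification $\cC(g_\cC \to h_\cC) \cong \cC(1_\cC \to hg_\cC^{-1})$; (b) a rewriting of a tensor product as a $G$-action using the strict equalities $g_\cD \xz k_\cD = gk_\cD$ and $F_{g^{-1}}F_g = F_e = \id$; (c) an application of the compatibility isomorphism $\bfa$; and (d) an application of the tensorator $\bfA^2$. The unit coherence $\iota^A$ similarly reduces to $\bfA^1$. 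Consequently, naturality of $\mu^A$ in $2$-cells is immediate from naturality of $\bfA^2$ and of $\bfa$, and the unitality conditions on $\mu^A$ (including its interaction with $\iota^A$) follow from the normalization of $\bfA^2$, $\bfa$, and $\bfA^1$ on identities, which is part of the monoidal and $G$-crossed braided functor axioms.

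The main obstacle will be the pentagon coherence for $\mu^A$, applied to three $1$-cells $x \in \cC(g_\cC\to h_\cC)$, $y \in \cC(k_\cC\to \ell_\cC)$, $z \in \cC(m_\cC\to n_\cC)$. I plan to draw the pentagon-coherence diagram and expand each $\mu^A$-edge into its four-step composite. Because $-\xz (gkm)_\cD$ is a strict $2$-functor, it can be factored out of the entire diagram, reducing the problem to a diagram in $\fD$ whose sole ingredients are $\bfA^2$, $\bfa$, interchangers, and the $G$-action coherences inherited from the $G$-crossed braided structure on $\fD$. The resulting commutativity then follows by combining associativity of $\bfA^2$, the compatibility~\eqref{eq:GCrossedFunctor-Gamma1} of $\bfa$ with the $G$-action tensorator, and the coherences~\ref{rel:mu1},~\ref{rel:mu2} satisfied by $\mu^{g,h}$ on $\fD$ (which are trivial since $\fD$ is strict). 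Finally, taking $\omega^A, \ell^A, r^A$ to be identities is consistent: the associated coherences become strict equations that hold automatically once the pentagon and unit conditions are verified, by strictness of the ambient $\Gray$-monoids. The entire verification is a long but conceptually routine diagram chase, primarily consisting of bookkeeping for the nudging convention, and the details are deferred to Appendix~\ref{sec:CoherenceProofsForEquivalence}.
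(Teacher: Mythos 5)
Your outline correctly handles several of the coherences (naturality of $\mu^A$ in $2$-cells, the unit conditions, and the fact that the $0$-cell-level pentagon and triangle are vacuous since $\mu^A_{g,h}=\id$ and $\omega^A,\ell^A,r^A$ are identities), and your reduction of the three-$1$-cell associativity condition \ref{Functor:omega} to associativity of $\bfA^2$ and the $G$-action compatibility of $\bfa$ matches the paper. However, there is a genuine gap: you never address the coherence \ref{Functor:mu.monoidal}, which requires $\mu^A$ to be compatible with $1$-cell composition $\xo$ for two composable pairs $x_1,x_2$ and $y_1,y_2$. This is the single most substantial verification in the paper's proof, and it is not subsumed by anything you list. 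The condition contains an interchanger $\phi$ of $\cD$ and an $A(\phi)$ coming from $\cC$; since the interchangers of these $\Gray$-monoids are, by construction, the $G$-crossed braidings $\beta^{\fD}$ and $\beta^{\fC}$, verifying \ref{Functor:mu.monoidal} is precisely the point at which one must invoke the braiding-compatibility axiom \eqref{eq:GCrossedFunctor-Gamma2} of the $G$-crossed braided functor $(\bfA,\bfa)$. Your proposed toolkit --- associativity of $\bfA^2$, \eqref{eq:GCrossedFunctor-Gamma1}, and \ref{rel:mu1}, \ref{rel:mu2} --- nowhere mentions \eqref{eq:GCrossedFunctor-Gamma2} and is therefore insufficient: a monoidal functor equivariant for the $G$-actions but not compatible with the braidings would satisfy everything you check, yet would not define a $3$-functor, because it would fail to intertwine $A(\phi_{x,y})$ with $\phi_{A(x),A(y)}$.

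Concretely, when you expand $\mu^A_{x_2\xo x_1,\,y_2\xo y_1}$ versus the composite $\mu^A_{x_2,y_2}\xt\mu^A_{x_1,y_1}$ through your four blocks, the block (c) applications of $\bfa_g$ must be commuted past a $\beta^{\fD}$ arising from the nudging interchanger, and the block (d) applications of $\bfA^2$ must be commuted past $\bfA(\beta^{\fC})$; the diagram only closes using \eqref{eq:GCrossedFunctor-Gamma2} (together with monoidality of $\bfa_g$ and naturality). You should add this coherence to your list of obligations and identify \eqref{eq:GCrossedFunctor-Gamma2} as the key input. A minor terminological point: what you call ``the pentagon for three $1$-cells'' is the $2$-modification condition \ref{Functor:omega}, not the pentagon \ref{Functor:PentagonCoherence} (which concerns four $0$-cells and is trivial here); keeping these separate would have made the omission of \ref{Functor:mu.monoidal} easier to spot.
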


Finally, we observe that the $G$-crossed braided functor constructed from $A$ 
in Theorem \ref{thm:From3FunctorToGCrossedBraidedFunctor}
is exactly $(\bfA,\bfa)$ by construction.
Indeed, the strict 2-functors $-\xz e_\cC$ and $-\xz e_\cD$ are the identity on the nose.
Hence, $\TriCat_G^{\st} \to G\CrsBrd^{\st}$ is in fact surjective on $1$-morphisms on the nose.

\paragraph{Fully faithfulness on $2$-morphisms.}
For $\cC, \cD \in \TriCat_G^{\st}$, $A, B\in \TriCat_G^{\st}(\cC \to \cD)$, and $\eta\in \TriCat_G^{\st}(A \Rightarrow B)$,
let $\fC, \fD$ be the $G$-crossed braided categories obtained from $\cC,\cD$ respectively
from Theorem \ref{thm:ConstructionOfGCrossedBraidedCategory}, and let $(\bfA,\bfa),(\bfB,\bfb):\fC \to \fD$ be the $G$-crossed braided functors obtained from $A,B$ respectively from Theorem \ref{thm:From3FunctorToGCrossedBraidedFunctor}.
In Construction \ref{const:GMonoidalTransformationFrom3Transformation}
we defined $h:(\bfA,\bfa) \Rightarrow (\bfB,\bfb)$ by $h_a := \eta_a \in \cD(\bfA(a) \Rightarrow \bfB(a))$ for $a\in \fC_g = \cC(1_\cC \to g_\cC)$.

\begin{thm}
\label{thm:FullyFaithfulOn2Morphisms}
The map $\eta \mapsto h$ is a bijection $\TriCat_G^{\st}(A \Rightarrow B)\to G\CrsBrd^{\st}(\bfA \Rightarrow \bfB)$.
\DeferProof{sec:CoherenceProofsForEquivalence}
\end{thm}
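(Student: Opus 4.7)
The plan is to construct an explicit inverse to the map $\eta \mapsto h$. Given $h:(\bfA,\bfa) \Rightarrow (\bfB,\bfb)$ in $G\CrsBrd^{\st}$, I define a candidate transformation $\widetilde{h} : A \Rightarrow B$ by
$$
\widetilde{h}_* := e_\cD,
\quad
\widetilde{h}_g := \id_{g_\cD},
\quad
\widetilde{h}^1 := \id_{\id_{e_\cD}},
\quad
\widetilde{h}^2_{g,h} := \id_{\id_{gh_\cD}},
$$
and, for every $1$-cell $x \in \cC(g_\cC \to h_\cC)$, set
$$
\widetilde{h}_x := h_{x \xz g_\cC^{-1}} \xz g_\cD \in \cD(A(x) \Rightarrow B(x)),
$$
using Construction~\ref{defn:PreimageUnderlying2Functor} to identify $A(x) = \bfA(x\xz g_\cC^{-1}) \xz g_\cD$ and similarly for $B$. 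By design, $\widetilde{h}$ satisfies the strictness requirements of $\TriCat_G^{\st}$, and it restricts to $h$ on $a \in \fC_g = \cC(1_\cC \to g_\cC)$ since $a \xz e_\cC^{-1} = a$ and $\widetilde{h}_a = h_a \xz e_\cD = h_a$.

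The first main task is to verify that $\widetilde{h}$ satisfies the coherence axioms of Definition~\ref{defn:Transformation} for a natural transformation of 3-functors. I expect this to decompose into three packages: (a) naturality of $\widetilde{h}$ on $2$-cells of $\cC$, which follows from naturality of $h$ together with functoriality of right-whiskering $-\xz g_\cD$; (b) compatibility of $\widetilde{h}_{x\xz y}$ with the tensorators $\mu^A_{x,y}$ and $\mu^B_{x,y}$, which requires monoidality of $h$ combined with the $G$-action coherence~\eqref{eq:ActionCohrenceForTransformation} --- the latter is crucial because $\mu^A$, as built in Construction~\ref{defn:PreimageMonoidal2Functor}, involves the actionator $\bfa$ each time one crosses a left $g_\cD$-whisker past a $\bfA(-)$ disk; and (c) the unitality axioms, which reduce to $\widetilde{h}_{\id_{g_\cC}} = A^1_g = B^1_g$ via whiskering the identity $h_{1_\fC} = \id$ by $g_\cD$ on the right.

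For injectivity, observe that for any $\eta \in \TriCat_G^{\st}(A \Rightarrow B)$ and $x: g_\cC \to h_\cC$, strictness of the $\Gray$-monoid $\cC$ gives the equality $x = (x\xz g_\cC^{-1})\xz g_\cC$. Applying the naturality of $\eta$ with respect to the strict $2$-functor $-\xz g_\cC : \cC(1_\cC \to hg_\cC^{-1}) \to \cC(g_\cC \to h_\cC)$, together with the strictness conditions $\eta_{g_\cC} = \id_{g_\cD}$ and $\eta^2 = \id$, forces $\eta_x = \eta_{x \xz g_\cC^{-1}} \xz g_\cD$. Since $\eta_{x\xz g_\cC^{-1}} = h_{x\xz g_\cC^{-1}}$ by Construction~\ref{const:GMonoidalTransformationFrom3Transformation}, the transformation $\eta$ is entirely recovered from its image $h$. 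Surjectivity is then immediate: by the restriction identity above, the assignment $h \mapsto \widetilde{h}$ satisfies $\widetilde{h} \mapsto h$.

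The main obstacle will be step (b): carefully unwinding Construction~\ref{defn:PreimageMonoidal2Functor} to express $\mu^A_{x,y}$ as a composite of $\bfa$-applications, interchangers, and $\bfA^2$, and then using~\eqref{eq:ActionCohrenceForTransformation} to commute $h$ past each $\bfa$ before invoking monoidality of $h$ to match with the corresponding expression for $\mu^B_{x,y}$. Careful bookkeeping in the graphical calculus --- tracking both the $G$-coloured strands arising from right-whiskering by $g_\cC^{\pm 1}$ and the interchanger steps in the definition of $\mu^A$ --- will be required to close the diagram. Once this compatibility is established, the remaining coherences follow routinely from the axioms of $h$.
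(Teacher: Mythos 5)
Your overall strategy coincides with the paper's: injectivity by showing that a $2$-morphism in $\TriCat_G^{\st}(A\Rightarrow B)$ is determined by its components on $1$-cells out of $1_\cC$, and surjectivity by writing down an explicit preimage of $h$ and verifying the transformation axioms of Definition~\ref{defn:Transformation}, with the tensorator compatibility \ref{Transformation:eta^2} as the crux, closed using \eqref{eq:ActionCohrenceForTransformation} and monoidality of $h$. However, there is a genuine gap in your inverse formula.

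The theorem concerns \emph{arbitrary} $A,B\in\TriCat_G^{\st}(\cC\to\cD)$; the functors $\bfA,\bfB$ are obtained from them by the \emph{forward} construction (Construction~\ref{const:From3FunctorToGCrossedBraidedFunctor}, $\bfA(a):=A(a)$ for $a\in\cC(1_\cC\to g_\cC)$). Your definition $\widetilde h_x:=h_{x\xz g_\cC^{-1}}\xz g_\cD$ appeals to the ``identification'' $A(x)=\bfA(x\xz g_\cC^{-1})\xz g_\cD$, but that identity is the \emph{definition} of the preimage functor built in Construction~\ref{defn:PreimageUnderlying2Functor} for the essential-surjectivity step; it does not hold for a general $1$-morphism of $\TriCat_G^{\st}$. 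The strictness conditions on $A$ only force $\mu^A_{g,h}=\id$ on $0$-cells, while $\mu^A_{x\xz g_\cC^{-1},\,g_\cC}\colon A(x\xz g_\cC^{-1})\xz A(\id_{g_\cC})\Rightarrow A(x)$ and $A^1_g\colon\id_{g_\cD}\Rightarrow A(\id_{g_\cC})$ remain nontrivial data. Consequently $h_{x\xz g_\cC^{-1}}\xz g_\cD$ has source $A(x\xz g_\cC^{-1})\xz g_\cD$, which is not $A(x)$, so $\widetilde h_x$ is ill-typed. The correct formula is forced by instantiating \ref{Transformation:eta^2} at the pair $(x\xz g_\cC^{-1},\id_{g_\cC})$ together with $\eta_{\id_{g_\cC}}=B^1_g\xt(A^1_g)^{-1}$ from \ref{Transformation:eta_c.unital}, namely
$$
\eta_x:=\mu^B_{x\xz g_\cC^{-1},\,g_\cC}\xt\bigl(h_{x\xz g_\cC^{-1}}\xz(B^1_g\xt(A^1_g)^{-1})\bigr)\xt\bigl(\mu^A_{x\xz g_\cC^{-1},\,g_\cC}\bigr)^{-1}.
$$
The same oversight makes your injectivity identity $\eta_x=\eta_{x\xz g_\cC^{-1}}\xz g_\cD$ and your unitality claim $\widetilde h_{\id_{g_\cC}}=A^1_g=B^1_g$ false as stated (note also that $h_{1_\fC}=\bfB^1\xt(\bfA^1)^{-1}$, not the identity). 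Injectivity nonetheless survives, since all that is needed is that $\eta_x$ is \emph{determined} by $\eta_{x\xz g_\cC^{-1}}$ and $\eta_{\id_{g_\cC}}$, which the conjugation relation provides. With the corrected formula the coherence checks proceed essentially as you outline, but as written your inverse only makes sense for $A,B$ in the image of the preimage construction, so it does not establish full faithfulness for all parallel pairs of $1$-morphisms.
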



\section{Induced properties and structures}
\label{sec:inducedprops}
Theorem~\ref{thm:Main} constructs an equivalence between $G$-crossed braided categories and $1$-surjective $G$-pointed $3$-categories. In this section, we investigate how various additional structures and properties of $3$-categories, such as linearity and dualizability translate into the corresponding properties of $G$-crossed braided categories.
Let $\pi^\cC:\rmB G\to \cC$ be a $1$-surjective $G$-pointed $3$-category and let $\{\fC_g\}_{g\in G}$ be the corresponding $G$-crossed braided category constructed via Theorem~\ref{thm:Main}.

The first result below is immediate.

\begin{prop}[Linearity]
\label{prop:Linearity}
If $\cC$ is a linear $3$-category, then $\fC:=\bigoplus_{g\in G} \fC_g $ is a $G$-crossed braided category in these sense of~\cite[\S8.24]{MR3242743}. 
\qed
\end{prop}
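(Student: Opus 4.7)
The plan is to trace through the construction of the $G$-crossed braided category from \S\ref{sec:FromGBoring3CatsToGCrossedBriadedCats} and observe that every piece of data is inherited from the linear structure on $\cC$, after which taking the direct sum $\fC = \bigoplus_{g\in G} \fC_g$ produces the additive structure required by the EGNO definition. The heuristic is that each constituent of the $G$-crossed braided category $\fC$ is built from compositions, tensor products, and coherence cells of $\cC$, all of which preserve linearity whenever $\cC$ is linear.

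First I would invoke Theorem~\ref{thm:Main}, so that without loss of generality we may assume $\cC$ is a strict $\Gray$-monoid in $\TriCat_G^{\st}$ and $\fC$ is the strict $G$-crossed braided category constructed from $\cC$ via Theorem~\ref{thm:ConstructionOfGCrossedBraidedCategory}; one should also verify, using the strictification results of \cite{MR3671186} or by a direct argument, that linearity of $\cC$ is preserved along the strictification zig-zag~\eqref{eq:ZigZagOfStrictEquivalences} (this is where one uses that all the maneuvers in \S\ref{sec:Truncation} are built from linear operations in $\cC$). Since $\fC_g := \cC(1_\cC \to g_\cC)$ is a hom $1$-category in the linear 2-category $\cC(1_\cC\to g_\cC)$ (i.e., a hom of a hom), linearity of $\cC$ immediately gives each $\fC_g$ the structure of a linear category.

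Next I would verify that all the structural functors and natural transformations are linear. The tensor product $\xz_{g,h}:\fC_g \times \fC_h \to \fC_{gh}$ is the nudged tensor product \eqref{eq:Nudging} in $\cC$, which is bilinear because the $2$-functors $L_{g_\cC}$ and $R_{h_\cC}$ are linear by hypothesis. The $G$-action functors $F_g:\fC_h \to \fC_{ghg^{-1}}$ from Construction~\ref{const:GAction} are compositions of such linear whiskerings and are therefore linear. The $G$-crossed braiding $\beta^{g,h}$ is obtained from the interchanger $\phi$, which is a natural isomorphism of linear functors and hence yields a natural isomorphism in the linear setting. Consequently all the coherence data assembled in Theorem~\ref{thm:ConstructionOfGCrossedBraidedCategory} is linear.

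Finally, I would take the direct sum $\fC := \bigoplus_{g\in G} \fC_g$ and extend the tensor product, $G$-action, and $G$-crossed braiding biadditively. The axioms~\ref{rel:psi1}--\ref{rel:iota2} and \eqref{eq:Hexagon}--\eqref{eq:Heptagon2} are inherited graded component by graded component from Theorem~\ref{thm:ConstructionOfGCrossedBraidedCategory}, so $\fC$ satisfies the axioms of a linear $G$-crossed braided category in the sense of \cite[\S8.24]{MR3242743}. No step presents a real obstacle; the only mild subtlety is ensuring that linearity survives the strictification of \S\ref{sec:Truncation}, which can be handled by observing that each strictification step (Proposition~\ref{prop:Monoidal2FunctorStrictification} and the constructions in \S\ref{sec:Strictifying1Morphisms}--\S\ref{sec:Strictifying4Morphisms}) preserves linear structure since it is built from the $\Gray$-enriched Yoneda embedding and coherence data involving $\alpha_*$, $m_*$, and interchangers, all of which act linearly.
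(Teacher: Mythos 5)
Your proposal is correct and is exactly the intended unpacking: the paper declares this result immediate (the construction of \S\ref{sec:FromGBoring3CatsToGCrossedBriadedCats} builds every piece of $\fC$ from hom-categories, the linear tensor functors $L_a$, $R_a$, and coherence cells of $\cC$, as already noted in the preamble to \S\ref{defn:Definitions}), and your component-by-component verification, including the observation that linearity survives the strictification zig-zag, is precisely what that immediacy amounts to.
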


Following the conventions in~\cite[Defs.~2.1.1,~2.1.2,~2.1.4]{1312.7188}, given a $1$-morphism $f:c\to d$ in a $2$-category, we write $(f^L: d\to c,  \ev_f:f^L\circ f \To \id_c,\coev_f: \id_d \To f\circ f^L )$ for the left adjoint of $f$ and $(f^R:d\to c, {}_f\ev: f \circ f^R \Rightarrow \id_d, {}_f\coev: \id_c \Rightarrow f^R\circ f)$ for its right adjoint. 
Given an object $x$ in a monoidal category $\fM$, we write $(x^\vee,\ev_x : x^\vee\otimes x\to 1_{\fM},\coev_x: 1_{\fM} \to x\otimes x^\vee)$ for the right dual of $x$, and $({}^\vee x,{}_x\ev : x\otimes{}^\vee x\to 1_{\fM},{}_x\coev: 1_{\fM} \to {}^\vee x\otimes x)$ for the left dual of $x$.

Recall that a braided monoidal category has right duals if and only if it has left duals.

\begin{lem}
\label{lem:GCrossedOneDualImpliesOther}
A $G$-crossed braided monoidal category $\fC=\bigoplus_{g\in G} \fC_g$ has right duals if and only if it has left duals.
\end{lem}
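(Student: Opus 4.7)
The plan is to mimic the classical braided-monoidal argument. In a braided category, the braiding $\beta_{x,x^\vee}$ converts a right dual $(x^\vee, \ev_x, \coev_x)$ of $x$ into a left dual with the same underlying object. In the $G$-crossed setting the braiding $\beta^{g,h}: a_g \otimes b_h \to F_g(b_h) \otimes a_g$ introduces a twist by $F_g$, so I expect the left dual's underlying object to be $F_{g^{-1}}(x^\vee)$ rather than $x^\vee$ itself.

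By the strictification result \cite{MR3671186}, it suffices to treat the case where $\fC$ is strict, so that all coherence isomorphisms $\alpha, \lambda, \rho, \psi^g, \mu^{g,h}, \iota_g$ are identities and in particular $F_g F_{g^{-1}} = \id_{\fC_h}$ on the nose. Suppose every object of $\fC$ has a right dual, and let $x \in \fC_g$. Grading forces $x^\vee \in \fC_{g^{-1}}$, so that $\ev_x: x^\vee \otimes x \to 1_\fC$ and $\coev_x: 1_\fC \to x \otimes x^\vee$ both land in $\fC_e$. I would define ${}^\vee x := F_{g^{-1}}(x^\vee) \in \fC_{g^{-1}}$ and take
\[
{}_x\ev := \ev_x \circ \beta^{g,g^{-1}}_{x,\, F_{g^{-1}}(x^\vee)} : x \otimes F_{g^{-1}}(x^\vee) \longrightarrow F_g F_{g^{-1}}(x^\vee) \otimes x = x^\vee \otimes x \longrightarrow 1_\fC,
\]
\[
{}_x\coev := \bigl(\beta^{g^{-1},g}_{F_{g^{-1}}(x^\vee),\, x}\bigr)^{-1} \circ F_{g^{-1}}(\coev_x) : 1_\fC \longrightarrow F_{g^{-1}}(x) \otimes F_{g^{-1}}(x^\vee) \longrightarrow F_{g^{-1}}(x^\vee) \otimes x,
\]
using $F_g F_{g^{-1}} = \id$ in the strict setting to identify codomains.

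I would then verify the two zigzag identities by direct diagrammatic computation. Unpacking $({}_x\ev \otimes \id_x) \circ (\id_x \otimes {}_x\coev)$ gives a composite of a $G$-crossed braiding, an inverse braiding, $F_{g^{-1}}(\coev_x)$, and $\ev_x$. Applying the hexagon axiom \eqref{eq:Hexagon} together with naturality of $\beta$ rewrites this composite so that the braiding and its inverse cancel, at which point the remaining composite reduces by the right-duality of $(\ev_x, \coev_x)$ to $\id_x$. The second zigzag is checked analogously.

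The main obstacle is the grading bookkeeping in the diagrammatic verification: since each braiding $\beta^{g,h}$ conjugates its second factor by $F_g$, one must carefully track which twists appear at each intermediate stage in order for the hexagon to apply cleanly. Once these gradings are aligned, the argument reduces to the classical braided case. The converse implication---that left duals imply right duals---is entirely symmetric, with the roles of $\beta$ and $\beta^{-1}$ and of $F_g$ and $F_{g^{-1}}$ exchanged.
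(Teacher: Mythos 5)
Your construction is essentially the paper's: the left dual is ${}^\vee x := F_{g^{-1}}(x^\vee)$, with ${}_x\ev$ obtained by composing the crossed braiding $\beta_{x,F_{g^{-1}}(x^\vee)}$ with $\ev_x$ and ${}_x\coev$ obtained by composing $F_{g^{-1}}(\coev_x)$ with an inverse braiding. The only difference is that you strictify first (legitimate, since having duals is invariant under monoidal equivalence), whereas the paper keeps the coherence data $\mu^{g,g^{-1}}$ and $\psi^{g^{-1}}$ explicit in the same formulas.
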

\begin{proof}
We prove that having right duals implies having left duals; the other direction is analogous.
Suppose $x\in \fC_g$ has a right dual
$(x^\vee\in \fC_{g^{-1}},\ev_x : x^\vee\otimes x\to 1_{\fC},\coev_b: 1_{\fC} \to x\otimes x^\vee)$.
Then, ${}^\vee x:=g^{-1}(x^\vee)$ is a left dual with the following evaluation and coevaluation morphism:
\begin{align*}
{}_x\ev 
&:= 
\ev_x\circ (\mu_{g,g^{-1}}^x \otimes \id_{x^\vee})\circ(\beta_{x,g^{-1}(x^\vee)}) : x\otimes g^{-1}(x^\vee) \to 1_\fC 
\\
{}_x\coev 
&:=
\beta^{-1}_{g^{-1}(x^\vee), x}\circ \psi^{g^{-1}}_{x,x^\vee}\circ g^{-1}(\coev_x)
:
1_\fC \to {}^\vee x \otimes x
\end{align*}
That these maps satisfy the zig-zag/snake equations is straightforward.
\end{proof}

\begin{rem}
\label{rem:OneAdjointImpliesBoth}
Similar to Lemma \ref{lem:GCrossedOneDualImpliesOther}, every 2-morphism between invertible 1-morphisms (or more generally, between fully dualizable 1-morphisms) in a 3-category has a right adjoint if and only if it has a left adjoint \cite[Prop.~A.2]{ReutterThesis}.
\end{rem}

\begin{prop}[Rigidity]
\label{prop:Rigidity}
Suppose $\cC$ is linear so that Proposition \ref{prop:Linearity} holds. If every $2$-morphism in $\cC(\pi^{\cC}(e) \to \pi^\cC(g))$ has either a right or a left adjoint (and thus necessarily both by Remark \ref{rem:OneAdjointImpliesBoth}) for all $g\in G$, then the $G$-crossed braided linear monoidal category $\fC$ is rigid.
\end{prop}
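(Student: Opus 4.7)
The plan is to explicitly construct a right dual for each object of $\fC$ and then invoke Lemma~\ref{lem:GCrossedOneDualImpliesOther} to obtain left duals. Since the construction of $\fC$ from $(\cC, \pi^\cC)$ factors through the strict sub-$2$-category $\TriCat_G^{\st}$ of Theorem~\ref{thm:tricatpt}, and adjunctions of $2$-morphisms transfer across $3$-equivalences, I would first replace $\cC$ by an equivalent $\Gray$-monoid so that $\fC_g = \cC(e_\cC \to g_\cC)$ as in Theorem~\ref{thm:ConstructionOfGCrossedBraidedCategory}. The hypothesis then becomes: every $1$-cell $a: e_\cC \to g_\cC$ in the underlying $2$-category of the $\Gray$-monoid $\cC$ admits a left adjoint $a^L: g_\cC \to e_\cC$, with unit $\eta_a: \id_{g_\cC}\To a\xo a^L$ and counit $\epsilon_a: a^L \xo a \To \id_{e_\cC}$.

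For such an $a$ I propose the dual $a^\vee := R_{g^{-1}_\cC}(a^L) = a^L \xz \id_{g^{-1}_\cC} \in \cC(e_\cC\to g^{-1}_\cC) = \fC_{g^{-1}}$. The crucial step is the identification
\begin{equation*}
a^\vee \xz_\fC a \;=\; a^L \xo a
\qquad\text{and}\qquad
a \xz_\fC a^\vee \;=\; R_{g^{-1}_\cC}(a \xo a^L),
\end{equation*}
which follows from the definition $a \xz_\fC b = (a \xz \id) \xo (\id \xz b)$ together with strict unitality, the relation $R_{g_\cC}R_{g^{-1}_\cC} = R_{g^{-1}_\cC \xz g_\cC} = R_{e_\cC} = \id_\cC$, and the strict $2$-functoriality of $R_{g^{-1}_\cC}$. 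Under these identifications I would set $\ev_a := \epsilon_a$ and $\coev_a := R_{g^{-1}_\cC}(\eta_a)$, noting that $R_{g^{-1}_\cC}(\id_{g_\cC}) = \id_{e_\cC} = 1_\fC$.

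Since the $G$-crossed braided structure on $\fC$ produced by Theorem~\ref{thm:ConstructionOfGCrossedBraidedCategory} is strict, the two zig-zag identities for $(a, a^\vee, \ev_a, \coev_a)$ should reduce, via the identifications above, to the two triangle identities of the adjunction $a^L \dashv a$ in $\cC$ (with the second identity obtained after applying the strict $2$-functor $R_{g^{-1}_\cC}$).

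The hard part will be a careful bookkeeping of the nudging convention~\eqref{eq:Nudging}: unpacking the tensors $\coev_a \xz_\fC \id_a$ and $\id_{a^\vee}\xz_\fC \coev_a$ produces $\Gray$-monoid composites involving interchangers $\phi$, and one must show that each such $\phi$ is trivial here because at least one of its inputs is the identity $1$-cell on a $0$-cell. Once this verification is in hand the snake identities collapse cleanly to the triangle identities, completing the proof.
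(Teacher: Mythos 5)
Your proposal is correct and follows essentially the same route as the paper: after strictifying, both take $a^\vee := a^L \xz \id_{g^{-1}_\cC}$, identify $a^\vee \xz_\fC a$ with $a^L \xo a$ and $a \xz_\fC a^\vee$ with $R_{g^{-1}_\cC}(a \xo a^L)$ using strictness of $R_{g^{-1}_\cC}$, and reduce the snake equations to the triangle identities of the adjunction (the paper verifies one explicitly in the graphical calculus and leaves the other to the reader). Your remark that the relevant interchangers vanish because one leg is an identity $1$-cell on a $0$-cell is exactly the point handled implicitly by the paper's string-diagram computation, via the $\Gray$-monoid axiom that $\phi$ respects identities.
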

\begin{proof}
As the statement and the assumptions in this proposition are invariant under equivalences in $\TriCat_G$ and $ G\CrsBrd$ respectively, we may assume that $\cC$ is an object of $\TriCat_G^{\st}$, and hence the delooping $\rmB \cA$ of a $\Gray$-monoid $\cA$ whose set of $0$-cells is $\{g_\cA\}_{g\in G}$ with 0-composition $\xz$ the group multiplication, and that $\fC$ is the strict $G$-crossed braided category obtained from Constructions~\ref{const:UnderlyingCategories}--\ref{const:GCrossedBraiding}. 

By Lemma~\ref{lem:GCrossedOneDualImpliesOther} it suffices to prove that for every $g\in G$, every object $x\in \fC_g$ (given by a $1$-cell $x:1_{\cA}\to g_{\cA}$ in the strict $2$-category $\cA$) has either a right dual or a left dual. 
We assume the underlying $1$-cell $x:1_\cA \to g_\cA$ has a left adjoint $x^L: g_\cA \to 1_\cA$ in the 2-category $\cA$ and prove that the corresponding object $x\in \fC_g$ has a right dual in the monoidal category $\fC$.
We use the shorthand notation
$$
\tikzmath{
\draw (0,0) -- (0,.3);
\CircleMorphism{(0,0)}{\gColor}
}
:=
\tikzmath{
\node (g) at (0,.6) {$\scriptstyle g_\cC$};
\node[draw,rectangle, thick, rounded corners=5pt] (x) at (0,0) {$\scriptstyle x$};
\draw (x) to[in=-90,out=90] (g);
}
\qquad\qquad
\tikzmath{
\draw (0,0) -- (0,-.3);
\CircleMorphism{(0,0)}{\gColor}
}
:=
\tikzmath{
\node (g) at (0,-.6) {$\scriptstyle g_\cC$};
\node[draw,rectangle, thick, rounded corners=5pt] (x) at (0,0) {$\scriptstyle x^L$};
\draw (g) to[in=-90,out=90] (x);
}
$$

Setting
\[
x^\vee 
:= 
\tikzmath{
\phantom{\draw (0,-.1) -- (0,-.4) node[below] {$\scriptstyle g^{-1}_\cC$};};
\draw (0,-.1) arc (-180:0:.2cm) -- (.4,.4) node [above] {$\scriptstyle g^{-1}_\cC$};
\CircleMorphism{(0,0)}{\gColor}
} 
=
\tikzmath{
\draw (0,-.1) -- (0,-.4) node[below] {$\scriptstyle g_\cC$};
\draw (.3,-.4) -- (.3,.4) node[above] {$\scriptstyle g^{-1}_\cC$};
\CircleMorphism{(0,0)}{\gColor}
} 
\in \fC_{g^{-1}},
\]
it is a direct consequence of the adjunction between $x$ and $x^L$ (here denoted $\varepsilon: x^L \xo x \Rightarrow \id_{e_\cC}$ and $\eta: \id_{g_\cC} \Rightarrow x \xo x^L$) 
that the following evaluation and coevaluation morphisms exhibit $x^\vee$ as a right dual of $x$:
\begin{align*}
\ev_x&:
x^\vee \otimes x
=
\tikzmath{
\draw (-.4,-.1) -- (-.4,-.4) node[below] {$\scriptstyle g_\cC$};
\draw (0,-.4) -- (0,.4) node[above] {$\scriptstyle g^{-1}_\cC$};
\draw (.4,-.3) -- (.4,.4) node[above] {$\scriptstyle g_\cC$};
\CircleMorphism{(-.4,0)}{\gColor}
\CircleMorphism{(.4,-.3)}{\gColor}
} 
=
\tikzmath{
\draw (0,0) -- (0,-.4);
\path (0,0) -- (0,.2) node[above] {$\scriptstyle e_\cC$};
\CircleMorphism{(0,0)}{\gColor}
\CircleMorphism{(0,-.4)}{\gColor}
}
\,\,\,
\overset{\varepsilon}{\Longrightarrow}
\,\,\,
\tikzmath{
\draw[thick, dashed, rounded corners=5pt] (-.3,-.3) rectangle (.3,.3);
\path (0,0) -- (0,.4) node[above] {$\scriptstyle e_\cC$};
}
=1_\fC
\\
\coev_x &:
1_\fC = 
\tikzmath{
\draw[thick, dashed, rounded corners=5pt] (-.3,-.3) rectangle (.3,.3);
\path (0,0) -- (0,.4) node[above] {$\scriptstyle e_\cC$};
}
=
\tikzmath{
\draw (0,-.6) -- (0,.6) node[above] {$\scriptstyle g_\cC$};
\draw (.4,-.6) -- (.4,.6) node[above] {$\scriptstyle g^{-1}_\cC$};
\draw[thick, dashed, rounded corners=5pt] (-.2,-.3) rectangle (.2,.3);
}
\,\,\,
\overset{\eta}{\Longrightarrow}
\,\,\,
\tikzmath{
\draw (0,-.6) node[below] {$\scriptstyle g_\cC$} -- (0,-.3);
\draw (0,.3) -- (0,.6) node[above] {$\scriptstyle g_\cC$};
\draw (.4,-.6) -- (.4,.6) node[above] {$\scriptstyle g^{-1}_\cC$};
\CircleMorphism{(0,.2)}{\gColor}
\CircleMorphism{(0,-.2)}{\gColor}
}
=x\otimes x^\vee.
\end{align*}
We explicitly prove the relation $(\id_{x}\otimes \ev_x)\circ (\coev_{x} \otimes \id_x)= \id_x$; the other relation is left to the reader.
$$
\begin{tikzcd}
\tikzmath{
\draw (0,-.6) -- (0,.6) node [above] {$\scriptstyle g_\cC$};
\CircleMorphism{(0,-.6)}{\gColor}
\draw[thick, dashed, rounded corners=5pt] (-.2,-.3) rectangle (.2,.3);
}
\arrow[d, equals]
\arrow[r, Rightarrow, "\eta"]
\arrow[rr, Rightarrow, bend left=40, "\id"]
&
\tikzmath{
\draw (0,.3) -- (0,.6) node[above] {$\scriptstyle g_\cC$};
\draw (0,-.6) -- (0,-.3);
\CircleMorphism{(0,.2)}{\gColor}
\CircleMorphism{(0,-.2)}{\gColor}
\CircleMorphism{(0,-.6)}{\gColor}
\draw[thick, dashed, rounded corners=5pt] (-.2,-.8) rectangle (.2,0);
}
\arrow[d, equals]
\arrow[r, Rightarrow, "\varepsilon"]
&
\tikzmath{
\draw (0,.3) -- (0,.6) node[above] {$\scriptstyle g_\cC$};
\CircleMorphism{(0,.2)}{\gColor}
}
\\
\tikzmath{
\draw (-.4,-1) -- (-.4,.6) node[above] {$\scriptstyle g_\cC$};
\draw (0,-1) -- (0,.6) node[above] {$\scriptstyle g^{-1}_\cC$};
\draw (.4,-.6) -- (.4,.6) node[above] {$\scriptstyle g_\cC$};
\CircleMorphism{(.4,-.6)}{\gColor}
\draw[thick, dashed, rounded corners=5pt] (-.6,-.3) rectangle (-.2,.3);
}
\arrow[r, Rightarrow, "\eta"]
&
\tikzmath{
\draw (-.4,.3) -- (-.4,.6) node[above] {$\scriptstyle g_\cC$};
\draw (-.4,-.1) -- (-.4,-1);
\draw (0,-1) -- (0,.6) node[above] {$\scriptstyle g^{-1}_\cC$};
\draw (.4,-.6) -- (.4,.6) node[above] {$\scriptstyle g_\cC$};
\CircleMorphism{(-.4,.2)}{\gColor}
\CircleMorphism{(-.4,-.2)}{\gColor}
\CircleMorphism{(.4,-.6)}{\gColor}
}
\end{tikzcd}
$$
In the diagram above, 
the composite
$(\id_{x}\otimes \ev_x)\circ \alpha\circ (\coev_{x} \otimes \id_x)$
is the path going down and then to the right.
The square commutes as both maps are identical.
The triangle commutes by the adjunction.
\end{proof}

\begin{rem}
There is a version of Proposition \ref{prop:Rigidity} that holds in the non-linear setting; one must be careful to define the correct notion of duals.
\end{rem}

The following propostion is also immediate.

\begin{prop}[Multifusion]
\label{prop:Multifusion}
Suppose $\cC$ is as in the hypotheses of Proposition \ref{prop:Rigidity} so that $\fC$ is rigid linear monoidal.
If each $2$-morphism category $\cC(\pi^{\cC}(e)\to \pi^{\cC}(g))$ is semisimple, then $\fC$ is multifusion.
If moreover the $2$-morphism $\id_{\pi^{\cC}(e)}: \pi^{\cC}(e)\to \pi^{\cC}(e)$ is simple, then $\fC$ is fusion.
\qed
\end{prop}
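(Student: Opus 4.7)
The plan is a direct unpacking of definitions: no substantive coherence check is required, which is why the paper labels the proof as immediate. By Theorem~\ref{thm:Main} we may replace $\cC$ with an equivalent object in $\TriCat_G^{\st}$, since equivalence in $\TriCat_G$ preserves linearity, rigidity, semisimplicity of hom 2-categories, and simplicity of the identity 1-cell $\id_{\pi^\cC(e)}$. In this strict setting, Constructions~\ref{const:UnderlyingCategories}--\ref{const:GCrossedBraiding} give the literal identification $\fC_g = \cC(\pi^\cC(e)\to\pi^\cC(g))$, viewed as a 1-category with objects the 1-cells and morphisms the 2-cells.

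The semisimplicity hypothesis on the hom 2-category $\cC(\pi^\cC(e)\to\pi^\cC(g))$ says precisely that every 1-cell decomposes as a direct sum of simples and that hom spaces of 2-cells form semisimple linear categories. This is exactly the statement that $\fC_g$ is a semisimple 1-category; summing over $g\in G$ shows $\fC=\bigoplus_{g\in G}\fC_g$ is semisimple. Combined with Proposition~\ref{prop:Linearity} and Proposition~\ref{prop:Rigidity}, we obtain a semisimple rigid linear monoidal category, which is the multifusion conclusion.

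For the second assertion, the tensor unit of $\fC$ is $1_\fC = \id_{\pi^\cC(e)}\in\fC_e$ by Construction~\ref{const:UnderlyingCategories}. Simplicity of $\id_{\pi^\cC(e)}$ as a 1-cell in $\cC(\pi^\cC(e)\to\pi^\cC(e))$ --- that is, $\End(\id_{\pi^\cC(e)}) = k$ in the sense of semisimple 2-categories --- is by construction simplicity of $1_\fC$ as an object of $\fC_e$, and hence of $\fC$. This promotes the structure from multifusion to fusion. There is no real obstacle here; the only subtle point is the passage to the strict model, which is handled uniformly by Theorem~\ref{thm:Main}.
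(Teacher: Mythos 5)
Your proposal is correct and matches the paper's intent exactly: the paper marks this proposition as immediate (no proof is given beyond \qed), and your argument is precisely the expected unpacking — $\fC_g = \cC(\pi^\cC(e)\to\pi^\cC(g))$ so semisimplicity of the hom 2-categories gives semisimplicity of each graded piece, rigidity and linearity come from Propositions~\ref{prop:Linearity} and~\ref{prop:Rigidity}, and $1_\fC=\id_{\pi^\cC(e)}$ so simplicity of the identity 2-morphism gives a simple unit. Nothing further is needed.
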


Since the fusion 2-categories of \cite{1812.11933} satisfy the hypotheses of Proposition \ref{prop:Multifusion}, we get the following corollary.

\begin{cor}
If $\cC$ is a fusion $2$-category in the sense of~\cite{1812.11933} and $\pi:G \to \cC$ is a monoidal $2$-functor which is essentially surjective on objects, then $\fC$ is a $G$-crossed braided fusion category. 
\qed
\end{cor}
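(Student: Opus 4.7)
The plan is to reduce this corollary to Proposition~\ref{prop:Multifusion} by delooping the data. A monoidal $2$-functor $\pi: G \to \cC$ into a fusion $2$-category $\cC$ is the same as a $3$-functor $\pi^{\rmB\cC}: \rmB G \to \rmB\cC$ into the delooping $\rmB\cC$ (a $3$-category with one object, equivalent after strictification to the delooping of a $\Gray$-monoid). Since $\rmB G$ and $\rmB\cC$ each have a single object, the condition that $\pi$ be essentially surjective on objects translates exactly to the condition that $\pi^{\rmB\cC}$ be essentially surjective on $1$-morphisms, i.e., that $\pi^{\rmB\cC}$ be $1$-surjective. This places us in the setting of Theorem~\ref{thm:Main}, and the $G$-crossed braided category $\fC$ of the statement is the one associated to $(\rmB\cC, \pi^{\rmB\cC}) \in \TriCat_G$.

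It then remains to match the axioms of a fusion $2$-category against the hypotheses of the chain Proposition~\ref{prop:Linearity}, \ref{prop:Rigidity}, \ref{prop:Multifusion}. Linearity of $\rmB\cC$ is immediate from linearity of $\cC$, so Proposition~\ref{prop:Linearity} gives that $\fC = \bigoplus_{g\in G}\fC_g$ is a $G$-crossed braided linear monoidal category. For the rigidity hypothesis of Proposition~\ref{prop:Rigidity}, a $2$-morphism in $\rmB\cC(\pi^{\rmB\cC}(e) \to \pi^{\rmB\cC}(g))$ is precisely a $1$-morphism in $\cC(\pi(e) \to \pi(g))$, and every $1$-morphism in a fusion $2$-category admits both left and right adjoints by definition; by Remark~\ref{rem:OneAdjointImpliesBoth}, this is all that is needed.

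Finally, to apply Proposition~\ref{prop:Multifusion} it suffices to observe two features of the fusion $2$-category $\cC$: the hom $1$-categories $\cC(\pi(e) \to \pi(g))$ are semisimple by definition of a fusion $2$-category, and the unit $1$-cell of $\cC$ is simple; since $\pi$ is monoidal, its unitor gives an equivalence $\pi(e) \simeq 1_\cC$, so the corresponding $2$-morphism $\id_{\pi^{\rmB\cC}(e)}$ in $\rmB\cC$ is simple. Hence all hypotheses of Proposition~\ref{prop:Multifusion} are satisfied, and $\fC$ is a $G$-crossed braided fusion category.

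There is no real obstacle: the content of this corollary is entirely a matter of unwinding definitions, with the only mild subtlety being the appeal to the delooping correspondence between monoidal $2$-functors $G \to \cC$ and $3$-functors $\rmB G \to \rmB\cC$, which is a standard instance of the Delooping Hypothesis already used throughout the paper.
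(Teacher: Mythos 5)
Your proposal is correct and follows exactly the route the paper intends: the paper's entire proof is the preceding remark that fusion $2$-categories satisfy the hypotheses of Proposition~\ref{prop:Multifusion}, and your write-up simply makes explicit the delooping of $\pi$ to a $1$-surjective $3$-functor $\rmB G\to\rmB\cC$ and the verification of the hypotheses of Propositions~\ref{prop:Linearity}, \ref{prop:Rigidity}, and \ref{prop:Multifusion} (linearity, adjoints of $1$-morphisms, semisimple hom categories, and simplicity of $\id_{\pi(e)}$ via $\pi(e)\simeq 1_\cC$). Nothing is missing.
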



\begin{rem}[Unitarity]
We define a dagger structure on a $\Gray$-monoid $\cC$ in terms of the unpacked Definition \ref{def:monoidal2cat} above.
We require the strict 2-category $\cC$ to be a dagger 2-category, all 2-functors to be dagger 2-functors, and all isomorphisms to be unitary.
Similarly, one can define the notion of a C$^*$ or W$^*$ $\Gray$-monoid.
Given a dagger $\Gray$-monoid $\cC$ and an appropriately compatible $G$ action on $\cC$ (all actions are by dagger functors and all isomorphisms are unitary), we expect our construction will yield a  $G$-crossed braided dagger category.
We expect analogous results in the C$^*$ and W$^*$ settings.
However, 
the notion of dagger $\Gray$-monoid is not compatible with weak equivalences and $\Gray$-ification.
These notions merit further study.
\end{rem}

\appendix

\section{Functors and higher morphisms between \texorpdfstring{$\Gray$}{Gray}-monoids}
\label{sec:Weak3CategoryCoherences}

In this section, we unpack the definitions of trihomomorphism, tritransformation, trimodiciation, and perturbation of~\cite[Def 4.10, 4.16, 4.18, 4.21]{MR3076451} between two $\Gray$-monoids in terms of the graphical calculus.

We remind the reader that as in Notation \ref{nota:CellsInGrayMonoids}, given a $\Gray$-monoid $\cC$, we refer to its objects, 1-morphisms, and 2-morphisms as 0-cells, 1-cells, and 2-cells respectively in order to distinguish these basic components of $\cC$ from morphisms in an ambient category in which $\cC$ lives.
The notion of adjoint equivalence in a $2$-category is well-known, so we will not unpack it further.
A \emph{biadjoint biequivalence} \cite{MR2972968} $0$-cell $\alpha$ in a $\Gray$-monoid
consists of 0-cells $\alpha_*,\alpha^{-1}_*$ 
which we depict in the graphical calculus
as oriented \textcolor{red}{red} strands:
$$
\tikzmath{
\draw[thick, red, mid>] (0,-.4) node [below] {$\scriptstyle\alpha_*$} -- (0,.4) ;
}
\qquad
\tikzmath{
\draw[thick, red, mid<] (0,-.4) node [below] {$\scriptstyle\alpha_*^{-1}$} -- (0,.4) ;
}
\,,
$$
and cup and cap 1-morphisms
$$
\tikzmath{
\draw[thick, red, mid>] (.2,.4) -- (.2,.3) arc (0:-180:.2cm) -- (-.2,.4);
}
\qquad\qquad
\tikzmath{
\draw[thick, red, mid<] (.2,.4) -- (.2,.3) arc (0:-180:.2cm) -- (-.2,.4);
}
\qquad\qquad
\tikzmath{
\draw[thick, red, mid>] (-.2,-.4) -- (-.2,-.3) arc (180:0:.2cm) -- (.2,-.4);
}
\qquad\qquad
\tikzmath{
\draw[thick, red, mid<] (-.2,-.4)  -- (-.2,-.3) arc (180:0:.2cm) -- (.2,-.4); 
}\,
$$
together with 2-isomorphisms
\begin{equation}
\label{eq:BiadjointBiequivalence}
\tag{BB}
\begin{split}
&
\tikzmath{
\draw[thick, red, mid>] (-.2,-.4) -- (-.2,.4) ;
\draw[thick, red, mid<] (.2,-.4) -- (.2,.4);
}
\cong
\tikzmath{
\draw[thick, red, mid>] (.2,.4) -- (.2,.3) arc (0:-180:.2cm) -- (-.2,.4);
\draw[thick, red, mid>] (-.2,-.4) -- (-.2,-.3) arc (180:0:.2cm) -- (.2,-.4);
}
\qquad\qquad
\tikzmath{
 \draw[thick, red, mid<] (-.2,-.4)  -- (-.2,.4) ;
\draw[thick, red, mid>] (.2,-.4) -- (.2,.4);
}
\cong
\tikzmath{
\draw[thick, red, mid<] (.2,.4) -- (.2,.3) arc (0:-180:.2cm) -- (-.2,.4);
\draw[thick, red, mid<] (-.2,-.4)  -- (-.2,-.3) arc (180:0:.2cm) -- (.2,-.4); 
}
\qquad\qquad
\tikzmath{
\draw[thick, red, mid<] (0,0) node [left, xshift=-.3cm] {$\scriptstyle\alpha_*$} circle (.3cm) ;
}
\cong
\tikzmath{
\roundNbox{dashed}{(0,0)}{.4}{0}{0}{}
}
\cong
\tikzmath{
\draw[thick, red, mid>] (0,0) node [right, xshift=.25cm] {$\scriptstyle\alpha_*$} circle (.3cm) ;
}
\\&
\tikzmath{
\draw[thick, red, mid>] (-.2,-.4) -- (-.2,.1) arc (180:0:.1cm) -- (0,-.1) arc (-180:0:.1cm) -- (.2,.4);
}
\cong
\tikzmath{
\draw[thick, red, mid>] (0,-.4) -- (0,.4);
}
\cong
\tikzmath{
\draw[thick, red, mid>] (.2,-.4) -- (.2,.1) arc (0:180:.1cm) -- (0,-.1) arc (0:-180:.1cm) -- (-.2,.4);
}
\qquad\qquad
\tikzmath{
\draw[thick, red, mid<] (-.2,-.4) -- (-.2,.1) arc (180:0:.1cm) -- (0,-.1) arc (-180:0:.1cm) -- (.2,.4);
}
\cong
\tikzmath{
\draw[thick, red, mid<] (0,-.4) -- (0,.4);
}
\cong
\tikzmath{
\draw[thick, red, mid<] (.2,-.4) -- (.2,.1) arc (0:180:.1cm) -- (0,-.1) arc (0:-180:.1cm) -- (-.2,.4);
}
\end{split}
\end{equation}
fulfilling certain coherence conditions; see
\cite[Def.~2.1, Rem.~2.2, Def.~2.3]{MR2972968}.

\subsection{3-functors between \texorpdfstring{$\Gray$}{Gray}-monoids}

\begin{defn}
\label{defn:3functor}
Suppose $\cC,\cD$ are $\Gray$-monoids.
A 3-functor $A: \rmB\cC \to \rmB\cD$ consists of:
\begin{enumerate}[label=(F-\Roman*)]
\item
\label{Functor:A}
A 2-functor $(A, A^1,A^2): \cC \to \cD$. That is, a function on globular sets $A$, 
an invertible $2$-cell $A^1_c: \id_{A(c)}\Rightarrow A(\id_c)$, and 
an invertible $2$-cell
$A^2_{y,x}:A(y)\xo A(x) \Rightarrow A(y\xo x)$, which satisfy the following coherence conditions:
\begin{enumerate}[label=\ref{Functor:A}.\roman*]
\item
\label{Functor:A.associative}
For all $x\in \cC(a\to b)$, $y\in \cC(b\to c)$, and $z\in \cC(c\to d)$, the following diagram commutes:
$$
\begin{tikzpicture}[baseline= (a).base]
\node[scale=.8] (a) at (0,0){
\begin{tikzcd}
\tikzmath{
\node (a) at (0,0) {$\scriptstyle A(a)$};
\node[draw,rectangle, thick, rounded corners=5pt] (Ax) at (0,1) {$\scriptstyle A(x)$};
\node[draw,rectangle, thick, rounded corners=5pt] (Ay) at (0,2) {$\scriptstyle A(y)$};
\node[draw,rectangle, thick, rounded corners=5pt] (Az) at (0,3) {$\scriptstyle A(z)$};
\node (d) at (0,4) {$\scriptstyle A(d)$};
\draw[red, thick, dashed, rounded corners=5pt] (-.8,.65) rectangle (.8,2.45);
\draw[blue, thick, dashed, rounded corners=5pt] (-.8,1.65) rectangle (.8,3.45);
\draw (a) to[in=-90,out=90] (Ax);
\draw (Ax) to[in=-90,out=90] node[left] {$\scriptstyle A(b)$} (Ay);
\draw (Ay) to[in=-90,out=90] node[left] {$\scriptstyle A(c)$} (Az);
\draw (Az) to[in=-90,out=90] (d);
}
\arrow[Rightarrow,r,red,"A^2_{y,x}"]
\arrow[Rightarrow,d, blue,"A^2_{z,y}"]
&
\tikzmath{
\node (a) at (0,0) {$\scriptstyle A(a)$};
\node[draw,rectangle, thick, rounded corners=5pt] (Ax) at (0,1) {$\scriptstyle A(y\xo x)$};
\node[draw,rectangle, thick, rounded corners=5pt] (Az) at (0,2) {$\scriptstyle A(z)$};
\node (d) at (0,3) {$\scriptstyle A(d)$};
\draw (a) to[in=-90,out=90] (Ax);
\draw (Ax) to[in=-90,out=90] node[left] {$\scriptstyle A(c)$} (Az);
\draw (Az) to[in=-90,out=90] (d);
}
\arrow[Rightarrow,d,"A^2_{z,y\xo x}"]
\\
\tikzmath{
\node (a) at (0,0) {$\scriptstyle A(a)$};
\node[draw,rectangle, thick, rounded corners=5pt] (Ax) at (0,1) {$\scriptstyle A(x)$};
\node[draw,rectangle, thick, rounded corners=5pt] (Az) at (0,2) {$\scriptstyle A(z\xo y)$};
\node (d) at (0,3) {$\scriptstyle A(d)$};
\draw (a) to[in=-90,out=90] (Ax);
\draw (Ax) to[in=-90,out=90] node[left] {$\scriptstyle A(b)$} (Az);
\draw (Az) to[in=-90,out=90] (d);
}
\arrow[Rightarrow,r,"A^2_{z\xo y,x}"]
&
\tikzmath{
\node (a) at (0,0) {$\scriptstyle A(a)$};
\node[draw,rectangle, thick, rounded corners=5pt] (Ax) at (0,1) {$\scriptstyle A(z\xo y\xo x)$};
\node (d) at (0,2) {$\scriptstyle A(d)$};
\draw (a) to[in=-90,out=90] (Ax);
\draw (Ax) to[in=-90,out=90] (d);
}
\end{tikzcd}
};\end{tikzpicture}
$$
\item
\label{Functor:A.unital}
For all $x\in \cC(a\to b)$, the following two triangles commute:
$$
\begin{tikzpicture}[baseline= (a).base]
\node[scale=.8] (a) at (0,0){
\begin{tikzcd}
\tikzmath{
\node (a) at (0,0) {$\scriptstyle A(a)$};
\node[draw,rectangle, thick, rounded corners=5pt] (Ax) at (0,2) {$\scriptstyle A(x)$};
\node (c) at (0,4) {$\scriptstyle A(b)$};
\draw[red, thick, dashed, rounded corners=5pt] (-.8,.65) rectangle (.8,1.45);
\draw[blue, thick, dashed, rounded corners=5pt] (-.8,2.65) rectangle (.8,3.45);
\draw (a) to[in=-90,out=90] (Ax);
\draw (Ax) to[in=-90,out=90] (c);
}
\arrow[Rightarrow,r,red,"A^1_a"]
\arrow[Rightarrow,d, blue,"A^1_b"]
\arrow[Rightarrow,dr,"\id_{A(x)}"]
&
\tikzmath{
\node (a) at (0,0) {$\scriptstyle A(a)$};
\node[draw,rectangle, thick, rounded corners=5pt] (Ax) at (0,1) {$\scriptstyle A(\id_a)$};
\node[draw,rectangle, thick, rounded corners=5pt] (Az) at (0,2) {$\scriptstyle A(x)$};
\node (d) at (0,3) {$\scriptstyle A(b)$};
\draw (a) to[in=-90,out=90] (Ax);
\draw (Ax) to[in=-90,out=90] node[left] {$\scriptstyle A(a)$} (Az);
\draw (Az) to[in=-90,out=90] (d);
}
\arrow[Rightarrow,d,"A^2_{x,\id_a}"]
\\
\tikzmath{
\node (a) at (0,0) {$\scriptstyle A(a)$};
\node[draw,rectangle, thick, rounded corners=5pt] (Ax) at (0,1) {$\scriptstyle A(x)$};
\node[draw,rectangle, thick, rounded corners=5pt] (Az) at (0,2) {$\scriptstyle A(\id_b)$};
\node (d) at (0,3) {$\scriptstyle A(b)$};
\draw (a) to[in=-90,out=90] (Ax);
\draw (Ax) to[in=-90,out=90] node[left] {$\scriptstyle A(b)$} (Az);
\draw (Az) to[in=-90,out=90] (d);
}
\arrow[Rightarrow,r,"A^2_{\id_b,x}"]
&
\tikzmath{
\node (a) at (0,0) {$\scriptstyle A(a)$};
\node[draw,rectangle, thick, rounded corners=5pt] (Ax) at (0,1) {$\scriptstyle A(x)$};
\node (d) at (0,2) {$\scriptstyle A(b)$};
\draw (a) to[in=-90,out=90] (Ax);
\draw (Ax) to[in=-90,out=90] (d);
}
\end{tikzcd}
};\end{tikzpicture}
$$
\end{enumerate}

\item
\label{Functor:mu}
An adjoint equivalence $\mu^A: \xz_\cD \circ (A\times A) \Rightarrow A\circ \xz_\cC$
in the 2-category of 2-functors $\cC\times \cC\to \cD$. Explicitly, this is given by, for each pair of 0-cells $(a,b)\in \cC\times \cC$, an adjoint equivalence 1-cell $\mu^A_{a,b}: A(a) \xz A(b) \to A(a\xz b)$ and for each pair of 1-cells
$(x,y): (a,b) \to (c,d)$, an invertible 2-cell
$$
\tikzmath{
\node (a) at (0,-.5) {$\scriptstyle A(a)$};
\node (b) at (1,-.5) {$\scriptstyle A(b)$};
\node (d) at (.5,3) {$\scriptstyle A(c\xz d)$};
\node[draw,rectangle, thick, rounded corners=5pt] (f) at (0,.9) {$\scriptstyle A(f)$};
\node[draw,rectangle, thick, rounded corners=5pt] (g) at (1,.3) {$\scriptstyle A(g)$};
\coordinate (g2) at (1,.9);
\draw (a) to[in=-90,out=90] (f);
\draw (b) to[in=-90,out=90] (g);
\node[draw,rectangle, thick, rounded corners=5pt] (mu1) at (.5,2) {$\scriptstyle \mu^A_{a,b}$};
\draw (f) to[in=-90,out=90] node[left] {$\scriptstyle A(c)$} (mu1.-120);
\draw (g) to[in=-90,out=90] (g2) to[in=-90,out=90] node[right,yshift=.2cm] {$\scriptstyle A(d)$} (mu1.-60);
\draw[double] (mu1) to[in=-90,out=90] (d);
}
\overset{\mu^A_{f,g}}{\Rightarrow}
\tikzmath{
\node (a) at (0,0) {$\scriptstyle A(a)$};
\node (b) at (1,0) {$\scriptstyle A(b)$};
\node (d) at (.5,3) {$\scriptstyle A(c\xz d)$};
\node[draw,rectangle, thick, rounded corners=5pt] (mu1) at (.5,1) {$\scriptstyle \mu^A_{a,b}$};
\draw (a) to[in=-90,out=90] (mu1.-120);
\draw (b) to[in=-90,out=90] (mu1.-60);
\node[draw,rectangle, thick, rounded corners=5pt] (fg) at (.5,2) {$\scriptstyle A(f\xz g)$};
\draw[double] (mu1) to[in=-90,out=90] node[left] {$\scriptstyle A(a\xz b)$} (fg);
\draw[double] (fg) to[in=-90,out=90] (d);
}\,.
$$
That $\mu^A$ is a $2$-transformation means we have the following cohrences.
\begin{enumerate}[label=\ref{Functor:mu}.\roman*]
\item 
\label{Functor:mu.natural}
For all $x, x': a\to c$ and $y,y': b\to d$ and all $f: x \Rightarrow x'$ and $g: y \Rightarrow y$, the following square commutes:
$$
\begin{tikzpicture}[baseline= (a).base]
\node[scale=.8] (a) at (0,0){
\begin{tikzcd}
\tikzmath{
\node (a) at (0,-.5) {$\scriptstyle A(a)$};
\node (b) at (1,-.5) {$\scriptstyle A(b)$};
\node (d) at (.5,3) {$\scriptstyle A(c\xz d)$};
\node[draw,rectangle, thick, rounded corners=5pt] (f) at (0,.9) {$\scriptstyle A(x)$};
\draw[dashed, red, thick, rounded corners=5pt] (-.5,0) rectangle (1.5,1.3);
\node[draw,rectangle, thick, rounded corners=5pt] (g) at (1,.3) {$\scriptstyle A(y)$};
\coordinate (g2) at (1,.9);
\draw (a) to[in=-90,out=90] (f);
\draw (b) to[in=-90,out=90] (g);
\node[draw,rectangle, thick, rounded corners=5pt] (mu1) at (.5,2) {$\scriptstyle \mu^A_{a,b}$};
\draw (f) to[in=-90,out=90] node[left] {$\scriptstyle A(c)$} (mu1.-120);
\draw (g) to[in=-90,out=90] (g2) to[in=-90,out=90] node[right,yshift=.2cm] {$\scriptstyle A(d)$} (mu1.-60);
\draw[double] (mu1) to[in=-90,out=90] (d);
}
\arrow[Rightarrow,r,"\mu^A_{x,y}"]
\arrow[Rightarrow,d, red,"A(f)\xz A(g)"]
&
\tikzmath{
\node (a) at (0,0) {$\scriptstyle A(a)$};
\node (b) at (1,0) {$\scriptstyle A(b)$};
\node (d) at (.5,3) {$\scriptstyle A(c\xz d)$};
\node[draw,rectangle, thick, rounded corners=5pt] (mu1) at (.5,1) {$\scriptstyle \mu^A_{a,b}$};
\draw (a) to[in=-90,out=90] (mu1.-120);
\draw (b) to[in=-90,out=90] (mu1.-60);
\node[draw,rectangle, thick, rounded corners=5pt] (fg) at (.5,2) {$\scriptstyle A(x\xz y)$};
\draw[double] (mu1) to[in=-90,out=90] node[left] {$\scriptstyle A(a\xz b)$} (fg);
\draw[double] (fg) to[in=-90,out=90] (d);
}
\arrow[Rightarrow,d,"A(f\otimes g)"]
\\
\tikzmath{
\node (a) at (0,-.5) {$\scriptstyle A(a)$};
\node (b) at (1,-.5) {$\scriptstyle A(b)$};
\node (d) at (.5,3) {$\scriptstyle A(c\xz d)$};
\node[draw,rectangle, thick, rounded corners=5pt] (f) at (0,.9) {$\scriptstyle A(x')$};
\node[draw,rectangle, thick, rounded corners=5pt] (g) at (1,.3) {$\scriptstyle A(y')$};
\coordinate (g2) at (1,.9);
\draw (a) to[in=-90,out=90] (f);
\draw (b) to[in=-90,out=90] (g);
\node[draw,rectangle, thick, rounded corners=5pt] (mu1) at (.5,2) {$\scriptstyle \mu^A_{a,b}$};
\draw (f) to[in=-90,out=90] node[left] {$\scriptstyle A(c)$} (mu1.-120);
\draw (g) to[in=-90,out=90] (g2) to[in=-90,out=90] node[right,yshift=.2cm] {$\scriptstyle A(d)$} (mu1.-60);
\draw[double] (mu1) to[in=-90,out=90] (d);
}
\arrow[Rightarrow,r,"\mu^A_{x',y'}"]
&
\tikzmath{
\node (a) at (0,0) {$\scriptstyle A(a)$};
\node (b) at (1,0) {$\scriptstyle A(b)$};
\node (d) at (.5,3) {$\scriptstyle A(c\xz d)$};
\node[draw,rectangle, thick, rounded corners=5pt] (mu1) at (.5,1) {$\scriptstyle \mu^A_{a,b}$};
\draw (a) to[in=-90,out=90] (mu1.-120);
\draw (b) to[in=-90,out=90] (mu1.-60);
\node[draw,rectangle, thick, rounded corners=5pt] (fg) at (.5,2) {$\scriptstyle A(x'\xz y')$};
\draw[double] (mu1) to[in=-90,out=90] node[left] {$\scriptstyle A(a\xz b)$} (fg);
\draw[double] (fg) to[in=-90,out=90] (d);
}
\end{tikzcd}
};\end{tikzpicture}
$$
\item
\label{Functor:mu.monoidal}
For all 1-cells $x_1\in\cC(a_1\to a_2)$, $x_2\in\cC(a_2\to a_3)$, $y_1\in\cC(b_1\to b_2)$, and $y_2\in \cC(b_2\to b_3)$,
$$
\begin{tikzpicture}[baseline= (a).base]
\node[scale=.8] (a) at (0,0){
\begin{tikzcd}
\tikzmath{
\node (a) at (0,-1.8) {$\scriptstyle A(a_1)$};
\node (b) at (1,-1.8) {$\scriptstyle A(b_1)$};
\node (d) at (.5,3) {$\scriptstyle A(a_3\xz b_3)$};
\node[draw,rectangle, thick, rounded corners=5pt] (x2) at (0,.9) {$\scriptstyle A(x_2)$};
\node[draw,rectangle, thick, rounded corners=5pt] (y2) at (1,.3) {$\scriptstyle A(y_2)$};
\node[draw,rectangle, thick, rounded corners=5pt] (x1) at (0,-.3) {$\scriptstyle A(x_1)$};
\node[draw,rectangle, thick, rounded corners=5pt] (y1) at (1,-.9) {$\scriptstyle A(y_1)$};
\draw[dashed, red, thick, rounded corners=5pt] (-1,0.05) rectangle (2,2.5);
\draw[dashed, blue, thick, rounded corners=5pt] (-1,-.55) rectangle (2,.65);
\coordinate (y22) at (1,.9);
\draw (a) to[in=-90,out=90] (x1);
\draw (x1) to[in=-90,out=90] node[left] {$\scriptstyle A(a_2)$} (x2);
\draw (b) to[in=-90,out=90] (y1);
\draw (y1) to[in=-90,out=90] node[right] {$\scriptstyle A(b_2)$} (y2);
\node[draw,rectangle, thick, rounded corners=5pt] (mu1) at (.5,2) {$\scriptstyle \mu^A_{a,b}$};
\draw (x2) to[in=-90,out=90] node[left] {$\scriptstyle A(a_3)$} (mu1.-120);
\draw (y2) to[in=-90,out=90] (y22) to[in=-90,out=90] node[right,yshift=.2cm] {$\scriptstyle A(b_3)$} (mu1.-60);
\draw[double] (mu1) to[in=-90,out=90] (d);
}
\arrow[Rightarrow,rr, red, "\mu^A_{x_2,y_2}"]
\arrow[Rightarrow,d, blue, "\phi^{-1}"]
&&
\tikzmath{
\node (a) at (0,-.5) {$\scriptstyle A(a_1)$};
\node (b) at (1,-.5) {$\scriptstyle A(b_1)$};
\node (d) at (.5,4) {$\scriptstyle A(a_3\xz b_3)$};
\node[draw,rectangle, thick, rounded corners=5pt] (f) at (0,.9) {$\scriptstyle A(x_2)$};
\node[draw,rectangle, thick, rounded corners=5pt] (g) at (1,.3) {$\scriptstyle A(y_2)$};
\node[draw,rectangle, thick, rounded corners=5pt] (Axy) at (.5,3) {$\scriptstyle A(x_1\xz y_1)$};
\draw[dashed, thick, rounded corners=5pt] (-1,0.05) rectangle (2,2.5);
\coordinate (g2) at (1,.9);
\draw (a) to[in=-90,out=90] (f);
\draw (b) to[in=-90,out=90] (g);
\node[draw,rectangle, thick, rounded corners=5pt] (mu1) at (.5,2) {$\scriptstyle \mu^A_{a,b}$};
\draw (f) to[in=-90,out=90] node[left] {$\scriptstyle A(a_2)$} (mu1.-120);
\draw (g) to[in=-90,out=90] (g2) to[in=-90,out=90] node[right,yshift=.2cm] {$\scriptstyle A(b_2)$} (mu1.-60);
\draw[double] (mu1) to[in=-90,out=90] (Axy);
\draw[double] (Axy) to[in=-90,out=90] (d);
}
\arrow[Rightarrow,rr,"\mu^A_{x_2,y_2}"]
&&
\tikzmath{
\node (a) at (0,0) {$\scriptstyle A(a_1)$};
\node (b) at (1,0) {$\scriptstyle A(b_1)$};
\node (d) at (.5,4) {$\scriptstyle A(a_3\xz b_3)$};
\node[draw,rectangle, thick, rounded corners=5pt] (mu1) at (.5,1) {$\scriptstyle \mu^A_{x_1,y_1}$};
\draw (a) to[in=-90,out=90] (mu1.-120);
\draw (b) to[in=-90,out=90] (mu1.-60);
\node[draw,rectangle, thick, rounded corners=5pt] (xy1) at (.5,2) {$\scriptstyle A(x_1\xz y_1)$};
\node[draw,rectangle, thick, rounded corners=5pt] (xy2) at (.5,3) {$\scriptstyle A(x_2\xz y_2)$};
\draw[dashed, thick, rounded corners=5pt] (-1,1.7) rectangle (2,3.5);
\draw[double] (mu1) to[in=-90,out=90] node[left] {$\scriptstyle A(a_1\xz b_1)$} (fg);
\draw[double] (xy1) to[in=-90,out=90] node[left] {$\scriptstyle A(a_2\xz b_2)$} (xy2);
\draw[double] (xy2) to[in=-90,out=90] (d);
}
\arrow[Rightarrow,rr,"A^2_{x_2\otimes y_2, x_1\otimes y_2}"]
&&
\tikzmath{
\node (a) at (0,0) {$\scriptstyle A(a_1)$};
\node (b) at (1,0) {$\scriptstyle A(b_1)$};
\node (d) at (.5,3) {$\scriptstyle A(a_3\xz b_3)$};
\node[draw,rectangle, thick, rounded corners=5pt] (mu1) at (.5,1) {$\scriptstyle \mu^A_{a,b}$};
\draw (a) to[in=-90,out=90] (mu1.-120);
\draw (b) to[in=-90,out=90] (mu1.-60);
\node[draw,rectangle, thick, rounded corners=5pt] (fg) at (.5,2) {$\scriptstyle A((x_2\xz y_2) \xo (x_1\xz y_1))$};
\draw[double] (mu1) to[in=-90,out=90] node[left] {$\scriptstyle A(a_1\xz b_1)$} (fg);
\draw[double] (fg) to[in=-90,out=90] (d);
}
\\
\tikzmath{
\node (a) at (0,-1.8) {$\scriptstyle A(a_1)$};
\node (b) at (1,-1.8) {$\scriptstyle A(b_1)$};
\node (d) at (.5,3) {$\scriptstyle A(a_3\xz b_3)$};
\node[draw,rectangle, thick, rounded corners=5pt] (x2) at (0,.9) {$\scriptstyle A(x_2)$};
\node[draw,rectangle, thick, rounded corners=5pt] (y2) at (1,-.3) {$\scriptstyle A(y_2)$};
\node[draw,rectangle, thick, rounded corners=5pt] (x1) at (0,.3) {$\scriptstyle A(x_1)$};
\node[draw,rectangle, thick, rounded corners=5pt] (y1) at (1,-.9) {$\scriptstyle A(y_1)$};
\draw[dashed, thick, rounded corners=5pt] (-1,-1.15) rectangle (2,1.25);
\coordinate (y22) at (1,.9);
\draw (a) to[in=-90,out=90] (x1);
\draw (x1) to[in=-90,out=90] (x2);
\draw (b) to[in=-90,out=90] (y1);
\draw (y1) to[in=-90,out=90] (y2);
\node[draw,rectangle, thick, rounded corners=5pt] (mu1) at (.5,2) {$\scriptstyle \mu^A_{a,b}$};
\draw (x2) to[in=-90,out=90] node[left] {$\scriptstyle A(a_3)$} (mu1.-120);
\draw (y2) to[in=-90,out=90] (y22) to[in=-90,out=90] node[right, yshift=.2cm] {$\scriptstyle A(b_3)$} (mu1.-60);
\draw[double] (mu1) to[in=-90,out=90] (d);
}
\arrow[Rightarrow,rr,"A^2_{x_2,x_1}\otimes A^2_{y_2,y_1}"]
&&
\tikzmath{
\node (a) at (0,-.5) {$\scriptstyle A(a_1)$};
\node (b) at (1,-.5) {$\scriptstyle A(b_1)$};
\node (d) at (.5,3) {$\scriptstyle A(a_3\xz b_3)$};
\node[draw,rectangle, thick, rounded corners=5pt] (f) at (0,.9) {$\scriptstyle A(x_2\xo x_1)$};
\node[draw,rectangle, thick, rounded corners=5pt] (g) at (1,.3) {$\scriptstyle A(y_2 \xo y_1)$};
\coordinate (g2) at (1,.9);
\draw (a) to[in=-90,out=90] (f);
\draw (b) to[in=-90,out=90] (g);
\node[draw,rectangle, thick, rounded corners=5pt] (mu1) at (.5,2) {$\scriptstyle \mu^A_{a,b}$};
\draw (f) to[in=-90,out=90] node[left] {$\scriptstyle A(a_3)$} (mu1.-120);
\draw (g) to[in=-90,out=90] (g2) to[in=-90,out=90] node[right,yshift=.2cm] {$\scriptstyle A(b_3)$} (mu1.-60);
\draw[double] (mu1) to[in=-90,out=90] (d);
}
\arrow[Rightarrow,rr,"\mu^A_{x_2\xo x_1,y2\xo y_1}"]
&&
\tikzmath{
\node (a) at (0,0) {$\scriptstyle A(a_1)$};
\node (b) at (1,0) {$\scriptstyle A(b_1)$};
\node (d) at (.5,3) {$\scriptstyle A(a_3\xz b_3)$};
\node[draw,rectangle, thick, rounded corners=5pt] (mu1) at (.5,1) {$\scriptstyle \mu^A_{a,b}$};
\draw (a) to[in=-90,out=90] (mu1.-120);
\draw (b) to[in=-90,out=90] (mu1.-60);
\node[draw,rectangle, thick, rounded corners=5pt] (fg) at (.5,2) {$\scriptstyle A((x_2\xo x_1) \xz (y_2\xo y_1))$};
\draw[dashed, thick, rounded corners=5pt] (-1,1.7) rectangle (2,2.4);
\draw[double] (mu1) to[in=-90,out=90] node[left] {$\scriptstyle A(a_1\xz b_1)$} (fg);
\draw[double] (fg) to[in=-90,out=90] (d);
}
\arrow[Rightarrow,urr,"A(\phi)"]
\end{tikzcd}
};\end{tikzpicture}
$$
\item
\label{Functor:mu.unital}
For all 0-cells $a,b\in\cC$, the following diagram commutes:
$$
\begin{tikzpicture}[baseline= (a).base]
\node[scale=.8] (a) at (0,0){
\begin{tikzcd}
\tikzmath{
\node (a) at (0,-.5) {$\scriptstyle A(a)$};
\coordinate (a1) at (0,1);
\node (b) at (1,-.5) {$\scriptstyle A(b)$};
\coordinate (b1) at (1,1);
\node (d) at (.5,4) {$\scriptstyle A(a\xz b)$};
\draw[dashed, blue, thick, rounded corners=5pt] (-.4,.1) rectangle (1.4,.9);
\draw[dashed, red, thick, rounded corners=5pt] (0,2.8) rectangle (1,3.2);
\node[draw,rectangle, thick, rounded corners=5pt] (mu1) at (.5,2) {$\scriptstyle \mu^A_{a,b}$};
\draw (a) to[in=-90,out=90] (a1) to[in=-90,out=90] (mu1.-120);
\draw (b) to[in=-90,out=90] (b1) to[in=-90,out=90] (mu1.-60);
\draw[double] (mu1) to[in=-90,out=90] (d);
}
\arrow[Rightarrow,dr, blue,"A^1_a \otimes A^1_b"]
\arrow[Rightarrow,rr, red,"A^1_{a\otimes b}"]
&&
\tikzmath{
\node (a) at (0,0) {$\scriptstyle A(a)$};
\node (b) at (1,0) {$\scriptstyle A(b)$};
\node (d) at (.5,3) {$\scriptstyle A(c\xz d)$};
\node[draw,rectangle, thick, rounded corners=5pt] (mu1) at (.5,1) {$\scriptstyle \mu^A_{a,b}$};
\draw (a) to[in=-90,out=90] (mu1.-120);
\draw (b) to[in=-90,out=90] (mu1.-60);
\node[draw,rectangle, thick, rounded corners=5pt] (fg) at (.5,2) {$\scriptstyle A(\id_{x\otimes y})$};
\draw[double] (mu1) to[in=-90,out=90] node[left] {$\scriptstyle A(a\xz b)$} (fg);
\draw[double] (fg) to[in=-90,out=90] (d);
}
\\
&
\tikzmath{
\node (a) at (0,-.5) {$\scriptstyle A(a)$};
\node (b) at (1,-.5) {$\scriptstyle A(b)$};
\node (d) at (.5,3) {$\scriptstyle A(a\xz b)$};
\node[draw,rectangle, thick, rounded corners=5pt] (f) at (0,.9) {$\scriptstyle A(\id_a)$};
\node[draw,rectangle, thick, rounded corners=5pt] (g) at (1,.3) {$\scriptstyle A(\id_b)$};
\coordinate (g2) at (1,.9);
\draw (a) to[in=-90,out=90] (f);
\draw (b) to[in=-90,out=90] (g);
\node[draw,rectangle, thick, rounded corners=5pt] (mu1) at (.5,2) {$\scriptstyle \mu^A_{a,b}$};
\draw (f) to[in=-90,out=90] node[left] {$\scriptstyle A(a)$} (mu1.-120);
\draw (g) to[in=-90,out=90] (g2) to[in=-90,out=90] node[right,yshift=.2cm] {$\scriptstyle A(b)$} (mu1.-60);
\draw[double] (mu1) to[in=-90,out=90] (d);
}
\arrow[Rightarrow,ur,"\mu^A_{\id_a,\id_b}"]
\end{tikzcd}
};\end{tikzpicture}
$$
\end{enumerate}
\item
\label{Functor:iota}
An adjoint equivalence $\iota^A: I_\cD \Rightarrow A \circ I_\cC$
(in the 2-category of 2-functors $*\to \cD$)
where $I_\cC: * \to \cC$ is the inclusion of the trivial 2-category into $\cC$ which picks out $1_\cC, \id_{1_\cC}, \id_{\id_{1_\cC}}$, and similarly for $\cD$.
Explicitly, this is given by an adjoint equivalence 1-cell $\iota^A_*: 1_\cD \to A(1_\cC)$ and an invertible 2-cell 
$$
\left(
\tikzmath{
\node (d) at (0,3) {$\scriptstyle A(1_\cC)$};
\node[draw,rectangle, thick, rounded corners=5pt] (iota) at (0,1) {$\scriptstyle \iota^A_*$};
\draw (iota) to[in=-90,out=90] (d);
}
\overset{\iota^A_1}{\Rightarrow}
\tikzmath{
\node (d) at (0,3) {$\scriptstyle A(1_\cC)$};
\node[draw,rectangle, thick, rounded corners=5pt] (iota) at (0,1) {$\scriptstyle \iota^A_*$};
\node[draw,rectangle, thick, rounded corners=5pt] (Aid) at (0,2) {$\scriptstyle A(\id_{1_\cC})$};
\draw (iota) to[in=-90,out=90] node[left] {$\scriptstyle A(1_\cC)$} (Aid);
\draw (Aid) to[in=-90,out=90] (d);
}
\right)
=
\left(
\tikzmath{
\node (d) at (0,3) {$\scriptstyle A(1_\cC)$};
\node[draw,rectangle, thick, rounded corners=5pt] (iota) at (0,1) {$\scriptstyle \iota^A_*$};
\draw[dashed, thick, rounded corners=5pt] (-.4,1.8) rectangle (.4,2.2);
\draw (iota) to[in=-90,out=90] (d);
}
\overset{A^1_{1_\cC}}{\Rightarrow}
\tikzmath{
\node (d) at (0,3) {$\scriptstyle A(1_\cC)$};
\node[draw,rectangle, thick, rounded corners=5pt] (iota) at (0,1) {$\scriptstyle \iota^A_*$};
\node[draw,rectangle, thick, rounded corners=5pt] (Aid) at (0,2) {$\scriptstyle A(\id_{1_\cC})$};
\draw (iota) to[in=-90,out=90] node[left] {$\scriptstyle A(1_\cC)$} (Aid);
\draw (Aid) to[in=-90,out=90] (d);
}
\right).
$$
That $\iota^A$ is a $2$-transformation implies that $\iota^A_1$ equals the map on the right hand side above, which is a whiskering with $A^1_e$.
This means $\iota^A_1$ is automatically natural and compatible with $A^2$.

\item
\label{Functor:omega}
An invertible associator 2-modification $\omega^A$.
Explicitly, this is given by, for each $a,b,c\in \cC$, an invertible 2-cell
$$
\tikzmath{
\node (a) at (0,0) {$\scriptstyle A(a)$};
\node (b) at (.8,0) {$\scriptstyle A(b)$};
\node (c) at (1.6,0) {$\scriptstyle A(c)$};
\node (d) at (.8,3) {$\scriptstyle A(a\xz b \xz c)$};
\node[draw,rectangle, thick, rounded corners=5pt] (mu1) at (.4,1) {$\scriptstyle \mu^A_{a,b}$};
\draw (a) to[in=-90,out=90] (mu1.-120);
\draw (b) to[in=-90,out=90] (mu1.-60);
\node[draw,rectangle, thick, rounded corners=5pt] (mu2) at (.8,2) {$\scriptstyle \mu^A_{a\xz b,c}$};
\draw[double] (mu1) to[in=-90,out=90] node[left] {$\scriptstyle A(a\xz b)$} (mu2.-120);
\draw (c) to[in=-90,out=90] (mu2.-60);
\draw[triple] (mu2) to[in=-90,out=90] (d);
}
\overset{\omega^A_{a,b,c}}{\Rightarrow}
\tikzmath{
\node (a) at (0,0) {$\scriptstyle A(a)$};
\node (b) at (.8,0) {$\scriptstyle A(b)$};
\node (c) at (1.6,0) {$\scriptstyle A(c)$};
\node (d) at (.8,3) {$\scriptstyle A(a\xz b \xz c)$};
\node[draw,rectangle, thick, rounded corners=5pt] (mu1) at (1.2,1) {$\scriptstyle \mu^A_{b,c}$};
\draw (b) to[in=-90,out=90] (mu1.-120);
\draw (c) to[in=-90,out=90] (mu1.-60);
\node[draw,rectangle, thick, rounded corners=5pt] (mu2) at (.8,2) {$\scriptstyle \mu^A_{a, b\xz c}$};
\draw[double] (mu1) to[in=-90,out=90] node[right] {$\scriptstyle A(b\xz c)$} (mu2.-60);
\draw (a) to[in=-90,out=90] (mu2.-120);
\draw[triple] (mu2) to[in=-90,out=90] (d);
}
$$
and the fact that $\omega$ is a $2$-modification means that
for all $x\in\cC(a_1\to a_2)$, $y\in\cC(b_1\to b_2)$, and $z\in\cC(c_1\to c_2)$,
$$
\begin{tikzpicture}[baseline= (a).base]
\node[scale=.8] (a) at (0,0){
\begin{tikzcd}
\tikzmath{
\node (a) at (0,-2.5) {$\scriptstyle A(a_1)$};
\node (b) at (1,-2.5) {$\scriptstyle A(b_1)$};
\node (c) at (2,-2.5) {$\scriptstyle A(c_1)$};
\node (d) at (1.25,3.25) {$\scriptstyle A(c\xz b \xz c)$};
\node[draw,rectangle, thick, rounded corners=5pt] (Ax) at (0,0) {$\scriptstyle A(x)$};
\node[draw,rectangle, thick, rounded corners=5pt] (Ay) at (1,-.75) {$\scriptstyle A(y)$};
\node[draw,rectangle, thick, rounded corners=5pt] (Az) at (2,-1.5) {$\scriptstyle A(z)$};
\node[draw,rectangle, thick, rounded corners=5pt] (mu1) at (.5,1) {$\scriptstyle \mu^A_{a_2,b_2}$};
\draw[dashed, red, thick, rounded corners=5pt] (-.6,-1.1) rectangle (1.6,1.5);
\draw[dashed, blue, thick, rounded corners=5pt] (-.2,.5) rectangle (2.2,2.75);
\draw (a) to[in=-90,out=90] (Ax);
\draw (b) to[in=-90,out=90] (Ay);
\draw (c) to[in=-90,out=90] (Az);
\draw (Ax) to[in=-90,out=90] (mu1.-120);
\draw (Ay) to[in=-90,out=90] (mu1.-60);
\node[draw,rectangle, thick, rounded corners=5pt] (mu2) at (1.25,2.25) {$\scriptstyle \mu^A_{a_2\xz b_2,c_2}$};
\draw[double] (mu1) to[in=-90,out=90] (mu2.-120);
\draw (Az) to[in=-90,out=90] (mu2.-60);
\draw[triple] (mu2) to[in=-90,out=90] (d);
}
\arrow[Rightarrow,r,red,"\mu^A_{x,y}"]
\arrow[Rightarrow,d,blue,"\omega^A_{a_2,b_2,c_2}"]
&
\tikzmath{
\node (a) at (0,-2.5) {$\scriptstyle A(a_1)$};
\node (b) at (1,-2.5) {$\scriptstyle A(b_1)$};
\node (c) at (2,-2.5) {$\scriptstyle A(c_1)$};
\node (d) at (1.25,3.25) {$\scriptstyle A(a_2\xz b_2 \xz c_2)$};
\node[draw,rectangle, thick, rounded corners=5pt] (Axy) at (0.5,1) {$\scriptstyle A(x\otimes y)$};
\node[draw,rectangle, thick, rounded corners=5pt] (Az) at (2,-1.5) {$\scriptstyle A(z)$};
\node[draw,rectangle, thick, rounded corners=5pt] (mu1) at (.5,0) {$\scriptstyle \mu^A_{a_1,b_1}$};
\draw[dashed, thick, rounded corners=5pt] (-.2,-1.8) rectangle (2.6,.5);
\draw (a) to[in=-90,out=90] (mu1.-120);
\draw (b) to[in=-90,out=90] (mu1.-60);
\draw (c) to[in=-90,out=90] (Az);
\draw[double] (mu1) to[in=-90,out=90] (Axy);
\node[draw,rectangle, thick, rounded corners=5pt] (mu2) at (1.25,2.25) {$\scriptstyle \mu^A_{a_2\xz b_2,c_2}$};
\draw[double] (Axy) to[in=-90,out=90] (mu2.-120);
\draw (Az) to[in=-90,out=90] (mu2.-60);
\draw[triple] (mu2) to[in=-90,out=90] (d);
}
\arrow[Rightarrow,r,"\phi"]
&
\tikzmath{
\node (a) at (0,-2.5) {$\scriptstyle A(a_1)$};
\node (b) at (1,-2.5) {$\scriptstyle A(b_1)$};
\node (c) at (2,-2.5) {$\scriptstyle A(c_1)$};
\node (d) at (1.25,3.25) {$\scriptstyle A(a_2\xz b_2 \xz c_2)$};
\node[draw,rectangle, thick, rounded corners=5pt] (Axy) at (0.5,1) {$\scriptstyle A(x\otimes y)$};
\node[draw,rectangle, thick, rounded corners=5pt] (Az) at (2,0) {$\scriptstyle A(z)$};
\node[draw,rectangle, thick, rounded corners=5pt] (mu1) at (.5,-1) {$\scriptstyle \mu^A_{a_1,b_1}$};
\draw[dashed, thick, rounded corners=5pt] (-.3,-.4) rectangle (2.6,2.8);
\draw (a) to[in=-90,out=90] (mu1.-120);
\draw (b) to[in=-90,out=90] (mu1.-60);
\draw (c) to[in=-90,out=90] (Az);
\draw[double] (mu1) to[in=-90,out=90] (Axy);
\node[draw,rectangle, thick, rounded corners=5pt] (mu2) at (1.25,2.25) {$\scriptstyle \mu^A_{a_2\xz b_2,c_2}$};
\draw[double] (Axy) to[in=-90,out=90] (mu2.-120);
\draw (Az) to[in=-90,out=90] (mu2.-60);
\draw[triple] (mu2) to[in=-90,out=90] (d);
}
\arrow[Rightarrow,r,"\mu^A_{x\xz y, z}"]
&
\tikzmath{
\node (a) at (0,-1.5) {$\scriptstyle A(a_1)$};
\node (b) at (1,-1.5) {$\scriptstyle A(b_1)$};
\node (c) at (2,-1.5) {$\scriptstyle A(c_1)$};
\node (d) at (1.25,4.25) {$\scriptstyle A(a_2\xz b_2 \xz c_2)$};
\node[draw,rectangle, thick, rounded corners=5pt] (mu1) at (.5,0) {$\scriptstyle \mu^A_{a_1,b_1}$};
\draw[dashed, thick, rounded corners=5pt] (-.2,-.6) rectangle (2.2,2);
\draw (a) to[in=-90,out=90] (mu1.-120);
\draw (b) to[in=-90,out=90] (mu1.-60);
\node[draw,rectangle, thick, rounded corners=5pt] (mu2) at (1.25,1.5) {$\scriptstyle \mu^A_{a_1\xz b_1,c_1}$};
\node[draw,rectangle, thick, rounded corners=5pt] (Axyz) at (1.25,2.75) {$\scriptstyle A(x\otimes y\otimes z)$};
\draw[double] (mu1) to[in=-90,out=90] (mu2.-120);
\draw (c) to[in=-90,out=90] (mu2.-60);
\draw[triple] (mu2) to[in=-90,out=90] (Axyz);
\draw[triple] (Axyz) to[in=-90,out=90] (d);
}
\arrow[Rightarrow,d,"\omega^A_{a_1,b_1,c_1}"]
\\
\tikzmath{
\node (a) at (0,-2.5) {$\scriptstyle A(a_1)$};
\node (b) at (1,-2.5) {$\scriptstyle A(b_1)$};
\node (c) at (2,-2.5) {$\scriptstyle A(c_1)$};
\node (d) at (.75,3.25) {$\scriptstyle A(c\xz b \xz c)$};
\node[draw,rectangle, thick, rounded corners=5pt] (Ax) at (0,0) {$\scriptstyle A(x)$};
\node[draw,rectangle, thick, rounded corners=5pt] (Ay) at (1,-.75) {$\scriptstyle A(y)$};
\node[draw,rectangle, thick, rounded corners=5pt] (Az) at (2,-1.5) {$\scriptstyle A(z)$};
\node[draw,rectangle, thick, rounded corners=5pt] (mu1) at (1.5,1) {$\scriptstyle \mu^A_{b_2,c_2}$};
\draw[dashed, thick, rounded corners=5pt] (-.6,1.5) rectangle (2.2,-.3);
\draw (a) to[in=-90,out=90] (Ax);
\draw (b) to[in=-90,out=90] (Ay);
\draw (c) to[in=-90,out=90] (Az);
\node[draw,rectangle, thick, rounded corners=5pt] (mu2) at (.75,2.25) {$\scriptstyle \mu^A_{a_2,b_2 \xz c_2}$};
\draw (Ax) to[in=-90,out=90] (mu2.-120);
\draw (Ay) to[in=-90,out=90] (mu1.-120);
\draw[double] (mu1) to[in=-90,out=90] (mu2.-60);
\draw (Az) to[in=-90,out=90] (mu1.-60);
\draw[triple] (mu2) to[in=-90,out=90] (d);
}
\arrow[Rightarrow,r,"\phi"]
&
\tikzmath{
\node (a) at (0,-2.5) {$\scriptstyle A(a_1)$};
\node (b) at (1,-2.5) {$\scriptstyle A(b_1)$};
\node (c) at (2,-2.5) {$\scriptstyle A(c_1)$};
\node (d) at (.75,3.25) {$\scriptstyle A(c\xz b \xz c)$};
\node[draw,rectangle, thick, rounded corners=5pt] (Ax) at (0,1) {$\scriptstyle A(x)$};
\node[draw,rectangle, thick, rounded corners=5pt] (Ay) at (1,-.75) {$\scriptstyle A(y)$};
\node[draw,rectangle, thick, rounded corners=5pt] (Az) at (2,-1.5) {$\scriptstyle A(z)$};
\node[draw,rectangle, thick, rounded corners=5pt] (mu1) at (1.5,.3) {$\scriptstyle \mu^A_{b_2,c_2}$};
\draw[dashed, thick, rounded corners=5pt] (.4,.8) rectangle (2.6,-1.8);
\draw (a) to[in=-90,out=90] (Ax);
\draw (b) to[in=-90,out=90] (Ay);
\draw (c) to[in=-90,out=90] (Az);
\node[draw,rectangle, thick, rounded corners=5pt] (mu2) at (.75,2.25) {$\scriptstyle \mu^A_{a_2,b_2 \xz c_2}$};
\draw (Ax) to[in=-90,out=90] (mu2.-120);
\draw (Ay) to[in=-90,out=90] (mu1.-120);
\draw[double] (mu1) to[in=-90,out=90] (mu2.-60);
\draw (Az) to[in=-90,out=90] (mu1.-60);
\draw[triple] (mu2) to[in=-90,out=90] (d);
}
\arrow[Rightarrow,r,"\mu^A_{y, z}"]
&
\tikzmath{
\node (a) at (0,-2.5) {$\scriptstyle A(a_1)$};
\node (b) at (1,-2.5) {$\scriptstyle A(b_1)$};
\node (c) at (2,-2.5) {$\scriptstyle A(c_1)$};
\node (d) at (.75,3.25) {$\scriptstyle A(c\xz b \xz c)$};
\node[draw,rectangle, thick, rounded corners=5pt] (Ax) at (0,1) {$\scriptstyle A(x)$};
\node[draw,rectangle, thick, rounded corners=5pt] (Ayz) at (1.5,0) {$\scriptstyle A(y\xz z)$};
\node[draw,rectangle, thick, rounded corners=5pt] (mu1) at (1.5,-1) {$\scriptstyle \mu^A_{b_1,c_1}$};
\draw[dashed, thick, rounded corners=5pt] (-.6,-.4) rectangle (2.3,2.75);
\draw (a) to[in=-90,out=90] (Ax);
\draw (b) to[in=-90,out=90] (mu1.-120);
\draw (c) to[in=-90,out=90] (mu1.-60);
\node[draw,rectangle, thick, rounded corners=5pt] (mu2) at (.75,2.25) {$\scriptstyle \mu^A_{a_2,b_2 \xz c_2}$};
\draw (Ayz) to[in=-90,out=90] (mu2.-60);
\draw (Ax) to[in=-90,out=90] (mu2.-120);
\draw[double] (mu1) to[in=-90,out=90] (Ayz);
\draw[triple] (mu2) to[in=-90,out=90] (d);
}
\arrow[Rightarrow,r,"\mu^A_{x, y\xz z}"]
&
\tikzmath{
\node (a) at (0,-1.5) {$\scriptstyle A(a_1)$};
\node (b) at (1,-1.5) {$\scriptstyle A(b_1)$};
\node (c) at (2,-1.5) {$\scriptstyle A(c_1)$};
\node (d) at (.75,4.25) {$\scriptstyle A(a_2\xz b_2 \xz c_2)$};
\node[draw,rectangle, thick, rounded corners=5pt] (mu1) at (1.5,0) {$\scriptstyle \mu^A_{b_1,c_1}$};
\node[draw,rectangle, thick, rounded corners=5pt] (mu2) at (.75,1.5) {$\scriptstyle \mu^A_{a_1, b_1 \xz c_1}$};
\node[draw,rectangle, thick, rounded corners=5pt] (Axyz) at (.75,2.75) {$\scriptstyle A(x\otimes y\otimes z)$};
\draw[double] (mu1) to[in=-90,out=90] (mu2.-60);
\draw (a) to[in=-90,out=90] (mu2.-120);
\draw (b) to[in=-90,out=90] (mu1.-120);
\draw (c) to[in=-90,out=90] (mu1.-60);
\draw[triple] (mu2) to[in=-90,out=90] (Axyz);
\draw[triple] (Axyz) to[in=-90,out=90] (d);
}
\end{tikzcd}};\end{tikzpicture}
$$

\item
\label{Functor:lr}
invertible unitor 2-modifications $\ell^A$ and $r^A$, i.e., for each $c\in \cC$, invertible 2-cells
$$
\tikzmath{
\node (c) at (.8,0) {$\scriptstyle A(c)$};
\node (d) at (.4,3) {$\scriptstyle A(c)$};
\node[draw,rectangle, thick, rounded corners=5pt] (iota) at (0,1) {$\scriptstyle \iota^A_* $};
\node[draw,rectangle, thick, rounded corners=5pt] (mu1) at (.4,2) {$\scriptstyle \mu^A_{1_\cC,c}$};
\draw (iota) to[in=-90,out=90] node[left] {$\scriptstyle A(1_\cC)$} (mu1.-120);
\draw (c) to[in=-90,out=90] (mu1.-60);
\draw (mu1) to[in=-90,out=90] (d);
}
\overset{\ell_c^A}{\Rightarrow}
\tikzmath{
\node (c) at (0,0) {$\scriptstyle A(c)$};
\node (d) at (0,3) {$\scriptstyle A(c)$};
\draw (c) to[in=-90,out=90] (d);
}
\overset{r_c^A}{\Leftarrow}
\tikzmath{
\node (c) at (0,0) {$\scriptstyle A(c)$};
\node (d) at (.4,3) {$\scriptstyle A(c)$};
\node[draw,rectangle, thick, rounded corners=5pt] (iota) at (.8,1) {$\scriptstyle \iota^A_* $};
\node[draw,rectangle, thick, rounded corners=5pt] (mu1) at (.4,2) {$\scriptstyle \mu^A_{1_\cC,c}$};
\draw (iota) to[in=-90,out=90] node[right] {$\scriptstyle A(1_\cC)$} (mu1.-60);
\draw (c) to[in=-90,out=90] (mu1.-120);
\draw (mu1) to[in=-90,out=90] (d);
}
$$
The fact that $\ell$ and $r$ are $2$-modifications means that
for all $x \in \cC(a\to b)$, the following diagram commutes:
$$
\begin{tikzpicture}[baseline= (a).base]\node[scale=.8] (a) at (0,0){\begin{tikzcd}
\tikzmath{
\coordinate (c2) at (0,1);
\node (a) at (1,-1.5) {$\scriptstyle A(a)$};
\node (b) at (.5,3) {$\scriptstyle A(b)$};
\coordinate (Ax2) at (1,1);
\node[draw,rectangle, thick, rounded corners=5pt] (iota) at (0,.1) {$\scriptstyle \iota^A_* $};
\node[draw,rectangle, thick, rounded corners=5pt] (Ax) at (1,-.6) {$\scriptstyle A(x) $};
\node[draw,rectangle, thick, rounded corners=5pt] (mu1) at (.5,2) {$\scriptstyle \mu^A_{1_\cC,b}$};
\draw (iota) to[in=-90,out=90] (c2)  to[in=-90,out=90] (mu1.-120);
\draw[dashed, blue, thick, rounded corners=5pt] (-.3,.8) rectangle (.3,1.2);
\draw[dashed, red, thick, rounded corners=5pt] (-.5,-.15) rectangle (1.6,2.5);
\draw (a) to[in=-90,out=90] (Ax);
\draw (Ax) to[in=-90,out=90] (Ax2) to[in=-90,out=90] (mu1.-60);
\draw (mu1) to[in=-90,out=90] (b);
}
\arrow[Rightarrow,r,red,"\ell_b"]
\arrow[Rightarrow,d,blue,"\iota^A_1"]
&
\tikzmath{
\node (a) at (0,0) {$\scriptstyle A(a)$};
\node (b) at (0,2) {$\scriptstyle A(b)$};
\node[draw,rectangle, thick, rounded corners=5pt] (Ax) at (0,1) {$\scriptstyle A(x) $};
\draw (a) to[in=-90,out=90] (Ax);
\draw (Ax) to[in=-90,out=90] (b);
}
\\
\tikzmath{
\node (a) at (1,-1.5) {$\scriptstyle A(a)$};
\node (b) at (.5,3) {$\scriptstyle A(b)$};
\coordinate (Ax2) at (1,1);
\node[draw,rectangle, thick, rounded corners=5pt] (iota) at (0,.1) {$\scriptstyle \iota^A_* $};
\node[draw,rectangle, thick, rounded corners=5pt] (c2) at (0,1) {$\scriptstyle A(\id_{1_\cC}) $};
\node[draw,rectangle, thick, rounded corners=5pt] (Ax) at (1,-.6) {$\scriptstyle A(x) $};
\node[draw,rectangle, thick, rounded corners=5pt] (mu1) at (.5,2) {$\scriptstyle \mu^A_{1_\cC,b}$};
\draw (iota) to[in=-90,out=90] (c2);
\draw (c2) to[in=-90,out=90] (mu1.-120);
\draw[dashed, thick, rounded corners=5pt] (-.5,-1) rectangle (1.6,.6);
\draw (a) to[in=-90,out=90] (Ax);
\draw (Ax) to[in=-90,out=90] (Ax2) to[in=-90,out=90] (mu1.-60);
\draw (mu1) to[in=-90,out=90] (b);
}
\arrow[Rightarrow,r,"\phi"]
&
\tikzmath{
\node (a) at (1,-1.5) {$\scriptstyle A(a)$};
\node (b) at (.5,3) {$\scriptstyle A(b)$};
\coordinate (Ax2) at (1,1);
\node[draw,rectangle, thick, rounded corners=5pt] (iota) at (0,-.7) {$\scriptstyle \iota^A_* $};
\node[draw,rectangle, thick, rounded corners=5pt] (c2) at (0,1) {$\scriptstyle A(\id_{1_\cC}) $};
\node[draw,rectangle, thick, rounded corners=5pt] (Ax) at (1,.2) {$\scriptstyle A(x) $};
\node[draw,rectangle, thick, rounded corners=5pt] (mu1) at (.5,2) {$\scriptstyle \mu^A_{1_\cC,b}$};
\draw (iota) to[in=-90,out=90] (c2);
\draw (c2) to[in=-90,out=90] (mu1.-120);
\draw[dashed, thick, rounded corners=5pt] (-.8,-.15) rectangle (1.6,2.5);
\draw (a) to[in=-90,out=90] (Ax);
\draw (Ax) to[in=-90,out=90] (Ax2) to[in=-90,out=90] (mu1.-60);
\draw (mu1) to[in=-90,out=90] (b);
}
\arrow[Rightarrow,r,"\mu^A_{\id_{1_\cC},x}"]
&
\tikzmath{
\node (a) at (1,0) {$\scriptstyle A(a)$};
\node (b) at (.5,4) {$\scriptstyle A(b)$};
\node[draw,rectangle, thick, rounded corners=5pt] (Ax) at (.5,3) {$\scriptstyle A(x)$};
\node[draw,rectangle, thick, rounded corners=5pt] (iota) at (0,1) {$\scriptstyle \iota^A_* $};
\node[draw,rectangle, thick, rounded corners=5pt] (mu1) at (.5,2) {$\scriptstyle \mu^A_{1_\cC,a}$};
\draw[dashed, thick, rounded corners=5pt] (-.4,.7) rectangle (1.2,2.5);
\draw (iota) to[in=-90,out=90] (mu1.-120);
\draw (a) to[in=-90,out=90] (mu1.-60);
\draw (mu1) to[in=-90,out=90] (Ax);
\draw (Ax) to[in=-90,out=90] (b);
}
\arrow[Rightarrow,ul,"\ell_a"]
\end{tikzcd}};\end{tikzpicture}
$$
and a similar condition for $r$.
\end{enumerate}

This data is subject to the additional two coherence conditions c.f.~\cite[Def.~4.10]{MR3076451}:
\begin{enumerate}[label=(F-\arabic*)]
\item
\label{Functor:PentagonCoherence}
For all $a,b,c,d\in\cC$, the following diagram commutes:
$$
\begin{tikzpicture}[baseline= (a).base]\node[scale=.8] (a) at (0,0){\begin{tikzcd}
\tikzmath{
\node (a) at (0,0) {$\scriptstyle A(a)$};
\node (b) at (1,0) {$\scriptstyle A(b)$};
\node (c) at (2,0) {$\scriptstyle A(c)$};
\node (d) at (3,0) {$\scriptstyle A(d)$};
\node (e) at (2,4) {$\scriptstyle A(a\xz b\xz c\xz d)$};
\node[draw,rectangle, thick, rounded corners=5pt] (mu1) at (.5,1) {$\scriptstyle \mu^A_{a,b}$};
\node[draw,rectangle, thick, rounded corners=5pt] (mu2) at (1.25,2) {$\scriptstyle \mu^A_{a\xz b, c}$};
\node[draw,rectangle, thick, rounded corners=5pt] (mu3) at (2,3) {$\scriptstyle \mu^A_{a\xz b\xz c,d}$};
\draw[dashed, red, thick, rounded corners=5pt] (0,.5) rectangle (2,2.5);
\draw[dashed, blue, thick, rounded corners=5pt] (.5,1.5) rectangle (3,3.5);
\draw (a) to[in=-90,out=90] (mu1.-120);
\draw (b) to[in=-90,out=90] (mu1.-60);
\draw[double] (mu1) to[in=-90,out=90] (mu2.-120);
\draw (c) to[in=-90,out=90] (mu2.-60);
\draw[triple] (mu2) to[in=-90,out=90] (mu3.-120);
\draw (d) to[in=-90,out=90] (mu3.-60);
\draw[quadruple] (mu3) to[in=-90,out=90] (e);
}
\arrow[Rightarrow,rr,red,"\omega^A_{a,b,c}"]
\arrow[Rightarrow,d,blue,"\omega^A_{a\xz b,c,d}"]
&&
\tikzmath{
\node (a) at (0,0) {$\scriptstyle A(a)$};
\node (b) at (1,0) {$\scriptstyle A(b)$};
\node (c) at (2,0) {$\scriptstyle A(c)$};
\node (d) at (3,0) {$\scriptstyle A(d)$};
\node (e) at (2,4) {$\scriptstyle A(a\xz b\xz c\xz d)$};
\node[draw,rectangle, thick, rounded corners=5pt] (mu1) at (1.5,1) {$\scriptstyle \mu^A_{b,c}$};
\node[draw,rectangle, thick, rounded corners=5pt] (mu2) at (1.25,2) {$\scriptstyle \mu^A_{a,b\xz c}$};
\node[draw,rectangle, thick, rounded corners=5pt] (mu3) at (2,3) {$\scriptstyle \mu^A_{a\xz b\xz c,d}$};
\draw[dashed, thick, rounded corners=5pt] (.5,1.5) rectangle (3,3.5);
\draw (a) to[in=-90,out=90] (mu2.-120);
\draw (b) to[in=-90,out=90] (mu1.-120);
\draw[double] (mu1) to[in=-90,out=90] (mu2.-60);
\draw (c) to[in=-90,out=90] (mu1.-60);
\draw[triple] (mu2) to[in=-90,out=90] (mu3.-120);
\draw (d) to[in=-90,out=90] (mu3.-60);
\draw[quadruple] (mu3) to[in=-90,out=90] (e);
}
\arrow[Rightarrow,rr,"\omega^A_{a,b\xz c, d}"]
&&
\tikzmath{
\node (a) at (0,0) {$\scriptstyle A(a)$};
\node (b) at (1,0) {$\scriptstyle A(b)$};
\node (c) at (2,0) {$\scriptstyle A(c)$};
\node (d) at (3,0) {$\scriptstyle A(d)$};
\node (e) at (1,4) {$\scriptstyle A(a\xz b\xz c\xz d)$};
\node[draw,rectangle, thick, rounded corners=5pt] (mu1) at (1.5,1) {$\scriptstyle \mu^A_{b,c}$};
\node[draw,rectangle, thick, rounded corners=5pt] (mu2) at (1.75,2) {$\scriptstyle \mu^A_{b\xz c, d}$};
\node[draw,rectangle, thick, rounded corners=5pt] (mu3) at (1,3) {$\scriptstyle \mu^A_{a,b\xz c\xz d}$};
\draw[dashed, thick, rounded corners=5pt] (1,.5) rectangle (3,2.5);
\draw (a) to[in=-90,out=90] (mu3.-120);
\draw (b) to[in=-90,out=90] (mu1.-120);
\draw[double] (mu1) to[in=-90,out=90] (mu2.-120);
\draw (c) to[in=-90,out=90] (mu1.-60);
\draw[triple] (mu2) to[in=-90,out=90] (mu3.-60);
\draw (d) to[in=-90,out=90] (mu2.-60);
\draw[quadruple] (mu3) to[in=-90,out=90] (e);
}
\arrow[Rightarrow,d,"\omega^A_{b,c,d}"]
\\
\tikzmath{
\node (a) at (0,0) {$\scriptstyle A(a)$};
\node (b) at (1,0) {$\scriptstyle A(b)$};
\node (c) at (2,0) {$\scriptstyle A(c)$};
\node (d) at (3,0) {$\scriptstyle A(d)$};
\node (e) at (1.5,4) {$\scriptstyle A(a\xz b\xz c\xz d)$};
\node[draw,rectangle, thick, rounded corners=5pt] (mu1) at (.5,1.1) {$\scriptstyle \mu^A_{a,b}$};
\node[draw,rectangle, thick, rounded corners=5pt] (mu2) at (2.5,1.9) {$\scriptstyle \mu^A_{c,d}$};
\node[draw,rectangle, thick, rounded corners=5pt] (mu3) at (1.5,3) {$\scriptstyle \mu^A_{a\xz b,c\xz d}$};
\draw[dashed, thick, rounded corners=5pt] (0,.5) rectangle (3,2.5);
\draw (a) to[in=-90,out=90] (mu1.-120);
\draw (b) to[in=-90,out=90] (mu1.-60);
\draw[double] (mu1) to[in=-90,out=90] (mu3.-120);
\draw (c) to[in=-90,out=90] (mu2.-120);
\draw[double] (mu2) to[in=-90,out=90] (mu3.-60);
\draw (d) to[in=-90,out=90] (mu2.-60);
\draw[quadruple] (mu3) to[in=-90,out=90] (e);
}
\arrow[Rightarrow,rr,"\phi^{-1}"]
&&
\tikzmath{
\node (a) at (0,0) {$\scriptstyle A(a)$};
\node (b) at (1,0) {$\scriptstyle A(b)$};
\node (c) at (2,0) {$\scriptstyle A(c)$};
\node (d) at (3,0) {$\scriptstyle A(d)$};
\node (e) at (1.5,4) {$\scriptstyle A(a\xz b\xz c\xz d)$};
\node[draw,rectangle, thick, rounded corners=5pt] (mu1) at (.5,1.9) {$\scriptstyle \mu^A_{a,b}$};
\node[draw,rectangle, thick, rounded corners=5pt] (mu2) at (2.5,1) {$\scriptstyle \mu^A_{c,d}$};
\node[draw,rectangle, thick, rounded corners=5pt] (mu3) at (1.5,3) {$\scriptstyle \mu^A_{a\xz b,c\xz d}$};
\draw[dashed, thick, rounded corners=5pt] (0,1.5) rectangle (2.7,3.5);
\draw (a) to[in=-90,out=90] (mu1.-120);
\draw (b) to[in=-90,out=90] (mu1.-60);
\draw[double] (mu1) to[in=-90,out=90] (mu3.-120);
\draw (c) to[in=-90,out=90] (mu2.-120);
\draw[double] (mu2) to[in=-90,out=90] (mu3.-60);
\draw (d) to[in=-90,out=90] (mu2.-60);
\draw[quadruple] (mu3) to[in=-90,out=90] (e);
}
\arrow[Rightarrow,rr,"\omega^A_{a,b,c\xz d}"]
&&
\tikzmath{
\node (a) at (0,0) {$\scriptstyle A(a)$};
\node (b) at (1,0) {$\scriptstyle A(b)$};
\node (c) at (2,0) {$\scriptstyle A(c)$};
\node (d) at (3,0) {$\scriptstyle A(d)$};
\node (e) at (1,4) {$\scriptstyle A(a\xz b\xz c\xz d)$};
\node[draw,rectangle, thick, rounded corners=5pt] (mu1) at (2.5,1) {$\scriptstyle \mu^A_{c,d}$};
\node[draw,rectangle, thick, rounded corners=5pt] (mu2) at (1.75,2) {$\scriptstyle \mu^A_{b,c\xz d}$};
\node[draw,rectangle, thick, rounded corners=5pt] (mu3) at (1,3) {$\scriptstyle \mu^A_{a,b\xz c\xz d}$};
\draw (a) to[in=-90,out=90] (mu3.-120);
\draw (b) to[in=-90,out=90] (mu2.-120);
\draw[double] (mu1) to[in=-90,out=90] (mu2.-60);
\draw (c) to[in=-90,out=90] (mu1.-120);
\draw[triple] (mu2) to[in=-90,out=90] (mu3.-60);
\draw (d) to[in=-90,out=90] (mu1.-60);
\draw[quadruple] (mu3) to[in=-90,out=90] (e);
}
\end{tikzcd}};\end{tikzpicture}
$$

\item 
\label{Functor:TriangleCoherence}
For all $a,b,c\in\cC$, the following diagram commutes:
$$
\begin{tikzpicture}[baseline= (a).base]\node[scale=.8] (a) at (0,0){\begin{tikzcd}
\tikzmath{
\node (a) at (0,-1) {$\scriptstyle A(a)$};
\node (c) at (2,-1) {$\scriptstyle A(b)$};
\node (d) at (1.25,3) {$\scriptstyle A(a\xz b)$};
\node[draw,rectangle, thick, rounded corners=5pt] (iota) at (1,0) {$\scriptstyle \iota^A_*$};
\node[draw,rectangle, thick, rounded corners=5pt] (mu1) at (.5,1) {$\scriptstyle \mu^A_{a,1_\cC}$};
\draw (a) to[in=-90,out=90] (mu1.-120);
\draw (iota) to[in=-90,out=90] (mu1.-60);
\node[draw,rectangle, thick, rounded corners=5pt] (mu2) at (1.25,2) {$\scriptstyle \mu^A_{a,b}$};
\draw[dashed, red, thick, rounded corners=5pt] (-.1,.5) rectangle (1.8,2.5);
\draw[dashed, blue, thick, rounded corners=5pt] (-.1,-.4) rectangle (1.4,1.5);
\draw[double] (mu1) to[in=-90,out=90] (mu2.-120);
\draw (c) to[in=-90,out=90] (mu2.-60);
\draw[triple] (mu2) to[in=-90,out=90] (d);
}
\arrow[Rightarrow,rr,red,"\omega^A_{a,1_\cC,b}"]
\arrow[Rightarrow,dr,blue,"r_a"]
&&
\tikzmath{
\node (a) at (0,-1) {$\scriptstyle A(a)$};
\node (c) at (2,-1) {$\scriptstyle A(b)$};
\node (d) at (.75,3) {$\scriptstyle A(a\xz b)$};
\node[draw,rectangle, thick, rounded corners=5pt] (mu1) at (1.5,1) {$\scriptstyle \mu^A_{1_\cC,b}$};
\node[draw,rectangle, thick, rounded corners=5pt] (iota) at (1,0) {$\scriptstyle \iota^A_*$};
\draw (iota) to[in=-90,out=90] (mu1.-120);
\draw (c) to[in=-90,out=90] (mu1.-60);
\node[draw,rectangle, thick, rounded corners=5pt] (mu2) at (.75,2) {$\scriptstyle \mu^A_{a, b}$};
\draw[dashed, thick, rounded corners=5pt] (.6,-.4) rectangle (2.1,1.5);
\draw[double] (mu1) to[in=-90,out=90] (mu2.-60);
\draw (a) to[in=-90,out=90] (mu2.-120);
\draw[triple] (mu2) to[in=-90,out=90] (d);
}
\arrow[Rightarrow,dl,"\ell_b"]
\\
&
\tikzmath{
\node (a) at (0,0) {$\scriptstyle A(a)$};
\node (c) at (1,0) {$\scriptstyle A(b)$};
\node (d) at (.5,2) {$\scriptstyle A(a\xz b)$};
\node[draw,rectangle, thick, rounded corners=5pt] (mu1) at (.5,1) {$\scriptstyle \mu^A_{a,b}$};
\draw (a) to[in=-90,out=90] (mu1.-120);
\draw (c) to[in=-90,out=90] (mu1.-60);
\draw[double] (mu1) to[in=-90,out=90] (d);
}
\end{tikzcd}};\end{tikzpicture}
$$
\end{enumerate}
\end{defn}

\subsection{Transformations between functors of $\Gray$-monoids}
\begin{defn}
\label{defn:Transformation}
Suppose $\cC,\cD$ are $\Gray$-monoids and $A,B: \rmB\cC \to \rmB\cD$ are 3-functors.
A transformation $\eta: A \Rightarrow B$ consists of:
\begin{enumerate}[label=(T-\Roman*)]
\item
\label{Transformation:eta_*}
An object $\eta_* \in \cD$, which we depict by an oriented \textcolor{DarkGreen}{green} strand:
$
\tikzmath{
\node[DarkGreen] (eta1) at (0,0) {$\scriptstyle \eta_*$};
\node (eta2) at (0,1) {};
\draw[DarkGreen,thick,mid>] (eta1) to[in=-90,out=90] (eta2);
}
$
\item
\label{Transformation:eta_c}
An adjoint equivalence $\eta:\cD(\id,\eta_*)\circ A \Rightarrow \cD(\eta_*, \id) \circ B$ in the $2$-category of $2$-functors $\cC \to \cD$.
Explicitly, this is given by, for each $c\in \cC$, an adjoint equivalence $1$-cell $\eta_c : \eta_* \xz A(c) \Rightarrow B(c) \xz \eta_*$ which we depict as a crossing
$$
\eta_c
=
\tikzmath{
\node[DarkGreen] (eta1) at (0,0) {$\scriptstyle \eta_*$};
\node (Ac) at (1,0) {$\scriptstyle A(c)$};
\node[DarkGreen] (eta2) at (1,2) {$\scriptstyle \eta_*$};
\node (Bc) at (0,2) {$\scriptstyle B(c)$};
\node[draw,rectangle, thick, rounded corners=5pt] (etac) at (.5,1) {$\scriptstyle \eta_c$};
\draw[DarkGreen,thick,mid>] (eta1) to[in=-90,out=90] (etac.-120);
\draw (Ac) to[in=-90,out=90] (etac.-60);
\draw[DarkGreen,thick,mid>] (etac.60) to[in=-90,out=90] (eta2);
\draw (etac.120) to[in=-90,out=90] (Bc);
}
=:
\tikzmath{
\node[DarkGreen] (eta1) at (0,0) {$\scriptstyle \eta_*$};
\node (Ac) at (1,0) {$\scriptstyle A(c)$};
\node[DarkGreen] (eta2) at (1,2) {$\scriptstyle \eta_*$};
\node (Bc) at (0,2) {$\scriptstyle B(c)$};
\draw[DarkGreen,thick,mid>] (eta1) to[in=-90,out=90] (eta2);
\draw (Ac) to[in=-90,out=90] (Bc);
}
\,,
$$
together with, for each $x\in \cC(a\to b)$, invertible 2-cells
$$ 
\tikzmath{
\node[DarkGreen] (eta1) at (0,-1) {$\scriptstyle \eta_*$};
\node (Ac) at (1,-1) {$\scriptstyle A(a)$};
\coordinate (eta2) at (0,0);
\node (Bc) at (0,2) {$\scriptstyle B(b)$};
\node[DarkGreen] (eta3) at (1,2) {$\scriptstyle \eta_*$};
\node[draw,rectangle, thick, rounded corners=5pt] (Af) at (1,0) {$\scriptstyle A(x)$};
\draw (Ac) to[in=-90,out=90] (Af);
\draw[DarkGreen,thick,mid>] (eta1) to[in=-90,out=90] (eta2) to[in=-90,out=90] (eta3);
\draw (Af) node [right,yshift=.4cm] {$\scriptstyle A(b)$} to[in=-90,out=90] (Bc);
}
\overset{\eta_x}{\Rightarrow}
\tikzmath{
\node[DarkGreen] (eta1) at (0,0) {$\scriptstyle \eta_*$};
\node (Ac) at (1,0) {$\scriptstyle A(a)$};
\coordinate (eta2) at (1,2);
\node (Bc) at (0,3) {$\scriptstyle B(b)$};
\node[DarkGreen] (eta3) at (1,3) {$\scriptstyle \eta_*$};
\node[draw,rectangle, thick, rounded corners=5pt] (Bf) at (0,2) {$\scriptstyle B(x)$};
\draw (Ac) to[in=-90,out=90] (Bf);
\draw[DarkGreen,thick,mid>] (eta1) to[in=-90,out=90] (eta2) to[in=-90,out=90] (eta3);
\draw (Bf) node [left,yshift=-.4cm] {$\scriptstyle A(b)$} to[in=-90,out=90] (Bc);
}
\,.
$$
The fact that $\eta$ is a $2$-natural transformation means that:
\begin{enumerate}[label=\ref{Transformation:eta_c}.\roman*]
\item 
\label{Transformation:eta_c.natural}
For every $x,y\in \cC(a\to b)$ and $f\in\cC(x\Rightarrow y)$,
the following diagram commutes:
$$
\begin{tikzpicture}[baseline= (a).base]\node[scale=.8] (a) at (0,0){\begin{tikzcd}
\tikzmath{
\node[DarkGreen] (eta1) at (0,-1) {$\scriptstyle \eta_*$};
\node (Ac) at (1,-1) {$\scriptstyle A(a)$};
\coordinate (eta2) at (0,0);
\node (Bc) at (0,2) {$\scriptstyle B(b)$};
\node[DarkGreen] (eta3) at (1,2) {$\scriptstyle \eta_*$};
\node[draw,rectangle, thick, rounded corners=5pt] (Af) at (1,0) {$\scriptstyle A(x)$};
\draw[dashed, red, thick, rounded corners=5pt] (.4,.45) rectangle (1.6,-.3);
\draw (Ac) to[in=-90,out=90] (Af);
\draw[DarkGreen,thick,mid>] (eta1) to[in=-90,out=90] (eta2) to[in=-90,out=90] (eta3);
\draw (Af) to[in=-90,out=90] (Bc);
}
\arrow[Rightarrow,r,"\eta_x"]
\arrow[Rightarrow,d,red,"A(f)"]
&
\tikzmath{
\node[DarkGreen] (eta1) at (0,0) {$\scriptstyle \eta_*$};
\node (Ac) at (1,0) {$\scriptstyle A(a)$};
\coordinate (eta2) at (1,2);
\node (Bc) at (0,3) {$\scriptstyle B(b)$};
\node[DarkGreen] (eta3) at (1,3) {$\scriptstyle \eta_*$};
\node[draw,rectangle, thick, rounded corners=5pt] (Bf) at (0,2) {$\scriptstyle B(x)$};
\draw[dashed, thick, rounded corners=5pt] (-.6,2.45) rectangle (.6,1.7);
\draw (Ac) to[in=-90,out=90] (Bf);
\draw[DarkGreen,thick,mid>] (eta1) to[in=-90,out=90] (eta2) to[in=-90,out=90] (eta3);
\draw (Bf) to[in=-90,out=90] (Bc);
}
\arrow[Rightarrow,d,"B(f)"]
\\
\tikzmath{
\node[DarkGreen] (eta1) at (0,-1) {$\scriptstyle \eta_*$};
\node (Ac) at (1,-1) {$\scriptstyle A(a)$};
\coordinate (eta2) at (0,0);
\node (Bc) at (0,2) {$\scriptstyle B(b)$};
\node[DarkGreen] (eta3) at (1,2) {$\scriptstyle \eta_*$};
\node[draw,rectangle, thick, rounded corners=5pt] (Af) at (1,0) {$\scriptstyle A(y)$};
\draw (Ac) to[in=-90,out=90] (Af);
\draw[DarkGreen,thick,mid>] (eta1) to[in=-90,out=90] (eta2) to[in=-90,out=90] (eta3);
\draw (Af) to[in=-90,out=90] (Bc);
}
\arrow[Rightarrow,r,"\eta_y"]
&
\tikzmath{
\node[DarkGreen] (eta1) at (0,0) {$\scriptstyle \eta_*$};
\node (Ac) at (1,0) {$\scriptstyle A(a)$};
\coordinate (eta2) at (1,2);
\node (Bc) at (0,3) {$\scriptstyle B(b)$};
\node[DarkGreen] (eta3) at (1,3) {$\scriptstyle \eta_*$};
\node[draw,rectangle, thick, rounded corners=5pt] (Bf) at (0,2) {$\scriptstyle B(y)$};
\draw (Ac) to[in=-90,out=90] (Bf);
\draw[DarkGreen,thick,mid>] (eta1) to[in=-90,out=90] (eta2) to[in=-90,out=90] (eta3);
\draw (Bf) to[in=-90,out=90] (Bc);
}
\end{tikzcd}};\end{tikzpicture}
$$
\item
\label{Transformation:eta_c.monoidal}
For every $x\in \cC(a\to b)$ and $y\in\cC(b\Rightarrow c)$,
the following diagram commutes:
$$
\begin{tikzpicture}[baseline= (a).base]\node[scale=.8] (a) at (0,0){\begin{tikzcd}
\tikzmath{
\node[DarkGreen] (eta1) at (0,-2) {$\scriptstyle \eta_*$};
\node (Aa) at (1,-2) {$\scriptstyle A(a)$};
\coordinate (eta2) at (0,0);
\node (Bc) at (0,2) {$\scriptstyle B(c)$};
\node[DarkGreen] (eta3) at (1,2) {$\scriptstyle \eta_*$};
\node[draw,rectangle, thick, rounded corners=5pt] (Ay) at (1,0) {$\scriptstyle A(y)$};
\node[draw,rectangle, thick, rounded corners=5pt] (Ax) at (1,-1) {$\scriptstyle A(x)$};
\draw[dashed, red, thick, rounded corners=5pt] (-.4,1.4) rectangle (1.6,-.4);
\draw[dashed, blue, thick, rounded corners=5pt] (.4,-1.4) rectangle (1.6,.5);
\draw (Aa) to[in=-90,out=90] (Ax);
\draw (Ax) to[in=-90,out=90] (Ay);
\draw[DarkGreen,thick,mid>] (eta1) to[in=-90,out=90] (eta2) to[in=-90,out=90] (eta3);
\draw (Ay) to[in=-90,out=90] (Bc);
}
\arrow[Rightarrow,r,red,"\eta_y"]
\arrow[Rightarrow,d,blue,"A^2_{y,x}"]
&
\tikzmath{
\node[DarkGreen] (eta1) at (0,-1) {$\scriptstyle \eta_*$};
\node (Ac) at (1,-1) {$\scriptstyle A(a)$};
\coordinate (eta2) at (0,0);
\coordinate (eta3) at (1,2);
\node (Bc) at (0,3) {$\scriptstyle B(b)$};
\node[DarkGreen] (eta4) at (1,3) {$\scriptstyle \eta_*$};
\node[draw,rectangle, thick, rounded corners=5pt] (Bf) at (0,2) {$\scriptstyle B(y)$};
\node[draw,rectangle, thick, rounded corners=5pt] (Ax) at (1,0) {$\scriptstyle A(x)$};
\draw[dashed, thick, rounded corners=5pt] (-.3,-.4) rectangle (1.6,1.3);
\draw (Ac) to[in=-90,out=90] (Ax);
\draw (Ax) to[in=-90,out=90] (Bf);
\draw[DarkGreen,thick,mid>] (eta1) to[in=-90,out=90] (eta2) to[in=-90,out=90] (eta3) to[in=-90,out=90] (eta4);
\draw (Bf) to[in=-90,out=90] (Bc);
}
\arrow[Rightarrow,r,"\eta_x"]
&
\tikzmath{
\node[DarkGreen] (eta1) at (0,-1) {$\scriptstyle \eta_*$};
\node (Ac) at (1,-1) {$\scriptstyle A(a)$};
\coordinate (eta2) at (1,1);
\node (Bc) at (0,3) {$\scriptstyle B(b)$};
\node[DarkGreen] (eta3) at (1,3) {$\scriptstyle \eta_*$};
\node[draw,rectangle, thick, rounded corners=5pt] (Bf) at (0,2) {$\scriptstyle B(y)$};
\node[draw,rectangle, thick, rounded corners=5pt] (Ax) at (0,1) {$\scriptstyle B(x)$};
\draw[dashed, thick, rounded corners=5pt] (-.6,.6) rectangle (.6,2.5);
\draw (Ac) to[in=-90,out=90] (Ax);
\draw (Ax) to[in=-90,out=90] (Bf);
\draw[DarkGreen,thick,mid>] (eta1) to[in=-90,out=90] (eta2) to[in=-90,out=90] (eta3);
\draw (Bf) to[in=-90,out=90] (Bc);
}
\arrow[Rightarrow,dl,"B^2_{y,x}"]
\\
\tikzmath{
\node[DarkGreen] (eta1) at (0,-1) {$\scriptstyle \eta_*$};
\node (Ac) at (1,-1) {$\scriptstyle A(a)$};
\coordinate (eta2) at (0,0);
\node (Bc) at (0,2) {$\scriptstyle B(c)$};
\node[DarkGreen] (eta3) at (1,2) {$\scriptstyle \eta_*$};
\node[draw,rectangle, thick, rounded corners=5pt] (Af) at (1,0) {$\scriptstyle A(y\xo x)$};
\draw (Ac) to[in=-90,out=90] (Af);
\draw[DarkGreen,thick,mid>] (eta1) to[in=-90,out=90] (eta2) to[in=-90,out=90] (eta3);
\draw (Af) to[in=-90,out=90] (Bc);
}
\arrow[Rightarrow,r,"\eta_{y\xo x}"]
&
\tikzmath{
\node[DarkGreen] (eta1) at (0,0) {$\scriptstyle \eta_*$};
\node (Ac) at (1,0) {$\scriptstyle A(a)$};
\coordinate (eta2) at (1,2);
\node (Bc) at (0,3) {$\scriptstyle B(b)$};
\node[DarkGreen] (eta3) at (1,3) {$\scriptstyle \eta_*$};
\node[draw,rectangle, thick, rounded corners=5pt] (Bf) at (0,2) {$\scriptstyle B(y\xo x)$};
\draw (Ac) to[in=-90,out=90] (Bf);
\draw[DarkGreen,thick,mid>] (eta1) to[in=-90,out=90] (eta2) to[in=-90,out=90] (eta3);
\draw (Bf) to[in=-90,out=90] (Bc);
}
\end{tikzcd}};\end{tikzpicture}
$$
\item
\label{Transformation:eta_c.unital}
For every $c\in \cC$, the following diagram commutes:
$$
\begin{tikzpicture}[baseline= (a).base]\node[scale=.8] (a) at (0,0){\begin{tikzcd}
\tikzmath{
\node[DarkGreen] (eta1) at (0,-1) {$\scriptstyle \eta_*$};
\coordinate (eta2) at (0,0);
\coordinate (eta3) at (1,2);
\node[DarkGreen] (eta4) at (1,3) {$\scriptstyle \eta_*$};
\node (A1) at (1,-1) {$\scriptstyle A(c)$};
\coordinate (A2) at (1,0);
\coordinate (B1) at (0,2);
\node (B2) at (0,3) {$\scriptstyle B(c)$};
\draw[dashed, thick, blue, rounded corners=5pt] (-.6,1.7) rectangle (.6,2.3);
\draw[dashed, thick, red, rounded corners=5pt] (.4,-.3) rectangle (1.6,.3);
\draw[DarkGreen,thick,mid>] (eta1) to[in=-90,out=90] (eta2) to[in=-90,out=90] (eta3) to[in=-90,out=90] (eta4);
\draw (A1) to[in=-90,out=90] (A2) to[in=-90,out=90] (B1) to[in=-90,out=90] (B2);
}
\arrow[Rightarrow,rr,red,"A^1_c"]
\arrow[Rightarrow,dr,blue,"B^1_c"]
&&
\tikzmath{
\node[DarkGreen] (eta1) at (0,-1) {$\scriptstyle \eta_*$};
\node (Ac) at (1,-1) {$\scriptstyle A(c)$};
\coordinate (eta2) at (0,0);
\node (Bc) at (0,2) {$\scriptstyle B(c)$};
\node[DarkGreen] (eta3) at (1,2) {$\scriptstyle \eta_*$};
\node[draw,rectangle, thick, rounded corners=5pt] (Af) at (1,0) {$\scriptstyle A(\id_{c})$};
\draw (Ac) to[in=-90,out=90] (Af);
\draw[DarkGreen,thick,mid>] (eta1) to[in=-90,out=90] (eta2) to[in=-90,out=90] (eta3);
\draw (Af) to[in=-90,out=90] (Bc);
}
\arrow[Rightarrow,dl,"\eta_{\id_c}"]
\\
&
\tikzmath{
\node[DarkGreen] (eta1) at (0,-1) {$\scriptstyle \eta_*$};
\node (Ac) at (1,-1) {$\scriptstyle A(c)$};
\coordinate (eta2) at (1,1);
\node (Bc) at (0,2) {$\scriptstyle B(c)$};
\node[DarkGreen] (eta3) at (1,2) {$\scriptstyle \eta_*$};
\node[draw,rectangle, thick, rounded corners=5pt] (Af) at (0,1) {$\scriptstyle B(\id_{c})$};
\draw (Ac) to[in=-90,out=90] (Af);
\draw[DarkGreen,thick,mid>] (eta1) to[in=-90,out=90] (eta2) to[in=-90,out=90] (eta3);
\draw (Af) to[in=-90,out=90] (Bc);
}
\end{tikzcd}
};\end{tikzpicture}
$$
Observe this immediately implies that $\eta_{\id_c} = B^1_c \xt (A^1_c)^{-1}$ for all $c\in \cC$.
\end{enumerate}

\item
\label{Transformation:eta^1}
A unit coherence invertible $2$-modification
$$
\tikzmath{
\node[DarkGreen] (eta1) at (0,-1) {$\scriptstyle \eta_*$};
\coordinate (eta2) at (0,0);
\node (Bc) at (0,2) {$\scriptstyle B(1_\cC)$};
\node[DarkGreen] (eta3) at (1,2) {$\scriptstyle \eta_*$};
\node[draw,rectangle, thick, rounded corners=5pt] (iA) at (1,0) {$\scriptstyle \iota^A_*$};
\draw[DarkGreen,thick,mid>] (eta1) to[in=-90,out=90] (eta2) to[in=-90,out=90] (eta3);
\draw (iA) node [right,yshift=.4cm] {$\scriptstyle A(1_\cC)$} to[in=-90,out=90] (Bc);
}
\overset{\eta^1}{\Rightarrow}
\tikzmath{
\node[DarkGreen] (eta1) at (0,-1) {$\scriptstyle \eta_*$};
\coordinate (eta2) at (1,1);
\coordinate (Bc1) at (0,1);
\node (Bc2) at (0,2) {$\scriptstyle B(1_\cC)$};
\node[DarkGreen] (eta3) at (1,2) {$\scriptstyle \eta_*$};
\node[draw,rectangle, thick, rounded corners=5pt] (iB) at (0,1) {$\scriptstyle \iota^B_*$};
\draw[DarkGreen,thick,mid>] (eta1) to[in=-90,out=90] (eta2) to[in=-90,out=90] (eta3);
\draw (iB) to[in=-90,out=90] (Bc);
}
$$
The 2-modification criterion for $\eta^1$ is automatically satisfied by \ref{Functor:iota} (which says $\iota^A_1=A^1_{1_\cC}$ and $\iota^B_1=B^1_{1_\cC}$) and \ref{Transformation:eta_c.unital} above.

\item
\label{Transformation:eta^2}
For every $a,b\in \cC$, a monoidality coherence invertible $2$-modification
$$
\tikzmath{
\node[DarkGreen] (eta1) at (0,-1) {$\scriptstyle \eta_*$};
\node (Aa) at (1,-1) {$\scriptstyle A(a)$};
\node (Ab) at (2,-1) {$\scriptstyle A(b)$};
\coordinate (eta2) at (0,0);
\node (Bc) at (0,2) {$\scriptstyle B(a\xz b)$};
\node[DarkGreen] (eta3) at (1.5,2) {$\scriptstyle \eta_*$};
\node[draw,rectangle, thick, rounded corners=5pt] (mu) at (1.5,0) {$\scriptstyle \mu^A_{a,b}$};
\draw (Aa) to[in=-90,out=90] (mu.-120);
\draw (Ab) to[in=-90,out=90] (mu.-60);
\draw[DarkGreen,thick,mid>] (eta1) to[in=-90,out=90] (eta2) to[in=-90,out=90] (eta3);
\draw[double] (mu) node [right,yshift=.4cm] {$\scriptstyle A(a\xz b)$} to[in=-90,out=90] (Bc);
}
\overset{\eta^2_{a,b}}{\Rightarrow}
\tikzmath{
\node[DarkGreen] (eta1) at (0,-1) {$\scriptstyle \eta_*$};
\node (Aa) at (1,-1) {$\scriptstyle A(a)$};
\node (Ab) at (2,-1) {$\scriptstyle A(b)$};
\coordinate (eta2) at (1.5,1);
\node (Bc) at (0,2) {$\scriptstyle B(a\xz b)$};
\node[DarkGreen] (eta3) at (1.5,2) {$\scriptstyle \eta_*$};
\node[draw,rectangle, thick, rounded corners=5pt] (mu) at (0,1) {$\scriptstyle \mu^B_{a,b}$};
\draw (Aa) to[in=-90,out=90] (mu.-120);
\draw (Ab) to[in=-90,out=90] (mu.-60);
\draw[DarkGreen,thick,mid>] (eta1) to[in=-90,out=90] (eta2) to[in=-90,out=90] (eta3);
\draw[double] (mu) node [left,yshift=-.4cm, xshift=-.1cm] {$\scriptstyle B(a)$} node [right, yshift=-.4cm, xshift=.1cm] {$\scriptstyle B(b)$} to[in=-90,out=90] (Bc);
}
$$
That $\eta^2$ is a 2-modification means that
for all $x\in \cC(a_1\to a_2)$ and $y\in \cC(b_1\to b_2)$,
$$
\begin{tikzpicture}[baseline= (a).base]\node[scale=.8] (a) at (0,0){\begin{tikzcd}
\tikzmath{
\node[DarkGreen] (eta0) at (0,-2.5) {$\scriptstyle \eta_*$};
\coordinate (eta1) at (0,-1);
\coordinate (eta2) at (0,0);
\node[DarkGreen] (eta3) at (1.5,2) {$\scriptstyle \eta_*$};
\node (Aa) at (1,-2.5) {$\scriptstyle A(a_1)$};
\node (Ab) at (2,-2.5) {$\scriptstyle A(b_1)$};
\node (Bc) at (0,2) {$\scriptstyle B(a_2\xz b_2)$};
\node[draw,rectangle, thick, rounded corners=5pt] (Ax) at (1,-1) {$\scriptstyle A(x)$};
\node[draw,rectangle, thick, rounded corners=5pt] (Ay) at (2,-1.6) {$\scriptstyle A(y)$};
\node[draw,rectangle, thick, rounded corners=5pt] (mu) at (1.5,0) {$\scriptstyle \mu^A_{a_2,b_2}$};
\draw[dashed, red, thick, rounded corners=5pt] (.4,-2) rectangle (2.6,.5);
\draw[dashed, blue, thick, rounded corners=5pt] (-.3,-.5) rectangle (2.3,1.5);
\draw (Aa) to[in=-90,out=90] (Ax);
\draw (Ab) to[in=-90,out=90] (Ay);
\draw (Ax) to[in=-90,out=90] (mu.-120);
\draw (Ay) to[in=-90,out=90] (mu.-60);
\draw[DarkGreen,thick,mid>] (eta0) to[in=-90,out=90] (eta1) to[in=-90,out=90] (eta2) to[in=-90,out=90] (eta3);
\draw[double] (mu) to[in=-90,out=90] (Bc);
}
\arrow[Rightarrow,d,blue,"\eta^2_{a_2,b_2}"]
\arrow[Rightarrow,r,red,"\mu^A_{x,y}"]
&
\tikzmath{
\node[DarkGreen] (eta0) at (0,-2) {$\scriptstyle \eta_*$};
\coordinate (eta1) at (0,-1);
\coordinate (eta2) at (0,0);
\node[DarkGreen] (eta3) at (1.5,2) {$\scriptstyle \eta_*$};
\node (Aa) at (1,-2) {$\scriptstyle A(a_1)$};
\node (Ab) at (2,-2) {$\scriptstyle A(b_1)$};
\node (Bc) at (0,2) {$\scriptstyle B(a_2\xz b_2)$};
\node[draw,rectangle, thick, rounded corners=5pt] (Axy) at (1.5,0) {$\scriptstyle A(x\xz y)$};
\node[draw,rectangle, thick, rounded corners=5pt] (mu) at (1.5,-1) {$\scriptstyle \mu^A_{a_1,b_1}$};
\draw[dashed, thick, rounded corners=5pt] (-.3,-.4) rectangle (2.3,1.5);
\draw (Aa) to[in=-90,out=90] (mu.-120);
\draw (Ab) to[in=-90,out=90] (mu.-60);
\draw (mu) to[in=-90,out=90] (Axy);
\draw[DarkGreen,thick,mid>] (eta0) to[in=-90,out=90] (eta1) to[in=-90,out=90] (eta2) to[in=-90,out=90] (eta3);
\draw[double] (Axy) to[in=-90,out=90] (Bc);
}
\arrow[Rightarrow,r,"\eta_{x\xz y}"]
&
\tikzmath{
\node[DarkGreen] (eta0) at (0,-2) {$\scriptstyle \eta_*$};
\coordinate (eta1) at (0,-1);
\node[DarkGreen] (eta2) at (1.5,2) {$\scriptstyle \eta_*$};
\node (Aa) at (1,-2) {$\scriptstyle A(a_1)$};
\node (Ab) at (2,-2) {$\scriptstyle A(b_1)$};
\node (Bc) at (0,2) {$\scriptstyle B(a_2\xz b_2)$};
\node[draw,rectangle, thick, rounded corners=5pt] (Axy) at (0,1) {$\scriptstyle B(x\xz y)$};
\node[draw,rectangle, thick, rounded corners=5pt] (mu) at (1.5,-1) {$\scriptstyle \mu^A_{a_1,b_1}$};
\draw[dashed, thick, rounded corners=5pt] (-.3,-1.4) rectangle (2.3,.5);
\draw (Aa) to[in=-90,out=90] (mu.-120);
\draw (Ab) to[in=-90,out=90] (mu.-60);
\draw (mu) to[in=-90,out=90] (Axy);
\draw[DarkGreen,thick,mid>] (eta0) to[in=-90,out=90] (eta1) to[in=-90,out=90] (eta2);
\draw[double] (Axy) to[in=-90,out=90] (Bc);
}
\arrow[Rightarrow,r,"\eta^2_{a_1,b_1}"]
&
\tikzmath{
\node[DarkGreen] (eta0) at (0,-2) {$\scriptstyle \eta_*$};
\coordinate (eta1) at (1.5,0);
\node[DarkGreen] (eta2) at (1.5,2) {$\scriptstyle \eta_*$};
\node (Aa) at (1,-2) {$\scriptstyle A(a_1)$};
\node (Ab) at (2,-2) {$\scriptstyle A(b_1)$};
\node (Bc) at (0,2) {$\scriptstyle B(a_2\xz b_2)$};
\node[draw,rectangle, thick, rounded corners=5pt] (Axy) at (0,1) {$\scriptstyle B(x\xz y)$};
\node[draw,rectangle, thick, rounded corners=5pt] (mu) at (0,0) {$\scriptstyle \mu^B_{a_1,b_1}$};
\draw (Aa) to[in=-90,out=90] (mu.-120);
\draw (Ab) to[in=-90,out=90] (mu.-60);
\draw (mu) to[in=-90,out=90] (Axy);
\draw[DarkGreen,thick,mid>] (eta0) to[in=-90,out=90] (eta1) to[in=-90,out=90] (eta2);
\draw[double] (Axy) to[in=-90,out=90] (Bc);
}
\\
\tikzmath{
\node[DarkGreen] (eta0) at (0,-2.5) {$\scriptstyle \eta_*$};
\coordinate (eta1) at (0,-1);
\node[DarkGreen] (eta2) at (2,3) {$\scriptstyle \eta_*$};
\node (Aa) at (1,-2.5) {$\scriptstyle A(a_1)$};
\node (Ab) at (2,-2.5) {$\scriptstyle A(b_1)$};
\node (Bc) at (.5,3) {$\scriptstyle B(a\xz b)$};
\node[draw,rectangle, thick, rounded corners=5pt] (mu) at (.5,2) {$\scriptstyle \mu^B_{a_2,b_2}$};
\node[draw,rectangle, thick, rounded corners=5pt] (Ax) at (1,-1) {$\scriptstyle A(x)$};
\node[draw,rectangle, thick, rounded corners=5pt] (Ay) at (2,-1.6) {$\scriptstyle A(y)$};
\coordinate (Ay2) at (2,0);
\draw[dashed, thick, rounded corners=5pt] (-.3,-1.3) rectangle (1.6,.7);
\draw (Aa) to[in=-90,out=90] (Ax);
\draw (Ab) to[in=-90,out=90] (Ay);
\draw (Ax) to[in=-90,out=90] (mu.-120);
\draw (Ay) to[in=-90,out=90] (Ay2) to[in=-90,out=90] (mu.-60);
\draw[DarkGreen,thick,mid>] (eta0) to[in=-90,out=90] (eta1) to[in=-90,out=90] (eta2);
\draw[double] (mu) to[in=-90,out=90] (Bc);
}
\arrow[Rightarrow,r,"\eta_x"]
&
\tikzmath{
\node[DarkGreen] (eta0) at (0,-2.5) {$\scriptstyle \eta_*$};
\coordinate (eta1) at (1,0);
\coordinate (eta2) at (2,2);
\node[DarkGreen] (eta3) at (2,3) {$\scriptstyle \eta_*$};
\node (Aa) at (1,-2.5) {$\scriptstyle A(a_1)$};
\node (Ab) at (2,-2.5) {$\scriptstyle A(b_1)$};
\node (Bc) at (.5,3) {$\scriptstyle B(a\xz b)$};
\node[draw,rectangle, thick, rounded corners=5pt] (mu) at (.5,2) {$\scriptstyle \mu^B_{a_2,b_2}$};
\node[draw,rectangle, thick, rounded corners=5pt] (Ax) at (0,0) {$\scriptstyle B(x)$};
\node[draw,rectangle, thick, rounded corners=5pt] (Ay) at (2,-1.6) {$\scriptstyle A(y)$};
\coordinate (Ay2) at (2,0);
\draw[dashed, thick, rounded corners=5pt] (-.6,-1.9) rectangle (2.6,1.2);
\draw (Aa) to[in=-90,out=90] (Ax);
\draw (Ab) to[in=-90,out=90] (Ay);
\draw (Ax) to[in=-90,out=90] (mu.-120);
\draw (Ay) to[in=-90,out=90] (Ay2) to[in=-90,out=90] (mu.-60);
\draw[DarkGreen,thick,mid>] (eta0) to[in=-90,out=90] (eta1) to[in=-90,out=90] (eta2) to[in=-90,out=90] (eta3);
\draw[double] (mu) to[in=-90,out=90] (Bc);
}
\arrow[Rightarrow,r, "\phi^{-1}\xo \phi "]
&
\tikzmath{
\node[DarkGreen] (eta0) at (0,-2.5) {$\scriptstyle \eta_*$};
\coordinate (eta2) at (2,1.7);
\node[DarkGreen] (eta3) at (2,3) {$\scriptstyle \eta_*$};
\node (Aa) at (1,-2.5) {$\scriptstyle A(a_1)$};
\node (Ab) at (2,-2.5) {$\scriptstyle A(b_1)$};
\node (Bc) at (.5,3) {$\scriptstyle B(a\xz b)$};
\node[draw,rectangle, thick, rounded corners=5pt] (mu) at (.5,2) {$\scriptstyle \mu^B_{a_2,b_2}$};
\node[draw,rectangle, thick, rounded corners=5pt] (Ax) at (0,1) {$\scriptstyle B(x)$};
\node[draw,rectangle, thick, rounded corners=5pt] (Ay) at (2,-.5) {$\scriptstyle A(y)$};
\coordinate (Ax2) at (0,0);
\draw[dashed, thick, rounded corners=5pt] (.7,-.8) rectangle (2.6,.7);
\draw (Aa) to[in=-90,out=90] (Ax2) to[in=-90,out=90] (Ax);
\draw (Ab) to[in=-90,out=90] (Ay);
\draw (Ax) to[in=-90,out=90] (mu.-120);
\draw (Ay) to[in=-90,out=90] (mu.-60);
\draw[DarkGreen,thick,mid>] (eta0) to[in=-90,out=90] (eta2) to[in=-90,out=90] (eta3);
\draw[double] (mu) to[in=-90,out=90] (Bc);
}
\arrow[Rightarrow,r,"\eta_y"]
&
\tikzmath{
\node[DarkGreen] (eta0) at (0,-2.5) {$\scriptstyle \eta_*$};
\coordinate (eta1) at (2,1);
\node[DarkGreen] (eta3) at (2,3) {$\scriptstyle \eta_*$};
\node (Aa) at (1,-2.5) {$\scriptstyle A(a_1)$};
\node (Ab) at (2,-2.5) {$\scriptstyle A(b_1)$};
\node (Bc) at (.5,3) {$\scriptstyle B(a\xz b)$};
\node[draw,rectangle, thick, rounded corners=5pt] (mu) at (.5,2) {$\scriptstyle \mu^B_{a_2,b_2}$};
\node[draw,rectangle, thick, rounded corners=5pt] (Ax) at (0,1) {$\scriptstyle B(x)$};
\node[draw,rectangle, thick, rounded corners=5pt] (Ay) at (1,.4) {$\scriptstyle B(y)$};
\coordinate (Ax2) at (0,0);
\draw[dashed, thick, rounded corners=5pt] (-.6,0) rectangle (1.6,2.5);
\draw (Aa) to[in=-90,out=90] (Ax2) to[in=-90,out=90] (Ax);
\draw (Ab) to[in=-90,out=90] (Ay);
\draw (Ax) to[in=-90,out=90] (mu.-120);
\draw (Ay) to[in=-90,out=90] (mu.-60);
\draw[DarkGreen,thick,mid>] (eta0) to[in=-90,out=90] (eta1) to[in=-90,out=90] (eta2) to[in=-90,out=90] (eta3);
\draw[double] (mu) to[in=-90,out=90] (Bc);
}
\arrow[Rightarrow,u,"\mu^B_{x,y}"]
\end{tikzcd}
};\end{tikzpicture}
$$
\end{enumerate}

This data is subject to the following additional three coherences c.f.~\cite[Def.~4.16]{MR3076451}
\begin{enumerate}[label=(T-\arabic*)]
\item
\label{Transformation:AssociatorCoherence}
For all $a,b,c\in\cC$, the following diagram commutes:
$$
\begin{tikzpicture}[baseline= (a).base]\node[scale=.8] (a) at (0,0){\begin{tikzcd}
\tikzmath{
\node (a) at (1,0) {$\scriptstyle A(a)$};
\node (b) at (2,0) {$\scriptstyle A(b)$};
\node (c) at (3,0) {$\scriptstyle A(c)$};
\node (d) at (1,4) {$\scriptstyle B(a\xz b \xz c)$};
\node[DarkGreen] (eta1) at (0,0) {$\scriptstyle \eta_*$};
\coordinate (eta2) at (0,2);
\node[DarkGreen] (eta3) at (2,4) {$\scriptstyle \eta_*$};
\node[draw,rectangle, thick, rounded corners=5pt] (mu1) at (1.5,1) {$\scriptstyle \mu^A_{a,b}$};
\node[draw,rectangle, thick, rounded corners=5pt] (mu2) at (2,2) {$\scriptstyle \mu^A_{a\xz b,c}$};
\draw[thick, dashed, red, rounded corners=5pt] (-.2,1.6) rectangle (2.7,3.5);
\draw[thick, dashed, blue, rounded corners=5pt] (1,.5) rectangle (3,2.5);
\draw (a) to[in=-90,out=90] (mu1.-120);
\draw (b) to[in=-90,out=90] (mu1.-60);
\draw[double] (mu1) to[in=-90,out=90] (mu2.-120);
\draw (c) to[in=-90,out=90] (mu2.-60);
\draw[triple] (mu2) to[in=-90,out=90] (d);
\draw[thick, DarkGreen, mid>] (eta1) to[in=-90,out=90] (eta2) to[in=-90,out=90] (eta3);
}
\arrow[Rightarrow,d,blue,"\omega^A_{a,b,c}"]
\arrow[Rightarrow,r,red,"\eta^2_{a\xz b, c}"]
&
\tikzmath{
\node (a) at (1,0) {$\scriptstyle A(a)$};
\node (b) at (2,0) {$\scriptstyle A(b)$};
\node (c) at (3,0) {$\scriptstyle A(c)$};
\coordinate (c2) at (3,1.5);
\node (d) at (1,4) {$\scriptstyle B(a\xz b \xz c)$};
\node[DarkGreen] (eta1) at (0,0) {$\scriptstyle \eta_*$};
\coordinate (eta2) at (0,1);
\coordinate (eta3) at (2,3);
\node[DarkGreen] (eta4) at (2,4) {$\scriptstyle \eta_*$};
\node[draw,rectangle, thick, rounded corners=5pt] (mu1) at (1.5,1) {$\scriptstyle \mu^A_{a,b}$};
\draw (a) to[in=-90,out=90] (mu1.-120);
\draw (b) to[in=-90,out=90] (mu1.-60);
\node[draw,rectangle, thick, rounded corners=5pt] (mu2) at (1,3) {$\scriptstyle \mu^B_{a\xz b,c}$};
\draw[thick, dashed, rounded corners=5pt] (-.2,.5) rectangle (2,2.2);
\draw[double] (mu1) to[in=-90,out=90] (mu2.-120);
\draw (c) to[in=-90,out=90] (c2) to[in=-90,out=90] (mu2.-60);
\draw[triple] (mu2) to[in=-90,out=90] (d);
\draw[thick, DarkGreen, mid>] (eta1) to[in=-90,out=90] (eta2) to[in=-90,out=90] (eta3) to[in=-90,out=90] (eta4);
}
\arrow[Rightarrow,r,"\eta^2_{a,b}"]
&
\tikzmath{
\node (a) at (1,0) {$\scriptstyle A(a)$};
\node (b) at (2,0) {$\scriptstyle A(b)$};
\node (c) at (3,0) {$\scriptstyle A(c)$};
\coordinate (c2) at (3,1.5);
\node (d) at (1,4) {$\scriptstyle B(a\xz b \xz c)$};
\node[DarkGreen] (eta1) at (0,0) {$\scriptstyle \eta_*$};
\coordinate (eta2) at (2,2);
\coordinate (eta3) at (2.5,3);
\node[DarkGreen] (eta4) at (2.5,4) {$\scriptstyle \eta_*$};
\node[draw,rectangle, thick, rounded corners=5pt] (mu1) at (.5,1.5) {$\scriptstyle \mu^B_{a,b}$};
\draw (a) to[in=-90,out=90] (mu1.-120);
\draw (b) to[in=-90,out=90] (mu1.-60);
\node[draw,rectangle, thick, rounded corners=5pt] (mu2) at (1,3) {$\scriptstyle \mu^B_{a\xz b,c}$};
\draw[thick, dashed, rounded corners=5pt] (-.2,1) rectangle (3.2,2.5);
\draw[double] (mu1) to[in=-90,out=90] (mu2.-120);
\draw (c) to[in=-90,out=90] (c2) to[in=-90,out=90] (mu2.-60);
\draw[triple] (mu2) to[in=-90,out=90] (d);
\draw[thick, DarkGreen, mid>] (eta1) to[in=-90,out=90] (eta2) to[in=-90,out=90] (eta3) to[in=-90,out=90] (eta4); 
}
\arrow[Rightarrow,r,"\phi^{-1}"]
&
\tikzmath{
\node (a) at (1,0) {$\scriptstyle A(a)$};
\node (b) at (2,0) {$\scriptstyle A(b)$};
\node (c) at (3,0) {$\scriptstyle A(c)$};
\node (d) at (1,4) {$\scriptstyle B(a\xz b \xz c)$};
\node[DarkGreen] (eta1) at (0,0) {$\scriptstyle \eta_*$};
\coordinate (eta2) at (2.5,2);
\node[DarkGreen] (eta3) at (2.5,4) {$\scriptstyle \eta_*$};
\node[draw,rectangle, thick, rounded corners=5pt] (mu1) at (.5,2) {$\scriptstyle \mu^B_{a,b}$};
\draw (a) to[in=-90,out=90] (mu1.-120);
\draw (b) to[in=-90,out=90] (mu1.-60);
\node[draw,rectangle, thick, rounded corners=5pt] (mu2) at (1,3) {$\scriptstyle \mu^B_{a\xz b,c}$};
\draw[thick, dashed, rounded corners=5pt] (-.2,1.5) rectangle (2,3.5);
\draw[double] (mu1) to[in=-90,out=90] (mu2.-120);
\draw (c) to[in=-90,out=90] (mu2.-60);
\draw[triple] (mu2) to[in=-90,out=90] (d);
\draw[thick, DarkGreen, mid>] (eta1) to[in=-90,out=90] (eta2) to[in=-90,out=90] (eta3); 
}
\arrow[Rightarrow,d,"\omega^B_{a,b,c}"]
\\
\tikzmath{
\node (a) at (1,0) {$\scriptstyle A(a)$};
\node (b) at (2,0) {$\scriptstyle A(b)$};
\node (c) at (3,0) {$\scriptstyle A(c)$};
\node (d) at (1,4) {$\scriptstyle B(a\xz b \xz c)$};
\node[DarkGreen] (eta1) at (0,0) {$\scriptstyle \eta_*$};
\coordinate (eta2) at (0,2);
\node[DarkGreen] (eta3) at (2,4) {$\scriptstyle \eta_*$};
\node[draw,rectangle, thick, rounded corners=5pt] (mu1) at (2.5,1) {$\scriptstyle \mu^A_{b,c}$};
\draw (b) to[in=-90,out=90] (mu1.-120);
\draw (c) to[in=-90,out=90] (mu1.-60);
\node[draw,rectangle, thick, rounded corners=5pt] (mu2) at (2,2) {$\scriptstyle \mu^A_{a, b\xz c}$};
\draw[thick, dashed, rounded corners=5pt] (-.2,1.6) rectangle (2.8,3.5);
\draw[double] (mu1) to[in=-90,out=90] (mu2.-60);
\draw (a) to[in=-90,out=90] (mu2.-120);
\draw[triple] (mu2) to[in=-90,out=90] (d);
\draw[thick, DarkGreen, mid>] (eta1) to[in=-90,out=90] (eta2) to[in=-90,out=90] (eta3);
}
\arrow[Rightarrow,r,"\eta^2_{a,b\xz c}"]
&
\tikzmath{
\node (a) at (1,0) {$\scriptstyle A(a)$};
\node (b) at (2,0) {$\scriptstyle A(b)$};
\node (c) at (3,0) {$\scriptstyle A(c)$};
\node (d) at (1,4) {$\scriptstyle B(a\xz b \xz c)$};
\node[DarkGreen] (eta1) at (0,0) {$\scriptstyle \eta_*$};
\coordinate (eta2) at (0,.5);
\coordinate (eta3) at (2,3);
\node[DarkGreen] (eta4) at (2,4) {$\scriptstyle \eta_*$};
\node[draw,rectangle, thick, rounded corners=5pt] (mu1) at (2.5,1) {$\scriptstyle \mu^A_{b,c}$};
\draw (b) to[in=-90,out=90] (mu1.-120);
\draw (c) to[in=-90,out=90] (mu1.-60);
\node[draw,rectangle, thick, rounded corners=5pt] (mu2) at (1,3) {$\scriptstyle \mu^B_{a, b\xz c}$};
\draw[thick, dashed, rounded corners=5pt] (-.2,.5) rectangle (3,1.9);
\draw[double] (mu1) to[in=-90,out=90] (mu2.-60);
\draw (a) to[in=-90,out=90] (mu2.-120);
\draw[triple] (mu2) to[in=-90,out=90] (d);
\draw[thick, DarkGreen, mid>] (eta1) to[in=-90,out=90] (eta2) to[in=-90,out=90] (eta3) to[in=-90,out=90] (eta4);
}
\arrow[Rightarrow,r,"\phi"]
&
\tikzmath{
\node (a) at (1,-1) {$\scriptstyle A(a)$};
\coordinate (a2) at (0,1);
\node (b) at (2,-1) {$\scriptstyle A(b)$};
\node (c) at (3,-1) {$\scriptstyle A(c)$};
\node (d) at (1,4) {$\scriptstyle B(a\xz b \xz c)$};
\node[DarkGreen] (eta1) at (0,-1) {$\scriptstyle \eta_*$};
\coordinate (eta2) at (1.5,1.5);
\node[DarkGreen] (eta3) at (2.5,4) {$\scriptstyle \eta_*$};
\node[draw,rectangle, thick, rounded corners=5pt] (mu1) at (2.5,1) {$\scriptstyle \mu^A_{b,c}$};
\draw (b) to[in=-90,out=90] (mu1.-120);
\draw (c) to[in=-90,out=90] (mu1.-60);
\node[draw,rectangle, thick, rounded corners=5pt] (mu2) at (1,3) {$\scriptstyle \mu^B_{a, b\xz c}$};
\draw[thick, dashed, rounded corners=5pt] (1,.5) rectangle (3,2.5);
\draw[double] (mu1) to[in=-90,out=90] (mu2.-60);
\draw (a) to[in=-90,out=90] (a2) to[in=-90,out=90] (mu2.-120);
\draw[triple] (mu2) to[in=-90,out=90] (d);
\draw[thick, DarkGreen, mid>] (eta1) to[in=-90,out=90] (eta2) to[in=-90,out=90] (eta3);
}
\arrow[Rightarrow,r,"\eta^2_{b,c}"]
&
\tikzmath{
\node (a) at (1,0) {$\scriptstyle A(a)$};
\node (b) at (2,0) {$\scriptstyle A(b)$};
\node (c) at (3,0) {$\scriptstyle A(c)$};
\node (d) at (1,4) {$\scriptstyle B(a\xz b \xz c)$};
\node[DarkGreen] (eta1) at (0,0) {$\scriptstyle \eta_*$};
\coordinate (eta2) at (0,.2);
\coordinate (eta3) at (2.5,2.3);
\node[DarkGreen] (eta4) at (2.5,4) {$\scriptstyle \eta_*$};
\node[draw,rectangle, thick, rounded corners=5pt] (mu1) at (1.5,2) {$\scriptstyle \mu^B_{b,c}$};
\draw (b) to[in=-90,out=90] (mu1.-120);
\draw (c) to[in=-90,out=90] (mu1.-60);
\node[draw,rectangle, thick, rounded corners=5pt] (mu2) at (1,3) {$\scriptstyle \mu^B_{a, b\xz c}$};
\draw[double] (mu1) to[in=-90,out=90] (mu2.-60);
\draw (a) to[in=-90,out=90] (mu2.-120);
\draw[triple] (mu2) to[in=-90,out=90] (d);
\draw[thick, DarkGreen, mid>] (eta1) to[in=-90,out=90] (eta2) to[in=-90,out=90] (eta3) to[in=-90,out=90] (eta4);
}
\end{tikzcd}
};\end{tikzpicture}
$$
\item
\label{Transformation:UnitCoherence}
For all $c\in\cC$, the following diagram commutes:
$$
\begin{tikzpicture}[baseline= (a).base]\node[scale=.8] (a) at (0,0){\begin{tikzcd}
\tikzmath{
\node[DarkGreen] (eta1) at (0,-2) {$\scriptstyle \eta_*$};
\node (Ac) at (2,-2) {$\scriptstyle A(c)$};
\coordinate (Ac2) at (2,-1);
\coordinate (eta2) at (0,0);
\node (Bc) at (.5,2) {$\scriptstyle B(c)$};
\node[DarkGreen] (eta3) at (1.5,2) {$\scriptstyle \eta_*$};
\node[draw,rectangle, thick, rounded corners=5pt] (mu) at (1.5,0) {$\scriptstyle \mu^A_{1_\cC,c}$};
\node[draw,rectangle, thick, rounded corners=5pt] (iota) at (1,-1) {$\scriptstyle \iota^A_*$};
\draw[dashed, thick, red, rounded corners=5pt] (-.3,-.5) rectangle (2.2,1.5);
\draw[dashed, thick, blue, rounded corners=5pt] (-.3,-1.4) rectangle (2.2,.5);
\draw (iota) to[in=-90,out=90] (mu.-120);
\draw (Ac) to[in=-90,out=90] (Ac2) to[in=-90,out=90] (mu.-60);
\draw[DarkGreen,thick,mid>] (eta1) to[in=-90,out=90] (eta2) to[in=-90,out=90] (eta3);
\draw[double] (mu) to[in=-90,out=90] (Bc);
}
\arrow[Rightarrow,d,blue,"\ell^A_c"]
\arrow[Rightarrow,r,red,"\eta^2_{1_\cC,c}"]
&
\tikzmath{
\node[DarkGreen] (eta1) at (0,-2) {$\scriptstyle \eta_*$};
\node (Ac) at (2,-2) {$\scriptstyle A(c)$};
\coordinate (Ac2) at (2,-.75);
\coordinate (eta2) at (1.5,1);
\node (Bc) at (.5,2) {$\scriptstyle B(a\xz b)$};
\node[DarkGreen] (eta3) at (1.5,2) {$\scriptstyle \eta_*$};
\node[draw,rectangle, thick, rounded corners=5pt] (mu) at (.5,1) {$\scriptstyle \mu^B_{1_\cC,c}$};
\node[draw,rectangle, thick, rounded corners=5pt] (iota) at (1,-1) {$\scriptstyle \iota^A_*$};
\draw[dashed, thick, rounded corners=5pt] (-.2,-1.4) rectangle (1.4,0);
\draw (iota) to[in=-90,out=90] (mu.-120);
\draw (Ac) to[in=-90,out=90] (Ac2) to[in=-90,out=90] (mu.-60);
\draw[DarkGreen,thick,mid>] (eta1) to[in=-90,out=90] (eta2) to[in=-90,out=90] (eta3);
\draw[double] (mu) to[in=-90,out=90] (Bc);
}
\arrow[Rightarrow,r,"\eta^1"]
&
\tikzmath{
\node[DarkGreen] (eta1) at (0,-2) {$\scriptstyle \eta_*$};
\node (Ac) at (2,-2) {$\scriptstyle A(c)$};
\coordinate (Ac2) at (2,-.75);
\coordinate (eta2) at (1.5,1);
\node (Bc) at (.5,2) {$\scriptstyle B(a\xz b)$};
\node[DarkGreen] (eta3) at (1.5,2) {$\scriptstyle \eta_*$};
\node[draw,rectangle, thick, rounded corners=5pt] (mu) at (.5,1) {$\scriptstyle \mu^B_{1_\cC,c}$};
\node[draw,rectangle, thick, rounded corners=5pt] (iota) at (0,-.5) {$\scriptstyle \iota^B_*$};
\draw[dashed, thick, rounded corners=5pt] (-.5,-.9) rectangle (2.2,.3);
\draw (iota) to[in=-90,out=90] (mu.-120);
\draw (Ac) to[in=-90,out=90] (Ac2) to[in=-90,out=90] (mu.-60);
\draw[DarkGreen,thick,mid>] (eta1) to[in=-90,out=90] (eta2) to[in=-90,out=90] (eta3);
\draw[double] (mu) to[in=-90,out=90] (Bc);
}
\arrow[Rightarrow,dl,"\phi^{-1}"]
\\
\tikzmath{
\node[DarkGreen] (eta1) at (0,0) {$\scriptstyle \eta_*$};
\node (Ac) at (1,0) {$\scriptstyle A(c)$};
\node[DarkGreen] (eta2) at (1,2) {$\scriptstyle \eta_*$};
\node (Bc) at (0,2) {$\scriptstyle B(c)$};
\draw[DarkGreen,thick,mid>] (eta1) to[in=-90,out=90] (eta2);
\draw (Ac) to[in=-90,out=90] (Bc);
}
&
\tikzmath{
\node[DarkGreen] (eta1) at (0,-2) {$\scriptstyle \eta_*$};
\node (Ac) at (2,-2) {$\scriptstyle A(c)$};
\coordinate (eta2) at (1.5,.75);
\node (Bc) at (.5,2) {$\scriptstyle B(a\xz b)$};
\node[DarkGreen] (eta3) at (1.5,2) {$\scriptstyle \eta_*$};
\node[draw,rectangle, thick, rounded corners=5pt] (mu) at (.5,1) {$\scriptstyle \mu^B_{1_\cC,c}$};
\node[draw,rectangle, thick, rounded corners=5pt] (iota) at (0,0) {$\scriptstyle \iota^B_*$};
\draw[dashed, thick, rounded corners=5pt] (-.5,-.3) rectangle (1.2,1.5);
\draw (iota) to[in=-90,out=90] (mu.-120);
\draw (Ac) to[in=-90,out=90] (mu.-60);
\draw[DarkGreen,thick,mid>] (eta1) to[in=-90,out=90] (eta2) to[in=-90,out=90] (eta3);
\draw[double] (mu) to[in=-90,out=90] (Bc);
}
\arrow[Rightarrow,l,"\ell^B_c"]
\end{tikzcd}
};\end{tikzpicture}
$$
and a similar coherence equation holds for $r^A_c$ as well.
\end{enumerate}
Observe that \ref{Transformation:UnitCoherence}
completely determines $\eta^1$ in terms of lower data.
This means that one needs only verify the existence of some $\eta^1$ satisfying \ref{Transformation:UnitCoherence} to verify \ref{Transformation:eta^1} above.
\end{defn}

\subsection{Modifications between transformations of $\Gray$-monoids}
\begin{defn}
\label{defn:Modification}
Suppose $\cC,\cD$ are $\Gray$-monoids, $A,B: \rmB\cC \to \rmB\cD$ are 3-functors, and $\eta,\zeta: A \Rightarrow B$ are transformations.
A modification $m: \eta \Rrightarrow \zeta$ consists of:
\begin{enumerate}[label=(M-\Roman*)]
\item
\label{Modification:m_*}
a 1-cell $m_*: \eta_* \to \zeta_*$ depicted
$
\tikzmath{
\node[DarkGreen] (eta) at (0,0) {$\scriptstyle \eta_*$};
\node[orange] (zeta) at (0,2) {$\scriptstyle \zeta_*$};
\node[draw,rectangle, thick, rounded corners=5pt] (m) at (0,1) {$\scriptstyle m_*$};
\draw[DarkGreen, thick] (eta) to[in=-90,out=90] (m);
\draw[orange, thick] (m) to[in=-90,out=90] (zeta);
}
$
\item
\label{Modification:m_c}
For each $c\in \cC$, an invertible 2-modification
$$
\tikzmath{
\node[DarkGreen] (eta) at (0,0) {$\scriptstyle \eta_*$};
\node (Ac1) at (1,0) {$\scriptstyle A(c)$};
\coordinate (Ac2) at (1,1);
\node (Bc) at (0,3) {$\scriptstyle B(c)$};
\node[orange] (zeta) at (1,3) {$\scriptstyle \zeta_*$};
\node[draw,rectangle, thick, rounded corners=5pt] (m) at (0,1) {$\scriptstyle m_*$};
\draw (Ac1) to[in=-90,out=90] (Ac2) to[in=-90,out=90] (Bc);
\draw[DarkGreen,thick,mid>] (eta) to[in=-90,out=90] (m);
\draw[orange, thick, mid>] (m) to[in=-90,out=90] (zeta);
}
\overset{m_c}{\Rightarrow}
\tikzmath{
\node[DarkGreen] (eta) at (0,0) {$\scriptstyle \eta_*$};
\node (Ac) at (1,0) {$\scriptstyle A(c)$};
\coordinate (Bc1) at (0,2);
\node (Bc2) at (0,3) {$\scriptstyle B(c)$};
\node[orange] (zeta) at (1,3) {$\scriptstyle \zeta_*$};
\node[draw,rectangle, thick, rounded corners=5pt] (m) at (1,2) {$\scriptstyle m_*$};
\draw (Ac) to[in=-90,out=90] (Bc1) to[in=-90,out=90] (Bc2);
\draw[DarkGreen,thick,mid>] (eta) to[in=-90,out=90] (m);
\draw[orange, thick, mid>] (m) to[in=-90,out=90] (zeta);
}
$$
Explicitly, this means that $m_c$ satisfies the following coherence condition for all $x:a\to b$:
$$
\begin{tikzpicture}[baseline= (a).base]\node[scale=.8] (a) at (0,0){\begin{tikzcd}
\tikzmath{
\node[DarkGreen] (eta1) at (0,-1) {$\scriptstyle \eta_*$};
\coordinate (eta2) at (0,0);
\node (g) at (1,-1) {$\scriptstyle A(a)$};
\coordinate (h1) at (1,1);
\node (h2) at (0,3) {$\scriptstyle B(b)$};
\node[orange] (zeta) at (1,3) {$\scriptstyle \zeta_*$};
\node[draw,rectangle, thick, rounded corners=5pt] (Ax) at (1,0) {$\scriptstyle A(x)$};
\node[draw,rectangle, thick, rounded corners=5pt] (m) at (0,1) {$\scriptstyle m_*$};
\draw[thick, dashed, blue, rounded corners=5pt] (-.5,.7) rectangle (1.6,2.4);
\draw[thick, dashed, red, rounded corners=5pt] (-.5,-.3) rectangle (1.6,1.4);
\draw (g) to[in=-90,out=90] (Ax);
\draw (Ax) to[in=-90,out=90] (h1) to[in=-90,out=90] (h2);
\draw[DarkGreen,thick,mid>] (eta1) to[in=-90,out=90] (eta2) to[in=-90,out=90] (m);
\draw[orange,thick, mid>] (m) to[in=-90,out=90] (zeta);
}
\arrow[d, blue, Rightarrow, "m_b"]
\arrow[r, red, Rightarrow, "\phi"]
&
\tikzmath{
\node[DarkGreen] (eta) at (0,-1) {$\scriptstyle \eta_*$};
\node (g) at (1,-1) {$\scriptstyle A(a)$};
\node (h) at (0,3) {$\scriptstyle B(b)$};
\coordinate (zeta1) at (0,1);
\node[orange] (zeta2) at (1,3) {$\scriptstyle \zeta_*$};
\node[draw,rectangle, thick, rounded corners=5pt] (Ax) at (1,1) {$\scriptstyle A(x)$};
\node[draw,rectangle, thick, rounded corners=5pt] (m) at (0,0) {$\scriptstyle m_*$};
\draw[thick, dashed, rounded corners=5pt] (-.5,.7) rectangle (1.6,2.4);
\draw (g) to[in=-90,out=90] (Ax);
\draw (Ax) to[in=-90,out=90] (h);
\draw[DarkGreen,thick,mid>] (eta) to[in=-90,out=90] (m);
\draw[orange,thick, mid>] (m) to[in=-90,out=90] (zeta1) to[in=-90,out=90] (zeta2);
}
\arrow[r, Rightarrow, "\zeta_x"]
&
\tikzmath{
\node[DarkGreen] (eta) at (0,-1) {$\scriptstyle \eta_*$};
\node (g1) at (1,-1) {$\scriptstyle A(a)$};
\coordinate (g2) at (1,0);
\node (h) at (0,3) {$\scriptstyle B(b)$};
\coordinate (zeta1) at (1,2);
\node[orange] (zeta2) at (1,3) {$\scriptstyle \zeta_*$};
\node[draw,rectangle, thick, rounded corners=5pt] (Ax) at (0,2) {$\scriptstyle B(x)$};
\node[draw,rectangle, thick, rounded corners=5pt] (m) at (0,0) {$\scriptstyle m_*$};
\draw[thick, dashed, rounded corners=5pt] (-.5,-.3) rectangle (1.6,1.4);
\draw (g1) to[in=-90,out=90] (g2) to[in=-90,out=90] (Ax);
\draw (Ax) to[in=-90,out=90] (h);
\draw[DarkGreen,thick,mid>] (eta) to[in=-90,out=90] (m);
\draw[orange,thick, mid>] (m) to[in=-90,out=90] (zeta1) to[in=-90,out=90] (zeta2);
}
\arrow[d, Rightarrow, "m_a"]
\\
\tikzmath{
\node[DarkGreen] (eta1) at (0,-1) {$\scriptstyle \eta_*$};
\coordinate (eta2) at (0,0);
\node (g) at (1,-1) {$\scriptstyle A(a)$};
\coordinate (h1) at (0,2);
\node (h2) at (0,3) {$\scriptstyle B(b)$};
\node[orange] (zeta) at (1,3) {$\scriptstyle \zeta_*$};
\node[draw,rectangle, thick, rounded corners=5pt] (Ax) at (1,0) {$\scriptstyle A(x)$};
\node[draw,rectangle, thick, rounded corners=5pt] (m) at (1,2) {$\scriptstyle m_*$};
\draw[thick, dashed, rounded corners=5pt] (-.5,-.3) rectangle (1.6,1.4);
\draw (g) to[in=-90,out=90] (Ax);
\draw (Ax) to[in=-90,out=90] (h1) to[in=-90,out=90] (h2);
\draw[DarkGreen,thick,mid>] (eta1) to[in=-90,out=90] (eta2) to[in=-90,out=90] (m);
\draw[orange,thick, mid>] (m) to[in=-90,out=90] (zeta);
}
\arrow[r, Rightarrow, "\eta_x"]
&
\tikzmath{
\node[DarkGreen] (eta1) at (0,-1) {$\scriptstyle \eta_*$};
\coordinate (eta2) at (1,1);
\node (g) at (1,-1) {$\scriptstyle A(a)$};
\coordinate (h1) at (0,2);
\node (h2) at (0,3) {$\scriptstyle B(b)$};
\node[orange] (zeta) at (1,3) {$\scriptstyle \zeta_*$};
\node[draw,rectangle, thick, rounded corners=5pt] (Ax) at (0,1) {$\scriptstyle B(x)$};
\node[draw,rectangle, thick, rounded corners=5pt] (m) at (1,2) {$\scriptstyle m_*$};
\draw[thick, dashed, rounded corners=5pt] (-.6,.7) rectangle (1.5,2.4);
\draw (g) to[in=-90,out=90] (Ax);
\draw (Ax) to[in=-90,out=90] (h1) to[in=-90,out=90] (h2);
\draw[DarkGreen,thick,mid>] (eta1) to[in=-90,out=90] (eta2) to[in=-90,out=90] (m);
\draw[orange,thick, mid>] (m) to[in=-90,out=90] (zeta);
}
\arrow[r, Rightarrow, "\phi^{-1}"]
&
\tikzmath{
\node[DarkGreen] (eta) at (0,-1) {$\scriptstyle \eta_*$};
\node (g1) at (1,-1) {$\scriptstyle A(a)$};
\coordinate (g2) at (0,1);
\node (h) at (0,3) {$\scriptstyle B(b)$};
\node[orange] (zeta) at (1,3) {$\scriptstyle \zeta_*$};
\node[draw,rectangle, thick, rounded corners=5pt] (Ax) at (0,2) {$\scriptstyle B(x)$};
\node[draw,rectangle, thick, rounded corners=5pt] (m) at (1,1) {$\scriptstyle m_*$};
\draw (g1) to[in=-90,out=90] (g2) to[in=-90,out=90] (Ax);
\draw (Ax) to[in=-90,out=90] (h);
\draw[DarkGreen,thick,mid>] (eta) to[in=-90,out=90] (m);
\draw[orange,thick, mid>] (m) to[in=-90,out=90] (zeta);
}
\end{tikzcd}
};\end{tikzpicture}
$$
\end{enumerate}
This data is subject to the following two conditions c.f.~\cite[Def.~4.18]{MR3076451}:
\begin{enumerate}[label=(M-\arabic*)]
\item
\label{Modification:MonoidalCoherence}
For all $a,b\in\cC$,
$$
\begin{tikzpicture}[baseline= (a).base]\node[scale=.8] (a) at (0,0){\begin{tikzcd}
\tikzmath{
\node[DarkGreen] (eta) at (0,-2) {$\scriptstyle \eta_*$};
\node[orange] (zeta) at (1.5,2) {$\scriptstyle \zeta_*$};
\node (Aa) at (1,-2) {$\scriptstyle A(a)$};
\node (Ab) at (2,-2) {$\scriptstyle A(b)$};
\node (Bab) at (0,2) {$\scriptstyle B(a\xz b)$};
\node[draw,rectangle, thick, rounded corners=5pt] (mu) at (1.5,-.8) {$\scriptstyle \mu^A_{a,b}$};
\node[draw,rectangle, thick, rounded corners=5pt] (m) at (0,0) {$\scriptstyle m_*$};
\coordinate (mu2) at (1.5,0);
\draw[dashed, red, thick, rounded corners=5pt] (-.5,-.3) rectangle (1.7,1.5);
\draw[dashed, blue, thick, rounded corners=5pt] (-.5,-1.5) rectangle (2.1,.5);
\draw (Aa) to[in=-90,out=90] (mu.-120);
\draw (Ab) to[in=-90,out=90] (mu.-60);
\draw[DarkGreen,thick,mid>] (eta) to[in=-90,out=90] (m);
\draw[orange,thick,mid>] (m) to[in=-90,out=90] (zeta);
\draw[double] (mu) to[in=-90,out=90] (mu2) to[in=-90,out=90] (Bab);
}
\arrow[Rightarrow,d,blue,"\phi"]
\arrow[Rightarrow,r,red,"m_{a\xz b}"]
&
\tikzmath{
\node[DarkGreen] (eta) at (0,-2) {$\scriptstyle \eta_*$};
\coordinate (eta1) at (0,-1);
\node[orange] (zeta) at (1.5,2) {$\scriptstyle \zeta_*$};
\node (Aa) at (1,-2) {$\scriptstyle A(a)$};
\node (Ab) at (2,-2) {$\scriptstyle A(b)$};
\node (Bab) at (0,2) {$\scriptstyle B(a\xz b)$};
\node[draw,rectangle, thick, rounded corners=5pt] (mu) at (1.5,-.8) {$\scriptstyle \mu^A_{a,b}$};
\node[draw,rectangle, thick, rounded corners=5pt] (m) at (1.5,1) {$\scriptstyle m_*$};
\draw[dashed, thick, rounded corners=5pt] (-.5,-1.5) rectangle (2.1,.5);
\draw (Aa) to[in=-90,out=90] (mu.-120);
\draw (Ab) to[in=-90,out=90] (mu.-60);
\draw[DarkGreen,thick,mid>] (eta) to[in=-90,out=90] (eta1) to[in=-90,out=90] (m);
\draw[orange,thick,mid>] (m) to[in=-90,out=90] (zeta);
\draw[double] (mu) to[in=-90,out=90] (Bab);
}
\arrow[Rightarrow,r,"\eta^2_{a,b}"]
&
\tikzmath{
\node[DarkGreen] (eta) at (0,-2) {$\scriptstyle \eta_*$};
\coordinate (zeta1) at (1.5,1.5);
\node[orange] (zeta2) at (1.5,2) {$\scriptstyle \zeta_*$};
\node (Aa) at (1,-2) {$\scriptstyle A(a)$};
\node (Ab) at (2,-2) {$\scriptstyle A(b)$};
\node (Bab) at (.5,2) {$\scriptstyle B(a\xz b)$};
\node[draw,rectangle, thick, rounded corners=5pt] (mu) at (.5,.3) {$\scriptstyle \mu^B_{a,b}$};
\node[draw,rectangle, thick, rounded corners=5pt] (m) at (1.5,1) {$\scriptstyle m_*$};
\draw[dashed, thick, rounded corners=5pt] (-.1,-.1) rectangle (2,1.4);
\draw (Aa) to[in=-90,out=90] (mu.-120);
\draw (Ab) to[in=-90,out=90] (mu.-60);
\draw[DarkGreen,thick,mid>] (eta) to[in=-90,out=90] (m);
\draw[orange,thick,mid>] (m) to[in=-90,out=90] (zeta1) to[in=-90,out=90] (zeta2);
\draw[double] (mu) to[in=-90,out=90] (Bab);
}
\arrow[dr, Rightarrow, "\phi^{-1}"]
\\
\tikzmath{
\node[DarkGreen] (eta) at (0,-2) {$\scriptstyle \eta_*$};
\coordinate (zeta1) at (0,0);
\node[orange] (zeta2) at (1.5,2) {$\scriptstyle \zeta_*$};
\node (Aa) at (1,-2) {$\scriptstyle A(a)$};
\node (Ab) at (2,-2) {$\scriptstyle A(b)$};
\node (Bab) at (0,2) {$\scriptstyle B(a\xz b)$};
\node[draw,rectangle, thick, rounded corners=5pt] (mu) at (1.5,0) {$\scriptstyle \mu^A_{a,b}$};
\node[draw,rectangle, thick, rounded corners=5pt] (m) at (0,-.8) {$\scriptstyle m_*$};
\draw[dashed, thick, rounded corners=5pt] (-.5,-.4) rectangle (2.1,1.5);
\draw (Aa) to[in=-90,out=90] (mu.-120);
\draw (Ab) to[in=-90,out=90] (mu.-60);
\draw[DarkGreen,thick,mid>] (eta) to[in=-90,out=90] (m);
\draw[orange,thick,mid>] (m) to[in=-90,out=90] (zeta1) to[in=-90,out=90] (zeta2);
\draw[double] (mu) to[in=-90,out=90] (Bab);
}
\arrow[Rightarrow,r,"\zeta^2_{a,b}"]
&
\tikzmath{
\node[DarkGreen] (eta) at (0,-2) {$\scriptstyle \eta_*$};
\coordinate (zeta1) at (1.5,1.5);
\node[orange] (zeta2) at (1.5,2) {$\scriptstyle \zeta_*$};
\node (Aa) at (1,-2) {$\scriptstyle A(a)$};
\node (Ab) at (2,-2) {$\scriptstyle A(b)$};
\node (Bab) at (.5,2) {$\scriptstyle B(a\xz b)$};
\node[draw,rectangle, thick, rounded corners=5pt] (mu) at (.5,1) {$\scriptstyle \mu^B_{a,b}$};
\node[draw,rectangle, thick, rounded corners=5pt] (m) at (0,-.8) {$\scriptstyle m_*$};
\draw[dashed, thick, rounded corners=5pt] (-.5,-1.2) rectangle (.8,.5);
\draw (Aa) to[in=-90,out=90] (mu.-120);
\draw (Ab) to[in=-90,out=90] (mu.-60);
\draw[DarkGreen,thick,mid>] (eta) to[in=-90,out=90] (m);
\draw[orange,thick,mid>] (m) to[in=-90,out=90] (zeta1) to[in=-90,out=90] (zeta2);
\draw[double] (mu) to[in=-90,out=90] (Bab);
}
\arrow[Rightarrow,r,"m_a"]
&
\tikzmath{
\node[DarkGreen] (eta) at (0,-1.5) {$\scriptstyle \eta_*$};
\node[orange] (zeta) at (1.5,2.5) {$\scriptstyle \zeta_*$};
\node (Aa) at (1,-1.5) {$\scriptstyle A(a)$};
\coordinate (Aa1) at (0,0);
\node (Ab) at (2,-1.5) {$\scriptstyle A(b)$};
\coordinate (Ab1) at (2,0);
\node (Bab) at (.5,2.5) {$\scriptstyle B(a\xz b)$};
\node[draw,rectangle, thick, rounded corners=5pt] (mu) at (.5,1.5) {$\scriptstyle \mu^B_{a,b}$};
\node[draw,rectangle, thick, rounded corners=5pt] (m) at (1,0) {$\scriptstyle m_*$};
\draw[dashed, thick, rounded corners=5pt] (.5,-.5) rectangle (2.2,1);
\draw (Aa) to[in=-90,out=90] (Aa1) to[in=-90,out=90] (mu.-120);
\draw (Ab) to[in=-90,out=90] (Ab1) to[in=-90,out=90] (mu.-60);
\draw[DarkGreen,thick,mid>] (eta) to[in=-90,out=90] (m);
\draw[orange,thick,mid>] (m) to[in=-90,out=90] (zeta);
\draw[double] (mu) to[in=-90,out=90] (Bab);
}
\arrow[Rightarrow,r,"m_b"]
&
\tikzmath{
\node[DarkGreen] (eta) at (0,-2) {$\scriptstyle \eta_*$};
\coordinate (zeta1) at (1.5,1.5);
\node[orange] (zeta2) at (1.5,2) {$\scriptstyle \zeta_*$};
\node (Aa) at (1,-2) {$\scriptstyle A(a)$};
\node (Ab) at (2,-2) {$\scriptstyle A(b)$};
\node (Bab) at (.5,2) {$\scriptstyle B(a\xz b)$};
\node[draw,rectangle, thick, rounded corners=5pt] (mu) at (.5,1) {$\scriptstyle \mu^B_{a,b}$};
\node[draw,rectangle, thick, rounded corners=5pt] (m) at (1.5,.3) {$\scriptstyle m_*$};
\draw (Aa) to[in=-90,out=90] (mu.-120);
\draw (Ab) to[in=-90,out=90] (mu.-60);
\draw[DarkGreen,thick,mid>] (eta) to[in=-90,out=90] (m);
\draw[orange,thick,mid>] (m) to[in=-90,out=90] (zeta1) to[in=-90,out=90] (zeta2);
\draw[double] (mu) to[in=-90,out=90] (Bab);
}
\end{tikzcd}
};\end{tikzpicture}
$$
\item
\label{Modification:UnitCoherence}
The following diagram commutes:
$$
\begin{tikzpicture}[baseline= (a).base]\node[scale=.8] (a) at (0,0){\begin{tikzcd}
\tikzmath{
\node[DarkGreen] (eta) at (0,-1) {$\scriptstyle \eta_*$};
\node[orange] (zeta) at (1,3) {$\scriptstyle \zeta_*$};
\coordinate (A2) at (1,1);
\node (B1) at (0,3) {$\scriptstyle B(1_\cC)$};
\node[draw,rectangle, thick, rounded corners=5pt] (m) at (0,1) {$\scriptstyle m_*$};
\node[draw,rectangle, thick, rounded corners=5pt] (iota) at (1,0) {$\scriptstyle \iota^A_*$};
\draw[dashed, thick, red, rounded corners=5pt] (-.5,.6) rectangle (1.3,2.4);
\draw[dashed, thick, blue, rounded corners=5pt] (-.5,-.3) rectangle (1.5,1.4);
\draw (iota) to[in=-90,out=90] (A2) to[in=-90,out=90] (B1);
\draw[DarkGreen,thick,mid>] (eta) to[in=-90,out=90] (m);
\draw[orange, thick, mid>] (m) to[in=-90,out=90] (zeta);
}
\arrow[Rightarrow,d,blue,"\phi"]
\arrow[Rightarrow,r,red,"m_{1_\cC}"]
&
\tikzmath{
\node[DarkGreen] (eta1) at (0,-1) {$\scriptstyle \eta_*$};
\coordinate (eta2) at (0,0);
\node[orange] (zeta) at (1,3) {$\scriptstyle \zeta_*$};
\coordinate (A2) at (0,2);
\node (B1) at (0,3) {$\scriptstyle B(1_\cC)$};
\node[draw,rectangle, thick, rounded corners=5pt] (m) at (1,2) {$\scriptstyle m_*$};
\node[draw,rectangle, thick, rounded corners=5pt] (iota) at (1,0) {$\scriptstyle \iota^A_*$};
\draw[dashed, thick, rounded corners=5pt] (-.3,-.3) rectangle (1.5,1.5);
\draw (iota) to[in=-90,out=90] (A2) to[in=-90,out=90] (B1);
\draw[DarkGreen,thick,mid>] (eta1) to[in=-90,out=90] (eta2) to[in=-90,out=90] (m);
\draw[orange, thick, mid>] (m) to[in=-90,out=90] (zeta);
}
\arrow[Rightarrow,r,"\eta^1"]
&
\tikzmath{
\node[DarkGreen] (eta1) at (0,-1) {$\scriptstyle \eta_*$};
\coordinate (eta2) at (1,1);
\node[orange] (zeta) at (1,3) {$\scriptstyle \zeta_*$};
\node (B1) at (0,3) {$\scriptstyle B(1_\cC)$};
\node[draw,rectangle, thick, rounded corners=5pt] (m) at (1,2) {$\scriptstyle m_*$};
\node[draw,rectangle, thick, rounded corners=5pt] (iota) at (0,1) {$\scriptstyle \iota^B_*$};
\draw[dashed, thick, rounded corners=5pt] (-.5,.5) rectangle (1.5,2.5);
\draw (iota) to[in=-90,out=90] (B1);
\draw[DarkGreen,thick,mid>] (eta1) to[in=-90,out=90] (eta2) to[in=-90,out=90] (m);
\draw[orange, thick, mid>] (m) to[in=-90,out=90] (zeta);
}
\arrow[dl,Rightarrow,"\phi^{-1}"]
\\
\tikzmath{
\node[DarkGreen] (eta) at (0,-1) {$\scriptstyle \eta_*$};
\coordinate (zeta1) at (0,1);
\node[orange] (zeta2) at (1,3) {$\scriptstyle \zeta_*$};
\node (B1) at (0,3) {$\scriptstyle B(1_\cC)$};
\node[draw,rectangle, thick, rounded corners=5pt] (m) at (0,0) {$\scriptstyle m_*$};
\node[draw,rectangle, thick, rounded corners=5pt] (iota) at (1,1) {$\scriptstyle \iota^A_*$};
\draw[dashed, thick, rounded corners=5pt] (-.2,.6) rectangle (1.5,2.4);
\draw (iota) to[in=-90,out=90] (B1);
\draw[DarkGreen,thick,mid>] (eta) to[in=-90,out=90] (m);
\draw[orange, thick, mid>] (m) to[in=-90,out=90] (zeta1) to[in=-90,out=90] (zeta2);
}
\arrow[Rightarrow,r,"\zeta^1"]
&
\tikzmath{
\node[DarkGreen] (eta) at (0,-1) {$\scriptstyle \eta_*$};
\coordinate (zeta1) at (1,2);
\node[orange] (zeta2) at (1,3) {$\scriptstyle \zeta_*$};
\node (A2) at (1,0) {};
\node (B1) at (0,3) {$\scriptstyle B(1_\cC)$};
\node[draw,rectangle, thick, rounded corners=5pt] (m) at (0,0) {$\scriptstyle m_*$};
\node[draw,rectangle, thick, rounded corners=5pt] (iota) at (0,2) {$\scriptstyle \iota^B_*$};
\draw (iota) to[in=-90,out=90] (B1);
\draw[DarkGreen,thick,mid>] (eta) to[in=-90,out=90] (m);
\draw[orange, thick, mid>] (m) to[in=-90,out=90] (zeta1) to[in=-90,out=90] (zeta2);
}
\end{tikzcd}
};\end{tikzpicture}
$$
Observe this coherence completely determines $m_{1_\cC}$ in terms of $\eta^1$ and $\zeta^1$.
\end{enumerate}
\end{defn}

\subsection{Perturbations between modifications of $\Gray$-monoids}
\begin{defn}
\label{defn:Perturbation}
Suppose $\cC,\cD$ are $\Gray$-monoids, $A,B: \rmB\cC \to \rmB\cD$ are 3-functors, $\eta,\zeta: A \Rightarrow B$ are transformations, and $m,n: \eta \Rrightarrow \zeta$ are modifications.
A perturbation $\rho: m \RRightarrow n$ consists of a 2-cell $\rho_* : m_* \Rightarrow n_*$
satisfying the following coherence condition c.f.~\cite[Def.~4.21]{MR3076451}:
\begin{enumerate}[label=(P-\arabic*)]
\item
\label{Petrubtation:Condition}
For each $c\in \cC$, the following square commutes:
$$
\begin{tikzpicture}[baseline= (a).base]\node[scale=.8] (a) at (0,0){\begin{tikzcd}
\tikzmath{
\node[DarkGreen] (eta) at (0,0) {$\scriptstyle \eta_*$};
\node (Ac1) at (1,0) {$\scriptstyle A(c)$};
\coordinate (Ac2) at (1,1);
\node (Bc) at (0,3) {$\scriptstyle B(c)$};
\node[orange] (zeta) at (1,3) {$\scriptstyle \zeta_*$};
\node[draw,rectangle, thick, rounded corners=5pt] (m) at (0,1) {$\scriptstyle m_*$};
\draw[thick, dashed, red, rounded corners=5pt] (-.5,.7) rectangle (.5,1.4);
\draw (Ac1) to[in=-90,out=90] (Ac2) to[in=-90,out=90] (Bc);
\draw[DarkGreen,thick,mid>] (eta) to[in=-90,out=90] (m);
\draw[orange, thick, mid>] (m) to[in=-90,out=90] (zeta);
}
\arrow[Rightarrow,r,"m_c"]
\arrow[Rightarrow,d,red,"\rho_*"]
&
\tikzmath{
\node[DarkGreen] (eta) at (0,0) {$\scriptstyle \eta_*$};
\node (Ac) at (1,0) {$\scriptstyle A(c)$};
\coordinate (Bc1) at (0,2);
\node (Bc2) at (0,3) {$\scriptstyle B(c)$};
\node[orange] (zeta) at (1,3) {$\scriptstyle \zeta_*$};
\node[draw,rectangle, thick, rounded corners=5pt] (m) at (1,2) {$\scriptstyle m_*$};
\draw[thick, dashed, red, rounded corners=5pt] (.5,1.7) rectangle (1.5,2.4);
\draw (Ac) to[in=-90,out=90] (Bc1) to[in=-90,out=90] (Bc2);
\draw[DarkGreen,thick,mid>] (eta) to[in=-90,out=90] (m);
\draw[orange, thick, mid>] (m) to[in=-90,out=90] (zeta);
}
\arrow[Rightarrow,d,"\rho_*"]
\\
\tikzmath{
\node[DarkGreen] (eta) at (0,0) {$\scriptstyle \eta_*$};
\node (Ac1) at (1,0) {$\scriptstyle A(c)$};
\coordinate (Ac2) at (1,1);
\node (Bc) at (0,3) {$\scriptstyle B(c)$};
\node[orange] (zeta) at (1,3) {$\scriptstyle \zeta_*$};
\node[draw,rectangle, thick, rounded corners=5pt] (m) at (0,1) {$\scriptstyle n_*$};
\draw (Ac1) to[in=-90,out=90] (Ac2) to[in=-90,out=90] (Bc);
\draw[DarkGreen,thick,mid>] (eta) to[in=-90,out=90] (m);
\draw[orange, thick, mid>] (m) to[in=-90,out=90] (zeta);
}
\arrow[Rightarrow,r,"n_c"]
&
\tikzmath{
\node[DarkGreen] (eta) at (0,0) {$\scriptstyle \eta_*$};
\node (Ac) at (1,0) {$\scriptstyle A(c)$};
\coordinate (Bc1) at (0,2);
\node (Bc2) at (0,3) {$\scriptstyle B(c)$};
\node[orange] (zeta) at (1,3) {$\scriptstyle \zeta_*$};
\node[draw,rectangle, thick, rounded corners=5pt] (m) at (1,2) {$\scriptstyle n_*$};
\draw (Ac) to[in=-90,out=90] (Bc1) to[in=-90,out=90] (Bc2);
\draw[DarkGreen,thick,mid>] (eta) to[in=-90,out=90] (m);
\draw[orange, thick, mid>] (m) to[in=-90,out=90] (zeta);
}
\end{tikzcd}};\end{tikzpicture}
$$
\end{enumerate}
\end{defn}



\section{Coherence proofs for strictification}
\label{sec:CoherenceProofs}

This appendix contains all proofs from \S\ref{sec:Truncation} which amount to checking/using various coherence conditions from Appendix \ref{sec:Weak3CategoryCoherences}.
As most of the proofs in this section are similar, we provide full detail for one part of each coherence proof below, and we explain the components of the proof in other parts whose details are left to the reader.
To make the commutative diagrams more readable, we suppress all whiskering notation, including Notation \ref{nota:DashedBoxForWhiskering}.

\subsection{Coherence proofs for Strictifying 1-morphisms \S\ref{sec:Strictifying1Morphisms}}
\label{sec:CoherenceProofs1Morphisms}

We remind the reader that $(\rmB\cC,\pi^\cC), (\rmB\cD,\pi^\cD)$ are two objects in $\TriCat_G^{\pt}$, so that $\cC,\cD$ are $\Gray$-monoids, and $(A,\alpha)\in \TriCat_G( (\rmB\cC,\pi^\cC)\to (\rmB\cD,\pi^\cD))$.
We specified data for $(B,\beta): (\rmB\cC,\pi^\cC)\to (\rmB\cD,\pi^\cD)$ above in \S\ref{sec:Strictifying1Morphisms}, together with data for $(\gamma,\id): (B,\beta) \Rightarrow (A, \alpha)$.

\begin{nota}
\label{nota:ShadedBoxes}
In this section, we will use a shorthand notation for 1-cells in $\cD$ for proofs using commutative diagrams.
For $x\in \cC(a\to b)$, $y\in \cC(b\to c)$, and $z\in \cC(c\to d)$, we will denote their corresponding image in $\cD$ under $A$ as a small shaded square, e.g.,
$$
\tikzmath{
\draw (0,-.3) -- (0,.3);
\filldraw[fill=\gColor, thick] (-.1,-.1) rectangle (.1,.1);
}
:=
\tikzmath{
\node (Aa) at (0,-.8) {$\scriptstyle A(a)$};
\node (Ab) at (0,.8) {$\scriptstyle A(b)$};
\node[draw,rectangle, thick, rounded corners=5pt] (Ax) at (0,0) {$\scriptstyle A(x)$};
\draw (Aa) to[in=-90,out=90] (Ax);
\draw (Ax) to[in=-90,out=90] (Ab);
}
\qquad\qquad
\tikzmath{
\draw (0,-.3) -- (0,.3);
\filldraw[fill=\hColor, thick] (-.1,-.1) rectangle (.1,.1);
}
:=
\tikzmath{
\node (Aa) at (0,-.8) {$\scriptstyle A(b)$};
\node (Ab) at (0,.8) {$\scriptstyle A(c)$};
\node[draw,rectangle, thick, rounded corners=5pt] (Ax) at (0,0) {$\scriptstyle A(y)$};
\draw (Aa) to[in=-90,out=90] (Ax);
\draw (Ax) to[in=-90,out=90] (Ab);
}
\qquad\qquad
\tikzmath{
\draw (0,-.3) -- (0,.3);
\filldraw[fill=\kColor, thick] (-.1,-.1) rectangle (.1,.1);
}
:=
\tikzmath{
\node (Aa) at (0,-.8) {$\scriptstyle A(c)$};
\node (Ab) at (0,.8) {$\scriptstyle A(d)$};
\node[draw,rectangle, thick, rounded corners=5pt] (Ax) at (0,0) {$\scriptstyle A(z)$};
\draw (Aa) to[in=-90,out=90] (Ax);
\draw (Ax) to[in=-90,out=90] (Ab);
}
$$
While the 1-composition $\xo$ in $\cD$ is stacking of diagrams, we denote $A$ applied to a 1-composite in $\cC$ by vertically joining the shaded squares:
$$
\tikzmath{
\draw (0,-.5) -- (0,.5);
\filldraw[fill=\hColor, thick] (-.1,.1) rectangle (.1,.3);
\filldraw[fill=\gColor, thick] (-.1,-.3) rectangle (.1,-.1);
}
:=
\tikzmath{
\node (Aa) at (0,-.8) {$\scriptstyle A(a)$};
\node (Ab) at (0,.8) {$\scriptstyle A(c)$};
\node[draw,rectangle, thick, rounded corners=5pt] (Ax) at (0,0) {$\scriptstyle A(y)\xo A(x)$};
\draw (Aa) to[in=-90,out=90] (Ax);
\draw (Ax) to[in=-90,out=90] (Ab);
}
\qquad\qquad
\tikzmath{
\draw (0,-.4) -- (0,.4);
\filldraw[fill=\hColor, thick] (-.1,0) rectangle (.1,.2);
\filldraw[fill=\gColor, thick] (-.1,-.2) rectangle (.1,0);
}
:=
\tikzmath{
\node (Aa) at (0,-.8) {$\scriptstyle A(a)$};
\node (Ab) at (0,.8) {$\scriptstyle A(c)$};
\node[draw,rectangle, thick, rounded corners=5pt] (Ax) at (0,0) {$\scriptstyle A(y\xo x)$};
\draw (Aa) to[in=-90,out=90] (Ax);
\draw (Ax) to[in=-90,out=90] (Ab);
}\,.
$$
Since 1-composition in $\cC$ and $\cD$ are both strict, we denote a triple 1-composite by stacking three boxes, and we denote $A$ applied to a triple 1-composite by vertically joining three boxes:
$$
\qquad
\tikzmath{
\draw (0,-.7) -- (0,.7);
\filldraw[fill=\kColor, thick] (-.1,.3) rectangle (.1,.5);
\filldraw[fill=\hColor, thick] (-.1,-.1) rectangle (.1,.1);
\filldraw[fill=\gColor, thick] (-.1,-.5) rectangle (.1,-.3);
}
:=
\tikzmath{
\node (Aa) at (0,-.8) {$\scriptstyle A(b)$};
\node (Ab) at (0,.8) {$\scriptstyle A(d)$};
\node[draw,rectangle, thick, rounded corners=5pt] (Ax) at (0,0) {$\scriptstyle A(z)\xo A(y)\xo A(x)$};
\draw (Aa) to[in=-90,out=90] (Ax);
\draw (Ax) to[in=-90,out=90] (Ab);
}
\qquad\qquad
\tikzmath{
\draw (0,-.5) -- (0,.5);
\filldraw[fill=\kColor, thick] (-.1,.1) rectangle (.1,.3);
\filldraw[fill=\hColor, thick] (-.1,-.1) rectangle (.1,.1);
\filldraw[fill=\gColor, thick] (-.1,-.3) rectangle (.1,-.1);
}
:=
\tikzmath{
\node (Aa) at (0,-.8) {$\scriptstyle A(a)$};
\node (Ab) at (0,.8) {$\scriptstyle A(d)$};
\node[draw,rectangle, thick, rounded corners=5pt] (Ax) at (0,0) {$\scriptstyle A(z\xo y\xo x)$};
\draw (Aa) to[in=-90,out=90] (Ax);
\draw (Ax) to[in=-90,out=90] (Ab);
}\,.
$$
For the $\xz$ composite of 1-cells, we use the nudging convention as in \eqref{eq:Nudging}.
We denote $A$ applied to a $\xz$ composite of 1-cells in $\cC$ by joining the shaded boxes along corners.
For the following example, given $x_1 \in \cC(a_1 \to b_1)$, $y_1\in \cC(b_1\to c_1)$, $x_2\in \cC(a_2\to b_2)$, and $y_2\in \cC(b_2\to c_2)$, we write
$$
\tikzmath{
\draw (0,-.3) -- (0,.3);
\filldraw[fill=\gColor, thick] (-.1,-.1) rectangle (.1,.1);
}
:=
\tikzmath{
\node (Aa) at (0,-.8) {$\scriptstyle A(a_1)$};
\node (Ab) at (0,.8) {$\scriptstyle A(b_1)$};
\node[draw,rectangle, thick, rounded corners=5pt] (Ax) at (0,0) {$\scriptstyle A(x_1)$};
\draw (Aa) to[in=-90,out=90] (Ax);
\draw (Ax) to[in=-90,out=90] (Ab);
}
\qquad\qquad
\tikzmath{
\draw (0,-.3) -- (0,.3);
\filldraw[fill=\hColor, thick] (-.1,-.1) rectangle (.1,.1);
}
:=
\tikzmath{
\node (Aa) at (0,-.8) {$\scriptstyle A(b_1)$};
\node (Ab) at (0,.8) {$\scriptstyle A(c_1)$};
\node[draw,rectangle, thick, rounded corners=5pt] (Ax) at (0,0) {$\scriptstyle A(y_1)$};
\draw (Aa) to[in=-90,out=90] (Ax);
\draw (Ax) to[in=-90,out=90] (Ab);
}
\qquad\qquad
\tikzmath{
\draw (0,-.3) -- (0,.3);
\filldraw[fill=\kColor, thick] (-.1,-.1) rectangle (.1,.1);
}
:=
\tikzmath{
\node (Aa) at (0,-.8) {$\scriptstyle A(a_2)$};
\node (Ab) at (0,.8) {$\scriptstyle A(b_2)$};
\node[draw,rectangle, thick, rounded corners=5pt] (Ax) at (0,0) {$\scriptstyle A(x_2)$};
\draw (Aa) to[in=-90,out=90] (Ax);
\draw (Ax) to[in=-90,out=90] (Ab);
}
\qquad\qquad
\tikzmath{
\draw (0,-.3) -- (0,.3);
\filldraw[fill, thick] (-.1,-.1) rectangle (.1,.1);
}
:=
\tikzmath{
\node (Aa) at (0,-.8) {$\scriptstyle A(b_2)$};
\node (Ab) at (0,.8) {$\scriptstyle A(c_2)$};
\node[draw,rectangle, thick, rounded corners=5pt] (Ax) at (0,0) {$\scriptstyle A(y_2)$};
\draw (Aa) to[in=-90,out=90] (Ax);
\draw (Ax) to[in=-90,out=90] (Ab);
}
$$
We then write
$$
\tikzmath{
\draw (-.2,-.5) -- (-.2,.5);
\draw (.2,-.5) -- (.2,.5);
\filldraw[fill=\gColor, thick] (-.3,.1) rectangle (-.1,.3);
\filldraw[fill=\kColor, thick] (.1,-.3) rectangle (.3,-.1);
}
:=
\tikzmath{
\node (Aa1) at (-.4,-.8) {$\scriptstyle A(a_1)$};
\node (Ab1) at (-.4,.8) {$\scriptstyle A(b_1)$};
\node (Aa2) at (.4,-.8) {$\scriptstyle A(a_2)$};
\node (Ab2) at (.4,.8) {$\scriptstyle A(b_2)$};
\node[draw,rectangle, thick, rounded corners=5pt] (Ax) at (0,0) {$\scriptstyle A(x_1)\xz A(x_2)$};
\draw (Aa1) to[in=-90,out=90] (-.4,-.23);
\draw (-.4,.23) to[in=-90,out=90] (Ab1);
\draw (Aa2) to[in=-90,out=90] (.4,-.23);
\draw (.4,.23) to[in=-90,out=90] (Ab2);
}
\qquad\qquad
\tikzmath{
\draw[double] (0,-.4) -- (0,.4);
\filldraw[fill=\gColor, thick] (-.2,0) rectangle (0,.2);
\filldraw[fill=\kColor, thick] (0,-.2) rectangle (.2,0);
}
:=
\tikzmath{
\node (Aa) at (0,-.8) {$\scriptstyle A(a_1\xz a_2)$};
\node (Ab) at (0,.8) {$\scriptstyle A(b_1\xz b_2)$};
\node[draw,rectangle, thick, rounded corners=5pt] (Ax) at (0,0) {$\scriptstyle A(x_1\xz x_2)$};
\draw[double] (Aa) to[in=-90,out=90] (Ax);
\draw[double] (Ax) to[in=-90,out=90] (Ab);
}\,.
$$
In this notation, we would write the following diagram for $A$ applied to the following composites:
$$
\tikzmath{
\draw[double] (0,-.6) -- (0,.6);
\filldraw[fill=\hColor, thick] (-.2,.2) rectangle (0,.4);
\filldraw[fill, thick] (0,0) rectangle (.2,.2);
\filldraw[fill=\gColor, thick] (-.2,-.2) rectangle (0,0);
\filldraw[fill=\kColor, thick] (0,-.4) rectangle (.2,-.2);
}
:=
\tikzmath{
\node (Aa) at (0,-.8) {$\scriptstyle A(a_1\xz a_2)$};
\node (Ab) at (0,.8) {$\scriptstyle A(c_1\xz c_2)$};
\node[draw,rectangle, thick, rounded corners=5pt] (Ax) at (0,0) {$\scriptstyle A(y_1\xz y_2) \xo A(x_1\xz x_2)$};
\draw[double] (Aa) to[in=-90,out=90] (Ax);
\draw[double] (Ax) to[in=-90,out=90] (Ab);
}
\qquad\qquad
\tikzmath{
\draw[double] (0,-.6) -- (0,.6);
\filldraw[fill=\hColor, thick] (-.2,.2) rectangle (0,.4);
\filldraw[fill=\gColor, thick] (-.2,0) rectangle (0,.2);
\filldraw[fill, thick] (0,-.2) rectangle (.2,0);
\filldraw[fill=\kColor, thick] (0,-.4) rectangle (.2,-.2);
}
:=
\tikzmath{
\node (Aa) at (0,-.8) {$\scriptstyle A(a_1\xz a_2)$};
\node (Ab) at (0,.8) {$\scriptstyle A(c_1\xz c_2)$};
\node[draw,rectangle, thick, rounded corners=5pt] (Ax) at (0,0) {$\scriptstyle A(y_1\xo x_1) \xz A(y_2\xo x_2)$};
\draw[double] (Aa) to[in=-90,out=90] (Ax);
\draw[double] (Ax) to[in=-90,out=90] (Ab);
}
\,.
$$
\end{nota}

\begin{proof}[Proof of Lem.~\ref{lem:Truncation2Functor}: $(B,B^1,B^2)$ is a 2-functor]
We must check \ref{Functor:A} for $B$.
We provide a complete proof for \ref{Functor:A.associative}, and leave most of \ref{Functor:A.unital} as an exercise for the reader.

\item[\ref{Functor:A.associative}]
For $x\in \cC(g_\cC\to h_\cC)$, $y\in \cC(h_\cC\to k_\cC)$, and $z\in \cC(k_\cC\to \ell_\cC)$, we use the following shorthand as in Notation \ref{nota:ShadedBoxes}:
$$
\tikzmath{
\draw (0,-.3) -- (0,.3);
\filldraw[fill=\gColor, thick] (-.1,-.1) rectangle (.1,.1);
}
:=
\tikzmath{
\node (Aa) at (0,-.8) {$\scriptstyle A(g_\cC)$};
\node (Ab) at (0,.8) {$\scriptstyle A(h_\cC)$};
\node[draw,rectangle, thick, rounded corners=5pt] (Ax) at (0,0) {$\scriptstyle A(x)$};
\draw (Aa) to[in=-90,out=90] (Ax);
\draw (Ax) to[in=-90,out=90] (Ab);
}
\qquad\qquad
\tikzmath{
\draw (0,-.3) -- (0,.3);
\filldraw[fill=\hColor, thick] (-.1,-.1) rectangle (.1,.1);
}
:=
\tikzmath{
\node (Aa) at (0,-.8) {$\scriptstyle A(h_\cC)$};
\node (Ab) at (0,.8) {$\scriptstyle A(k_\cC)$};
\node[draw,rectangle, thick, rounded corners=5pt] (Ax) at (0,0) {$\scriptstyle A(y)$};
\draw (Aa) to[in=-90,out=90] (Ax);
\draw (Ax) to[in=-90,out=90] (Ab);
}
\qquad\qquad
\tikzmath{
\draw (0,-.3) -- (0,.3);
\filldraw[fill=\kColor, thick] (-.1,-.1) rectangle (.1,.1);
}
:=
\tikzmath{
\node (Aa) at (0,-.8) {$\scriptstyle A(k_\cC)$};
\node (Ab) at (0,.8) {$\scriptstyle A(\ell_\cC)$};
\node[draw,rectangle, thick, rounded corners=5pt] (Ax) at (0,0) {$\scriptstyle A(z)$};
\draw (Aa) to[in=-90,out=90] (Ax);
\draw (Ax) to[in=-90,out=90] (Ab);
}
$$

$$
\begin{tikzpicture}[baseline= (a).base]\node[scale=.8] (a) at (0,0){\begin{tikzcd}
\tikzmath{
\coordinate (x) at (0,-.7);
\coordinate (y) at (0,0);
\coordinate (z) at (0,.7);
\draw (0,-1.1) -- (0,1.1);
\RectangleMorphism{(x)}{\gColor}
\RectangleMorphism{(y)}{\hColor}
\RectangleMorphism{(z)}{\kColor}
\AlphaAction{(x)}{.25}{.15}
\AlphaAction{(y)}{.25}{.15}
\AlphaAction{(z)}{.25}{.15}
}
\arrow[r, Rightarrow]
\arrow[d, Rightarrow]
&
\tikzmath{
\coordinate (x) at (0,-.7);
\coordinate (y) at (0,0);
\coordinate (z) at (0,.7);
\draw (0,-1.1) -- (0,1.1);
\RectangleMorphism{(x)}{\gColor}
\RectangleMorphism{(y)}{\hColor}
\RectangleMorphism{(z)}{\kColor}
\DoubleAlphaAction{(x)}{.25}{.15}{.35}
\AlphaAction{(z)}{.25}{.15}
}
\arrow[r, Rightarrow]
\arrow[d, Rightarrow]
&
\tikzmath{
\coordinate (x) at (0,-.7);
\coordinate (y) at (0,0);
\coordinate (z) at (0,.7);
\draw (0,-1.1) -- (0,1.1);
\RectangleMorphism{(x)}{\gColor}
\RectangleMorphism{(y)}{\hColor}
\RectangleMorphism{(z)}{\kColor}
\LongAlphaAction{($ (x) + (0,.35) $)}{.25}{.5}{.35}
\AlphaAction{(z)}{.25}{.15}
}
\arrow[r, Rightarrow]
\arrow[d, Rightarrow]
&
\tikzmath{
\coordinate (x) at (0,-.2);
\coordinate (y) at (0,0);
\coordinate (z) at (0,.7);
\draw (0,-.6) -- (0,1.1);
\RectangleMorphism{(x)}{\gColor}
\RectangleMorphism{(y)}{\hColor}
\RectangleMorphism{(z)}{\kColor}
\LongAlphaAction{($ (x) + (0,.1) $)}{.25}{.25}{.1}
\AlphaAction{(z)}{.25}{.15}
}
\arrow[d, Rightarrow]
\\
\tikzmath{
\coordinate (x) at (0,-.7);
\coordinate (y) at (0,0);
\coordinate (z) at (0,.7);
\draw (0,-1.1) -- (0,1.1);
\RectangleMorphism{(x)}{\gColor}
\RectangleMorphism{(y)}{\hColor}
\RectangleMorphism{(z)}{\kColor}
\AlphaAction{(x)}{.25}{.15}
\DoubleAlphaAction{(y)}{.25}{.15}{.35}
}
\arrow[r, Rightarrow]
\arrow[d, Rightarrow]
&
\tikzmath{
\coordinate (x) at (0,-.7);
\coordinate (y) at (0,0);
\coordinate (z) at (0,.7);
\draw (0,-1.1) -- (0,1.1);
\RectangleMorphism{(x)}{\gColor}
\RectangleMorphism{(y)}{\hColor}
\RectangleMorphism{(z)}{\kColor}
\TripleAlphaAction{(y)}{.25}{.15}
}
\arrow[r, Rightarrow]
\arrow[d, Rightarrow]
&
\tikzmath{
\coordinate (x) at (0,-.7);
\coordinate (y) at (0,0);
\coordinate (z) at (0,.7);
\draw (0,-1.1) -- (0,1.1);
\RectangleMorphism{(x)}{\gColor}
\RectangleMorphism{(y)}{\hColor}
\RectangleMorphism{(z)}{\kColor}
\draw[thick, red, mid>] 
(-.125,.35)
 .. controls ++(90:{.5*.25}) and ++(270:{.25}) .. 
(.25,.7) 
.. controls ++(90:.25) and ++(90:.25) .. 
(-.25,.85) 
-- 
(-.25,-.85) 
.. controls ++(270:.25) and ++(270:.25) .. 
(.25,-.7) 
--
(.25,0) 
.. controls ++(90:{.25}) and ++(270:{.5*.25}) .. 
(-.125,.35);
}
\arrow[r, Rightarrow]
\arrow[d, Rightarrow]
&
\tikzmath{
\coordinate (x) at (0,-.2);
\coordinate (y) at (0,0);
\coordinate (z) at (0,.7);
\draw (0,-.6) -- (0,1.1);
\RectangleMorphism{(x)}{\gColor}
\RectangleMorphism{(y)}{\hColor}
\RectangleMorphism{(z)}{\kColor}
\draw[thick, red, mid>] 
(-.125,.35)
 .. controls ++(90:{.5*.25}) and ++(270:{.25}) .. 
(.25,.7) 
.. controls ++(90:.25) and ++(90:.25) .. 
(-.25,.85) 
-- 
(-.25,-.35) 
.. controls ++(270:.25) and ++(270:.25) .. 
(.25,-.2) 
--
(.25,0) 
.. controls ++(90:{.25}) and ++(270:{.5*.25}) .. 
(-.125,.35);
}
\arrow[d, Rightarrow]
\\
\tikzmath{
\coordinate (x) at (0,-.7);
\coordinate (y) at (0,0);
\coordinate (z) at (0,.7);
\draw (0,-1.1) -- (0,1.1);
\RectangleMorphism{(x)}{\gColor}
\RectangleMorphism{(y)}{\hColor}
\RectangleMorphism{(z)}{\kColor}
\AlphaAction{(x)}{.25}{.15}
\LongAlphaAction{($ (y) + (0,.35) $)}{.25}{.5}{.35}
}
\arrow[r, Rightarrow]
\arrow[d, Rightarrow]
&
\tikzmath{
\coordinate (x) at (0,-.7);
\coordinate (y) at (0,0);
\coordinate (z) at (0,.7);
\draw (0,-1.1) -- (0,1.1);
\RectangleMorphism{(x)}{\gColor}
\RectangleMorphism{(y)}{\hColor}
\RectangleMorphism{(z)}{\kColor}
\draw[thick, red, mid>] 
(-.125,-.35)
 .. controls ++(90:{.5*.25}) and ++(270:{.25}) .. 
(.25,0) 
--
(.25,.7) 
.. controls ++(90:.25) and ++(90:.25) .. 
(-.25,.85) 
-- 
(-.25,-.85) 
.. controls ++(270:.25) and ++(270:.25) .. 
(.25,-.7) 
.. controls ++(90:{.25}) and ++(270:{.5*.25}) .. 
(-.125,-.35);
}
\arrow[r, Rightarrow]
\arrow[d, Rightarrow]
&
\tikzmath{
\coordinate (x) at (0,-.7);
\coordinate (y) at (0,0);
\coordinate (z) at (0,.7);
\draw (0,-1.1) -- (0,1.1);
\RectangleMorphism{(x)}{\gColor}
\RectangleMorphism{(y)}{\hColor}
\RectangleMorphism{(z)}{\kColor}
\LongAlphaAction{(y)}{.25}{.85}{.7}
}
\arrow[r, Rightarrow, "A^2_{y,x}"]
\arrow[d, Rightarrow, swap, "A^2_{z,y}"]
\arrow[dr, phantom, "\text{\scriptsize \ref{Functor:A.associative}}"]
&
\tikzmath{
\coordinate (x) at (0,-.2);
\coordinate (y) at (0,0);
\coordinate (z) at (0,.7);
\draw (0,-.6) -- (0,1.1);
\RectangleMorphism{(x)}{\gColor}
\RectangleMorphism{(y)}{\hColor}
\RectangleMorphism{(z)}{\kColor}
\LongAlphaAction{($ (y) + (0,.25) $)}{.25}{.65}{.45}
}
\arrow[d, Rightarrow, "A^2_{z,y\xo x}"]
\\
\tikzmath{
\coordinate (x) at (0,-.7);
\coordinate (y) at (0,0);
\coordinate (z) at (0,.2);
\draw (0,-1.1) -- (0,.6);
\RectangleMorphism{(x)}{\gColor}
\RectangleMorphism{(y)}{\hColor}
\RectangleMorphism{(z)}{\kColor}
\AlphaAction{(x)}{.25}{.15}
\LongAlphaAction{($ (y) + (0,.1) $)}{.25}{.25}{.1}
}
\arrow[r, Rightarrow]
&
\tikzmath{
\coordinate (x) at (0,-.7);
\coordinate (y) at (0,0);
\coordinate (z) at (0,.2);
\draw (0,-1.1) -- (0,.6);
\RectangleMorphism{(x)}{\gColor}
\RectangleMorphism{(y)}{\hColor}
\RectangleMorphism{(z)}{\kColor}
\draw[thick, red, mid>] 
(-.125,-.35)
 .. controls ++(90:{.5*.25}) and ++(270:{.25}) .. 
(.25,0)
--
(.25,.2) 
.. controls ++(90:.25) and ++(90:.25) .. 
(-.25,.35) 
-- 
(-.25,-.85) 
.. controls ++(270:.25) and ++(270:.25) .. 
(.25,-.7) 
.. controls ++(90:{.25}) and ++(270:{.5*.25}) .. 
(-.125,-.35);
}
\arrow[r, Rightarrow]
&
\tikzmath{
\coordinate (x) at (0,-.7);
\coordinate (y) at (0,0);
\coordinate (z) at (0,.2);
\draw (0,-1.1) -- (0,.6);
\RectangleMorphism{(x)}{\gColor}
\RectangleMorphism{(y)}{\hColor}
\RectangleMorphism{(z)}{\kColor}
\LongAlphaAction{($ (y) - (0,.25) $)}{.25}{.65}{.45}
}
\arrow[r, Rightarrow, "A^2_{z\xo y, x}"]
&
\tikzmath{
\coordinate (x) at (0,-.2);
\coordinate (y) at (0,0);
\coordinate (z) at (0,.2);
\draw (0,-.6) -- (0,.6);
\RectangleMorphism{(x)}{\gColor}
\RectangleMorphism{(y)}{\hColor}
\RectangleMorphism{(z)}{\kColor}
\LongAlphaAction{($ (y) $)}{.25}{.35}{.2}
}
\end{tikzcd}};\end{tikzpicture}
$$
Every square except for the bottom right square commutes by functoriality of 1-cell composition $\xo$ in a $\Gray$-monoid, that is, applying two 2-cells locally to non-overlapping regions in a 1-cell commutes.
The bottom right square commutes by \ref{Functor:A.associative} applied to the underlying 2-functor of $A$.

\item[\ref{Functor:A.unital}]
Follows from the properties of the adjoint equivalence $\alpha$ (see Remark \ref{rem:CombineAlphaCommutationSquares} below) together with \ref{Functor:A.unital} for the underlying 2-functor of $A$.
\end{proof}

\begin{rem}
\label{rem:CombineAlphaCommutationSquares}
In subsequent proofs, we will freely combine squares that commute by functoriality of 1-cell composition $\xo$ when the involved 2-cells applied locally are part of the biadjoint biequivalence $\alpha_*$ \eqref{eq:BiadjointBiequivalence} or the adjoint equivalences $\alpha_g$.
We will then simply state this larger face commutes by the properties of the adjoint equivalence $\alpha$, i.e., the properties of the biadjoint biequivalence $\alpha_*$ \eqref{eq:BiadjointBiequivalence} and the properties of the adjoint equivalences $\alpha_g$.
\end{rem}

\begin{proof}[Proof of Lem.~\ref{lem:Truncation3Functor}: $(B,\mu^B, \iota^B, \omega, \ell,r)$ is a weak 3-functor $\rmB \cC \to \rmB \cD$]

\item[\ref{Functor:mu.natural}] 
Every component which makes up $\mu^B$, 
in Construction \ref{const:B3Functor}, 
especially $\mu^A$, is natural.

\item[\ref{Functor:mu.monoidal}]
For $x_1 \in \cC(a_1 \to b_1)$, $y_1\in \cC(b_1\to c_1)$, $x_2\in \cC(a_2\to b_2)$, and $y_2\in \cC(b_2\to c_2)$, we use the following shorthand as in Notation \ref{nota:ShadedBoxes}:
$$
\tikzmath{
\draw (0,-.3) -- (0,.3);
\filldraw[fill=\gColor, thick] (-.1,-.1) rectangle (.1,.1);
}
:=
\tikzmath{
\node (Aa) at (0,-.8) {$\scriptstyle A(g_\cC)$};
\node (Ab) at (0,.8) {$\scriptstyle A(h_\cC)$};
\node[draw,rectangle, thick, rounded corners=5pt] (Ax) at (0,0) {$\scriptstyle A(x_1)$};
\draw (Aa) to[in=-90,out=90] (Ax);
\draw (Ax) to[in=-90,out=90] (Ab);
}
\qquad\qquad
\tikzmath{
\draw (0,-.3) -- (0,.3);
\filldraw[fill=\hColor, thick] (-.1,-.1) rectangle (.1,.1);
}
:=
\tikzmath{
\node (Aa) at (0,-.8) {$\scriptstyle A(h_\cC)$};
\node (Ab) at (0,.8) {$\scriptstyle A(k_\cC)$};
\node[draw,rectangle, thick, rounded corners=5pt] (Ax) at (0,0) {$\scriptstyle A(y_1)$};
\draw (Aa) to[in=-90,out=90] (Ax);
\draw (Ax) to[in=-90,out=90] (Ab);
}
\qquad\qquad
\tikzmath{
\draw (0,-.3) -- (0,.3);
\filldraw[fill=\kColor, thick] (-.1,-.1) rectangle (.1,.1);
}
:=
\tikzmath{
\node (Aa) at (0,-.8) {$\scriptstyle A(p_\cC)$};
\node (Ab) at (0,.8) {$\scriptstyle A(q_\cC)$};
\node[draw,rectangle, thick, rounded corners=5pt] (Ax) at (0,0) {$\scriptstyle A(x_2)$};
\draw (Aa) to[in=-90,out=90] (Ax);
\draw (Ax) to[in=-90,out=90] (Ab);
}
\qquad\qquad
\tikzmath{
\draw (0,-.3) -- (0,.3);
\filldraw[fill, thick] (-.1,-.1) rectangle (.1,.1);
}
:=
\tikzmath{
\node (Aa) at (0,-.8) {$\scriptstyle A(q_\cC)$};
\node (Ab) at (0,.8) {$\scriptstyle A(r_\cC)$};
\node[draw,rectangle, thick, rounded corners=5pt] (Ax) at (0,0) {$\scriptstyle A(y_2)$};
\draw (Aa) to[in=-90,out=90] (Ax);
\draw (Ax) to[in=-90,out=90] (Ab);
}
$$
For the following diagram to fit on one page, we compress the definition of $\mu^B_{x,y}$ from Construction \ref{const:B3Functor} into four steps
\begin{equation}
\label{eq:CompressedMuB}
\mu^B_{x,y}
:=
\left(
\tikzmath{
\coordinate (x) at (-.3,0);
\coordinate (y) at (.3,-.7);
\draw (-.3,-1.2) -- (-.3,.1) .. controls ++(90:.3) and ++(270:.3) .. (-.015,.6) -- (-.015,1);
\draw (.3,-1.2) -- (.3,.1) .. controls ++(90:.3) and ++(270:.3) .. (.015,.6) -- (.015,1);
\RectangleMorphism{(x)}{\gColor}
\RectangleMorphism{(y)}{\kColor}
\AlphaAction{(x)}{.25}{.15}
\AlphaAction{(y)}{.25}{.15}
}
\Rightarrow
\tikzmath{
\coordinate (x) at (-.2,-.1);
\coordinate (y) at (.2,-.5);
\draw (-.2,-1.2) -- (-.2,.1) .. controls ++(90:.3) and ++(270:.3) .. (-.015,.6) -- (-.015,1);
\draw (.2,-1.2) -- (.2,.1) .. controls ++(90:.3) and ++(270:.3) .. (.015,.6) -- (.015,1);
\RectangleMorphism{(x)}{\gColor}
\RectangleMorphism{(y)}{\kColor}
\AlphaAction{(0,-.35)}{.5}{.3}
}
\overset{(\alpha^2_{h,q})^{-1}}{\Rightarrow}
\tikzmath{
\coordinate (x) at (-.2,-.6);
\coordinate (y) at (.2,-.9);
\node[draw,rectangle, thick, rounded corners=5pt] (m) at (0,0) {$\scriptstyle \mu^A_{h,q}$};
\draw (-.2,-1.6) -- (x) to[in=-110,out=90] (m);
\draw (.2,-1.6) -- (y) to[in=-70,out=90] (m);
\draw[double] (m) to[in=-90,out=90] (0,.8);
\RectangleMorphism{(x)}{\gColor}
\RectangleMorphism{(y)}{\kColor}
\LongAlphaAction{(0,-.4)}{.7}{.6}{.3}
}
\overset{\mu^A}{\Rightarrow}
\tikzmath{
\coordinate (x) at (-.1,.2);
\coordinate (y) at (.1,0);
\node[draw,rectangle, thick, rounded corners=5pt] (m) at (0,-.7) {$\scriptstyle \mu^A_{g,p}$};
\draw (-.2,-1.6) to[in=-110,out=90] (m);
\draw (.2,-1.6) to[in=-70,out=90] (m);
\draw[double] (m) to[in=-90,out=90] (0,.8);
\RectangleMorphism{(x)}{\gColor}
\RectangleMorphism{(y)}{\kColor}
\LongAlphaAction{(0,-.4)}{.7}{.6}{.3}
}
\overset{(\alpha^2_{g,p})^{-1}}{\Rightarrow}
\tikzmath{
\coordinate (x) at (-.1,.5);
\coordinate (y) at (.1,.3);
\draw (-.3,-.8) .. controls ++(90:.3) and ++(270:.3) .. (-.015,-.2) -- (-.015,1);
\draw (.3,-.8) .. controls ++(90:.3) and ++(270:.3) .. (.015,-.2) -- (.015,1);
\RectangleMorphism{(x)}{\gColor}
\RectangleMorphism{(y)}{\kColor}
\AlphaAction{(0,.4)}{.4}{.2}
}
\right),
\end{equation}
we suppress as many interchangers as possible, and we combine commuting squares involving only $\alpha_*$ and the $\alpha_g$ as in Remark \ref{rem:CombineAlphaCommutationSquares}.
$$
\begin{tikzpicture}[baseline= (a).base]\node[scale=.8] (a) at (0,0){\begin{tikzcd}[column sep=4em]
\tikzmath{
\coordinate (y1) at (-.3,0);
\coordinate (y2) at (.3,-.7);
\coordinate (x1) at (-.3,-1.4);
\coordinate (x2) at (.3,-2.1);
\draw (-.3,-2.6) -- (-.3,.1) .. controls ++(90:.3) and ++(270:.3) .. (-.015,.6) -- (-.015,1);
\draw (.3,-2.6) -- (.3,.1) .. controls ++(90:.3) and ++(270:.3) .. (.015,.6) -- (.015,1);
\RectangleMorphism{(x1)}{\gColor}
\RectangleMorphism{(x2)}{\kColor}
\RectangleMorphism{(y1)}{\hColor}
\RectangleMorphism{(y2)}{black}
\AlphaAction{(x1)}{.25}{.15}
\AlphaAction{(x2)}{.25}{.15}
\AlphaAction{(y1)}{.25}{.15}
\AlphaAction{(y2)}{.25}{.15}
}
\arrow[r, Rightarrow]
\arrow[d, Rightarrow, "\phi^{-1}"]
\arrow[dr, Rightarrow]
&
\tikzmath{
\coordinate (y1) at (-.2,-.1);
\coordinate (y2) at (.2,-.5);
\coordinate (x1) at (-.2,-1.4);
\coordinate (x2) at (.2,-2.1);
\draw (-.2,-2.6) -- (-.2,.1) .. controls ++(90:.3) and ++(270:.3) .. (-.015,.6) -- (-.015,1);
\draw (.2,-2.6) -- (.2,.1) .. controls ++(90:.3) and ++(270:.3) .. (.015,.6) -- (.015,1);
\RectangleMorphism{(x1)}{\gColor}
\RectangleMorphism{(x2)}{\kColor}
\RectangleMorphism{(y1)}{\hColor}
\RectangleMorphism{(y2)}{black}
\AlphaAction{(x1)}{.25}{.15}
\AlphaAction{(x2)}{.25}{.15}
\AlphaAction{(0,-.35)}{.5}{.3}
}
\arrow[r, Rightarrow, "(\alpha^2_{k,r})^{-1}"]
\arrow[dr, Rightarrow]
&
\tikzmath{
\coordinate (y1) at (-.2,-.6);
\coordinate (y2) at (.2,-.9);
\coordinate (x1) at (-.2,-1.8);
\coordinate (x2) at (.2,-2.5);
\node[draw,rectangle, thick, rounded corners=5pt] (m) at (0,0) {$\scriptstyle \mu^A_{k,r}$};
\draw (-.2,-3) -- (y1) to[in=-110,out=90] (m);
\draw (.2,-3) -- (y2) to[in=-70,out=90] (m);
\draw[double] (m) to[in=-90,out=90] (0,1);
\RectangleMorphism{(x1)}{\gColor}
\RectangleMorphism{(x2)}{\kColor}
\RectangleMorphism{(y1)}{\hColor}
\RectangleMorphism{(y2)}{black}
\LongAlphaAction{(0,-.3)}{.7}{.6}{.3}
\AlphaAction{(x1)}{.25}{.15}
\AlphaAction{(x2)}{.25}{.15}
}
\arrow[r, Rightarrow, "\mu^A_{y_1,y_2}"]
\arrow[dr, Rightarrow]
&
\tikzmath{
\coordinate (y1) at (-.1,.2);
\coordinate (y2) at (.1,0);
\coordinate (x1) at (-.2,-1.8);
\coordinate (x2) at (.2,-2.5);
\node[draw,rectangle, thick, rounded corners=5pt] (m) at (0,-.7) {$\scriptstyle \mu^A_{h,q}$};
\draw (-.2,-3) -- (x1) to[in=-100,out=90] (m);
\draw (.2,-3) -- (x2) to[in=-80,out=90] (m);
\draw[double] (m) to[in=-90,out=90] (0,.8);
\RectangleMorphism{(x1)}{\gColor}
\RectangleMorphism{(x2)}{\kColor}
\RectangleMorphism{(y1)}{\hColor}
\RectangleMorphism{(y2)}{black}
\LongAlphaAction{(0,-.4)}{.7}{.6}{.3}
\AlphaAction{(x1)}{.25}{.15}
\AlphaAction{(x2)}{.25}{.15}
}
\arrow[r, Rightarrow, "(\alpha^2_{h,q})^{-1}"]
\arrow[dr, Rightarrow]
&
\tikzmath{
\coordinate (y1) at (-.1,.5);
\coordinate (y2) at (.1,.3);
\coordinate (x1) at (-.3,-.9);
\coordinate (x2) at (.3,-1.4);
\draw (-.3,-1.9) -- (-.3,-.8) .. controls ++(90:.3) and ++(270:.3) .. (-.015,-.2) -- (-.015,1);
\draw (.3,-1.9) -- (.3,-.8) .. controls ++(90:.3) and ++(270:.3) .. (.015,-.2) -- (.015,1);
\RectangleMorphism{(x1)}{\gColor}
\RectangleMorphism{(x2)}{\kColor}
\RectangleMorphism{(y1)}{\hColor}
\RectangleMorphism{(y2)}{black}
\AlphaAction{(0,.4)}{.4}{.2}
\AlphaAction{(x1)}{.25}{.15}
\AlphaAction{(x2)}{.25}{.15}
}
\arrow[r, Rightarrow]
&
\tikzmath{
\coordinate (y1) at (-.1,1.2);
\coordinate (y2) at (.1,1);
\coordinate (x1) at (-.2,-.1);
\coordinate (x2) at (.2,-.5);
\draw (-.2,-1.2) -- (-.2,.1) .. controls ++(90:.3) and ++(270:.3) .. (-.015,.6) -- (-.015,1.8);
\draw (.2,-1.2) -- (.2,.1) .. controls ++(90:.3) and ++(270:.3) .. (.015,.6) -- (.015,1.8);
\RectangleMorphism{(x1)}{\gColor}
\RectangleMorphism{(x2)}{\kColor}
\RectangleMorphism{(y1)}{\hColor}
\RectangleMorphism{(y2)}{black}
\AlphaAction{(0,-.35)}{.5}{.3}
\AlphaAction{(0,1.1)}{.4}{.2}
}
\arrow[d, Rightarrow, "(\alpha^2_{h,q})^{-1}"]
\\
\tikzmath{
\coordinate (y1) at (-.3,0);
\coordinate (y2) at (.3,-1.4);
\coordinate (x1) at (-.3,-.7);
\coordinate (x2) at (.3,-2.1);
\draw (-.3,-2.6) -- (-.3,.1) .. controls ++(90:.3) and ++(270:.3) .. (-.015,.6) -- (-.015,1);
\draw (.3,-2.6) -- (.3,.1) .. controls ++(90:.3) and ++(270:.3) .. (.015,.6) -- (.015,1);
\RectangleMorphism{(x1)}{\gColor}
\RectangleMorphism{(x2)}{\kColor}
\RectangleMorphism{(y1)}{\hColor}
\RectangleMorphism{(y2)}{black}
\AlphaAction{(x1)}{.25}{.15}
\AlphaAction{(x2)}{.25}{.15}
\AlphaAction{(y1)}{.25}{.15}
\AlphaAction{(y2)}{.25}{.15}
}
\arrow[dr, Rightarrow]
\arrow[d, Rightarrow]
&
\tikzmath{
\coordinate (y1) at (-.2,-.1);
\coordinate (y2) at (.2,-.5);
\coordinate (x1) at (-.2,-.9);
\coordinate (x2) at (.2,-1.3);
\draw (-.2,-2.4) -- (-.2,.1) .. controls ++(90:.3) and ++(270:.3) .. (-.015,.6) -- (-.015,1);
\draw (.2,-2.4) -- (.2,.1) .. controls ++(90:.3) and ++(270:.3) .. (.015,.6) -- (.015,1);
\RectangleMorphism{(x1)}{\gColor}
\RectangleMorphism{(x2)}{\kColor}
\RectangleMorphism{(y1)}{\hColor}
\RectangleMorphism{(y2)}{black}
\LongAlphaAction{(0,-.8)}{.5}{.7}{.4}
}
\arrow[d, Rightarrow, "\phi^{-1}"]
\arrow[dr, Rightarrow, "(\alpha^2_{k,r})^{-1}"]
&
\tikzmath{
\coordinate (y1) at (-.2,-.1);
\coordinate (y2) at (.2,-.5);
\coordinate (x1) at (-.2,-1.4);
\coordinate (x2) at (.2,-1.9);
\draw (-.2,-2.5) -- (-.2,.1) .. controls ++(90:.3) and ++(270:.3) .. (-.015,.6) -- (-.015,1);
\draw (.2,-2.5) -- (.2,.1) .. controls ++(90:.3) and ++(270:.3) .. (.015,.6) -- (.015,1);
\RectangleMorphism{(x1)}{\gColor}
\RectangleMorphism{(x2)}{\kColor}
\RectangleMorphism{(y1)}{\hColor}
\RectangleMorphism{(y2)}{black}
\AlphaAction{(0,-.35)}{.5}{.3}
\AlphaAction{(0,-1.75)}{.5}{.3}
}
\arrow[l, Rightarrow]
\arrow[r, Rightarrow, "(\alpha^2_{k,r})^{-1}"]
&
\tikzmath{
\coordinate (y1) at (-.2,-.6);
\coordinate (y2) at (.2,-.9);
\coordinate (x1) at (-.2,-1.8);
\coordinate (x2) at (.2,-2.3);
\node[draw,rectangle, thick, rounded corners=5pt] (m) at (0,0) {$\scriptstyle \mu^A_{k,r}$};
\draw (-.2,-2.9) -- (y1) to[in=-110,out=90] (m);
\draw (.2,-2.9) -- (y2) to[in=-70,out=90] (m);
\draw[double] (m) to[in=-90,out=90] (0,1);
\RectangleMorphism{(x1)}{\gColor}
\RectangleMorphism{(x2)}{\kColor}
\RectangleMorphism{(y1)}{\hColor}
\RectangleMorphism{(y2)}{black}
\LongAlphaAction{(0,-.3)}{.7}{.6}{.3}
\AlphaAction{(0,-2.15)}{.5}{.3}
}
\arrow[r, Rightarrow, "\mu^A_{y_1,y_2}"]
\arrow[dl, Rightarrow]
&
\tikzmath{
\coordinate (y1) at (-.1,.2);
\coordinate (y2) at (.1,0);
\coordinate (x1) at (-.2,-1.8);
\coordinate (x2) at (.2,-2.3);
\node[draw,rectangle, thick, rounded corners=5pt] (m) at (0,-.7) {$\scriptstyle \mu^A_{h,q}$};
\draw (-.2,-3) -- (x1) to[in=-100,out=90] (m);
\draw (.2,-3) -- (x2) to[in=-80,out=90] (m);
\draw[double] (m) to[in=-90,out=90] (0,.8);
\RectangleMorphism{(x1)}{\gColor}
\RectangleMorphism{(x2)}{\kColor}
\RectangleMorphism{(y1)}{\hColor}
\RectangleMorphism{(y2)}{black}
\LongAlphaAction{(0,-.4)}{.7}{.6}{.3}
\AlphaAction{(0,-2.15)}{.5}{.3}
}
\arrow[d, Rightarrow]
\arrow[ur, Rightarrow, "(\alpha^2_{h,q})^{-1}"]
&
\tikzmath{
\coordinate (x1) at (-.2,-.6);
\coordinate (x2) at (.2,-.9);
\coordinate (y1) at (-.1,1.3);
\coordinate (y2) at (.1,1.1);
\node[draw,rectangle, thick, rounded corners=5pt] (m) at (0,0) {$\scriptstyle \mu^A_{h,q}$};
\draw (-.2,-1.6) -- (x1) to[in=-110,out=90] (m);
\draw (.2,-1.6) -- (x2) to[in=-70,out=90] (m);
\draw[double] (m) to[in=-90,out=90] (0,1.8);
\RectangleMorphism{(x1)}{\gColor}
\RectangleMorphism{(x2)}{\kColor}
\RectangleMorphism{(y1)}{\hColor}
\RectangleMorphism{(y2)}{black}
\LongAlphaAction{(0,-.3)}{.7}{.6}{.3}
\AlphaAction{(0,1.2)}{.4}{.2}
}
\arrow[d, Rightarrow, "\mu^A_{x_1,x_2}"]
\arrow[dl, Rightarrow]
\\
\tikzmath{
\coordinate (y1) at (-.3,0);
\coordinate (y2) at (.3,-1.2);
\coordinate (x1) at (-.3,-.4);
\coordinate (x2) at (.3,-1.6);
\draw (-.3,-2.1) -- (-.3,.1) .. controls ++(90:.3) and ++(270:.3) .. (-.015,.6) -- (-.015,1);
\draw (.3,-2.1) -- (.3,.1) .. controls ++(90:.3) and ++(270:.3) .. (.015,.6) -- (.015,1);
\RectangleMorphism{(x1)}{\gColor}
\RectangleMorphism{(x2)}{\kColor}
\RectangleMorphism{(y1)}{\hColor}
\RectangleMorphism{(y2)}{black}
\LongAlphaAction{(-.3,-.2)}{.25}{.35}{.15}
\LongAlphaAction{(.3,-1.4)}{.25}{.35}{.15}
}
\arrow[r, Rightarrow]
\arrow[d, Rightarrow, "A^2_{y_1, x_1}\xz A^2_{y_2, x_2}"]
&
\tikzmath{
\coordinate (y1) at (-.2,-.1);
\coordinate (y2) at (.2,-.9);
\coordinate (x1) at (-.2,-.5);
\coordinate (x2) at (.2,-1.3);
\draw (-.2,-2.4) -- (-.2,.1) .. controls ++(90:.3) and ++(270:.3) .. (-.015,.6) -- (-.015,1);
\draw (.2,-2.4) -- (.2,.1) .. controls ++(90:.3) and ++(270:.3) .. (.015,.6) -- (.015,1);
\RectangleMorphism{(x1)}{\gColor}
\RectangleMorphism{(x2)}{\kColor}
\RectangleMorphism{(y1)}{\hColor}
\RectangleMorphism{(y2)}{black}
\LongAlphaAction{(0,-.8)}{.5}{.7}{.4}
}
\arrow[ddl, Rightarrow, "A^2_{y_1, x_1}\xz A^2_{y_2, x_2}"]
\arrow[dr, Rightarrow, "(\alpha^2_{k,r})^{-1}"]
&
\tikzmath{
\coordinate (y1) at (-.2,-.6);
\coordinate (y2) at (.2,-.9);
\coordinate (x1) at (-.2,-1.2);
\coordinate (x2) at (.2,-1.5);
\node[draw,rectangle, thick, rounded corners=5pt] (m) at (0,0) {$\scriptstyle \mu^A_{k,r}$};
\draw (-.2,-2.1) -- (y1) to[in=-110,out=90] (m);
\draw (.2,-2.1) -- (y2) to[in=-70,out=90] (m);
\draw[double] (m) to[in=-90,out=90] (0,1);
\RectangleMorphism{(x1)}{\gColor}
\RectangleMorphism{(x2)}{\kColor}
\RectangleMorphism{(y1)}{\hColor}
\RectangleMorphism{(y2)}{black}
\LongAlphaAction{(0,-.6)}{.7}{.9}{.6}
}
\arrow[d, Rightarrow, "\phi^{-1}"]
&
\text{\ref{Functor:mu.monoidal} for $A$}
&
\tikzmath{
\coordinate (y1) at (-.1,.2);
\coordinate (y2) at (.1,0);
\coordinate (x1) at (-.2,-1.3);
\coordinate (x2) at (.2,-1.6);
\node[draw,rectangle, thick, rounded corners=5pt] (m) at (0,-.7) {$\scriptstyle \mu^A_{h,q}$};
\draw (-.2,-2.2) -- (x1) to[in=-110,out=90] (m);
\draw (.2,-2.2) -- (x2) to[in=-70,out=90] (m);
\draw[double] (m) to[in=-90,out=90] (0,.8);
\RectangleMorphism{(x1)}{\gColor}
\RectangleMorphism{(x2)}{\kColor}
\RectangleMorphism{(y1)}{\hColor}
\RectangleMorphism{(y2)}{black}
\LongAlphaAction{(0,-.7)}{.7}{.9}{.6}
}
\arrow[d, Rightarrow, "\mu^A_{x_1,x_2}"]
&
\tikzmath{
\coordinate (y1) at (-.1,1.4);
\coordinate (y2) at (.1,1.2);
\coordinate (x1) at (-.1,.2);
\coordinate (x2) at (.1,0);
\node[draw,rectangle, thick, rounded corners=5pt] (m) at (0,-.7) {$\scriptstyle \mu^A_{g,p}$};
\draw (-.2,-1.6) to[in=-110,out=90] (m);
\draw (.2,-1.6) to[in=-70,out=90] (m);
\draw[double] (m) to[in=-90,out=90] (0,1.9);
\RectangleMorphism{(x1)}{\gColor}
\RectangleMorphism{(x2)}{\kColor}
\RectangleMorphism{(y1)}{\hColor}
\RectangleMorphism{(y2)}{black}
\LongAlphaAction{(0,-.4)}{.7}{.6}{.3}
\AlphaAction{(0,1.3)}{.4}{.2}
}
\arrow[d, Rightarrow, "(\alpha^2_{g,p})^{-1}"]
\arrow[dl, Rightarrow]
\\
\tikzmath{
\coordinate (x2) at (-.3,0);
\coordinate (y2) at (.3,-1);
\coordinate (x1) at (-.3,-.2);
\coordinate (y1) at (.3,-1.2);
\draw (-.3,-1.7) -- (-.3,.1) .. controls ++(90:.3) and ++(270:.3) .. (-.015,.6) -- (-.015,1);
\draw (.3,-1.7) -- (.3,.1) .. controls ++(90:.3) and ++(270:.3) .. (.015,.6) -- (.015,1);
\RectangleMorphism{(x1)}{\gColor}
\RectangleMorphism{(x2)}{\hColor}
\RectangleMorphism{(y1)}{\kColor}
\RectangleMorphism{(y2)}{black}
\LongAlphaAction{($ (x1) + (0,.1) $)}{.25}{.25}{.1}
\LongAlphaAction{($ (y1) + (0,.1) $)}{.25}{.25}{.1}
}
\arrow[d, Rightarrow]
&&
\tikzmath{
\coordinate (y1) at (-.2,-.6);
\coordinate (y2) at (.2,-1.3);
\coordinate (x1) at (-.2,-1);
\coordinate (x2) at (.2,-1.7);
\node[draw,rectangle, thick, rounded corners=5pt] (m) at (0,0) {$\scriptstyle \mu^A_{k,r}$};
\draw (-.2,-2.3) -- (y1) to[in=-110,out=90] (m);
\draw (.2,-2.3) -- (y2) to[in=-70,out=90] (m);
\draw[double] (m) to[in=-90,out=90] (0,1);
\RectangleMorphism{(x1)}{\gColor}
\RectangleMorphism{(x2)}{\kColor}
\RectangleMorphism{(y1)}{\hColor}
\RectangleMorphism{(y2)}{black}
\LongAlphaAction{(0,-.7)}{.7}{1}{.7}
}
\arrow[dl, Rightarrow, "A^2_{y_1, x_1}\xz A^2_{y_2, x_2}"]
&
\tikzmath{
\coordinate (y1) at (-.1,.7);
\coordinate (y2) at (.1,.5);
\coordinate (x1) at (-.1,.3);
\coordinate (x2) at (.1,.1);
\node[draw,rectangle, thick, rounded corners=5pt] (m) at (0,-.7) {$\scriptstyle \mu^A_{g,p}$};
\draw (-.2,-1.6) to[in=-110,out=90] (m);
\draw (.2,-1.6) to[in=-70,out=90] (m);
\draw[double] (m) to[in=-90,out=90] (0,1.3);
\RectangleMorphism{(x1)}{\gColor}
\RectangleMorphism{(x2)}{\kColor}
\RectangleMorphism{(y1)}{\hColor}
\RectangleMorphism{(y2)}{black}
\LongAlphaAction{(0,-.2)}{.7}{.8}{.5}
}
\arrow[dr, Rightarrow, "(\alpha^2_{g,p})^{-1}"]
&
\tikzmath{
\coordinate (y1) at (-.1,.9);
\coordinate (y2) at (.1,.7);
\coordinate (x1) at (-.1,.3);
\coordinate (x2) at (.1,.1);
\node[draw,rectangle, thick, rounded corners=5pt] (m) at (0,-.7) {$\scriptstyle \mu^A_{g,p}$};
\draw (-.2,-1.6) to[in=-110,out=90] (m);
\draw (.2,-1.6) to[in=-70,out=90] (m);
\draw[double] (m) to[in=-90,out=90] (0,1.3);
\RectangleMorphism{(x1)}{\gColor}
\RectangleMorphism{(x2)}{\kColor}
\RectangleMorphism{(y1)}{\hColor}
\RectangleMorphism{(y2)}{black}
\LongAlphaAction{(0,-.1)}{.7}{.8}{.5}
}
\arrow[l, swap, Rightarrow, "A^2_{y_1\xz y_2, x_1\xz x_2}"]
\arrow[dr, Rightarrow, "(\alpha^2_{g,p})^{-1}"]
&
\tikzmath{
\coordinate (y1) at (-.1,1.5);
\coordinate (y2) at (.1,1.3);
\coordinate (x1) at (-.1,.5);
\coordinate (x2) at (.1,.3);
\draw (-.3,-.8) .. controls ++(90:.3) and ++(270:.3) .. (-.015,-.2) -- (-.015,2);
\draw (.3,-.8) .. controls ++(90:.3) and ++(270:.3) .. (.015,-.2) -- (.015,2);
\RectangleMorphism{(x1)}{\gColor}
\RectangleMorphism{(x2)}{\kColor}
\RectangleMorphism{(y1)}{\hColor}
\RectangleMorphism{(y2)}{black}
\AlphaAction{(0,.4)}{.4}{.2}
\AlphaAction{(0,1.4)}{.4}{.2}
}
\arrow[d, Rightarrow]
\\
\tikzmath{
\coordinate (y1) at (-.2,-.1);
\coordinate (y2) at (.2,-.7);
\coordinate (x1) at (-.2,-.3);
\coordinate (x2) at (.2,-.9);
\draw (-.2,-1.6) -- (-.2,.1) .. controls ++(90:.3) and ++(270:.3) .. (-.015,.6) -- (-.015,1);
\draw (.2,-1.6) -- (.2,.1) .. controls ++(90:.3) and ++(270:.3) .. (.015,.6) -- (.015,1);
\RectangleMorphism{(x1)}{\gColor}
\RectangleMorphism{(x2)}{\kColor}
\RectangleMorphism{(y1)}{\hColor}
\RectangleMorphism{(y2)}{black}
\LongAlphaAction{(0,-.6)}{.5}{.5}{.2}
}
\arrow[r, Rightarrow, "(\alpha^2_{k,r})^{-1}"]
&
\tikzmath{
\coordinate (y1) at (-.2,-.6);
\coordinate (y2) at (.2,-1.1);
\coordinate (x1) at (-.2,-.8);
\coordinate (x2) at (.2,-1.3);
\node[draw,rectangle, thick, rounded corners=5pt] (m) at (0,0) {$\scriptstyle \mu^A_{k,r}$};
\draw (-.2,-2.3) -- (y1) to[in=-110,out=90] (m);
\draw (.2,-2.3) -- (y2) to[in=-70,out=90] (m);
\draw[double] (m) to[in=-90,out=90] (0,1);
\RectangleMorphism{(x1)}{\gColor}
\RectangleMorphism{(x2)}{\kColor}
\RectangleMorphism{(y1)}{\hColor}
\RectangleMorphism{(y2)}{black}
\LongAlphaAction{(0,-.5)}{.7}{.8}{.5}
}
\arrow[r, Rightarrow, "\mu^A_{y_1\xo x_1, y_2\xo x_2}"]
&
\tikzmath{
\coordinate (y1) at (-.1,.7);
\coordinate (y2) at (.1,.3);
\coordinate (x1) at (-.1,.5);
\coordinate (x2) at (.1,.1);
\node[draw,rectangle, thick, rounded corners=5pt] (m) at (0,-.7) {$\scriptstyle \mu^A_{g,k}$};
\draw (-.2,-1.6) to[in=-110,out=90] (m);
\draw (.2,-1.6) to[in=-70,out=90] (m);
\draw[double] (m) to[in=-90,out=90] (0,1.3);
\RectangleMorphism{(x1)}{\gColor}
\RectangleMorphism{(x2)}{\kColor}
\RectangleMorphism{(y1)}{\hColor}
\RectangleMorphism{(y2)}{black}
\LongAlphaAction{(0,-.2)}{.7}{.8}{.5}
}
\arrow[ur, Rightarrow, "A(\phi)"]
\arrow[r, Rightarrow, "(\alpha^2_{g,p})^{-1}"]
&
\tikzmath{
\coordinate (y1) at (-.1,.7);
\coordinate (y2) at (.1,.3);
\coordinate (x1) at (-.1,.5);
\coordinate (x2) at (.1,.1);
\draw (-.3,-.8) .. controls ++(90:.3) and ++(270:.3) .. (-.015,-.2) -- (-.015,1.3);
\draw (.3,-.8) .. controls ++(90:.3) and ++(270:.3) .. (.015,-.2) -- (.015,1.3);
\RectangleMorphism{(x1)}{\gColor}
\RectangleMorphism{(x2)}{\kColor}
\RectangleMorphism{(y1)}{\hColor}
\RectangleMorphism{(y2)}{black}
\LongAlphaAction{(0,.4)}{.4}{.4}{.2}
}
\arrow[r, Rightarrow, "A(\phi)"]
&
\tikzmath{
\coordinate (y1) at (-.1,.7);
\coordinate (y2) at (.1,.5);
\coordinate (x1) at (-.1,.3);
\coordinate (x2) at (.1,.1);
\draw (-.3,-.8) .. controls ++(90:.3) and ++(270:.3) .. (-.015,-.2) -- (-.015,1.3);
\draw (.3,-.8) .. controls ++(90:.3) and ++(270:.3) .. (.015,-.2) -- (.015,1.3);
\RectangleMorphism{(x1)}{\gColor}
\RectangleMorphism{(x2)}{\kColor}
\RectangleMorphism{(y1)}{\hColor}
\RectangleMorphism{(y2)}{black}
\LongAlphaAction{(0,.4)}{.4}{.4}{.2}
}
&
\tikzmath{
\coordinate (y1) at (-.1,1.1);
\coordinate (y2) at (.1,.9);
\coordinate (x1) at (-.1,.5);
\coordinate (x2) at (.1,.3);
\draw (-.3,-.8) .. controls ++(90:.3) and ++(270:.3) .. (-.015,-.2) -- (-.015,1.7);
\draw (.3,-.8) .. controls ++(90:.3) and ++(270:.3) .. (.015,-.2) -- (.015,1.7);
\RectangleMorphism{(x1)}{\gColor}
\RectangleMorphism{(x2)}{\kColor}
\RectangleMorphism{(y1)}{\hColor}
\RectangleMorphism{(y2)}{black}
\LongAlphaAction{(0,.7)}{.4}{.5}{.3}
}
\arrow[l, Rightarrow, swap, "A^2_{y_1\xz y_2, x_1\xz x_2}"]
\end{tikzcd}};\end{tikzpicture}
$$
Non-labelled faces either commute by functoriality of 1-cell composition $\xo$,
axioms \ref{Interchanger:Composition} and \ref{Interchanger:Natural} of the interchanger,
or Remark \ref{rem:CombineAlphaCommutationSquares}.

\item[\ref{Functor:mu.unital}]
This follows by Remark \ref{rem:CombineAlphaCommutationSquares} and functoriality of 1-cell composition $\xo$, together with \ref{Functor:mu.unital} applied to $A$.

\item[\ref{Functor:iota}]
This part is automatic as $\iota^B_1 := B^1_e$.

\item[\ref{Functor:omega}]
This follows by Remark \ref{rem:CombineAlphaCommutationSquares} and functoriality of 1-cell composition $\xo$, together with \ref{Functor:omega} applied to $A$ and two instances of \ref{Transformation:AssociatorCoherence} for the transformation $\alpha: \pi^\cD \Rightarrow A \circ \pi^\cC$.

\item[\ref{Functor:lr}]
This follows by Remark \ref{rem:CombineAlphaCommutationSquares} and functoriality of 1-cell composition $\xo$, together with \ref{Functor:lr} applied to $A$ and two instances of \ref{Transformation:UnitCoherence} for the transformation $\alpha: \pi^\cD \Rightarrow A \circ \pi^\cC$.

\item[\ref{Functor:PentagonCoherence}] Every map is the identity map.

\item[\ref{Functor:TriangleCoherence}] Every map is the identity map.
\end{proof}

\begin{proof}[Proof of Lem.~\ref{lem:Truncation3FunctorGPointed}: $(\beta_*,\beta_g,\beta_{\id_g},\beta^1,\beta^2)$ is a 2-natural transformation $\pi^\cD\Rightarrow B\circ \pi^\cC$]

\item[\ref{Transformation:eta_c.natural}]
This condition is immediate as the only 1-cells and 2-cells in $\rmB G$ are identities.

\item[\ref{Transformation:eta_c.monoidal}]
This step amounts to checking $B^2_{\id_g,\id_g}\xt (B^1_g\xo B^1_g) = B^1_g$.
Using Remark \ref{rem:CombineAlphaCommutationSquares} and functoriality of 1-cell composition $\xo$, this reduces to the identity
$A^2_{\id_g,\id_g}\xt (A^1_g\xo A^1_g) = A^1_g$.

\item[\ref{Transformation:eta_c.unital}]
This condition is immediate as $(\pi^\cD)^1_g = \id_{\id_{g_\cD}}$ and $\beta^1_g = B^1_g$.

\item[\ref{Transformation:eta^1}] 
This condition is automatically satisfied.

\item[\ref{Transformation:eta^2}] 
This condition is immediate as $\mu^{\pi^\cD}_{g,h} = \id_{gh_\cD} = \mu^{B\circ \pi^\cC}_{g,h}$ and $\beta_g = \id_g$ for all $g\in G$.

\item[\ref{Transformation:AssociatorCoherence}]
Every map is the identity map.

\item[\ref{Transformation:UnitCoherence}]
Every map is the identity map.
\end{proof}

\begin{proof}[Proof of Thm.~\ref{thm:TruncationStrictifying1Morphisms}: $(\gamma,\id): (B,\beta) \Rightarrow (A,\alpha)$ is an invertible 2-morphism in $\TriCat_G$]
\mbox{}

It suffices to prove that $\gamma$ defines a 2-transformation $\gamma: B \Rightarrow A$, as it clearly invertible.

\item[\ref{Transformation:eta_c.natural}]
Every component which makes up $\gamma_x$ in \eqref{eq:DefinitionOfGamma_x} is natural in $x$.

\item[\ref{Transformation:eta_c.monoidal}]
This follows by Remark \ref{rem:CombineAlphaCommutationSquares}.

\item[\ref{Transformation:eta_c.unital}]
This follows by Remark \ref{rem:CombineAlphaCommutationSquares}.

\item[\ref{Transformation:eta^1}] 
This condition is automatically satisfied.

\item[\ref{Transformation:eta^2}] 
For $x\in \cC(g_\cC \to h_\cC)$ and $y\in \cC(k_\cC \to \ell_\cC)$, we use the following shorthand as in Notation \ref{nota:ShadedBoxes}:
$$
\tikzmath{
\draw (0,-.3) -- (0,.3);
\filldraw[fill=\gColor, thick] (-.1,-.1) rectangle (.1,.1);
}
:=
\tikzmath{
\node (Aa) at (0,-.8) {$\scriptstyle A(g_\cC)$};
\node (Ab) at (0,.8) {$\scriptstyle A(h_\cC)$};
\node[draw,rectangle, thick, rounded corners=5pt] (Ax) at (0,0) {$\scriptstyle A(x)$};
\draw (Aa) to[in=-90,out=90] (Ax);
\draw (Ax) to[in=-90,out=90] (Ab);
}
\qquad
\tikzmath{
\draw (0,-.3) -- (0,.3);
\filldraw[fill=\kColor, thick] (-.1,-.1) rectangle (.1,.1);
}
:=
\tikzmath{
\node (Aa) at (0,-.8) {$\scriptstyle A(k_\cC)$};
\node (Ab) at (0,.8) {$\scriptstyle A(\ell_\cC)$};
\node[draw,rectangle, thick, rounded corners=5pt] (Ax) at (0,0) {$\scriptstyle A(y)$};
\draw (Aa) to[in=-90,out=90] (Ax);
\draw (Ax) to[in=-90,out=90] (Ab);
}
$$
For the following diagram to fit on one page, we compress the definition of $\mu^B_{x,y}$ from Construction \ref{const:B3Functor} into four steps as in \eqref{eq:CompressedMuB},
we suppress as many interchangers as possible, and we combine commuting squares involving only $\alpha_*$ and the $\alpha_g$ as in Remark \ref{rem:CombineAlphaCommutationSquares}.
$$
\begin{tikzpicture}[baseline= (a).base]\node[scale=.8] (a) at (0,0){\begin{tikzcd}
\tikzmath{
\coordinate (x) at (-.3,0);
\coordinate (y) at (.3,-.7);
\draw (-.3,-1.2) -- (-.3,.1) .. controls ++(90:.3) and ++(270:.3) .. (-.015,.6) -- (-.015,1);
\draw (.3,-1.2) -- (.3,.1) .. controls ++(90:.3) and ++(270:.3) .. (.015,.6) -- (.015,1);
\draw[thick, red, mid>] (-.8,-1.2) -- (-.8,.2) .. controls ++(90:.5) and ++(270:.5) .. (.4,1);
\RectangleMorphism{(x)}{\gColor}
\RectangleMorphism{(y)}{\kColor}
\AlphaAction{(x)}{.25}{.15}
\AlphaAction{(y)}{.25}{.15}
}
\arrow[r, Rightarrow]
\arrow[d, Rightarrow, "\alpha^2_{h,\ell}"]
&
\tikzmath{
\coordinate (x) at (-.2,-.1);
\coordinate (y) at (.2,-.5);
\draw (-.2,-1.2) -- (-.2,.1) .. controls ++(90:.3) and ++(270:.3) .. (-.015,.6) -- (-.015,1);
\draw (.2,-1.2) -- (.2,.1) .. controls ++(90:.3) and ++(270:.3) .. (.015,.6) -- (.015,1);
\draw[thick, red, mid>] (-.8,-1.2) -- (-.8,.2) .. controls ++(90:.5) and ++(270:.5) .. (.4,1);
\RectangleMorphism{(x)}{\gColor}
\RectangleMorphism{(y)}{\kColor}
\AlphaAction{(0,-.35)}{.5}{.3}
}
\arrow[r, Rightarrow, "(\alpha^2_{h,\ell})^{-1}"]
\arrow[d, Rightarrow, "\alpha^2_{h,\ell}"]
&
\tikzmath{
\coordinate (x) at (-.2,-.6);
\coordinate (y) at (.2,-.9);
\node[draw,rectangle, thick, rounded corners=5pt] (m) at (0,0) {$\scriptstyle \mu^A_{h,\ell}$};
\draw (-.2,-1.6) -- (x) to[in=-110,out=90] (m);
\draw (.2,-1.6) -- (y) to[in=-70,out=90] (m);
\draw[thick, red, mid>] (-1,-1.6) -- (-1,.6) .. controls ++(90:.5) and ++(270:.5) .. (.4,1.4);
\draw[double] (m) to[in=-90,out=90] (0,1.4);
\RectangleMorphism{(x)}{\gColor}
\RectangleMorphism{(y)}{\kColor}
\LongAlphaAction{(0,-.3)}{.7}{.6}{.3}
}
\arrow[r, Rightarrow, "\mu^A_{x,y}"]
\arrow[ddl, Rightarrow, "\gamma"]
&
\tikzmath{
\coordinate (x) at (-.1,.2);
\coordinate (y) at (.1,0);
\node[draw,rectangle, thick, rounded corners=5pt] (m) at (0,-.7) {$\scriptstyle \mu^A_{g,k}$};
\draw (-.2,-1.6) to[in=-110,out=90] (m);
\draw (.2,-1.6) to[in=-70,out=90] (m);
\draw[thick, red, mid>] (-1,-1.6) -- (-1,.6) .. controls ++(90:.5) and ++(270:.5) .. (.4,1.4);
\draw[double] (m) to[in=-90,out=90] (0,1.4);
\RectangleMorphism{(x)}{\gColor}
\RectangleMorphism{(y)}{\kColor}
\LongAlphaAction{(0,-.4)}{.7}{.6}{.3}
}
\arrow[d, Rightarrow, "(\alpha^2_{g,k})^{_1}"]
\arrow[ddl, Rightarrow, swap, "\gamma"]
\\
\tikzmath{
\coordinate (x) at (-.3,-.4);
\coordinate (y) at (.3,-1.1);
\node[draw,rectangle, thick, rounded corners=5pt] (m) at (0,1) {$\scriptstyle \mu^A_{h,\ell}$};
\draw (-.3,-1.6) -- (x) to[in=-105,out=90] (m);
\draw (.3,-1.6) -- (y) to[in=-75,out=90] (m);
\draw[thick, red, mid>] (-.8,-1.6) -- (-.8,-.1) .. controls ++(90:.5) and ++(270:.5) .. (.7,.7) -- (.7,1.8);
\draw[double] (m) to[in=-90,out=90] (0,1.8);
\RectangleMorphism{(x)}{\gColor}
\RectangleMorphism{(y)}{\kColor}
\AlphaAction{(x)}{.25}{.15}
\AlphaAction{(y)}{.25}{.15}
}
\arrow[r, Rightarrow]
\arrow[d, Rightarrow, "\gamma_x"]
&
\tikzmath{
\coordinate (x) at (-.2,-.4);
\coordinate (y) at (.2,-.8);
\node[draw,rectangle, thick, rounded corners=5pt] (m) at (0,1) {$\scriptstyle \mu^A_{h,\ell}$};
\draw (-.2,-1.5) -- (x) to[in=-100,out=90] (m);
\draw (.2,-1.5) -- (y) to[in=-80,out=90] (m);
\draw[thick, red, mid>] (-.8,-1.5) -- (-.8,-.1) .. controls ++(90:.5) and ++(270:.5) .. (.7,.7) -- (.7,1.8);
\draw[double] (m) to[in=-90,out=90] (0,1.8);
\RectangleMorphism{(x)}{\gColor}
\RectangleMorphism{(y)}{\kColor}
\AlphaAction{(0,-.65)}{.5}{.3}
}
\arrow[d, Rightarrow, "\gamma"]
&&
\tikzmath{
\coordinate (x) at (-.1,.5);
\coordinate (y) at (.1,.3);
\draw (-.3,-.8) .. controls ++(90:.3) and ++(270:.3) .. (-.015,-.2) -- (-.015,1.4);
\draw (.3,-.8) .. controls ++(90:.3) and ++(270:.3) .. (.015,-.2) -- (.015,1.4);
\draw[thick, red, mid>] (-.7,-.8) -- (-.7,.6) .. controls ++(90:.5) and ++(270:.5) .. (.4,1.4);
\RectangleMorphism{(x)}{\gColor}
\RectangleMorphism{(y)}{\kColor}
\AlphaAction{(0,.4)}{.4}{.2}
}
\arrow[d, Rightarrow, "\gamma_{x \xz y}"]
\\
\tikzmath{
\coordinate (x) at (-.3,.2);
\coordinate (y) at (.3,-.7);
\node[draw,rectangle, thick, rounded corners=5pt] (m) at (0,1) {$\scriptstyle \mu^A_{h,\ell}$};
\draw (-.3,-1.2) -- (x) to[in=-105,out=90] (m);
\draw (.3,-1.2) -- (y) to[in=-75,out=90] (m);
\draw[thick, red, mid>] (-.8,-1.2) -- (-.8,-.6) .. controls ++(90:.5) and ++(270:.5) .. (.7,.2) -- (.7,1.8);
\draw[double] (m) to[in=-90,out=90] (0,1.8);
\RectangleMorphism{(x)}{\gColor}
\RectangleMorphism{(y)}{\kColor}
\AlphaAction{(y)}{.25}{.15}
}
\arrow[r, Rightarrow, "\gamma_y"]
&
\tikzmath{
\coordinate (x) at (-.2,-.6);
\coordinate (y) at (.2,-.9);
\node[draw,rectangle, thick, rounded corners=5pt] (m) at (0,0) {$\scriptstyle \mu^A_{h,\ell}$};
\draw (-.2,-1.6) -- (x) to[in=-110,out=90] (m);
\draw (.2,-1.6) -- (y) to[in=-70,out=90] (m);
\draw[thick, red, mid>] (-.7,-1.6) .. controls ++(90:.5) and ++(270:.5) .. (.7,-.9) -- (.7,.8);
\draw[double] (m) to[in=-90,out=90] (0,.8);
\RectangleMorphism{(x)}{\gColor}
\RectangleMorphism{(y)}{\kColor}
}
\arrow[r,Rightarrow, "\mu^A_{x,y}"]
&
\tikzmath{
\coordinate (x) at (-.1,.3);
\coordinate (y) at (.1,.1);
\node[draw,rectangle, thick, rounded corners=5pt] (m) at (0,-.7) {$\scriptstyle \mu^A_{g,k}$};
\draw (-.2,-1.6) to[in=-110,out=90] (m);
\draw (.2,-1.6) to[in=-70,out=90] (m);
\draw[thick, red, mid>] (-.7,-1.6) .. controls ++(90:.5) and ++(270:.5) .. (.7,-.9) -- (.7,.8);
\draw[double] (m) to[in=-90,out=90] (0,.8);
\RectangleMorphism{(x)}{\gColor}
\RectangleMorphism{(y)}{\kColor}
}
&
\tikzmath{
\coordinate (x) at (-.1,.5);
\coordinate (y) at (.1,.3);
\draw (-.3,-.8) .. controls ++(90:.3) and ++(270:.3) .. (-.015,-.2) -- (-.015,1);
\draw (.3,-.8) .. controls ++(90:.3) and ++(270:.3) .. (.015,-.2) -- (.015,1);
\draw[thick, red, mid>] (-.7,-.8) -- (-.7,-.6) .. controls ++(90:.5) and ++(270:.5) .. (.4,.2) -- (.4,1);
\RectangleMorphism{(x)}{\gColor}
\RectangleMorphism{(y)}{\kColor}
}
\arrow[l,Rightarrow, swap, "\alpha^2_{g,k}"]
\end{tikzcd}};\end{tikzpicture}
$$
Every square here commutes by 
properties of the biadjoint biequivalence $\alpha_*$ \eqref{eq:BiadjointBiequivalence}, the adjoint equivalences $\alpha_g$, and functoriality of 1-cell composition $\xo$.

\item[\ref{Transformation:AssociatorCoherence}]
Since $\omega^B_{g,h,k}$ is the identity, it is equal to $\omega^{\pi^{\cD}}_{g,h,k}$.
Since  \ref{Transformation:AssociatorCoherence} holds for $\alpha: \pi^\cD \Rightarrow A\circ \pi^\cC$,
we conclude  \ref{Transformation:AssociatorCoherence} holds for $\zeta: B\Rightarrow A$.

\item[\ref{Transformation:UnitCoherence}]
Since $\ell^B_g,r^B_g$ are identities, they are equal to $ \ell^{\pi^{\cD}}_g, r^{\pi^{\cD}}_g$ respectively.
Since \ref{Transformation:UnitCoherence} holds for $\alpha: \pi^\cD \Rightarrow A\circ \pi^\cC$,
we conclude \ref{Transformation:UnitCoherence} holds for $\zeta: B\Rightarrow A$.
\end{proof}

\subsection{Coherence proofs for Strictifying 2-morphisms \S\ref{sec:Strictifying2Morphisms}}
\label{sec:CoherenceProofs2Morphisms}

We remind the reader that 
$(\rmB\cC,\pi^\cC), (\rmB\cD,\pi^\cD)$ are two objects in $\TriCat_G^{\pt}$,
$(A,\alpha),(B,\beta) \in \TriCat_G^{\pt}((\rmB\cC,\pi^\cC)\to (\rmB\cD,\pi^\cD))$,
and $(\eta,m)\in \TriCat_G((A,\alpha) \Rightarrow (B,\beta))$.
We specified data for $(\zeta,\id): (A,\alpha)\Rightarrow (B,\beta)$ above in \S\ref{sec:Strictifying2Morphisms}.

\begin{proof}[Proof of Lem.~\ref{lem:TruncationDefine2Morphism}: $(\zeta,\id):(A,\alpha)\Rightarrow (B,\beta)$ is a 2-morphism in $\TriCat_G$]
It suffices to check that $\zeta: A \Rightarrow B$ is a 2-natural transformation.

\item[\ref{Transformation:eta_c.natural}]
Every component which makes up $\zeta_x$ in \eqref{eq:DefinitionOfZeta_x} is natural in $x$.

\item[\ref{Transformation:eta_c.monoidal}]
This follows by Remark \ref{rem:CombineAlphaCommutationSquares} and functoriality of 1-cell composition $\xo$, together with \ref{Transformation:eta_c.monoidal} applied to $\alpha: \pi^\cD \Rightarrow A \circ \pi^\cC$.

\item[\ref{Transformation:eta_c.unital}]
This follows by Remark \ref{rem:CombineAlphaCommutationSquares} and functoriality of 1-cell composition $\xo$, together with \ref{Transformation:eta_c.unital} applied to $\alpha: \pi^\cD \Rightarrow A \circ \pi^\cC$.

\item[\ref{Transformation:eta^1}] 
This condition is automatically satisfied.

\item[\ref{Transformation:eta^2}] 
For $x\in \cC(g_\cC \to h_\cC)$ and $y\in \cC(k_\cC \to \ell_\cC)$, we use the following shorthand as in Notation \ref{nota:ShadedBoxes}:
$$
\tikzmath{
\draw (0,-.3) -- (0,.3);
\filldraw[fill=\gColor, thick] (-.1,-.1) rectangle (.1,.1);
}
:=
\tikzmath{
\node (Aa) at (0,-.8) {$\scriptstyle g_\cD$};
\node (Ab) at (0,.8) {$\scriptstyle h_\cD$};
\node[draw,rectangle, thick, rounded corners=5pt] (Ax) at (0,0) {$\scriptstyle A(x)$};
\draw (Aa) to[in=-90,out=90] (Ax);
\draw (Ax) to[in=-90,out=90] (Ab);
}
\qquad
\tikzmath{
\draw (0,-.3) -- (0,.3);
\filldraw[fill=\kColor, thick] (-.1,-.1) rectangle (.1,.1);
}
:=
\tikzmath{
\node (Aa) at (0,-.8) {$\scriptstyle k_\cD$};
\node (Ab) at (0,.8) {$\scriptstyle \ell_\cD$};
\node[draw,rectangle, thick, rounded corners=5pt] (Ax) at (0,0) {$\scriptstyle A(y)$};
\draw (Aa) to[in=-90,out=90] (Ax);
\draw (Ax) to[in=-90,out=90] (Ab);
}
\qquad
\tikzmath{
\draw (0,-.3) -- (0,.3);
\filldraw[fill=black, thick] (-.1,-.1) rectangle (.1,.1);
}
:=
\tikzmath{
\node (Aa) at (0,-.8) {$\scriptstyle g_\cD$};
\node (Ab) at (0,.8) {$\scriptstyle h_\cD$};
\node[draw,rectangle, thick, rounded corners=5pt] (Ax) at (0,0) {$\scriptstyle B(x)$};
\draw (Aa) to[in=-90,out=90] (Ax);
\draw (Ax) to[in=-90,out=90] (Ab);
}
\qquad
\tikzmath{
\draw (0,-.3) -- (0,.3);
\filldraw[fill=red, thick] (-.1,-.1) rectangle (.1,.1);
}
:=
\tikzmath{
\node (Aa) at (0,-.8) {$\scriptstyle k_\cD$};
\node (Ab) at (0,.8) {$\scriptstyle \ell_\cD$};
\node[draw,rectangle, thick, rounded corners=5pt] (Ax) at (0,0) {$\scriptstyle B(y)$};
\draw (Aa) to[in=-90,out=90] (Ax);
\draw (Ax) to[in=-90,out=90] (Ab);
}
\qquad
\tikzmath{
\draw[thick, DarkGreen, mid>] (0,0) -- (0,.6);
\filldraw[DarkGreen] (0,0) circle (.1cm);
}
:=
\tikzmath{
\node[DarkGreen] (Ab) at (0,.8) {$\scriptstyle \eta_*$};
\node[draw,rectangle, thick, rounded corners=5pt] (Ax) at (0,0) {$\scriptstyle m_*$};
\draw[DarkGreen,mid>] (Ax) to[in=-90,out=90] (Ab);
}
\qquad
\tikzmath{
\draw[thick, DarkGreen, mid>] (0,-.6) -- (0,0);
\filldraw[DarkGreen] (0,0) circle (.1cm);
}
:=
\tikzmath{
\node[DarkGreen] (Aa) at (0,-.8) {$\scriptstyle \eta_*$};
\node[draw,rectangle, thick, rounded corners=5pt] (Ax) at (0,0) {$\scriptstyle m_*^{-1}$};
\draw[DarkGreen,mid>] (Aa) to[in=-90,out=90] (Ax);
}\,.
$$
For the following diagram to fit on one page, we compress the definition of $\zeta_x$ from \eqref{eq:DefinitionOfZeta_x} into three steps
$$
\zeta_x
:=
\left(
\tikzmath{
\coordinate (x) at (0,-.3);
\draw (0,-.7) -- (0,.7);
\RectangleMorphism{(x)}{\gColor}
}
\Rightarrow
\tikzmath{
\coordinate (x) at (0,-.3);
\draw (0,-.7) -- (0,.7);
\coordinate (m1) at (-.3,-.5);
\coordinate (m2) at (.3,.5);
\draw[thick, DarkGreen, mid>] ($ (m1) + (0,.1) $) to[in=-90,out=90] ($ (m2) - (0,.1) $);
\RectangleMorphism{(x)}{\gColor}
\filldraw[DarkGreen] (m1) circle (.1cm);
\filldraw[DarkGreen] (m2) circle (.1cm);
}
\overset{\eta_x}{\Rightarrow}
\tikzmath{
\coordinate (x) at (0,.3);
\draw (0,-.7) -- (0,.7);
\coordinate (m1) at (-.3,-.5);
\coordinate (m2) at (.3,.5);
\draw[thick, DarkGreen, mid>] ($ (m1) + (0,.1) $) to[in=-90,out=90] ($ (m2) - (0,.1) $);
\RectangleMorphism{(x)}{black}
\filldraw[DarkGreen] (m1) circle (.1cm);
\filldraw[DarkGreen] (m2) circle (.1cm);
}
\Rightarrow
\tikzmath{
\coordinate (x) at (0,.3);
\draw (0,-.7) -- (0,.7);
\RectangleMorphism{(x)}{black}
}
\right),
$$
and we combine and suppress as many interchangers as possible, simply writing $\phi$.
$$
\begin{tikzpicture}[baseline= (a).base]\node[scale=.8] (a) at (0,0){\begin{tikzcd}
\tikzmath{
\coordinate (x) at (-.4,-.4);
\coordinate (y) at (.4,-.8);
\draw (-.4,-1.2) -- (-.4,0) .. controls ++(90:.3) and ++(270:.3) .. (-.015,.6) -- (-.015,1);
\draw (.4,-1.2) -- (.4,0) .. controls ++(90:.3) and ++(270:.3) .. (.015,.6) -- (.015,1);
\RectangleMorphism{(x)}{\gColor}
\RectangleMorphism{(y)}{\kColor}
}
\arrow[rr,Rightarrow, "\mu^A"]
\arrow[drr,Rightarrow]
\arrow[dd,Rightarrow]
\arrow[ddrr, Rightarrow,"\text{\ref{Modification:MonoidalCoherence}}"]
&&
\tikzmath{
\coordinate (x) at (-.1,1.1);
\coordinate (y) at (.1,.9);
\draw (-.4,0) .. controls ++(90:.3) and ++(270:.3) .. (-.015,.6) -- (-.015,2.2);
\draw (.4,0) .. controls ++(90:.3) and ++(270:.3) .. (.015,.6) -- (.015,2.2);
\RectangleMorphism{(x)}{\gColor}
\RectangleMorphism{(y)}{\kColor}
}
\arrow[r,Rightarrow]
\arrow[dr,Rightarrow]
&
\tikzmath{
\coordinate (x) at (-.1,1.3);
\coordinate (y) at (.1,1.1);
\coordinate (m1) at (-.4,.8);
\coordinate (m2) at (.4,2);
\draw (-.4,0) .. controls ++(90:.3) and ++(270:.3) .. (-.015,.6) -- (-.015,2.2);
\draw (.4,0) .. controls ++(90:.3) and ++(270:.3) .. (.015,.6) -- (.015,2.2);
\draw[thick, DarkGreen, mid>] ($ (m1) + (0,.1) $) -- ($ (m1) + (0,.5) $) to[in=-90,out=90] ($ (m2) - (0,.1) $);
\RectangleMorphism{(x)}{\gColor}
\RectangleMorphism{(y)}{\kColor}
\filldraw[DarkGreen] (m1) circle (.1cm);
\filldraw[DarkGreen] (m2) circle (.1cm);
}
\arrow[r,Rightarrow]
\arrow[d,Rightarrow, "\phi"]
&
\tikzmath{
\coordinate (x) at (-.1,1.7);
\coordinate (y) at (.1,1.5);
\coordinate (m1) at (-.4,.8);
\coordinate (m2) at (.4,2);
\draw (-.4,0) .. controls ++(90:.3) and ++(270:.3) .. (-.015,.6) -- (-.015,2.2);
\draw (.4,0) .. controls ++(90:.3) and ++(270:.3) .. (.015,.6) -- (.015,2.2);
\draw[thick, DarkGreen, mid>] ($ (m1) + (0,.1) $) to[in=-90,out=90] ($ (m2) - (0,.5) $)  -- ($ (m2) - (0,.1) $);
\RectangleMorphism{(x)}{black}
\RectangleMorphism{(y)}{red}
\filldraw[DarkGreen] (m1) circle (.1cm);
\filldraw[DarkGreen] (m2) circle (.1cm);
}
\arrow[rr,Rightarrow]
\arrow[d,Rightarrow, "\phi"]
&&
\tikzmath{
\coordinate (x) at (-.1,1.1);
\coordinate (y) at (.1,.9);
\draw (-.4,0) .. controls ++(90:.3) and ++(270:.3) .. (-.015,.6) -- (-.015,2.2);
\draw (.4,0) .. controls ++(90:.3) and ++(270:.3) .. (.015,.6) -- (.015,2.2);
\RectangleMorphism{(x)}{black}
\RectangleMorphism{(y)}{red}
}
\\
&&
\tikzmath{
\coordinate (x) at (-.4,-.2);
\coordinate (y) at (.4,-.5);
\coordinate (m1) at (-.7,-.8);
\coordinate (m2) at (.7,1.4);
\draw (-.4,-1.2) -- (-.4,0) .. controls ++(90:.3) and ++(270:.3) .. (-.015,.6) -- (-.015,1.6);
\draw (.4,-1.2) -- (.4,0) .. controls ++(90:.3) and ++(270:.3) .. (.015,.6) -- (.015,1.6);
\draw[thick, DarkGreen, mid>] ($ (m1) + (0,.1) $) -- ($ (m1) + (0,1) $) to[in=-90,out=90] ($ (m2) - (0,.1) $);
\RectangleMorphism{(x)}{\gColor}
\RectangleMorphism{(y)}{\kColor}
\filldraw[DarkGreen] (m1) circle (.1cm);
\filldraw[DarkGreen] (m2) circle (.1cm);
}
\arrow[r, Rightarrow, "\mu^A"]
\arrow[d, Rightarrow, "\eta^2"]
&
\tikzmath{
\coordinate (x) at (-.1,1.3);
\coordinate (y) at (.1,1.1);
\coordinate (m1) at (-.7,.2);
\coordinate (m2) at (.4,2);
\draw (-.4,0) -- (-.4,.2) .. controls ++(90:.3) and ++(270:.3) .. (-.015,.8) -- (-.015,2.2);
\draw (.4,0) -- (.4,.2) .. controls ++(90:.3) and ++(270:.3) .. (.015,.8) -- (.015,2.2);
\draw[thick, DarkGreen, mid>] ($ (m1) + (0,.1) $) -- ($ (m1) + (0,1) $) to[in=-90,out=90] ($ (m2) - (0,.1) $);
\RectangleMorphism{(x)}{\gColor}
\RectangleMorphism{(y)}{\kColor}
\filldraw[DarkGreen] (m1) circle (.1cm);
\filldraw[DarkGreen] (m2) circle (.1cm);
}
\arrow[r,Rightarrow, "\eta_{x\xz y}"]
\arrow[d, phantom, "\text{\hspace{3cm}\ref{Transformation:eta^2} for $\eta$}"]
&
\tikzmath{
\coordinate (x) at (-.1,1.7);
\coordinate (y) at (.1,1.5);
\coordinate (m1) at (-.7,.2);
\coordinate (m2) at (.4,2);
\draw (-.4,0) -- (-.4,.2) .. controls ++(90:.3) and ++(270:.3) .. (-.015,.8) -- (-.015,2.2);
\draw (.4,0) -- (.4,.2) .. controls ++(90:.3) and ++(270:.3) .. (.015,.8) -- (.015,2.2);
\draw[thick, DarkGreen, mid>] ($ (m1) + (0,.1) $) to[in=-90,out=90] ($ (m2) - (0,.5) $)  -- ($ (m2) - (0,.1) $);
\RectangleMorphism{(x)}{black}
\RectangleMorphism{(y)}{red}
\filldraw[DarkGreen] (m1) circle (.1cm);
\filldraw[DarkGreen] (m2) circle (.1cm);
}
\arrow[urr, Rightarrow]
\arrow[r, Rightarrow, "\eta^2"]
&
\tikzmath{
\coordinate (x) at (-.1,1.1);
\coordinate (y) at (.1,.9);
\coordinate (m1) at (-.7,-.6);
\coordinate (m2) at (.7,1.4);
\draw (-.4,-1) -- (-.4,0) .. controls ++(90:.3) and ++(270:.3) .. (-.015,.6) -- (-.015,1.6);
\draw (.4,-1) -- (.4,0) .. controls ++(90:.3) and ++(270:.3) .. (.015,.6) -- (.015,1.6);
\draw[thick, DarkGreen, mid>] ($ (m1) + (0,.1) $) to[in=-90,out=90] ($ (m2) - (0,1) $) -- ($ (m2) - (0,.1) $);
\RectangleMorphism{(x)}{black}
\RectangleMorphism{(y)}{red}
\filldraw[DarkGreen] (m1) circle (.1cm);
\filldraw[DarkGreen] (m2) circle (.1cm);
}
\arrow[ur,Rightarrow, near start, "\text{\ref{Modification:MonoidalCoherence}}"]
\\
\tikzmath{
\coordinate (x) at (-.4,-.8);
\coordinate (y) at (.4,-1.3);
\coordinate (m1) at (-.7,-1);
\coordinate (m2) at (-.1,0);
\draw (-.4,-1.6) -- (-.4,0) .. controls ++(90:.3) and ++(270:.3) .. (-.015,.6) -- (-.015,1);
\draw (.4,-1.6) -- (.4,0) .. controls ++(90:.3) and ++(270:.3) .. (.015,.6) -- (.015,1);
\draw[thick, DarkGreen, mid>] ($ (m1) + (0,.1) $) to[in=-90,out=90] ($ (m2) - (0,.1) $);
\RectangleMorphism{(x)}{\gColor}
\RectangleMorphism{(y)}{\kColor}
\filldraw[DarkGreen] (m1) circle (.1cm);
\filldraw[DarkGreen] (m2) circle (.1cm);
}
\arrow[r,Rightarrow]
\arrow[d,Rightarrow, "\eta_x"]
\arrow[dr, Rightarrow]
&
\tikzmath{
\coordinate (x) at (-.4,-1.6);
\coordinate (y) at (.4,-2);
\coordinate (m1) at (-.7,-2.4);
\coordinate (m2) at (-.1,-.8);
\coordinate (m3) at (.1,-.4);
\coordinate (m4) at (.7,.8);
\draw (-.4,-2.6) -- (-.4,0) .. controls ++(90:.3) and ++(270:.3) .. (-.015,.6) -- (-.015,1);
\draw (.4,-2.6) -- (.4,0) .. controls ++(90:.3) and ++(270:.3) .. (.015,.6) -- (.015,1);
\draw[thick, DarkGreen, mid>] ($ (m1) + (0,.1) $) -- ($ (m1) + (0,.7) $) to[in=-90,out=90] ($ (m2) - (0,.1) $);
\draw[thick, DarkGreen, mid>] ($ (m3) + (0,.1) $) to[in=-90,out=90] ($ (m4) - (0,.1) $);
\RectangleMorphism{(x)}{\gColor}
\RectangleMorphism{(y)}{\kColor}
\filldraw[DarkGreen] (m1) circle (.1cm);
\filldraw[DarkGreen] (m2) circle (.1cm);
\filldraw[DarkGreen] (m3) circle (.1cm);
\filldraw[DarkGreen] (m4) circle (.1cm);
}
\arrow[r,Rightarrow]
\arrow[d,Rightarrow, "\phi"]
\arrow[dr,Rightarrow, "\eta_x"]
&
\tikzmath{
\coordinate (x) at (-.4,-.2);
\coordinate (y) at (.4,-.5);
\coordinate (m1) at (-.7,-.8);
\coordinate (m2) at (.7,1.4);
\draw (-.4,-1.2) -- (-.4,.4) .. controls ++(90:.3) and ++(270:.3) .. (-.015,1) -- (-.015,1.6);
\draw (.4,-1.2) -- (.4,.4) .. controls ++(90:.3) and ++(270:.3) ..(.015,1) -- (.015,1.6);
\draw[thick, DarkGreen, mid>] ($ (m1) + (0,.1) $) -- ($ (m1) + (0,.6) $) to[in=-90,out=90] ($ (m2) - (0,.5) $) -- ($ (m2) - (0,.1) $);
\RectangleMorphism{(x)}{\gColor}
\RectangleMorphism{(y)}{\kColor}
\filldraw[DarkGreen] (m1) circle (.1cm);
\filldraw[DarkGreen] (m2) circle (.1cm);
}
\arrow[r, Rightarrow, "\eta_x"]
&
\tikzmath{
\coordinate (x) at (-.4,.3);
\coordinate (y) at (.4,-.7);
\coordinate (m1) at (-.7,-1);
\coordinate (m2) at (.7,1.4);
\draw (-.4,-1.2) -- (-.4,.4) .. controls ++(90:.3) and ++(270:.3) .. (-.015,1) -- (-.015,1.6);
\draw (.4,-1.2) -- (.4,.4) .. controls ++(90:.3) and ++(270:.3) ..(.015,1) -- (.015,1.6);
\draw[thick, DarkGreen, mid>] ($ (m1) + (0,.1) $) -- ($ (m1) + (0,.3) $) to[in=-90,out=90] ($ (m2) - (0,1) $) -- ($ (m2) - (0,.1) $);
\RectangleMorphism{(x)}{black}
\RectangleMorphism{(y)}{\kColor}
\filldraw[DarkGreen] (m1) circle (.1cm);
\filldraw[DarkGreen] (m2) circle (.1cm);
}
\arrow[r, Rightarrow, "\phi"]
&
\tikzmath{
\coordinate (x) at (-.4,.3);
\coordinate (y) at (.4,-.2);
\coordinate (m1) at (-.7,-1);
\coordinate (m2) at (.7,1.4);
\draw (-.4,-1.2) -- (-.4,.4) .. controls ++(90:.3) and ++(270:.3) .. (-.015,1) -- (-.015,1.6);
\draw (.4,-1.2) -- (.4,.4) .. controls ++(90:.3) and ++(270:.3) ..(.015,1) -- (.015,1.6);
\draw[thick, DarkGreen, mid>] ($ (m1) + (0,.1) $) to[in=-90,out=90] ($ (m2) - (0,.7) $) -- ($ (m2) - (0,.1) $);
\RectangleMorphism{(x)}{black}
\RectangleMorphism{(y)}{\kColor}
\filldraw[DarkGreen] (m1) circle (.1cm);
\filldraw[DarkGreen] (m2) circle (.1cm);
}
\arrow[r, Rightarrow, "\eta_y"]
&
\tikzmath{
\coordinate (x) at (-.4,.3);
\coordinate (y) at (.4,-.1);
\coordinate (m1) at (-.7,-1);
\coordinate (m2) at (.7,1.4);
\draw (-.4,-1.2) -- (-.4,.4) .. controls ++(90:.3) and ++(270:.3) .. (-.015,1) -- (-.015,1.6);
\draw (.4,-1.2) -- (.4,.4) .. controls ++(90:.3) and ++(270:.3) .. (.015,1) -- (.015,1.6);
\draw[thick, DarkGreen, mid>] ($ (m1) + (0,.1) $) to[in=-90,out=90] ($ (m2) - (0,1.6) $) -- ($ (m2) - (0,.1) $);
\RectangleMorphism{(x)}{black}
\RectangleMorphism{(y)}{red}
\filldraw[DarkGreen] (m1) circle (.1cm);
\filldraw[DarkGreen] (m2) circle (.1cm);
}
\arrow[u,Rightarrow, "\mu^B"]
\arrow[r, Rightarrow]
&
\tikzmath{
\coordinate (x) at (-.4,-.4);
\coordinate (y) at (.4,-.8);
\draw (-.4,-1.2) -- (-.4,0) .. controls ++(90:.3) and ++(270:.3) .. (-.015,.6) -- (-.015,1);
\draw (.4,-1.2) -- (.4,0) .. controls ++(90:.3) and ++(270:.3) .. (.015,.6) -- (.015,1);
\RectangleMorphism{(x)}{black}
\RectangleMorphism{(y)}{red}
}
\arrow[uu, Rightarrow, "\mu^B"]
\\
\tikzmath{
\coordinate (x) at (-.4,-.2);
\coordinate (y) at (.4,-1.3);
\coordinate (m1) at (-.7,-1);
\coordinate (m2) at (-.1,0);
\draw (-.4,-1.6) -- (-.4,0) .. controls ++(90:.3) and ++(270:.3) .. (-.015,.6) -- (-.015,1);
\draw (.4,-1.6) -- (.4,0) .. controls ++(90:.3) and ++(270:.3) .. (.015,.6) -- (.015,1);
\draw[thick, DarkGreen, mid>] ($ (m1) + (0,.1) $) to[in=-90,out=90] ($ (m2) - (0,.1) $);
\RectangleMorphism{(x)}{black}
\RectangleMorphism{(y)}{\kColor}
\filldraw[DarkGreen] (m1) circle (.1cm);
\filldraw[DarkGreen] (m2) circle (.1cm);
}
\arrow[dd, Rightarrow]
\arrow[dr, Rightarrow]
&
\tikzmath{
\coordinate (x) at (-.4,-.8);
\coordinate (y) at (.4,-2.2);
\coordinate (m1) at (-.7,-1);
\coordinate (m2) at (-.1,0);
\coordinate (m3) at (.1,-2.4);
\coordinate (m4) at (.7,-1.4);
\draw (-.4,-2.6) -- (-.4,0) .. controls ++(90:.3) and ++(270:.3) .. (-.015,.6) -- (-.015,1);
\draw (.4,-2.6) -- (.4,0) .. controls ++(90:.3) and ++(270:.3) .. (.015,.6) -- (.015,1);
\draw[thick, DarkGreen, mid>] ($ (m1) + (0,.1) $) to[in=-90,out=90] ($ (m2) - (0,.1) $);
\draw[thick, DarkGreen, mid>] ($ (m3) + (0,.1) $) to[in=-90,out=90] ($ (m4) - (0,.1) $);
\RectangleMorphism{(x)}{\gColor}
\RectangleMorphism{(y)}{\kColor}
\filldraw[DarkGreen] (m1) circle (.1cm);
\filldraw[DarkGreen] (m2) circle (.1cm);
\filldraw[DarkGreen] (m3) circle (.1cm);
\filldraw[DarkGreen] (m4) circle (.1cm);
}
\arrow[d,Rightarrow, "\eta_x"]
&
\tikzmath{
\coordinate (x) at (-.4,-1.2);
\coordinate (y) at (.4,-2);
\coordinate (m1) at (-.7,-2.4);
\coordinate (m2) at (-.1,-.8);
\coordinate (m3) at (.1,-.4);
\coordinate (m4) at (.7,.8);
\draw (-.4,-2.6) -- (-.4,0) .. controls ++(90:.3) and ++(270:.3) .. (-.015,.6) -- (-.015,1);
\draw (.4,-2.6) -- (.4,0) .. controls ++(90:.3) and ++(270:.3) .. (.015,.6) -- (.015,1);
\draw[thick, DarkGreen, mid>] ($ (m1) + (0,.1) $) to[in=-90,out=90] ($ (m2) - (0,.5) $) -- ($ (m2) - (0,.1) $);
\draw[thick, DarkGreen, mid>] ($ (m3) + (0,.1) $) to[in=-90,out=90] ($ (m4) - (0,.1) $);
\RectangleMorphism{(x)}{black}
\RectangleMorphism{(y)}{\kColor}
\filldraw[DarkGreen] (m1) circle (.1cm);
\filldraw[DarkGreen] (m2) circle (.1cm);
\filldraw[DarkGreen] (m3) circle (.1cm);
\filldraw[DarkGreen] (m4) circle (.1cm);
}
\arrow[ur, Rightarrow]
\arrow[r, Rightarrow,"\phi"]
&
\tikzmath{
\coordinate (x) at (-.4,-.1);
\coordinate (y) at (.4,-.5);
\coordinate (m1) at (-.7,-2.4);
\coordinate (m2) at (-.1,-1.4);
\coordinate (m3) at (.1,-1);
\coordinate (m4) at (.7,.8);
\draw (-.4,-2.6) -- (-.4,0) .. controls ++(90:.3) and ++(270:.3) .. (-.015,.6) -- (-.015,1);
\draw (.4,-2.6) -- (.4,0) .. controls ++(90:.3) and ++(270:.3) .. (.015,.6) -- (.015,1);
\draw[thick, DarkGreen, mid>] ($ (m1) + (0,.1) $) to[in=-90,out=90] ($ (m2) - (0,.1) $);
\draw[thick, DarkGreen, mid>] ($ (m3) + (0,.1) $) -- ($ (m3) + (0,.4) $) to[in=-90,out=90] ($ (m4) - (0,.1) $);
\RectangleMorphism{(x)}{black}
\RectangleMorphism{(y)}{\kColor}
\filldraw[DarkGreen] (m1) circle (.1cm);
\filldraw[DarkGreen] (m2) circle (.1cm);
\filldraw[DarkGreen] (m3) circle (.1cm);
\filldraw[DarkGreen] (m4) circle (.1cm);
}
\arrow[ur, Rightarrow]
\arrow[r, Rightarrow, "\eta_y"]
&
\tikzmath{
\coordinate (x) at (-.4,.1);
\coordinate (y) at (.4,-.2);
\coordinate (m1) at (-.7,-2.4);
\coordinate (m2) at (-.1,-1.4);
\coordinate (m3) at (.1,-1);
\coordinate (m4) at (.7,.9);
\draw (-.4,-2.6) -- (-.4,.1) .. controls ++(90:.3) and ++(270:.3) .. (-.015,.7) -- (-.015,1.1);
\draw (.4,-2.6) -- (.4,.1) .. controls ++(90:.3) and ++(270:.3) .. (.015,.7) -- (.015,1.1);
\draw[thick, DarkGreen, mid>] ($ (m1) + (0,.1) $) to[in=-90,out=90] ($ (m2) - (0,.1) $);
\draw[thick, DarkGreen, mid>] ($ (m3) + (0,.1) $) to[in=-90,out=90] ($ (m4) - (0,1.1) $) -- ($ (m4) - (0,.1) $);
\RectangleMorphism{(x)}{black}
\RectangleMorphism{(y)}{red}
\filldraw[DarkGreen] (m1) circle (.1cm);
\filldraw[DarkGreen] (m2) circle (.1cm);
\filldraw[DarkGreen] (m3) circle (.1cm);
\filldraw[DarkGreen] (m4) circle (.1cm);
}
\arrow[ur, Rightarrow]
\arrow[dd, Rightarrow]
\\
&
\tikzmath{
\coordinate (x) at (-.4,-.2);
\coordinate (y) at (.4,-2.2);
\coordinate (m1) at (-.7,-1);
\coordinate (m2) at (-.1,0);
\coordinate (m3) at (.1,-2.4);
\coordinate (m4) at (.7,-1.4);
\draw (-.4,-2.6) -- (-.4,0) .. controls ++(90:.3) and ++(270:.3) .. (-.015,.6) -- (-.015,1);
\draw (.4,-2.6) -- (.4,0) .. controls ++(90:.3) and ++(270:.3) .. (.015,.6) -- (.015,1);
\draw[thick, DarkGreen, mid>] ($ (m1) + (0,.1) $) to[in=-90,out=90] ($ (m2) - (0,.1) $);
\draw[thick, DarkGreen, mid>] ($ (m3) + (0,.1) $) to[in=-90,out=90] ($ (m4) - (0,.1) $);
\RectangleMorphism{(x)}{black}
\RectangleMorphism{(y)}{\kColor}
\filldraw[DarkGreen] (m1) circle (.1cm);
\filldraw[DarkGreen] (m2) circle (.1cm);
\filldraw[DarkGreen] (m3) circle (.1cm);
\filldraw[DarkGreen] (m4) circle (.1cm);
}
\arrow[dr, Rightarrow]
\arrow[ur, Rightarrow, "\phi"]
\arrow[urr, Rightarrow, "\phi"]
\arrow[rr, Rightarrow, "\eta_y"]
&&
\tikzmath{
\coordinate (x) at (-.4,-.2);
\coordinate (y) at (.4,-1.6);
\coordinate (m1) at (-.7,-1);
\coordinate (m2) at (-.1,0);
\coordinate (m3) at (.1,-2.4);
\coordinate (m4) at (.7,-1.4);
\draw (-.4,-2.6) -- (-.4,0) .. controls ++(90:.3) and ++(270:.3) .. (-.015,.6) -- (-.015,1);
\draw (.4,-2.6) -- (.4,0) .. controls ++(90:.3) and ++(270:.3) .. (.015,.6) -- (.015,1);
\draw[thick, DarkGreen, mid>] ($ (m1) + (0,.1) $) to[in=-90,out=90] ($ (m2) - (0,.1) $);
\draw[thick, DarkGreen, mid>] ($ (m3) + (0,.1) $) to[in=-90,out=90] ($ (m4) - (0,.1) $);
\RectangleMorphism{(x)}{black}
\RectangleMorphism{(y)}{red}
\filldraw[DarkGreen] (m1) circle (.1cm);
\filldraw[DarkGreen] (m2) circle (.1cm);
\filldraw[DarkGreen] (m3) circle (.1cm);
\filldraw[DarkGreen] (m4) circle (.1cm);
}
\arrow[dr, Rightarrow]
\arrow[ur, Rightarrow, "\phi"]
\\
\tikzmath{
\coordinate (x) at (-.4,-.5);
\coordinate (y) at (.4,-1.3);
\draw (-.4,-1.6) -- (-.4,0) .. controls ++(90:.3) and ++(270:.3) .. (-.015,.6) -- (-.015,1);
\draw (.4,-1.6) -- (.4,0) .. controls ++(90:.3) and ++(270:.3) .. (.015,.6) -- (.015,1);
\RectangleMorphism{(x)}{black}
\RectangleMorphism{(y)}{\kColor}
}
\arrow[rr, Rightarrow]
&&
\tikzmath{
\coordinate (x) at (-.4,-.2);
\coordinate (y) at (.4,-1.2);
\coordinate (m1) at (.1,-1.4);
\coordinate (m2) at (.7,-.4);
\draw (-.4,-1.6) -- (-.4,0) .. controls ++(90:.3) and ++(270:.3) .. (-.015,.6) -- (-.015,1);
\draw (.4,-1.6) -- (.4,0) .. controls ++(90:.3) and ++(270:.3) .. (.015,.6) -- (.015,1);
\draw[thick, DarkGreen, mid>] ($ (m1) + (0,.1) $) to[in=-90,out=90] ($ (m2) - (0,.1) $);
\RectangleMorphism{(x)}{black}
\RectangleMorphism{(y)}{\kColor}
\filldraw[DarkGreen] (m1) circle (.1cm);
\filldraw[DarkGreen] (m2) circle (.1cm);
}
\arrow[rr, Rightarrow, "\eta_y"]
&&
\tikzmath{
\coordinate (x) at (-.4,-.2);
\coordinate (y) at (.4,-.6);
\coordinate (m1) at (.1,-1.4);
\coordinate (m2) at (.7,-.4);
\draw (-.4,-1.6) -- (-.4,0) .. controls ++(90:.3) and ++(270:.3) .. (-.015,.6) -- (-.015,1);
\draw (.4,-1.6) -- (.4,0) .. controls ++(90:.3) and ++(270:.3) .. (.015,.6) -- (.015,1);
\draw[thick, DarkGreen, mid>] ($ (m1) + (0,.1) $) to[in=-90,out=90] ($ (m2) - (0,.1) $);
\RectangleMorphism{(x)}{black}
\RectangleMorphism{(y)}{red}
\filldraw[DarkGreen] (m1) circle (.1cm);
\filldraw[DarkGreen] (m2) circle (.1cm);
}
\arrow[uuurr, Rightarrow]
&&
\end{tikzcd}};\end{tikzpicture}
$$
Non-labelled faces commute by either functoriality of 1-cell composition $\xo$ or by properties of a (bi)adjoint (bi)equivalence.

\item[\ref{Transformation:AssociatorCoherence}]
Every map is the identity map.

\item[\ref{Transformation:UnitCoherence}]
Every map is the identity map.
\end{proof}

We remind the reader that by the strictness properties for $\alpha, \beta$ as components of 1-morphisms in $\TriCat_G^{\pt}$, 
$m_* : e_\cD \Rightarrow \eta_*$, and $m_g$ is an invertible 2-cell
\begin{equation}
\tag{\ref{eq:mgIso}}
\tikzmath{
\node (g1) at (0,0) {$\scriptstyle g_\cD$};
\node (g2) at (0,1) {};
\node[draw,rectangle, thick, rounded corners=5pt] (p) at (-.5,.5) {$\scriptstyle m_*$};
\draw (g1) to[in=-90,out=90] (g2);
}
\overset{m_g}{\Rightarrow}
\tikzmath{
\node (g1) at (0,0) {$\scriptstyle g_\cD$};
\node (g2) at (0,1) {};
\node[draw,rectangle, thick, rounded corners=5pt] (p) at (.5,.5) {$\scriptstyle m_*$};
\draw (g1) to[in=-90,out=90] (g2);
}
\,.
\end{equation}

\begin{proof}[Proof of Thm.~\ref{thm:TruncationStrictify2Morphisms}: $(m,\id): (\zeta,\id)\Rrightarrow (\eta,m)$ is an invertible 3-morphism in $\TriCat_G$]
It suffices to check that $m: \zeta\Rrightarrow \eta$ is an invertible 3-modification.

\item[\ref{Modification:m_c}]
This condition corresponds for $m: \zeta \Rrightarrow \eta$ corresponds to the outside of the following commutative diagram, where we use the following shorthand notation for $x\in \cC(g_\cC \to h_\cC)$ and $m_*, m_*^{-1}$:
$$
\tikzmath{
\draw (0,-.3) -- (0,.3);
\filldraw[fill=\gColor, thick] (-.1,-.1) rectangle (.1,.1);
}
:=
\tikzmath{
\node (Aa) at (0,-.8) {$\scriptstyle g_\cD$};
\node (Ab) at (0,.8) {$\scriptstyle h_\cD$};
\node[draw,rectangle, thick, rounded corners=5pt] (Ax) at (0,0) {$\scriptstyle A(x)$};
\draw (Aa) to[in=-90,out=90] (Ax);
\draw (Ax) to[in=-90,out=90] (Ab);
}
\qquad\qquad
\tikzmath{
\draw (0,-.3) -- (0,.3);
\filldraw[fill=\kColor, thick] (-.1,-.1) rectangle (.1,.1);
}
:=
\tikzmath{
\node (Aa) at (0,-.8) {$\scriptstyle g_\cD$};
\node (Ab) at (0,.8) {$\scriptstyle h_\cD$};
\node[draw,rectangle, thick, rounded corners=5pt] (Ax) at (0,0) {$\scriptstyle B(x)$};
\draw (Aa) to[in=-90,out=90] (Ax);
\draw (Ax) to[in=-90,out=90] (Ab);
}
\qquad\qquad
\tikzmath{
\draw[thick, DarkGreen, mid>] (0,0) -- (0,.6);
\filldraw[DarkGreen] (0,0) circle (.1cm);
}
:=
\tikzmath{
\node[DarkGreen] (Ab) at (0,.8) {$\scriptstyle \eta_*$};
\node[draw,rectangle, thick, rounded corners=5pt] (Ax) at (0,0) {$\scriptstyle m_*$};
\draw[DarkGreen,mid>] (Ax) to[in=-90,out=90] (Ab);
}
\qquad\qquad
\tikzmath{
\draw[thick, DarkGreen, mid>] (0,-.6) -- (0,0);
\filldraw[DarkGreen] (0,0) circle (.1cm);
}
:=
\tikzmath{
\node[DarkGreen] (Aa) at (0,-.8) {$\scriptstyle \eta_*$};
\node[draw,rectangle, thick, rounded corners=5pt] (Ax) at (0,0) {$\scriptstyle m_*^{-1}$};
\draw[DarkGreen,mid>] (Aa) to[in=-90,out=90] (Ax);
}
$$
$$
\begin{tikzpicture}[baseline= (a).base]\node[scale=.8] (a) at (0,0){\begin{tikzcd}
\tikzmath{
\pgfmathsetmacro{\extra}{.2}
\coordinate (Ax) at (.3,-.7);
\coordinate (Bx) at (-.3,.4);
\coordinate (m1) at (-.3,-.4);
\coordinate (m2) at (.3,.4);
\draw ($ (Ax) - 2*(0,\extra) $) -- ($ (Ax) + 2*(0,\extra) $) .. controls ++(90:.3) and ++(270:.3) .. ($ (Bx) + (0,-.1) $) -- (-.3,1.5);
\draw[thick, DarkGreen, mid>] ($ (m1) + (0,.1) $) .. controls ++(90:.3) and ++(270:.3) .. ($ (m2) + (0,-.1) $) -- (.3,1.5);
\RectangleMorphism{(Ax)}{\gColor}
\filldraw[DarkGreen] (m1) circle (.1cm);
}
\arrow[d,Rightarrow,"m_h"]
\arrow[rrr,Rightarrow,"\phi"]
\arrow[drr,Rightarrow]
&&&
\tikzmath{
\pgfmathsetmacro{\extra}{.2}
\coordinate (Ax) at (.3,-.4);
\coordinate (Bx) at (-.3,.4);
\coordinate (m1) at (-.3,-.7);
\coordinate (m2) at (.3,.4);
\draw ($ (Ax) - 3*(0,\extra) $) -- ($ (Ax) + (0,.1) $) .. controls ++(90:.3) and ++(270:.3) .. ($ (Bx) + (0,-.1) $) -- (-.3,1.5);
\draw[thick, DarkGreen, mid>] ($ (m1) + (0,.1) $) -- ($ (m1) + 2*(0,\extra) $) .. controls ++(90:.3) and ++(270:.3) .. ($ (m2) + (0,-.1) $) -- (.3,1.5);
\RectangleMorphism{(Ax)}{\gColor}
\filldraw[DarkGreen] (m1) circle (.1cm);
}
\arrow[r,Rightarrow,"\eta_x"]
\arrow[d,Rightarrow]
&
\tikzmath{
\pgfmathsetmacro{\extra}{.2}
\coordinate (Ax) at (.3,-.4);
\coordinate (Bx) at (-.3,.4);
\coordinate (m1) at (-.3,-.7);
\coordinate (m2) at (.3,.4);
\draw ($ (Ax) - 3*(0,\extra) $) -- ($ (Ax) + (0,.1) $) .. controls ++(90:.3) and ++(270:.3) .. ($ (Bx) + (0,-.1) $) -- (-.3,1.5);
\draw[thick, DarkGreen, mid>] ($ (m1) + (0,.1) $) -- ($ (m1) + 2*(0,\extra) $) .. controls ++(90:.3) and ++(270:.3) .. ($ (m2) + (0,-.1) $) -- (.3,1.5);
\RectangleMorphism{(Bx)}{\kColor}
\filldraw[DarkGreen] (m1) circle (.1cm);
}
\arrow[rrr,Rightarrow,"m_g"]
\arrow[d,Rightarrow]
&&&
\tikzmath{
\pgfmathsetmacro{\extra}{.2}
\coordinate (Ax) at (-.3,.4);
\coordinate (m1) at (.3,-.2);
\draw (-.3,-1.1) -- (-.3,1.5);
\draw[thick, DarkGreen, mid>] ($ (m1) + (0,.1) $) -- (.3,1.5);
\RectangleMorphism{(Ax)}{\kColor}
\filldraw[DarkGreen] (m1) circle (.1cm);
}
\arrow[d,Rightarrow,"\phi"]
\arrow[dll,Rightarrow]
\\
\tikzmath{
\pgfmathsetmacro{\extra}{.2}
\coordinate (Ax) at (-.3,-.7);
\coordinate (Bx) at (-.3,.4);
\coordinate (m1) at (.3,1);
\draw ($ (Ax) - 2*(0,\extra) $) -- (-.3,1.5);
\draw[thick, DarkGreen, mid>] ($ (m1) + (0,.1) $) -- (.3,1.5);
\RectangleMorphism{(Ax)}{\gColor}
\filldraw[DarkGreen] (m1) circle (.1cm);
}
\arrow[r,Rightarrow]
&
\tikzmath{
\pgfmathsetmacro{\extra}{.2}
\coordinate (Ax) at (-.3,-.7);
\coordinate (Bx) at (-.3,.4);
\coordinate (m3) at (.3,1);
\coordinate (m2) at (.3,.7);
\coordinate (m1) at (.3,0);
\draw ($ (Ax) - 2*(0,\extra) $) -- (-.3,1.5);
\draw[thick, DarkGreen, mid>] ($ (m3) + (0,.1) $) -- (.3,1.5);
\draw[thick, DarkGreen, mid>] ($ (m1) + (0,.1) $) -- (m2);
\RectangleMorphism{(Ax)}{\gColor}
\filldraw[DarkGreen] (m1) circle (.1cm);
\filldraw[DarkGreen] (m2) circle (.1cm);
\filldraw[DarkGreen] (m3) circle (.1cm);
}
\arrow[r,Rightarrow, "m_h^{-1}"]
&
\tikzmath{
\pgfmathsetmacro{\extra}{.2}
\coordinate (Ax) at (.3,-.7);
\coordinate (Bx) at (-.3,.4);
\coordinate (m1) at (-.3,-.4);
\coordinate (m3) at (.3,1);
\coordinate (m2) at (.3,.7);
\draw ($ (Ax) - 2*(0,\extra) $) -- ($ (Ax) + 2*(0,\extra) $) .. controls ++(90:.3) and ++(270:.3) .. ($ (Bx) + (0,-.1) $) -- (-.3,1.5);
\draw[thick, DarkGreen, mid>] ($ (m1) + (0,.1) $) .. controls ++(90:.3) and ++(270:.3) .. ($ (m2) + (0,-.4) $) -- (m2); 
\draw[thick, DarkGreen, mid>] ($ (m3) + (0,.1) $) -- (.3,1.5);
\RectangleMorphism{(Ax)}{\gColor}
\filldraw[DarkGreen] (m1) circle (.1cm);
\filldraw[DarkGreen] (m2) circle (.1cm);
\filldraw[DarkGreen] (m3) circle (.1cm);
}
\arrow[r,Rightarrow, "\phi"]
&
\tikzmath{
\pgfmathsetmacro{\extra}{.2}
\coordinate (Ax) at (.3,-.4);
\coordinate (Bx) at (-.3,.4);
\coordinate (m1) at (-.3,-.7);
\coordinate (m2) at (.3,.4);
\draw ($ (Ax) - 3*(0,\extra) $) -- ($ (Ax) + (0,.1) $) .. controls ++(90:.3) and ++(270:.3) .. ($ (Bx) + (0,-.1) $) -- (-.3,1.5);
\draw[thick, DarkGreen, mid>] ($ (m1) + (0,.1) $) -- ($ (m1) + 2*(0,\extra) $) .. controls ++(90:.3) and ++(270:.3) .. ($ (m2) + (0,-.1) $) -- ($ (m2) + 2*(0,\extra) $);
\RectangleMorphism{(Ax)}{\gColor}
\draw[thick, DarkGreen, mid>] ($ (m2) + (0,.7) $) -- ($ (m2) + (0,1.1) $);
\filldraw[DarkGreen] (m1) circle (.1cm);
\filldraw[DarkGreen] ($ (m2) + (0,.3) $) circle (.1cm);
\filldraw[DarkGreen] ($ (m2) + (0,.6) $) circle (.1cm);
}
\arrow[r,Rightarrow, "\eta_x"]
&
\tikzmath{
\pgfmathsetmacro{\extra}{.2}
\coordinate (Ax) at (.3,-.4);
\coordinate (Bx) at (-.3,.4);
\coordinate (m1) at (-.3,-.7);
\coordinate (m2) at (.3,.4);
\draw ($ (Ax) - 3*(0,\extra) $) -- ($ (Ax) + (0,.1) $) .. controls ++(90:.3) and ++(270:.3) .. ($ (Bx) + (0,-.1) $) -- (-.3,1.5);
\draw[thick, DarkGreen, mid>] ($ (m1) + (0,.1) $) -- ($ (m1) + 2*(0,\extra) $) .. controls ++(90:.3) and ++(270:.3) .. ($ (m2) + (0,-.1) $) -- ($ (m2) + 2*(0,\extra) $);
\RectangleMorphism{(Bx)}{\kColor}
\draw[thick, DarkGreen, mid>] ($ (m2) + (0,.7) $) -- ($ (m2) + (0,1.1) $);
\filldraw[DarkGreen] (m1) circle (.1cm);
\filldraw[DarkGreen] ($ (m2) + (0,.3) $) circle (.1cm);
\filldraw[DarkGreen] ($ (m2) + (0,.6) $) circle (.1cm);
}
\arrow[r,Rightarrow, "m_g"]
&
\tikzmath{
\pgfmathsetmacro{\extra}{.2}
\coordinate (Ax) at (-.3,.4);
\coordinate (Bx) at (-.3,.4);
\coordinate (m3) at (.3,1);
\coordinate (m2) at (.3,.7);
\coordinate (m1) at (.3,0);
\draw (-.3,-1.1) -- (-.3,1.5);
\draw[thick, DarkGreen, mid>] ($ (m3) + (0,.1) $) -- (.3,1.5);
\draw[thick, DarkGreen, mid>] ($ (m1) + (0,.1) $) -- (m2);
\RectangleMorphism{(Ax)}{\kColor}
\filldraw[DarkGreen] (m1) circle (.1cm);
\filldraw[DarkGreen] (m2) circle (.1cm);
\filldraw[DarkGreen] (m3) circle (.1cm);
}
\arrow[r,Rightarrow,"\phi"]
&
\tikzmath{
\pgfmathsetmacro{\extra}{.2}
\coordinate (Ax) at (-.3,-.4);
\coordinate (m3) at (.3,1);
\coordinate (m2) at (.3,.7);
\coordinate (m1) at (.3,0);
\draw (-.3,-1.1) -- (-.3,1.5);
\draw[thick, DarkGreen, mid>] ($ (m3) + (0,.1) $) -- (.3,1.5);
\draw[thick, DarkGreen, mid>] ($ (m1) + (0,.1) $) -- (m2);
\RectangleMorphism{(Ax)}{\kColor}
\filldraw[DarkGreen] (m1) circle (.1cm);
\filldraw[DarkGreen] (m2) circle (.1cm);
\filldraw[DarkGreen] (m3) circle (.1cm);
}
\arrow[r,Rightarrow]
&
\tikzmath{
\pgfmathsetmacro{\extra}{.2}
\coordinate (Ax) at (-.3,-.4);
\coordinate (m1) at (.3,1);
\draw (-.3,-1.1) -- (-.3,1.5);
\draw[thick, DarkGreen, mid>] ($ (m1) + (0,.1) $) -- (.3,1.5);
\RectangleMorphism{(Ax)}{\kColor}
\filldraw[DarkGreen] (m1) circle (.1cm);
}
\end{tikzcd}};\end{tikzpicture}
$$
All inner faces in the above diagram are squares which commute by functoriality of 1-cell composition $\xo$.

\item[\ref{Modification:MonoidalCoherence}]
This is exactly \ref{Modification:MonoidalCoherence} applied to $m$ viewed as a modification $m: \beta \Rrightarrow (\eta \xo \id_{\pi^\cC})\xt \alpha$ as in \eqref{eq:ModificationConditionFor2Morphisms} above
    
\item[\ref{Modification:UnitCoherence}]
By the strictness properties of $(A,\alpha)$ and $(B,\beta)$, \ref{Modification:UnitCoherence} for the modification $m: \beta \Rrightarrow (\eta \xo \id_{\pi^\cC})\xt \alpha$ as in \eqref{eq:ModificationConditionFor2Morphisms} above tells us that $m_{1_\cC}=\eta^1$ on the nose.
This exactly gives the coherence \ref{Modification:UnitCoherence} for $\gamma$.
\end{proof}

\subsection{Coherence proofs for Strictifying 3-morphisms \S\ref{sec:Strictifying3Morphisms}}
\label{sec:CoherenceProofs3Morphisms}

We remind the reader that in this section,
$(\eta,m=\id),(\zeta, n=\id): (A,\alpha) \Rightarrow (B,\beta)$ are two 2-morphisms in $\TriCat_G^{\pt}$ and $(p,\rho):(\eta, \id) \Rrightarrow (\zeta, \id)$ is a 3-morphism in $\TriCat_G$.
This means
$\rho_*$ is an invertible 2-cell $\id_{\id_{e_\cD}} \Rightarrow p_*$ satisfying
the coherence
\begin{equation}
\tag{\ref{eq:PerturbationEquationForRho}}
\left(
\tikzmath{
\node (g1) at (0,0) {$\scriptstyle g_\cD$};
\node (g2) at (0,1) {};
\draw[dashed, thick, rounded corners=5pt] (-.2,.3) rectangle (-.8,.7);
\draw (g1) to[in=-90,out=90] (g2);
}
\overset{\rho_*}{\Rightarrow}
\tikzmath{
\node (g1) at (0,0) {$\scriptstyle g_\cD$};
\node (g2) at (0,1) {};
\node[draw,rectangle, thick, rounded corners=5pt] (p) at (-.5,.5) {$\scriptstyle p_*$};
\draw (g1) to[in=-90,out=90] (g2);
}
\overset{p_g}{\Rightarrow}
\tikzmath{
\node (g1) at (0,0) {$\scriptstyle g_\cD$};
\node (g2) at (0,1) {};
\node[draw,rectangle, thick, rounded corners=5pt] (p) at (.5,.5) {$\scriptstyle p_*$};
\draw (g1) to[in=-90,out=90] (g2);
}
\right)
=
\left(
\tikzmath{
\node (g1) at (0,0) {$\scriptstyle g_\cD$};
\node (g2) at (0,1) {};
\draw[dashed, thick, rounded corners=5pt] (.2,.3) rectangle (.8,.7);
\draw (g1) to[in=-90,out=90] (g2);
}
\overset{\rho_*}{\Rightarrow}
\tikzmath{
\node (g1) at (0,0) {$\scriptstyle g_\cD$};
\node (g2) at (0,1) {};
\node[draw,rectangle, thick, rounded corners=5pt] (p) at (.5,.5) {$\scriptstyle p_*$};
\draw (g1) to[in=-90,out=90] (g2);
}
\right)
\qquad\qquad \forall\,g\in G.
\end{equation}

\begin{proof}[Proof of Lem.~\ref{lem:Only3Endos}: $\eta_x=\zeta_x$ for all $x\in \cC(g_\cC\to h_\cC)$]
For $x\in\cC(g_\cC \to h_\cC)$, 
we use the following shorthand as in Notation \ref{nota:ShadedBoxes}.
$$
\tikzmath{
\draw (0,-.3) -- (0,.3);
\filldraw[fill=\gColor, thick] (-.1,-.1) rectangle (.1,.1);
}
:=
\tikzmath{
\node (Aa) at (0,-.8) {$\scriptstyle g_\cD$};
\node (Ab) at (0,.8) {$\scriptstyle h_\cD$};
\node[draw,rectangle, thick, rounded corners=5pt] (Ax) at (0,0) {$\scriptstyle A(x)$};
\draw (Aa) to[in=-90,out=90] (Ax);
\draw (Ax) to[in=-90,out=90] (Ab);
}
\qquad\qquad
\tikzmath{
\draw (0,-.3) -- (0,.3);
\filldraw[fill=\kColor, thick] (-.1,-.1) rectangle (.1,.1);
}
:=
\tikzmath{
\node (Aa) at (0,-.8) {$\scriptstyle g_\cD$};
\node (Ab) at (0,.8) {$\scriptstyle h_\cD$};
\node[draw,rectangle, thick, rounded corners=5pt] (Ax) at (0,0) {$\scriptstyle B(x)$};
\draw (Aa) to[in=-90,out=90] (Ax);
\draw (Ax) to[in=-90,out=90] (Ab);
}
\qquad\qquad
\tikzmath{
\filldraw[fill=black, thick] (-.1,-.1) rectangle (.1,.1);
}
:=
\tikzmath{
\node (Aa) at (0,-.8) {$\scriptstyle e_\cD$};
\node (Ab) at (0,.8) {$\scriptstyle e_\cD$};
\node[draw,rectangle, thick, rounded corners=5pt] (Ax) at (0,0) {$\scriptstyle p_*$};
}
$$
The outside of the following commutative diagram is a bigon with one arrow $\eta_x$ and one arrow $\zeta_x$; hence $\eta_x=\zeta_x$:
$$
\begin{tikzpicture}[baseline= (a).base]\clip (-4.5,-4.5) rectangle (4.5,4.5);\node[scale=1] (a) at (0,0){\begin{tikzcd}
\tikzmath{
\draw (0,-.5) -- (0,.5);
\RectangleMorphism{(0,0)}{\gColor}
}
\arrow[dddrrrr, Rightarrow, bend right=90, "\eta_x"]
\arrow[dddrrrr, Rightarrow, bend left=90, "\zeta_x"]
\arrow[dr, Rightarrow, swap, near end, "\rho_*"]
\arrow[drr, Rightarrow, bend left=20, "\rho_*", "\text{\scriptsize \ref{Interchanger:Natural}}"']
\arrow[ddr, Rightarrow, bend right=20,"\rho_*"']
\\
&
\tikzmath{
\draw (0,-.5) -- (0,.5);
\RectangleMorphism{(0,0)}{\gColor}
\RectangleMorphism{(-.3,.3)}{black}
}
\arrow[d, Rightarrow, "p_h", "\text{\scriptsize \eqref{eq:PerturbationEquationForRho}}"']
\arrow[r, Rightarrow, "\phi"]
&
\tikzmath{
\draw (0,-.5) -- (0,.5);
\RectangleMorphism{(0,0)}{\gColor}
\RectangleMorphism{(-.3,-.3)}{black}
}
\arrow[r, Rightarrow, "\zeta_x"]
\arrow[d, phantom, "\text{\scriptsize \ref{Modification:m_c}}"]
&
\tikzmath{
\draw (0,-.5) -- (0,.5);
\RectangleMorphism{(0,0)}{\kColor}
\RectangleMorphism{(-.3,-.3)}{black}
}
\arrow[d, Rightarrow, "\text{\scriptsize \eqref{eq:PerturbationEquationForRho}}", "p_g"']
\\
&
\tikzmath{
\draw (0,-.5) -- (0,.5);
\RectangleMorphism{(0,0)}{\gColor}
\RectangleMorphism{(.3,.3)}{black}
}
\arrow[r, Rightarrow, "\eta_x"]
&
\tikzmath{
\draw (0,-.5) -- (0,.5);
\RectangleMorphism{(0,0)}{\kColor}
\RectangleMorphism{(.3,.3)}{black}
}
\arrow[r, Rightarrow, "\phi^{-1}"]
&
\tikzmath{
\draw (0,-.5) -- (0,.5);
\RectangleMorphism{(0,0)}{\kColor}
\RectangleMorphism{(.3,-.3)}{black}
}
\\
&&&&
\tikzmath{
\draw (0,-.5) -- (0,.5);
\RectangleMorphism{(0,0)}{\kColor}
}
\arrow[ul, Rightarrow, swap, near end, "\rho_*"]
\arrow[ull, Rightarrow, bend left=20, "\rho_*","\text{\scriptsize \ref{Interchanger:Natural}}"']
\arrow[uul, Rightarrow, bend right=20, "\rho_*"']
\end{tikzcd}};\end{tikzpicture}
$$
The unlabeled faces commute by functoriality of 1-cell composition $\xo$.
\end{proof}


\section{Coherence proofs for \texorpdfstring{$G$}{G}-crossed braided categories}
\label{sec:CoherenceProofsGCrossed}

This appendix contains all proofs from \S\ref{sec:GCrossed} which amount to checking/using various coherence conditions using the properties listed in Appendix \ref{sec:Weak3CategoryCoherences}.
To make the commutative diagrams more readable, we suppress all whiskering notation, including Notation \ref{nota:DashedBoxForWhiskering}.

\subsection{Coherence proofs for the 2-functor 
\texorpdfstring{$\TriCat_G^{\st}$}{TriCatGst}
to
\texorpdfstring{$G\CrsBrd^{\st}$}{GCrsBrdst} from \S\ref{sec:FromGBoring3CatsToGCrossedBriadedCats}}
\label{sec:CoherenceProofsForGCrossedObjects}

We now supply the proofs for statements in \S\ref{sec:FromGBoring3CatsToGCrossedBriadedCats}.
We remind the reader that $(\fC, \otimes_{g,h}, F_g, \beta_{g,h})$ is the data constructed from $\cC\in \TriCat_G^{\st}$ in Constructions \ref{const:UnderlyingCategories}, \ref{const:GAction}, and \ref{const:GCrossedBraiding}.

\begin{proof}[Proof of Thm.~\ref{thm:ConstructionOfGCrossedBraidedCategory}: 
$(\fC, \xz_{g,h}, F_g, \beta^{g,h})$ forms a strict $G$-crossed braided category]

We remind the reader that we use the shorthand notation that white, green, and blue shaded disks correspond to 1-morphisms into $g_\cC, h_\cC,$ and $k_\cC$, respectively:
$$
\tikzmath{
    \draw (0,0)  -- (0,.3) node [above] {\scriptsize{$g_\cC$}};
    \filldraw[fill=\gColor, thick] (0,0) circle (.1cm);
}
\qquad
\tikzmath{
    \draw (0,0)  -- (0,.3) node [above] {\scriptsize{$h_\cC$}};
    \filldraw[fill=\hColor, thick] (0,0) circle (.1cm);
}
\qquad
\tikzmath{
    \draw (0,0)  -- (0,.3) node [above] {\scriptsize{$k_\cC$}};
    \filldraw[fill=\kColor, thick] (0,0) circle (.1cm);
}\,.
$$
It remains to check the commutativity of \eqref{eq:Hexagon}, \eqref{eq:Heptagon1}, and \eqref{eq:Heptagon2}.
We treat \eqref{eq:Hexagon} in detail.
Going around the outside of the diagram below corresponds to \eqref{eq:Hexagon}.
The large face consists of only equalities, so it manifestly commutes.
\begin{equation*}
\tag{\ref{eq:Hexagon}}
\begin{tikzcd}
\tikzmath{
    \coordinate (a) at (-.4,0);
    \coordinate (b) at (.4,-.4);
    \draw (a) -- ($ (a) + (0,.4) $) node [above,xshift=.15cm] {$\scriptstyle ghg^{-1}_\cC$};
    \draw[red] ($ (a) + (-.2,.4) $) -- ($ (a) + (-.2,0) $) node [left,xshift=.1cm] {$\scriptstyle g$} arc (-180:0:.2cm) -- ($ (a) + (.2,.4) $); 
    \draw (b) -- ($ (b) + (0,.8) $) node [above,xshift=.15cm] {$\scriptstyle gkg^{-1}_\cC$};
    \draw[red] ($ (b) + (-.2,.8) $) -- ($ (b) + (-.2,0) $) node [left,xshift=.1cm] {$\scriptstyle g$} arc (-180:0:.2cm) -- ($ (b) + (.2,.8) $); 
    \filldraw[fill=\hColor, thick] (a) circle (.1cm);
    \filldraw[fill=\kColor, thick] (b) circle (.1cm);
}
\arrow[r, "\cong"]
&
\tikzmath{
    \coordinate (a) at (-.4,-.4);
    \coordinate (b) at (.4,0);
    \draw (a) -- ($ (a) + (0,.8) $);    
    \draw[red] ($ (a) + (-.2,.8) $) -- ($ (a) + (-.2,0) $) node [left,xshift=.1cm] {$\scriptstyle g$} arc (-180:0:.2cm) -- ($ (a) + (.2,.8) $); 
    \draw (b) -- ($ (b) + (0,.4) $);
    \draw[red] ($ (b) + (-.2,.4) $) -- ($ (b) + (-.2,0) $) node [left,xshift=.1cm] {$\scriptstyle g$} arc (-180:0:.2cm) -- ($ (b) + (.2,.4) $); 
    \filldraw[fill=\hColor, thick] (a) circle (.1cm);
    \filldraw[fill=\kColor, thick] (b) circle (.1cm);
}
\arrow[r,"="]
&
\tikzmath{
    \coordinate (a) at (-.4,0);
    \coordinate (b) at (.4,-.4);
    \draw (a) -- ($ (a) + (0,.4) $);
    \draw[red,double] ($ (a) + (-.2,.4) $) -- ($ (a) + (-.2,0) $) node [left,xshift=.1cm] {$\scriptstyle ghg^{-1},g$} arc (-180:0:.2cm) -- ($ (a) + (.2,.4) $); 
    \draw (b) -- ($ (b) + (0,.8) $);
    \draw[red] ($ (b) + (-.2,.8) $) -- ($ (b) + (-.2,0) $) node [left,xshift=.1cm] {$\scriptstyle g$} arc (-180:0:.2cm) -- ($ (b) + (.2,.8) $); 
    \filldraw[fill=\kColor, thick] (a) circle (.1cm);
    \filldraw[fill=\hColor, thick] (b) circle (.1cm);
}
\arrow[d,"="]
\\
\tikzmath{
    \coordinate (a) at (-.2,0);
    \coordinate (b) at (.2,-.2);
    \draw (a) -- ($ (a) + (0,.4) $);
    \draw[red] ($ (a) + (-.2,.4) $) -- ($ (a) + (-.2,-.2) $) node [left,xshift=.1cm] {$\scriptstyle g$} arc (-180:0:.4cm) -- ($ (b) + (.2,.6) $); 
    \draw (b) -- ($ (b) + (0,.6) $);
    \filldraw[fill=\hColor, thick] (a) circle (.1cm);
    \filldraw[fill=\kColor, thick] (b) circle (.1cm);
}
\arrow[u,"="]
\arrow[r, "\cong"]
&
\tikzmath{
    \coordinate (a) at (-.2,-.2);
    \coordinate (b) at (.2,0);
    \draw (a) -- ($ (a) + (0,.6) $);
    \draw[red] ($ (a) + (-.2,.6) $) -- ($ (a) + (-.2,0) $) node [left,xshift=.1cm] {$\scriptstyle g$} arc (-180:0:.4cm) -- ($ (b) + (.2,.4) $); 
    \draw (b) -- ($ (b) + (0,.4) $);
    \filldraw[fill=\hColor, thick] (a) circle (.1cm);
    \filldraw[fill=\kColor, thick] (b) circle (.1cm);
}
\arrow[dl,"="]
\arrow[u,"="]
&
\tikzmath{
    \coordinate (a) at (-.4,0);
    \coordinate (b) at (.4,-.4);
    \draw (a) -- ($ (a) + (0,.4) $);
    \draw[red] ($ (a) + (-.2,.4) $) -- ($ (a) + (-.2,0) $) node [left,xshift=.1cm] {$\scriptstyle gh$} arc (-180:0:.2cm) -- ($ (a) + (.2,.4) $); 
    \draw (b) -- ($ (b) + (0,.8) $);
    \draw[red] ($ (b) + (-.2,.8) $) -- ($ (b) + (-.2,0) $) node [left,xshift=.1cm] {$\scriptstyle g$} arc (-180:0:.2cm) -- ($ (b) + (.2,.8) $); 
    \filldraw[fill=\kColor, thick] (a) circle (.1cm);
    \filldraw[fill=\hColor, thick] (b) circle (.1cm);
}
\\
\tikzmath{
	\coordinate (a) at (-.2,0);
	\coordinate (b) at (.2,-.4);
	\draw (a) -- ($ (a) + (0,.4) $);
	\draw[red] ($ (a) + (-.5,.4) $) -- ($ (a) + (-.5,-.4) $) node [left,xshift=.1cm] {$\scriptstyle g$} arc (-180:0:.55cm) -- ($ (b) + (.2,.8) $); 
	\draw[red] ($ (a) + (-.2,.4) $) -- ($ (a) + (-.2,0) $) node [left,xshift=.1cm] {$\scriptstyle h$} arc (-180:0:.2cm) -- ($ (a) + (.2,.4) $); 
	\draw (b) -- ($ (b) + (0,.8) $);
	\filldraw[fill=\kColor, thick] (a) circle (.1cm);
	\filldraw[fill=\hColor, thick] (b) circle (.1cm);
}
\arrow[rr,"="]
&&
\tikzmath{
    \coordinate (a) at (-.4,0);
    \coordinate (b) at (.4,-.4);
    \draw (a) -- ($ (a) + (0,.4) $);
    \draw[red,double] ($ (a) + (-.2,.4) $) -- ($ (a) + (-.2,0) $) node [left,xshift=.1cm] {$\scriptstyle g,h$} arc (-180:0:.2cm) -- ($ (a) + (.2,.4) $); 
    \draw (b) -- ($ (b) + (0,.8) $);
    \draw[red] ($ (b) + (-.2,.8) $) -- ($ (b) + (-.2,0) $) node [left,xshift=.1cm] {$\scriptstyle g$} arc (-180:0:.2cm) -- ($ (b) + (.2,.8) $); 
    \filldraw[fill=\kColor, thick] (a) circle (.1cm);
    \filldraw[fill=\hColor, thick] (b) circle (.1cm);
}
\arrow[u,"="]
\end{tikzcd}
\end{equation*}
The top left square commutes as the only two non-trivial maps are the same interchanger.

The equations \eqref{eq:Heptagon1} and \eqref{eq:Heptagon2} are similar.
In the two diagrams below, the outside 7 diagrams are the vertices in the heptagons \eqref{eq:Heptagon1} and \eqref{eq:Heptagon2} respectively.
There is only one non-trivial face in each the two diagrams below corresponding to these two coherences, and this face commutes by the axiom \ref{Interchanger:Composition} of the interchanger in a $\Gray$-monoid.
\begin{equation*}
\tag{\ref{eq:Heptagon1}}
\begin{tikzpicture}[baseline= (a).base]
\node (a) at (0,0){
\begin{tikzcd}
&
\tikzmath{
    \coordinate (a) at (-.2,0);
    \coordinate (b) at (.2,-.2);
    \coordinate (c) at (.6,-.4);
    \draw (a) -- ($ (a) + (0,.4) $) node [above] {$\scriptstyle g_\cC$};
    \draw (b) -- ($ (b) + (0,.6) $) node [above] {$\scriptstyle h_\cC$};
    \draw (c) -- ($ (c) + (0,.8) $) node [above] {$\scriptstyle k_\cC$};
    \filldraw[fill=\gColor, thick] (a) circle (.1cm);
    \filldraw[fill=\hColor, thick] (b) circle (.1cm);
    \filldraw[fill=\kColor, thick] (c) circle (.1cm);
}
\arrow[dl, "="]
\arrow[dr, "\cong"]
\arrow[d, "\cong"]
\arrow[d, phantom, bend left=40, "\text{\scriptsize \ref{Interchanger:Composition}}"]
\\
\tikzmath{
    \coordinate (a) at (-.2,0);
    \coordinate (b) at (.2,-.2);
    \coordinate (c) at (.6,-.4);
    \draw (a) -- ($ (a) + (0,.4) $);
    \draw (b) -- ($ (b) + (0,.6) $);
    \draw (c) -- ($ (c) + (0,.8) $);
    \filldraw[fill=\gColor, thick] (a) circle (.1cm);
    \filldraw[fill=\hColor, thick] (b) circle (.1cm);
    \filldraw[fill=\kColor, thick] (c) circle (.1cm);
}
\arrow[r, "\cong"]
&
\tikzmath{
    \coordinate (a) at (-.2,-.4);
    \coordinate (b) at (.2,0);
    \coordinate (c) at (.6,-.2);
    \draw (a) -- ($ (a) + (0,.8) $);
    \draw (b) -- ($ (b) + (0,.4) $);
    \draw (c) -- ($ (c) + (0,.6) $);
    \filldraw[fill=\gColor, thick] (a) circle (.1cm);
    \filldraw[fill=\hColor, thick] (b) circle (.1cm);
    \filldraw[fill=\kColor, thick] (c) circle (.1cm);
}
\arrow[dl,"="]
\arrow[dr,"="]
&
\tikzmath{
    \coordinate (a) at (-.2,-.2);
    \coordinate (b) at (.2,0);
    \coordinate (c) at (.6,-.4);
    \draw (a) -- ($ (a) + (0,.6) $);
    \draw (b) -- ($ (b) + (0,.4) $);
    \draw (c) -- ($ (c) + (0,.8) $);
    \filldraw[fill=\gColor, thick] (a) circle (.1cm);
    \filldraw[fill=\hColor, thick] (b) circle (.1cm);
    \filldraw[fill=\kColor, thick] (c) circle (.1cm);
}
\arrow[l, "\cong"]
\arrow[r, "="]
& 
\tikzmath{
    \coordinate (a) at (0,0);
    \coordinate (b) at (.4,-.4);
    \coordinate (c) at (.8,-.6);
    \draw (a) -- ($ (a) + (0,.4) $);
    \draw[red] ($ (a) + (-.2,.4) $) -- ($ (a) + (-.2,0) $) node [left,xshift=.1cm] {$\scriptstyle g$} arc (-180:0:.2cm) -- ($ (a) + (.2,.4) $); 
    \draw (b) -- ($ (b) + (0,.8) $);
    \draw (c) -- ($ (c) + (0,1) $);
    \filldraw[fill=\hColor, thick] (a) circle (.1cm);
    \filldraw[fill=\gColor, thick] (b) circle (.1cm);
    \filldraw[fill=\kColor, thick] (c) circle (.1cm);
}
\arrow[d, "="]
\arrow[dl, "\cong"]
\\
\tikzmath{
    \coordinate (a) at (-.2,0);
    \coordinate (b) at (.2,-.2);
    \coordinate (c) at (.6,-.8);
    \draw (a) -- ($ (a) + (0,.4) $);
    \draw[red] ($ (a) + (-.2,.4) $) -- ($ (a) + (-.2,-.2) $) node [left,xshift=.1cm] {$\scriptstyle g$} arc (-180:0:.4cm) -- ($ (b) + (.2,.6) $); 
    \draw (b) -- ($ (b) + (0,.6) $);
    \draw (c) -- ($ (c) + (0,1.2) $);
    \filldraw[fill=\hColor, thick] (a) circle (.1cm);
    \filldraw[fill=\kColor, thick] (b) circle (.1cm);
    \filldraw[fill=\gColor, thick] (c) circle (.1cm);
}
\arrow[d, "="]
&&
\tikzmath{
    \coordinate (a) at (0,0);
    \coordinate (b) at (.4,-.6);
    \coordinate (c) at (.8,-.4);
    \draw (a) -- ($ (a) + (0,.4) $);
    \draw[red] ($ (a) + (-.2,.4) $) -- ($ (a) + (-.2,0) $) node [left,xshift=.1cm] {$\scriptstyle g$} arc (-180:0:.2cm) -- ($ (a) + (.2,.4) $); 
    \draw (b) -- ($ (b) + (0,1) $);
    \draw (c) -- ($ (c) + (0,.8) $);
    \filldraw[fill=\hColor, thick] (a) circle (.1cm);
    \filldraw[fill=\gColor, thick] (b) circle (.1cm);
    \filldraw[fill=\kColor, thick] (c) circle (.1cm);
}
\arrow[dr, "="]
&
\tikzmath{
    \coordinate (a) at (0,0);
    \coordinate (b) at (.4,-.4);
    \coordinate (c) at (.8,-.6);
    \draw (a) -- ($ (a) + (0,.4) $);
    \draw[red] ($ (a) + (-.2,.4) $) -- ($ (a) + (-.2,0) $) node [left,xshift=.1cm] {$\scriptstyle g$} arc (-180:0:.2cm) -- ($ (a) + (.2,.4) $); 
    \draw (b) -- ($ (b) + (0,.8) $);
    \draw (c) -- ($ (c) + (0,1) $);
    \filldraw[fill=\hColor, thick] (a) circle (.1cm);
    \filldraw[fill=\gColor, thick] (b) circle (.1cm);
    \filldraw[fill=\kColor, thick] (c) circle (.1cm);
}
\arrow[l, "\cong"]
\\
\tikzmath{
    \coordinate (a) at (-.2,0);
    \coordinate (b) at (.4,-.4);
    \coordinate (c) at (.8,-.8);
    \draw (a) -- ($ (a) + (0,.4) $);
    \draw[red] ($ (a) + (-.2,.4) $) -- ($ (a) + (-.2,0) $) node [left,xshift=.1cm] {$\scriptstyle g$} arc (-180:0:.2cm) -- ($ (a) + (.2,.4) $); 
    \draw (b) -- ($ (b) + (0,.8) $);
    \draw[red] ($ (b) + (-.2,.8) $) -- ($ (b) + (-.2,0) $) node [left,xshift=.1cm] {$\scriptstyle g$} arc (-180:0:.2cm) -- ($ (b) + (.2,.8) $); 
    \draw (c) -- ($ (c) + (0,1.2) $);
    \filldraw[fill=\hColor, thick] (a) circle (.1cm);
    \filldraw[fill=\kColor, thick] (b) circle (.1cm);
    \filldraw[fill=\gColor, thick] (c) circle (.1cm);
}
\arrow[rrr, "="]
&&&
\tikzmath{
    \coordinate (a) at (-.2,0);
    \coordinate (b) at (.4,-.4);
    \coordinate (c) at (.8,-.8);
    \draw (a) -- ($ (a) + (0,.4) $);
    \draw[red] ($ (a) + (-.2,.4) $) -- ($ (a) + (-.2,0) $) node [left,xshift=.1cm] {$\scriptstyle g$} arc (-180:0:.2cm) -- ($ (a) + (.2,.4) $); 
    \draw (b) -- ($ (b) + (0,.8) $);
    \draw[red] ($ (b) + (-.2,.8) $) -- ($ (b) + (-.2,0) $) node [left,xshift=.1cm] {$\scriptstyle g$} arc (-180:0:.2cm) -- ($ (b) + (.2,.8) $); 
    \draw (c) -- ($ (c) + (0,1.2) $);
    \filldraw[fill=\hColor, thick] (a) circle (.1cm);
    \filldraw[fill=\kColor, thick] (b) circle (.1cm);
    \filldraw[fill=\gColor, thick] (c) circle (.1cm);
}
\end{tikzcd}
};
\end{tikzpicture}
\end{equation*}

\begin{equation*}
\tag{\ref{eq:Heptagon2}}
\begin{tikzpicture}[baseline= (a).base]
\node (a) at (0,0){
\begin{tikzcd}
&
\tikzmath{
    \coordinate (a) at (-.2,0);
    \coordinate (b) at (.2,-.2);
    \coordinate (c) at (.6,-.4);
    \draw (a) -- ($ (a) + (0,.4) $) node [above] {$\scriptstyle g_\cC$};
    \draw (b) -- ($ (b) + (0,.6) $) node [above] {$\scriptstyle h_\cC$};
    \draw (c) -- ($ (c) + (0,.8) $) node [above] {$\scriptstyle k_\cC$};
    \filldraw[fill=\gColor, thick] (a) circle (.1cm);
    \filldraw[fill=\hColor, thick] (b) circle (.1cm);
    \filldraw[fill=\kColor, thick] (c) circle (.1cm);
}
\arrow[dl, "="]
\arrow[dr, "\cong"]
\arrow[d, "\cong"]
\arrow[d, phantom, bend left=40, "\text{\scriptsize \ref{Interchanger:Composition}}"]
\\
\tikzmath{
    \coordinate (a) at (-.2,0);
    \coordinate (b) at (.2,-.2);
    \coordinate (c) at (.6,-.4);
    \draw (a) -- ($ (a) + (0,.4) $);
    \draw (b) -- ($ (b) + (0,.6) $);
    \draw (c) -- ($ (c) + (0,.8) $);
    \filldraw[fill=\gColor, thick] (a) circle (.1cm);
    \filldraw[fill=\hColor, thick] (b) circle (.1cm);
    \filldraw[fill=\kColor, thick] (c) circle (.1cm);
}
\arrow[r, "\cong"]
&
\tikzmath{
    \coordinate (a) at (-.2,-.2);
    \coordinate (b) at (.2,-.4);
    \coordinate (c) at (.6,0);
    \draw (a) -- ($ (a) + (0,.6) $);
    \draw (b) -- ($ (b) + (0,.8) $);
    \draw (c) -- ($ (c) + (0,.4) $);
    \filldraw[fill=\gColor, thick] (a) circle (.1cm);
    \filldraw[fill=\hColor, thick] (b) circle (.1cm);
    \filldraw[fill=\kColor, thick] (c) circle (.1cm);
}
\arrow[dr, "="]
\arrow[dl, "="]
&
\tikzmath{
    \coordinate (a) at (-.2,0);
    \coordinate (b) at (.2,-.4);
    \coordinate (c) at (.6,-.2);
    \draw (a) -- ($ (a) + (0,.4) $);
    \draw (b) -- ($ (b) + (0,.8) $);
    \draw (c) -- ($ (c) + (0,.6) $);
    \filldraw[fill=\gColor, thick] (a) circle (.1cm);
    \filldraw[fill=\hColor, thick] (b) circle (.1cm);
    \filldraw[fill=\kColor, thick] (c) circle (.1cm);
}
\arrow[r, "="]
\arrow[l, "\cong"]
& 
\tikzmath{
    \coordinate (a) at (-.1,0);
    \coordinate (b) at (.4,-.2);
    \coordinate (c) at (.8,-.6);
    \draw (a) -- ($ (a) + (0,.4) $);
    \draw (b) -- ($ (b) + (0,.6) $);
    \draw[red] ($ (b) + (-.2,.6) $) -- ($ (b) + (-.2,0) $) node [left,xshift=.1cm] {$\scriptstyle h$} arc (-180:0:.2cm) -- ($ (b) + (.2,.6) $); 
    \draw (c) -- ($ (c) + (0,1) $);
    \filldraw[fill=\gColor, thick] (a) circle (.1cm);
    \filldraw[fill=\kColor, thick] (b) circle (.1cm);
    \filldraw[fill=\hColor, thick] (c) circle (.1cm);
}
\arrow[d, "="]
\\
\tikzmath{
    \coordinate (a) at (0,0);
    \coordinate (b) at (.4,-.4);
    \coordinate (c) at (.8,-.6);
    \draw (a) -- ($ (a) + (0,.4) $);
    \draw[red] ($ (a) + (-.2,.4) $) -- ($ (a) + (-.2,0) $) node [left,xshift=.1cm] {$\scriptstyle gh$} arc (-180:0:.2cm) -- ($ (a) + (.2,.4) $); 
    \draw (b) -- ($ (b) + (0,.8) $);
    \draw (c) -- ($ (c) + (0,1) $);
    \filldraw[fill=\kColor, thick] (a) circle (.1cm);
    \filldraw[fill=\gColor, thick] (b) circle (.1cm);
    \filldraw[fill=\hColor, thick] (c) circle (.1cm);
}
\arrow[d, "="]
&
&
\tikzmath{
    \coordinate (a) at (-.1,-.4);
    \coordinate (b) at (.4,0);
    \coordinate (c) at (.8,-.6);
    \draw (a) -- ($ (a) + (0,.8) $);
    \draw (b) -- ($ (b) + (0,.4) $);
    \draw[red] ($ (b) + (-.2,.4) $) -- ($ (b) + (-.2,0) $) node [left,xshift=.1cm] {$\scriptstyle h$} arc (-180:0:.2cm) -- ($ (b) + (.2,.4) $); 
    \draw (c) -- ($ (c) + (0,1) $);
    \filldraw[fill=\gColor, thick] (a) circle (.1cm);
    \filldraw[fill=\kColor, thick] (b) circle (.1cm);
    \filldraw[fill=\hColor, thick] (c) circle (.1cm);
}
\arrow[dr, "="]
&
\tikzmath{
    \coordinate (a) at (-.1,0);
    \coordinate (b) at (.4,-.2);
    \coordinate (c) at (.8,-.6);
    \draw (a) -- ($ (a) + (0,.4) $);
    \draw (b) -- ($ (b) + (0,.6) $);
    \draw[red] ($ (b) + (-.2,.6) $) -- ($ (b) + (-.2,0) $) node [left,xshift=.1cm] {$\scriptstyle h$} arc (-180:0:.2cm) -- ($ (b) + (.2,.6) $); 
    \draw (c) -- ($ (c) + (0,1) $);
    \filldraw[fill=\gColor, thick] (a) circle (.1cm);
    \filldraw[fill=\kColor, thick] (b) circle (.1cm);
    \filldraw[fill=\hColor, thick] (c) circle (.1cm);
}
\arrow[l, "\cong"]
\\
\tikzmath{
    \coordinate (a) at (0,0);
    \coordinate (b) at (.4,-.4);
    \coordinate (c) at (.8,-.6);
    \draw (a) -- ($ (a) + (0,.4) $);
    \draw[red,double] ($ (a) + (-.2,.4) $) -- ($ (a) + (-.2,0) $) node [left,xshift=.1cm] {$\scriptstyle g,h$} arc (-180:0:.2cm) -- ($ (a) + (.2,.4) $); 
    \draw (b) -- ($ (b) + (0,.8) $);
    \draw (c) -- ($ (c) + (0,1) $);
    \filldraw[fill=\kColor, thick] (a) circle (.1cm);
    \filldraw[fill=\gColor, thick] (b) circle (.1cm);
    \filldraw[fill=\hColor, thick] (c) circle (.1cm);
}
\arrow[rrr, "="]
&&&
\tikzmath{
    \coordinate (a) at (0,0);
    \coordinate (b) at (.4,-.4);
    \coordinate (c) at (.8,-.6);
    \draw (a) -- ($ (a) + (0,.4) $);
    \draw[red,double] ($ (a) + (-.2,.4) $) -- ($ (a) + (-.2,0) $) node [left,xshift=.1cm] {$\scriptstyle g,h$} arc (-180:0:.2cm) -- ($ (a) + (.2,.4) $); 
    \draw (b) -- ($ (b) + (0,.8) $);
    \draw (c) -- ($ (c) + (0,1) $);
    \filldraw[fill=\kColor, thick] (a) circle (.1cm);
    \filldraw[fill=\gColor, thick] (b) circle (.1cm);
    \filldraw[fill=\hColor, thick] (c) circle (.1cm);
}
\end{tikzcd}
};
\end{tikzpicture}
\end{equation*}
This completes the proof.
\end{proof}


For $\cC, \cD \in \TriCat_G^{\st}$, $A, B\in \TriCat_G^{\st}(\cC \to \cD)$, and $\eta\in \TriCat_G^{\st}(A \Rightarrow B)$,
let $\fC, \fD$ be the $G$-crossed braided categories obtained from $\cC,\cD$ respectively
from Theorem \ref{thm:ConstructionOfGCrossedBraidedCategory}.
In Construction \ref{const:From3FunctorToGCrossedBraidedFunctor} and \eqref{eq:ActionatorForGCrossedFunctor}, we defined the data $(\bfA, \bfa): \fC \to \fD$.

\begin{proof}[Proof of Lem.~\ref{lem:FromUnderlying2FunctorToGMonoidalFunctor}: $(\bfA, \bfA^1, \bfA^2): \fC \to \fD$ is a $G$-graded monoidal functor]

That each $\bfA_g$ is a functor follows immediately from the fact that $A$ is a functor.
The data $\bfA^2$ satisfies associativity by property \ref{Functor:omega} of $(A,\mu^A,\iota^A)$, and the data $\bfA^2$ and $\bfA^1$ satisfies unitality by property \ref{Functor:lr} of $(A,\mu^A,\iota^A)$.
(Observe that in \ref{Functor:omega} and \ref{Functor:lr}, all instances of $\phi$, $\omega^A$, $\ell^A$, and $r^A$ are identities, so these reduce to the usual associativity and unitality conditions for a monoidal functor.)
\end{proof}

\begin{proof}[Proof of Thm.~\ref{thm:From3FunctorToGCrossedBraidedFunctor}:  $(\bfA, \bfA^1,\bfA^2, \bfa):\fC \to \fD$ is a $G$-crossed braided monoidal functor.]

Naturality of $\bfa$ follows by naturality of $\bfA^1$ and \ref{Functor:mu.natural} of $\mu^A$.
It remains to prove the coherences 
\eqref{eq:GCrossedFunctor-Gamma1} and \eqref{eq:GCrossedFunctor-Gamma2}.
\item[\eqref{eq:GCrossedFunctor-Gamma1}]
Observe that since $\fC$ and $\fD$ are strict, the coherence condition \eqref{eq:GCrossedFunctor-Gamma1} is actually a triangle.
For $a\in \fC_k$, we use the shorthand notation a small shaded box for $A(a)$.
For $g,h\in G$ and $a\in \fC_k = \cC(1_\cC \to k_\cC)$, we use the following shorthand as in Notation \ref{nota:ShadedBoxes}:
$$
\tikzmath{
\draw[red] (-.1,-.4) -- (-.1,.4);
}
:=
\id_{g_\cD}
\qquad\qquad
\tikzmath{
\draw[red] (0,-.3) -- (0,.3);
\filldraw[fill=\gColor, thick] (-.1,-.1) rectangle (.1,.1);
}
:=
\tikzmath{
\node (Aa) at (0,-.8) {$\scriptstyle g_\cD$};
\node (Ab) at (0,.8) {$\scriptstyle g_\cD$};
\node[draw,rectangle, thick, rounded corners=5pt] (Ax) at (0,0) {$\scriptstyle A(\id_g)$};
\draw (Aa) to[in=-90,out=90] (Ax);
\draw (Ax) to[in=-90,out=90] (Ab);
}
\qquad\qquad
\tikzmath{
\draw[DarkGreen] (-.1,-.4) -- (-.1,.4);
}
:=
\id_{h_\cD}
\qquad\qquad
\tikzmath{
\draw[DarkGreen] (0,-.3) -- (0,.3);
\filldraw[fill=\gColor, thick] (-.1,-.1) rectangle (.1,.1);
}
:=
\tikzmath{
\node (Aa) at (0,-.8) {$\scriptstyle h_\cD$};
\node (Ab) at (0,.8) {$\scriptstyle h_\cD$};
\node[draw,rectangle, thick, rounded corners=5pt] (Ax) at (0,0) {$\scriptstyle A(\id_h)$};
\draw (Aa) to[in=-90,out=90] (Ax);
\draw (Ax) to[in=-90,out=90] (Ab);
}
\qquad\qquad
\tikzmath{
\draw (0,0) -- (0,.3);
\filldraw[fill=\kColor, thick] (-.1,-.1) rectangle (.1,.1);
}
:=
\tikzmath{
\node (Ab) at (0,.8) {$\scriptstyle k_\cD$};
\node[draw,rectangle, thick, rounded corners=5pt] (Ax) at (0,0) {$\scriptstyle A(a)$};
\draw (Ax) to[in=-90,out=90] (Ab);
}
$$
Observe that since $\cC$ is $\Gray$, we have an equality $\id_{g_\cC} \xz \id_{h_\cC} = \id_{gh_\cC}$:
$$
\tikzmath{
\draw[red] (-.1,-.4) -- (-.1,.4);
\draw[DarkGreen] (.1,-.4) -- (.1,.4);
\filldraw[fill=\gColor, thick] (0,.2) rectangle (-.2,0);
\filldraw[fill=\gColor, thick] (.2,-.2) rectangle (0,0);
}
=
\tikzmath{
\draw[red] (-.1,-.4) -- (-.1,.4);
\draw[DarkGreen] (.1,-.4) -- (.1,.4);
\filldraw[fill=\gColor, thick] (-.2,.1) rectangle (.2,-.1);
}
=
\tikzmath{
\node (gh1) at (0,-.8) {$\scriptstyle gh_\cD$};
\node (gh2) at (0,.8) {$\scriptstyle gh_\cD$};
\node[draw,rectangle, thick, rounded corners=5pt] (Ax) at (0,0) {$\scriptstyle A(\id_{gh_\cC})$};
\draw (gh1) to[in=-90,out=90] (Ax);
\draw (Ax) to[in=-90,out=90] (gh2);
}
$$
Expanding \eqref{eq:ActionatorForGCrossedFunctor}, we see that \eqref{eq:GCrossedFunctor-Gamma1} follows from the following commuting diagram.
(Recall that the cups on the bottom in \eqref{eq:ActionatorForGCrossedFunctor} are really identity maps, and do not need to be drawn.)
$$
\begin{tikzpicture}[baseline= (a).base]\node[scale=1] (a) at (0,0){\begin{tikzcd}[row sep=3em, column sep=3em]
\tikzmath{
\coordinate (Aa) at (0,0);
\draw (Aa) to[in=-90,out=90] (0,.6);
\draw[DarkGreen] (-.2,-.6) -- (-.2,.6);
\draw[DarkGreen] (.2,-.6) -- (.2,.6);
\draw[red] (-.4,-.6) -- (-.4,.6);
\draw[red] (.4,-.6) -- (.4,.6);
\filldraw[fill=\kColor, thick] ($ (Aa) - (.1,.1) $) rectangle ($ (Aa) + (.1,.1) $);
}
\arrow[r, Rightarrow, "A^1_{gh_\cC}"]
\arrow[d,Rightarrow, "A^1_{h_\cC}"]
\arrow[rd, phantom, "\text{\scriptsize \ref{Functor:mu.unital}}"]
&
\tikzmath{
\coordinate (Aa) at (0,0);
\draw (Aa) to[in=-90,out=90] (0,.6);
\draw[DarkGreen] (-.2,-.6) -- (-.2,.6);
\draw[DarkGreen] (.2,-.6) -- (.2,.6);
\draw[red] (-.4,-.6) -- (-.4,.6);
\draw[red] (.4,-.6) -- (.4,.6);
\filldraw[fill=\gColor, thick] (-.5,.2) rectangle (-.1,.4);
\filldraw[fill=\kColor, thick] ($ (Aa) - (.1,.1) $) rectangle ($ (Aa) + (.1,.1) $);
}
\arrow[r, Rightarrow, "\mu^A"]
&
\tikzmath{
\coordinate (Aa) at (0,0);
\draw (Aa) to[in=-90,out=90] (0,.6);
\draw[DarkGreen] (-.2,-.6) -- (-.2,.6);
\draw[DarkGreen] (.2,-.6) -- (.2,.6);
\draw[red] (-.4,-.6) -- (-.4,.6);
\draw[red] (.4,-.6) -- (.4,.6);
\filldraw[fill=\gColor, thick] (-.5,.1) rectangle (-.1,.3);
\filldraw[fill=\kColor, thick] ($ (Aa) - (.1,.1) $) rectangle ($ (Aa) + (.1,.1) $);
}
\arrow[rd, phantom, "\text{\scriptsize \ref{Functor:mu.unital}}"]
\arrow[rdd, Rightarrow, swap, "A^1_{h^{-1}_\cC}"]
\arrow[r, Rightarrow, "A^1_{(gh)^{-1}_\cC}"]
&
\tikzmath{
\coordinate (Aa) at (0,0);
\draw (Aa) to[in=-90,out=90] (0,.6);
\draw[DarkGreen] (-.2,-.6) -- (-.2,.6);
\draw[DarkGreen] (.2,-.6) -- (.2,.6);
\draw[red] (-.4,-.6) -- (-.4,.6);
\draw[red] (.4,-.6) -- (.4,.6);
\filldraw[fill=\gColor, thick] (-.5,.1) rectangle (-.1,.3);
\filldraw[fill=\gColor, thick] (.5,-.2) rectangle (.1,-.4);
\filldraw[fill=\kColor, thick] ($ (Aa) - (.1,.1) $) rectangle ($ (Aa) + (.1,.1) $);
}
\arrow[r, Rightarrow, "\mu^A"]
&
\tikzmath{
\coordinate (Aa) at (0,0);
\draw (Aa) to[in=-90,out=90] (0,.6);
\draw[DarkGreen] (-.2,-.6) -- (-.2,.6);
\draw[DarkGreen] (.2,-.6) -- (.2,.6);
\draw[red] (-.4,-.6) -- (-.4,.6);
\draw[red] (.4,-.6) -- (.4,.6);
\filldraw[fill=\gColor, thick] (-.5,.1) rectangle (-.1,.3);
\filldraw[fill=\gColor, thick] (.5,-.1) rectangle (.1,-.3);
\filldraw[fill=\kColor, thick] ($ (Aa) - (.1,.1) $) rectangle ($ (Aa) + (.1,.1) $);
}
\\
\tikzmath{
\coordinate (Aa) at (0,0);
\draw (Aa) to[in=-90,out=90] (0,.6);
\draw[DarkGreen] (-.2,-.6) -- (-.2,.6);
\draw[DarkGreen] (.2,-.6) -- (.2,.6);
\draw[red] (-.4,-.6) -- (-.4,.6);
\draw[red] (.4,-.6) -- (.4,.6);
\filldraw[fill=\gColor, thick] (-.3,.2) rectangle (-.1,.4);
\filldraw[fill=\kColor, thick] ($ (Aa) - (.1,.1) $) rectangle ($ (Aa) + (.1,.1) $);
}
\arrow[r, Rightarrow, "A^1_{g_\cC}"]
\arrow[d, Rightarrow, "\mu^A"]
&
\tikzmath{
\coordinate (Aa) at (0,0);
\draw (Aa) to[in=-90,out=90] (0,.9);
\draw[DarkGreen] (-.2,-.6) -- (-.2,.9);
\draw[DarkGreen] (.2,-.6) -- (.2,.9);
\draw[red] (-.4,-.6) -- (-.4,.9);
\draw[red] (.4,-.6) -- (.4,.9);
\filldraw[fill=\gColor, thick] (-.5,.5) rectangle (-.3,.7);
\filldraw[fill=\gColor, thick] (-.3,.2) rectangle (-.1,.4);
\filldraw[fill=\kColor, thick] ($ (Aa) - (.1,.1) $) rectangle ($ (Aa) + (.1,.1) $);
}
\arrow[d, Rightarrow, "\mu^A"]
\arrow[u, Rightarrow, "\mu^A"]
\arrow[ur, phantom, "\text{\scriptsize \ref{Functor:omega}}"]
&\mbox{}&
\tikzmath{
\coordinate (Aa) at (0,0);
\draw (Aa) to[in=-90,out=90] (0,.6);
\draw[DarkGreen] (-.2,-.9) -- (-.2,.6);
\draw[DarkGreen] (.2,-.9) -- (.2,.6);
\draw[red] (-.4,-.9) -- (-.4,.6);
\draw[red] (.4,-.9) -- (.4,.6);
\filldraw[fill=\gColor, thick] (-.5,.1) rectangle (-.1,.3);
\filldraw[fill=\gColor, thick] (.3,-.2) rectangle (.1,-.4);
\filldraw[fill=\gColor, thick] (.3,-.5) rectangle (.5,-.7);
\filldraw[fill=\kColor, thick] ($ (Aa) - (.1,.1) $) rectangle ($ (Aa) + (.1,.1) $);
}
\arrow[u, Rightarrow, "\mu^A"]
\arrow[ddr, Rightarrow, "\mu^A"]
\arrow[r, phantom, "\text{\scriptsize \ref{Functor:omega}}"]
&
\mbox{}
\\
\tikzmath{
\coordinate (Aa) at (0,0);
\draw (Aa) to[in=-90,out=90] (0,.6);
\draw[DarkGreen] (-.2,-.6) -- (-.2,.6);
\draw[DarkGreen] (.2,-.6) -- (.2,.6);
\draw[red] (-.4,-.6) -- (-.4,.6);
\draw[red] (.4,-.6) -- (.4,.6);
\filldraw[fill=\gColor, thick] (-.3,.1) rectangle (-.1,.3);
\filldraw[fill=\kColor, thick] ($ (Aa) - (.1,.1) $) rectangle ($ (Aa) + (.1,.1) $);
}
\arrow[r, Rightarrow, "A^1_{g_\cC}"]
\arrow[d, Rightarrow, "A^1_{h^{-1}_\cC}"]
\arrow[rd, phantom, near start, "\text{\scriptsize \ref{Functor:mu.unital}}"]
&
\tikzmath{
\coordinate (Aa) at (0,0);
\draw (Aa) to[in=-90,out=90] (0,.9);
\draw[DarkGreen] (-.2,-.6) -- (-.2,.9);
\draw[DarkGreen] (.2,-.6) -- (.2,.9);
\draw[red] (-.4,-.6) -- (-.4,.9);
\draw[red] (.4,-.6) -- (.4,.9);
\filldraw[fill=\gColor, thick] (-.5,.5) rectangle (-.3,.7);
\filldraw[fill=\gColor, thick] (-.3,.1) rectangle (-.1,.3);
\filldraw[fill=\kColor, thick] ($ (Aa) - (.1,.1) $) rectangle ($ (Aa) + (.1,.1) $);
}
\arrow[uur, Rightarrow, swap, "\mu^A"]
\arrow[r, Rightarrow, "A^1_{h^{-1}_\cC}"]
&
\tikzmath{
\coordinate (Aa) at (0,0);
\draw (Aa) to[in=-90,out=90] (0,.9);
\draw[DarkGreen] (-.2,-.6) -- (-.2,.9);
\draw[DarkGreen] (.2,-.6) -- (.2,.9);
\draw[red] (-.4,-.6) -- (-.4,.9);
\draw[red] (.4,-.6) -- (.4,.9);
\filldraw[fill=\gColor, thick] (-.5,.5) rectangle (-.3,.7);
\filldraw[fill=\gColor, thick] (-.3,.1) rectangle (-.1,.3);
\filldraw[fill=\gColor, thick] (.3,-.2) rectangle (.1,-.4);
\filldraw[fill=\kColor, thick] ($ (Aa) - (.1,.1) $) rectangle ($ (Aa) + (.1,.1) $);
}
\arrow[r, Rightarrow, "\mu^A"]
\arrow[d, Rightarrow, "\mu^A\text{\hspace{.5cm}}"]
\arrow[dr, phantom, "\text{\scriptsize \ref{Functor:omega}}"]
&
\tikzmath{
\coordinate (Aa) at (0,0);
\draw (Aa) to[in=-90,out=90] (0,.6);
\draw[DarkGreen] (-.2,-.6) -- (-.2,.6);
\draw[DarkGreen] (.2,-.6) -- (.2,.6);
\draw[red] (-.4,-.6) -- (-.4,.6);
\draw[red] (.4,-.6) -- (.4,.6);
\filldraw[fill=\gColor, thick] (-.5,.1) rectangle (-.1,.3);
\filldraw[fill=\gColor, thick] (.3,-.2) rectangle (.1,-.4);
\filldraw[fill=\kColor, thick] ($ (Aa) - (.1,.1) $) rectangle ($ (Aa) + (.1,.1) $);
}
\arrow[u, Rightarrow, swap, near start, "A^1_{g^{-1}_\cC}"]
\arrow[d, Rightarrow, "\mu^A"]
\\
\tikzmath{
\coordinate (Aa) at (0,0);
\draw (Aa) to[in=-90,out=90] (0,.6);
\draw[DarkGreen] (-.2,-.6) -- (-.2,.6);
\draw[DarkGreen] (.2,-.6) -- (.2,.6);
\draw[red] (-.4,-.6) -- (-.4,.6);
\draw[red] (.4,-.6) -- (.4,.6);
\filldraw[fill=\gColor, thick] (-.3,.1) rectangle (-.1,.3);
\filldraw[fill=\gColor, thick] (.3,-.2) rectangle (.1,-.4);
\filldraw[fill=\kColor, thick] ($ (Aa) - (.1,.1) $) rectangle ($ (Aa) + (.1,.1) $);
}
\arrow[r, Rightarrow, "\mu^A"]
\arrow[rru, Rightarrow, "A^1_{g_\cC}"]
&
\tikzmath{
\coordinate (Aa) at (0,0);
\draw (Aa) to[in=-90,out=90] (0,.6);
\draw[DarkGreen] (-.2,-.6) -- (-.2,.6);
\draw[DarkGreen] (.2,-.6) -- (.2,.6);
\draw[red] (-.4,-.6) -- (-.4,.6);
\draw[red] (.4,-.6) -- (.4,.6);
\filldraw[fill=\gColor, thick] (-.3,.1) rectangle (-.1,.3);
\filldraw[fill=\gColor, thick] (.3,-.1) rectangle (.1,-.3);
\filldraw[fill=\kColor, thick] ($ (Aa) - (.1,.1) $) rectangle ($ (Aa) + (.1,.1) $);
}
\arrow[r, Rightarrow, "A^1_{g^{-1}_\cC}"]
&
\tikzmath{
\coordinate (Aa) at (0,0);
\draw (Aa) to[in=-90,out=90] (0,.9);
\draw[DarkGreen] (-.2,-.6) -- (-.2,.9);
\draw[DarkGreen] (.2,-.6) -- (.2,.9);
\draw[red] (-.4,-.6) -- (-.4,.9);
\draw[red] (.4,-.6) -- (.4,.9);
\filldraw[fill=\gColor, thick] (-.5,.5) rectangle (-.3,.7);
\filldraw[fill=\gColor, thick] (-.3,.1) rectangle (-.1,.3);
\filldraw[fill=\gColor, thick] (.3,-.1) rectangle (.1,-.3);
\filldraw[fill=\kColor, thick] ($ (Aa) - (.1,.1) $) rectangle ($ (Aa) + (.1,.1) $);
}
\arrow[r, Rightarrow, "\mu^A"]
&
\tikzmath{
\coordinate (Aa) at (0,0);
\draw (Aa) to[in=-90,out=90] (0,.6);
\draw[DarkGreen] (-.2,-.6) -- (-.2,.6);
\draw[DarkGreen] (.2,-.6) -- (.2,.6);
\draw[red] (-.4,-.6) -- (-.4,.6);
\draw[red] (.4,-.6) -- (.4,.6);
\filldraw[fill=\gColor, thick] (-.5,.1) rectangle (-.1,.3);
\filldraw[fill=\gColor, thick] (.3,-.1) rectangle (.1,-.3);
\filldraw[fill=\kColor, thick] ($ (Aa) - (.1,.1) $) rectangle ($ (Aa) + (.1,.1) $);
}
\arrow[r, Rightarrow, "A^1_{g^{-1}_\cC}"]
&
\tikzmath{
\coordinate (Aa) at (0,0);
\draw (Aa) to[in=-90,out=90] (0,.6);
\draw[DarkGreen] (-.2,-.9) -- (-.2,.6);
\draw[DarkGreen] (.2,-.9) -- (.2,.6);
\draw[red] (-.4,-.9) -- (-.4,.6);
\draw[red] (.4,-.9) -- (.4,.6);
\filldraw[fill=\gColor, thick] (-.5,.1) rectangle (-.1,.3);
\filldraw[fill=\gColor, thick] (.3,-.1) rectangle (.1,-.3);
\filldraw[fill=\gColor, thick] (.3,-.5) rectangle (.5,-.7);
\filldraw[fill=\kColor, thick] ($ (Aa) - (.1,.1) $) rectangle ($ (Aa) + (.1,.1) $);
}
\arrow[uuu, Rightarrow, "\mu^A"]
\end{tikzcd}};\end{tikzpicture}
$$
Each square above is labelled by the property for $A$ which makes it commute.
Unlabelled squares commute by functoriality of 1-cell composition $\xo$.

\item[\eqref{eq:GCrossedFunctor-Gamma2}]
For $g,h\in G$, $a\in \fC_g = \cC(1_\cC \to g_\cC)$, and $b\in \fC_h = \cC(1_\cC \to h_\cC)$, we use the following shorthand as in Notation \ref{nota:ShadedBoxes}:
$$
\tikzmath{
\filldraw[fill=\gColor, thick] (-.1,-.1) rectangle (.1,.1);
}
:=
\tikzmath{
\node (Aa) at (0,-.8) {$\scriptstyle e_\cD$};
\node (Ab) at (0,.8) {$\scriptstyle e_\cD$};
\node[draw,rectangle, thick, rounded corners=5pt] (Ax) at (0,0) {$\scriptstyle A(\id_e)$};
}
\qquad\qquad\tikzmath{
\draw (0,-.3) -- (0,.3);
\filldraw[fill=\gColor, thick] (-.1,-.1) rectangle (.1,.1);
}
:=
\tikzmath{
\node (Aa) at (0,-.8) {$\scriptstyle g_\cD$};
\node (Ab) at (0,.8) {$\scriptstyle g_\cD$};
\node[draw,rectangle, thick, rounded corners=5pt] (Ax) at (0,0) {$\scriptstyle A(\id_g)$};
\draw (Aa) to[in=-90,out=90] (Ax);
\draw (Ax) to[in=-90,out=90] (Ab);
}
\qquad\qquad
\tikzmath{
\draw (0,0) -- (0,.3);
\filldraw[fill=\hColor, thick] (-.1,-.1) rectangle (.1,.1);
}
:=
\tikzmath{
\node (Ab) at (0,.8) {$\scriptstyle g_\cD$};
\node[draw,rectangle, thick, rounded corners=5pt] (Ax) at (0,0) {$\scriptstyle A(a)$};
\draw (Ax) to[in=-90,out=90] (Ab);
}
\qquad\qquad
\tikzmath{
\draw (0,0) -- (0,.3);
\filldraw[fill=\kColor, thick] (-.1,-.1) rectangle (.1,.1);
}
:=
\tikzmath{
\node (Ab) at (0,.8) {$\scriptstyle h_\cD$};
\node[draw,rectangle, thick, rounded corners=5pt] (Ax) at (0,0) {$\scriptstyle A(b)$};
\draw (Ax) to[in=-90,out=90] (Ab);
}
\qquad\qquad
\tikzmath{
\draw (0,0) -- (0,.3);
\RedRectangleMorphism{(0,0)}{\kColor}
}
:=
\tikzmath{
\node (Ab) at (0,.8) {$\scriptstyle h_\cD$};
\node[draw,rectangle, thick, rounded corners=5pt] (Ax) at (0,0) {$\scriptstyle A(F^\fC_g(b))$};
\draw (Ax) to[in=-90,out=90] (Ab);
}
$$
Recall that by Construction \ref{const:GCrossedBraiding} of the $G$-crossed braiding in $\fC$, we have the identities
$$
\tikzmath{
\coordinate (Ab) at (.1,.1);
\coordinate (i1) at (-.1,.3);
\draw (.1,.2) -- (.1,.6);
\draw (-.1,-.2) -- (-.1,.6);
\RectangleMorphism{(i1)}{white}
\RectangleMorphism{(Ab)}{\kColor}
}
=
\tikzmath{
\coordinate (Ab) at (-.1,.3);
\coordinate (i1) at (.1,.1);
\draw (-.1,.2) -- (-.1,.6);
\draw (.1,-.2) -- (.1,.6);
\RectangleMorphism{(i1)}{white}
\RedRectangleMorphism{(Ab)}{\kColor}
}
\qquad\qquad
\tikzmath{
\coordinate (Aa) at (-.1,-.2);
\coordinate (Ab) at (.1,0);
\coordinate (i1) at (-.1,.2);
\coordinate (i2) at (.1,-.4);
\draw (-.1,-.2) -- (-.1,.6);
\draw (.1,-.4) -- (.1,.6);
\RectangleMorphism{(i1)}{white}
\RectangleMorphism{(i2)}{white}
\RectangleMorphism{(Aa)}{\hColor}
\RectangleMorphism{(Ab)}{\kColor}
}
=
\tikzmath{
\coordinate (Aa) at (-.1,0);
\coordinate (Ab) at (.1,-.2);
\draw (-.1,0) -- (-.1,.6);
\draw (.1,-.2) -- (.1,.6);
\RectangleMorphism{(Ab)}{\hColor}
\RedRectangleMorphism{(Aa)}{\kColor}
}\,.
$$
Going around the outside of the diagram below corresponds to \eqref{eq:GCrossedFunctor-Gamma2}.
$$
\begin{tikzpicture}[baseline= (a).base]\node[scale=.9] (a) at (0,0){\begin{tikzcd}[row sep=3em, column sep=3em]
\tikzmath{
\coordinate (Aa) at (0,0);
\coordinate (Ab) at (.3,-.3);
\draw (Aa) to[in=-90,out=90] (0,.4);
\draw (Ab) to[in=-90,out=90] (.3,.4);
\RectangleMorphism{(Aa)}{\hColor}
\RectangleMorphism{(Ab)}{\kColor}
}
\arrow[r, Rightarrow, "\mu^A"]
\arrow[d, Rightarrow, "\phi"]
\arrow[dr, bend right = 25, Rightarrow, "A^1_h\xz A^1_e"]
&
\tikzmath{
\coordinate (Aa) at (-.1,0);
\coordinate (Ab) at (.1,-.2);
\draw (.1,-.2) to[in=-90,out=90] (.1,.4);
\draw (-.1,0) to[in=-90,out=90] (-.1,.4);
\RectangleMorphism{(Aa)}{\hColor}
\RectangleMorphism{(Ab)}{\kColor}
}
\arrow[r, Rightarrow, "A(\phi)"]
&
\tikzmath{
\coordinate (Aa) at (-.1,-.2);
\coordinate (Ab) at (.1,0);
\coordinate (i1) at (-.1,.2);
\coordinate (i2) at (.1,-.4);
\draw (-.1,-.2) to[in=-90,out=90] (-.1,.6);
\draw (.1,0) to[in=-90,out=90] (.1,.6);
\RectangleMorphism{(i1)}{white}
\RectangleMorphism{(Aa)}{\hColor}
\RectangleMorphism{(Ab)}{\kColor}
\RectangleMorphism{(i2)}{white}
}
\arrow[drr, phantom, "\text{\scriptsize \ref{Functor:mu.monoidal}}"]
\arrow[rrrr, equals]
&&&&
\tikzmath{
\coordinate (Aa) at (-.1,0);
\coordinate (Ab) at (.1,-.2);
\draw (.1,-.2) to[in=-90,out=90] (.1,.4);
\draw (-.1,0) to[in=-90,out=90] (-.1,.4);
\RectangleMorphism{(Ab)}{\hColor}
\RedRectangleMorphism{(Aa)}{\kColor}
}
\\
\tikzmath{
\coordinate (Aa) at (0,-.3);
\coordinate (Ab) at (.3,0);
\draw (Aa) to[in=-90,out=90] (0,.4);
\draw (Ab) to[in=-90,out=90] (.3,.4);
\RectangleMorphism{(Aa)}{\hColor}
\RectangleMorphism{(Ab)}{\kColor}
}
\arrow[d, equals]
\arrow[dr, Rightarrow, "A^1_h\xz A^1_e"]
&
\tikzmath{
\coordinate (Aa) at (0,0);
\coordinate (Ab) at (.3,-.3);
\coordinate (i1) at (0,.3);
\coordinate (i2) at (.3,-.6);
\draw (Aa) to[in=-90,out=90] (0,.7);
\draw (Ab) to[in=-90,out=90] (.3,.7);
\RectangleMorphism{(i1)}{white}
\RectangleMorphism{(i2)}{white}
\RectangleMorphism{(Aa)}{\hColor}
\RectangleMorphism{(Ab)}{\kColor}
}
\arrow[ul, bend right = 25, Rightarrow, "\text{\scriptsize \ref{Functor:A.unital}}", "A^2_{\id_h, a}\xz A^2_{b, \id_e}" ' near start]
\arrow[d, Rightarrow, "\phi"]
&&
\tikzmath{
\coordinate (Aa) at (-.1,-.5);
\coordinate (Ab) at (.1,.1);
\coordinate (i1) at (-.1,.3);
\coordinate (i2) at (-.4,-.2);
\draw (.1,.2) to[in=-90,out=90] (.1,.7);
\draw (Aa) -- (-.1,.7);
\RectangleMorphism{(i1)}{white}
\RectangleMorphism{(i2)}{white}
\RectangleMorphism{(Aa)}{\hColor}
\RectangleMorphism{(Ab)}{\kColor}
}
\arrow[r, equals]
\arrow[d, Rightarrow, swap, "\mu^A"]
&
\tikzmath{
\coordinate (Aa) at (.1,-.1);
\coordinate (Ab) at (.3,-.9);
\coordinate (i1) at (0,-.6);
\coordinate (i2) at (.3,-.3);
\draw (Aa) to[in=-90,out=90] (.1,.4);
\draw (Ab) to[in=-90,out=90] (.3,.4);
\RedRectangleMorphism{(Aa)}{\kColor}
\RectangleMorphism{(Ab)}{\hColor}
\RectangleMorphism{(i1)}{white}
\RectangleMorphism{(i2)}{white}
}
&
\tikzmath{
\coordinate (Aa) at (0,0);
\coordinate (Ab) at (.3,-.9);
\coordinate (i1) at (0,-.6);
\coordinate (i2) at (.3,-.3);
\draw (Aa) to[in=-90,out=90] (0,.4);
\draw (Ab) to[in=-90,out=90] (.3,.4);
\RedRectangleMorphism{(Aa)}{\kColor}
\RectangleMorphism{(Ab)}{\hColor}
\RectangleMorphism{(i1)}{white}
\RectangleMorphism{(i2)}{white}
}
\arrow[l, Rightarrow, "\mu^A"]
\\
\tikzmath{
\coordinate (Aa) at (0,0);
\coordinate (Ab) at (.4,-.5);
\draw (Aa) to[in=-90,out=90] (0,.4);
\draw (Ab) to[in=-90,out=90] (.4,.4);
\draw[red] (-.2,.5) -- (-.2,-.1) arc (-180:0:.2cm) -- (.2,.5);
\RectangleMorphism{(Aa)}{\kColor}
\RectangleMorphism{(Ab)}{\hColor}
}
\arrow[dr, Rightarrow, "A^1_g\xz A^1_e"]
\arrow[d, Rightarrow, "A^1_g"]
&
\tikzmath{
\coordinate (Aa) at (0,-.3);
\coordinate (Ab) at (.3,0);
\coordinate (i1) at (0,.3);
\coordinate (i2) at (.3,-.6);
\draw (Aa) to[in=-90,out=90] (0,.7);
\draw (Ab) to[in=-90,out=90] (.3,.7);
\RectangleMorphism{(i1)}{white}
\RectangleMorphism{(i2)}{white}
\RectangleMorphism{(Aa)}{\hColor}
\RectangleMorphism{(Ab)}{\kColor}
}
\arrow[d, equals]
\arrow[r, Rightarrow, "\mu^A"]
&
\tikzmath{
\coordinate (Aa) at (.2,-.2);
\coordinate (Ab) at (0,-.6);
\coordinate (i1) at (0,0);
\coordinate (i2) at (.3,-.9);
\draw (Aa) to[in=-90,out=90] (.2,.4);
\draw (Ab) to[in=-90,out=90] (0,.4);
\RectangleMorphism{(Aa)}{\kColor}
\RectangleMorphism{(Ab)}{\hColor}
\RectangleMorphism{(i1)}{white}
\RectangleMorphism{(i2)}{white}
}
\arrow[d, equals]
\arrow[r, Rightarrow, "\mu^A"]
\arrow[uu, phantom, "\text{\scriptsize \ref{Functor:mu.monoidal}}"]
&
\tikzmath{
\coordinate (Aa) at (.2,-.2);
\coordinate (Ab) at (0,-.6);
\coordinate (i1) at (0,0);
\coordinate (i2) at (.2,-.8);
\draw (Aa) to[in=-90,out=90] (.2,.4);
\draw (Ab) to[in=-90,out=90] (0,.4);
\RectangleMorphism{(Aa)}{\kColor}
\RectangleMorphism{(Ab)}{\hColor}
\RectangleMorphism{(i1)}{white}
\RectangleMorphism{(i2)}{white}
}
=
\tikzmath{
\coordinate (Aa) at (0,0);
\coordinate (Ab) at (.2,-.6);
\coordinate (i2) at (.2,-.2);
\draw (Aa) to[in=-90,out=90] (0,.4);
\draw (Ab) to[in=-90,out=90] (.2,.4);
\RedRectangleMorphism{(Aa)}{\kColor}
\RectangleMorphism{(Ab)}{\hColor}
\RectangleMorphism{(i2)}{white}
}
\arrow[ur, Rightarrow, "\text{\scriptsize \ref{Functor:lr}}", "A^1_e"']
\arrow[uul, Rightarrow, "A^2"]
&
\tikzmath{
\coordinate (Aa) at (0,0);
\coordinate (Ab) at (.3,-.6);
\coordinate (i2) at (.3,-.3);
\draw (Aa) to[in=-90,out=90] (0,.5);
\draw (Ab) to[in=-90,out=90] (.3,.5);
\RedRectangleMorphism{(Aa)}{\kColor}
\RectangleMorphism{(Ab)}{\hColor}
\RectangleMorphism{(i2)}{white}
}
\arrow[ur, Rightarrow, "A^1_e","\text{\scriptsize \ref{Interchanger:Natural}}"']
\arrow[r, Rightarrow, "A^1_e"]
\arrow[l, Rightarrow, "\mu^A"]
&
\tikzmath{
\coordinate (Aa) at (0,0);
\coordinate (Ab) at (.3,-.9);
\coordinate (i1) at (0,-.3);
\coordinate (i2) at (.3,-.6);
\draw (Aa) to[in=-90,out=90] (0,.4);
\draw (Ab) to[in=-90,out=90] (.3,.4);
\RedRectangleMorphism{(Aa)}{\kColor}
\RectangleMorphism{(Ab)}{\hColor}
\RectangleMorphism{(i1)}{white}
\RectangleMorphism{(i2)}{white}
}
\arrow[ddr, Rightarrow, near start, "A^2\xz A^2"]
\arrow[u, Rightarrow, "\phi"]
\arrow[dl, phantom, near start, "\text{\scriptsize \ref{Functor:A.unital}}"]
\\
\tikzmath{
\coordinate (Aa) at (0,0);
\coordinate (Ab) at (.4,-.5);
\coordinate (i1) at (-.2,.3);
\draw (Aa) to[in=-90,out=90] (0,.6);
\draw (Ab) to[in=-90,out=90] (.4,.6);
\draw[red] (-.2,.6) -- (-.2,-.1) arc (-180:0:.2cm) -- (.2,.6);
\RectangleMorphism{(i1)}{white}
\RectangleMorphism{(Aa)}{\kColor}
\RectangleMorphism{(Ab)}{\hColor}
}
\arrow[r, Rightarrow, "A^1_e"]
\arrow[drr, Rightarrow, "\mu^A"]
&
\tikzmath{
\coordinate (Aa) at (0,0);
\coordinate (Ab) at (.4,-.5);
\coordinate (i1) at (-.2,.3);
\coordinate (i2) at (.6,-.8);
\draw (Aa) to[in=-90,out=90] (0,.7);
\draw (Ab) to[in=-90,out=90] (.4,.7);
\draw[red] (-.2,.6) -- (-.2,-.1) arc (-180:0:.2cm) -- (.2,.7);
\RectangleMorphism{(i1)}{white}
\RectangleMorphism{(i2)}{white}
\RectangleMorphism{(Aa)}{\kColor}
\RectangleMorphism{(Ab)}{\hColor}
}
\arrow[r, Rightarrow, "\mu^A"]
&
\tikzmath{
\coordinate (Aa) at (0,0);
\coordinate (Ab) at (.4,-.5);
\coordinate (i1) at (-.2,.2);
\coordinate (i2) at (.6,-.8);
\draw (0,0) to[in=-90,out=90] (0,.6);
\draw (Ab) to[in=-90,out=90] (.4,.6);
\draw[red] (-.2,.6) -- (-.2,-.1) arc (-180:0:.2cm) -- (.2,.6);
\RectangleMorphism{(i1)}{white}
\RectangleMorphism{(i2)}{white}
\RectangleMorphism{(Aa)}{\kColor}
\RectangleMorphism{(Ab)}{\hColor}
}
&
\tikzmath{
\coordinate (Aa) at (0,0);
\coordinate (Ab) at (.5,-.8);
\coordinate (i1) at (-.2,.2);
\coordinate (i2) at (.4,-.3);
\draw (0,0) to[in=-90,out=90] (0,.6);
\draw (Ab) to[in=-90,out=90] (.5,.6);
\draw[red] (-.2,.6) -- (-.2,-.4) arc (-180:0:.25cm) -- (.3,.6);
\filldraw[thick, fill=white] ($ (i2) + (-.2,-.1) $) rectangle ($ (i2) + (.2,.1) $);
\RectangleMorphism{(i1)}{white}
\RectangleMorphism{(Aa)}{\kColor}
\RectangleMorphism{(Ab)}{\hColor}
}
\arrow[u, Rightarrow, "\mu^A"]
&
\tikzmath{
\coordinate (Aa) at (0,0);
\coordinate (Ab) at (.4,-1);
\coordinate (i1) at (-.2,.2);
\coordinate (i2) at (.2,-.3);
\coordinate (i3) at (.4,-.7);
\draw (0,0) to[in=-90,out=90] (0,.6);
\draw (Ab) to[in=-90,out=90] (.4,.6);
\draw[red] (-.2,.6) -- (-.2,-.4) arc (-180:0:.2cm) -- (.2,.6);
\RectangleMorphism{(i1)}{white}
\RectangleMorphism{(i2)}{white}
\RectangleMorphism{(i3)}{white}
\RectangleMorphism{(Aa)}{\kColor}
\RectangleMorphism{(Ab)}{\hColor}
}
\arrow[u, Rightarrow, "\text{\scriptsize \ref{Functor:omega}}\hspace{.5cm}\mu^A"]
\arrow[l, Rightarrow, "\text{\scriptsize \ref{Functor:mu.unital}}", "\mu^A"']
\\
&&
\tikzmath{
\coordinate (Aa) at (0,0);
\coordinate (Ab) at (.4,-.5);
\coordinate (i1) at (-.2,.2);
\draw (Aa) to[in=-90,out=90] (0,.6);
\draw (Ab) to[in=-90,out=90] (.4,.6);
\draw[red] (-.2,.6) -- (-.2,-.1) arc (-180:0:.2cm) -- (.2,.6);
\RectangleMorphism{(i1)}{white}
\RectangleMorphism{(Aa)}{\kColor}
\RectangleMorphism{(Ab)}{\hColor}
}
\arrow[ur, Rightarrow, "A^1_{e}"]
\arrow[urr, Rightarrow, swap, "A^1_{g^{-1}_\cC} \xz A^1_{g}"]
\arrow[uur, equals, "\text{\scriptsize \ref{Functor:lr}}","\text{\scriptsize \ref{Functor:lr}}"']
\arrow[u, Rightarrow, "A^1_e"]
\arrow[rr, Rightarrow, "A^1_{g^{-1}_\cC}"]
&&
\tikzmath{
\coordinate (Aa) at (0,0);
\coordinate (Ab) at (.4,-.7);
\coordinate (i1) at (-.2,.2);
\coordinate (i2) at (.2,-.3);
\draw (Aa) to[in=-90,out=90] (0,.6);
\draw (Ab) to[in=-90,out=90] (.4,.6);
\draw[red] (-.2,.6) -- (-.2,-.1) arc (-180:0:.2cm) -- (.2,.6);
\RectangleMorphism{(i1)}{white}
\RectangleMorphism{(i2)}{white}
\RectangleMorphism{(Aa)}{\kColor}
\RectangleMorphism{(Ab)}{\hColor}
}
\arrow[u, Rightarrow, "A^1_{g}"]
\arrow[rr, Rightarrow, "\mu^A"]
&&
\tikzmath{
\coordinate (Aa) at (0,0);
\coordinate (Ab) at (.3,-.3);
\draw (Aa) to[in=-90,out=90] (0,.4);
\draw (Ab) to[in=-90,out=90] (.3,.4);
\RedRectangleMorphism{(Aa)}{\kColor}
\RectangleMorphism{(Ab)}{\hColor}
}
\arrow[uull, Rightarrow, "A^1_{g}"]
\arrow[uuuu, Rightarrow, "\mu^A"]
\end{tikzcd}};\end{tikzpicture}
$$
Again, each square above is labelled by the property for $A$ which makes it commute.
\end{proof}

\begin{rem}
By an argument similar to the right half of the commutative diagram in the proof of \eqref{eq:GCrossedFunctor-Gamma2} above, for a functor $A \in \TriCat_G^{\st}$,
$x\in \cC(1_\cC \to g_\cC)$, and $y\in \cC(h)\cC \to k_\cC)$,
 the following square commutes:
\begin{equation}
\label{eq:Functor:mu.monoidal.fewer}
\begin{tikzcd}
\tikzmath{
\node (g) at (0,3.2) {$\scriptstyle g_\cD$};
\node (h) at (1,0) {$\scriptstyle h_\cD$};
\node (k) at (1,3.2) {$\scriptstyle k_\cD$};
\node[draw,rectangle, thick, rounded corners=5pt] (Ax) at (0,2.4) {$\scriptstyle A(x)$};
\node[draw,rectangle, thick, rounded corners=5pt] (Ak) at (1,1.6) {$\scriptstyle A(\id_{k_\cC})$};
\node[draw,rectangle, thick, rounded corners=5pt] (Ay) at (1,.8) {$\scriptstyle A(y)$};
\draw (Ax) to[in=-90,out=90] (g);
\draw (h) to[in=-90,out=90] (Ay);
\draw (Ay) to[in=-90,out=90] (Ak);
\draw (Ak) to[in=-90,out=90] (k);
}
\arrow[r, Rightarrow, "\mu^A"]
\arrow[d, Rightarrow, "A^2_{\id_{k_\cC}, y}"]
&
\tikzmath{
\node (h) at (0,0) {$\scriptstyle h_\cD$};
\node (gk) at (0,2.4) {$\scriptstyle gk_\cD$};
\node[draw,rectangle, thick, rounded corners=5pt] (Ax) at (0,1.6) {$\scriptstyle A(x \xz \id_{k_\cC})$};
\node[draw,rectangle, thick, rounded corners=5pt] (Ay) at (0,.8) {$\scriptstyle A(y)$};
\draw (h) to[in=-90,out=90] (Ay);
\draw (Ay) to[in=-90,out=90] (Ax);
\draw (Ax) to[in=-90,out=90] (gk);
}
\arrow[d, Rightarrow, "A^2_{x\xz \id_{k_\cC}, y}"]
\\
\tikzmath{
\node (g) at (0,2.4) {$\scriptstyle g_\cD$};
\node (h) at (1,0) {$\scriptstyle h_\cD$};
\node (k) at (1,2.4) {$\scriptstyle k_\cD$};
\node[draw,rectangle, thick, rounded corners=5pt] (Ax) at (0,1.6) {$\scriptstyle A(x)$};
\node[draw,rectangle, thick, rounded corners=5pt] (Ay) at (1,.8) {$\scriptstyle A(y)$};
\draw (Ax) to[in=-90,out=90] (g);
\draw (h) to[in=-90,out=90] (Ay);
\draw (Ay) to[in=-90,out=90] (k);
}
\arrow[r, Rightarrow, "\mu^A_{x,y}"]
&
\tikzmath{
\node (gk) at (0,1.6) {$\scriptstyle gk_\cD$};
\node (h) at (0,0) {$\scriptstyle h_\cD$};
\node[draw,rectangle, thick, rounded corners=5pt] (Axy) at (0,.8) {$\scriptstyle A(x\xz y)$};
\draw (h) to[in=-90,out=90] (Axy);
\draw (Axy) to[in=-90,out=90] (gk);
}
\end{tikzcd}
\end{equation}
\end{rem}

\begin{proof}[Proof of Prop.~\ref{prop:2FunctorStrictlyUnitalAndAssociative}: $(A,\mu^A, \iota^A)\mapsto (\bfA, \bfa)$ is strict.]
\mbox{}

It is straightforward to see that if $(A, A^1, A^2, \mu^A, \iota^A)\in \TriCat_G^{\st}(\cC \to \cC)$ is the identity 3-functor, then so is $(\bfA, \bfA^1, \bfA^2, \bfa)\in G\CrsBrd^{\st}(\fC \to \fC)$.
Suppose now we have two composable 1-morphisms
$(A, A^1, A^2, \mu^A, \iota^A) \in \TriCat_G^{\st}(\cD \to \cE)$
and
$(B, B^1, B^2, \mu^B, \iota^B) \in \TriCat_G^{\st}(\cC \to \cD)$.
We now calculate the composition formulas for the composite $G$-crossed braided monoidal functor
$(\bfA \circ \bfB, (\bfA\circ \bfB)^1, (\bfA\circ \bfB)^2, \bfa\circ \bfb)$
associated to
$(A\circ B, (A\circ B)^2, (A\circ B)^2, \mu^{A\circ B}, \iota^{A\circ B})$.
The unitor $(\bfA \circ \bfB)^1$ and tensorator $(\bfA \circ \bfB)^2$ are straightforward:
\begin{align*}
(\bfA \circ \bfB)^1
&= 
(A \circ B)^1_e 
= 
A(B^1_e) \xt A^1_{e}
=
\bfA(\bfB^1) \xt \bfB^1_e
\\
(\bfA \circ \bfB)^2_{x,y}
&=
\mu_{x,y}^{A\circ B}
=
A(\mu^B_{x,y}) \xt \mu^A_{B(x),B(y)}  
=
\bfA(\bfB^2_{x,y}) \xt \bfA^2_{\bfB(x), \bfB(y)}. 
\end{align*}

To compute $(\bfa\circ \bfb)_x$ for $x\in \cC(1_\cC \to g_\cC)$, we use the following shorthand as in Notation \ref{nota:ShadedBoxes}, where black rectangles and strings corresponds to 1-cells in $\cE$ after applying $A$, and blue rectangles and strands corresponds to 1-cells in $\cD$ after applying $B$.
We also draw red strands to denote $\id_{g}$ in both $\cD$ and $\cE$.
For example:
$$
\tikzmath{
\draw[blue] (0,0) -- (0,.3);
\filldraw[fill=\hColor, draw=blue, thick] (-.1,-.1) rectangle (.1,.1);
}
:=
\tikzmath{
\node (h) at (0,.8) {$\scriptstyle h_\cD$};
\node[draw,rectangle, thick, rounded corners=5pt] (Ax) at (0,0) {$\scriptstyle B(x)$};
\draw (Ax) to[in=-90,out=90] (h);
}
\qquad\qquad
\tikzmath{
\draw (0,0) -- (0,.35);
\filldraw[fill=white, thick] (-.15,-.15) rectangle (.15,.15);
\filldraw[fill=\hColor, draw=blue, thick] (-.1,-.1) rectangle (.1,.1);
}
:=
\tikzmath{
\node (h) at (0,.8) {$\scriptstyle h_\cE$};
\node[draw,rectangle, thick, rounded corners=5pt] (ABx) at (0,0) {$\scriptstyle A(B(x))$};
\draw (ABx) to[in=-90,out=90] (h);
}
\qquad\qquad
\tikzmath{
\draw (0,.3) -- (0,.5);
\draw[thick] (-.5,-.3) rectangle (.3,.3);
\filldraw[fill=\hColor, draw=blue, thick] (-.1,-.1) rectangle (.1,.1);
\draw[blue] (0,.1) -- (0,.3);
\draw[red] (-.3,-.5) -- (-.3,.5);
}
:=
\tikzmath{
\node (h) at (0,.8) {$\scriptstyle gh_\cE$};
\node (g) at (0,-.8) {$\scriptstyle g_\cE$};
\node[draw,rectangle, thick, rounded corners=5pt] (Bx) at (0,0) {$\scriptstyle A(\id_{g_\cD} \xz B(x))$};
\draw (g) to[in=-90,out=90] (Bx);
\draw (Bx) to[in=-90,out=90] (h);
}
$$
We draw unshaded boxes on red strands to denote $B(\id_{g_\cC}), B(\id_{g_\cD^{-1}}$, $A(\id_{g_\cD}),A(\id_{g_\cD^{-1}}$.
$$
\tikzmath{
\draw[blue, thick] (-.1,-.1) rectangle (.1,.1);
\draw[red] (0,-.3) -- (0,.3);
}
:=
\tikzmath{
\node (g1) at (0,-.8) {$\scriptstyle g_\cD$};
\node (g2) at (0,.8) {$\scriptstyle g_\cD$};
\node[draw,rectangle, thick, rounded corners=5pt] (Ai) at (0,0) {$\scriptstyle B(\id_{g_\cC})$};
\draw (g1) to[in=-90,out=90] (Ai);
\draw (Ai) to[in=-90,out=90] (g2);
}
\qquad\qquad
\tikzmath{
\draw[thick] (-.15,-.15) rectangle (.15,.15);
\draw[blue, thick] (-.1,-.1) rectangle (.1,.1);
\draw[red] (0,-.35) -- (0,.35);
}
:=
\tikzmath{
\node (g1) at (0,-.8) {$\scriptstyle g_\cE$};
\node (g2) at (0,.8) {$\scriptstyle g_\cE$};
\node[draw,rectangle, thick, rounded corners=5pt] (Ai) at (0,0) {$\scriptstyle A(B(\id_{g_\cC}))$};
\draw (g1) to[in=-90,out=90] (Ai);
\draw (Ai) to[in=-90,out=90] (g2);
}
\qquad\qquad
\tikzmath{
\draw[thick] (-.1,-.1) rectangle (.1,.1);
\draw[red] (0,-.3) -- (0,.3);
}
:=
\tikzmath{
\node (g1) at (0,-.8) {$\scriptstyle g_\cE$};
\node (g2) at (0,.8) {$\scriptstyle g_\cE$};
\node[draw,rectangle, thick, rounded corners=5pt] (Ai) at (0,0) {$\scriptstyle A(\id_{g_\cD})$};
\draw (g1) to[in=-90,out=90] (Ai);
\draw (Ai) to[in=-90,out=90] (g2);
}
$$
The composite along the diagonal in the commuting diagram below is the definition of $(\bfa \circ \bfb)_x$.
Each face without a label above commutes by functoriality of 1-cell composition $\xo$.
$$
\begin{tikzpicture}[baseline= (a).base]\node[scale=.8] (a) at (0,0){\begin{tikzcd}[column sep=7em]
\tikzmath{
\coordinate (x) at (0,0);
\draw (x) -- ($ (x) + (0,.6) $);
\BlueRectangleMorphism{(x)}{\hColor}
\draw[thick] ($ (x) + (-.15, -.15) $) rectangle ($ (x) + (.15, .15) $);
\draw[red] (-.4,-.7) -- (-.4,.7);
\draw[red] (.4,-.7) -- (.4,.7);
}
\arrow[r, Rightarrow, "A^1_g"]
\arrow[dr, Rightarrow, "\text{\scriptsize by def'n.}", "(A\circ B)^1_g"']
&
\tikzmath{
\coordinate (ig) at (-.4,.3);
\coordinate (x) at (0,0);
\draw (x) -- ($ (x) + (0,.7) $);
\BlueRectangleMorphism{(x)}{\hColor}
\draw[thick] ($ (x) + (-.15, -.15) $) rectangle ($ (x) + (.15, .15) $);
\draw[thick] ($ (ig) + (-.1, -.1) $) rectangle ($ (ig) + (.1, .1) $);
\draw[red] (-.4,-.7) -- (-.4,.7);
\draw[red] (.4,-.7) -- (.4,.7);
}
\arrow[r, Rightarrow, "\mu^A_{\id_{g_\cD}, B(x)}"]
\arrow[d, Rightarrow, "A(B^1_g)"]
\arrow[dr, phantom, "\text{\scriptsize \ref{Functor:mu.natural}}"]
&
\tikzmath{
\coordinate (x) at (0,0);
\draw (x) -- ($ (x) + (0,.7) $);
\BlueRectangleMorphism{(x)}{\hColor}
\draw[thick] ($ (x) + (-.6, -.2) $) rectangle ($ (x) + (.2, .5) $);
\draw[red] (-.4,-.7) -- (-.4,.7);
\draw[red] (.4,-.7) -- (.4,.7);
}
\arrow[r, Rightarrow, "A^1_{g^{-1}}"]
\arrow[d, Rightarrow, "A(B^1_g\xz \id_{B(x)})"]
&
\tikzmath{
\coordinate (ig2) at (.4,-.4);
\coordinate (x) at (0,0);
\draw (x) -- ($ (x) + (0,.7) $);
\BlueRectangleMorphism{(x)}{\hColor}
\draw[thick] ($ (x) + (-.6, -.2) $) rectangle ($ (x) + (.2, .5) $);
\draw[thick] ($ (ig2) + (-.1, -.1) $) rectangle ($ (ig2) + (.1, .1) $);
\draw[red] (-.4,-.7) -- (-.4,.7);
\draw[red] (.4,-.7) -- (.4,.7);
}
\arrow[r, Rightarrow, "\mu^A_{\id_{g_\cD}\xz B(x), \id_{g^{-1}_\cD}}"]
\arrow[d, Rightarrow, "A(B^1_g\xz \id_{B(x)})"]
&
\tikzmath{
\coordinate (x) at (0,0);
\draw (x) -- ($ (x) + (0,.7) $);
\BlueRectangleMorphism{(x)}{\hColor}
\draw[thick] ($ (x) + (-.6, -.5) $) rectangle ($ (x) + (.6, .5) $);
\draw[red] (-.4,-.7) -- (-.4,.7);
\draw[red] (.4,-.7) -- (.4,.7);
}
\arrow[d, Rightarrow, "A(B^1_g\xz \id_{B(x)}\xz \id_{g_\cD})"]
\arrow[dl, phantom, near start, "\text{\scriptsize \ref{Functor:mu.natural}}"]
\\
&
\tikzmath{
\coordinate (ig) at (-.4,.3);
\coordinate (x) at (0,0);
\draw (x) -- ($ (x) + (0,.7) $);
\BlueRectangleMorphism{(x)}{\hColor}
\draw[thick] ($ (x) + (-.15, -.15) $) rectangle ($ (x) + (.15, .15) $);
\draw[thick, blue] ($ (ig) + (-.1, -.1) $) rectangle ($ (ig) + (.1, .1) $);
\draw[thick] ($ (ig) + (-.15, -.15) $) rectangle ($ (ig) + (.15, .15) $);
\draw[red] (-.4,-.7) -- (-.4,.7);
\draw[red] (.4,-.7) -- (.4,.7);
}
\arrow[r, Rightarrow, "\mu^A_{B(\id_{g_\cC}), B(x)}"]
\arrow[dr, Rightarrow, "\text{\scriptsize by def'n.}", "\mu^{A\circ B}_{\id_{g_\cC}, x}"']
&
\tikzmath{
\coordinate (ig) at (-.4,.3);
\coordinate (x) at (0,0);
\draw (x) -- ($ (x) + (0,.7) $);
\BlueRectangleMorphism{(x)}{\hColor}
\draw[thick, blue] ($ (ig) + (-.1, -.1) $) rectangle ($ (ig) + (.1, .1) $);
\draw[thick] ($ (x) + (-.6, -.2) $) rectangle ($ (x) + (.2, .5) $);
\draw[red] (-.4,-.7) -- (-.4,.7);
\draw[red] (.4,-.7) -- (.4,.7);
}
\arrow[r, Rightarrow, "A^1_{g^{-1}}"]
\arrow[d, Rightarrow, "A(\mu^B_{\id_{g_\cC}, x})"]
&
\tikzmath{
\coordinate (ig) at (-.4,.3);
\coordinate (ig2) at (.4,-.4);
\coordinate (x) at (0,0);
\draw (x) -- ($ (x) + (0,.7) $);
\BlueRectangleMorphism{(x)}{\hColor}
\draw[thick, blue] ($ (ig) + (-.1, -.1) $) rectangle ($ (ig) + (.1, .1) $);
\draw[thick] ($ (x) + (-.6, -.2) $) rectangle ($ (x) + (.2, .5) $);
\draw[thick] ($ (ig2) + (-.1, -.1) $) rectangle ($ (ig2) + (.1, .1) $);
\draw[red] (-.4,-.7) -- (-.4,.7);
\draw[red] (.4,-.7) -- (.4,.7);
}
\arrow[r, Rightarrow, "\mu^A_{B(\id_{g_\cC})\xz B(x), \id_{g^{-1}_\cD}}"]
\arrow[d, Rightarrow, "A(\mu^B_{\id_{g_\cC}, x})"]
&
\tikzmath{
\coordinate (ig) at (-.4,.3);
\coordinate (x) at (0,0);
\draw (x) -- ($ (x) + (0,.7) $);
\BlueRectangleMorphism{(x)}{\hColor}
\draw[thick, blue] ($ (ig) + (-.1, -.1) $) rectangle ($ (ig) + (.1, .1) $);
\draw[thick] ($ (x) + (-.6, -.5) $) rectangle ($ (x) + (.6, .5) $);
\draw[red] (-.4,-.7) -- (-.4,.7);
\draw[red] (.4,-.7) -- (.4,.7);
}
\arrow[d, Rightarrow, "A(\mu^B_{\id_{g_\cC}, x} \xz \id_{g_\cD})"]
\arrow[dl, phantom, near start, "\text{\scriptsize \ref{Functor:mu.natural}}"]
\\
&&
\tikzmath{
\coordinate (ig) at (-.2,.2);
\coordinate (x) at (0,0);
\draw (x) -- ($ (x) + (0,.7) $);
\BlueRectangleMorphism{(x)}{\hColor}
\draw[thick, blue] ($ (ig) + (-.1, -.1) $) rectangle ($ (ig) + (.1, .1) $);
\draw[thick] ($ (x) + (-.4, -.2) $) rectangle ($ (x) + (.2, .4) $);
\draw[red] (-.2,-.7) -- (-.2,.7);
\draw[red] (.4,-.7) -- (.4,.7);
}
\arrow[r, Rightarrow, "A^1_{g^{-1}}"]
\arrow[dr, Rightarrow, "\text{\scriptsize by def'n.}", "(A\circ B)^1_{g^{-1}}"']
&
\tikzmath{
\coordinate (ig) at (-.2,.2);
\coordinate (ig2) at (.4,-.4);
\coordinate (x) at (0,0);
\draw (x) -- ($ (x) + (0,.7) $);
\BlueRectangleMorphism{(x)}{\hColor}
\draw[thick, blue] ($ (ig) + (-.1, -.1) $) rectangle ($ (ig) + (.1, .1) $);
\draw[thick] ($ (x) + (-.4, -.2) $) rectangle ($ (x) + (.2, .4) $);
\draw[thick] ($ (ig2) + (-.1, -.1) $) rectangle ($ (ig2) + (.1, .1) $);
\draw[red] (-.2,-.7) -- (-.2,.7);
\draw[red] (.4,-.7) -- (.4,.7);
}
\arrow[r, Rightarrow, "\mu^A_{B(\id_{g_\cC}\xz x), \id_{g^{-1}_\cD}}"]
\arrow[d, Rightarrow, "A(B^1_{g^{-1}})"]
&
\tikzmath{
\coordinate (ig) at (-.2,.2);
\coordinate (x) at (0,0);
\draw (x) -- ($ (x) + (0,.7) $);
\BlueRectangleMorphism{(x)}{\hColor}
\draw[thick, blue] ($ (ig) + (-.1, -.1) $) rectangle ($ (ig) + (.1, .1) $);
\draw[thick] ($ (x) + (-.4, -.5) $) rectangle ($ (x) + (.6, .4) $);
\draw[red] (-.2,-.7) -- (-.2,.7);
\draw[red] (.4,-.7) -- (.4,.7);
}
\arrow[d, Rightarrow, "A(\id_{B(\id_{g_\cC}\xz x)}\xz B^1_{g^{-1}})"]
\arrow[dl, phantom, near start, "\text{\scriptsize \ref{Functor:mu.natural}}"]
\\
&&&
\tikzmath{
\coordinate (ig) at (-.2,.2);
\coordinate (ig2) at (.4,-.4);
\coordinate (x) at (0,0);
\draw (x) -- ($ (x) + (0,.7) $);
\BlueRectangleMorphism{(x)}{\hColor}
\draw[thick, blue] ($ (ig) + (-.1, -.1) $) rectangle ($ (ig) + (.1, .1) $);
\draw[thick] ($ (x) + (-.4, -.2) $) rectangle ($ (x) + (.2, .4) $);
\draw[thick, blue] ($ (ig2) + (-.1, -.1) $) rectangle ($ (ig2) + (.1, .1) $);
\draw[thick] ($ (ig2) + (-.15, -.15) $) rectangle ($ (ig2) + (.15, .15) $);
\draw[red] (-.2,-.7) -- (-.2,.7);
\draw[red] (.4,-.7) -- (.4,.7);
}
\arrow[r, Rightarrow, "\mu^A_{B(\id_{g_\cC}\xz x), B(\id_{g^{-1}_\cC})}"]
\arrow[dr, Rightarrow, "\text{\scriptsize by def'n.}", "\mu^{A\circ B}_{\id_{g_\cC}\xz x, \id_{g^{-1}_\cC}}"']
&
\tikzmath{
\coordinate (ig) at (-.2,.2);
\coordinate (ig2) at (.4,-.3);
\coordinate (x) at (0,0);
\draw (x) -- ($ (x) + (0,.7) $);
\BlueRectangleMorphism{(x)}{\hColor}
\draw[thick, blue] ($ (ig) + (-.1, -.1) $) rectangle ($ (ig) + (.1, .1) $);
\draw[thick] ($ (x) + (-.4, -.5) $) rectangle ($ (x) + (.6, .4) $);
\draw[thick, blue] ($ (ig2) + (-.1, -.1) $) rectangle ($ (ig2) + (.1, .1) $);
\draw[red] (-.2,-.7) -- (-.2,.7);
\draw[red] (.4,-.7) -- (.4,.7);
}
\arrow[d, Rightarrow, "\mu^A_{B(\id_{g_\cC}\xz x), B(\id_{g_\cC^{-1}})}"]
\\
&&&&
\tikzmath{
\coordinate (ig) at (-.2,.2);
\coordinate (ig2) at (.2,-.2);
\coordinate (x) at (0,0);
\draw (x) -- ($ (x) + (0,.7) $);
\BlueRectangleMorphism{(x)}{\hColor}
\draw[thick, blue] ($ (ig) + (-.1, -.1) $) rectangle ($ (ig) + (.1, .1) $);
\draw[thick] ($ (x) + (-.4, -.4) $) rectangle ($ (x) + (.4, .4) $);
\draw[thick, blue] ($ (ig2) + (-.1, -.1) $) rectangle ($ (ig2) + (.1, .1) $);
\draw[red] (-.2,-.7) -- (-.2,.7);
\draw[red] (.2,-.7) -- (.2,.7);
}
\end{tikzcd}};\end{tikzpicture}
$$
As the above diagram commutes, $(\bfa \circ \bfb)_x = \bfA(\bfa_{F^\cD_{g}(B(x)}) \xt \bfa_{\bfB(x)}$.

Finally, we observe this data agrees with the composite data for the data for the composite of the  $G$-crossed braided monoidal functors $(\bfA,\bfA^1,\bfA^2,\bfa)$ and $(\bfB, \bfB^1, \bfB^2, \bfb)$ in $G\CrsBrd$.
\end{proof}

For $\cC, \cD \in \TriCat_G^{\st}$, $A, B\in \TriCat_G^{\st}(\cC \to \cD)$, and $\eta\in \TriCat_G^{\st}(A \Rightarrow B)$,
let $\fC, \fD$ be the $G$-crossed braided categories obtained from $\cC,\cD$ respectively
from Theorem \ref{thm:ConstructionOfGCrossedBraidedCategory}, and let $(\bfA,\bfa),(\bfB,\bfb):\fC \to \fD$ be the $G$-crossed braided functors obtained from $A,B$ respectively from Theorem \ref{thm:From3FunctorToGCrossedBraidedFunctor}.
In Construction \ref{const:GMonoidalTransformationFrom3Transformation}
we defined $h:(\bfA,\bfa) \Rightarrow (\bfB,\bfb)$ by $h_a := \eta_a \in \cD(\bfA(a) \Rightarrow \bfB(a))$ for $a\in \fC_g = \cC(1_\cC \to g_\cC)$.

\begin{proof}[Proof of Thm.~\ref{thm:GMonoidalTransformation}: $h:(\bfA,\bfa) \Rightarrow (\bfB,\bfb)$ is a $G$-crossed braided monoidal transformation]
\item[\underline{Naturality:}]
This is immediate by the definition $h_x := \eta_x$ for $x\in \fC_g = \cC(1_\cC \to g_\cC)$ and  \ref{Transformation:eta_c.natural}.

\item[\underline{Unitality:}]
By \ref{Transformation:eta_c.unital}.
$h_{1_\fC} = \eta_{\id_{e_\cC}} = B^1_e \xt (A^1_e)^{-1} = \bfB^1 \xt (\bfA^1)^{-1}$.

\item[\underline{Monoidality:}]
That $\bfB^2_{x,y} \xt (h_x \xz h_y) = h_{x\xz y} \xt \bfA^2_{x,y}$ follows immediately by \ref{Transformation:eta^2}.

\item[\eqref{eq:ActionCohrenceForTransformation}]
For $x\in \fC_h = \cC(1_\cC \to h_\cC)$, we use the following shorthand as in Notation \ref{nota:ShadedBoxes}.
We also draw red strands to denote $\id_{g}$ in $\cD$, and we draw unshaded boxes on red strands to denote each of $A(\id_{g_\cC}),A(\id_{g_\cC^{-1}}$ and $B(\id_{g_\cC}), B(\id_{g_\cD^{-1}}$, 
For example:
$$
\tikzmath{
\draw (0,0) -- (0,.3);
\filldraw[fill=\hColor, thick] (-.1,-.1) rectangle (.1,.1);
}
:=
\tikzmath{
\node (h) at (0,.8) {$\scriptstyle h_\cD$};
\node[draw,rectangle, thick, rounded corners=5pt] (Ax) at (0,0) {$\scriptstyle A(x)$};
\draw (Ax) to[in=-90,out=90] (h);
}
\qquad\qquad
\tikzmath{
\draw (0,0) -- (0,.3);
\filldraw[fill=\kColor, thick] (-.1,-.1) rectangle (.1,.1);
}
:=
\tikzmath{
\node (h) at (0,.8) {$\scriptstyle h_\cD$};
\node[draw,rectangle, thick, rounded corners=5pt] (Ax) at (0,0) {$\scriptstyle B(x)$};
\draw (Ax) to[in=-90,out=90] (h);
}
\qquad\qquad
\tikzmath{
\draw[red] (0,-.3) -- (0,.3);
\filldraw[thick, fill=white] (-.1,-.1) rectangle (.1,.1);
}
:=
\tikzmath{
\node (g1) at (0,-.8) {$\scriptstyle g_\cD$};
\node (g2) at (0,.8) {$\scriptstyle g_\cD$};
\node[draw,rectangle, thick, rounded corners=5pt] (Ai) at (0,0) {$\scriptstyle A(\id_{g_\cC})$};
\draw (g1) to[in=-90,out=90] (Ai);
\draw (Ai) to[in=-90,out=90] (g2);
}
\qquad\qquad
\tikzmath{
\draw[red] (0,-.3) -- (0,.3);
\filldraw[thick, fill=white] (-.1,-.1) rectangle (.1,.1);
}
:=
\tikzmath{
\node (g1) at (0,-.8) {$\scriptstyle g_\cD$};
\node (g2) at (0,.8) {$\scriptstyle g_\cD$};
\node[draw,rectangle, thick, rounded corners=5pt] (Ai) at (0,0) {$\scriptstyle B(\id_{g_\cC})$};
\draw (g1) to[in=-90,out=90] (Ai);
\draw (Ai) to[in=-90,out=90] (g2);
}
$$
The outside of the commuting diagram below corresponds to \eqref{eq:ActionCohrenceForTransformation}.
$$
\begin{tikzpicture}[baseline= (a).base]\node[scale=.8] (a) at (0,0){\begin{tikzcd}[column sep=5em]
\tikzmath{
\coordinate (x) at (0,0);
\draw (x) -- ($ (x) + (0,.6) $);
\draw[red] (-.3,-.6) -- (-.3,.6);
\draw[red] (.3,-.6) -- (.3,.6);
\RectangleMorphism{(x)}{\hColor}
}
\arrow[r, Rightarrow, "A^1_g"]
\arrow[d, Rightarrow, "\eta_x"]
\arrow[dr, phantom, "\text{\scriptsize \ref{Transformation:eta_c.unital}}"]
&
\tikzmath{
\coordinate (x) at (0,0);
\coordinate (ig) at (-.3,.3);
\draw (x) -- ($ (x) + (0,.6) $);
\draw[red] (-.3,-.6) -- (-.3,.6);
\draw[red] (.3,-.6) -- (.3,.6);
\RectangleMorphism{(x)}{\hColor}
\RectangleMorphism{(ig)}{white}
}
\arrow[r, Rightarrow, "\mu^A_{\id_{g_\cC}, x}"]
\arrow[d, Rightarrow, "\eta_{\id_g}\xz \eta_x"]
&
\tikzmath{
\coordinate (x) at (0,0);
\coordinate (ig) at (-.2,.2);
\draw (x) -- ($ (x) + (0,.6) $);
\draw[red] (-.2,-.6) -- (-.2,.6);
\draw[red] (.3,-.6) -- (.3,.6);
\RectangleMorphism{(x)}{\hColor}
\RectangleMorphism{(ig)}{white}
}
\arrow[dl, phantom, near start, "\text{\scriptsize \ref{Transformation:eta^2}}"]
\arrow[r, Rightarrow, "A^1_{g^{-1}}"]
\arrow[d, Rightarrow, "\eta_{\id_{g_\cC}\xz x}"]
&
\tikzmath{
\coordinate (x) at (0,0);
\coordinate (ig) at (-.2,.2);
\coordinate (ig2) at (.3,-.3);
\draw (x) -- ($ (x) + (0,.6) $);
\draw[red] (-.2,-.6) -- (-.2,.6);
\draw[red] (.3,-.6) -- (.3,.6);
\RectangleMorphism{(x)}{\hColor}
\RectangleMorphism{(ig)}{white}
\RectangleMorphism{(ig2)}{white}
}
\arrow[dl, phantom, near start, "\text{\scriptsize \ref{Transformation:eta_c.unital}}"]
\arrow[dr, phantom, "\text{\scriptsize \ref{Transformation:eta^2}}"]
\arrow[r, Rightarrow, "\mu^A_{\id_{g_\cC}\xz x, \id_{g_\cC^{-1}}}"]
\arrow[d, Rightarrow, "\eta\xz \eta"]
&
\tikzmath{
\coordinate (x) at (0,0);
\coordinate (ig) at (-.2,.2);
\coordinate (ig2) at (.2,-.2);
\draw (x) -- ($ (x) + (0,.6) $);
\draw[red] (-.2,-.6) -- (-.2,.6);
\draw[red] (.2,-.6) -- (.2,.6);
\RectangleMorphism{(x)}{\hColor}
\RectangleMorphism{(ig)}{white}
\RectangleMorphism{(ig2)}{white}
}
\arrow[d, Rightarrow, "\eta"]
\\
\tikzmath{
\coordinate (x) at (0,0);
\draw (x) -- ($ (x) + (0,.6) $);
\draw[red] (-.3,-.6) -- (-.3,.6);
\draw[red] (.3,-.6) -- (.3,.6);
\RectangleMorphism{(x)}{\kColor}
}
\arrow[r, Rightarrow, "B^1_g"]
&
\tikzmath{
\coordinate (x) at (0,0);
\coordinate (ig) at (-.3,.3);
\draw (x) -- ($ (x) + (0,.6) $);
\draw[red] (-.3,-.6) -- (-.3,.6);
\draw[red] (.3,-.6) -- (.3,.6);
\RectangleMorphism{(x)}{\kColor}
\RectangleMorphism{(ig)}{white}
}
\arrow[r, Rightarrow, "\mu^B_{\id_{g_\cC}, x}"]
&
\tikzmath{
\coordinate (x) at (0,0);
\coordinate (ig) at (-.2,.2);
\draw (x) -- ($ (x) + (0,.6) $);
\draw[red] (-.2,-.6) -- (-.2,.6);
\draw[red] (.3,-.6) -- (.3,.6);
\RectangleMorphism{(x)}{\kColor}
\RectangleMorphism{(ig)}{white}
}
\arrow[r, Rightarrow, "B^1_{g^{-1}}"]
&
\tikzmath{
\coordinate (x) at (0,0);
\coordinate (ig) at (-.2,.2);
\coordinate (ig2) at (.3,-.3);
\draw (x) -- ($ (x) + (0,.6) $);
\draw[red] (-.2,-.6) -- (-.2,.6);
\draw[red] (.3,-.6) -- (.3,.6);
\RectangleMorphism{(x)}{\kColor}
\RectangleMorphism{(ig)}{white}
\RectangleMorphism{(ig2)}{white}
}
\arrow[r, Rightarrow, "\mu^B_{\id_{g_\cC}\xz x, \id_{g_\cC^{-1}}}"]
&
\tikzmath{
\coordinate (x) at (0,0);
\coordinate (ig) at (-.2,.2);
\coordinate (ig2) at (.2,-.2);
\draw (x) -- ($ (x) + (0,.6) $);
\draw[red] (-.2,-.6) -- (-.2,.6);
\draw[red] (.2,-.6) -- (.2,.6);
\RectangleMorphism{(x)}{\kColor}
\RectangleMorphism{(ig)}{white}
\RectangleMorphism{(ig2)}{white}
}
\end{tikzcd}};\end{tikzpicture}
$$
This completes the proof.
\end{proof}

\subsection{Coherence proofs for the equivalence \S\ref{sec:2FunctorEquivalence}}
\label{sec:CoherenceProofsForEquivalence}

In this section, we supply the proofs from \S\ref{sec:2FunctorEquivalence} which prove that 
the strict 2-functor $\TriCat_G^{\st} \to G\CrsBrd^{\st}$ from Theorem \ref{thm:2Functor} is an equivalence.
We begin by expanding on Notation \ref{nota:ShadedBoxes}.

\begin{nota}
\label{nota:ShadedBoxesGCrossed}
In this section, we use an expanded shorthand notation for 1-cells in $\cD$ and $\fD$ for proofs using commutative diagrams.
For $x_1\in \cC(g_\cC \to h_\cC)$, $x_2 \in \cC(h_\cC \to k_\cC)$,
$y_1 \in \cC(p_\cC \to q_\cC)$, and $y_2 \in \cC(q_\cC \to r_\cC)$, we will denote the image under $\bfA$ after tensoring with the identity of the source object using small shaded squares with one strand coming out of the top, e.g.,
$$
\tikzmath{
\draw (0,0) -- (0,.3);
\filldraw[fill=\gColor, thick] (-.1,-.1) rectangle (.1,.1);
}
:=
\tikzmath{
\node (Ab) at (0,.8) {$\scriptstyle hg^{-1}_\cD$};
\node[draw,rectangle, thick, rounded corners=5pt] (Ax) at (0,0) {$\scriptstyle \bfA(x_1 \xz g_\cC^{-1})$};
\draw (Ax) to[in=-90,out=90] (Ab);
}
\qquad\qquad
\tikzmath{
\draw (0,0) -- (0,.3);
\filldraw[fill=\hColor, thick] (-.1,-.1) rectangle (.1,.1);
}
:=
\tikzmath{
\node (Ab) at (0,.8) {$\scriptstyle kh^{-1}_\cD$};
\node[draw,rectangle, thick, rounded corners=5pt] (Ax) at (0,0) {$\scriptstyle \bfA(y_1 \xz h_\cC^{-1})$};
\draw (Ax) to[in=-90,out=90] (Ab);
}
\qquad\qquad
\tikzmath{
\draw (0,0) -- (0,.3);
\filldraw[fill=\kColor, thick] (-.1,-.1) rectangle (.1,.1);
}
:=
\tikzmath{
\node (Ab) at (0,.8) {$\scriptstyle qp^{-1}_\cD$};
\node[draw,rectangle, thick, rounded corners=5pt] (Ax) at (0,0) {$\scriptstyle \bfA(x_2 \xz p_\cC^{-1})$};
\draw (Ax) to[in=-90,out=90] (Ab);
}
\qquad\qquad
\tikzmath{
\draw (0,0) -- (0,.3);
\RectangleMorphism{(0,0)}{black}
}
:=
\tikzmath{
\node (Ab) at (0,.8) {$\scriptstyle rq^{-1}_\cD$};
\node[draw,rectangle, thick, rounded corners=5pt] (Ax) at (0,0) {$\scriptstyle \bfA(y_2 \xz q^{-1}_\cC)$};
\draw (Ax) to[in=-90,out=90] (Ab);
}
$$
We denote the $G$-actions $F^\fD_g$ and $F^\fD_h$ as in Construction \ref{const:GCrossedBraiding} by a red strand underneath the 1-morphism in $\cC$, where red corresponds to $g$ and green corresponds to $h$.
We denote $\bfA$ applied to the $G$-actions $F^\fC_g$ and $F^\fC$ by outlining the shaded square with red or green respectively, e.g.,
$$
\tikzmath{
\draw (0,0) -- (0,.3);
\draw[red] (-.2,.3) -- (-.2,-.1) arc (-180:0:.2cm) -- (.2,.3);
\RectangleMorphism{(0,0)}{\kColor}
}
:=
\tikzmath{
\node (Ab) at (0,.8) {$\scriptstyle gqp^{-1}g^{-1}_\cD$};
\node[draw,rectangle, thick, rounded corners=5pt] (Ax) at (0,0) {$\scriptstyle F^\fD_g(\bfA(x_2 \xz p_\cC^{-1}))$};
\draw (Ax) to[in=-90,out=90] (Ab);
}
\qquad
\tikzmath{
\draw (0,0) -- (0,.3);
\RedRectangleMorphism{(0,0)}{\kColor}
}
:=
\tikzmath{
\node (Ab) at (0,.8) {$\scriptstyle gqp^{-1}g^{-1}_\cD$};
\node[draw,rectangle, thick, rounded corners=5pt] (Ax) at (0,0) {$\scriptstyle \bfA(F^\fC_g(x_2 \xz p_\cC^{-1}))$};
\draw (Ax) to[in=-90,out=90] (Ab);
}
\qquad
\tikzmath{
\draw (0,0) -- (0,.3);
\draw[DarkGreen] (-.2,.3) -- (-.2,-.1) arc (-180:0:.2cm) -- (.2,.3);
\RectangleMorphism{(0,0)}{black}
}
:=
\tikzmath{
\node (Ab) at (0,.8) {$\scriptstyle hrq^{-1}h^{-1}_\cD$};
\node[draw,rectangle, thick, rounded corners=5pt] (Ax) at (0,0) {$\scriptstyle F^\fD_h(\bfA(y_2 \xz q_\cC^{-1}))$};
\draw (Ax) to[in=-90,out=90] (Ab);
}
\qquad
\tikzmath{
\draw (0,0) -- (0,.3);
\GreenRectangleMorphism{(0,0)}{black}
}
:=
\tikzmath{
\node (Ab) at (0,.8) {$\scriptstyle hrq^{-1}h^{-1}_\cD$};
\node[draw,rectangle, thick, rounded corners=5pt] (Ax) at (0,0) {$\scriptstyle \bfA(F^\fC_h(y_2 \xz q_\cC^{-1}))$};
\draw (Ax) to[in=-90,out=90] (Ab);
}
$$
We use a similar convention for the $\xz$ composite of 1-cells as in Notation \ref{nota:ShadedBoxes}.
For example, if $x\in \fC_g=\cC(1_\cC \to g_\cC)$ and $y\in \fC_h = \cC(1_\cC \to h_\cC)$, we write
$$
\tikzmath{
\draw (0,0) -- (0,.3);
\filldraw[fill=\gColor, thick] (-.1,-.1) rectangle (.1,.1);
}
:=
\tikzmath{
\node (Ab) at (0,.8) {$\scriptstyle hg^{-1}_\cD$};
\node[draw,rectangle, thick, rounded corners=5pt] (Ax) at (0,0) {$\scriptstyle \bfA(x)$};
\draw (Ax) to[in=-90,out=90] (Ab);
}
\qquad\qquad
\tikzmath{
\draw (0,0) -- (0,.3);
\filldraw[fill=\kColor, thick] (-.1,-.1) rectangle (.1,.1);
}
:=
\tikzmath{
\node (Ab) at (0,.8) {$\scriptstyle qp^{-1}_\cD$};
\node[draw,rectangle, thick, rounded corners=5pt] (Ax) at (0,0) {$\scriptstyle \bfA(y)$};
\draw (Ax) to[in=-90,out=90] (Ab);
}
\qquad\qquad
\tikzmath{
\draw (-.2,.2) -- (-.2,.5);
\draw (.2,-.2) -- (.2,.5);
\filldraw[fill=\gColor, thick] (-.3,.1) rectangle (-.1,.3);
\filldraw[fill=\kColor, thick] (.1,-.3) rectangle (.3,-.1);
}
:=
\tikzmath{
\node (Ag) at (-.4,.8) {$\scriptstyle g_\cC$};
\node (Ah) at (.4,.8) {$\scriptstyle h_\cC$};
\node[draw,rectangle, thick, rounded corners=5pt] (Ax) at (0,0) {$\scriptstyle A(x)\xz A(y)$};
\draw (-.4,.23) to[in=-90,out=90] (Ag);
\draw (.4,.23) to[in=-90,out=90] (Ah);
}
\qquad\qquad
\tikzmath{
\draw (.1,-.1) -- (.1,.4);
\draw (-.1,.1) -- (-.1,.4);
\filldraw[fill=\gColor, thick] (-.2,0) rectangle (0,.2);
\filldraw[fill=\kColor, thick] (0,-.2) rectangle (.2,0);
}
:=
\tikzmath{
\node (Ab) at (0,.8) {$\scriptstyle g_\cC\xz h_\cC$};
\node[draw,rectangle, thick, rounded corners=5pt] (Ax) at (0,0) {$\scriptstyle A(x\xz y)$};
\draw[double] (Ax) to[in=-90,out=90] (Ab);
}\,.
$$
This means that by Construction \ref{const:GCrossedBraiding} of the $G$-crossed braiding in $\fC$, we have that
$$
\tikzmath{
\coordinate (Ab) at (.1,.1);
\coordinate (i1) at (-.1,.3);
\draw (Ab) to[in=-90,out=90] (.1,.6);
\draw (i1) to[in=-90,out=90] (-.1,.6);
\RectangleMorphism{(i1)}{white}
\RedRectangleMorphism{(Ab)}{black}
}
\xrightarrow{\bfA\left(\beta^{hg^{-1}, grq^{-1}g^{-1}}_{x_1\xz g_\cC^{-1}, F_g^\fC(y_2\xz q_\cC^{-1})}\right)}
\tikzmath{
\coordinate (Ab) at (-.1,.3);
\coordinate (i1) at (.1,.1);
\draw (Ab) to[in=-90,out=90] (-.1,.6);
\draw (i1) to[in=-90,out=90] (.1,.6);
\RectangleMorphism{(i1)}{white}
\GreenRectangleMorphism{(Ab)}{black}
}
\,.
$$
\end{nota}

\begin{proof}[Proof of Lem.~\ref{lem:PreimageUnderlying2Functor}: $(A,A^1,A^2): \cC \to \cD$ is a 2-functor]
\item[\ref{Functor:A.associative}]
For $x\in \cC(g_\cC \to h_\cC)$, $y\in \cC(h_\cC \to k_\cC)$, and $z\in \cC(k_\cC \to \ell_\cC)$, using the nudging convention \eqref{eq:Nudging}, the square for $A^2$ is exactly
$$
\begin{tikzcd}[column sep=7em]
\underbrace{\bfA(z \xz k^{-1}_\cC) \xz \bfA(y \xz h^{-1}_\cC) \xz \bfA(x \xz g^{-1}_\cC) \xz g_\cD}_{A(z)\xo A(y) \xo A(x) }
\arrow[r, "\id_{\bfA(z\xz k^{-1}_\cC)}\xz \bfA^2 \xz \id_{g_\cD}"]
\arrow[d, "\bfA^2 \xz \id_{\bfA(x \xz g^{-1}_\cC)} \xz \id_{g_\cD}"]
&
\underbrace{\bfA(z \xz k^{-1}_\cC) \xz \bfA(y \xz h^{-1}_\cC \xz x \xz g^{-1}_\cC) \xz g_\cD}_{A(z)\xo A(y \xo x) }
\arrow[d, "\bfA^2 \xz \id_{g_\cD}"]
\\
\underbrace{\bfA(z \xz k^{-1}_\cC \xz y \xz h^{-1}_\cC) \xz \bfA(x \xz g^{-1}_\cC) \xz g_\cD}_{A(z\xo y) \xo A(x) }
\arrow[r, "\bfA^2\xz \id_{g_\cD}"]
&
\underbrace{\bfA((z \xz k^{-1}_\cC) \xz (y \xz h^{-1}_\cC) \xz (x \xz g^{-1}_\cC)) \xz g_\cD}_{A(z\xo y \xo x) }
\end{tikzcd}
$$
which commutes by strictness of $\fC, \fD$ and associativity of $\bfA^2$.

\item[\ref{Functor:A.unital}]
For $x\in \cC(g_\cC \to h_\cC)$, using the nudging convention \eqref{eq:Nudging}, the lower triangle for $A^1$ and $A^2$ is exactly
$$
\begin{tikzcd}[column sep=6em]
\underbrace{e_\cD \xz \bfA(x \xz g^{-1}_\cC) \xz g_\cD}_{A(x)}
\arrow[d, swap, "\bfA^1_e \xz \id_{\bfA(x \xz g^{-1}_\cC)} \xz \id_{g_\cD}"]
\arrow[dr, "\id_x"]
\\
\underbrace{\bfA(e_\cC) \xz \bfA(x \xz g^{-1}_\cC) \xz g_\cD}_{A(\id_h) \xo A(x)}
\arrow[r, "\bfA^2\xz \id_{g_\cD}"]
&
\underbrace{\bfA(e_\cC \xz  x \xz g^{-1}_\cC) \xz g_\cD}_{A(x) }
\end{tikzcd}
$$
which commutes by unitality of $\bfA^1, \bfA^2$.
The other triangle is similar.
\end{proof}

\begin{proof}[Proof of Thm.~\ref{thm:Preimage3Functor}: $(A, \mu^A, \iota^A)\in \TriCat_G^{\st}(\cC \to \cD)$]
\item[\ref{Functor:mu.natural}]
Each component in the definition of $\mu^A_{y,x}$ is natural in $x$ and $y$.

\item[\ref{Functor:mu.monoidal}] 
For $g,h\in G$, $x_1\in \cC(g_\cC \to h_\cC)$, $x_2 \in \cC(h_\cC \to k_\cC)$,
$y_1 \in \cC(p_\cC \to q_\cC)$, and $y_2 \in \cC(q_\cC \to r_\cC)$, we use the following shorthand as in Notation \ref{nota:ShadedBoxes}:
$$
\tikzmath{
\draw (0,0) -- (0,.3);
\filldraw[fill=\gColor, thick] (-.1,-.1) rectangle (.1,.1);
}
:=
\tikzmath{
\node (Ab) at (0,.8) {$\scriptstyle hg^{-1}_\cD$};
\node[draw,rectangle, thick, rounded corners=5pt] (Ax) at (0,0) {$\scriptstyle \bfA(x_1 \xz g_\cC^{-1})$};
\draw (Ax) to[in=-90,out=90] (Ab);
}
\qquad\qquad
\tikzmath{
\draw (0,0) -- (0,.3);
\filldraw[fill=\hColor, thick] (-.1,-.1) rectangle (.1,.1);
}
:=
\tikzmath{
\node (Ab) at (0,.8) {$\scriptstyle kh^{-1}_\cD$};
\node[draw,rectangle, thick, rounded corners=5pt] (Ax) at (0,0) {$\scriptstyle \bfA(y_1 \xz h_\cC^{-1})$};
\draw (Ax) to[in=-90,out=90] (Ab);
}
\qquad\qquad
\tikzmath{
\draw (0,0) -- (0,.3);
\filldraw[fill=\kColor, thick] (-.1,-.1) rectangle (.1,.1);
}
:=
\tikzmath{
\node (Ab) at (0,.8) {$\scriptstyle qp^{-1}_\cD$};
\node[draw,rectangle, thick, rounded corners=5pt] (Ax) at (0,0) {$\scriptstyle \bfA(x_2 \xz p_\cC^{-1})$};
\draw (Ax) to[in=-90,out=90] (Ab);
}
\qquad\qquad
\tikzmath{
\draw (0,0) -- (0,.3);
\RectangleMorphism{(0,0)}{black}
}
:=
\tikzmath{
\node (Ab) at (0,.8) {$\scriptstyle rq^{-1}_\cD$};
\node[draw,rectangle, thick, rounded corners=5pt] (Ax) at (0,0) {$\scriptstyle \bfA(y_2 \xz q^{-1}_\cC)$};
\draw (Ax) to[in=-90,out=90] (Ab);
}
$$
Observe that by the definition of $A$ from $\bfA$ and the nudging convention \eqref{eq:Nudging}, we have
\begin{align*}
\tikzmath{
\node[draw,rectangle, thick, rounded corners=5pt] (x1) at (-.5,-.3) {$\scriptstyle A(x_1)$};
\node[draw,rectangle, thick, rounded corners=5pt] (x2) at (.5,-.9) {$\scriptstyle A(x_2)$};
\node[draw,rectangle, thick, rounded corners=5pt] (y1) at (-.5,.9) {$\scriptstyle A(y_1)$};
\node[draw,rectangle, thick, rounded corners=5pt] (y2) at (.5,.3) {$\scriptstyle A(y_2)$};
\draw (-.5,-1.5) to[in=-90,out=90] (x1);
\draw (x1) to[in=-90,out=90] (y1);
\draw (y1) to[in=-90,out=90] (-.5,1.5);
\draw (.5,-1.5) to[in=-90,out=90] (x2);
\draw (x2) to[in=-90,out=90] (y2);
\draw (y2) to[in=-90,out=90] (.5,1.5);
}
=
\scriptstyle
\bfA(y_1\xz h_\cC^{-1}) 
\xz 
F^\fD_h(\bfA(y_2 \xz q_\cC^{-1}))
\xz 
\bfA(x_1 \xz g_\cC^{-1})
\xz
F^\fD_g(\bfA(x_2 \xz p_\cC^{-1}))
\xz gp_\cD
&=
\tikzmath{
\coordinate (y1) at (0,0);
\coordinate (y2) at (.5,-.3);
\coordinate (x1) at (1,-.6);
\coordinate (x2) at (1.5,-.9);
\draw (y1) -- (0,.3);
\draw (y2) -- (.5,.3);
\draw (x1) -- (1,.3);
\draw (x2) -- (1.5,.3);
\draw[DarkGreen] (.3,.3) -- (.3,-.4) node[left] {$\scriptstyle h$} arc (-180:0:.2cm) -- (.7,.3);
\draw[red] (1.3,.3) -- (1.3,-1) node[left] {$\scriptstyle g$} arc (-180:0:.2cm) -- (1.7,.3);
\draw (2,-1.2) node [below] {$\scriptstyle gp_\cD$} -- (2,.3);
\RectangleMorphism{(y1)}{\hColor}
\RectangleMorphism{(y2)}{black}
\RectangleMorphism{(x1)}{white}
\RectangleMorphism{(x2)}{\kColor}
}
\\
\tikzmath{
\node[draw,rectangle, thick, rounded corners=5pt] (A) at (0,0) {$\scriptstyle A((y_1 \xz y_2)\xo (x_1\xz x_2))$};
\draw (0,-1) to[in=-90,out=90] (A);
\draw (A) to[in=-90,out=90] (0,1);
}
=
\scriptstyle
\bfA(
(y_1\xz h_\cC^{-1})
\xz 
F^\fC_h(y_2 \xz q_\cC^{-1})
\xz 
(x_1 \xz g_\cC^{-1})
\xz
F^\fC_g(x_2 \xz p_\cC^{-1}))
\xz gp_\cD
&=
\tikzmath{
\coordinate (y1) at (0,0);
\coordinate (y2) at (.2,-.2);
\coordinate (x1) at (.4,-.4);
\coordinate (x2) at (.6,-.6);
\draw (y1) -- (0,.3);
\draw (y2) -- (.2,.3);
\draw (x1) -- (.4,.3);
\draw (x2) -- (.6,.3);
\draw (.9,-.9) node [below] {$\scriptstyle gp_\cD$} -- (.9,.3);
\RectangleMorphism{(y1)}{\hColor}
\GreenRectangleMorphism{(y2)}{black}
\RectangleMorphism{(x1)}{white}
\RedRectangleMorphism{(x2)}{\kColor}
}
\end{align*}

Going around the outside of the diagram below corresponds to \ref{Functor:mu.monoidal}, except we leave off the extra $gp_\cD$ strand on the right hand side of each string diagram.
$$
\begin{tikzpicture}[baseline= (a).base]\node[scale=.8] (a) at (0,0){\begin{tikzcd}
\tikzmath{
\coordinate (y1) at (0,0);
\coordinate (y2) at (.5,-.3);
\coordinate (x1) at (1,-.6);
\coordinate (x2) at (1.5,-.9);
\draw (y1) -- (0,.3);
\draw (y2) -- (.5,.3);
\draw (x1) -- (1,.3);
\draw (x2) -- (1.5,.3);
\draw[DarkGreen] (.3,.3) -- (.3,-.4) node[left] {$\scriptstyle h$} arc (-180:0:.2cm) -- (.7,.3);
\draw[red] (1.3,.3) -- (1.3,-1) node[left] {$\scriptstyle g$} arc (-180:0:.2cm) -- (1.7,.3);
\RectangleMorphism{(y1)}{\hColor}
\RectangleMorphism{(y2)}{black}
\RectangleMorphism{(x1)}{white}
\RectangleMorphism{(x2)}{\kColor}
}
\arrow[dr, equals]
\arrow[d, Rightarrow, "\phi^{-1}=\beta^{-1}"]
\arrow[rrr, Rightarrow, "\bfa_h"]
&\mbox{}
\arrow[d, phantom, "\text{\scriptsize \eqref{eq:GCrossedFunctor-Gamma1}}"]
&&
\tikzmath{
\coordinate (y1) at (0,0);
\coordinate (y2) at (.5,-.3);
\coordinate (x1) at (1,-.6);
\coordinate (x2) at (1.5,-.9);
\draw (y1) -- (0,.3);
\draw (y2) -- (.5,.3);
\draw (x1) -- (1,.3);
\draw (x2) -- (1.5,.3);
\draw[red] (1.3,.3) -- (1.3,-1) node[left] {$\scriptstyle g$} arc (-180:0:.2cm) -- (1.7,.3);
\RectangleMorphism{(y1)}{\hColor}
\GreenRectangleMorphism{(y2)}{black}
\RectangleMorphism{(x1)}{white}
\RectangleMorphism{(x2)}{\kColor}
}
\arrow[dd, Rightarrow, "\bfA^2"]
\arrow[r, Rightarrow, "\bfA^2"]
&
\tikzmath{
\coordinate (y1) at (0,0);
\coordinate (y2) at (.2,-.2);
\coordinate (x1) at (.7,-.5);
\coordinate (x2) at (1.2,-.8);
\draw (y1) -- (0,.3);
\draw (y2) -- (.2,.3);
\draw (x1) -- (.7,.3);
\draw (x2) -- (1.2,.3);
\draw[red] (1,.3) -- (1,-.9) node[left] {$\scriptstyle g$} arc (-180:0:.2cm) -- (1.4,.3);
\RectangleMorphism{(y1)}{\hColor}
\GreenRectangleMorphism{(y2)}{black}
\RectangleMorphism{(x1)}{white}
\RectangleMorphism{(x2)}{\kColor}
}
\arrow[ddr, Rightarrow, "\bfa_g"]
\arrow[dd, Rightarrow, "\bfA^2"]
\\
\tikzmath{
\coordinate (y1) at (0,0);
\coordinate (y2) at (1,-.6);
\coordinate (x1) at (.5,-.3);
\coordinate (x2) at (1.5,-.9);
\draw (y1) -- (0,.3);
\draw (y2) -- (1,.3);
\draw (x1) -- (.5,.3);
\draw (x2) -- (1.5,.3);
\draw[red] (.8,.3) -- (.8,-.7) node[left] {$\scriptstyle g$} arc (-180:0:.2cm) -- (1.2,.3);
\draw[red] (1.3,.3) -- (1.3,-1) node[left] {$\scriptstyle g$} arc (-180:0:.2cm) -- (1.7,.3);
\RectangleMorphism{(y1)}{\hColor}
\RectangleMorphism{(y2)}{black}
\RectangleMorphism{(x1)}{white}
\RectangleMorphism{(x2)}{\kColor}
}
\arrow[dr,Rightarrow, "\bfa_g"]
\arrow[d, equals]
&
\tikzmath{
\coordinate (y1) at (0,0);
\coordinate (y2) at (.5,-.3);
\coordinate (x1) at (1,-.6);
\coordinate (x2) at (1.5,-.9);
\draw (y1) -- (0,.3);
\draw (y2) -- (.5,.3);
\draw (x1) -- (1,.3);
\draw (x2) -- (1.5,.3);
\draw[DarkGreen] (.2,.3) -- (.2,-.4) node[left] {$\scriptstyle h$, \textcolor{red}{$\scriptstyle g^{-1}, g$}} arc (-180:0:.3cm) -- (.8,.3);
\draw[red] (.25,.3) -- (.25,-.4) arc (-180:0:.25cm) -- (.75,.3);
\draw[red] (.3,.3) -- (.3,-.4) arc (-180:0:.2cm) -- (.7,.3);
\draw[red] (1.3,.3) -- (1.3,-1) node[left] {$\scriptstyle g$} arc (-180:0:.2cm) -- (1.7,.3);
\RectangleMorphism{(y1)}{\hColor}
\RectangleMorphism{(y2)}{black}
\RectangleMorphism{(x1)}{white}
\RectangleMorphism{(x2)}{\kColor}
}
\arrow[r, Rightarrow, "\bfa_g"]
&
\tikzmath{
\coordinate (y1) at (0,0);
\coordinate (y2) at (.5,-.3);
\coordinate (x1) at (1,-.6);
\coordinate (x2) at (1.5,-.9);
\draw (y1) -- (0,.3);
\draw (y2) -- (.5,.3);
\draw (x1) -- (1,.3);
\draw (x2) -- (1.5,.3);
\draw[DarkGreen] (.25,.3) -- (.25,-.4) node[left] {$\scriptstyle h$, \textcolor{red}{$\scriptstyle g^{-1}$}} arc (-180:0:.25cm) -- (.75,.3);
\draw[red] (.3,.3) -- (.3,-.4) arc (-180:0:.2cm) -- (.7,.3);
\draw[red] (1.3,.3) -- (1.3,-1) node[left] {$\scriptstyle g$} arc (-180:0:.2cm) -- (1.7,.3);
\RectangleMorphism{(y1)}{\hColor}
\RedRectangleMorphism{(y2)}{black}
\RectangleMorphism{(x1)}{white}
\RectangleMorphism{(x2)}{\kColor}
}
\arrow[ur, Rightarrow, "\bfa_{hg^{-1}}"]
\arrow[dr, phantom, "\text{\scriptsize \eqref{eq:GCrossedFunctor-Gamma2}}"]
\\
\tikzmath{
\coordinate (y1) at (0,0);
\coordinate (y2) at (1,-.6);
\coordinate (x1) at (.5,-.3);
\coordinate (x2) at (1.3,-.9);
\draw (y1) -- (0,.3);
\draw (y2) -- (1,.3);
\draw (x1) -- (.5,.3);
\draw (x2) -- (1.3,.3);
\draw[red] (.8,.3) -- (.8,-.9) node[left] {$\scriptstyle g$} arc (-180:0:.4cm) -- (1.6,.3);
\RectangleMorphism{(y1)}{\hColor}
\RectangleMorphism{(y2)}{black}
\RectangleMorphism{(x1)}{white}
\RectangleMorphism{(x2)}{\kColor}
}
\arrow[d, Rightarrow, "\bfA^2"]
\arrow[dr, phantom, "\text{\scriptsize $\bfa_g$ monoidal}"]
&
\tikzmath{
\coordinate (y1) at (0,0);
\coordinate (y2) at (1,-.6);
\coordinate (x1) at (.5,-.3);
\coordinate (x2) at (1.5,-.9);
\draw (y1) -- (0,.3);
\draw (y2) -- (1,.3);
\draw (x1) -- (.5,.3);
\draw (x2) -- (1.5,.3);
\draw[red] (1.3,.3) -- (1.3,-1) node[left] {$\scriptstyle g$} arc (-180:0:.2cm) -- (1.7,.3);
\RectangleMorphism{(y1)}{\hColor}
\RedRectangleMorphism{(y2)}{black}
\RectangleMorphism{(x1)}{white}
\RectangleMorphism{(x2)}{\kColor}
}
\arrow[ur, Rightarrow, "\beta"]
\arrow[r, Rightarrow, "\bfA^2"]
\arrow[d, Rightarrow, "\bfa_g"]
&
\tikzmath{
\coordinate (y1) at (0,0);
\coordinate (y2) at (.7,-.5);
\coordinate (x1) at (.5,-.3);
\coordinate (x2) at (1.2,-.8);
\draw (y1) -- (0,.3);
\draw (y2) -- (.7,.3);
\draw (x1) -- (.5,.3);
\draw (x2) -- (1.2,.3);
\draw[red] (1,.3) -- (1,-.9) node[left] {$\scriptstyle g$} arc (-180:0:.2cm) -- (1.4,.3);
\RectangleMorphism{(y1)}{\hColor}
\RedRectangleMorphism{(y2)}{black}
\RectangleMorphism{(x1)}{white}
\RectangleMorphism{(x2)}{\kColor}
}
\arrow[r, Rightarrow, "\bfA(\beta)"]
\arrow[d, Rightarrow, "\bfa_g"]
&
\tikzmath{
\coordinate (y1) at (0,0);
\coordinate (y2) at (.5,-.3);
\coordinate (x1) at (.7,-.5);
\coordinate (x2) at (1.2,-.8);
\draw (y1) -- (0,.3);
\draw (y2) -- (.5,.3);
\draw (x1) -- (.7,.3);
\draw (x2) -- (1.2,.3);
\draw[red] (1,.3) -- (1,-.9) node[left] {$\scriptstyle g$} arc (-180:0:.2cm) -- (1.4,.3);
\RectangleMorphism{(y1)}{\hColor}
\GreenRectangleMorphism{(y2)}{black}
\RectangleMorphism{(x1)}{white}
\RectangleMorphism{(x2)}{\kColor}
}
\arrow[r, Rightarrow, "\bfA^2"]
\arrow[d, Rightarrow, "\bfa_g"]
&
\tikzmath{
\coordinate (y1) at (0,0);
\coordinate (y2) at (.2,-.2);
\coordinate (x1) at (.4,-.4);
\coordinate (x2) at (.9,-.7);
\draw (y1) -- (0,.3);
\draw (y2) -- (.2,.3);
\draw (x1) -- (.4,.3);
\draw (x2) -- (.9,.3);
\draw[red] (.7,.3) -- (.7,-.8) node[left] {$\scriptstyle g$} arc (-180:0:.2cm) -- (1.1,.3);
\RectangleMorphism{(y1)}{\hColor}
\GreenRectangleMorphism{(y2)}{black}
\RectangleMorphism{(x1)}{white}
\RectangleMorphism{(x2)}{\kColor}
}
\arrow[dd, Rightarrow, "\bfa_g"]
&
\tikzmath{
\coordinate (y1) at (0,0);
\coordinate (y2) at (.2,-.2);
\coordinate (x1) at (.7,-.5);
\coordinate (x2) at (1.2,-.8);
\draw (y1) -- (0,.3);
\draw (y2) -- (.2,.3);
\draw (x1) -- (.7,.3);
\draw (x2) -- (1.2,.3);
\RectangleMorphism{(y1)}{\hColor}
\GreenRectangleMorphism{(y2)}{black}
\RectangleMorphism{(x1)}{white}
\RedRectangleMorphism{(x2)}{\kColor}
}
\arrow[ddl, Rightarrow, "\bfA^2"]
\arrow[dd, Rightarrow, "\bfA^2"]
\\
\tikzmath{
\coordinate (y1) at (0,0);
\coordinate (y2) at (.8,-.6);
\coordinate (x1) at (.2,-.2);
\coordinate (x2) at (1,-.8);
\draw (y1) -- (0,.3);
\draw (y2) -- (.8,.3);
\draw (x1) -- (.2,.3);
\draw (x2) -- (1,.3);
\draw[red] (.6,.3) -- (.6,-.9) node[left] {$\scriptstyle g$} arc (-180:0:.3cm) -- (1.2,.3);
\RectangleMorphism{(y1)}{\hColor}
\RectangleMorphism{(y2)}{black}
\RectangleMorphism{(x1)}{white}
\RectangleMorphism{(x2)}{\kColor}
}
\arrow[d, Rightarrow, "\bfa_g"]
&
\tikzmath{
\coordinate (y1) at (0,0);
\coordinate (y2) at (1,-.6);
\coordinate (x1) at (.5,-.3);
\coordinate (x2) at (1.5,-.9);
\draw (y1) -- (0,.3);
\draw (y2) -- (1,.3);
\draw (x1) -- (.5,.3);
\draw (x2) -- (1.5,.3);
\RectangleMorphism{(y1)}{\hColor}
\RedRectangleMorphism{(y2)}{black}
\RectangleMorphism{(x1)}{white}
\RedRectangleMorphism{(x2)}{\kColor}
}
\arrow[d, Rightarrow, "\bfA^2"]
\arrow[dl, Rightarrow, swap, near start, "\bfA^2\xz \bfA^2"]
\arrow[r, Rightarrow, "\bfA^2"]
&
\tikzmath{
\coordinate (y1) at (0,0);
\coordinate (y2) at (.7,-.5);
\coordinate (x1) at (.5,-.3);
\coordinate (x2) at (1.2,-.8);
\draw (y1) -- (0,.3);
\draw (y2) -- (.7,.3);
\draw (x1) -- (.5,.3);
\draw (x2) -- (1.2,.3);
\RectangleMorphism{(y1)}{\hColor}
\RedRectangleMorphism{(y2)}{black}
\RectangleMorphism{(x1)}{white}
\RedRectangleMorphism{(x2)}{\kColor}
}
\arrow[d, Rightarrow, "\bfA^2"]
\arrow[r, Rightarrow, "\bfA(\beta)"]
&
\tikzmath{
\coordinate (y1) at (0,0);
\coordinate (y2) at (.5,-.3);
\coordinate (x1) at (.7,-.5);
\coordinate (x2) at (1.2,-.8);
\draw (y1) -- (0,.3);
\draw (y2) -- (.5,.3);
\draw (x1) -- (.7,.3);
\draw (x2) -- (1.2,.3);
\RectangleMorphism{(y1)}{\hColor}
\GreenRectangleMorphism{(y2)}{black}
\RectangleMorphism{(x1)}{white}
\RedRectangleMorphism{(x2)}{\kColor}
}
\arrow[dr, Rightarrow, "\bfA^2"]
\\
\tikzmath{
\coordinate (y1) at (0,0);
\coordinate (y2) at (.8,-.6);
\coordinate (x1) at (.2,-.2);
\coordinate (x2) at (1,-.8);
\draw (y1) -- (0,.3);
\draw (y2) -- (.8,.3);
\draw (x1) -- (.2,.3);
\draw (x2) -- (1,.3);
\RectangleMorphism{(y1)}{\hColor}
\RedRectangleMorphism{(y2)}{black}
\RectangleMorphism{(x1)}{white}
\RedRectangleMorphism{(x2)}{\kColor}
}
\arrow[d, Rightarrow, "\bfA^2"]
&
\tikzmath{
\coordinate (y1) at (0,0);
\coordinate (y2) at (.7,-.5);
\coordinate (x1) at (.2,-.2);
\coordinate (x2) at (1.2,-.8);
\draw (y1) -- (0,.3);
\draw (y2) -- (.7,.3);
\draw (x1) -- (.2,.3);
\draw (x2) -- (1.2,.3);
\RectangleMorphism{(y1)}{\hColor}
\RedRectangleMorphism{(y2)}{black}
\RectangleMorphism{(x1)}{white}
\RedRectangleMorphism{(x2)}{\kColor}
}
\arrow[l, Rightarrow, "\bfA^2"]
\arrow[r, Rightarrow, "\bfA^2"]
&
\tikzmath{
\coordinate (y1) at (0,0);
\coordinate (y2) at (.4,-.4);
\coordinate (x1) at (.2,-.2);
\coordinate (x2) at (.9,-.7);
\draw (y1) -- (0,.3);
\draw (y2) -- (.4,.3);
\draw (x1) -- (.2,.3);
\draw (x2) -- (.9,.3);
\RectangleMorphism{(y1)}{\hColor}
\RedRectangleMorphism{(y2)}{black}
\RectangleMorphism{(x1)}{white}
\RedRectangleMorphism{(x2)}{\kColor}
}
\arrow[dll, Rightarrow, "\bfA^2"]
\arrow[rr, Rightarrow, "\bfA(\id\xz \beta)"]
&
&
\tikzmath{
\coordinate (y1) at (0,0);
\coordinate (y2) at (.2,-.2);
\coordinate (x1) at (.4,-.4);
\coordinate (x2) at (.9,-.7);
\draw (y1) -- (0,.3);
\draw (y2) -- (.2,.3);
\draw (x1) -- (.4,.3);
\draw (x2) -- (.9,.3);
\RectangleMorphism{(y1)}{\hColor}
\GreenRectangleMorphism{(y2)}{black}
\RectangleMorphism{(x1)}{white}
\RedRectangleMorphism{(x2)}{\kColor}
}
\arrow[dr, Rightarrow, "\bfA^2"]
&
\tikzmath{
\coordinate (y1) at (0,0);
\coordinate (y2) at (.2,-.2);
\coordinate (x1) at (.7,-.5);
\coordinate (x2) at (.9,-.7);
\draw (y1) -- (0,.3);
\draw (y2) -- (.2,.3);
\draw (x1) -- (.7,.3);
\draw (x2) -- (.9,.3);
\RectangleMorphism{(y1)}{\hColor}
\GreenRectangleMorphism{(y2)}{black}
\RectangleMorphism{(x1)}{white}
\RedRectangleMorphism{(x2)}{\kColor}
}
\arrow[d, Rightarrow, "\bfA^2"]
\\
\tikzmath{
\coordinate (y1) at (0,0);
\coordinate (y2) at (.4,-.4);
\coordinate (x1) at (.2,-.2);
\coordinate (x2) at (.6,-.6);
\draw (y1) -- (0,.3);
\draw (y2) -- (.4,.3);
\draw (x1) -- (.2,.3);
\draw (x2) -- (.6,.3);
\RectangleMorphism{(y1)}{\hColor}
\RedRectangleMorphism{(y2)}{black}
\RectangleMorphism{(x1)}{white}
\RedRectangleMorphism{(x2)}{\kColor}
}
\arrow[rrrrr, Rightarrow, "\bfA(\phi)=\bfA(\beta)"]
&&&&&
\tikzmath{
\coordinate (y1) at (0,0);
\coordinate (y2) at (.2,-.2);
\coordinate (x1) at (.4,-.4);
\coordinate (x2) at (.6,-.6);
\draw (y1) -- (0,.3);
\draw (y2) -- (.2,.3);
\draw (x1) -- (.4,.3);
\draw (x2) -- (.6,.3);
\RectangleMorphism{(y1)}{\hColor}
\GreenRectangleMorphism{(y2)}{black}
\RectangleMorphism{(x1)}{white}
\RedRectangleMorphism{(x2)}{\kColor}
}
\end{tikzcd}};\end{tikzpicture}
$$
The faces without labels above commute either by naturality or by associativity of $\bfA^2$.

\item[\ref{Functor:mu.unital}]
This follows since each $F_g^\fD$ is strictly unital, and thus for all $g\in G$,
$$
\bfA^1_e = \bfA^2_{\id_e, \id_e} \xt (A^1_e \xz A^1_e)=\bfA^2_{\id_e, F^\fD_g(\id_e)}\xt (A^1_e, F^\fD_g(A^1_e).
$$

\item[\ref{Functor:iota}]
This part is automatic as $\iota^A_1 = A^1_e$.

\item[\ref{Functor:omega}] 
This follows by monoidality of $\bfa_g$ and associativity of $\bfA^2$.
We omit the full proof as it is much easier than \ref{Functor:mu.monoidal} above.

\item[\ref{Functor:lr}]
This reduces to unitality of $\bfA^1$ and $\bfA^2$, i.e., for all $x\in \cC(g_\cC \to h_\cC)$,
$$
\bfA^2_{\id_e, x\xz g_\cC^{-1}} \xt (\bfA^1 \xz \id_{\bfA(x\xz g_\cC^{-1})}) = \id_{\bfA(x\xz g_\cC^{-1})}.
$$
The other unitality axiom is similar.

\item[\ref{Functor:PentagonCoherence}]
Every map is the identity map.
\item[\ref{Functor:TriangleCoherence}]
Every map is the identity map.
\end{proof}

\begin{proof}[Proof of Thm.~\ref{thm:FullyFaithfulOn2Morphisms}: the map $\TriCat_G^{\st}(A\Rightarrow B) \to G\CrsBrd(\bfA \Rightarrow \bfB)$ is bijective]
\mbox{}

Suppose that $\eta , \zeta \in \TriCat_G^{\st}(A \Rightarrow B)$ satisfy $\eta_x = \zeta_x$ for every $x\in \cC(1_\cC \to g_\cC)$ for all $g\in G$.
Since $\eta, \zeta$ are 2-morphisms in $\TriCat_G^{\st}$, we have $\eta_* = e_\cD = \zeta_*$, $\eta_g = \id_{g_\cD} = \zeta_g$ for all $g\in G$.
For an arbitrary $y\in \cC(g_\cC \to h_\cC)$, we have $\eta_{y\xz g^{-1}_\cC} = \zeta_{y\xz g^{-1}_\cC}$. 
By \ref{Transformation:eta^2} for $\eta:A\Rightarrow B$, the following diagram commutes:
\begin{equation}
\label{eq:EtaDeterminedByMapsFrom1}
\begin{tikzpicture}[baseline= (a).base]\node[scale=1] (a) at (0,0){\begin{tikzcd}
\tikzmath{
\node (hg) at (0,1.5) {$\scriptstyle hg^{-1}_\cD$};
\node (g1) at (1.5,-1) {$\scriptstyle g_\cD$};
\node (g2) at (1.5,1.5) {$\scriptstyle g_\cD$};
\node[draw,rectangle, thick, rounded corners=5pt] (Axg) at (0,.5) {$\scriptstyle A(y\xz g^{-1}_\cC)$};
\node[draw,rectangle, thick, rounded corners=5pt] (Ag) at (1.5,0) {$\scriptstyle A(g_\cC)$};
\draw (Axg) to[in=-90,out=90] (hg);
\draw (g1) to[in=-90,out=90] (Ag);
\draw (Ag) to[in=-90,out=90] (g2);
}
\arrow[r, Rightarrow, "\mu^A"]
\arrow[d, Rightarrow, "\eta_{y\xz g_\cC^{-1}}\xz \eta_{\id_{g_\cC}}"]
&
\tikzmath{
\node (g1) at (0,-1) {$\scriptstyle g_\cD$};
\node (g2) at (0,1) {$\scriptstyle g_\cD$};
\node[draw,rectangle, thick, rounded corners=5pt] (Ax) at (0,0) {$\scriptstyle A(y)$};
\draw (g1) to[in=-90,out=90] (Ax);
\draw (Ax) to[in=-90,out=90] (g2);
}
\arrow[d, Rightarrow, "\eta_y"]
\\
\tikzmath{
\node (hg) at (0,1.5) {$\scriptstyle hg^{-1}_\cD$};
\node (g1) at (1.5,-1) {$\scriptstyle g_\cD$};
\node (g2) at (1.5,1.5) {$\scriptstyle g_\cD$};
\node[draw,rectangle, thick, rounded corners=5pt] (Axg) at (0,.5) {$\scriptstyle B(y\xz g^{-1}_\cC)$};
\node[draw,rectangle, thick, rounded corners=5pt] (Ag) at (1.5,0) {$\scriptstyle B(g_\cC)$};
\draw (Axg) to[in=-90,out=90] (hg);
\draw (g1) to[in=-90,out=90] (Ag);
\draw (Ag) to[in=-90,out=90] (g2);
}
\arrow[r, Rightarrow, "\mu^B"]
&
\tikzmath{
\node (g1) at (0,-1) {$\scriptstyle g_\cD$};
\node (g2) at (0,1) {$\scriptstyle g_\cD$};
\node[draw,rectangle, thick, rounded corners=5pt] (Ax) at (0,0) {$\scriptstyle B(y)$};
\draw (g1) to[in=-90,out=90] (Ax);
\draw (Ax) to[in=-90,out=90] (g2);
}
\end{tikzcd}};\end{tikzpicture}
\end{equation}
as does a similar diagram for $\zeta$ replacing $\eta$.
Since $\eta_{y \xz g_\cC^{-1}} = \zeta_{y \xz g_\cC^{-1}}$ by assumption,
$\eta_{\id_g} = B_g^1 \xt (A^1_g)^{-1} = \zeta_{\id_g}$ by \ref{Transformation:eta_c.unital}, 
and $\mu^A, \mu^B$ are invertible 2-cells, we conclude that $\eta_x = \zeta_x$.

Now suppose $h: \bfA \Rightarrow \bfB$ is a $G$-monoidal natural transformation.
We define $\eta: A \Rightarrow B$ by $\eta_* = e_\cD$, $\eta_g = \id_{g_\cD}$ for all $g\in G$, and for $y\in \cC(g_\cC \to h_\cC)$, we use \eqref{eq:EtaDeterminedByMapsFrom1} above to define
$$
\eta_y :=
\mu^B_{y \xz g_\cC^{-1}, g_\cC}
\xt
(h_{y\xz g^{-1}_\cC} \xz (B^1_g \xt (A^1_g)^{-1}))
\xt
(\mu^A_{y \xz g_\cC^{-1}, g_\cC})^{-1}.
$$
By construction, provided $\eta$ is a transformation $\eta \mapsto h$.
It remains to verify that $\eta: A \Rightarrow B$ is indeed a transformation.
We prove one of the coherences below, and we give a hint as how to proceed for the other coherences.

\item[\ref{Transformation:eta_c.natural}]
Every composite step in the definition of $\eta$ is natural.

\item[\ref{Transformation:eta_c.monoidal}]
This follows from functoriality of 1-cell composition $\xo$ together with
the fact that $h$ is monoidal, and two instances each (one for each of $A$ and $B$) of \ref{Functor:A.unital}, \eqref{eq:Functor:mu.monoidal.fewer}, and \ref{Functor:omega}.

\item[\ref{Transformation:eta_c.unital}]
This follows using 2 instances of \ref{Functor:mu.unital} (one for each of $A$ and $B$) together with the fact that 
$$
h_{\id_e} = \bfB^1 \xz (\bfA^1)^{-1}=B^1_e \xt (A^1_e)^{-1}
$$
which is unitality for a monoidal natural transformation.

\item[\ref{Transformation:eta^1}] 
This condition is automatically satisfied.

\item[\ref{Transformation:eta^2}] 
For $x\in \cC(g_\cC \to h_\cC)$ and $y\in \cC(k_\cC \to \ell_\cC)$, we use the following shorthand as in Notation \ref{nota:ShadedBoxes} and Notation \ref{nota:ShadedBoxesGCrossed}:
\begin{align*}
\tikzmath{
\draw (-.1,.1) -- (-.1,.5);
\draw (.1,-.5) -- (.1,.5);
\filldraw[fill=black, thick] (-.2,0) rectangle (0,.2);
\filldraw[fill=white, thick] (0,-.2) rectangle (.2,0);
}
:=
\tikzmath{
\node (g) at (0,-.8) {$\scriptstyle g_\cD$};
\node (h) at (0,.8) {$\scriptstyle h_\cD$};
\node[draw,rectangle, thick, rounded corners=5pt] (Ax) at (0,0) {$\scriptstyle A(x)$};
\draw (g) to[in=-90,out=90] (Ax);
\draw (Ax) to[in=-90,out=90] (h);
}
\qquad
\tikzmath{
\draw (0,0) -- (0,.4);
\filldraw[fill=black, thick] (-.1,-.1) rectangle (.1,.1);
}
:=
\tikzmath{
\node (Ab) at (0,.8) {$\scriptstyle hg^{-1}_\cD$};
\node[draw,rectangle, thick, rounded corners=5pt] (Ax) at (0,0) {$\scriptstyle \bfA(x \xz g_\cC^{-1})$};
\draw (Ax) to[in=-90,out=90] (Ab);
}
\qquad
\tikzmath{
\draw (-.1,.1) -- (-.1,.5);
\draw (.1,-.5) -- (.1,.5);
\filldraw[fill=gray, thick] (-.2,0) rectangle (0,.2);
\filldraw[fill=white, thick] (0,-.2) rectangle (.2,0);
}
:=
\tikzmath{
\node (g) at (0,-.8) {$\scriptstyle k_\cD$};
\node (h) at (0,.8) {$\scriptstyle \ell_\cD$};
\node[draw,rectangle, thick, rounded corners=5pt] (Ax) at (0,0) {$\scriptstyle A(y)$};
\draw (g) to[in=-90,out=90] (Ax);
\draw (Ax) to[in=-90,out=90] (h);
}
\qquad
\tikzmath{
\draw (0,0) -- (0,.4);
\filldraw[fill=gray, thick] (-.1,-.1) rectangle (.1,.1);
}
:=
\tikzmath{
\node (Ab) at (0,.8) {$\scriptstyle \ell k^{-1}_\cD$};
\node[draw,rectangle, thick, rounded corners=5pt] (Ax) at (0,0) {$\scriptstyle \bfA(y \xz k_\cC^{-1})$};
\draw (Ax) to[in=-90,out=90] (Ab);
}
\qquad
\tikzmath{
\draw (0,-.4) -- (0,.4);
\filldraw[fill=white, thick] (-.1,-.1) rectangle (.1,.1);
}
:=
\tikzmath{
\node (g) at (0,-.8) {$\scriptstyle j_\cD$};
\node (h) at (0,.8) {$\scriptstyle j_\cD$};
\node[draw,rectangle, thick, rounded corners=5pt] (Ax) at (0,0) {$\scriptstyle A(\id_{j_\cC})$};
\draw (g) to[in=-90,out=90] (Ax);
\draw (Ax) to[in=-90,out=90] (h);
}
\qquad
\forall\, j \in G.
\\
\tikzmath{
\draw[blue] (-.1,.1) -- (-.1,.5);
\draw[blue] (.1,-.5) -- (.1,.5);
\filldraw[fill=blue, draw=blue, thick] (-.2,0) rectangle (0,.2);
\filldraw[fill=white, draw=blue, thick] (0,-.2) rectangle (.2,0);
}
:=
\tikzmath{
\node[blue] (g) at (0,-.8) {$\scriptstyle g_\cD$};
\node[blue] (h) at (0,.8) {$\scriptstyle h_\cD$};
\node[draw, blue, rectangle, thick, rounded corners=5pt] (Ax) at (0,0) {$\scriptstyle B(x)$};
\draw[blue] (g) to[in=-90,out=90] (Ax);
\draw[blue] (Ax) to[in=-90,out=90] (h);
}
\qquad
\tikzmath{
\draw[blue] (0,0) -- (0,.4);
\filldraw[fill=blue, draw=blue, thick] (-.1,-.1) rectangle (.1,.1);
}
:=
\tikzmath{
\node[blue] (Ab) at (0,.8) {$\scriptstyle hg^{-1}_\cD$};
\node[draw, blue, rectangle, thick, rounded corners=5pt] (Ax) at (0,0) {$\scriptstyle \bfB(x \xz g_\cC^{-1})$};
\draw[blue] (Ax) to[in=-90,out=90] (Ab);
}
\qquad
\tikzmath{
\draw[blue] (-.1,.1) -- (-.1,.5);
\draw[blue] (.1,-.5) -- (.1,.5);
\filldraw[fill=\BlueGray, draw=blue, thick] (-.2,0) rectangle (0,.2);
\filldraw[fill=white, draw=blue, thick] (0,-.2) rectangle (.2,0);
}
:=
\tikzmath{
\node[blue] (g) at (0,-.8) {$\scriptstyle k_\cD$};
\node[blue] (h) at (0,.8) {$\scriptstyle \ell_\cD$};
\node[draw, blue, rectangle, thick, rounded corners=5pt] (Ax) at (0,0) {$\scriptstyle B(y)$};
\draw[blue] (g) to[in=-90,out=90] (Ax);
\draw[blue] (Ax) to[in=-90,out=90] (h);
}
\qquad
\tikzmath{
\draw[blue] (0,0) -- (0,.4);
\filldraw[fill=\BlueGray, draw=blue, thick] (-.1,-.1) rectangle (.1,.1);
}
:=
\tikzmath{
\node[blue] (Ab) at (0,.8) {$\scriptstyle \ell k^{-1}_\cD$};
\node[draw, blue, rectangle, thick, rounded corners=5pt] (Ax) at (0,0) {$\scriptstyle \bfB(y \xz k_\cC^{-1})$};
\draw[blue] (Ax) to[in=-90,out=90] (Ab);
}
\qquad
\tikzmath{
\draw[blue] (0,-.4) -- (0,.4);
\filldraw[fill=white, draw=blue, thick] (-.1,-.1) rectangle (.1,.1);
}
:=
\tikzmath{
\node[blue] (g) at (0,-.8) {$\scriptstyle j_\cD$};
\node[blue] (h) at (0,.8) {$\scriptstyle j_\cD$};
\node[draw, blue, rectangle, thick, rounded corners=5pt] (Ax) at (0,0) {$\scriptstyle B(\id_{j_\cC})$};
\draw[blue] (g) to[in=-90,out=90] (Ax);
\draw[blue] (Ax) to[in=-90,out=90] (h);
}
\qquad
\textcolor{blue}{\forall\, j \in G}.
\end{align*}
Suppose $x \in \cC(g_\cC \to h_\cC)$ and $y\in \cC(k_\cC \to \ell_\cC)$.
We begin with the following observation that the following diagram commutes:
\begin{equation}
\label{eq:HelperDiagram}
\begin{tikzpicture}[baseline= (a).base]\node[scale=.9] (a) at (0,0){\begin{tikzcd}
\tikzmath{
\coordinate (x) at (-.4,.4);
\coordinate (ig) at (-.2,.2);
\coordinate (y) at (.2,-.2);
\coordinate (ik) at (.4,-.4);
\draw (-.4,.4) -- (-.4,.8);
\draw (-.2,-.8) -- (-.2,.8);
\draw (.4,-.8) -- (.4,.8);
\draw (.2,-.2) -- (.2,.8);
\RectangleMorphism{(x)}{black}
\RectangleMorphism{(ig)}{white}
\RectangleMorphism{(y)}{gray}
\RectangleMorphism{(ik)}{white}
}
\arrow[dr, phantom, "\text{\scriptsize \ref{Functor:omega}}"]
\arrow[r, Rightarrow, "\mu^A"]
\arrow[d, Rightarrow, "\mu^A"]
&
\tikzmath{
\coordinate (x) at (-.3,.3);
\coordinate (ig) at (-.1,.1);
\coordinate (y) at (.1,-.1);
\coordinate (ik) at (.3,-.3);
\draw (-.3,.3) -- (-.3,.8);
\draw (-.1,-.8) -- (-.1,.8);
\draw (.3,-.8) -- (.3,.8);
\draw (.1,-.2) -- (.1,.8);
\RectangleMorphism{(x)}{black}
\RectangleMorphism{(ig)}{white}
\RectangleMorphism{(y)}{gray}
\RectangleMorphism{(ik)}{white}
}
\arrow[r, equals]
&
\tikzmath{
\coordinate (x) at (-.3,.3);
\coordinate (y) at (-.1,.1);
\coordinate (ig) at (.1,-.1);
\coordinate (ik) at (.3,-.3);
\draw (-.3,.3) -- (-.3,.8);
\draw (-.1,.1) -- (-.1,.8);
\draw (.3,-.8) -- (.3,.8);
\draw (.1,-.8) -- (.1,.8);
\RectangleMorphism{(x)}{black}
\RectangleMorphism{(ig)}{white}
\RedRectangleMorphism{(y)}{gray}
\RectangleMorphism{(ik)}{white}
}
\arrow[rrr, Rightarrow, "(\mu^A)^{-1}"]
&
\mbox{}
\arrow[d, phantom, "\text{\scriptsize \ref{Functor:omega}}"]
&&
\tikzmath{
\coordinate (x) at (-.4,.4);
\coordinate (y) at (-.2,.2);
\coordinate (ig) at (.2,-.2);
\coordinate (ik) at (.4,-.4);
\draw (-.4,.4) -- (-.4,.8);
\draw (-.2,.2) -- (-.2,.8);
\draw (.4,-.8) -- (.4,.8);
\draw (.2,-.8) -- (.2,.8);
\RectangleMorphism{(x)}{black}
\RectangleMorphism{(ig)}{white}
\RedRectangleMorphism{(y)}{gray}
\RectangleMorphism{(ik)}{white}
}
\arrow[ddddddd, Rightarrow, swap, "(A^1)^{-1}"]
\\
\tikzmath{
\coordinate (x) at (-.6,.6);
\coordinate (ig) at (-.2,.2);
\coordinate (y) at (.2,-.2);
\coordinate (ik) at (.4,-.4);
\draw (-.6,.6) -- (-.6,1);
\draw (-.2,-.8) -- (-.2,1);
\draw (.4,-.8) -- (.4,1);
\draw (.2,-.2) -- (.2,1);
\RectangleMorphism{(x)}{black}
\RectangleMorphism{(ig)}{white}
\RectangleMorphism{(y)}{gray}
\RectangleMorphism{(ik)}{white}
}
\arrow[dr, phantom, "\text{\scriptsize \ref{Functor:omega}}"]
\arrow[r, Rightarrow, "\mu^A"]
&
\tikzmath{
\coordinate (x) at (-.4,.4);
\coordinate (ig) at (0,0);
\coordinate (y) at (.2,-.2);
\coordinate (ik) at (.4,-.4);
\draw (-.4,.4) -- (-.4,.8);
\draw (0,-.8) -- (0,.8);
\draw (.4,-.8) -- (.4,.8);
\draw (.2,-.2) -- (.2,.8);
\RectangleMorphism{(x)}{black}
\RectangleMorphism{(ig)}{white}
\RectangleMorphism{(y)}{gray}
\RectangleMorphism{(ik)}{white}
}
\arrow[u, Rightarrow, "\mu^A"]
\arrow[r, equals]
&
\tikzmath{
\coordinate (x) at (-.4,.4);
\coordinate (y) at (0,0);
\coordinate (ig) at (.2,-.2);
\coordinate (ik) at (.4,-.4);
\draw (-.4,.4) -- (-.4,.8);
\draw (0,0) -- (0,.8);
\draw (.4,-.8) -- (.4,.8);
\draw (.2,-.8) -- (.2,.8);
\RectangleMorphism{(x)}{black}
\RectangleMorphism{(ig)}{white}
\RedRectangleMorphism{(y)}{gray}
\RectangleMorphism{(ik)}{white}
}
\arrow[u, Rightarrow, "\mu^A"]
\arrow[r, phantom, "\text{\scriptsize \ref{Functor:omega}}"]
&
\tikzmath{
\coordinate (x) at (-.4,.4);
\coordinate (y) at (-.2,.2);
\coordinate (ig) at (0,0);
\coordinate (ik) at (.4,-.4);
\draw (-.4,.4) -- (-.4,.8);
\draw (-.2,.2) -- (-.2,.8);
\draw (.4,-.8) -- (.4,.8);
\draw (0,-.8) -- (0,.8);
\RectangleMorphism{(x)}{black}
\RectangleMorphism{(ig)}{white}
\RedRectangleMorphism{(y)}{gray}
\RectangleMorphism{(ik)}{white}
}
\arrow[ul, Rightarrow, "\mu^A"]
\arrow[d, phantom, "\text{\scriptsize \ref{Functor:omega}}"]
&
\tikzmath{
\coordinate (x) at (-.4,.4);
\coordinate (y) at (-.2,.2);
\coordinate (ig) at (.2,-.2);
\coordinate (ik) at (.6,-.6);
\draw (-.4,.4) -- (-.4,.8);
\draw (-.2,.2) -- (-.2,.8);
\draw (.6,-1) -- (.6,.8);
\draw (.2,-1) -- (.2,.8);
\RectangleMorphism{(x)}{black}
\RectangleMorphism{(ig)}{white}
\RedRectangleMorphism{(y)}{gray}
\RectangleMorphism{(ik)}{white}
}
\arrow[l, Rightarrow, "\mu^A"]
\arrow[ur, Rightarrow, "\mu^A"]
\arrow[r, phantom, "\text{\scriptsize \ref{Functor:mu.unital}}"]
&
\mbox{}
\\
\tikzmath{
\coordinate (x) at (-.6,.6);
\coordinate (ig) at (-.2,.2);
\coordinate (y) at (.2,-.2);
\coordinate (ik) at (.6,-.6);
\draw (-.6,.6) -- (-.6,1);
\draw (-.2,-1) -- (-.2,1);
\draw (.6,-1) -- (.6,1);
\draw (.2,-.2) -- (.2,1);
\RectangleMorphism{(x)}{black}
\RectangleMorphism{(ig)}{white}
\RectangleMorphism{(y)}{gray}
\RectangleMorphism{(ik)}{white}
}
\arrow[r, Rightarrow, "\mu^A"]
\arrow[u, Rightarrow, "\mu^A"]
&
\tikzmath{
\coordinate (x) at (-.6,.6);
\coordinate (ig) at (-.2,.2);
\coordinate (y) at (0,0);
\coordinate (ik) at (.4,-.4);
\draw (-.6,.6) -- (-.6,1);
\draw (-.2,-.8) -- (-.2,1);
\draw (.4,-.8) -- (.4,1);
\draw (0,0) -- (0,1);
\RectangleMorphism{(x)}{black}
\RectangleMorphism{(ig)}{white}
\RectangleMorphism{(y)}{gray}
\RectangleMorphism{(ik)}{white}
}
\arrow[r, equals]
\arrow[u, Rightarrow, "\mu^A"]
&
\tikzmath{
\coordinate (x) at (-.6,.6);
\coordinate (y) at (-.2,.2);
\coordinate (ig) at (0,0);
\coordinate (ik) at (.4,-.4);
\draw (-.6,.6) -- (-.6,1);
\draw (-.2,.2) -- (-.2,1);
\draw (.4,-.8) -- (.4,1);
\draw (0,-.8) -- (0,1);
\RectangleMorphism{(x)}{black}
\RectangleMorphism{(ig)}{white}
\RedRectangleMorphism{(y)}{gray}
\RectangleMorphism{(ik)}{white}
}
\arrow[u, Rightarrow, "\mu^A"]
\arrow[ur, Rightarrow, "\mu^A"]
&
\tikzmath{
\coordinate (x) at (-.6,.6);
\coordinate (y) at (-.2,.2);
\coordinate (ig) at (.2,-.2);
\coordinate (ik) at (.6,-.6);
\draw (-.6,.6) -- (-.6,1);
\draw (-.2,.2) -- (-.2,1);
\draw (.6,-1) -- (.6,1);
\draw (.2,-1) -- (.2,1);
\RectangleMorphism{(x)}{black}
\RectangleMorphism{(ig)}{white}
\RedRectangleMorphism{(y)}{gray}
\RectangleMorphism{(ik)}{white}
}
\arrow[l, Rightarrow, "\mu^A"]
\arrow[ur, Rightarrow, "\mu^A"]
\\
\tikzmath{
\coordinate (x) at (-.6,.6);
\coordinate (ig) at (-.2,.2);
\coordinate (y) at (.2,-.2);
\draw (-.6,.6) -- (-.6,1);
\draw (-.2,-.6) -- (-.2,1);
\draw (.6,-.6) -- (.6,1);
\draw (.2,-.2) -- (.2,1);
\RectangleMorphism{(x)}{black}
\RectangleMorphism{(ig)}{white}
\RectangleMorphism{(y)}{gray}
}
\arrow[r, Rightarrow, "\mu^A"]
\arrow[u, Rightarrow, "A^1"]
&
\tikzmath{
\coordinate (x) at (-.6,.6);
\coordinate (ig) at (-.2,.2);
\coordinate (y) at (0,0);
\draw (-.6,.6) -- (-.6,1);
\draw (-.2,-.6) -- (-.2,1);
\draw (.4,-.6) -- (.4,1);
\draw (0,0) -- (0,1);
\RectangleMorphism{(x)}{black}
\RectangleMorphism{(ig)}{white}
\RectangleMorphism{(y)}{gray}
}
\arrow[r, equals]
\arrow[u, Rightarrow, "A^1"]
&
\tikzmath{
\coordinate (x) at (-.6,.6);
\coordinate (y) at (-.2,.2);
\coordinate (ig) at (0,0);
\draw (-.6,.6) -- (-.6,1);
\draw (-.2,.2) -- (-.2,1);
\draw (.4,-.6) -- (.4,1);
\draw (0,-.6) -- (0,1);
\RectangleMorphism{(x)}{black}
\RectangleMorphism{(ig)}{white}
\RedRectangleMorphism{(y)}{gray}
}
\arrow[u, Rightarrow, "A^1"]
&
\tikzmath{
\coordinate (x) at (-.6,.6);
\coordinate (y) at (-.2,.2);
\coordinate (ig) at (.2,-.2);
\draw (-.6,.6) -- (-.6,1);
\draw (-.2,.2) -- (-.2,1);
\draw (.6,-.6) -- (.6,1);
\draw (.2,-.6) -- (.2,1);
\RectangleMorphism{(x)}{black}
\RectangleMorphism{(ig)}{white}
\RedRectangleMorphism{(y)}{gray}
}
\arrow[r, Rightarrow, "\mu^A"]
\arrow[l, Rightarrow, "\mu^A"]
\arrow[u, Rightarrow, "A^1"]
&
\tikzmath{
\coordinate (x) at (-.4,.4);
\coordinate (y) at (-.2,.2);
\coordinate (ig) at (.2,-.2);
\draw (-.4,.4) -- (-.4,.8);
\draw (-.2,.2) -- (-.2,.8);
\draw (.6,-.6) -- (.6,.8);
\draw (.2,-.6) -- (.2,.8);
\RectangleMorphism{(x)}{black}
\RectangleMorphism{(ig)}{white}
\RedRectangleMorphism{(y)}{gray}
}
\arrow[uu, Rightarrow, "A^1"]
\\
\tikzmath{
\coordinate (x) at (-.6,.6);
\coordinate (ig) at (-.3,.3);
\coordinate (y) at (0,0);
\draw (-.6,.6) -- (-.6,1);
\draw[red] (-.3,1) -- (-.3,0) arc (-180:0:.3cm) -- (.3,1);
\draw (0,0) -- (0,1);
\draw (.6,-.6) -- (.6,1);
\draw (.9,-.6) -- (.9,1);
\RectangleMorphism{(x)}{black}
\RectangleMorphism{(ig)}{white}
\RectangleMorphism{(y)}{gray}
}
\arrow[r, Rightarrow, "\mu^A"]
\arrow[u, equals]
&
\tikzmath{
\coordinate (x) at (-.6,.6);
\coordinate (ig) at (-.3,.3);
\coordinate (y) at (-.1,.1);
\draw (-.6,.6) -- (-.6,1);
\draw[red] (-.3,1) -- (-.3,0) arc (-180:0:.3cm) -- (.3,1);
\draw (-.1,.1) -- (-.1,1);
\draw (.6,-.6) -- (.6,1);
\draw (.9,-.6) -- (.9,1);
\RectangleMorphism{(x)}{black}
\RectangleMorphism{(ig)}{white}
\RectangleMorphism{(y)}{gray}
}
\arrow[r, equals]
\arrow[u, equals]
\arrow[ddr, Rightarrow, swap, "A^1"]
&
\tikzmath{
\coordinate (x) at (-.6,.6);
\coordinate (y) at (-.3,.3);
\coordinate (ig) at (-.1,.1);
\draw (-.6,.6) -- (-.6,1);
\draw (-.1,1) -- (-.1,0) arc (-180:0:.2cm) -- (.3,1);
\draw (-.3,.3) -- (-.3,1);
\draw (.6,-.6) -- (.6,1);
\draw (.9,-.6) -- (.9,1);
\RectangleMorphism{(x)}{black}
\RectangleMorphism{(ig)}{white}
\RedRectangleMorphism{(y)}{gray}
}
\arrow[u, equals]
\arrow[d, Rightarrow, swap, "A^1"]
&
\tikzmath{
\coordinate (x) at (-.7,.7);
\coordinate (y) at (-.4,.4);
\coordinate (ig) at (-.1,.1);
\draw (-.7,.7) -- (-.7,1.1);
\draw (-.1,1.1) -- (-.1,0) arc (-180:0:.2cm) -- (.3,1.1);
\draw (-.4,.4) -- (-.4,1.1);
\draw (.6,-.6) -- (.6,1.1);
\draw (.9,-.6) -- (.9,1.1);
\RectangleMorphism{(x)}{black}
\RectangleMorphism{(ig)}{white}
\RedRectangleMorphism{(y)}{gray}
}
\arrow[l, Rightarrow, "\mu^A"]
\arrow[u, equals]
\arrow[d, Rightarrow, swap, "A^1"]
\\
&
&
\tikzmath{
\coordinate (x) at (-.6,.6);
\coordinate (y) at (-.3,.3);
\coordinate (ig) at (-.1,.1);
\coordinate (ig2) at (.3,-.2);
\draw (-.6,.6) -- (-.6,1);
\draw (-.1,1) -- (-.1,-.3) arc (-180:0:.2cm) -- (.3,1);
\draw (-.3,.3) -- (-.3,1);
\draw (.6,-.8) -- (.6,1);
\draw (.9,-.8) -- (.9,1);
\RectangleMorphism{(x)}{black}
\RectangleMorphism{(ig)}{white}
\RectangleMorphism{(ig2)}{white}
\RedRectangleMorphism{(y)}{gray}
}
\arrow[dr, phantom, "\text{\scriptsize \ref{Functor:omega}}"]
&
\tikzmath{
\coordinate (x) at (-.7,.7);
\coordinate (y) at (-.4,.4);
\coordinate (ig) at (-.1,.1);
\coordinate (ig2) at (.3,-.2);
\draw (-.7,.7) -- (-.7,1.1);
\draw (-.1,1.1) -- (-.1,-.3) arc (-180:0:.2cm) -- (.3,1.1);
\draw (-.4,.4) -- (-.4,1.1);
\draw (.6,-.8) -- (.6,1.1);
\draw (.9,-.8) -- (.9,1.1);
\RectangleMorphism{(x)}{black}
\RectangleMorphism{(ig)}{white}
\RectangleMorphism{(ig2)}{white}
\RedRectangleMorphism{(y)}{gray}
}
\arrow[l, Rightarrow, "\mu^A"]
\arrow[d, Rightarrow, "\mu^A"]
&
\tikzmath{
\coordinate (x) at (-.7,.7);
\coordinate (y) at (-.4,.4);
\draw (-.7,.7) -- (-.7,1.1);
\draw (-.1,1.1) -- (-.1,.1) arc (-180:0:.2cm) -- (.3,1.1);
\draw (-.4,.4) -- (-.4,1.1);
\draw (.6,-.4) -- (.6,1.1);
\draw (.9,-.4) -- (.9,1.1);
\RectangleMorphism{(x)}{black}
\RedRectangleMorphism{(y)}{gray}
}
\arrow[ul, Rightarrow, "A^1"]
\arrow[l, Rightarrow, swap, "A^1\xz A^1"]
\\
\mbox{}
\arrow[rr, phantom, "\text{\scriptsize Defintion of $\bfa$}"]
&&
\tikzmath{
\coordinate (x) at (-.6,.6);
\coordinate (ig) at (-.3,.3);
\coordinate (ig2) at (.3,-.2);
\coordinate (y) at (-.1,.1);
\draw (-.6,.6) -- (-.6,1);
\draw[red] (-.3,1) -- (-.3,-.3) arc (-180:0:.3cm) -- (.3,1);
\draw (-.1,.1) -- (-.1,1);
\draw (.6,-.8) -- (.6,1);
\draw (.9,-.8) -- (.9,1);
\RectangleMorphism{(x)}{black}
\RectangleMorphism{(ig)}{white}
\RectangleMorphism{(ig2)}{white}
\RectangleMorphism{(y)}{gray}
}
\arrow[u, equals]
\arrow[dr, Rightarrow, near start, "\mu^A"]
&
\tikzmath{
\coordinate (x) at (-.6,.6);
\coordinate (y) at (-.3,.3);
\coordinate (i) at (0,0);
\draw (-.6,.6) -- (-.6,1);
\draw (-.3,.3) -- (-.3,1);
\draw (.6,-.6) -- (.6,1);
\draw (.3,-.6) -- (.3,1);
\RectangleMorphism{(x)}{black}
\RectangleMorphism{(i)}{white}
\RedRectangleMorphism{(y)}{gray}
}
\arrow[ur, phantom, "\text{\scriptsize \ref{Functor:mu.unital}}"]
\arrow[d, bend right=20, Rightarrow, near start, "\text{\scriptsize \ref{Functor:lr}}", "\mu^A"']
\\
\tikzmath{
\coordinate (x) at (-.5,.4);
\coordinate (y) at (0,.1);
\draw (-.5,.4) -- (-.5,.8);
\draw[red] (-.2,.8) -- (-.2,0) arc (-180:0:.2cm) -- (.2,.8);
\draw (0,0) -- (0,.8);
\draw (.5,-.5) -- (.5,.8);
\draw (.8,-.5) -- (.8,.8);
\RectangleMorphism{(x)}{black}
\RectangleMorphism{(y)}{gray}
}
\arrow[uuu, Rightarrow, "A^1"]
\arrow[rrr, Rightarrow, "\bfa"]
&&&
\tikzmath{
\coordinate (x) at (-.6,.6);
\coordinate (y) at (-.3,.3);
\draw (-.6,.6) -- (-.6,1);
\draw (-.3,.3) -- (-.3,1);
\draw (.3,-.3) -- (.3,1);
\draw (0,-.3) -- (0,1);
\RectangleMorphism{(x)}{black}
\RedRectangleMorphism{(y)}{gray}
}
\arrow[u, bend right=20, Rightarrow, swap, "A^1"]
\arrow[uur, equals, bend right=30]
\arrow[rr, Rightarrow, "\mu^A"]
&&
\tikzmath{
\coordinate (x) at (-.5,.5);
\coordinate (y) at (-.3,.3);
\draw (-.5,.5) -- (-.5,1);
\draw (-.3,.3) -- (-.3,1);
\draw (.3,-.3) -- (.3,1);
\draw (0,-.3) -- (0,1);
\RectangleMorphism{(x)}{black}
\RedRectangleMorphism{(y)}{gray}
}
\arrow[uuuuuul, Rightarrow, swap, near end, "A^1\xz A^1"]
\arrow[uuuul, near start, Rightarrow, "A^1"]
\end{tikzcd}};\end{tikzpicture}
\end{equation}
Observe that \eqref{eq:HelperDiagram} above also holds with $(A,A^1, \mu^A, \bfA, \bfa)$ replaced by $(B,B^1, \mu^B, \bfB, \bfb)$.

Going around the outside of the diagram below corresponds to \ref{Transformation:eta^2}, where we also use the abuse of notation of $h$ for $B^1 \xt (A^1)^{-1}$.
$$
\begin{tikzpicture}[baseline= (a).base]\node[scale=.9] (a) at (0,0){\begin{tikzcd}[column sep=4em]
\tikzmath{
\coordinate (x) at (-.4,.4);
\coordinate (ig) at (-.2,.2);
\coordinate (y) at (.2,-.2);
\coordinate (ik) at (.4,-.4);
\draw (-.4,.4) -- (-.4,.8);
\draw (-.2,-.8) -- (-.2,.8);
\draw (.4,-.8) -- (.4,.8);
\draw (.2,-.2) -- (.2,.8);
\RectangleMorphism{(x)}{black}
\RectangleMorphism{(ig)}{white}
\RectangleMorphism{(y)}{gray}
\RectangleMorphism{(ik)}{white}
}
\arrow[r, Rightarrow, "\mu^A"]
\arrow[d, Rightarrow, "\mu^A"]
&
\tikzmath{
\coordinate (x) at (-.3,.3);
\coordinate (ig) at (-.1,.1);
\coordinate (y) at (.1,-.1);
\coordinate (ik) at (.3,-.3);
\draw (-.3,.3) -- (-.3,.8);
\draw (-.1,-.8) -- (-.1,.8);
\draw (.3,-.8) -- (.3,.8);
\draw (.1,-.2) -- (.1,.8);
\RectangleMorphism{(x)}{black}
\RectangleMorphism{(ig)}{white}
\RectangleMorphism{(y)}{gray}
\RectangleMorphism{(ik)}{white}
}
\arrow[r, equals]
&
\tikzmath{
\coordinate (x) at (-.3,.3);
\coordinate (y) at (-.1,.1);
\coordinate (ig) at (.1,-.1);
\coordinate (ik) at (.3,-.3);
\draw (-.3,.3) -- (-.3,.8);
\draw (-.1,.1) -- (-.1,.8);
\draw (.3,-.8) -- (.3,.8);
\draw (.1,-.8) -- (.1,.8);
\RectangleMorphism{(x)}{black}
\RectangleMorphism{(ig)}{white}
\RedRectangleMorphism{(y)}{gray}
\RectangleMorphism{(ik)}{white}
}
\arrow[rrr, Rightarrow, "(\mu^A)^{-1}"]
&&&
\tikzmath{
\coordinate (x) at (-.4,.4);
\coordinate (y) at (-.2,.2);
\coordinate (ig) at (.2,-.2);
\coordinate (ik) at (.4,-.4);
\draw (-.4,.4) -- (-.4,.8);
\draw (-.2,.2) -- (-.2,.8);
\draw (.4,-.8) -- (.4,.8);
\draw (.2,-.8) -- (.2,.8);
\RectangleMorphism{(x)}{black}
\RectangleMorphism{(ig)}{white}
\RedRectangleMorphism{(y)}{gray}
\RectangleMorphism{(ik)}{white}
}
\arrow[dd, Rightarrow, "(A^1)^{-1}"]
\\
\tikzmath{
\coordinate (x) at (-.6,.6);
\coordinate (ig) at (-.2,.2);
\coordinate (y) at (.2,-.2);
\coordinate (ik) at (.4,-.4);
\draw (-.6,.6) -- (-.6,1);
\draw (-.2,-.8) -- (-.2,1);
\draw (.4,-.8) -- (.4,1);
\draw (.2,-.2) -- (.2,1);
\RectangleMorphism{(x)}{black}
\RectangleMorphism{(ig)}{white}
\RectangleMorphism{(y)}{gray}
\RectangleMorphism{(ik)}{white}
}
\arrow[d, Rightarrow, "h\xz h"]
&
\tikzmath{
\coordinate (x) at (-.6,.6);
\coordinate (ig) at (-.2,.2);
\coordinate (y) at (.2,-.2);
\coordinate (ik) at (.6,-.6);
\draw (-.6,.6) -- (-.6,1);
\draw (-.2,-1) -- (-.2,1);
\draw (.6,-1) -- (.6,1);
\draw (.2,-.2) -- (.2,1);
\RectangleMorphism{(x)}{black}
\RectangleMorphism{(ig)}{white}
\RectangleMorphism{(y)}{gray}
\RectangleMorphism{(ik)}{white}
}
\arrow[l, Rightarrow, "\mu^A"]
\arrow[d, Rightarrow, "h\xz h"]
&
\tikzmath{
\coordinate (x) at (-.6,.6);
\coordinate (ig) at (-.2,.2);
\coordinate (y) at (.2,-.2);
\draw (-.6,.6) -- (-.6,1);
\draw (-.2,-.6) -- (-.2,1);
\draw (.6,-.6) -- (.6,1);
\draw (.2,-.2) -- (.2,1);
\RectangleMorphism{(x)}{black}
\RectangleMorphism{(ig)}{white}
\RectangleMorphism{(y)}{gray}
}
\arrow[rr, phantom, "\text{\scriptsize \eqref{eq:HelperDiagram} for $A$}"]
\arrow[l, Rightarrow, "A^1"]
&&
\mbox{}
\\
\tikzmath{
\coordinate (x) at (-.6,.6);
\coordinate (ig) at (-.2,.2);
\coordinate (y) at (.2,-.2);
\coordinate (ik) at (.4,-.4);
\draw[blue] (-.6,.6) -- (-.6,1);
\draw[blue] (-.2,-.8) -- (-.2,1);
\draw (.4,-.8) -- (.4,1);
\draw (.2,-.2) -- (.2,1);
\BlueRectangleMorphism{(x)}{blue}
\BlueRectangleMorphism{(ig)}{white}
\RectangleMorphism{(y)}{gray}
\RectangleMorphism{(ik)}{white}
}
\arrow[d, Rightarrow, "\mu^B"]
&
\tikzmath{
\coordinate (x) at (-.6,.6);
\coordinate (ig) at (-.2,.2);
\coordinate (y) at (.2,-.2);
\coordinate (ik) at (.6,-.6);
\draw[blue] (-.6,.6) -- (-.6,1);
\draw[blue] (-.2,-1) -- (-.2,1);
\draw (.6,-1) -- (.6,1);
\draw (.2,-.2) -- (.2,1);
\BlueRectangleMorphism{(x)}{blue}
\BlueRectangleMorphism{(ig)}{white}
\RectangleMorphism{(y)}{gray}
\RectangleMorphism{(ik)}{white}
}
\arrow[l, Rightarrow, "\mu^A"]
\arrow[ddl, Rightarrow, "\mu^B"]
\arrow[dd, Rightarrow, "h\xz h"]
&
\tikzmath{
\coordinate (x) at (-.6,.6);
\coordinate (y) at (0,.3);
\draw (-.6,.6) -- (-.6,1);
\draw (-.3,-.3) -- (-.3,1);
\draw (.3,-.3) -- (.3,1);
\draw (0,.3) -- (0,1);
\RectangleMorphism{(x)}{black}
\RectangleMorphism{(y)}{gray}
}
\arrow[l, Rightarrow, "h \xz B^1 \xz \id \xz A^1"]
\arrow[ul, Rightarrow, swap, "A^1\xz A^1"]
\arrow[u, Rightarrow, swap, "A^1"]
\arrow[r, equals]
\arrow[d, Rightarrow, swap, "h\xz h"]
&
\tikzmath{
\coordinate (x) at (-.5,.4);
\coordinate (y) at (0,.1);
\draw (-.5,.4) -- (-.5,.8);
\draw[red] (-.2,.8) -- (-.2,0) arc (-180:0:.2cm) -- (.2,.8);
\draw (0,0) -- (0,.8);
\draw (.5,-.5) -- (.5,.8);
\draw (.8,-.5) -- (.8,.8);
\RectangleMorphism{(x)}{black}
\RectangleMorphism{(y)}{gray}
}
\arrow[r, Rightarrow, "\bfa"]
\arrow[d, Rightarrow, swap, "h\xz h"]
\arrow[dr, phantom, "\text{\scriptsize \eqref{eq:ActionCohrenceForTransformation}}"]
&
\tikzmath{
\coordinate (x) at (-.6,.6);
\coordinate (y) at (-.3,.3);
\draw (-.6,.6) -- (-.6,1);
\draw (-.3,.3) -- (-.3,1);
\draw (.3,-.3) -- (.3,1);
\draw (0,-.3) -- (0,1);
\RectangleMorphism{(x)}{black}
\RedRectangleMorphism{(y)}{gray}
}
\arrow[r, Rightarrow, "\mu^A"]
\arrow[d, Rightarrow, swap, "h\xz h"]
\arrow[dr, phantom, "\text{\scriptsize $h$ monoidal}"]
&
\tikzmath{
\coordinate (x) at (-.5,.5);
\coordinate (y) at (-.3,.3);
\draw (-.5,.5) -- (-.5,1);
\draw (-.3,.3) -- (-.3,1);
\draw (.3,-.3) -- (.3,1);
\draw (0,-.3) -- (0,1);
\RectangleMorphism{(x)}{black}
\RedRectangleMorphism{(y)}{gray}
}
\arrow[d, Rightarrow, "h"]
\\
\tikzmath{
\coordinate (x) at (-.4,.4);
\coordinate (ig) at (-.2,.2);
\coordinate (y) at (.2,-.2);
\coordinate (ik) at (.4,-.4);
\draw[blue] (-.4,.4) -- (-.4,.8);
\draw[blue] (-.2,-.8) -- (-.2,.8);
\draw (.4,-.8) -- (.4,.8);
\draw (.2,-.2) -- (.2,.8);
\BlueRectangleMorphism{(x)}{blue}
\BlueRectangleMorphism{(ig)}{white}
\RectangleMorphism{(y)}{gray}
\RectangleMorphism{(ik)}{white}
}
\arrow[d, Rightarrow, swap, "(\mu^A)^{-1}"]
&&
\tikzmath{
\coordinate (x) at (-.6,.6);
\coordinate (y) at (0,.3);
\draw[blue] (-.6,.6) -- (-.6,1);
\draw[blue] (-.3,-.3) -- (-.3,1);
\draw[blue] (.3,-.3) -- (.3,1);
\draw[blue] (0,.3) -- (0,1);
\BlueRectangleMorphism{(x)}{blue}
\BlueRectangleMorphism{(y)}{\BlueGray}
}
\arrow[dl, Rightarrow, swap, "B^1\xz B^1"]
\arrow[d, Rightarrow, swap, "B^1"]
\arrow[r, equals]
&
\tikzmath{
\coordinate (x) at (-.5,.4);
\coordinate (y) at (0,.1);
\draw[blue] (-.5,.4) -- (-.5,.8);
\draw[red] (-.2,.8) -- (-.2,0) arc (-180:0:.2cm) -- (.2,.8);
\draw[blue] (0,0) -- (0,.8);
\draw[blue] (.5,-.5) -- (.5,.8);
\draw[blue] (.8,-.5) -- (.8,.8);
\BlueRectangleMorphism{(x)}{blue}
\BlueRectangleMorphism{(y)}{\BlueGray}
}
\arrow[r, Rightarrow, "\bfb"]
&
\tikzmath{
\coordinate (x) at (-.6,.6);
\coordinate (y) at (-.3,.3);
\draw[blue] (-.6,.6) -- (-.6,1);
\draw[blue] (-.3,.3) -- (-.3,1);
\draw[blue] (.3,-.3) -- (.3,1);
\draw[blue] (0,-.3) -- (0,1);
\BlueRectangleMorphism{(x)}{blue}
\RedRectangleMorphism{(y)}{\BlueGray}
}
\arrow[r, Rightarrow, "\mu^B"]
&
\tikzmath{
\coordinate (x) at (-.5,.5);
\coordinate (y) at (-.3,.3);
\draw[blue] (-.5,.5) -- (-.5,1);
\draw[blue] (-.3,.3) -- (-.3,1);
\draw[blue] (.3,-.3) -- (.3,1);
\draw[blue] (0,-.3) -- (0,1);
\BlueRectangleMorphism{(x)}{blue}
\RedRectangleMorphism{(y)}{\BlueGray}
}
\arrow[ddd, Rightarrow, "B^1"]
\\
\tikzmath{
\coordinate (x) at (-.4,.4);
\coordinate (ig) at (-.2,.2);
\coordinate (y) at (.2,-.2);
\coordinate (ik) at (.6,-.6);
\draw[blue] (-.4,.4) -- (-.4,.8);
\draw[blue] (-.2,-1) -- (-.2,.8);
\draw (.6,-1) -- (.6,.8);
\draw (.2,-.2) -- (.2,.8);
\BlueRectangleMorphism{(x)}{blue}
\BlueRectangleMorphism{(ig)}{white}
\RectangleMorphism{(y)}{gray}
\RectangleMorphism{(ik)}{white}
}
\arrow[d, Rightarrow, "h\xz h"]
&
\tikzmath{
\coordinate (x) at (-.6,.6);
\coordinate (ig) at (-.2,.2);
\coordinate (y) at (.2,-.2);
\coordinate (ik) at (.6,-.6);
\draw[blue] (-.6,.6) -- (-.6,1);
\draw[blue] (-.2,-1) -- (-.2,1);
\draw[blue] (.6,-1) -- (.6,1);
\draw[blue] (.2,-.2) -- (.2,1);
\BlueRectangleMorphism{(x)}{blue}
\BlueRectangleMorphism{(ig)}{white}
\BlueRectangleMorphism{(y)}{\BlueGray}
\BlueRectangleMorphism{(ik)}{white}
}
\arrow[dl, Rightarrow, "\mu^B"]
\arrow[d, Rightarrow, "\mu^B"]
&
\tikzmath{
\coordinate (x) at (-.6,.6);
\coordinate (ig) at (-.2,.2);
\coordinate (y) at (.2,-.2);
\draw[blue] (-.6,.6) -- (-.6,1);
\draw[blue] (-.2,-.6) -- (-.2,1);
\draw[blue] (.6,-.6) -- (.6,1);
\draw[blue] (.2,-.2) -- (.2,1);
\BlueRectangleMorphism{(x)}{blue}
\BlueRectangleMorphism{(ig)}{white}
\BlueRectangleMorphism{(y)}{\BlueGray}
}
\arrow[rr, phantom, "\text{\scriptsize \eqref{eq:HelperDiagram} for $B$}"]
\arrow[l, Rightarrow, "B^1"]
&&
\mbox{}
\\
\tikzmath{
\coordinate (x) at (-.4,.4);
\coordinate (ig) at (-.2,.2);
\coordinate (y) at (.2,-.2);
\coordinate (ik) at (.6,-.6);
\draw[blue] (-.4,.4) -- (-.4,.8);
\draw[blue] (-.2,-1) -- (-.2,.8);
\draw[blue] (.6,-1) -- (.6,.8);
\draw[blue] (.2,-.2) -- (.2,.8);
\BlueRectangleMorphism{(x)}{blue}
\BlueRectangleMorphism{(ig)}{white}
\BlueRectangleMorphism{(y)}{\BlueGray}
\BlueRectangleMorphism{(ik)}{white}
}
\arrow[d, Rightarrow, "\mu^B"]
\arrow[r, phantom, "\text{\scriptsize \ref{Functor:omega}}"]
&
\tikzmath{
\coordinate (x) at (-.6,.6);
\coordinate (ig) at (-.2,.2);
\coordinate (y) at (.2,-.2);
\coordinate (ik) at (.4,-.4);
\draw[blue] (-.6,.6) -- (-.6,1);
\draw[blue] (-.2,-.8) -- (-.2,1);
\draw[blue] (.4,-.8) -- (.4,1);
\draw[blue] (.2,-.2) -- (.2,1);
\BlueRectangleMorphism{(x)}{blue}
\BlueRectangleMorphism{(ig)}{white}
\BlueRectangleMorphism{(y)}{\BlueGray}
\BlueRectangleMorphism{(ik)}{white}
}
\arrow[dl, Rightarrow, "\mu^B"]
\\
\tikzmath{
\coordinate (x) at (-.4,.4);
\coordinate (ig) at (-.2,.2);
\coordinate (y) at (.2,-.2);
\coordinate (ik) at (.4,-.4);
\draw[blue] (-.4,.4) -- (-.4,.8);
\draw[blue] (-.2,-.8) -- (-.2,.8);
\draw[blue] (.4,-.8) -- (.4,.8);
\draw[blue] (.2,-.2) -- (.2,.8);
\BlueRectangleMorphism{(x)}{blue}
\BlueRectangleMorphism{(ig)}{white}
\BlueRectangleMorphism{(y)}{\BlueGray}
\BlueRectangleMorphism{(ik)}{white}
}
\arrow[r, Rightarrow, "\mu^B"]
&
\tikzmath{
\coordinate (x) at (-.3,.3);
\coordinate (ig) at (-.1,.1);
\coordinate (y) at (.1,-.1);
\coordinate (ik) at (.3,-.3);
\draw[blue] (-.3,.3) -- (-.3,.8);
\draw[blue] (-.1,-.8) -- (-.1,.8);
\draw[blue] (.3,-.8) -- (.3,.8);
\draw[blue] (.1,-.2) -- (.1,.8);
\BlueRectangleMorphism{(x)}{blue}
\BlueRectangleMorphism{(ig)}{white}
\BlueRectangleMorphism{(y)}{\BlueGray}
\BlueRectangleMorphism{(ik)}{white}
}
\arrow[r, equals]
&
\tikzmath{
\coordinate (x) at (-.3,.3);
\coordinate (y) at (-.1,.1);
\coordinate (ig) at (.1,-.1);
\coordinate (ik) at (.3,-.3);
\draw[blue] (-.3,.3) -- (-.3,.8);
\draw[blue] (-.1,.1) -- (-.1,.8);
\draw[blue] (.3,-.8) -- (.3,.8);
\draw[blue] (.1,-.8) -- (.1,.8);
\BlueRectangleMorphism{(x)}{blue}
\BlueRectangleMorphism{(ig)}{white}
\RedRectangleMorphism{(y)}{\BlueGray}
\BlueRectangleMorphism{(ik)}{white}
}
\arrow[rrr, Rightarrow, "(\mu^B)^{-1}"]
&&&
\tikzmath{
\coordinate (x) at (-.4,.4);
\coordinate (y) at (-.2,.2);
\coordinate (ig) at (.2,-.2);
\coordinate (ik) at (.4,-.4);
\draw[blue] (-.4,.4) -- (-.4,.8);
\draw[blue] (-.2,.2) -- (-.2,.8);
\draw[blue] (.4,-.8) -- (.4,.8);
\draw[blue] (.2,-.8) -- (.2,.8);
\BlueRectangleMorphism{(x)}{blue}
\BlueRectangleMorphism{(ig)}{white}
\RedRectangleMorphism{(y)}{\BlueGray}
\BlueRectangleMorphism{(ik)}{white}
}
\end{tikzcd}};\end{tikzpicture}
$$
The faces without labels above commute by functoriality of 1-cell composition $\xo$ or by the shorthand $h= B^1 \xt (A^1)^{-1}$.

\item[\ref{Transformation:AssociatorCoherence}]
Every map is the identity map.

\item[\ref{Transformation:UnitCoherence}]
Every map is the identity map.

\end{proof}

\bibliographystyle{amsalpha}
{\footnotesize{
\bibliography{bibliography}
}}
\end{document}